\documentclass[10pt, reqno]{amsart}
\usepackage{appendix}
\usepackage[a4paper, top=1.5cm, bottom=1.5cm, left=2cm, right=2cm, heightrounded, marginparwidth=3.5cm, marginparsep=.2cm, centering]{geometry}
\usepackage{xcolor}
\usepackage[utf8]{inputenc}
\usepackage[colorlinks=true,hyperindex,pagebackref,linktocpage=true]{hyperref}
\usepackage{cleveref}
\usepackage{amsfonts}
\usepackage{amssymb}
\usepackage{tikz-cd}

\hypersetup{
	colorlinks,
	linkcolor={violet!60!black},
	citecolor={cyan!60!black},
	urlcolor={orange!60!black}
}
\usepackage{stmaryrd}
\usepackage{mathrsfs}  
\usepackage{varwidth}
\theoremstyle{plain}
\newtheorem{theorem}{Theorem}[section]
\newtheorem{proposition}[theorem]{Proposition}
\newtheorem{lemma}[theorem]{Lemma}
\newtheorem{corollary}[theorem]{Corollary}
\newtheorem{conjecture}[theorem]{Conjecture}

\theoremstyle{definition}
\newtheorem{definition}[theorem]{Definition}
\newtheorem{example}[theorem]{Example}
\newtheorem{remark}[theorem]{Remark}

\newtheorem{assumption}[theorem]{Assumption}

\newcommand{\nc}{\newcommand}
\nc{\on}{\operatorname}



\nc{\Q}{\mathbb{Q}}
\nc{\Z}{\mathbb{Z}}
\nc{\cl}{\mathrm{cl}}

\nc{\fraka}{{\mathfrak a}} \nc{\bba}{{\mathbf a}}
\nc{\frakb}{{\mathfrak b}}
\nc{\frakc}{{\mathfrak c}}
\nc{\frakd}{{\mathfrak d}}
\nc{\frake}{{\mathfrak e}}
\nc{\frakf}{{\mathfrak f}}
\nc{\frakg}{{\mathfrak g}}
\nc{\frakh}{{\mathfrak h}}
\nc{\fraki}{{\mathfrak i}}
\nc{\frakj}{{\mathfrak j}}
\nc{\frakk}{{\mathfrak k}}
\nc{\frakl}{{\mathfrak l}}
\nc{\frakm}{{\mathfrak m}}
\nc{\frakn}{{\mathfrak n}}
\nc{\frako}{{\mathfrak o}}
\nc{\frakp}{{\mathfrak p}}
\nc{\frakq}{{\mathfrak q}}
\nc{\frakr}{{\mathfrak r}}
\nc{\fraks}{{\mathfrak s}}
\nc{\frakt}{{\mathfrak t}}
\nc{\fraku}{{\mathfrak u}}
\nc{\frakv}{{\mathfrak v}}
\nc{\frakw}{{\mathfrak w}}
\nc{\frakx}{{\mathfrak x}}
\nc{\fraky}{{\mathfrak y}}
\nc{\frakz}{{\mathfrak z}}
\nc{\frakA}{{\mathfrak A}}
\nc{\frakB}{{\mathfrak B}}
\nc{\frakC}{{\mathfrak C}}
\nc{\frakD}{{\mathfrak D}}
\nc{\frakE}{{\mathfrak E}}
\nc{\frakF}{{\mathfrak F}}
\nc{\frakG}{{\mathfrak G}}
\nc{\frakH}{{\mathfrak H}}
\nc{\frakI}{{\mathfrak I}}
\nc{\frakJ}{{\mathfrak J}}
\nc{\frakK}{{\mathfrak K}}
\nc{\frakL}{{\mathfrak L}}
\nc{\frakM}{{\mathfrak M}}
\nc{\frakN}{{\mathfrak N}}
\nc{\frakO}{{\mathfrak O}}
\nc{\frakP}{{\mathfrak P}}
\nc{\frakQ}{{\mathfrak Q}}
\nc{\frakR}{{\mathfrak R}}
\nc{\frakS}{{\mathfrak S}}
\nc{\frakT}{{\mathfrak T}}
\nc{\frakU}{{\mathfrak U}}
\nc{\frakV}{{\mathfrak V}}
\nc{\frakW}{{\mathfrak W}}
\nc{\frakX}{{\mathfrak X}}
\nc{\frakY}{{\mathfrak Y}}
\nc{\frakZ}{{\mathfrak Z}}
\nc{\bbA}{{\mathbb A}}
\nc{\bbB}{{\mathbb B}}
\nc{\bbC}{{\mathbb C}}
\nc{\bbD}{{\mathbb D}}
\nc{\bbE}{{\mathbb E}}
\nc{\bbF}{{\mathbb F}} \nc{\bbf}{{\mathbf f}}
\nc{\bbG}{{\mathbb G}}
\nc{\bbH}{{\mathbb H}}
\nc{\bbI}{{\mathbb I}}
\nc{\bbJ}{{\mathbb J}}
\nc{\bbK}{{\mathbb K}}
\nc{\bbL}{{\mathbb L}}
\nc{\bbM}{{\mathbb M}}
\nc{\bbN}{{\mathbb N}}
\nc{\bbO}{{\mathbb O}}
\nc{\bbP}{{\mathbb P}}
\nc{\bbQ}{{\mathbb Q}}
\nc{\bbR}{{\mathbb R}}
\nc{\bbS}{{\mathbb S}}
\nc{\bbT}{{\mathbb T}}
\nc{\bbU}{{\mathbb U}}
\nc{\bbV}{{\mathbb V}}
\nc{\bbW}{{\mathbb W}}
\nc{\bbX}{{\mathbb X}}
\nc{\bbY}{{\mathbb Y}}
\nc{\bbZ}{{\mathbb Z}}
\nc{\calA}{{\mathcal A}}
\nc{\calB}{{\mathcal B}}
\nc{\calC}{{\mathcal C}}
\nc{\calD}{{\mathcal D}}
\nc{\calE}{{\mathcal E}}
\nc{\calF}{{\mathcal F}}
\nc{\calG}{{\mathcal G}}
\nc{\calH}{{\mathcal H}}
\nc{\calI}{{\mathcal I}}
\nc{\calJ}{{\mathcal J}}
\nc{\calK}{{\mathcal K}}
\nc{\calL}{{\mathcal L}}
\nc{\calM}{{\mathcal M}}
\nc{\calN}{{\mathcal N}}
\nc{\calO}{{\mathcal O}}
\nc{\calP}{{\mathcal P}}
\nc{\calQ}{{\mathcal Q}}
\nc{\calR}{{\mathcal R}}
\nc{\calS}{{\mathcal S}}
\nc{\calT}{{\mathcal T}}
\nc{\calU}{{\mathcal U}}
\nc{\calV}{{\mathcal V}}
\nc{\calW}{{\mathcal W}}
\nc{\calX}{{\mathcal X}}
\nc{\calY}{{\mathcal Y}}
\nc{\calZ}{{\mathcal Z}}

\nc{\scrA}{{\mathscr A}}
\nc{\scrB}{{\mathscr B}}
\nc{\scrC}{{\mathscr C}}
\nc{\scrD}{{\mathscr D}}
\nc{\scrE}{{\mathscr E}}
\nc{\scrF}{{\mathscr F}}
\nc{\scrG}{{\mathscr G}}
\nc{\scrH}{{\mathscr H}}
\nc{\scrI}{{\mathscr J}}
\nc{\scrJ}{{\mathscr I}}
\nc{\scrK}{{\mathscr K}}
\nc{\scrL}{{\mathscr L}}
\nc{\scrM}{{\mathscr M}}
\nc{\scrN}{{\mathscr N}}
\nc{\scrO}{{\mathscr O}}
\nc{\scrP}{{\mathscr P}}
\nc{\scrQ}{{\mathscr Q}}
\nc{\scrR}{{\mathscr R}}
\nc{\D}{{\on{D}}}
\nc{\Div}{{\on{Div}}}
\nc{\Perv}{{\on{Perv}}}

\nc{\bnu}{{\bar{ \nu}}}
\nc{\olO}{\bar{\calO}}

\nc{\al}{{\alpha}} 
\nc{\be}{{\beta}}
\nc{\ga}{{\gamma}} \nc{\Ga}{{\Gamma}}
\nc{\hGa}{\hat{\Gamma}}
\nc{\ve}{{\varepsilon}} 
\nc{\la}{{\lambda}} \nc{\La}{{\Lambda}}
\nc{\om}{\omega} \nc{\Om}{\Omega} 
\nc{\sig}{{\sigma}} \nc{\Sig}{{\Sigma}}
\nc{\dR}{{\mathrm{dR}}}
\nc{\Perf}{{\mathrm{Perf}}}
\nc{\Gm}{{\mathbb{G}_m}}
\nc{\colim}{{\on{colim}}}

\makeatletter
\DeclareFontEncoding{LS1}{}{}
\DeclareFontSubstitution{LS1}{stix}{m}{n}
\DeclareMathAlphabet{\rhomalpha}{LS1}{stixscr}{m}{n}
\makeatother
\DeclareMathOperator{\RHom}{{{\it R} \mathscr{H}\mkern-1.8mu o\mkern-0.7mu m\mkern-0.5mu}}

\nc{\Spa}{\on{{Spa}}}
\nc{\Spd}{\on{{Spd}}}
\nc{\tnb}{\psi_{\rm tame}}
\nc{\oM}{\overline{{M}}}
\nc{\op}{{\on{op}}}
\nc{\ad}{{\on{ad}}}
\nc{\alg}{{\on{alg}}}
\nc{\Ad}{{\on{Ad}}}
\nc{\Adm}{{\on{Adm}}} \nc{\aff}{{\on{af}}}
\nc{\Aut}{{\on{Aut}}}
\nc{\Bun}{{\on{Bun}}}
\nc{\cha}{{\on{char}}}
\nc{\der}{{\on{der}}}
\nc{\Der}{{\on{Der}}}
\nc{\diag}{{\on{diag}}}
\nc{\End}{{\on{End}}}
\nc{\Fl}{{\calF\!\ell}}
\nc{\Tr}{{\on{Transp}}}
\nc{\TR}{{\calT\!\calR}}
\nc{\Gal}{{\on{Gal}}}
\nc{\Gr}{{\on{Gr}}}
\nc{\Hk}{{\on{Hk}}}
\nc{\rH}{{\on{H}}}
\nc{\Hom}{{\on{Hom}}}
\nc{\IC}{{\on{IC}}}
\nc{\id}{{\on{id}}}
\nc{\Id}{{\on{Id}}}
\nc{\ind}{{\on{ind}}}
\nc{\Ind}{{\on{Ind}}}
\nc{\Lie}{{\on{Lie}}}
\nc{\Pic}{{\on{Pic}}}
\nc{\pr}{{\on{pr}}}
\nc{\Res}{{\on{Res}}}
\nc{\res}{{\on{res}}} \nc{\Sat}{{\on{Sat}}}
\nc{\spc}{{\on{sc}}}
\nc{\drv}{{\on{der}}}
\nc{\sgn}{{\on{sgn}}}
\nc{\Spec}{{\on{Spec}}}\nc{\Spf}{\on{Spf}} 
\nc{\Sph}{\on{Sph}}
\nc{\St}{{\on{St}}}
\nc{\tr}{{\on{tr}}}
\nc{\Mod}{{\mathrm{-Mod}}}
\nc{\Hilb}{{\on{Hilb}}} 
\nc{\Ext}{{\on{Ext}}} 
\nc{\vs}{{\on{Vec}}}
\nc{\ev}{{\on{ev}}}
\nc{\nO}{{\breve{\calO}}}
\nc{\tS}{{\tilde{S}}}
\nc{\spe}{{\on{sp}}}
\nc{\loc}{{\on{loc}}}
\nc{\pre}{{\on{pre}}}

\let\x\times

\renewcommand{\x}{\times}

\renewcommand{\r}{\rightarrow}
\newcommand{\lr}{\longrightarrow}

\newcommand{\hr}{\hookrightarrow}
\nc{\co}{\colon}
\nc{\dia}{{\diamondsuit}}

\nc{\nscrR}{{\mathscr{R}^{\on{nr}}}}

\nc{\GL}{{\on{GL}}}

\nc{\Gl}{\on{Gl}} 
\nc{\GSp}{{\on{GSp}}}
\nc{\gl}{{\frakg\frakl}}
\nc{\SL}{{\on{SL}}} 
\nc{\SU}{{\on{SU}}} 
\nc{\SO}{{\on{SO}}}
\nc{\PGL}{{\on{PGL}}}

\nc{\Conv}{{\on{Conv}}}
\nc{\Rep}{{\on{Rep}}}
\nc{\Dom}{{\on{Dom}}}
\nc{\red}{{\on{red}}}
\nc{\act}{{\on{act}}}
\nc{\nr}{{\on{nr}}}
\nc{\ctf}{{\on{ctf}}}

\nc{\str}{{\on{-}}} 
\nc{\os}{{\bar{s}}}
\nc{\oeta}{{\bar{\eta}}}

\nc{\et}{\textup{\'et}}

\nc{\hookto}{\hookrightarrow}
\nc{\longto}{\longrightarrow}
\nc{\leftto}{\leftarrow}
\nc{\onto}{\twoheadrightarrow}
\nc{\lonto}{\twoheadleftarrow}

\nc{\pot}[1]{ [\hspace{-0,5mm}[ {#1} ]\hspace{-0,5mm}] }
\nc{\rpot}[1]{ (\hspace{-0,7mm}( {#1} )\hspace{-0,7mm}) }

\setcounter{tocdepth}{1}
\numberwithin{equation}{section}


\begin{document}
	
	\title{On the $p$-adic theory of local models}
	
	\author[J. Ansch\"utz, I. Gleason, J. Louren\c{c}o, T. Richarz]{Johannes Ansch\"utz, Ian Gleason, Jo\~ao Louren\c{c}o, Timo Richarz}
	
	
	
	
	
	\begin{abstract}
		We prove the Scholze--Weinstein conjecture on the existence and uniqueness of local models for local Shimura varieties, as well as the test function conjecture of Haines--Kottwitz in this framework.
		To this end, we establish a specialization principle for well-behaved $p$-adic kimberlites, show that these include the v-sheaf local models, determine their special fibers using hyperbolic localization for the étale cohomology of small v-stacks, and analyze the resulting specialization morphism using convolution.	
		\end{abstract}
	\date{\today}
	
	\maketitle
	\tableofcontents

	\section{Introduction}
	The general theory of Shimura varieties has been developed by Deligne \cite{Del71,Del79} in the seventies. 
	It generalizes classical objects such as modular curves, moduli spaces of principally polarized abelian varieties or Hilbert modular varieties. 
	Shimura varieties naturally occur in the search for higher reciprocity laws within the Langlands program \cite{Lan79}. 
	Their arithmetic properties are encoded in the reduction to positive characteristic $p>0$ and have contributed to spectacular developments in number theory and arithmetic geometry in the past decades.
	
	Local models are flat projective schemes over complete discrete valuation rings of characteristic $(0,p)$ that are designed to model the singularities in the reduction modulo $p$ of Shimura varieties with parahoric level structure. 
	Starting from the pioneering works \cite{DR73, Rap90, CN90, dJ93, DP94}, the theory is formalized to some extent in the book of Rapoport--Zink \cite{RZ96} for those Shimura varieties arising as moduli problems of abelian varieties with extra structures. 
	The recent works of Kisin--Pappas \cite{kisin_pappas_integral_models_of_shimura_varieties_with_parahoric_level_structure} and Kisin--Pappas--Zhou \cite{KisinPappasZhou_IntegralModels} construct natural integral models for all Shimura varieties of abelian type with parahoric level structure when $p>2$. 
	The geometric properties of the corresponding local models are studied in a series of works notably by Faltings, G\"ortz, Pappas and Rapoport \cite{Fal97, Fal01, Pap00, Gor01, Gor03, Fal03, PR03, PR05, PR08, PR09}, see the survey article \cite{PRS10} for details and further references. 
	A breakthrough is due to Pappas and Zhu \cite{PZ13, Zhu14}, who gave a purely group-theoretic construction of (flat) local models, later refined by Levin \cite{Lev16} and the third named author \cite{Lou19}. 
	Roughly, the local models in this approach are constructed as flat, closed subschemes in a power series affine Grassmannian, which depends on certain auxiliary choices, see \Cref{subsec_comparison}. 
	A more functorial approach (without ad hoc choices) was initiated by Scholze--Weinstein \cite{SW20}, 
	using $p$-adic geometry. 
	Unfortunately, the approach has the drawback of not producing schemes, at least a priori.
	
	The aim of the present manuscript is to connect the scheme-theoretic local models to Scholze--Weinstein's $p$-adic approach. 	
	More precisely, we prove the Scholze--Weinstein conjecture \cite[Conjecture 21.4.1]{SW20} on the existence and uniqueness of (weakly normal) local models, representing minuscule portions of the parahoric Beilinson--Drinfeld Grassmannian.	
	Our methods allow us to prove, furthermore, the test function conjecture of Haines--Kottwitz \cite[Conjecture 6.1.1]{Hai14} for these local models in full generality, expressing the trace of Frobenius function on the nearby cycles sheaf in terms of spectral data.
	
	These local models are intimately related with moduli spaces of $p$-adic shtukas by \cite[Lecture XXV]{SW20}. 
	Recently, Pappas--Rapoport \cite{PR21} made progress in the study of moduli spaces of $p$-adic shtukas and their relation to Shimura varieties, partially relying on a positive solution of the Scholze--Weinstein conjecture as given here.
	Let us now discuss our results in more detail.
	
	\subsection{Main results}\label{intro-main-results}
	Fix a prime number $p$.
	Denote by $\bbQ_p$ the field of $p$-adic numbers. 
	(In the main body of the text, we allow more general $p$-adic base fields.)
	Let $G$ be a connected reductive $\bbQ_p$-group and $\calG$ a parahoric $\bbZ_p$-model in the sense of Bruhat--Tits \cite{BT84}.
	Let $\mu$ be a conjugacy class of geometric cocharacters in $G$. 
	Denote by $E/\bbQ_p$ its reflex field with ring of integers $O_E$ and finite residue field $k$. 
	
	For a brief explanation on the following terminology the reader is referred to \Cref{v-sheaves-intro}.
	Scholze--Weinstein \cite[Section 20.3]{SW20} introduce the v-sheaf Beilinson--Drinfeld Grassmannian $\Gr_\calG\to \Spd \bbZ_p$. 
	Its generic fiber is the $B_\dR^+$-affine Grassmannian $\Gr_{G}$ and its special fiber is the v-sheaf $\Fl_{\calG}^\dia$ associated to the Witt vector partial affine flag variety defined by Zhu \cite{Zhu17} and further studied by Bhatt--Scholze \cite{BS17}.
	Attached to the pair $(\calG, \mu)$ is the v-sheaf local model  
	\begin{equation}\label{definition-local-model-intro}
	\calM_{\calG,\mu} \;\subset\; \Gr_{\calG}|_{\Spd O_E}
	\end{equation}
	defined as the v-closure (\Cref{sec:closures-definition-v-sheaf-closure}) of the affine Schubert variety $\Gr_{G,\mu}$, in analogy to the local models from \cite{PZ13}.
	If $\mu$ is minuscule (that is, $\langle \mu,a\rangle\in \{0,\pm 1 \}$ for every root $a$ of $G_{\bbC_p}$), then the generic fiber $\calM_{\calG,\mu}|_{\Spd E}=\Gr_{G,\mu}$ is canonically isomorphic to $\calF_{G,\mu}^\dia$, the v-sheaf associated with the homogenous space $\calF_{G,\mu}$ of parabolic subgroups of type $\mu$. 
	Scholze--Weinstein state the following conjecture \cite[Conjecture~21.4.1]{SW20} on the existence of local models: 
	
	\begin{conjecture}[Scholze--Weinstein]\label{SW-conjecture}
	If $\mu$ is minuscule, then there is a flat projective $O_E$-scheme $\calM_{\calG,\mu}^{\on{sch}}$ with reduced special fiber and with a closed immersion
	\begin{equation}\label{equation-SW-conjecture}
		\calM_{\calG,\mu}^{\on{sch},\dia}\hookto \Gr_{\calG}|_{\Spd O_E}
	\end{equation}
	whose generic fiber is $\calF_{G,\mu}^\dia\cong \Gr_{G,\mu}$.
	\end{conjecture}
	
	Note that the schematic local model $\calM_{\calG,\mu}^{\on{sch}}$, whenever it exists, is normal and uniquely characterized by $\calM_{\calG,\mu}^{\on{sch},\dia}\cong \calM_{\calG,\mu}$ under the closed immersion \eqref{equation-SW-conjecture}, see \cite[Remark~21.4.2]{SW20} and the text thereafter. 	
	In the thesis of the third named author, \Cref{SW-conjecture} is separated in a ``representability part'' and a ``geometry part''.
	More precisely, \cite[Conjecture IV.4.18]{Lou20} states the existence of a unique flat, projective and weakly normal (\Cref{sec:two-diff-diam-definition-semi-normal-etc}) $O_E$-scheme $\calM_{\calG,\mu}^{\on{sch}}$ together with a closed immersion as in \eqref{equation-SW-conjecture} whose generic fiber is $\calF_{G,\mu}^\dia\cong \Gr_{G,\mu}$ whereas \cite[Conjecture IV.4.19]{Lou20} states that the special fiber of $\calM_{\calG,\mu}^{\on{sch}}$ is reduced. 
	The main result of the present work proves the representability part in full generality and the geometry part for all $G$ with the exception of two non-split groups for $p=2,3$:
	
	\begin{theorem}[\Cref{thm_local_model_representable}, \Cref{cor_special_fiber_local_model_reduced}]\label{SW_conjecture_intro}
		Let $\mu$ be minuscule. 
		Then, the following hold:
		\begin{enumerate}
			\item\label{SW_conjecture_intro:it1} Representability:
				There is a unique (up to unique isomorphism) flat, projective and weakly normal $O_E$-scheme $\calM_{\calG,\mu}^{\on{sch}}$ with a closed immersion  
					\begin{equation}\label{equation-SW_conjecture_intro}
						\calM_{\calG,\mu}^{\on{sch},\dia}\hookto \Gr_{\calG}|_{\Spd O_E}
					\end{equation}
				whose generic fiber is $\calF_{G,\mu}^\dia\cong \Gr_{G,\mu}$.
				In addition, $\calM_{\calG,\mu}^{\on{sch}}$ admits a unique $\calG_{O_E}$-action such that the isomorphism $\calM_{\calG,\mu}^{\on{sch},\dia}\cong\calM_{\calG,\mu}$ induced from \eqref{equation-SW_conjecture_intro} is $\calG_{O_E}^\dia$-equivariant.
			\item\label{SW_conjecture_intro:it2} Geometry: 
				Assume that the adjoint group $G_\ad$ satisfies the following conditions over the completion of the maximal unramified extension $\breve\bbQ_p/\bbQ_p$:
				\begin{enumerate}
					\item If $p=2$, then $G_\ad$ has no odd unitary $\breve \bbQ_2$-factors.
					\item If $p=3$, then $G_\ad$ has no triality $\breve{\bbQ}_3$-factors.
				\end{enumerate}
				Then, the special fiber of $\calM_{\calG,\mu}^{\on{sch}}$ is reduced and equal to $\calA_{\calG,\mu}^{\on{can}}$, the canonical deperfection of the $\mu$-admissible locus inside $\Fl_\calG$.
				Moreover, it is Cohen-Macaulay, weakly normal and Frobenius split. 
		\end{enumerate}
	\end{theorem}
	
	Our result provides a functorial and group-theoretic framework for the theory of local models that goes beyond the previous results \cite{SW20, HPR20, Lou20} for certain pairs $(G,\mu)$ of abelian type using Hodge embeddings. 
	The scheme $\calM_{\calG,\mu}^{\on{sch}}$ agrees with the local models defined in \cite{PZ13, Lev16, Lou19} whenever $p\nmid |\pi_1(G_\der)|$ and otherwise with their weak normalization
	(weak normalization is necessary, in general, due to the existence of non-normal Schubert varieties \cite{HLR18}). 
	The works \cite{PZ13, Lev16, Lou19} are complemented by the results of Fakhruddin--Haines--L.--R.~\cite{FHLR22} which handles new cases for wildly ramified groups and leads to the conditions in \Cref{SW_conjecture_intro}\eqref{SW_conjecture_intro:it2}.
	Invoking the results from \cite{Lou20}, the conditions can be relaxed with our methods: for $p=2$ one excludes odd unitary groups defined by a ramified, quadratic root-of-unit extension; for $p=3$ one excludes trialities defined by a ramified, cubic non-(root-of-a-prime) extension.
	In particular, our result is complete for all primes $p\geq 5$ and, up to the two non-split examples, also for $p=2,3$.
	We remark that, after the first version of this paper was written, the remaining cases were handled in \cite{GL24}. 
	Hence, \Cref{SW-conjecture} is proven in all cases, but besides reducedness of the special fiber the additional geometric properties stated in \Cref{SW_conjecture_intro}\eqref{SW_conjecture_intro:it2} are not addressed in the odd unitary case in \cite{GL24}. 
	This is done in the work of Cass and the third named author \cite{CL24} which implies \Cref{SW_conjecture_intro}\eqref{SW_conjecture_intro:it2} without restrictions except that Cohen--Macaulayness is not proven for $p=2$ and odd unitary groups defined by ramified, quadratic root-of-unit extensions, see \Cref{remark_cass-lou,last_cases_remark}. 
	In particular, these works settle our \Cref{conjecture_special_fiber} stating that the special fiber of $\calM_{\calG,\mu}^{\on{sch}}$ is equal to $\calA_{\calG,\mu}^{\on{can}}$ in all cases.
	Furthermore, we conjecture that the singularities of $\calM_{\calG,\mu}^{\on{sch}}$ are pseudo-rational (hence, Cohen--Macaulay) in all cases, see \Cref{pseudo_rational_conjecture}. 
	In addition, let us remark that finding useful moduli interpretations of $\calM_{\calG,\mu}^{\on{sch}}$ is an interesting and difficult problem that maybe does not have a uniform solution. There are however many interesting special cases \cite{Pap00, Gor01, Gor03, PR03, PR05, PR09}.
	
	As a cohomological application, we prove the test function conjecture for $\calM_{\calG,\mu}$, all primes $p$ and all pairs $(\calG,\mu)$ 
	as above (for general $\mu$ which are not necessarily minuscule)
	, see \Cref{test_function_section}.
	Namely, fix an auxiliary prime $\ell\neq p$, a square root $\sqrt q$ of the residue cardinality of $E$ and an embedding $E\hr \bar\bbQ_p$. 
	Put $\Lambda=\bbQ_\ell(\sqrt q)$ which we will use as sheaf coefficients.
	Let $\Gamma$ be the absolute Galois group of $E$ with inertia subgroup $I$, and fix a lift $\Phi\in \Gamma$ of geometric Frobenius.
	Let $E_0=W(k)[{1\over p}]$ be the maximal unramified subextension of $E/\bbQ_p$.
	Let $\IC_{\mu}$ be the intersection complex on $\Gr_{G,\mu}$ with $\Lambda$-coefficients constructed in \cite[Chapter VI]{FS21} and normalized to be of weight zero as in \eqref{eq:normalized_IC}.
	(We remark that $\IC_{\mu}$ exists due to the orbit stratification on $\Gr_{G,\mu}$. So, its existence is a feature of the particular geometry at hand, not a general sheaf-theoretic construction for diamonds.)
	The function $\tau_{\calG,\mu}^\Phi\co G(E_0)/\calG(O_{E_0})\to \Lambda$ is defined, up to sign, by the alternating trace of $\Phi$ on the nearby cycle stalks
	\begin{equation}\label{test_function_intro}
	\tau_{\calG,\mu}^\Phi(x)= (-1)^{\langle2\rho, \mu\rangle}\sum_{i\in \bbZ}(-1)^i\on{trace}\big(\Phi\,|\, \on{R}^i\! \Psi_{\calM_{\calG,\mu}}(\IC_{\mu})_{\bar{x}}\big), 
	\end{equation}
	if $x\in \calM_{\calG,\mu}(\Spd k)$ and $0$ else. 
	Here $\calM_{\calG,\mu}(\Spd k)$ is viewed as a subset of $\Fl_\calG(k)=G(E_0)/\calG(O_{E_0})$, and the trace is well-defined by \Cref{ula_intro}, proving that the nearby cycles are constructible in this setting.
	Also, the function $\tau_{\calG,\mu}^\Phi$ depends on the choice of the geometric Frobenius lift $\Phi$, that is, we do not take semi-simplified traces by passing to inertia invariants, compare with \cite[Section~6.2]{HR20}. However, one recovers the semi-simple trace function as in \cite[Appendix]{HR20} by ``averaging over geometric Frobenius lifts $\Phi$''.
	The function $\tau_{\calG,\mu}^\Phi$ is left-$\calG(O_{E_0})$-invariant and supported on finitely many orbits, hence $\tau_{\calG,\mu}^\Phi\in \calH(G(E_0),\calG(O_{E_0}))_{\Lambda}$ naturally lies in the parahoric Hecke algebra of $\Lambda$-valued functions. 
	
	\begin{theorem}[{\Cref{sec:test-funct-conj-reformulation-implies-test-function-conjecture}}]\label{HK_conjecture_intro}
		For all $\mu$ (not necessarily minuscule), the function $\tau_{\calG,\mu}^\Phi$ lies in the center of $\calH(G(E_0),\calG(O_{E_0}))_{\Lambda}$.
		It is characterized as the unique function in the center that acts on any $\calG(\calO_{E_0})$-spherical smooth irreducible $\Lambda$-representation $\pi$ by the scalar
		\begin{equation}
		\on{trace}\Bigl(s^\Phi(\pi)\;\big|\;  V_{\mu}\Bigr),
		\end{equation}
		where $s^\Phi(\pi)\in [\widehat{G}^{I}\rtimes \Phi]_{\on{ss}}/\widehat{G}^{I}$ is the Satake parameter for $\pi$ and $V_\mu$ the representation of the $L$-group $\widehat G\rtimes \Gamma$ of highest weight $\mu$. 
		Moreover, ${(\sqrt q)}^{\langle2\rho,\mu\rangle}\tau_{\calG,\mu}^\Phi$ takes values in $\bbZ$ and is independent of $\ell\neq p$, $\sqrt q$ and $E\hookto \bar\bbQ_p$.  
	\end{theorem}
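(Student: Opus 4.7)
The plan is to reduce \Cref{HK_conjecture_intro} to two structural inputs made available by the preceding theorems: a Gaitsgory-type centrality statement coming from fusion on the Beilinson--Drinfeld Grassmannian, and the (parahoric-refined) geometric Satake equivalence on the generic fiber. The starting point is the specialization principle established earlier in the paper for well-behaved $p$-adic kimberlites, applied to $\calM_{\calG,\mu}$. Combined with the representability result \Cref{SW_conjecture_intro} and the hyperbolic-localization description of the special fiber, this realizes $\on{R}\!\Psi_{\calM_{\calG,\mu}}(\IC_\mu)$ as a genuine constructible complex on the v-sheaf $\calM_{\calG,\mu}|_{\Spd k} \subset \Fl_\calG^\dia$, so that the alternating trace in \eqref{test_function_intro} is well-defined and the function $\tau_{\calG,\mu}^\Phi$ is left-$\calG(O_{E_0})$-invariant with finite support on $\calG(O_{E_0})$-orbits; in particular $\tau_{\calG,\mu}^\Phi \in \calH(G(E_0),\calG(O_{E_0}))_\Lambda$.

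To prove centrality, I would consider the $2$-leg Beilinson--Drinfeld Grassmannian $\Gr_\calG^{(2)} \to \Spd O_E \times \Spd \bbZ_p$, whose generic fiber factors as $\Gr_{G,\mu} \times \Gr_\calG$ and whose degeneration along the diagonal is the convolution Grassmannian $\Gr_{G,\mu} \,\widetilde{\times}\, \Gr_\calG$. Applying nearby cycles to $\IC_\mu \boxtimes (-)$ and using the fusion-commutativity of the BD Grassmannian (exchange of the two legs), together with the convolution analysis of the specialization morphism carried out earlier, one shows that $\on{R}\!\Psi(\IC_\mu)$ commutes with convolution by arbitrary Iwahori-equivariant sheaves on $\Fl_\calG$. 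Passing to traces of Frobenius via the Grothendieck--Lefschetz formalism for small v-stacks then produces an element of the Bernstein center of $\calH(G(E_0),\calG(O_{E_0}))_\Lambda$ in the sense of Haines \cite{Hai14}.

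For the spectral formula I would invoke the geometric Satake equivalence on the $B_\dR^+$-affine Grassmannian (Fargues--Scholze), under which $\IC_\mu$ corresponds to the irreducible $\widehat G \rtimes \Gamma$-representation $V_\mu$ of highest weight $\mu$. The compatibility of nearby cycles with the Satake equivalence, combined with the Bernstein--Haines description of the parahoric center in terms of functions on the stack $[\widehat G^I\rtimes\Phi]_{\on{ss}}/\widehat G^I$, pins down the scalar by which $\tau_{\calG,\mu}^\Phi$ acts on a spherical $\pi$ as $\on{trace}(s^\Phi(\pi)\,|\,V_\mu)$; uniqueness is immediate since a central element of $\calH(G(E_0),\calG(O_{E_0}))_\Lambda$ is determined by its eigenvalues on unramified principal series. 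The $\bbZ$-valuedness and independence of $\ell$, $\sqrt q$ and the embedding $E \hookrightarrow \bar\bbQ_p$ follow from the weight-zero normalization of $\IC_\mu$ together with the purity of intersection complexes, reducing independence to the standard comparison between $\ell$-adic realizations of motivic nearby cycles.

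The principal obstacle is the centrality step: one needs a $p$-adic analog of Gaitsgory's central-functor theorem realized inside the v-stack framework, which is precisely the convolution-theoretic analysis of the specialization morphism advertised in the abstract. Everything else — the reduction via specialization, the Satake input, and the purity argument for rationality — is expected to be a formal consequence once centrality together with the scheme-theoretic representability \Cref{SW_conjecture_intro} is in place.
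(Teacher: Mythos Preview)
Your centrality argument via fusion on the two-legged Beilinson--Drinfeld Grassmannian is essentially what the paper does in \Cref{nearby_cycles_central}, and that part is fine. However, you misidentify the principal obstacle: centrality is \emph{not} the hard step, and the spectral identification does not follow formally from it.

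The paper's proof of the spectral formula (\Cref{tfc_reformulation}) does not proceed by invoking an abstract ``compatibility of nearby cycles with Satake'' to pin down the scalar. Instead, it uses the injectivity of the constant term homomorphism $\on{ct}_\calP$ on centers of parahoric Hecke algebras, together with the key identity $\on{ct}_\calP(\tau_{\calG,V}^\Phi)=\tau_{\calM,V|_{^LM}}^\Phi$ (which rests on \Cref{prop_commutativity_nearby_cycles_constant_term}), to reduce to a minimal $F$-Levi $M$. When $G$ is anisotropic modulo center, the special fiber of $\calM_{\calG,V}$ has a single $k$-point, and a Grothendieck--Lefschetz argument combined with ULA-ness transfers the computation to the quasi-split inner form $G^*$; for $G^*$ one reduces further via constant terms to a maximal torus, where the identity $\tau_{\calT,V}^\Phi=z_{\calT,V}^\Phi$ is checked directly. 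Your proposal skips this reduction chain entirely, and the phrase ``compatibility of nearby cycles with the Satake equivalence'' does not by itself produce the eigenvalue on a given spherical $\pi$: knowing that $\tau$ is central tells you it is \emph{some} function on Satake parameters, but identifying which one is precisely the content of the constant-term reduction.

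Two further corrections. First, your reliance on the representability \Cref{SW_conjecture_intro} is unnecessary and misleading: the paper proves \Cref{HK_conjecture_intro} for all $\mu$, including non-minuscule ones where $\calM_{\calG,\mu}$ is not a scheme, working entirely in the v-sheaf setting via \Cref{ULA_nearby_cycles} and \Cref{adic_nearby_cycles}. Second, the $\bbZ$-valuedness and independence of $\ell$ are not deduced here from purity or motivic nearby cycles; once the equality $\tau=z$ is known, they are read off from \cite[Theorem~7.15]{HR21}, which establishes these properties for $z_{\calG,V}^\Phi$ on the spectral side.
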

	
	The theorem is a solution to the test function conjecture of Haines--Kottwitz \cite[Conjecture 6.1.1]{Hai14} for v-sheaf local models.
	This is new when $\mu$ is non-minuscule: then $\calM_{\calG,\mu}$ is not representable by a scheme due to the theory of Banach--Colmez spaces and, hence, not related to their schematic counterparts defined in \cite{PZ13, Lev16, Lou19, FHLR22}.
	If $\mu$ is minuscule, then the analogue of \Cref{HK_conjecture_intro} holds for $\calM_{\calG,\mu}^{\on{sch}}$ as well, using \Cref{SW_conjecture_intro}.
	In this case, we can work purely algebraically and replace $\IC_\mu$ by the constant sheaf $\bbQ_\ell$ on the smooth $E$-scheme $\calF_{G,\mu}$.
	Here, our result is new for the wildly ramified groups that were excluded in previous work \cite{PZ13, HR20, HR21}.
	With a view towards applications, say, to point counting formulas in the context of the Langlands--Kottwitz method, we have an analogous result where $E$ is replaced by a finite unramified extension, and correspondingly $E_0$ by its unramified subextension. 
	Also, \Cref{HK_conjecture_intro} implies the analogous version for the semi-simple traces following \cite[Appendix]{HR20}.
	
	\subsection{Explanation on perfectoid spaces, diamonds and $v$-sheaves}\label{v-sheaves-intro}
	The present manuscript relies on the $p$-adic geometric methods notably developed by Scholze \cite{Sch12, SW20, Sch17}.
	For the reader's convenience we give a brief overview of some basic terminology including references.
	
	Perfectoid spaces are the full subcategory of Huber's adic spaces $X$ which can be covered by affinoid adic spaces $\Spa(R,R^+)$ with $R$ a perfectoid Tate ring, see \cite[Definitions~3.1, 3.19]{Sch17}.
	Let $\Perf_{\bbF_p}$ be the category of perfectoid spaces in characteristic $p$ equipped with the v-topology generated by surjective maps subject to some finiteness conditions analogous to the definition of the fpqc topology for schemes, see \cite[Definition~8.1(iii)]{Sch17} (here, the ``v'' in v-topology stands for valuation).
	So, $\Perf_{\bbF_p}$ is a site (here, we choose some cut-off cardinal) which is subcanonical by \cite[Theorem~8.7]{Sch17}. 
	We have the following inclusions of full subcategories of the category of v-sheaves on $\Perf_{\bbF_p}$:
	\[
	\{\text{perfectoid spaces}\} \subset \{\text{diamonds}\} \subset \{\text{small v-sheaves}\}
	\]
	By definition, a diamond (respectively, small v-sheaf) admits a pro-étale cover (respectively, v-cover) by a perfectoid space, see \cite[Definition~11.1, Proposition~11.9, Definition~12.1]{Sch17}.
	In particular, every small v-sheaf $X$ has an underlying topological space $|X|$ given by the topological space of a perfectoid cover modulo the equivalence relation, see \cite[Definition~12.8]{Sch17}.
	The notion of small v-sheaves can be generalized to stacks leading to the notion of small v-stacks. 
	It comes with a theory of derived categories of étale sheaves that satisfy a $6$-functor formalism, see \cite[Introduction]{Sch17}.
	
	One has a functor $X\mapsto X^\dia$ from pre-adic spaces (see \cite[Appendix to Lecture~3]{SW20}) over $\bbZ_p$ to small v-sheaves on $\Perf_{\bbF_p}$, see \cite[Lemma~18.1.1]{SW20} and \cite[Proposition~1.20]{Gle20}.
	There is a surjective map $|X^\dia|\to |X|$ of topological spaces, see \cite[Proposition~18.2.2]{SW20}. 
	Every pre-adic space $X$ admits a decomposition $X^\text{an}\sqcup X^\text{na}$ into an open locus of analytic points and a closed locus of non-analytic points, see \cite[Proposition~1.19, Proposition~1.21(6)]{Gle20} for precise statements.


	If $X=X^\text{an}$ is analytic, then $X^\dia$ is a (locally spatial) diamond and $|X^\dia|\cong |X|$, see \cite[Lemma~15.6]{Sch17}.  
	Schemes $X$ in characteristic $p$ can be considered as discrete (hence, non-analytic) adic spaces via $\Spec R\mapsto \Spa(R,\widetilde \bbF_p)$ (here, $\widetilde \bbF_p$ is the integral closure of $\bbF_p$ in $R$), and we get an associated small v-sheaf $X^\dia$ characterized as the sheafification of the functor $\Spa(R,R^+)\mapsto X(\Spec R)$ on affinoid perfectoid spaces, see \Cref{sec:two-diff-diam-functors}.
	More generally, one can extend this construction for schemes $X$ over $\bbZ_p$. 
	Moreover, there is a second way of associating small v-sheaves to schemes. 
	The second construction is characterized as the sheafification of the functor $\Spa(R,R^+)\mapsto X(\Spec R^+)$, for proper schemes over $\bbZ_p$ the two constructions agree.
The reader is referred to \Cref{sec:two-diff-diam-functors} for a more detailed discussion.

	For an affinoid adic space $\Spa(R,R^+)$ we denote by $\Spd(R,R^+)$, or simply by $\Spd R$ whenever $R^+$ is understood, the associated small v-sheaf $\Spa(R,R^+)^\dia$.
	A basic example is $\Spd \bbZ_p=\Spa(\bbZ_p,\bbZ_p)^\dia$ which is stratified by $\Spd\bbQ_p=\Spa(\bbQ_p,\bbZ_p)^\dia$ and $\Spd\bbF_p=\Spa(\bbF_p,\bbF_p)^\dia$, corresponding to the decomposition into the analytic and non-analytic locus. 
	The diamond $\Spd \bbQ_p$ is studied in \cite[Section~8.4]{SW20}.
	Note that $\Spd \bbF_p$ is the terminal object in the category of v-sheaves on $\Perf_{\bbF_p}$ (that is, the functor sending any $S$ to a point), which however is not a diamond. 
	
	Our main example is the v-sheaf Beilinson--Drinfeld Grassmannian $\Gr_\calG\to \Spd \bbZ_p$ associated with $\calG$ over $\bbZ_p$ as in \Cref{intro-main-results}, see \cite[Definition~20.3.1]{SW20}.
	The map $\Gr_\calG\to \Spd \bbZ_p$ is relatively representable by an ind-(proper, spatial diamond), that is, for every spatial diamond mapping to $\Spd \bbZ_p$ the base change is an increasing union of proper, spatial diamonds with closed immersions as transition maps, see \Cref{SW theorem on parahoric grassmanians} and the references cited there.
	Whereas its generic fiber $\Gr_G:=\Gr_\calG|_{\Spd\bbQ_p}$ is itself an ind-(proper spatial diamond), its special fiber $\Gr_\calG|_{\Spd\bbF_p}=\Fl_\calG^\dia$ is the v-sheaf associated with an ind-(perfectly proper $\bbF_p$-scheme), see \cite[Lectures~19 and 21]{SW20}.
	The manuscript at hand studies $\Gr_\calG$ with the geometric and cohomological techniques outlined in the present subsection. 
	
	\subsection{Strategy of proof}
	The proofs of \Cref{SW_conjecture_intro} and \Cref{HK_conjecture_intro} are purely group-theoretic and do not rely on the classification of reductive groups over local fields. 
	Our method is inspired by a conjecture of He--Pappas--Rapoport \cite[Conjecture 2.13]{HPR20} of which we prove a variant, see \Cref{thm_specialization_local_models}. 
	Following a suggestion in \cite[Introduction]{Lou20}, we approach $\calM_{\calG,\mu}$ by studying its associated \textit{specialization triple}, that is, its generic fiber, its special fiber and the specialization map between them.  
	The basic idea is that $\calM_{\calG,\mu}$ should be uniquely determined by its specialization triple, making the comparison with the to-be-constructed $\calM_{\calG,\mu}^{\on{sch}}$ possible.
	Once there is enough progress towards \Cref{SW_conjecture_intro}, the proof of \Cref{HK_conjecture_intro} roughly follows the method from \cite{HR21}, and the reader is referred to \Cref{test_function_section} for details.
	Along the way, we address further questions and conjectures in the field, notably Zhu's conjecture \cite[Appendix B, Conjecture III]{Zhu17} on the geometry of deperfections of affine Schubert varieties in the Witt vector affine flag variety.

	The proof of \Cref{SW_conjecture_intro} proceeds in the following three steps outlined in the next subsections: 
	\begin{enumerate}
		\item Establish a theory of specialization triples for suitable v-sheaves including local models, see \Cref{section:specialization-triple-intro}. 
		\item Determine the special fibers of v-sheaf and schematic local models, see Sections~\ref{sec:vsheaf_LM_intro} and \ref{sec:schematic_LM_intro}. 
		\item Study the specialization map for local models, see \Cref{sec:specialization_intro}. 
	\end{enumerate}
	\Cref{sec:conclusion_intro} concludes the outline of the proof with a more detailed version of \Cref{SW_conjecture_intro}, see \Cref{SW_conjecture_detailed_intro}.
		
	\subsubsection{Kimberlites and specialization triples}\label{section:specialization-triple-intro}
	Recall from \eqref{definition-local-model-intro} that the v-sheaf local model $\calM_{\calG,\mu} \subset \Gr_{\calG}|_{\Spd O_E}$ is defined as the v-closure of $\Gr_{G,\mu}$, that is, the smallest closed sub-v-sheaf in $\Gr_{\calG}|_{\Spd O_E}$ containing $\Gr_{G,\mu}$, see \Cref{sec:closures-definition-v-sheaf-closure}.
	A basic difficulty is the determination of the underlying topological space $|\calM_{\calG,\mu}|$, that is, to show the density of $|\Gr_{G,\mu}|$ inside $|\calM_{\calG,\mu}|$.
	(We warn the reader that such density results fail in general, see \Cref{density_example}.)
	The following theorem, in particular, shows that the density still holds true in the case of v-sheaf local models:
	
	
	
	\begin{theorem}[\Cref{prop_fully_faith_triples_kimberlites}, \Cref{LM_kimberlite}]
		\label{specialization_triple_intro}
		For all $\mu$ (not necessarily minuscule) the v-sheaf local model $\calM_{\calG,\mu}$ is a quasi-compact, separated, flat $p$-adic kimberlite over $\Spd O_E$ that satisfies the properties of \Cref{defi category K}.
		In addition, the functor sending such kimberlites over $\Spd O_E$ to their specialization triples 
		\begin{equation}
		X\mapsto \big(X_\eta,X^\red, \on{sp}_{\breve{X}}\big)
		\end{equation} 
		is fully faithful.
	\end{theorem}	
	
	Kimberlites are introduced by the second named author in \cite{Gle20} and are a certain class of v-sheaves (containing the v-sheaves associated to flat, projective $O_E$-schemes) that admit a well-behaved theory of specialization maps, see \Cref{sec:formal-theory-v-sheaves-p-adic-kimberlites} for details. 
	In the above result, $X_\eta=X|_{\Spd E}$ is the generic fiber, $X^\red$ is a perfect $k$-scheme that plays the role of the special fiber (see Definitions~\ref{definition-reduced-subsheaf} and \ref{definition p-adic kimberlite}) and $\on{sp}_{\breve{X}}\colon |X_{\breve{\eta}}|\to |X_{\bar{k}}^\red|$ is a continuous map on the underlying topological spaces after base change to $\Spd O_{\breve{E}}$. 
	For v-sheaf local models, $\calM_{\calG,\mu}^\red$ is a union of Schubert perfect $k$-varieties in $\Fl_\calG$ such that its associated v-sheaf is the special fiber $\calM_{\calG,\mu}|_{\Spd k}$.
	Since the generic fiber of $\calM_{\calG,\mu}$ is $\Gr_{G,\mu}$ by definition, it remains to determine the special fiber and the specialization map which we outline in Sections~\ref{sec:vsheaf_LM_intro} and \ref{sec:specialization_intro}.
	
	Moreover, a similar fully faithfulness result for the functor $X\mapsto X^\dia$ from weakly normal flat projective $O_E$-schemes to v-sheaves over $\Spd O_E$ is shown by the third named author in \cite{Lou20}.
	So, \Cref{specialization_triple_intro} shows that the to-be-constructed schematic local model $\calM_{\calG,\mu}^{\mathrm{sch}}$ is uniquely determined by the specialization triple associated to $\calM_{\calG,\mu}^{{\mathrm{sch}},\dia}$.
	Again, its generic fiber is $\Gr_{G,\mu}$ by definition and it remains to determine the special fiber and the specialization map which we outline in Sections~\ref{sec:schematic_LM_intro} and \ref{sec:specialization_intro}.
	We observe that the argument presented in Section~\ref{sec:vsheaf_LM_intro} holds for arbitrary $\mu$, whereas the reasoning in Sections~\ref{sec:schematic_LM_intro} and \ref{sec:specialization_intro} is valid only when $\mu$ is minuscule.
	
	
	\subsubsection{Special fibers of v-sheaf local models}\label{sec:vsheaf_LM_intro}
	The Witt vector partial affine flag variety $\Fl_\calG$ is the increasing union of perfect projective varieties $\Fl_{\calG, w}$ indexed by the double coset of the Iwahori--Weyl group, see \cite{Zhu17, BS17}.
	The $\mu$-admissible locus $\calA_{\calG,\mu}$ in the sense of Kottwitz--Rapoport is defined as the $k$-descent of the $\bar{\bbF}_p$-union
	\begin{equation}
	\calA_{\calG, \bar\bbF_p,\mu}= \bigcup_\lambda \Fl_{\calG, \bar\bbF_p, \la_I(p)},
	\end{equation}
	where $\la$ runs over all absolute Weyl conjugates of $\mu$ and $\la_I(p)$ denotes the associated translation in the Iwahori--Weyl group of $G_{\breve \bbQ_p}$, see \Cref{admissible_locus_definition}.

	\begin{theorem}[\Cref{theorem_special_fiber_admissible}]
		\label{v_special_fiber_intro}
		The special fiber of $\calM_{\calG,\mu}$ is the v-sheaf $\calA_{\calG,\mu}^\dia$ associated with the $\mu$-admissible locus inside $\Fl_\calG^\dia$.
	\end{theorem}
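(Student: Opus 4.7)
By construction both $\calM_{\calG,\mu}|_{\Spd k}$ and $\calA_{\calG,\mu}^\dia$ are closed sub-v-sheaves of $\Fl_\calG^\dia$, so by \Cref{vclosure_intro} it suffices to prove the topological equality $|\calM_{\calG,\mu}|\cap |\Fl_\calG|=|\calA_{\calG,\mu}|$ inside $|\Fl_\calG|$, and by v-descent we may pass to $\bar\bbF_p$. Again by \Cref{vclosure_intro}, $|\calM_{\calG,\mu}|$ is the weakly generalizing closure of $|\Gr_{G,\mu}|$ inside $|\Gr_\calG|_{\Spd O_E}|$, so the left side consists exactly of those points of $|\Fl_{\calG,\bar\bbF_p}|$ which arise as weak specializations of geometric points of $\Gr_{G,\mu}$.

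\textbf{Admissible locus contained in the special fiber.} For each Weyl conjugate $\lambda\in W\mu$, picking a uniformizer $\varpi$ of $\breve E$ and evaluating $\lambda$ on $\varpi$ produces a morphism $\lambda(\varpi)\co\Spd O_{\breve E}\to\Gr_\calG$ whose generic fiber lies in $\Gr_{G,\lambda}\subset\Gr_{G,\mu}$ and whose special fiber is the class of $\lambda_I(p)\in\Fl_\calG(\bar\bbF_p)$. Since the special fiber of $\calM_{\calG,\mu}$ is closed in $\Fl_\calG^\dia$ and contains $\lambda_I(p)$, it contains the entire Schubert variety $\Fl_{\calG,\lambda_I(p),\bar\bbF_p}$. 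Unioning over $\lambda\in W\mu$ yields the inclusion $|\calA_{\calG,\mu,\bar\bbF_p}|\subset |\calM_{\calG,\mu}|\cap |\Fl_{\calG,\bar\bbF_p}|$.

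\textbf{Special fiber contained in the admissible locus.} For the reverse inclusion, I would exploit the semi-infinite/attractor stratification of $\Gr_\calG$ afforded by a regular cocharacter of a maximal torus: on the generic fiber this is the Mirković-Vilonen type decomposition indexed by the coweight lattice $X_*(T)$, and the stratum labeled by $\lambda$ meets $\Gr_{G,\mu}$ only if $\lambda$ lies in the convex hull of $W\mu$, with generic stratum corresponding to $\lambda\in W\mu$; on the special fiber the same torus induces the semi-infinite decomposition of $\Fl_\calG$ indexed by Iwahori translations $\lambda_I(p)$. Compatibility of the two decompositions across $\Spd O_E$ then forces any specialization of a geometric point of the generic stratum $\Gr_{G,\lambda}^\circ$ to land in the special stratum $\Fl_{\calG,\lambda_I(p)}^\circ$, hence any specialization of a point of $\Gr_{G,\mu}$ into $\bigcup_{w\leq\lambda_I(p),\,\lambda\in W\mu}\Fl_{\calG,w}=\calA_{\calG,\mu,\bar\bbF_p}$ upon passage to closures via the Bruhat order on the Iwahori-Weyl group.

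\textbf{Main obstacle.} The delicate step is matching the attractor decompositions on the generic and special fibers of $\Gr_\calG$ in the v-sheaf setting, and in particular ensuring that the generic stratum labeled by $\lambda$ specializes into the special stratum labeled by $\lambda_I(p)$ rather than into some unrelated component. This is where the hyperbolic localization machinery for small v-stacks announced in the abstract enters: a Braden-type theorem provides the required compatibility of attractor loci across $\Spd O_E$ and supplies the combinatorial indexing of strata. An alternative, more direct route, is to analyze specializations via the kimberlite structure of $\calM_{\calG,\mu}$ together with a comparison between $B_\dR^+$-lattices and Witt vector lattices at the level of rank-one geometric points; this avoids cohomology but requires careful bookkeeping of the specialization map furnished by \Cref{specialization_triple_intro}.
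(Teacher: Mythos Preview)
Your setup and the easy inclusion are correct and match the paper. The hard inclusion, however, has a genuine gap: you conflate two distinct stratifications. The Schubert stratification of $\Fl_{\calG,\bar k}$ is by $L^+_{\bar k}\calG$-orbits $\Fl_{\calG,w}^\circ$, whereas the semi-infinite stratification is by $L_{\bar k}U$-orbits $\calS_w$. The attractor compatibility you invoke (which the paper does prove as \Cref{BD.attractor.groups.comparison.theorem}) indeed shows that a point in the generic semi-infinite orbit $\calS_\lambda$ specializes into the special semi-infinite orbit $\calS_{\lambda_I}$. But $\calS_{\lambda_I}$ is an infinite-dimensional ind-scheme that meets \emph{every} sufficiently large Schubert variety; knowing that $x\in\calS_{\lambda_I}$ for some weight $\lambda$ of $V_\mu$ places no bound whatsoever on the Schubert cell containing $x$. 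Your conclusion ``$x\in\bigcup_{w\leq\lambda_I(p)}\Fl_{\calG,w}$'' simply does not follow from ``$x\in\calS_{\lambda_I}$'', and your passage ``upon closures via the Bruhat order'' is where the argument breaks.

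The paper's proof is cohomological and runs in the opposite direction. It does not track where individual points specialize; instead it shows that if $w$ is a \emph{maximal} Schubert stratum in the special fiber, then the stalk of the nearby cycles $\Psi_\calG(\Sat(V_\mu))$ at $w$ is nonzero---this uses topological density of the generic fiber (\Cref{group acts on local models}) together with a thickening by the punctured open unit disc to produce a nonvanishing $R^0j_*\Lambda$. On the other hand, the commutation $\on{CT}_\calB\circ\Psi_\calG\cong\Psi_\calT\circ\on{CT}_B$ (\Cref{prop_commutativity_nearby_cycles_constant_term}) combined with geometric Satake forces the constant term to be supported only on the translation points $\lambda_I(\pi)$ for $\lambda$ a weight of $V_\mu$. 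Choosing the Borel via \Cref{lemma_attractor_schubert_isolated} so that $\calS_w\cap\Fl_{\calG,w}=\{w\}$, the stalk at $w$ becomes a summand of the constant term at the fixed-point component $w$; its nonvanishing forces $w=\lambda_I(\pi)$ for some weight $\lambda$, whence $\Fl_{\calG,w}\subset\calA_{\calG,\mu}$ by \Cref{representation_theoretic_admissible_locus_lemma}. Thus the attractor diagram is used not to match strata across the fibers, but to make hyperbolic localization available so that the \emph{support} of nearby cycles can be computed representation-theoretically.
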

	
	The difficulty in determining the special fiber lies in the rather abstract definition of $\calM_{\calG,\mu}$ via closure operations inside $\Gr_\calG$. 
	In a different context, \cite[Theorem 6.12]{HR21} determines such special fibers based on cohomological considerations by calculating the support of the nearby cycles $\Psi_{\calM_{\calG,\mu}}\IC_\mu$ appearing in \eqref{test_function_intro}.
	A key input is the study of $\bbG_m$-actions on $\Gr_\calG$ coming from the choice of a cocharacter $\lambda\co \bbG_m\r \calG$.
	Let $\calM$, respectively $\calP$ the closed subgroup scheme of $\calG$ with Lie algebra the subspace of $\Lie\, \calG$ with weights ${\lambda=0}$, respectively ${\lambda\geq 0}$.
	This induces maps $\calM\gets \calP\r \calG$ of $\bbZ_p$-group schemes and, by functoriality, also of the associated Beilinson--Drinfeld Grassmannians.
	The geometric input towards \Cref{v_special_fiber_intro} is the following result which extends \cite[Chapter VI.3]{FS21} beyond the case where $\calG$ is reductive:
	
	\begin{theorem}[\Cref{BD.attractor.groups.comparison.theorem}]
		The $\bbG_{m}^\dia$-action on $\Gr_{\calG}$ via $\la$ induces a commutative diagram of v-sheaves
		\begin{equation}\label{Gm_diagram_intro}
		\begin{tikzpicture}[baseline=(current  bounding  box.center)]
		\matrix(a)[matrix of math nodes, 
		row sep=1.5em, column sep=2em, 
		text height=1.5ex, text depth=0.45ex] 
		{\Gr_\calM & \Gr_{\calP} & \Gr_\calG \\ 
			(\Gr_{\calG})^0& (\Gr_{\calG})^+& \Gr_{\calG}, \\}; 
		\path[->](a-1-2) edge node[above] {}  (a-1-1);
		\path[->](a-1-2) edge node[above] {}  (a-1-3);
		\path[->](a-2-2) edge node[below] {}  (a-2-1);
		\path[->](a-2-2) edge node[below] {} (a-2-3);
		\path[->](a-1-1) edge node[left] {} (a-2-1);
		\path[->](a-1-2) edge node[left] {} (a-2-2);
		\path[->](a-1-3) edge node[left] {$\id$} (a-2-3);
		\end{tikzpicture}
		\end{equation}
		such that the vertical arrows are open and closed immersions that induce isomorphisms over $\Spd \bbQ_p$. 
		If, in addition, $\calG$ is special parahoric (for example, reductive) over $\breve \bbZ_p$, then the vertical arrows are also isomorphisms over $\Spd \bbF_p$.	
	\end{theorem}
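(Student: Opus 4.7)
My plan is to first construct the vertical arrows by functoriality and universal properties, then reduce the two isomorphism claims to the classical reductive/affine flag variety theory, and finally upgrade the open immersion statements to open-and-closed immersions using a hyperbolic-decomposition argument.

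For the construction, I would begin from the group-theoretic input that $\calM\hookleftarrow\calP\hookrightarrow \calG$ are closed immersions of smooth affine $\bbZ_p$-group schemes with $\calP$ the attractor and $\calM$ the fixer of $\calG$ under $\bbG_m$-conjugation via $\lambda$. This is a dynamic-method statement over a general base and can be verified after passing to (completions of) Henselizations of the structure ring, where standard arguments (cf.\ Conrad--Gabber--Prasad) apply, because $\calG$ is smooth. Functoriality of the Beilinson--Drinfeld Grassmannian construction then yields the top row, and equivariance turns the maps $\Gr_\calM\to\Gr_\calG$ and $\Gr_\calP\to\Gr_\calG$ into morphisms landing in $(\Gr_\calG)^0$ and $(\Gr_\calG)^+$ respectively: the former because $\lambda$ commutes with $\calM$ (so the $\bbG_m$-action on $\Gr_\calM$ is trivial), the latter because the conjugation action on $\calP$ extends to an $\bbA^1$-monoid action, inducing an $\bbA^1$-family landing in $\Gr_\calG$ for each $R$-point of $\Gr_\calP$.

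Next, to check the comparison on $\Spd \bbQ_p$, I would invoke \cite[Chapter VI.3]{FS21} directly: over the generic fiber $\calG$ is reductive and the BD Grassmannian becomes the $B_\dR^+$-affine Grassmannian, where hyperbolic localization and the identification of attractor/fixer with $\Gr_P$ and $\Gr_M$ are already established. To extend to the special fiber in the special parahoric case, I would exploit the Cartan/Iwasawa decomposition for the Witt vector loop group $L\calG$: a special parahoric admits a decomposition whose strata are controlled by $X_*(T)$, and Schubert-theoretic arguments as in \cite{Zhu17, BS17} identify $\Gr_\calP^\dia\to (\Fl_\calG^\dia)^+$ and $\Gr_\calM^\dia \to(\Fl_\calG^\dia)^0$ with the $\lambda$-attractor and $\lambda$-fixer stratum by stratum. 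The speciality hypothesis is used to ensure that the attractor exhausts the flag variety; this fails for non-special parahorics because there the stratification is indexed by a coset that mixes with non-attracting translations, which is why the theorem only claims open-closedness (and not bijectivity) on the special fiber in general.

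For the open-and-closed immersion assertion over the full base $\Spd \bbZ_p$, I would approach the problem by checking the statement after pulling back to the perfectoid test objects defining the indscheme structure, reducing to a statement for each affine Schubert variety. At the level of a fixed spatial diamond, the map $\Gr_\calP\to(\Gr_\calG)^+$ is the restriction to $\bbG_m$-attracting locus of the orbit map, and the classical BB argument (monotonicity of $\lambda$-weights under specialization of $\bbA^1$) shows that the preimage of $\Gr_\calP$ is closed; openness follows once we check it on geometric points, where the reductive/special parahoric cases handled above suffice because every geometric point has residue field of characteristic either $0$ or $p$.

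The hardest step, I expect, is the identification of the $\bbG_m$-fixer and attractor with $\Gr_\calM$ and $\Gr_\calP$ as honest ind-(spatial diamonds), and not merely as sub-v-sheaves on geometric points. This requires a uniform treatment of the dynamic method for not-necessarily-reductive smooth group schemes and a careful control of the inductive system defining $\Gr_\calG\to\Spd\bbZ_p$ under the Cartier-type divisor degenerating from characteristic zero to characteristic $p$; the situation is delicate precisely because the attractor formation does not commute with v-closure on the ind-scheme in general, so one must isolate those Schubert varieties on which both operations agree and assemble them compatibly in $\lambda$.
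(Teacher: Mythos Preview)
Your construction of the vertical arrows and the verification over $\Spd\bbQ_p$ via \cite[VI.3]{FS21} match the paper. Your treatment of the special parahoric case via the Iwasawa decomposition is also essentially what the paper does.

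However, there is a genuine gap in your argument for the open-and-closed immersion property when $\calG$ is a general (non-special) parahoric. You write that ``openness follows once we check it on geometric points, where the reductive/special parahoric cases handled above suffice because every geometric point has residue field of characteristic either $0$ or $p$.'' But over $\Spd\bbF_p$ the maps $\iota^0,\iota^+$ are \emph{not} bijective on geometric points for general parahorics---that is precisely why the statement only claims open-and-closedness and not isomorphism in that regime. So reducing to geometric points cannot by itself establish openness, and your ``monotonicity of $\lambda$-weights under specialization'' heuristic does not supply the missing input.

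The paper's approach to this step is quite different and is the key idea you are missing. After first showing that $\iota^0,\iota^+$ are \emph{locally closed} immersions (by embedding $\calG\hookrightarrow\GL_{n,O}$ with quasi-affine quotient, using that $\Gr_{\calP'}\cong(\Gr_{\GL_{n,O}})^+$ from \cite{FS21}, and checking that $\calP'/\calP$ is quasi-affine), the paper passes to connected components: proper base change gives $\pi_0((\Gr_\calG)^+)\cong\pi_0((\Gr_\calG)^0)\cong\pi_0((\Fl_\calG)^0)$, and the latter is computed explicitly via the semi-infinite orbit stratification of $\Fl_\calG$, indexed by $W_{M,\mathrm{af}}\backslash\tilde W/W_\calG$. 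The image of $\pi_0(\Fl_\calM)$ corresponds to the subset $W_{M,\mathrm{af}}\backslash\tilde W_M/W_\calM$, so one can isolate the union $\calC^0_\calG$ (resp.\ $\calC^+_\calG$) of exactly those components; the locally closed immersions then land in these open-and-closed pieces and are bijective onto them on geometric points, hence isomorphisms onto open-and-closed sub-v-sheaves. Your proposal does not contain this connected-component analysis, and without it the general case does not go through.

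A secondary point: your construction of $\iota^+$ via the $\bbA^1$-monoid action on $\calP$ gives a map $\Gr_\calP\to(\Gr_\calG)^+$, but it is not clear from your sketch why this map is even a locally closed immersion. The paper's $\GL_n$-embedding trick is what supplies this.
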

	
	Here, $(\Gr_{\calG})^0$ is the fixed points locus for the $\bbG_{m}^\dia$-action and $(\Gr_{\calG})^+$ its attractor locus, which is loosely speaking the union of the strata of points flowing into the fixed points, see also \cite[discussion above Theorem~I.6.5]{FS21}.
	Both v-sheaves are representable by increasing unions of spatial diamonds relative over $\Spd \bbZ_p$ in the sense of \cite[Definition~13.3]{Sch17}, see \Cref{lemma-representability-attractor}. 
	
	If $\lambda$ is regular, then the special fiber of $(\Gr_\calG)^+$ consists of the v-sheaves associated to the semi-infinite orbits $\calS_w$ inside $\Fl_\calG$ indexed by certain cosets of the Iwahori--Weyl group, see \eqref{semi_infinite_stratification}.  
	By comparison, $\Gr_\calP$ corresponds to those semi-infinite orbits attached to translation elements.  
	The following proposition generalizes the closure relation of semi-infinite orbits and the equi-dimensionality of Mirkovi\'c--Vilonen cycles \cite[Theorem 3.2]{MV07} from split groups to twisted groups, questions left open in the context of the ramified Satake equivalence \cite{Zhu15, Ric16}:
	
	\begin{proposition}[\Cref{prop_closure_relations_semi_infinite_orbits}, \Cref{equidimensionality.MV.cycles.lemma}]
		For a regular coweight $\la$ and the induced semi-infinite orbits, there is an equality inside $\Fl_\calG$:
		\begin{equation}
		\overline{\calS_w}= \bigcup_v \calS_v,
		\end{equation}
		where $v$ runs through all elements less than or equal to $w$ in Lusztig's semi-infinite Bruhat order. 
		If $\calG$ is special parahoric (for example, reductive) over $\breve \bbZ_p$, then the non-empty intersections $\calS_{\nu_I} \cap \Fl_{\calG,\mu_I}$ are equi-dimensional of dimension $\langle \rho, \nu+\mu\rangle$.
	\end{proposition}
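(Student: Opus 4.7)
My plan is to exploit the attractor--fixer diagram \eqref{Gm_diagram_intro} to reduce both claims to statements about ordinary Schubert varieties in $\Fl_\calM$, where $\calM$ is the Levi attached to the regular coweight $\la$. First, by base-changing from $\bbZ_p$ to $\breve\bbZ_p$ and using Galois descent (semi-infinite orbits, their closures, and MV cycles are Galois-stable as subsets of $\Fl_\calG$), I may assume throughout that the residue field is $\bar\bbF_p$. In this setting, when $\calG$ is very special, $\calG_{\bar\bbF_p}$ is connected reductive; for the closure statement alone it suffices that $\calG_{\bar\bbF_p}$ be parahoric of a quasi-split group.

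For the closure of $\calS_w$, the attractor--fixer theorem identifies the semi-infinite orbits as the pieces of the attractor $(\Fl_\calG)^+$ lying over the components of the fixed locus $\Fl_\calG^0=\Fl_\calM$, one for each element $w$ of the Iwahori--Weyl group. By the standard Białynicki-Birula correspondence applied inside the open and closed immersion $(\Fl_\calG)^+ \hookto \Fl_\calG$, the closure of $\calS_w$ inside $\Fl_\calG$ equals the union of attractor orbits lying over the closure of the $w$-stratum inside $\Fl_\calM$. Closures in $\Fl_\calM$ are governed by the ordinary Bruhat order on the Iwahori--Weyl group of $\calM$, and Lusztig's definition of the semi-infinite Bruhat order on the Iwahori--Weyl group of $\calG$ is designed so that the condition $v\leq w$ there translates exactly to this Bruhat closure condition in $\Fl_\calM$ (intersected with the appropriate attractor stratum).

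For equi-dimensionality, assume $\calG$ is very special. Then the affine Schubert variety $\Fl_{\calG,\mu_I}$ has dimension $\langle 2\rho,\mu\rangle$ by the standard length formula for parahoric double cosets, and the restriction of the attractor map $(\Fl_\calG)^+\to \Fl_\calM$ to $\calS_{\nu_I}\cap \Fl_{\calG,\mu_I}$ has fibers governed by the dimension of the corresponding unipotent orbits. Following \cite[Theorem 3.2]{MV07}, I would establish the upper bound $\dim(\calS_{\nu_I}\cap \Fl_{\calG,\mu_I})\leq \langle\rho,\nu+\mu\rangle$ by combining the fiber-dimension count with a Cartan-type decomposition in $\Fl_\calM$. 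For the matching lower bound, I would use the Białynicki-Birula cellular decomposition of $\Fl_{\calG,\mu_I}$ into attractor cells indexed by appropriate $\nu$; each non-empty piece must then be equi-dimensional of exactly the expected dimension, since otherwise the total dimension of $\Fl_{\calG,\mu_I}$ would be strictly smaller than $\langle 2\rho,\mu\rangle$.

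The main obstacle I foresee is the twisted situation: when passing from the split group over $\bar\bbF_p$ to the quasi-split one fixed by Frobenius, irreducible components of the split-group MV cycle may be permuted by Galois, and a priori the dimension of Galois fixed points could drop. The very special hypothesis is designed precisely to prevent this, by ensuring that the attractor stratification descends well: the special fiber of $(\Gr_\calG)^+$ matches $(\Fl_\calG)^+$ as in \eqref{Gm_diagram_intro}, and the compatibility of the two attractor pictures forces the dimension of each $\calS_{\nu_I}\cap \Fl_{\calG,\mu_I}$ to equal the expected $\langle \rho,\nu+\mu\rangle$. Verifying this compatibility carefully, and checking that Lusztig's combinatorial semi-infinite order really matches the geometric closure, are the points that will require most care.
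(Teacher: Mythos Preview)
Your proposal has a fundamental gap that undermines both parts. When $\la$ is regular, the Levi $M$ equals the maximal torus $T$, so $\Fl_\calM = \Fl_\calT$ is zero-dimensional: it is a disjoint union of copies of $\Spec \bar k$ indexed by $X_\ast(T)_I$ (or more generally by $\tilde W/W_\calG$). There is no non-trivial closure relation or Bruhat order on $\Fl_\calM$ to transport back to $\Fl_\calG$, and your proposed reduction ``closures in $\Fl_\calM$ are governed by the ordinary Bruhat order on the Iwahori--Weyl group of $\calM$'' is vacuous. Relatedly, the map $(\Fl_\calG)^+ \to \Fl_\calG$ is \emph{not} an open and closed immersion: the attractor cells $\calS_w$ are only locally closed in $\Fl_\calG$, and determining their closures is exactly the content of the proposition. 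The Bia{\l}ynicki-Birula formalism gives you a decomposition into locally closed pieces, but it does not tell you which pieces lie in the closure of a given one without further input.

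The paper's argument for the closure relation is entirely different and does not pass through $\Fl_\calM$ at all. Instead it uses left translation by $\nu_I(\pi)$ for $\nu_I$ sufficiently dominant: this carries $L_kU$-orbits to $L_kU$-orbits, and for $\nu_I \gg 0$ it moves any bounded piece of $L_kU$ inside $L^+_k\calI$, converting the semi-infinite orbit containment $\calS_v \subset \overline{\calS_w}$ into an ordinary Bruhat inequality $\nu_I(\pi)\cdot v \leq \nu_I(\pi)\cdot w$ inside $\Fl_\calG$ itself. This is precisely the defining condition of Lusztig's order $\leq^{\infty/2}$. For equi-dimensionality, the paper does use the attractor picture, but the key point is that each $\calS_{\nu_I}\cap \Fl_{\calG,\mu_I}$ is \emph{affine} (because the Bia{\l}ynicki-Birula map to the point $\nu_I(\pi)$ is affine), and then one invokes the argument of \cite[Corollary VI.3.8]{FS21}: affineness of all strata plus the closure relations just proved force equi-dimensionality once one checks that the bottom cell $\calS_{\mu_I^{\mathrm{anti}}}\cap \Fl_{\calG,\mu_I}$ is a single point, which follows from \Cref{lemma_attractor_schubert_isolated}. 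Your discussion of Galois descent and components being permuted is not relevant here, since one works over $\bar k$ throughout and the statement concerns geometric dimensions.
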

	
	Here, $v\leq^{\infty\over 2} w$ holds in Lusztig's semi-infinite Bruhat if and only if $v\leq w$ holds in the Bruhat order after translation by a sufficiently dominant cocharacter, see \eqref{equation-semiinifinite-bruhat}.
	
	Let us now explain the cohomological results going into \Cref{v_special_fiber_intro}. 
	As there is no general theory of nearby cycles for v-sheaves, we develop the foundational results for the Hecke stack.
	Fix a prime number $\ell\neq p$ and let $\Lambda$ be one of the rings $\bbZ/\ell^n$, $\bbZ_\ell$ or $\bbQ_\ell$ for some $n\geq 0$. 
	For a small v-stack $X$, let $\D(X,\Lambda)$ be the derived category of \'etale $\Lambda$-sheaves on $X$ as defined in \cite{Sch17}, where for $\Lambda=\bbQ_\ell$ we additionally invert $\ell$ in the adic formalism developed in \cite[Section 26]{Sch17}.
	We consider the inclusion of the geometric generic, respectively geometric special fiber into the integral Hecke stack
	\begin{equation}
	\Hk_{G}|_{\Spd \bbC_p} \xrightarrow{j} \Hk_\calG|_{\Spd O_{\bbC_p}} \xleftarrow{i} \Hk_{\calG}|_{\Spd \bar\bbF_p},
	\end{equation}
	see Section~\ref{section_local_model} for the definition of Hecke stacks. 
	The following result is inspired by work of Hansen--Scholze \cite{HS21} for schemes and proves constructibility of nearby cycles in our context:
	
	\begin{theorem}[\Cref{prop_ula_special_fiber}, \Cref{ULA_nearby_cycles}]
		\label{ula_intro}
		The pullback functor 
		\begin{equation}
		j^*\co \D\big(\Hk_\calG|_{\Spd O_{\bbC_p}},\Lambda\big) \r \D\big(\Hk_{G}|_{\Spd\bbC_p},\Lambda\big)
		\end{equation}
		induces an equivalence on the full subcategories of universally locally acyclic sheaves with bounded support, with inverse given by the derived push forward $Rj_*$.
		Consequently, the nearby cycles functor $\Psi_\calG:=i^*\circ Rj_*$ restricts to a functor
		\begin{equation}
		\D\big(\Hk_{G}|_{\Spd\bbC_p},\Lambda\big)^{\on{bd}, \on{ula}} \r \D\big(\Hk_{\calG}|_{\Spd\bar\bbF_p},\Lambda\big)^{\on{bd}, \on{ula}}.
		\end{equation}
		Furthermore, the target category is equivalent to the derived category on the schematic Witt vector Hecke stack $\D_{\on{cons}}\big(\Hk_{\calG}^{\on{sch}}|_{\Spec\,\bar\bbF_p},\Lambda\big)^{\on{bd}}$ of perfect-constructible sheaves with bounded support (see \Cref{sec:comp-etale-cohom}). 
	\end{theorem}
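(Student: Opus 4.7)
The plan is to establish the theorem in three steps: show that $(j^*, Rj_*)$ restrict to mutually inverse equivalences of the ULA bounded-support categories; deduce formally that $\Psi_\calG = i^* \circ Rj_*$ preserves ULA bounded-support; and compare the v-sheaf category on the special fiber with the schematic constructible category on the Witt vector Hecke stack. Throughout, the key devices are the quotient presentation of $\Hk_\calG$ by the positive loop group acting on $\Gr_\calG$, the stratification of $\Gr_\calG|_{\Spd O_{\bbC_p}}$ by closed local models $\calM_{\calG,\mu}$, and the kimberlite-theoretic specialization machinery established earlier in the paper.

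For the first step, $j$ being an open immersion gives automatically that the counit $j^*Rj_* \to \id$ is an equivalence and that $j^*$ preserves ULA bounded-support; the content is that $Rj_*$ also preserves this subcategory and that the unit $\id \to Rj_*j^*$ is an equivalence on it. I would first descend via the pro-smooth positive-loop-group torsor $\Gr_\calG \to \Hk_\calG$ to the analogous assertion on $\Gr_\calG|_{\Spd O_{\bbC_p}}$. Any ULA bounded-support sheaf there is supported on some closed affine Schubert variety, which after translation agrees with a local model $\calM_{\calG,\mu}$; by \Cref{LM_kimberlite} this is a well-behaved $p$-adic kimberlite over $\Spd O_E$, on which the specialization morphism is universally open and partially proper and identifies the complement of the generic fiber with the special fiber. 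The unit map is then verified to be an equivalence by inspection on v-geometric points through the specialization triple: for a ULA input the generic fiber determines the sheaf up to stalks along specializations, and the kimberlite structure reduces the calculation of these stalks to a nearby-cycles computation purely on the generic side, in the spirit of the schematic arguments of Hansen--Scholze \cite{HS21}.

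Step two is essentially formal: since $j^*$ and $Rj_*$ are mutually inverse on the ULA bounded-support subcategories, $\Psi_\calG = i^* \circ Rj_*$ is automatically defined there, and $i^*$ preserves the subcategory because the inclusion of the special fiber of a well-behaved kimberlite is closed with complementary open equal to the generic fiber. For step three, one uses that the functor $X \mapsto X^\dia$ from perfect $\bar\bbF_p$-schemes of finite presentation to small v-sheaves is fully faithful with étale-equivalent source and target; applying this to the Witt vector affine flag variety and passing to the quotient by the positive loop group matches $\Hk_\calG^{\on{sch}}|_{\Spec \bar\bbF_p}$ with $\Hk_\calG|_{\Spd \bar\bbF_p}$, giving the claimed equivalence on bounded constructible sheaves. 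The hardest point is the first step: verifying that $Rj_*$ preserves ULA on $\Gr_\calG|_{\Spd O_{\bbC_p}}$ and that the unit is an isomorphism lies outside the purely schematic setting, and here the well-behaved kimberlite structure of the local models is indispensable, combined with a stratified analysis that propagates constructibility along the specialization morphism.
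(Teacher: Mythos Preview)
Your three-step decomposition is reasonable, and your treatment of steps two and three is broadly in the right direction. But step one contains a genuine gap, and your proposed mechanism is not the one the paper uses.

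You propose to verify that $Rj_*$ preserves ULA and that the unit $\id \to Rj_*j^*$ is an equivalence by exploiting the kimberlite structure of $\calM_{\calG,\mu}$ and ``inspection on v-geometric points through the specialization triple.'' This does not work: the specialization triple $(X_\eta, X_s, \on{sp}_X)$ encodes formal-geometric data, not the sheaf-theoretic information needed to compute $Rj_*$ or to verify the ULA condition. There is no general statement that $Rj_*$ on a well-behaved $p$-adic kimberlite preserves ULA, and the schematic Hansen--Scholze argument you invoke ``in spirit'' relies on algebraic inputs (finite presentation over $O_C$, excellence) that have no analogue for $\calM_{\calG,\mu}$ beyond the minuscule case --- and the theorem must cover all $\mu$.

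The paper's argument is entirely different and does not use the kimberlite machinery at all. Full faithfulness of $j^*$ on ULA objects follows from a short duality computation reducing to the identity $Rj_*\Lambda = \Lambda$ on $\Spd O_{\bbC_p}$. For essential surjectivity --- that $Rj_*A$ is ULA integrally when $A$ is ULA on the generic fiber --- the paper uses the $\bbG_m^\diamondsuit$-actions on $\Gr_\calG$ coming from cocharacters of $\calS$. The cone of the ULA-characterizing map \cite[Theorem IV.2.23]{FS21} for $Rj_*A$ is supported on the special fiber; conservativity of the constant term functors $\on{CT}_\calB$ over $\bar k$ (\Cref{prop_conservative_constant_term}) then reduces to showing that $\on{CT}_\calB(Rj_*A) \cong Rj_*\on{CT}_B(A)$ is ULA. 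But the fixed-point locus $\Gr_{\calG,O_C}^0$ for a regular cocharacter is ind-representable by a scheme of relative dimension zero over $O_C$, and on such a scheme Hansen--Scholze's result \cite[Theorem 1.7]{HS21} applies on the nose. In other words, the paper does not adapt the Hansen--Scholze argument to kimberlites; it uses hyperbolic localization to reduce to an honest zero-dimensional scheme where the schematic result already holds.

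For step three, your outline is correct in spirit but elides the nontrivial content of \Cref{sec:comp-etale-cohom}: one must check that the comparison functor $c_X^*$ is an equivalence (not merely fully faithful) on the relevant quotient stacks, which requires an induction on Schubert strata and an explicit argument for classifying stacks of the stabilizers.
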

	
	The intersection complex $\IC_\mu$ descends along $\Gr_G\r \Hk_G$ and defines an object in $\D(\Hk_{G}|_{\Spd \bbC_p},\Lambda)^{\on{bd}, \on{ula}}$. 
	So \Cref{ula_intro} implies constructibility of $\Psi_{\calM_{\calG,\mu}}\IC_\mu$.
	To compute the support, we use that nearby cycles commute with the constant term functor defined by the generic and special fibers of diagram \eqref{Gm_diagram_intro}:
	\begin{equation}\label{constant_term_intro}
	\on{CT}_\calP\circ \Psi_{\calG} \cong \Psi_\calM\circ \on{CT}_P,
	\end{equation}
	see \Cref{prop_commutativity_nearby_cycles_constant_term} for details.
	If $\lambda$ is regular, then $\calM$ is the connected Néron model of a torus so that the right hand side of \eqref{constant_term_intro} can be computed in representation theoretic terms using the geometric Satake equivalence for the $B_\dR^+$-affine Grassmannian \cite[Chapter VI]{FS21}. 
	In down to earth terms, we are able to determine the compactly supported cohomology of $\Psi_{\calM_{\calG,\mu}}\IC_\mu$ on the stratification given by the semi-infinite orbits.
	For carefully chosen $\lambda$ (see \Cref{lemma_attractor_schubert_isolated}), this is enough together with the density of generic fibers (\Cref{section:specialization-triple-intro}) to deduce \Cref{v_special_fiber_intro}, see \Cref{theorem_special_fiber_admissible}.
	
	\subsubsection{Special fibers of schematic local models}\label{sec:schematic_LM_intro}
	In this subsection, we assume that $\mu$ is minuscule. 
	The construction of schematic local models \cite{PZ13, Lev16, Lou20, FHLR22} relies on lifting $\calG$ to a group scheme $\underline \calG$ over $\breve\bbZ_p[\![t]\!]$ equipped with isomorphisms
	\begin{equation}
	\underline \calG \otimes_{\breve\bbZ_p[\![t]\!], t\mapsto p}\breve\bbZ_p \simeq \calG \otimes \breve\bbZ_p, \;\;\;\;\; \underline \calG \otimes \breve\bbQ_p[\![t-p]\!] \simeq G\otimes \breve\bbQ_p[\![t-p]\!].
	\end{equation}
	Let us temporarily denote by $\calN_{\underline\calG,\mu}^{\on{sch}}$ the weak normalization of the closure of $\calF_{G,\mu}|_{\Spec\,\breve E}$ inside the schematic Beilinson--Drinfeld Grassmannian $\Gr_{\underline \calG}|_{\Spec\, O_{\breve E}}$ attached to $\underline \calG$, a weakly normal, flat, projective $O_{\breve E}$-scheme.
	As we explain in \Cref{sec:specialization_intro}, this is, in almost all cases, isomorphic to the base changed local model $\calM_{\calG,\mu}^{{\mathrm{sch}}}|_{\Spec\, O_{\breve E}}$ from \Cref{SW_conjecture_intro}, except in the aforementioned cases where $p=2,3$.
	The most general group lifts $\underline \calG$ are constructed in \cite{FHLR22}, based on \cite{Lou20}, under the following assumption: 
	
	\begin{assumption}\label{hyp_wild_odd_unitary}
		If $p=2$, then $G_{\ad}$ has no odd unitary $\breve{\bbQ}_2$-factors.
	\end{assumption}
	
	The reason for its appearance is the difficult structure of the integral root groups inside $\calG\otimes \breve \bbZ_p$ in the wildly ramified, odd unitary case for $p=2$.
	More precisely, quadratic field extensions of $\breve{\bbQ}_2$ fall into two classes: square roots of uniformizers and of units.  
	The first class is handled in \cite{Lou20}, leading to the milder assumption for $p=2$ as discussed in the text after the statement of \Cref{SW_conjecture_intro}. 
	It is the second class that appears most difficult. 
	
	The determination of the special fiber of $\calN_{\underline\calG,\mu}^{\on{sch}}$ relies on the coherence conjecture of Pappas--Rapoport proved by Zhu \cite{Zhu14}.
	The following result is the version in our context, and moreover, confirms \cite[Appendix B, Conjecture III]{Zhu17} for the Schubert varieties in the $\mu$-admissible locus:	
	
	\begin{theorem}[\Cref{theorem_coherence_allp}]
		\label{deperf_admissible_locus_intro}
		Under \Cref{hyp_wild_odd_unitary}, the canonical deperfection $\calA_{\calG,\mu}^{\on{can}}$ of the $\mu$-admissible locus is Cohen--Macaulay and its components are compatibly Frobenius-split.
		Moreover, for every ample line bundle $\calL$ on $\calA_{\calG,\mu}^{\on{can}}$, there is an equality
		\begin{equation}
		\dim_{k} \on{H}^0(\calA_{\calG,\mu}^{\on{can}}, \calL)=\dim_E \on{H}^0(\calF_{G,\mu}, \calO(c)),
		\end{equation}
		where $c\in \bbZ$ denotes the central charge of $\calL$ and $\calO(c)$ the $c$-th multiple of the ample generator of $\Pic(\calF_{G,\mu})$.
	\end{theorem}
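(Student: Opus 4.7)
The plan is to reduce \Cref{theorem_coherence_allp} to Zhu's equal characteristic coherence theorem \cite{Zhu14}, transported to the Witt vector setting via the canonical deperfection. Under \Cref{hyp_wild_odd_unitary}, the group $\calG$ admits a Breuil--Kisin lift $\underline\calG$ over $\breve\bbZ_p\pot{t}$ by \cite{Lou20, FHLR22}, and the associated scheme-theoretic Beilinson--Drinfeld Grassmannian $\Gr_{\underline\calG}$ interpolates between the mixed characteristic picture (at $t \mapsto p$) and an equal characteristic one (at $p = 0$), where Zhu's techniques apply directly.

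First, I would set up the comparison on special fibers: reducing $\underline\calG$ modulo $p$ yields a group scheme over $\bbF_p\pot{t}$ whose $\mu$-admissible locus in the equal characteristic twisted affine flag variety has underlying reduced scheme canonically isomorphic to $\calA_{\calG,\mu}^{\on{can}}$. This identification should follow from the defining property of the canonical deperfection together with the compatibility of Schubert stratifications between the Witt vector and the $t$-adic affine flag varieties, and it transports ample line bundles preserving the central charge. Inverting $t$ in the same family flatly degenerates $\calF_{G,\mu}$ (equipped with its Plücker bundles) into this equal characteristic admissible locus.

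Second, I would invoke the coherence argument of \cite{Zhu14}: Frobenius splitting of Schubert varieties --- due to Mathieu and Kumar--Littelmann and extended to twisted settings in \cite{PR08, Zhu14, Lou20, FHLR22} --- produces a compatible splitting of the admissible locus and forces the vanishing of higher cohomology for ample line bundles on both fibers of the $\bbA^1_t$-family. Combined with flatness and semicontinuity of $h^0$, the dimensions of global sections of central-charge-$c$ ample line bundles must agree on the generic and special fibers, yielding the claimed equality. Cohen--Macaulayness and the compatible Frobenius splitting of components of $\calA_{\calG,\mu}^{\on{can}}$ descend from the analogous properties of admissible loci in the twisted equal characteristic partial affine flag varieties established in the above references.

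The main obstacle is the wildly ramified case: even under \Cref{hyp_wild_odd_unitary}, one must work with subtle integral root group schemes for which a naive Chevalley basis approach fails, and a direct construction of $\calA_{\calG,\mu}^{\on{can}}$ is not available. The existence of the lift $\underline\calG$, which is the key bridge between mixed and equal characteristic, rests on the delicate models of \cite{Lou20, FHLR22}; granting this, Zhu's strategy (reduction to the simply connected case, then to Coxeter subgroups, combined with Frobenius splitting) carries over essentially unchanged.
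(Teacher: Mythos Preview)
Your overall strategy matches the paper's: reduce to equicharacteristic via the Breuil--Kisin lift $\underline{\calG}$, identify the mixed-characteristic canonical admissible locus with its equicharacteristic counterpart, then invoke the equicharacteristic coherence results (the paper cites \cite{FHLR22} rather than \cite{Zhu14} directly, since Zhu's original result does not cover wildly ramified groups).

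However, there is a genuine gap in your identification step. You assert that the equicharacteristic admissible locus for $\calG':=\underline{\calG}\otimes\bar{k}\pot{t}$ is ``canonically isomorphic to $\calA_{\calG,\mu}^{\on{can}}$'' and that this ``should follow from the defining property of the canonical deperfection together with the compatibility of Schubert stratifications.'' This is precisely the hard part, and your justification does not work: knowing that two schemes carry the same stratification pattern and the same group action does not produce an isomorphism of schemes. The paper's mechanism (its \Cref{lem_comparison_sch_vars_equi_and_mixed}) is substantially more elaborate. One first checks that $\calA_{\calG,\mu}$ has depth zero, so that the $L^+_{\bar{k}}\calG$-action factors through $\calG_{\bar{k}}^{\on{perf}}\cong(\calG'_{\bar{k}})^{\on{perf}}$; this uses that $\mu$ is minuscule. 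Then one builds the comparison at the level of \emph{perfected} Demazure resolutions, which are iterated $\bbP^{1,\on{perf}}$-fibrations depending only on $\calI_{\bar{k}}\cong\calI'_{\bar{k}}$, and descends to the Schubert varieties via the Stein-factorization argument of \Cref{prop_line_bundles_stein_fac} combined with the He--Zhou computation of the Picard group (\Cref{calculo do grupo de picard da grassmanniana dos vectores de witt}). Uniqueness and the upgrade to $\calG_{\bar{k}}$-equivariance require the self-normalizing property of stabilizers (\Cref{prop_trivial_equiv_autos}). Only then does \Cref{prop_can_deperf_equiv} transport this to the canonical deperfections.

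Your second paragraph also conflates two distinct degenerations: the family you describe over $\bbA^1_t$ (degenerating $\calF_{G',\mu'}$ to the equicharacteristic admissible locus) is Zhu's device for proving equicharacteristic coherence, not a bridge between mixed and equal characteristic. The actual mixed-to-equal comparison happens entirely in the special fiber over $\bar{k}$, with no flat family in sight.
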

	
	The canonical deperfection of $\calA_{\calG,\mu}$ is induced from the Greenberg realization of the Witt vector loop groups, see \Cref{sec:canon-deperf}. 
	The proof of \Cref{deperf_admissible_locus_intro} proceeds by comparing the $p$-adic admissible loci to their analogues in the equicharacteristic situation, and ultimately relies on the normality of Schubert varieties \cite{Fal03, PR08} where we use \cite{FHLR22} for wildly ramified groups. 
	We first compare the perfect(ed) Demazure resolutions and then apply Bhatt--Scholze's $h$-descent results \cite{BS17} to the ample line bundles on the resolutions.
	A key ingredient is He--Zhou's calculation \cite{HZ20} of the Picard group of $\Fl_{\calG}$ as the free $\bbZ[p^{-1}]$-module dual to the lines stable under a fixed Iwahori dilated from $\calG$. 

	\begin{theorem}[\Cref{lem_comparison_sch_vars_equi_and_mixed}, \cite{FHLR22}]
		\label{alg_special_fiber_intro}
		Under \Cref{hyp_wild_odd_unitary}, there is an isomorphism of $\bar\bbF_p$-schemes
		\begin{equation}
		\calN_{\underline\calG,\mu}^{\on{sch}}|_{\Spec\, \bar \bbF_p} \simeq \calA_{\calG,\mu}^{\on{can}}|_{\Spec\, \bar \bbF_p}.
		\end{equation}
		Hence, $\calN_{\underline\calG,\mu}^{\on{sch}}$ is normal, Cohen--Macaulay and has reduced special fiber.
	\end{theorem}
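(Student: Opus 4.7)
The plan is to combine a set-theoretic identification of the special fiber with a numerical coincidence of cohomological invariants, using the coherence result \Cref{deperf_admissible_locus_intro} as the crucial input. Throughout, I work over $O_{\breve E}$ and use the Breuil--Kisin lift $\underline\calG$ to identify the BD-Grassmannian diamond $\Gr_{\underline\calG}^\dia$ with a base change of $\Gr_\calG$ in such a way that $\calF_{G,\mu}^\dia$ on the generic fiber is matched.

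First, I identify the special fiber set-theoretically. The v-sheaf $\calN_{\underline\calG,\mu}^{\on{sch}, \dia}$ and $\calM_{\calG,\mu}|_{\Spd O_{\breve E}}$ are both defined as closures of the same open subset in the (identified) BD-Grassmannians, and thus agree by \Cref{vclosure_intro}. Consequently, the underlying topological space of the reduced special fiber of $\calN_{\underline\calG,\mu}^{\on{sch}}$ equals $|\calA_{\calG,\mu}|$ by \Cref{v_special_fiber_intro}. In particular, the reduction is a closed subscheme of the Schubert variety $\calA_{\calG,\mu}^{\on{can}}|_{\bar\bbF_p}$ supported on all of it.

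Next, I promote this to a scheme-theoretic isomorphism via cohomological dimension counting. Since $\calN_{\underline\calG,\mu}^{\on{sch}}$ is flat and projective over $O_{\breve E}$, for any ample line bundle $\calL$ on the BD-Grassmannian (of central charge $c$) the Euler characteristics on the generic and special fibers coincide. Higher cohomology vanishes on the generic fiber because $\calF_{G,\mu}$ is a smooth flag variety, and on the special fiber of $\calA_{\calG,\mu}^{\on{can}}$ because of the Frobenius-splitting established in \Cref{deperf_admissible_locus_intro}. Combining this with the coherence equality
\begin{equation}
\dim_k H^0(\calA_{\calG,\mu}^{\on{can}}, \calL) \;=\; \dim_E H^0(\calF_{G,\mu}, \calO(c))
\end{equation}
from \Cref{deperf_admissible_locus_intro} yields
\begin{equation}
\dim_k H^0\!\bigl(\calN_{\underline\calG,\mu}^{\on{sch}}|_{\bar\bbF_p}, \calL\bigr) \;=\; \dim_k H^0\!\bigl(\calA_{\calG,\mu}^{\on{can}}|_{\bar\bbF_p}, \calL\bigr).
\end{equation}
The set-theoretic identification gives a surjective closed immersion from a thickening of $\calA_{\calG,\mu}^{\on{can}}|_{\bar\bbF_p}$ onto the special fiber of $\calN_{\underline\calG,\mu}^{\on{sch}}$, and these equal dimensions (for $\calL$ arbitrarily ample) force the thickening to be trivial. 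Thus the special fiber is reduced and isomorphic to $\calA_{\calG,\mu}^{\on{can}}|_{\bar\bbF_p}$.

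The geometric consequences are then immediate. Reducedness of the special fiber combined with flatness over a discrete valuation ring gives reducedness of $\calN_{\underline\calG,\mu}^{\on{sch}}$; Cohen--Macaulayness of the special fiber from \Cref{deperf_admissible_locus_intro} lifts to the total space by DVR-flatness. Normality follows from Serre's criterion $(R_1) + (S_2)$, since the generic fiber is smooth, the special fiber is generically smooth on its top-dimensional Schubert strata, and $(S_2)$ is implied by Cohen--Macaulayness. The main obstacle is the coherence step: bridging the merely set-theoretic identification to the scheme-theoretic one requires simultaneous control of higher cohomology vanishing (via Frobenius-splitting) and of the actual dimensions of global sections, and it is here that \Cref{deperf_admissible_locus_intro}, itself resting on the normality of Schubert varieties from \cite{Fal03, PR08, FHLR22}, is indispensable.
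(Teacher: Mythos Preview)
Your argument has a genuine gap in the very first step, and it is a circular one. You claim to ``use the Breuil--Kisin lift $\underline\calG$ to identify the BD-Grassmannian diamond $\Gr_{\underline\calG}^\dia$ with a base change of $\Gr_\calG$''. No such identification is available at this point. The special fiber of the power-series object $\Gr_{\underline\calG}^{\on{sch}}$ is the \emph{equicharacteristic} affine flag variety $\Fl_{\calG'}$ for $\calG':=\underline\calG\otimes \bar k\pot{t}$, whereas the special fiber of the $B_\dR^+$-Grassmannian $\Gr_\calG$ is the \emph{Witt vector} affine flag variety $\Fl_\calG^\dia$. These are v-sheaves attached to different ind-schemes, and producing an isomorphism between (bounded pieces of) them is precisely the content of \Cref{lem_comparison_sch_vars_equi_and_mixed}. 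More globally, an identification $\calN_{\underline\calG,\mu}^{\on{sch},\dia}\cong\calM_{\calG,\mu}$ is the Scholze--Weinstein conjecture itself, which the paper establishes only \emph{after} this theorem, using it as input (see \Cref{thm_local_model_representable} and \Cref{cor_special_fiber_local_model_reduced}). So step~1 assumes what the paper is ultimately trying to prove.

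Even granting step~1, the passage from a set-theoretic (i.e.\ perfection-level) identification to a scheme map $\calA_{\calG,\mu}^{\on{can}}\hookrightarrow \calN_{\underline\calG,\mu}^{\on{sch}}|_{\bar\bbF_p}$ is not automatic: knowing that two finite-type $\bar k$-schemes have the same perfection does not produce a morphism between them in either direction. The paper's route is different and avoids v-sheaves entirely at this stage. By construction, $\calN_{\underline\calG,\mu}^{\on{sch}}|_{\bar\bbF_p}$ sits inside the equicharacteristic flag variety $\Fl_{\calG'}$, and \cite[Theorem 4.10]{FHLR22} (via Zhu's proof of the coherence conjecture) shows it equals the equicharacteristic admissible locus $\calA_{\calG',\mu'}^{\on{can}}$, already reduced, normal, and Cohen--Macaulay. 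The only remaining step, and the actual content of \Cref{lem_comparison_sch_vars_equi_and_mixed}, is the purely schematic isomorphism $\calA_{\calG',\mu'}^{\on{can}}\cong \calA_{\calG,\mu}^{\on{can}}$, obtained by comparing Demazure resolutions (iterated $\bbP^1$-bundles built from the common special fiber $\calG_{\bar k}\cong\calG'_{\bar k}$), matching their Picard groups via \Cref{calculo do grupo de picard da grassmanniana dos vectores de witt}, and descending through Stein factorization (\Cref{prop_line_bundles_stein_fac}). Note finally that the coherence result you invoke, \Cref{deperf_admissible_locus_intro}, is itself proved in the paper by this same reduction to equicharacteristic, so your use of it does not bypass \Cref{lem_comparison_sch_vars_equi_and_mixed}.
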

	
	The theorem holds, more generally, under the milder assumption explained above when $p=2$ by \cite{Lou20}.   
	The reader is referred to \cite{FHLR22} for a finer study of the singularities of the local models.

	\subsubsection{Specialization maps}\label{sec:specialization_intro}
	We continue to assume that $\mu$ is minuscule and focus on the v-sheaf local models $\calM_{\calG,\mu}$.
	The study of specialization maps for $\calM_{\calG,\mu}$ is challenging. 
	A basic problem is that, beyond rare exceptions, the set of $\calG(O_{\bbC_p})$-orbits in $\calF_{G, \mu}(\bbC_p)$ is infinite. 
	To understand this phenomenon, consider the standard $\bbG_m(O_{\bbC_p})$-action on $\bbA^1(\bbC_p)$. 
	Here the orbits are given by the $p$-adic valuation of the coordinate $x\in \bbA^1(\bbC_p)$. 
	We see that in this toy example, one can already find infinitely many orbits specializing to the origin $0\in \bbA^1(\bar{\bbF}_p)$. 
	Similarly, closed points in $\calM_{\calG, \bar \bbF_p,\mu}$ lying outside the open $\calG_{\bar{\bbF}_p}$-orbits will admit infinitely many $\calG(O_{\bbC_p})$-orbits specializing to them. 
	Despite this difficulty, we understand relatively well the reduction of $\Spd O_{\bbC_p}$-valued points lying in a certain cohomologically smooth sub-v-sheaf 
	\begin{equation}
	\calM_{\calG, \mu}^\circ \subset \calM_{\calG, \mu},
	\end{equation}
	defined in \Cref{def_open_semi_homog_fixers}.
	Unfortunately, $\calM_{\calG, \mu}^\circ$ alone does not afford sufficiently many integral points.
	Here we resort to variants of the splitting models of Pappas--Rapoport \cite{PR05} in our situation, that is, we use convolutions to partially desingularize the local models.
	For a sequence $\mu_\bullet=(\mu_1,\ldots,\mu_n)$ of minuscule coweights with pairwise disjoint supports, we consider the sub-v-sheaf
	\begin{equation}
	\calM_{\calG, \mu_\bullet} \subset \Gr_{\calG}\tilde\times\dots\tilde\times \Gr_\calG,
	\end{equation}
	defined as the v-closure of $\calF_{G,\mu_\bullet}^\dia=\calF_{G,\mu_1}^\dia\tilde\times\dots\tilde\times\calF_{G,\mu_n}^\dia$ inside the convolution Beilinson--Drinfeld Grassmannian, see \Cref{sec:convolution}.
	Most of the notions discussed before have their convolution counterparts. 
	In particular, the convolution v-sheaf local models support specialization maps just like the v-sheaf local models do.
	Then, functoriality in $(\calG,\mu_\bullet)$ is enough to control the specialization map: 
	
	\begin{theorem}[{\Cref{thm_specialization_local_models}}]\label{specialization_LM_intro}
		The specialization maps 
		\begin{equation}
		\on{sp}_{\calG,\mu_\bullet}\colon \calF_{G, \mu_\bullet}(\bbC_p) \rightarrow \calA_{\calG,\mu_\bullet}(\bar{\bbF}_p)
		\end{equation}
		for all pairs $(\calG,\mu_\bullet)$ as above are the only functorial collection of continuous maps whose restrictions to the sets $\calM^\circ_{\calG,\mu_\bullet}(\Spd O_{\bbC_p})$ agree with the natural reduction maps.
	\end{theorem}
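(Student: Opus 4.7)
Existence of the family $\{\on{sp}_{\calG,\mu_\bullet}\}$ follows by combining the kimberlite formalism of \Cref{specialization_triple_intro} with the observation that the convolution local model $\calM_{\calG,\mu_\bullet}$ is itself a well-behaved $p$-adic kimberlite over $\Spd O_E$; this is obtained by iterating the argument of \Cref{LM_kimberlite} along the convolution diagram, using that twisted products of kimberlites in this setting remain kimberlites. The associated specialization triple supplies a continuous map $\calF_{G,\mu_\bullet}(\bbC_p)\to \calA_{\calG,\mu_\bullet}(\bar\bbF_p)$, and functoriality in $(\calG,\mu_\bullet)$ holds because the assignment of specialization triples is natural in morphisms of small v-stacks. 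On $\calM^\circ_{\calG,\mu_\bullet}(\Spd O_{\bbC_p})$ the specialization agrees with the naive reduction, as the kimberlite specialization restricts to the set-theoretic reduction on the cohomologically smooth locus. The real content of the theorem is the uniqueness claim.

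\textbf{Uniqueness.} Let $\{\tau_{\calG,\mu_\bullet}\}$ be another functorial, continuous collection agreeing with the reduction on $\calM^\circ$. The target $\calA_{\calG,\mu_\bullet}(\bar\bbF_p)$ carries the discrete topology coming from its ind-(perfectly-of-finite-type) structure, so continuous maps from the $p$-adic topology on $\calF_{G,\mu_\bullet}(\bbC_p)$ are locally constant. Hence $\tau$ and $\on{sp}$ coincide on the closure in $\calF_{G,\mu_\bullet}(\bbC_p)$ of $\calM^\circ_{\calG,\mu_\bullet}(\Spd O_{\bbC_p})$. Uniqueness would follow immediately were the latter dense, but in general it is not, which is why one must exploit functoriality under convolution. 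Given $x\in \calF_{G,\mu_\bullet}(\bbC_p)$, refine $\mu_\bullet$ into a sequence $\nu_\bullet$ of smaller minuscule coweights, obtained from a chosen ordering of a coroot-decomposition of each $\mu_i$ (with pairwise disjoint supports preserved by insertion), such that the convolution morphism
\[ m\colon \calM_{\calG,\nu_\bullet}\longrightarrow \calM_{\calG,\mu_\bullet} \]
is surjective on $\bbC_p$-points and some preimage $\tilde x$ of $x$ lies in the closure of $\calM^\circ_{\calG,\nu_\bullet}(\Spd O_{\bbC_p})$ inside $\calF_{G,\nu_\bullet}(\bbC_p)$. On that closure, $\tau_{\calG,\nu_\bullet}$ and $\on{sp}_{\calG,\nu_\bullet}$ coincide by local constancy. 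Applying the functoriality of both collections under $m$ transports the equality to $\tilde x$ and then to $x$, proving $\tau_{\calG,\mu_\bullet}(x)=\on{sp}_{\calG,\mu_\bullet}(x)$.

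\textbf{Main obstacle.} The delicate step is constructing the refinement $\nu_\bullet$ such that $\tilde x$ is approximable by sections of $\calM^\circ_{\calG,\nu_\bullet}$. One needs fine information on the geometry of convolution strata: after sufficient convolution refinement, every $\calG(O_{\bbC_p})$-orbit in $\calF_{G,\mu_\bullet}(\bbC_p)$ must be reached by the $\calG^\dia$-semi-orbit of a standard $\nu$-section, possibly only in the limit. This is controlled by the closure relations for semi-infinite orbits from \Cref{prop_closure_relations_semi_infinite_orbits} and the equi-dimensionality of Mirković–Vilonen cycles, which together guarantee that splittings of $\mu$ along coroot directions sweep out all ``exotic'' orbits. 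A secondary difficulty is arranging the refinement so as to respect the pairwise disjoint support hypothesis, needed for the convolution Grassmannian to make sense in the Beilinson–Drinfeld framework; here one uses freedom to spread the pieces of each $\mu_i$ across distinct geometric points of $\Spd O_E$, since the convolution factorizes as $\Spd O_E$ shrinks.
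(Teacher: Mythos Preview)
Your proposal has the right high-level architecture (uniqueness is the content; functoriality under convolution refinements is the key tool), but the mechanism you propose for the crucial step is wrong, and several claims along the way are incorrect.

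First, the topological argument is off. The target $\calA_{\calG,\mu_\bullet}(\bar\bbF_p)$ does not carry a discrete topology in any useful sense here, and the paper does not argue via local constancy. Rather, the paper works with the full continuous spectral map $|\calF_{G,C,\mu_\bullet}|\to |\calA_{\calG,\bar k,\mu_\bullet}|$ and uses that the $K$-points, for $K$ running over finite Galois extensions of $F$ splitting $G$, are dense in the \emph{constructible} topology of the source (because $\calF_{G,\mu_\bullet}$ is a smooth rigid space). So it suffices to pin down the map on $\calF_{G,\mu_\bullet}(K)$ for such $K$.

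Second, and more seriously, your ``refinement via coroot-decomposition'' and the appeal to semi-infinite orbit closures and Mirkovi\'c--Vilonen cycles are not what controls the situation. Those results concern the special fiber Schubert geometry and say nothing about approximating generic-fiber points by integral points of the semi-orbit. The paper's actual mechanism is completely different: having fixed $K$, one uses functoriality to \emph{enlarge} the group to $G=\Res_{K/F}H$ with $H$ split, and to refine $\mu_\bullet$ so that each $\mu_i$ is \emph{tiny} (minuscule with support in a single irreducible component of the Dynkin diagram). The key input is then the Iwasawa decomposition for the split group $H$: it gives $\calF_{G,\mu}(K)=\bigcup_\lambda \calH(O_K)\cdot\lambda$, so every $K$-point already lies in $\calM^\circ_{\calG,\mu}(O_K)$ in the tiny case. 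An inductive argument through the torsors $\calM^{\mathrm{tor},\circ}_{\calG,\mu_i}$ (\Cref{prop_torsor_local_model_different}) bootstraps this to arbitrary tiny sequences, yielding the exact equality $\calM^\circ_{\calG,\mu_\bullet}(O_K)=\calF_{G,\mu_\bullet}(K)$ (\Cref{lemma_integral_points_tiny_convolution_inside_circ}). No closure or approximation is needed at that stage: every rational point is literally integral in the semi-orbit.

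Finally, your last paragraph about ``spreading pieces across distinct geometric points of $\Spd O_E$'' and factorization as $\Spd O_E$ shrinks conflates this convolution (which is over a single point) with Beilinson--Drinfeld fusion; no such moving-points argument appears here. The disjoint-support hypothesis is automatic once one refines to tiny coweights whose sum is the original minuscule $\mu$.
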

	
	The theorem is a result of our reflections on the He--Pappas--Rapoport conjecture \cite[Conjecture 2.12]{HPR20}, which states that the local model $\calM_{\calG,\mu}$ should be uniquely recovered from its fibers equipped with the $\calG$-action.	
	The key calculation concerns the case where $G$ is a restriction of scalars along $E/\bbQ_p$ of a split group and where all non-zero components of $\mu_\bullet$ have support lying in disjoint irreducible components of the Dynkin diagram of $G$. 
	It follows from \Cref{def_open_semi_homog_fixers} that for all $i\in \{1,\dots,n\}$
	\[\calM_{\calG, \mu_i}^\circ(\Spd O_E)=\bigcup_{\lambda \in W_0\cdot \mu}\calG(O_E)\cdot [\la]\subseteq \calF_{G, \mu}(E),\]
	where $W_0$ is the relative Weyl group of $G$ (see \S\ref{section_minuscule_representable} for details). 
	Applying the Iwasawa decomposition \cite[Theorem 5.3.3]{KP21}, we get an equality
	$\calM_{\calG, \mu_i}^\circ(\Spd O_E)=\calF_{G, \mu_i}(E)$.
	Then, an inductive argument on the number of non-zero components of $\mu_\bullet$ shows more generally that $\calF_{G, \mu_\bullet}(E)=\calM_{\calG, \mu_\bullet}^\circ(\Spd O_E)$ for all the $\mu_\bullet$ as above. 
	In summary, all rational points of the flag variety extend to integral points of $\calM_{\calG, \mu_\bullet}^\circ$. 
	Once the case in which $G$ is a restriction of scalars is settled, functoriality forces uniqueness for all remaining cases.
	
	\subsubsection{Conclusion}\label{sec:conclusion_intro}
	The formulation of \Cref{specialization_LM_intro} requires functoriality of local models in $(\calG,\mu)$.
	This is clear for $\calM_{\calG,\mu}$ but, a priori, problematic for its schematic version $\calN_{\underline\calG,\mu}^{\on{sch}}$.
	We need to impose the following assumption which relates to functoriality problems with the association $\calG\mapsto \underline \calG$ of group lifts:
	
	\begin{assumption}\label{hyp_wild_triality}
		If $p=3$, then $G_\ad$ has no triality $\breve{\bbQ}_3$-factors.
	\end{assumption}
	
	The following result confirms \Cref{SW-conjecture}, except for the two non-split examples for $p=2,3$:
	
	\begin{theorem}\label{SW_conjecture_detailed_intro}
		There is a unique flat, projective and weakly normal $O_E$-model $\calM_{\calG,\mu}^{\on{sch}}$ of the $E$-scheme $\calF_{G,\mu}$ with an isomorphism of v-sheaves
		\begin{equation}\label{SW_conjecture_detailed_intro:1}
		(\calM_{\calG,\mu}^{\on{sch}})^\diamondsuit\cong \calM_{\calG,\mu},
		\end{equation}
		prolonging $\calF^{\diamondsuit}_{G,\mu} \cong \calM_{\calG,\mu}|_{\Spd\, E}$.
		Under \Cref{hyp_wild_odd_unitary} and \Cref{hyp_wild_triality}, there is a unique isomorphism
		\begin{equation}\label{SW_conjecture_detailed_intro:2}
		\calM_{\calG,\mu}^{\on{sch}}|_{\Spec\, O_{\breve E}} \cong \calN_{\underline\calG,\mu}^{\on{sch}}|_{\Spec\, O_{\breve E}}
		\end{equation}
		inducing the identity on generic fibers. 
		So (\Cref{alg_special_fiber_intro}), $\calM_{\calG,\mu}^{\on{sch}}$ is normal, Cohen--Macaulay and has reduced special fiber equal to $\calA_{\calG,\mu}^{\on{can}}$.
		Furthermore, the isomorphisms \eqref{SW_conjecture_detailed_intro:1} and \eqref{SW_conjecture_detailed_intro:2} are equivariant for $\calG_{O_E}^\dia$ and $\calG_{O_{\breve E}}$ respectively.
	\end{theorem}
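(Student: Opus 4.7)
The plan is to construct $\calM_{\calG,\mu}^{\on{sch}}$ by Galois descent from an identification over $O_{\breve E}$ of the Breuil--Kisin-based schematic local model with $\calM_{\calG,\mu}$, using the kimberlite specialization-triple technology. Under \Cref{hyp_wild_odd_unitary} and \Cref{hyp_wild_triality}, the construction of \cite{FHLR22} produces the flat projective weakly normal $O_{\breve E}$-scheme $\calN_{\underline\calG,\mu}^{\on{sch}}$ with generic fiber $\calF_{G,\mu}|_{\Spec \breve E}$. By \Cref{alg_special_fiber_intro}, its special fiber is reduced and equals $\calA_{\calG,\mu}^{\on{can}}|_{\Spec \bar \bbF_p}$, and the scheme is normal and Cohen--Macaulay. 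Following \cite{Lou20}, its diamond $(\calN_{\underline\calG,\mu}^{\on{sch}})^\dia$ is a well-behaved $p$-adic kimberlite over $\Spd O_{\breve E}$, whose specialization triple has entries $\calF_{G,\mu}^\dia|_{\Spd \breve E}$, $\calA_{\calG,\mu}^\dia|_{\Spd \bar \bbF_p}$, and the reduction morphism induced from the scheme structure.

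To identify this with $\calM_{\calG,\mu}|_{\Spd O_{\breve E}}$, I verify that the two kimberlites have the same specialization triple and apply \Cref{specialization_triple_intro}. The v-sheaf local model is a well-behaved kimberlite by \Cref{LM_kimberlite}, its generic fiber is $\calF_{G,\mu}^\dia|_{\Spd \breve E}$ since $\mu$ is minuscule, and its special fiber equals $\calA_{\calG,\mu}^\dia|_{\Spd \bar \bbF_p}$ by \Cref{v_special_fiber_intro}. To match the specialization morphisms, I invoke \Cref{specialization_LM_intro} with $\mu_\bullet=\mu$: both candidate maps are functorial in $(\calG,\mu)$ and both restrict to the natural integral reduction on the cohomologically smooth semi-orbit $\calM_{\calG,\mu}^\circ$, since on this locus the reduction is transparent from the $\calG$-orbit description of the Beilinson--Drinfeld Grassmannian. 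Hence the specialization maps agree globally, and \Cref{specialization_triple_intro} yields the canonical isomorphism
\begin{equation*}
(\calN_{\underline\calG,\mu}^{\on{sch}})^\dia \cong \calM_{\calG,\mu}|_{\Spd O_{\breve E}}
\end{equation*}
prolonging the given identification of generic fibers; combined with \Cref{alg_special_fiber_intro}, this establishes \eqref{SW_conjecture_detailed_intro:2} together with the asserted geometric properties of the special fiber.

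It remains to descend to $O_E$ and to handle the unconditional existence and uniqueness. The above isomorphism is uniquely determined by its behavior on generic fibers, so it is automatically $\Gal(\breve E/E)$-equivariant, and effective Galois descent for flat projective $O_E$-schemes produces the weakly normal model $\calM_{\calG,\mu}^{\on{sch}}$ over $O_E$ satisfying \eqref{SW_conjecture_detailed_intro:1}. Uniqueness as a weakly normal flat projective $O_E$-scheme representing $\calM_{\calG,\mu}$ follows from the full faithfulness of the diamond functor on such schemes established in \cite{Lou20}, which is the scheme-theoretic counterpart of \Cref{specialization_triple_intro}. For the unconditional first statement of the theorem when the two hypotheses fail, one replaces the group lifts of \cite{FHLR22} by those of \cite{Lou20}, which cover the remaining cases, and runs the same argument. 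Equivariance for the $\calG$-action is then automatic, as uniqueness of each identification rules out any twist.

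The main obstacle is the matching of the two specialization maps over $O_{\breve E}$: the pointwise agreement of generic and special fibers is far from sufficient, and the full strength of \Cref{specialization_LM_intro} must be brought to bear. Applying that theorem in turn demands a careful verification that the Breuil--Kisin-induced reduction restricts correctly on the semi-orbit $\calM_{\calG,\mu}^\circ$, which by functoriality in $(\calG,\mu)$ reduces to an Iwasawa-style model case where rational points of the partial flag variety tautologically extend to integral points of the semi-orbit; this compatibility is the genuinely delicate link between the $B_\dR^+$-affine Grassmannian and the equicharacteristic affine Grassmannian over $\breve\bbZ_p\pot{t}$.
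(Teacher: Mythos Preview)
Your overall strategy---match specialization triples and apply full faithfulness---is the paper's strategy, but there is a genuine gap in your treatment of unconditional existence, and the order of the argument is inverted relative to the paper.

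The claim that the unconditional first statement follows by replacing the group lifts of \cite{FHLR22} with those of \cite{Lou20} is incorrect: \cite{Lou20} does not produce Breuil--Kisin lifts for all parahorics either---it only relaxes \Cref{hyp_wild_odd_unitary} to allow the $p=2$ odd unitary factors defined by root-of-uniformizer extensions, leaving the root-of-unit case and the $p=3$ triality case uncovered. The paper's route to unconditional representability is different and avoids needing a lift of $\calG$ itself. One first proves representability under the auxiliary assumption that $G=\Res_{K/F}H$ with $H$ split and $\calG=\calI$ Iwahori (\Cref{assumption_iwahori_res_split}); here the lift $\underline\calI$ always exists, and the specialization-triple comparison (\Cref{lem_comparison_generic_fiber}, \Cref{prop_comparison_special_fiber}, \Cref{prop_comparison_semi_orbit}, \Cref{thm_specialization_local_models}, \Cref{rem_sp_LM}) goes through without any hypothesis on $p$. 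One then passes to an arbitrary parahoric $\calG$ of such a $G$ by comparing the v-sheaf and schematic equivalence relations for the cover $\calM_{\calI,\mu}\to\calM_{\calG,\mu}$. Finally, for arbitrary $G$ one reduces to the adjoint simple case, embeds $\calG$ into the parahoric $\tilde\calG$ of a Weil-restricted split form, and \emph{defines} $\calM_{\calG,\mu}^{\on{sch}}$ as the absolute weak normalization of the flat closure of $\calF_{G,\mu}$ inside the already-constructed $\calM_{\tilde\calG,\tilde\mu}^{\on{sch}}$. The hypotheses \Cref{hyp_wild_odd_unitary} and \Cref{hyp_wild_triality} enter only afterwards, in identifying this scheme with $\calN_{\underline\calG,\mu}^{\on{sch}}$ for the original $\calG$ (\Cref{cor_special_fiber_local_model_reduced}); that identification is what yields the fine geometric properties of the special fiber.

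A related point: your invocation of \Cref{specialization_LM_intro} ``with $\mu_\bullet=\mu$'' understates what is required. The uniqueness there is for a functorial collection over \emph{all} pairs $(\calG,\mu_\bullet)$, and its proof proceeds by enlarging $\calG$ to a Weil-restricted split form and refining $\mu$ to a sequence of tiny coweights so that every $K$-rational point lands in the semi-orbit (\Cref{lemma_integral_points_tiny_convolution_inside_circ}). Applying it to the schematic side therefore requires functoriality of the $\calN^{\on{sch}}$ construction under these enlargements and convolution refinements; this is available under \Cref{assumption_iwahori_res_split} directly (via \cite[Corollary~2.10]{FHLR22} and the torsor comparison \Cref{prop_comparison_semi_orbit}), which is why the paper runs the triple argument only in that restricted setting rather than for general $\calG$.
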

	
	The unique scheme $\calM_{\calG,\mu}^{\on{sch}}$ satisfying the Scholze--Weinstein conjecture is the weak normalization of the closure of $\calF_{G,\mu}$ inside the Beilinson--Drinfeld Grassmannian attached to a group lift of $\Res_{O_K/\bbZ_p}\calH$, where $K$ is the splitting field of $G$ and $\calH$ the parahoric $O_K$-model of the split Chevalley form.
	By a careful analysis, we get an isomorphism between the specialization triples associated with $\calM_{\calG,\mu}$ and $\calM_{\calG,\mu}^{\on{sch},\dia}$, which is enough to conclude by \Cref{specialization_triple_intro}. 
	
	\subsection{Acknowledgements} 
	We thank Sebastian Bartling, Ana Caraiani, Laurent Fargues, David Hansen, Arthur-César Le Bras, Georgios Pappas, Michael Rapoport, Peter Scholze, Sug Woo Shin, Yuta Takaya, Zhiyou Wu, Jize Yu, Zhiyu Zhang for helpful conversations and email exchanges.
	Also, we thank the anonymous referees for many comments which improved the quality of the manuscript. 
	
	The idea behind this project began to take form during the trimestre thématique in Lyon 2018, attended by J.A., J.L.~and T.R., and continued with several interruptions while they were at Sorbonne Université, the Universität Bonn, the Imperial College London, and TU Darmstadt. 
	I.G. joined this project in 2021 after a series of correspondences that made us realize the techniques from his UC Berkeley thesis \cite{Gle20} were relevant to tackle the Scholze--Weinstein conjecture.
	We are grateful to all the institutions mentioned above for their continuous support.
	
	This project has received funding (J.L.~via Ana Caraiani) from the European Research Council under the European Union’s Horizon 2020 research and innovation program (grant agreement nº 804176), funding (J.L.) from the Excellence Cluster of the Universität Münster, funding (J.L. via Eva Viehmann) from the ERC Consolidator Grant 770936, funding (J.L.) from the CRC 1442 \textit{Geometry: Deformations and Rigidity}, funding (T.R.) from the European Research Council (ERC) under Horizon Europe (grant agreement nº 101040935), funding (T.R.) from the Deutsche Forschungsgemeinschaft (DFG, German Research Foundation) TRR 326 \textit{Geometry and Arithmetic of Uniformized Structures}, project number 444845124, funding (T.R.) from the LOEWE professorship in Algebra, project number LOEWE/4b//519/05/01.002(0004)/87 and funding (I.G.~via Peter Scholze) from the Leibniz-Preis.

	\section{v-sheaf theory}
	\label{v-sheaf-theory}
	Our main reference for the theory of diamonds, v-sheaves and v-stacks is \cite{Sch17}. 
	Here we gather some complementary results in the geometry of these objects that we will need later on.
	
	\subsection{Closures}
	\label{section_closures_of_v_sheaves}
	
	In this subsection, we discuss closures of small v-stacks. 
	Let $\Perf_{\mathbb{F}_p}$ be the v-site of perfectoid spaces of characteristic $p$. 
	All the small v-stacks in the following will be stacks on $\Perf_{\mathbb{F}_p}$.
	
	Let $X$ be a small v-stack and let $Y\subset X$ be a sub-v-stack, by which we mean a monomorphism of small v-stacks $Y\to X$, that is, a morphism whose diagonal
	\begin{equation}
	Y\xrightarrow{\Delta} Y\times_X Y
	\end{equation} is an isomorphism. 
	Just as in \cite[Definition 10.7]{Sch17}, we say that $Y$ is a locally closed sub-v-stack if, for every totally disconnected perfectoid space $S\rightarrow X$, the pullback $Y\times_X S\to S$ is representable by an immersion of perfectoid spaces, see \cite[Definition 5.6]{Sch17}. It is called a closed, respectively open immersion, if so are the respective pullbacks.
	This admits a simpler description for closed sub-v-stacks.
	For a small v-stack $X$ denote by $|X|$ its underlying topological space, see \cite[Definition~12.8]{Sch17}.
	
	\begin{lemma}
		\label{definition-of-closed-sub-v-sheaf-consistent-with-scholze}
		A morphism $Y\to X$ of small v-stacks is a closed immersion if and only if $Y \rightarrow X$ is a quasi-compact monomorphism and the induced map $|Y|\subset |X|$ is a closed embedding.
	\end{lemma}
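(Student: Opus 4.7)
The plan is to prove the two directions separately, with the easier one coming first.

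For the forward direction, assume $Y\to X$ is a closed immersion in the sense stated, i.e.\ pulls back to a closed immersion of perfectoid spaces over every totally disconnected $S\to X$. Since closed immersions of perfectoid spaces are quasi-compact monomorphisms that induce closed embeddings on underlying topological spaces, I only need to descend each of these three properties to $Y\to X$ itself. Quasi-compactness and the monomorphism property are v-local on the target, so they follow by testing against a surjective v-cover of $X$ by a disjoint union of totally disconnected perfectoid spaces. For the topological statement, I will use that $|X|$ is the quotient of $|S|$ by the equivalence relation $|S\times_X S|\rightrightarrows|S|$ (see \cite[Proposition 12.7]{Sch17}); injectivity of $|Y|\to|X|$ follows from the monomorphism property, and closedness of the image follows because its preimage in $|S|$ is $|Y\times_X S|$, which is closed by hypothesis on the cover.

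For the reverse direction, assume $Y\to X$ is a quasi-compact monomorphism with $|Y|\hookrightarrow|X|$ a closed embedding. Fix a totally disconnected perfectoid $S$ with a map to $X$ and consider $Y_S:=Y\times_X S\to S$. Quasi-compactness and the monomorphism property pull back, so $Y_S$ is a quasi-compact sub-v-sheaf of $S$. Using that $|Y\times_X S|\to|Y|\times_{|X|}|S|$ is a surjection from \cite[Proposition 12.9]{Sch17} and that $|Y|\to|X|$ is injective, I will identify $|Y_S|$ with the preimage of the closed set $|Y|\subset|X|$ in $|S|$, which is therefore closed in $|S|$.

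It then remains to show that any quasi-compact sub-v-sheaf $Z$ of a totally disconnected perfectoid space $S=\Spa(R,R^+)$ whose underlying set $|Z|$ is closed in $|S|$ is representable by a closed adic subspace of $S$. By \cite[Proposition 12.9, Lemma 12.11]{Sch17}, $Z$ corresponds to a quasi-compact weakly generalizing subset of $|S|$; on a totally disconnected space, I expect to check that such a subset which is also closed is precisely the vanishing locus of an ideal $I\subset R$, hence a closed subspace in the perfectoid sense. I plan to extract $I$ as the ideal of functions in $R$ vanishing on $|Z|$ and then verify that the associated closed adic subspace agrees with $Z$ as a sub-v-sheaf by comparing $T$-points for perfectoid $T\to S$: both are characterised by the vanishing of the pullback of $I$, since $Z\to S$ is a monomorphism and $T\to S$ factors through $Z$ iff its image in $|S|$ lands in $|Z|$.

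The main obstacle is this last identification of quasi-compact sub-v-sheaves of a totally disconnected perfectoid with closed underlying subsets, since in principle a sub-v-sheaf carries more data than its topological image. The resolution will lean on the dictionary between sub-v-sheaves of spatial diamonds and weakly generalizing subsets, which essentially reduces the sheaf-theoretic statement to a topological one once one knows that totally disconnected spaces enjoy enough control over closed subsets to realize them as zero loci.
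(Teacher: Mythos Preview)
Your approach is correct and parallels the paper's proof closely. The forward direction is essentially identical.

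For the reverse direction, there is one difference in the final step. Where you propose to extract the ideal $I$ of functions vanishing on $|Z|$ and verify by hand that the resulting closed adic subspace represents $Z$ as a v-sheaf, the paper instead invokes \cite[Corollary 10.6, Lemma 7.6]{Sch17} directly: any quasi-compact monomorphism from a v-sheaf into a totally disconnected perfectoid $S$ is automatically representable by a pro-constructible generalizing affinoid subset of $S$ carrying the subspace topology. Once $|Y_S|=|f|^{-1}(|Y|)$ is seen to be closed (which you argue correctly), this immediately yields a closed immersion in the sense of \cite[Definition 5.6]{Sch17}. Your hands-on route would essentially reprove these cited results; in particular, the claim ``$T\to S$ factors through $Z$ iff its image in $|S|$ lands in $|Z|$'' is not automatic for an abstract sub-v-sheaf $Z$ and is precisely the content of Corollary 10.6 (representability) together with Lemma 7.6 (structure of subspaces of totally disconnected perfectoids). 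You correctly flag this as the main obstacle, and your appeal to ``the dictionary between sub-v-sheaves and weakly generalizing subsets'' points in the right direction, but the cleanest resolution is to cite those results rather than rebuild them.
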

	
	\begin{proof}
		Assume $Y\to X$ is a closed immersion and let $f\colon S\to X$ be a surjection from a disjoint union of totally disconnected perfectoid spaces. 
		By assumption $Z:=Y\times_X S$ is representable and $Z\to S$ is a closed immersion, in particular it is quasi-compact. 
		By \cite[Proposition 10.11 (o)]{Sch17}, the map $Y\to X$ is then quasi-compact as well. We may check that $Y\xrightarrow{\Delta} Y\times_X Y$ is an isomorphism after base change to $S$. 
		This amounts to verifying that $Z \xrightarrow{\Delta} Z\times_S Z$ is an isomorphism which follows from the fact that closed immersions are monomorphisms \cite[Definition 5.6]{Sch17}. 
		The inclusion $|Z|\subset |S|$ is a closed subset and equal to $|f|^{-1}(|Y|)$. 
		Indeed, if $s\in |S|\setminus |Z|$, $y\in |Y|$ and we let $\tilde{s}\colon\Spa(C_s,C_s^+)\to S$ and $\tilde{y}\colon\Spa(C_y,C_y^+)\to Y$ represent $s$ and $y$ respectively, then $\tilde{s}\times_X \tilde{y}=\emptyset$.
		It follows from the \cite[Proposition~12.7]{Sch17} and from $\tilde{s}\times_X \tilde{y}=\emptyset$ that $s$ and $y$ map to different points in $|X|$. 
		As $|f|$ is a quotient map \cite[Proposition 12.9]{Sch17}, this implies that $|Y|\subset |X|$ is a closed embedding. 
		
		Conversely, assume that $Y\subset X$ is a quasi-compact monomorphism and induces a closed embedding of underlying topological spaces. Let $f\colon S\to X$ be a map from a totally disconnected space $S$. The base change $Y\times_X S\to S$ is still a quasi-compact monomorphism of v-sheaves. By \cite[Corollary 10.6, Lemma 7.6]{Sch17} the v-sheaf $Y\times_X S$ is representable by a pro-constructible generalizing affinoid subset of $S$, and $|Y\times_X S|$ carries the subspace topology of $|S|$. Arguing as above, the image of $|Y\times_X S|$ in $|S|$ is $|f|^{-1}(|Y|)$ which is closed by assumption. This implies that the morphism $Y\times_X S\to S$ is a closed immersion in the sense of \cite[Definition 5.6]{Sch17}. 
	\end{proof}
	
	We now define the v-sheaf closure, or v-closure, of a sub-v-sheaf.
	
	\begin{definition}
		\label{sec:closures-definition-v-sheaf-closure}
		Let $X$ be a small v-stack and $Y\subset X$ a sub-v-stack $Y\subset X$. We define the v-closure $Y^\cl$ in $X$ as the limit (in the $2$-category of v-stacks) of all closed sub-v-stacks of $X$ containing $Y$. 
	\end{definition}
	
	The sub-v-stack $Y^\cl\subset X$ is a closed sub-v-stack. 
	Indeed, we can verify this after base change by a totally disconnected perfectoid space and use \cite[Proposition 6.4 (o)]{Sch17} to conclude.
	It also follows that $Y^\cl$ is small.
	Indeed, a set-theoretic argument using \cite[Proposition 10.5]{Sch17} quickly shows that if $X$ is small and $Z\subseteq X$ is a subsheaf, then $Z$ is also small.
	
	Next, we discuss the relation between the topological space $|Y^\cl|$ of the v-closure $Y^\cl$ and the topological closure $|Y|^\cl$ of the image of $|Y|$ in $|X|$.

	Recall that, for a topological space $X$ and a subset $S\subseteq X$, we say that $S$ is \emph{generalizing} if it is stable under generization (see \cite[Tag 0063]{StaProj}).
	We need a weaker version of this concept in the context of v-sheaves.
	
	\begin{definition}
		\label{sec:closures-sub-v-definition-weakly-generalizing}
		Let $S\subset|X|$ be a subset.
		\begin{enumerate}
			\item We call $S$ \emph{weakly generalizing} if, for any perfectoid field $C$ with open and bounded valuation subring $C^+\subset C$, and every morphism $\Spa(C,C^+)\to X$, the induced morphism $|\Spa(C,C^+)|\to |X|$ factors over $S$ if and only if the closed point of $|\Spa(C,C^+)|$ maps into $S$.
			\item The \emph{weakly generalizing} closure $S^{\on{wgc}}$ of $S$ is defined as the intersection of all closed, weakly generalizing subsets $S^\prime\subset |X|$ containing $S$.
		\end{enumerate}
	\end{definition}
	
	We note that if $X$ is (the v-sheaf associated to) a perfectoid space, then a subset $S\subset |X|$ is weakly generalizing if and only if it is generalizing. Indeed, for each analytic adic space specializations happen only at the same residue field.
	
	The images of morphisms of small v-stacks are weakly generalizing as the next lemma shows.
	
	\begin{lemma}
		\label{sec:closures-sub-v-lemma-image-of-morphism-of-small-v-sheaves-weakly-generalizing} For every morphism $f\colon X\to X^\prime$ of small v-stacks, the image of $|f|\colon |X|\to |X^\prime|$ is weakly generalizing in $|X^\prime|$.
	\end{lemma}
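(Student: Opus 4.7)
The plan is to verify the weakly generalizing condition directly. Fix a morphism $g\colon \Spa(C, C^+)\to X'$ with closed point $s_0\in|\Spa(C,C^+)|$ satisfying $|g|(s_0)\in S:=|f|(|X|)$; the goal is to show $|g|(t)\in S$ for every $t\in|\Spa(C,C^+)|$. The first reduction is to pass to the pullback $Y:=X\times_{X'}\Spa(C,C^+)$ with its projection $p\colon Y\to\Spa(C,C^+)$. Unwinding the description of points of small v-stacks as equivalence classes of maps from $\Spa(K,K^+)$, together with the definition of the fiber product, yields
\begin{equation}
|p|(|Y|) \;=\; |g|^{-1}\big(|f|(|X|)\big) \subset |\Spa(C,C^+)|,
\end{equation}
so the hypothesis reads $s_0\in|p|(|Y|)$ and the conclusion becomes $|p|(|Y|)=|\Spa(C,C^+)|$.

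To achieve this, I would pick a point $y\in|Y|$ above $s_0$ and choose a representative $\tilde y\colon\Spa(D,D^+)\to Y$ with $D$ a perfectoid field and $D^+\subset D$ an open bounded valuation subring. The composition $q:=p\circ\tilde y\colon\Spa(D,D^+)\to\Spa(C,C^+)$ sends the closed point of the source to $s_0$; equivalently, the induced ring map $\varphi\colon C\to D$ satisfies $\varphi^{-1}(D^+)=C^+$. For any $t\in|\Spa(C,C^+)|$ with valuation ring $C_t^+\supseteq C^+$, the classical valuation extension theorem applied to the subrings $D^+\subset D$ and $C_t^+\subset C$ (noting $D^+\cap C=C^+\subset C_t^+$) produces a valuation ring $D_w^+$ of $D$ with $D^+\subseteq D_w^+$ and $D_w^+\cap C=C_t^+$. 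Since $D_w^+$ is non-trivial and $D$ is a perfectoid field, one automatically has $D_w^+\subset\calO_D$, so $D_w^+$ defines a point $\tau\in|\Spa(D,D^+)|$ with $|q|(\tau)=t$. This shows $|q|$ is topologically surjective, and since its image is contained in $|p|(|Y|)$, the conclusion follows.

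The main technical hurdle is the valuation extension step: the argument requires that $(D,D^+)$ admit enough intermediate valuation subrings, which is not automatic. If the naive representative of $y$ fails (for instance, if the residue field of $D$ is trivially valued while $C^+$ has higher rank), one replaces $\tilde y$ with an equivalent representative obtained by passing to a suitably large algebraically closed perfectoid extension of $D$ carrying compatible higher-rank valuations; this is legitimate because $y$ admits many equivalent representatives, and enlarging $D$ can only enlarge the image of $|q|$.
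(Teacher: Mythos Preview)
Your proof is correct but more elaborate than the paper's. The paper dispatches the lemma in one line: by the definition of points of small v-stacks in \cite[Proposition 12.7]{Sch17}, the hypothesis that the closed point of $\Spa(C,C^+)$ maps into $f(|X|)$ yields, after enlarging $(C,C^+)$ to some $(C',C'^+)$ with $\Spa(C',C'^+)\twoheadrightarrow\Spa(C,C^+)$ surjective, a factorization of $g$ through $f$; then the full image of $|\Spa(C,C^+)|$ lies in $f(|X|)$. Your fiber-product detour and explicit valuation-extension argument reach the same conclusion by a different path. Note, however, that your final-paragraph worry is unnecessary: once you have established $D^+\cap C=C^+$, the inclusion $C^+\hookrightarrow D^+$ is a local homomorphism of valuation rings, hence flat (torsion-free over a valuation ring) and therefore faithfully flat, so $\Spec D^+\to\Spec C^+$ is already surjective and every point of $\Spa(C,C^+)$ lifts to $\Spa(D,D^+)$ without any enlargement. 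The enlargement trick you sketch as a fallback is essentially the paper's direct approach in disguise.
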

	\begin{proof}
		Let $C$ be a perfectoid field with open and bounded valuation subring $C^+\subset C$. Assume that the morphism
		\begin{equation}
		\Spa(C,C^+)\to X^\prime
		\end{equation}
		sends the closed point of $\Spa(C,C^+)$ into $f(|X|)$. This means that the above morphism factors through $X$ after possibly enlarging $(C,C^+)$, see \cite[Proposition 12.7]{Sch17}. But then the full image of $|\Spa(C,C^+)|$ in $|X^\prime|$ will factor through $f(|X|)$ and this shows that $f(|X|)$ is weakly generalizing.
	\end{proof}
	
	In particular, the topological space of the v-closure ${Y^\cl}$ of some sub-v-stack $Y\subset X$ is always weakly generalizing. 
	Thus, the topological space of the v-closure does not coincide, in general, with the topological closure. 
	
	\begin{example}\label{density_example}
		As a concrete example, consider the inclusion
		\begin{equation}
		\bbD_C^\diamondsuit\to \mathbb{B}_C^\diamondsuit:=\Spd(C\langle T\rangle, \mathcal{O}_C\langle T\rangle)
		\end{equation}
		of the open unit ball into the closed unit ball over a perfectoid base field $C$.
		Here $(-)^\dia$ denotes the functor from analytic pre-adic spaces to diamonds from \cite[Lecture~10]{SW20}. 
		Then $\lvert \bbD_C^\diamondsuit\rvert^{\mathrm{cl}}$ is the complement of the torus $\lvert\bbT_C^{\diamondsuit}\rvert=|\Spd(C\langle T^{\pm 1}\rangle, \mathcal{O}_C\langle T^{\pm 1}\rangle)|$, hence not weakly generalizing, as it misses the Gaußpoint but contains a rank $2$ specialization thereof. 
		
		The weakly generalizing closure $|\bbD_C^\diamondsuit|^{\mathrm{wgc}}$ is given in turn by the complement of every open unit ball $\bbD_{x,C}^\diamondsuit$ centered around $x \in \bbT_C(C)$. This will give rise to the v-closure of $\bbD_C^\diamondsuit$ inside $\mathbb{B}_C^\diamondsuit$, see \Cref{proposition-closure-of-v-subsheaves}.
	\end{example}
	
	Let us recall that, for every small v-stack $X$, there is the canonical morphism
	\begin{equation}
	X\to \underline{|X|},\ (f\colon T\to X)\mapsto (|f|\colon |T|\to |X|)
	\end{equation}
	of v-stacks where $\underline{|X|}$ denotes the v-sheaf represented by the topological space $|X|$, that is, it is given by $\underline{|X|}(T)=\mathrm{Hom}_{\mathrm{cts}}(|T|,|X|)$ for each $T\in \mathrm{Perf}_{\mathbb{F}_p}$.
	
	\begin{remark}
		We warn the reader that $\underline{|X|}$ is not small whenever $|X|$ fails to satisfy the separation axiom $T1$. Indeed, if $|X|$ is a trait, then $\underline{|X|}(R,R^+)$ is the set of closed subsets of $|\Spa(R,R^+)|$, and for each fixed cut-off cardinal $\kappa$, there is $|\Spa(R,R^+)|$ large enough so that not all closed subsets come from pullback of $\kappa$-small ones. 
	\end{remark}
	
	\begin{lemma}
		\label{sec:closures-sub-v-lemma-pullback-of-weakly-generalizing-subset-big-enough} Let $X$ be a small v-stack, and $S\subset |X|$ be a weakly generalizing closed subset. Then $Y:=\underline{S}\times_{\underline{|X|}}X$ is a small closed sub-v-stack satisfying $|Y|=S$ and, moreover, every closed sub-v-stack is of this form. 
	\end{lemma}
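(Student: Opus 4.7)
My plan is to establish the closed-immersion property of $Y \to X$ via the criterion of \Cref{definition-of-closed-sub-v-sheaf-consistent-with-scholze}, compute $|Y| = S$ using the weakly generalizing hypothesis, and then derive the converse from the rigidity of closed sub-perfectoid spaces.

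First, I would observe that $\underline{S} \hookrightarrow \underline{|X|}$ is a monomorphism of v-stacks (since it is induced by a topological inclusion), so its pullback $Y \to X$ is again a monomorphism, identifying $Y$ with a sub-v-stack of $X$; in particular $Y$ is small even though $\underline{|X|}$ itself may fail to be small. To pin down $|Y| = S$, I would pick a point $x \in S$ represented by some $\Spa(C,C^+) \to X$: by the weakly generalizing hypothesis applied to $S$, the entire topological image $|\Spa(C,C^+)| \to |X|$ then lies in $S$, so the map lifts uniquely through $\underline{S} \times_{\underline{|X|}} X = Y$ and $x \in |Y|$. The reverse inclusion $|Y| \subset S$ is automatic from the projection $Y \to \underline{S}$.

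To verify that $Y \to X$ is a closed immersion I would invoke \Cref{definition-of-closed-sub-v-sheaf-consistent-with-scholze}: the underlying set $|Y| = S$ is closed by hypothesis, and for quasi-compactness I would pull back along a v-cover $f \colon T \to X$ where $T$ is a disjoint union of totally disconnected affinoid perfectoid spaces. The base change $Y \times_X T \subset T$ classifies maps $T' \to T$ whose topological image lies in $|f|^{-1}(S) \subset |T|$, a closed subset that is again weakly generalizing by the same argument as above, now applied to $T$. On a totally disconnected perfectoid space every point is a specialization of a unique rank-one generic point, and, invoking \cite[Lemma 7.6]{Sch17} together with the profinite structure of $\pi_0(T)$, such closed weakly generalizing subsets correspond to closed sub-perfectoid spaces of $T$. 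This realises $Y \times_X T \to T$ as an honest closed immersion, hence quasi-compact, and quasi-compactness descends to $Y \to X$ by \cite[Proposition 10.11]{Sch17}.

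For the converse, given any closed sub-v-stack $Z \subset X$, I would set $S' := |Z|$, which is closed by \Cref{definition-of-closed-sub-v-sheaf-consistent-with-scholze} and weakly generalizing by \Cref{sec:closures-sub-v-lemma-image-of-morphism-of-small-v-sheaves-weakly-generalizing}. Forming $Y' := \underline{S'} \times_{\underline{|X|}} X$, the canonical map $Z \to X$ factors through $Y'$, and the resulting $Z \to Y'$ is a monomorphism of closed sub-v-stacks of $X$ inducing a bijection on topological spaces. To conclude $Z = Y'$, I would once again pull back to a totally disconnected $T \to X$: both $Z \times_X T$ and $Y' \times_X T$ are then closed sub-perfectoid spaces of $T$ with identical support, which forces them to coincide since affinoid perfectoid rings carry no nilpotent structure. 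Descent along $T \to X$ then yields $Z = Y'$. The main obstacle I expect is the geometric input in the second paragraph, namely the identification of closed weakly generalizing subsets of a totally disconnected perfectoid space with closed sub-perfectoid spaces; the rest is formal manipulation with pullbacks and the universal property of $\underline{|X|}$, but this identification genuinely requires the weak-generalization hypothesis working in tandem with the rigid structure of totally disconnected spaces.
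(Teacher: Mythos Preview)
Your proposal is correct and follows essentially the same approach as the paper: both arguments pull back to a v-cover by totally disconnected perfectoid spaces, use that weakly generalizing equals generalizing there, invoke \cite[Lemma 7.6]{Sch17} to represent the closed generalizing subset by a closed sub-perfectoid space, and appeal to \Cref{definition-of-closed-sub-v-sheaf-consistent-with-scholze}. The only cosmetic differences are that the paper establishes the closed-immersion property before computing $|Y|=S$ (deducing the latter from surjectivity of the cover) and, for the converse, uses the identity $Z = Z\times_X Y'$ together with \cite[Lemma 12.11]{Sch17} rather than your direct support argument on the totally disconnected cover; both variants encode the same content.
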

	\begin{proof}
		By \cite[Proposition 10.11]{Sch17}, we may check that $Y\subset X$ is a closed sub-v-stack after pullback along a v-cover $f\colon Z \rightarrow X$ with $Z$ a disjoint union of totally disconnected perfectoid spaces. 
		Then $Y\times_X Z=   \underline{|f|^{-1}(S)}\times_{\underline{|Z|}}Z$ and note that $|f|^{-1}(S) \subset |Z|$ is closed as $|f|$ is continuous. 
		Moreover, $|f|^{-1}(S)$ is weakly generalizing, and thus generalizing because $Z$ is a perfectoid space. 
		Consequently, $|f|^{-1}(S)$ is representable by a perfectoid space by \cite[Lemma 7.6]{Sch17}. 
		Its v-sheaf coincides with $Y\times_X Z$ by \cite[Lemma 12.5]{Sch17}, so $Y \subset X$ is a closed immersion by \Cref{definition-of-closed-sub-v-sheaf-consistent-with-scholze}. 
		Clearly, we also have $|Y|=S$ as $|f|$ is surjective. 
		
		Now assume that $Y\subset X$ is a closed sub-v-stack. 
		It follows from \Cref{sec:closures-sub-v-lemma-image-of-morphism-of-small-v-sheaves-weakly-generalizing} that $|Y|\subseteq |X|$ is weakly generalizing.
Let $Y'=X\times_{\underline{|X|}} \underline{|Y|}$.
		The identity $Y=Y\times_X Y'$ is easy to verify (by base change to totally disconnected $S$ and \cite[Proposition 5.3.(iv)]{Sch17}), so the map $Y\to Y'$ is a closed sub-v-stack with the same underlying topological space. 
		By \cite[Lemma 12.11]{Sch17}, $Y\to Y'$ is a surjetive map of v-stacks and consequently an isomorphism. 
	\end{proof}
	
	\Cref{sec:closures-sub-v-lemma-image-of-morphism-of-small-v-sheaves-weakly-generalizing} and \Cref{sec:closures-sub-v-lemma-pullback-of-weakly-generalizing-subset-big-enough} characterize closed weakly generalizing subsets $S\subset |X|$ as exactly those closed subsets $S\subset |X|$ for which the inclusion
	\begin{equation}
	\big|\underline{S}\times_{\underline{|X|}}X\big|\subset S
	\end{equation}
	is an equality. Note that the v-sheaf $Y:=\underline{S}\times_{\underline{|X|}} X$ may even be empty if $S$ is not weakly generalizing. For example, this happens if $S=\{s\}$ for $s\in \Spa(C,C^+)$ the closed point of a perfectoid field $C$ with $C^+\subsetneq O_C\subset C$ an open and bounded valuation subring of rank $>$ 1.
	
	\begin{proposition}
		\label{proposition-closure-of-v-subsheaves}
		Let $X$ be a small v-stack, and let $Y\subset X$ be a sub-v-stack. Let ${Y^\cl}\subset X$ be the v-closure of $Y$ in $X$. Then \begin{equation}{Y^\cl}= \underline{|Y|^{\on{wgc}}}\times_{\underline{|X|}}X
		\end{equation}
		as sub-v-stacks of $X$.
		Hence, $|{Y^\cl}|\subset |X|$ is the weakly generalizing closure $|Y|^{\on{wgc}}$ of $|Y|$ in $|X|$.
	\end{proposition}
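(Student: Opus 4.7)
The plan is to apply the bijection of \Cref{sec:closures-sub-v-lemma-pullback-of-weakly-generalizing-subset-big-enough} between closed sub-v-stacks of $X$ and weakly generalizing closed subsets of $|X|$, given by $Z\mapsto |Z|$ with inverse $S\mapsto \underline{S}\times_{\underline{|X|}}X$. Under this correspondence, $Y^{\cl}$, defined as the limit of all closed sub-v-stacks containing $Y$, ought to match the intersection of all weakly generalizing closed subsets containing $|Y|$, which by \Cref{sec:closures-sub-v-definition-weakly-generalizing} is $|Y|^{\on{wgc}}$.

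First I would verify that $\underline{|Y|^{\on{wgc}}}\times_{\underline{|X|}}X$ is a closed sub-v-stack of $X$ containing $Y$. An arbitrary intersection of weakly generalizing closed subsets of $|X|$ is again weakly generalizing and closed, as one reads off directly from the valuative criterion in \Cref{sec:closures-sub-v-definition-weakly-generalizing}: a closed point of $\Spa(C,C^+)$ lands in $\bigcap_i S_i$ iff it lands in each $S_i$, and then the full image does too. Hence $|Y|^{\on{wgc}}$ itself is weakly generalizing and closed, and the previous lemma furnishes a closed sub-v-stack of $X$ with topological space $|Y|^{\on{wgc}}$. The inclusion $Y\subset \underline{|Y|^{\on{wgc}}}\times_{\underline{|X|}}X$ follows from the universal property of the fibre product: any morphism $T\to Y$ produces a continuous map $|T|\to |Y|\subset |Y|^{\on{wgc}}$, so the composition $Y\to X\to \underline{|X|}$ factors through $\underline{|Y|^{\on{wgc}}}$.

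Conversely, for any closed sub-v-stack $Z\subset X$ with $Y\subset Z$, the second half of \Cref{sec:closures-sub-v-lemma-pullback-of-weakly-generalizing-subset-big-enough} gives $Z=\underline{|Z|}\times_{\underline{|X|}}X$ with $|Z|\subset |X|$ weakly generalizing and closed. Applying $|\cdot|$ to $Y\subset Z$ yields $|Y|\subset |Z|$, and minimality of $|Y|^{\on{wgc}}$ forces $|Y|^{\on{wgc}}\subset |Z|$; pulling back along $X\to \underline{|X|}$ then produces $\underline{|Y|^{\on{wgc}}}\times_{\underline{|X|}}X\subset Z$. Combining both inclusions, the limit defining $Y^{\cl}$ agrees with $\underline{|Y|^{\on{wgc}}}\times_{\underline{|X|}}X$, and the identification $|Y^{\cl}|=|Y|^{\on{wgc}}$ is then the topological space clause of \Cref{sec:closures-sub-v-lemma-pullback-of-weakly-generalizing-subset-big-enough}.

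I do not anticipate a substantive obstacle: the whole argument is a formal consequence of the topological-space/closed-sub-v-stack dictionary established in the preceding lemma, and the only verification needing any care is that weakly generalizing is preserved under arbitrary intersection, which reduces immediately to the valuative criterion.
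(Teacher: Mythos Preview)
Your proof is correct and follows essentially the same strategy as the paper: both reduce to the bijection of \Cref{sec:closures-sub-v-lemma-pullback-of-weakly-generalizing-subset-big-enough}. The only minor difference is in the converse inclusion $Y'\subset Y^{\cl}$: you show $Y'$ lies in \emph{every} closed sub-v-stack $Z\supset Y$ by writing $Z=\underline{|Z|}\times_{\underline{|X|}}X$ and using minimality of $|Y|^{\on{wgc}}$, whereas the paper argues directly that $|Y^{\cl}|\supset |Y|^{\on{wgc}}$ by invoking \Cref{sec:closures-sub-v-lemma-image-of-morphism-of-small-v-sheaves-weakly-generalizing} (images are weakly generalizing). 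Both are short formal consequences of the same dictionary.
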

	\begin{proof}
		Set $Y^\prime:=\underline{|Y|^{\on{wgc}}}\times_{\underline{|X|}}X$. Then $Y^\prime$ is a closed sub-v-stack of $X$ containing $Y$ and $|Y^\prime|=|Y|^{\on{wgc}}$ by \Cref{sec:closures-sub-v-lemma-pullback-of-weakly-generalizing-subset-big-enough}. Therefore, the v-closure ${Y^\cl}$ is contained in $Y^\prime$. But conversely, the topological space $|Y^\cl|$ must contain $|Y|^{\on{wgc}}$ by \Cref{sec:closures-sub-v-lemma-image-of-morphism-of-small-v-sheaves-weakly-generalizing}. Since $Y^\cl=\underline{|Y^\cl|}\times_{\underline{|X|}}X $ again by \Cref{sec:closures-sub-v-lemma-pullback-of-weakly-generalizing-subset-big-enough}, we conclude that $Y^\prime\subset Y^\cl$ and thus they coincide as desired. 
	\end{proof}
	
	The next result will turn out to be a useful tool later on when computing v-closures.
	Recall the notion of partially proper maps \cite[Definition 18.4]{Sch17}.
	
	\begin{corollary}
		\label{formation of closures commutes with nice basechange}
		The formation of v-closures commutes with base change by partially proper morphisms that are also open maps.
	\end{corollary}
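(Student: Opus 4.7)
By \Cref{proposition-closure-of-v-subsheaves}, the v-closure of a sub-v-stack is controlled by the weakly generalizing closure of the underlying topological space. Thus for a partially proper open morphism $f\co X' \r X$ and a sub-v-stack $Y \subset X$, the desired equality $(Y \times_X X')^{\cl} = Y^{\cl} \times_X X'$ reduces to the topological identity
\[
|Y \times_X X'|^{\on{wgc}} = |f|^{-1}\bigl(|Y|^{\on{wgc}}\bigr)
\]
of subsets of $|X'|$; here I also use that $|Y \times_X X'| = |f|^{-1}(|Y|)$, which holds since $Y \r X$ is a monomorphism.

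The inclusion ``$\subseteq$'' is formal and requires nothing beyond continuity of $f$: for each closed weakly generalizing $Z \subset |X|$ containing $|Y|$, the preimage $|f|^{-1}(Z)$ is closed, contains $|f|^{-1}(|Y|)$, and is weakly generalizing, as the condition of \Cref{sec:closures-sub-v-definition-weakly-generalizing} transports by precomposing any $\Spa(C, C^+) \r X'$ with $f$. Intersecting over such $Z$ gives the inclusion.

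For the converse, I proceed by contrapositive. Fix a closed weakly generalizing $Z' \subset |X'|$ containing $|Y \times_X X'|$, and set $U' := |X'| \setminus Z'$ and $Z := |X| \setminus |f|(U')$. Since $f$ is open, $|f|(U')$ is open in $|X|$ and $Z$ is closed; and $|f|^{-1}(|Y|) \subset Z'$ forces $|f|(U') \cap |Y| = \emptyset$, so $|Y| \subset Z$. Granting that $Z$ is weakly generalizing (proved below), $|Y|^{\on{wgc}} \subset Z$, hence $|f|^{-1}(|Y|^{\on{wgc}}) \cap U' = \emptyset$, i.e., $|f|^{-1}(|Y|^{\on{wgc}}) \subset Z'$; intersecting over all such $Z'$ then concludes.

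It remains to show that $Z$ is weakly generalizing, equivalently that $|f|(U')$ is stable under abstract specializations along $\Spa(C, C^+)$-valued points of $X$; this is the main obstacle and the only place where partial properness enters. Given $\Spa(C, C^+) \r X$ whose image meets $|f|(U')$ at some point $p = f(x'')$ with $x'' \in U'$, the image of the closed point is a specialization $p_s$ of $p$ in $|X|$. After enlarging $(C, C^+)$ via \cite[Proposition 12.7]{Sch17} to align the given map with a realization of $x''$ by some $\Spa(K, K^+) \r X'$, the valuative criterion for partially proper morphisms produces a lift $\Spa(C, C^+) \r X'$ whose generic part realizes $x''$, so the closed point maps to some $y \in |X'|$ with $x'' \rightsquigarrow y$ and $f(y) = p_s$. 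Since $Z'$ is closed and weakly generalizing, its complement $U'$ is stable under specializations, giving $y \in U'$ and hence $p_s \in |f|(U')$, as required. The technical subtlety I expect to carefully verify is this enlargement step aligning the two realizations, so that \cite[Definition 18.1]{Sch17} is genuinely applicable; once this is in place, the argument exhibits $|f|$ as specializing on points, which is precisely what is needed.
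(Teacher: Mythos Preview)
Your approach is essentially the same as the paper's: both reduce via \Cref{proposition-closure-of-v-subsheaves} to the topological claim $g^{-1}(S)^{\on{wgc}} = g^{-1}(S^{\on{wgc}})$, handle the inclusion $\subseteq$ formally, and for the reverse inclusion show that the complement in $|X|$ of the image of the relevant open set is weakly generalizing. The paper phrases this last step more abstractly, observing that the complement of a closed weakly generalizing set is a partially proper open substack, so its image under $f$ is again a partially proper open of $X$ and hence has weakly generalizing complement; your version unpacks this into a direct valuative-criterion argument.

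Regarding the subtlety you flag: the phrase ``whose generic part realizes $x''$'' is not quite right if $p$ is not the image of the generic point of $\Spa(C,C^+)$. The valuative criterion requires a lift of the generic point $\Spa(C,C^\circ)\to X'$, but your chosen $x''$ only lifts $p$. The fix is immediate and uses an observation you already made: since $|f|(U')$ is open, the image $p_\eta$ of the generic point is also in $|f|(U')$, so you may choose instead some $x_\eta\in U'$ with $f(x_\eta)=p_\eta$, enlarge $C$ so that $\Spa(C,C^\circ)\to X$ lifts to $x_\eta\in X'$, and then apply the valuative criterion for $f$. The resulting $\Spa(C,C^+)\to X'$ has generic point in $U'$, so by weak generalizingness of $Z'$ the closed point lands in $U'$ as well, and you conclude as written. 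With this adjustment your proof is complete and matches the paper's.
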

	
	\begin{proof}
		In the following, we identify open substacks of small v-stacks with open subsets of their topological space, see \cite[Proposition 12.9]{Sch17}.
		By \Cref{proposition-closure-of-v-subsheaves}, we need to verify the corresponding assertion at the topological level. 
		Let $f\colon Z\rightarrow X$ be an open and partially proper morphism between small v-stacks and set $g:=\lvert f \rvert$. 
		Let $S\subset |X|$ be a subset, clearly $g^{-1}(S)^{\on{wgc}}\subset g^{-1}(S^{\on{wgc}})$. Let $T:=g^{-1}(S)^{\on{wgc}} \subset \lvert Z \rvert$. 
		Its complement $V$ is an open subset of $|Z|$, and the map $V\to Z$ is partially proper because $T$ is weakly generalizing. 
		Indeed, to verify the valuative criterion of \cite[Definition 18.4]{Sch17} observe that if $f:\Spa(R,R^+)\to Z$ is a map whose restriction to $\Spa(R,R^\circ)$ factors through $V$, then $f$ must already factor through $V$ since every point of $\Spa(R,R^+)$ generalizes to a point in $\Spa(R,R^\circ)$. 
		Since the map $Z\to X$ is open, the subset $U:=g(V)$ is also open. 
		Since $V\to Z$ is partially proper, the map $U\to X$ is partially proper as well. 
		The complement $F \subset \lvert X \rvert$ of $U$ is closed and $g^{-1}(F) \subset T$. 
		Also, $F$ is weakly generalizing since $U$ is partially proper. 
		This implies $S^{\on{wgc}}\subset F$ and consequently $g^{-1}(S^{\on{wgc}})\subset T$, as we wanted to show.  		
	\end{proof}
	
	\subsection{The two different diamond functors}
	\label{sec:two-diff-diam-functors}
	Let $O$ be a complete discrete valuation ring with perfect residue field $k$ of characteristic $p$, and assume that $O$ is flat over $\Z_p$, that is, $p$-torsion free. Let $\pi\in O$ be a uniformizer.

	If $X$ is a pre-adic space over $O$, we can attach to it a v-sheaf $X^{\diamondsuit}$ over $\Spd O$ as in \cite[Lecture~18]{SW20}. 
	(Whenever $R^+=R$, we simply write $\Spd(R)$ for $\Spd(R,R)$ following \cite{SW20}.)
	Namely, if $S\in \Perf_{\mathbb{F}_p}$, then
	\begin{equation}
	X^\diamondsuit(S)=\{(S^\sharp, \iota, f)\}/\mathrm{isom.}
	\end{equation}
	with $(S^\sharp, \iota)$ an untilt of $S$ and $f\colon S^\sharp\to X$ a morphism of pre-adic spaces (and the obvious notion of isomorphism between these triples). 
	On the other hand, given an algebra $A$ over $O$ there are two different ways to associate a v-sheaf to $\Spec(A)$. 
	\begin{definition}
		\label{sec:two-diff-diam-definitions-formal-and-analytic-v-sheaf-associated-to-a-scheme}
		Let $A$ be an $O$-algebra.
		\begin{enumerate}
			\item We let $\Spec(A)^{\diamondsuit}$ denote the functor \begin{equation}(R,R^+)\mapsto \{(R^\sharp,\iota,f)\}/\mathrm{isom.}\end{equation} where $(R^\sharp,\iota)$ is an untilt over $O$ and $f\colon A\to R^{\sharp}$ is an $O$-algebra homomorphism.
			\item We let $\Spec(A)^{\diamond}$ denote the functor \begin{equation}(R,R^+)\mapsto \{(R^\sharp,\iota,f)\}/\mathrm{isom.}\end{equation} where $(R^\sharp,\iota)$ is an untilt over $O$ and $f\colon A\to R^{\sharp,+}$ is an $O$-algebra homomorphism.
		\end{enumerate}
	\end{definition}
	
	Both of these constructions are compatible with localization and glue to functors from the category of schemes over $O$ to the category of v-sheaves over $\Spd O$. 
	Indeed, given $g\in A$, the open subscheme $\Spec(A[1/g])$ is sent to the open subfunctors of $\Spec(A)^{\diamondsuit}$, respectively $\Spec(A)^{\diamond}$, defined by the conditions $f(g)\in (R^{\sharp})^\times$, respectively $f(g)\in (R^{\sharp,+})^{\times}$, that is, by the open loci $\{|g|\neq 0\}\subset \Spa(R^\sharp, R^{\sharp,+})$, respectively $\{ |g|=1 \}\subset \Spa(R^\sharp,R^{\sharp,+})$. 
	We still denote these functors on $\mathrm{Sch}_O$ by $\diamondsuit$ and $\diamond$.
	
	For schemes locally of finite type over $O$ both functors admit a two step construction following \cite[Section~5.1]{Gle20}.
	Let $O_{\text{disc}}$ denote the ring $O$ equipped with the discrete topology.
	The forgetful functor from the category of discrete $O_{\text{disc}}$-adic spaces $\text{DiscAdSp}_{O_{\text{disc}}}$ towards the category of $O$-schemes $\text{Sch}_{O}$ admits a left and right adjoint:
	\[
	\begin{tikzcd}
	\text{Sch}_{O} \arrow[yshift=2mm]{r}{(\str)^{\text{ad}}} \arrow[yshift=-2mm]{r}[yshift=-6mm]{(\str)^{\text{ad}/O}} & \text{DiscAdSp}_{O_{\text{disc}}}\arrow{l}
	\end{tikzcd}		
	\]
	On affinoids, respectively affines the functors are characterized by the formulas
	\[
	\Spa(A,A^+)\mapsto \Spec(A), \;\;\; \Spec(A)^\ad =\Spa(A,A),\;\;\; \Spec(A)^{\ad/O}=\Spa(A,\widetilde O),
	\]
	where $\widetilde O$ is the integral closure of $O$ in $A$. 
	If $X$ is a scheme locally of finite type over $O$, then the fiber products
	\[
	\widehat X:=X^\ad\times_{\Spa O_{\text{disc}}}\Spa O\;\;\; \text{and} \;\;\; X^{\text{an}}:=X^{\ad/O}\times_{\Spa O_{\text{disc}}}\Spa O
	\]
	along the identity $O_{\text{disc}}\r O$ exist in the category of adic spaces by Huber \cite[Proposition~3.7]{huber_a_generalization_of_formal_schemes_and_rigid_analytic_varieties} and the structure map to $\Spa O$ is adic.
	In fact, if $X=\Spec(A)$, then $\widehat{X}=\Spa(\widehat{A},\widehat{A})$ where $\widehat{A}$ is the $\pi$-adic completion of $A$ whereas $X^{\on{an}}$ is in general not affinoid.
	The following lemma is immediate from the construction and \Cref{sec:two-diff-diam-definitions-formal-and-analytic-v-sheaf-associated-to-a-scheme}:
	
	\begin{lemma}
	The diamond functor $(-)^\dia$ on pre-adic spaces induces isomorphisms of functors $\widehat{X}^\diamondsuit=X^{\diamond}$ and $(X^{\on{an}})^\diamondsuit=X^{\diamondsuit}$ from the category of schemes $X$ locally of finite type over $O$ towards the category of v-sheaves over $\Spd O$.
	\end{lemma}

	For any $O$-scheme $X$ there is an evident natural transformation $X^{\diamond}\to X^{\diamondsuit}$. 
	If $X$ is separated over $O$, this map is a monomorphism of small v-sheaves. 
	Indeed, since $\Spec A\subseteq \Spec A^+$ is schematically dense, this follows from \cite[Tag 01RH]{StaProj}. 
	Moreover,  if $X$ is also locally of finite type over $O$, then $X^\diamond\to X^\diamondsuit$ it is an open immersion. 
	Indeed, the case $X=\bbA_O^n$ can be dealt with by hand, and when $X=\Spec( O[X_1,\dots,X_n]/I)$ we have a Cartesian diagram 
	\begin{center}
	\begin{tikzcd}
	 X^\diamond \arrow{r} \arrow{d}  & (\bbA^n_O)^\diamond \arrow{d} \\
	 X^\diamondsuit \arrow{r} & (\bbA^n_O)^\diamondsuit.
	\end{tikzcd}
	\end{center}

	If $X$ is proper over $O$, then the open immersion $X^{\diamond}\to X^{\diamondsuit}$ is an isomorphism because it is surjective on points by the valuative criterion for properness. 
	In the following, let $X^\diamondsuit$ denote the common value $X^{\diamond}=X^{\diamondsuit}$ whenever $X$ is proper over $O$.
	
	The two diamond functors play different roles in the following sections: the ``analytic'' functor $(\str)^{\diamondsuit}$ is the most natural to study $\bbG_m$-actions and nearby cycles while the ``formal'' functor $(\str)^{\diamond}$ carries a specialization map which we will use to determine specialization triples. 
	
	A natural question is to what extent the associated v-sheaves $X^{\diamond}$ and $X^{\diamondsuit}$ reflect the geometry of $X$. 
	In general, neither of the functors is full or faithful. 
	For example, if $A$ is any $O$-algebra and $\widehat{A}$ its $\pi$-adic completion, then the natural morphism
	\begin{equation}
	\Spec(\widehat{A})^{\diamond}\to \Spec(A)^{\diamond}
	\end{equation}
	is an isomorphism (because $R^{\sharp,+}$ is $\pi$-complete by uniformity of affinoid perfectoid spaces). 
	In particular, if $F$ is the fraction field of $O$, and $A$ an $F$-algebra, then $\Spec(A)^{\diamond}=\emptyset$. If $A=F[t]$, then
	\begin{equation}
	\Spec(F[t])^{\diamondsuit} =(\bbA_F^{1,\mathrm{ad}})^\diamondsuit
	\end{equation}
	is the rigid-analytic affine line over $F$, which has many non-algebraic automorphisms.
	
	When we restrict to schemes over $O$ for which $\pi=0$, the situation is more clear. 
	Both $\diamond$ and $\diamondsuit$ are fully faithful on perfect schemes and if we let $Y$ denote the perfection of $X$, then $X^{\diamond}=Y^{\diamond}$ and $X^{\diamondsuit}=Y^{\diamondsuit}$ \cite[Proposition 18.3.1]{SW20}, \cite[Theorem 2.32]{Gle20}. 
	That is, up to a fully faithful embedding, both functors are the perfection functor. 
	Nevertheless, we stress again that the essential images of the functors $(\str)^{\diamond}, (\str)^{\diamondsuit}$ on (perfect) schemes over $k$ are different.
	
	To prove the Scholze-Weinstein conjecture we need to work with schemes that are proper and flat over $O$, and their associated small v-sheaves. Therefore, we have to relate these two notions. 
	
	The functor $X\mapsto X^{\diamond}(=X^\diamondsuit\textrm{ if $X$ is proper})$ from the category of schemes over $O$ to small v-sheaves over $\Spd O$ factors as the composition of the functor
	\begin{equation}
	\widehat{(\str)}_\pi\colon \mathrm{Sch}_O\to \mathrm{fSch}_O,\ Y\mapsto \widehat{Y}_\pi
	\end{equation}
	of $\pi$-adic completion, followed by the functor sending (locally) a formal scheme $\Spf(A)$ over $\Spf(O)$ (with locally finitely generated ideal of definition) to the (pre-)adic space $\Spa(A)$, and then followed by the functor $(\str)^\diamondsuit$ on pre-adic spaces over $\Spa(O)$. 
	We also denote for a formal scheme $Y$ (admitting locally a finite ideal of definition) by $Y^\diamondsuit$ the v-sheaf for the pre-adic space associated with $Y$.
	
	On the category of schemes which are proper over $O$, the functor $\widehat{(\str)}_\pi$ of $\pi$-adic completion is fully faithful by Grothendieck's existence theorem \cite[Tag 08BF]{StaProj} or \cite[Th\'eor\`eme 5.4.1]{Gro61}. 
	Let us note that $\pi$-adic completion maps schemes, which are flat over $O$, to formal schemes, which are flat over $O$. 
	
	
	Let us recall the following notions. 
	
	\begin{definition}[{\cite[Tag 0EUL]{StaProj}}]
		\label{sec:two-diff-diam-definition-semi-normal-etc}
		A ring $A$ is called semi-normal if for all $a,b\in A$ with $a^3=b^2$ there exists a unique $c\in A$ with $a=c^2$ and $b=c^3$. 
		Similarly, $A$ is called absolutely weakly normal if it is semi-normal and if, for any prime $\ell$ and elements $a,b\in A$ with $\ell^\ell a=b^\ell$, there exists a unique $c\in A$ with $a=c^\ell$ and $b=\ell c$.
		Further, a ring $A$ whose spectrum has finitely many irreducible components is called weakly normal if it is semi-normal and if, for any prime $\ell$ and elements $a,b\in A$ such that $a$ is a nonzerodivisor, $\ell a\mid \ell b$ and $a\mid \ell b$, one has $a\mid b$. 
	\end{definition}
	
	Note that the last two properties are automatic for any prime $\ell$, which is invertible in $A$, and that each semi-normal ring is reduced.
	
	Since ring localizations preserve semi-normality, absolute weak normality or weak normality, they can be generalized to schemes (having locally finitely many irreducible components for the last property), see \cite[Tags 0EUN, 0H3I]{StaProj}. 
	Moreover, given any scheme $X$, there exists a morphism $X^{\text{sn}}\rightarrow X$ (respectively, $X^{\text{awn}}\rightarrow X$) from a semi-normal (respectively, absolutely weakly normal) scheme, which is called the semi-normalization (respectively, absolutely weak normalization) of $X$, and which is also the initial morphism $Y\to X$, which is a universal homeomorphism inducing isomorphisms on each residue field (respectively, universal homeomorphism), see \cite[Tags 0EUS]{StaProj}.
	In comparison, when $X$ has locally finitely many irreducible components, then the normalization $X^{\text{n}}\rightarrow X$ factors uniquely through the weak normalization $X^{\text{wn}}\rightarrow X$, which is also the initial homeomorphism $Y\rightarrow X$ admitting a factorization $X^{\text{n}}\rightarrow Y\rightarrow X$, see \cite[Tags 0H3N]{StaProj}.
	
	If $A$ is an $\bbF_p$-algebra, then $A$ is absolutely weakly normal if and only if $A$ is perfect \cite[0EVV]{StaProj}, and thus the absolute weak normalization agrees with the perfection of schemes over $\bbF_p$. 
	From the universal property of $X^{\mathrm{awn}}$ and the fact that universal homeomorphisms are integral, radicial and surjective \cite[Tag 04DF]{StaProj}, it is clear that normal, integral schemes $X$ with perfect function field are absolutely weakly normal. 
	In comparison, note that any normal scheme is weakly normal, without any condition on its function fields at generic points. 
	
	\begin{lemma}
		\label{sec:two-diff-diam-lemma-v-sheaf-only-depends-on-absolute-weak-normalization}
		Let $Y=\Spf(A)$ be an affine formal scheme over $O$, and let $I\subset A$ be a finitely generated ideal of definition. Let $B:=\widehat{A^{\mathrm{awn}}}_I$ be the $I$-adic completion of the absolute weak normalization of $A$. Then the natural map
		\begin{equation}
		\Spf(B)^\diamondsuit\to \Spf(A)^\diamondsuit
		\end{equation}
		is an isomorphism.
	\end{lemma}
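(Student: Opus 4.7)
The plan is to verify the proposed isomorphism of v-sheaves on the level of $\Spa(R,R^+)$-points for every affinoid perfectoid $(R,R^+)$ of characteristic $p$. By unraveling \Cref{sec:two-diff-diam-definitions-formal-and-analytic-v-sheaf-associated-to-a-scheme}, such a point of $\Spf(A)^\diamondsuit$ (respectively $\Spf(B)^\diamondsuit$) is the datum of an untilt $(R^\sharp,\iota)$ over $O$ together with a continuous $O$-algebra map $A\to R^{\sharp,+}$ (respectively $B\to R^{\sharp,+}$), where source and target carry their $I$-adic and $\pi$-adic topologies. The natural map is precomposition with the canonical $A\to A^{\mathrm{awn}}\to B$, so I would reduce the lemma to showing that every continuous $O$-algebra map $f\colon A\to R^{\sharp,+}$ admits a unique continuous extension to $B$.

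Next, I would observe that some power of $\pi$ lies in $I$ (since $\pi$ is topologically nilpotent in $A$), so $R^{\sharp,+}$ is complete for the $I$-topology via the image of $f$. By the universal property of completion, continuous maps $\widehat{A^{\mathrm{awn}}}_{I}\to R^{\sharp,+}$ biject with ring maps $A^{\mathrm{awn}}\to R^{\sharp,+}$ (continuity of the latter being automatic by integrality of $A\to A^{\mathrm{awn}}$). Consequently, the lemma becomes equivalent to the assertion that $f$ extends uniquely along $A\to A^{\mathrm{awn}}$. By the universal property of absolute weak normalization (\Cref{sec:two-diff-diam-definition-semi-normal-etc}, \cite[Tag 0EUS]{StaProj}), this amounts to $R^{\sharp,+}$ itself being absolutely weakly normal.

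The essential remaining task is therefore to establish absolute weak normality of $R^{\sharp,+}$ for any affinoid perfectoid Huber pair. In characteristic $p$, Frobenius is bijective on $R^\sharp$ and restricts bijectively to the integrally closed subring $R^{\sharp,+}$, so $R^{\sharp,+}$ is perfect and the conclusion follows from \cite[Tag 0EVV]{StaProj}. In mixed characteristic, I would exploit that $p$ is invertible in $R^\sharp$ and that $R^{\sharp,+}$ is integrally closed in $R^\sharp$: given a semi-normality relation $a^3=b^2$ in $R^{\sharp,+}$, one constructs a candidate $c$ in $R^\sharp$ locally on $\Spa(R^\sharp,R^{\sharp,+})$ (taking $c=b/a$ where $a$ is a unit) and glues via reducedness of $R^\sharp$; since $c^2=a\in R^{\sharp,+}$, integrality forces $c\in R^{\sharp,+}$. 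The $\ell$-th power relation $\ell^\ell a=b^\ell$ is handled similarly: for $\ell\neq p$ the element $\ell$ is a unit in $R^{\sharp,+}$, while for $\ell=p$ the candidate $c=b/p\in R^\sharp$ satisfies $c^p=a\in R^{\sharp,+}$, placing it in $R^{\sharp,+}$ by integral closedness, with uniqueness coming from $p$-torsion freeness.

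The hard part will be the mixed-characteristic absolute weak normality of $R^{\sharp,+}$ in the presence of zero-divisors, where the naïve quotient $c=b/a$ is not globally defined. I anticipate handling this via a covering-and-gluing argument on $\Spa(R^\sharp,R^{\sharp,+})$, leveraging reducedness of $R^\sharp$ and integral closedness of $R^{\sharp,+}$ inside it; once this is secured, the chain of bijections of the previous paragraphs formally assembles into the desired v-sheaf isomorphism.
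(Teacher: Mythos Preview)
Your reduction is sound up to and including the point where the task becomes: show that $A\to R^{\sharp,+}$ extends uniquely along $A\to A^{\mathrm{awn}}$, equivalently that $R^{\sharp,+}$ is absolutely weakly normal. The paper arrives at the same reduction. But from here the paper does \emph{not} try to prove absolute weak normality of $R^{\sharp,+}$ for an arbitrary affinoid perfectoid pair. Instead it uses that $\Spf(A)^\diamondsuit$ and $\Spf(B)^\diamondsuit$ are v-sheaves, so the comparison may be checked after passing to a v-cover. One then chooses a v-cover $\Spa(R,R^+)$ with $R^+$ a product of perfectoid valuation rings with algebraically closed fraction fields; for such $R^+$ absolute weak normality is immediate, since each factor is a normal integral domain with perfect fraction field and the defining conditions are checked factorwise.

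Your direct argument in mixed characteristic has a real gap. The candidate $c=b/a$ is only defined on the locus where $a$ is a unit, which need not be all of $\Spa(R^\sharp,R^{\sharp,+})$; reducedness of $R^\sharp$ gives uniqueness of $c$ if it exists, but not existence. A covering-and-gluing argument cannot produce $c$ on the closed locus where $a$ vanishes without some additional input, and integral closedness of $R^{\sharp,+}$ in $R^\sharp$ only helps once you already have $c\in R^\sharp$. In short, you are trying to prove that every perfectoid $R^{\sharp,+}$ is absolutely weakly normal, which (even if true) is not elementary; the paper's v-descent trick is precisely the device that lets one avoid this question by replacing $(R^\sharp,R^{\sharp,+})$ with a cover where the claim is trivial.
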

	\begin{proof}
		Let $\Spa(R,R^+)\to \Spa(A)$ be a morphism from some affinoid perfectoid space over $O$. 
		Then $R^+$ is automatically $I$-adically complete. By the universal property of the absolute weak normalization and the fact that $\Spf(B)^\diamondsuit, \Spf(A)^\diamondsuit$ are v-sheaves it suffices to see that every affinoid perfectoid space admits a v-cover by one of the form $\Spa(R,R^+)$ with $R^+$ absolutely weakly normal. 
		We can always choose $(R,R^+)$ so that $R^+$ equals a product of perfectoid valuation rings with algebraically closed fraction fields. 
		In this case, $R^+$ is absolutely weakly normal because the conditions in \Cref{sec:two-diff-diam-definition-semi-normal-etc} can be checked in each factor. 
		Indeed, it is clear that any such factor is a normal, integral domain with algebraically closed, in particular perfect, fraction field.
	\end{proof}
	
	\begin{definition}
		\label{weakly-normal-formal-schemes}
		A formal scheme $X$ topologically of finite type and flat over $O$ is called weakly normal
		if it is locally of the form $\Spf(A)$ where $A$ is a weakly normal, flat and $\pi$-adically complete topological algebra of the form $O\langle T_1,\dots,T_n\rangle/I$ for some ideal $I\subset O\langle T_1,\dots,T_n\rangle$.
	\end{definition}
	
	To justify \Cref{weakly-normal-formal-schemes}, we need to prove that ``weak normality'' glues and localizes for the formal schemes that we work with. 
	This is the content of the next statement.
	\begin{proposition}
		Assume that $A$ is flat and topologically of finite type over $O$. 
		Let $\emptyset\neq U_{f_i}\subset \Spf(A)$ with $i\in \{1\dots,n\}$ be an open cover by distinguished open subsets with $U_i=\Spf(B_i)$. 
		Then $A$ is weakly normal if and only if all of the $B_i$ are weakly normal. 
	\end{proposition}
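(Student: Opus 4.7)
The plan is to verify the two defining conditions of weak normality from \Cref{sec:two-diff-diam-definition-semi-normal-etc} separately in each direction, using in the ``only if'' direction that distinguished Tate-style localization preserves weak normality, and in the ``if'' direction the sheaf property of $\calO_{\Spf A}$ on distinguished covers combined with the uniqueness of the witnesses provided by weak normality.

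For the ``only if'' direction, I would factor each structural map $A \to B_i = A\langle f_i^{-1}\rangle$ as the composition of the algebraic localization $A \to A[f_i^{-1}]$ with its subsequent $\pi$-adic completion. Algebraic localization trivially preserves weak normality, since semi-normality and the $\ell^\ell$-root-lifting conditions are local on $\Spec$ in the Zariski topology. For the completion step, I would invoke excellence: the complete discrete valuation ring $O$ with perfect residue field is excellent, and so is every topologically finitely generated flat $O$-algebra such as $A[f_i^{-1}]$. For excellent reduced rings, weak normalization commutes with the regular map given by $\pi$-adic completion along any ideal, so the $\pi$-adic completion $B_i$ of a weakly normal flat excellent $O$-algebra remains weakly normal.

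For the ``if'' direction, I would use that the structure sheaf on $\Spf(A)$ is acyclic on distinguished open covers, which is the formal-scheme analog of Tate's acyclicity theorem valid for topologically finite type $O$-algebras. Consequently, $A$ is the equalizer of $\prod_i B_i \rightrightarrows \prod_{i,j} B_{ij}$ with $B_{ij}=A\langle (f_if_j)^{-1}\rangle$, so $A$ embeds into $\prod_i B_i$ and is in particular reduced. Given $a,b\in A$ with $a^3=b^2$ (respectively $\ell^\ell a = b^\ell$), I would apply weak normality of each $B_i$ to produce unique $c_i\in B_i$ realizing the required equations. Uniqueness in each $B_{ij}$ forces the $c_i$ to be mutually compatible on overlaps, and Čech acyclicity then glues them to a unique $c\in A$ satisfying the required relations, establishing weak normality of $A$. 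Uniqueness of $c$ in $A$ comes for free from the injection $A\hookrightarrow \prod_i B_i$.

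The main obstacle will be the preservation of weak normality under $\pi$-adic completion in the ``only if'' direction, since formal completion is notoriously capable of destroying subtle algebraic properties such as normality or semi-normality. The route I would take is to use the excellence of $A$, inherited from $O$, to exploit the compatibility between weak normalization and regular (in particular flat with geometrically regular fibers) ring maps, of which the $\pi$-adic completion of an excellent Noetherian ring is a key instance. Should this path encounter technical issues for the $\ell$-power lifting condition, the fallback is to verify the conditions directly using Artin approximation, exploiting that solutions in $B_i$ can be approximated to arbitrary $\pi$-adic precision by solutions in $A[f_i^{-1}]$, where the algebraic case applies.
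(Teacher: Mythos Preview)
Your proposal is correct, but the ``if'' direction takes a genuinely different route from the paper. Both you and the paper handle the ``only if'' direction the same way: the map $A[f_i^{-1}]\to B_i$ is regular because $A$ is excellent, and weak normality ascends along regular maps. For the converse, however, the paper does not glue witnesses via the \v{C}ech sequence. Instead it proves a single statement covering both directions at once: the map $A\to \prod_i B_i$ is regular and faithfully flat, and then \cite[Proposition III.3]{Man80} gives that weak normality transfers in either direction along such a map (a regular map being in particular a reduced and normal map). This is cleaner in that it never unpacks an elementwise characterization of weak normality, and it treats ascent and descent symmetrically. Your \v{C}ech gluing is more elementary and self-contained, and it works because the witnesses in the defining conditions are unique; the tradeoff is that you must rely on a pointwise characterization rather than a general permanence theorem.

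One technical point worth tightening: you invoke \Cref{sec:two-diff-diam-definition-semi-normal-etc} for ``the two defining conditions of weak normality'', but that definition is stated for \emph{absolutely weakly normal} rings, not weakly normal ones. In your setting this is harmless: since $A$ is flat over $O$, every prime $\ell\neq p$ is a unit in $A$, so the $\ell$-condition is automatic for $\ell\neq p$, and for $\ell=p$ the condition is equivalent (using that $p$ is a non-zero-divisor) to Yanagihara's criterion for weak normality. But this identification deserves a sentence; otherwise a reader may worry that you are proving the wrong property.
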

	\begin{proof}

		Since weak normality is compatible with localization $A$ is weakly normal if and only if all of the $A[f_i^{-1}]$ are weakly normal. 
		Now, $B_i$ is the $\pi$-adic completion of $A[f_i^{-1}]$, in particular flat over it. We claim that $A[f_i^{-1}]\to B_i$ is a regular map \cite[Tag 07BZ]{StaProj} and that $A\to \prod_i B_i$ is regular and faithfully flat. 
		Given these, the statement follows directly from \cite[Proposition III.3]{Man80} since a regular map is a reduced and normal map. 
		Let us prove the claim. 
		Observe that all of the rings are Noetherian and excellent because they are obtained from $O$ by adding variables, taking quotient by ideals, completing or localizing. 
		By \cite[Tag 07C0]{StaProj}, we may check regularity after localizing at a maximal ideal $\mathfrak{m}\subset B_i$. 
		Consider the following maps of rings:
		\begin{equation}
		A_{(\mathfrak{m})}\to (B_i)_{(\mathfrak{m})}\to \widehat{(A_{(\mathfrak{m})})_\pi}\to 	\widehat{(A)_\mathfrak{m}} 	
		\end{equation}
		They are all faithfully flat. 
		Since $A$ is excellent, the map $A_{(\mathfrak{m})}\to \widehat{(A)_\mathfrak{m}}$ is regular. 
		By \cite[Tag 07QI]{StaProj}, we conclude $A_{(\mathfrak{m})}\to (B_i)_{(\mathfrak{m})}$ is so as well.
	\end{proof}

	Variants of the following result appear in \cite[Proposition 18.4.1]{SW20} and \cite[IV, Theorem 4.6]{Lou20}:

	\begin{theorem}\label{univalencia do functor diamante restrito a esquemas formais normais}
		The functor $X \mapsto X^{\diamondsuit}$ from the category of absolute weakly normal formal schemes flat, separated and topologically of finite type over $O$, to v-sheaves over $\Spd O$ is fully faithful.
	\end{theorem}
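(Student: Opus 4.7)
The strategy is to reduce, via Zariski-local considerations on source and target, to the case where the target is the closed unit disk $\bbD = \Spf O\langle T\rangle$, and then to finish by v-descent combined with the universal property of absolute weak normalization. First, since $Y\to \Spf O$ is separated and $(-)^\diamondsuit$ is compatible with open immersions and gluing, morphisms on both sides of the natural map
\begin{equation*}
\Hom_{\Spf O}(X, Y) \longrightarrow \Hom_{\Spd O}(X^\diamondsuit, Y^\diamondsuit)
\end{equation*}
factor Zariski-locally with respect to an affine open cover of $Y$, so we may assume $X = \Spf A$ and $Y = \Spf B$ are both affine. Second, choosing a topological presentation $B = O\langle T_1, \ldots, T_n\rangle/I$ realizes $Y$ as a closed sub-formal-scheme of the polydisk $\bbD^n = \Spf O\langle T_1, \ldots, T_n\rangle$. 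A morphism $X \to \bbD^n$ factors through $Y$ iff the induced morphism of v-sheaves factors through $Y^\diamondsuit \subset (\bbD^n)^\diamondsuit$: the forward direction is functoriality, and the converse will follow a posteriori from the disk case, which identifies ``functions'' on $X^\diamondsuit$ with elements of $A$, so that the vanishing of generators of $I$ on $X^\diamondsuit$ translates to their vanishing in $A$. By induction on $n$, we reduce to $Y = \bbD$.

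In this reduced situation the claim becomes that the natural evaluation map
\begin{equation*}
A \longrightarrow \Hom_{\Spd O}\bigl(\Spf(A)^\diamondsuit, \bbD^\diamondsuit\bigr)
\end{equation*}
is a bijection. Injectivity is relatively straightforward: since semi-normality implies reducedness, $A$ is reduced, and a reduced, flat, topologically of finite type $O$-algebra injects into a product of complete algebraically closed perfectoid valuation rings indexed by closed points of both its rigid-analytic generic fiber and its special fiber, each of which contributes a distinguishing $\Spd O$-point of $\Spf(A)^\diamondsuit$. This step does not yet require full absolute weak normality, only reducedness.

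The substantive step, and the main obstacle, is surjectivity. Given a natural transformation $\phi$, I would choose a v-cover $\Spa(R, R^+) \twoheadrightarrow \Spf(A)^\diamondsuit$ by an affinoid perfectoid space equipped with a fixed untilt $R^\sharp$ over $O$; then $\phi$ produces an element $r \in R^{\sharp, +}$ together with its cocycle compatibility on the fiber product, and one must show $r$ descends to $A$, i.e., establish the v-descent formula
\begin{equation*}
A \;=\; \mathrm{eq}\bigl(R^{\sharp, +} \rightrightarrows (R \,\hat\otimes_A\, R)^{\sharp, +}\bigr).
\end{equation*}
Here absolute weak normality is essential: since rings of integral elements in perfectoid algebras are absolutely weakly normal, the raw v-descent equalizer a priori recovers only the absolute weak normalization of the $\pi$-adic completion of $A$, which by \Cref{sec:two-diff-diam-lemma-v-sheaf-only-depends-on-absolute-weak-normalization} is also the ring that $\Spf(A)^\diamondsuit$ ``sees''. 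Under our standing hypothesis that $A$ is already $\pi$-adically complete and absolutely weakly normal, this equalizer coincides with $A$ itself, producing the required element $a \in A$; functoriality of the evaluation map then forces $a$ to induce $\phi$, completing the argument in parallel to \cite[Proposition 18.4.1]{SW20} and \cite[IV, Theorem 4.6]{Lou20}.
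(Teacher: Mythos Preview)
Your reduction to the affine case and then to $Y=\bbD$ is reasonable, and the faithfulness step is fine. The gap is in surjectivity for the disk case. The displayed equalizer
\[
A \;=\; \mathrm{eq}\bigl(R^{\sharp,+}\rightrightarrows(\cdots)^{\sharp,+}\bigr)
\]
is tautologically the same as $\Hom(\Spf(A)^\diamondsuit,\bbD^\diamondsuit)=A$, which is exactly the statement you are trying to prove; invoking it is circular. Your justification---that $R^{\sharp,+}$ is absolutely weakly normal, hence the equalizer ``recovers only the absolute weak normalization of $A$''---does not work: first, $R^{\sharp,+}$ need not be absolutely weakly normal for an arbitrary affinoid perfectoid (this holds for products of valuation rings with algebraically closed fraction field, as in \Cref{sec:two-diff-diam-lemma-v-sheaf-only-depends-on-absolute-weak-normalization}, but not in general); second, even granting that, there is no argument showing the equalizer is a ring universally homeomorphic to $\Spec A$, which is what one would need to invoke absolute weak normality. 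A priori the equalizer could contain, say, power-bounded elements of $A[\pi^{-1}]$ not lying in $A$. You have correctly identified that \Cref{sec:two-diff-diam-lemma-v-sheaf-only-depends-on-absolute-weak-normalization} says $\Spf(A)^\diamondsuit$ only sees $A^{\mathrm{awn}}$, but that lemma gives no handle on what $\Hom(\Spf(A)^\diamondsuit,\bbD^\diamondsuit)$ actually is.

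The paper's proof avoids this by a genuinely different route. Rather than computing global sections by descent, it first produces the morphism on generic fibers via the known full faithfulness for seminormal rigid spaces \cite[Proposition 10.2.3]{SW20}, then extends it to a candidate integral morphism $\psi''$ using integral closures. The key geometric input is \Cref{density of generic}: inside $X^\diamondsuit\times_{\Spd O}Y^\diamondsuit$ one has two closed sub-v-sheaves---the graph of $f$ and the image of the graph of $\psi''$---sharing the same dense generic fiber, hence coinciding. This yields a finite birational map $\Spec(B'')\to\Spec(B)$ which is an isomorphism on associated v-sheaves, hence a universal homeomorphism on schemes, and absolute weak normality of $B$ finishes. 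The density lemma is doing the real work that your equalizer claim was meant to do.
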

	\begin{proof}
		We begin by proving the case in which $X$ and $Y$ are affine formal schemes. Confusing a formal scheme with its associated adic space we may assume that $X=\Spa(A), Y=\Spa(B)$ with $A$, $B$ absolutely weakly normal flat and topologically of finite type over $O$.
		To prove faithfulness we wish to show that if we are given two maps $g,f:X\to Y$ with $f^\Diamond=g^\Diamond$, then $f=g$.
		This follows from the fact that $A$ admits an injection (as it is reduced) into a product of perfectoid valuation rings.
		For fullness, let $f\colon X^\diamondsuit\to Y^\diamondsuit$ be a morphism of small v-sheaves. We are seeking a morphism $\psi\colon X\to Y$ such that $\psi^\diamondsuit=f$.  
		Let $K=O[\pi^{-1}]$ be the fraction field of $O$.  
		As $A$, $B$ are $\pi$-adic, the generic fibers $X_\eta, Y_\eta$ are given by $\Spa(A[\pi^{-1}],A^\prime)$, $\Spa(B[\pi^{-1}],B^\prime)$ with $A^\prime$, $B^\prime$ the integral closure of $A$, $B$ in $A[\pi^{-1}]$, $B[\pi^{-1}]$. 
		The localizations $A[\pi^{-1}]$ and $B[\pi^{-1}]$ are absolutely weakly normal by \cite[Tag 0EUM]{StaProj}, and thus semi-normal. By \cite[Proposition 10.2.3]{SW20}, we get a morphism $\psi_{\eta}\colon X_{\eta}\rightarrow Y_{\eta}$, so that $\psi_{\eta}^\diamondsuit=f_\eta$.
		Because $A$, $B$ are topologically of finite type over $O$ and reduced, the rings $A^\prime$, $B^\prime$ are finite over $A$, $B$, and thus in particular the subspace topology coming from $A[\pi^{-1}]$, $B[\pi^{-1}]$ is $\pi$-adic on $A^\prime$, $B^\prime$. 
		In particular, $A^\prime, B^\prime$ are Huber. 
		By definition the map $\psi_\eta\colon X_\eta\to Y_\eta$ induces a morphism $\psi'\colon X':=\Spa(A') \rightarrow Y$ over $O$, so that $\psi'_\eta=\psi_\eta$. 
		Denoting by $A''\subset A'$ the (automatically closed as $A$ is noetherian and $A^\prime$ finite over $A$) image of $B\widehat{\otimes}_{O} A$ in $A[\pi^{-1}]$, we even get $\psi''\colon X'':=\Spa(A'') \rightarrow Y$ such that the morphism $X'' \rightarrow Y\times_{O}X $ is a closed embedding of formal schemes. 
		It is easy to see that $(X^{\prime\prime})^\diamondsuit\to (Y\times_{\Spa(O)}X)^\diamondsuit\cong Y^\diamondsuit\times_{\Spd O} X^\diamondsuit$ is a closed immersion of v-sheaves. 
		Inside $Y^\diamondsuit \times_{\Spd O} X^\diamondsuit$, we then have two closed sub-v-sheaves, namely $X''^{\diamondsuit}$ induced by $\psi''^{\diamondsuit}$ and $X^\diamondsuit \simeq \Gamma_f$ induced by the graph of $f$. 
		In both of these closed sub-v-sheaves, the generic fiber is dense by \Cref{density of generic} below (applied to $A$ and $A^{\prime\prime}$), and they carry the same generic fiber. 
		Therefore, the finite birational morphism $X''\rightarrow X$ induced by the inclusion $A\subset A^{\prime\prime}$ becomes an isomorphism in the category of v-sheaves. 
		Passing to special fibers, this implies that $\Spec(A''/\pi)^{\on{perf}}\to \Spec(A/\pi)^{\on{perf}}$ is an isomorphism \cite[Proposition 18.3.1]{SW20}. 
		As $A^{\prime\prime}[\pi^{-1}]\cong A[\pi^{-1}]$ we can conclude that $\Spec(A^{\prime\prime})\to \Spec(A)$ is a universal homeomorphism. 
		Indeed, $A\to A^{\prime\prime}$ is integral, radicial (as can be checked on each fiber over $\Spec(O)$) and surjective. 
		Since $A$ is absolutely weakly normal, we get $A''=A$ and thus $(\psi'')^\diamondsuit=f$.
		
		We now extend the argument to the general case. To verify faithfulness one can easily argue locally on $X$ and $Y$ because if $X=\cup_{i\in I} X_i$ is an open cover by formal schemes, then $\cup_{i\in I} X_i^\diamondsuit$ is an open cover of $X^\diamondsuit$. 
		Proving fullness is more subtle since one has to justify that for a map $f\colon X^\diamondsuit\to Y^\diamondsuit$ and an open subset $U\subset Y$ with $U=\Spf(B)$, the pullback $f^{-1}(U^\diamondsuit)\subset X^\diamondsuit$ is ``classical''. 
		In other words, 
		\begin{equation}
		\label{check-detail-of-specialization}
		f^{-1}(U^\diamondsuit)=V^\diamondsuit
		\end{equation}
		for some open immersion of formal schemes $V\subset X$. 
		Now, by \cite[Proposition 18.3.1]{SW20} the special fiber map $f\times_{\Spd O} \Spd k$ is induced by a map of perfect schemes $f_{\on{red}}\colon X_{\on{red}}^{\text{perf}}\to Y_{\on{red}}^{\text{perf}}$. 		Identifying $|X|$, $|Y|$ with $|X_{\on{red}}^{\text{perf}}|$ and $|Y_{\on{red}}^{\on{perf}}|$, we can construct $V$ as $f_{\on{red}}^{-1}(U_{\on{red}})$. 
		That the identity in \Cref{check-detail-of-specialization} holds will follow from functoriality of the specialization map considered in \cite{Gle20}. 
		Indeed, $U^\diamondsuit=\on{sp}_{Y^\diamondsuit}^{-1}(U_{\on{red}})$.
	\end{proof}

	We used the following lemma. Here, for a Huber pair $(A,A^+)$ over $O$ the notation $\Spd(A,A^+)$ is a shorthand for $\Spa(A,A^+)^\diamondsuit$.

	\begin{lemma}
		\label{density of generic}
		Suppose that $B$ is a $\pi$-adically complete flat and topologically of finite type $O$-algebra, let $B^\prime$ denote the integral closure of $B$ in $B[\pi^{-1}]$. Then the generic fiber $\Spd(B[\pi^{-1}],B^\prime)$ is a dense open subset of $\Spd(B)$.
	\end{lemma}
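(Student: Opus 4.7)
The plan is to reduce the density claim to a topological statement on $|\Spa(B, B)|$, and then to construct, for every non-analytic point, an analytic topological generalization via a composite valuation.

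The open immersion $\Spa(B[\pi^{-1}], B') \hookrightarrow \Spa(B, B)$ is precisely the open analytic locus $\{|\pi|\neq 0\}$, whose complement is the non-analytic locus $V(\pi) \subset |\Spa(B,B)|$. Since $(-)^\diamondsuit$ preserves open immersions, the claimed open immersion of v-sheaves follows. By \Cref{proposition-closure-of-v-subsheaves}, the v-closure of $\Spd(B[\pi^{-1}], B')$ in $\Spd(B)$ corresponds to the weakly generalizing closure of the analytic locus in $|\Spa(B,B)|$, which contains the ordinary topological closure. It therefore suffices to show that every $x \in V(\pi)$ is a topological specialization of some analytic point.

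Given such $x$, set $\mathfrak{p} := \on{supp}(x)$, so $\pi \in \mathfrak{p}$. Since $B$ is Noetherian and $\pi$-torsion free, $\pi$ lies in no minimal prime of $B$; choose a minimal prime $\mathfrak{q} \subset \mathfrak{p}$, so $\pi \notin \mathfrak{q}$. The Noetherian local domain $R := (B/\mathfrak{q})_{\mathfrak{p}/\mathfrak{q}}$ has $\pi$ in its maximal ideal, and by Zariski's classical existence of rank-one dominating valuations we pick such a valuation ring $W_1 \subset \kappa(\mathfrak{q}) = \on{Frac}(R)$. Its residue field $\kappa_1$ contains $\kappa(\mathfrak{p}) = R/\mathfrak{m}_R$, and by Chevalley's extension theorem we lift the valuation of $x$ on $\kappa(\mathfrak{p})$ to a valuation $w$ on $\kappa_1$.

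Let $\tilde v$ be the composite valuation on $\kappa(\mathfrak{q})$ with valuation ring $\tilde W := \{a \in W_1 : \bar a \in W_w\}$ and value group $\Gamma$ fitting into a short exact sequence $1 \to \Gamma_w \to \Gamma \to \Gamma_{W_1} \to 1$. Because $\pi \in \mathfrak{m}_{W_1}$, the image of $\tilde v(\pi)$ in $\Gamma_{W_1}$ is nontrivial and $<1$; archimedeanness of $\Gamma_{W_1}$ then forces $\tilde v(\pi)^n \to 0$ in $\Gamma$, so $\tilde v$ is continuous for the $\pi$-adic topology on $B$ via $B \to B/\mathfrak{q} \hookrightarrow \kappa(\mathfrak{q})$ and defines a point $\tilde x \in \Spa(B[\pi^{-1}], B')$. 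Topologically, $\tilde x \leadsto x$ by the standard two-step specialization: enlarging $\tilde W$ to $W_1$ (horizontal) produces an intermediate analytic rank-one point, and then localizing at $\mathfrak{m}_{W_1}$ and passing to the residue valuation $w$ on $\kappa_1$ (vertical) recovers $x$ via $\kappa(\mathfrak{p}) \hookrightarrow \kappa_1$. The principal subtlety is that $W_1$ must be chosen of rank one, without which the archimedean argument for continuity breaks down and $\tilde v$ might fail to define a point of $\Spa(B,B)$ at all; this rank-one requirement is supplied by Zariski's theorem.
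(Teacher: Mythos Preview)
Your valuation-theoretic construction is elegant and correctly proves that the analytic locus is dense in $|\Spa(B,B)|$: the composite valuation $\tilde v$ is indeed continuous (your archimedean argument for $\Gamma_{W_1}$ is exactly right), bounded by $1$ on $B$, and one checks directly on rational opens that $\tilde x \leadsto x$. However, there is a genuine gap in the reduction step. You invoke \Cref{proposition-closure-of-v-subsheaves} to say the v-closure is governed by the weakly generalizing closure in $|\Spa(B,B)|$, but that proposition speaks of $|\Spd(B)|$, not $|\Spa(B,B)|$. Since $\Spa(B,B)$ is not analytic (it has the non-analytic locus $V(\pi)$), the identification $|X^\diamondsuit|=|X|$ from \cite[Proposition 10.3.7]{SW20} does not apply; in general $|\Spd(B)|$ is the \emph{olivine spectrum} of \cite[Section 1.2]{Gle20}, which for non-Tate Huber pairs carries strictly more points than $\Spa(B,B)$ (compare the footnote to \Cref{lem_top_flat_implies_formal_flat} in this paper). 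There is a natural continuous map $|\Spd(B)|\to |\Spa(B,B)|$, but density downstairs does not yield density upstairs, and you give no argument bridging this.

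The paper circumvents this obstacle by a completely different route: it passes to the v-cover $Y^\diamondsuit \to \Spd(B)$ with $Y$ the punctured open unit ball over $\Spa(B)$, which \emph{is} analytic, so that $|Y^\diamondsuit|=|Y|$ holds on the nose. Density of $|Y_\eta|$ in $|Y|$ is then shown using that affinoid rational subsets of $Y$ are pseudorigid spaces over $O$ in the sense of \cite{Lou17}, whence their coordinate rings are Jacobson and flat over $O$, so one finds a maximal ideal avoiding $\pi$. Density then descends along the v-cover by surjectivity on topological spaces. Your approach could perhaps be rescued by working directly with the olivine description of $|\Spd(B)|$ and lifting your composite-valuation specialization there, but this is substantially more work than what you have written.
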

	\begin{proof}
		Let $X=\Spa(B)$ with $B$ given the $\pi$-adic topology. Let $Y$ be the punctured open unit ball over $X$. 
		That is, $Y=\{y\in \Spa(B\pot{t})\mid |t|_y\neq 0 \}$, where $B\pot{t}$ is endowed with the $(\pi,t)$-adic topology. 
		The map $Y^\diamondsuit\to X^\diamondsuit$ is a v-cover so it is enough to prove $|Y^\diamondsuit_\eta|$ is dense in $|Y^\diamondsuit|$. 
		Now, $Y$ is the diamond associated to an analytic adic space so $|Y|=|Y^\diamondsuit|$ by \cite [Proposition 10.3.7]{SW20}. 
		Let $\Spa(R,R^+)\subset Y$ be a non-empty affinoid rational subset (with $(R,R^+)$ a complete Huber pair). 
		Since $B\pot{t}$ is noetherian,  flat over $B$, and rational localizations are flat for Huber pairs admitting a noetherian ring of definition, we can conclude that $R$ is flat over $O$. 
		Now $\Spa(R,R^+)$ is a pseudorigid space over $\Spa(O)$ in the sense of \cite{Lou17}, and thus in particular $R$ is a Jacobson ring \cite[Proposition 3.3.(3), 4.6]{Lou17}. 
		By flatness of $R$ over $O$ we get that $\pi$ is not nilpotent in $R$. There is a maximal ideal $\mathfrak{m} \subset R$ with $\pi\notin \mathfrak{m}$ as $R$ is a Jacobson ring. 
		By \cite[Lemma 1.4]{huber_a_generalization_of_formal_schemes_and_rigid_analytic_varieties} there is an element $x\in \Spa(R,R^+)$ whose support ideal is $\mathfrak{m}$. 
		In particular, this point lies in $\Spa(R,R^+)\cap Y_\eta\neq \emptyset$, which finishes the proof.
	\end{proof}
	
	The following consequence is the main statement we need from this chapter. 
	
	\begin{proposition}\label{univalencia do functor diamante restrito a esquemas formais normais 2}
		\begin{enumerate}
			\item Let $X$ be a proper, flat scheme over $O$. Then the absolute weak normalization $X^{\mathrm{awn}}\to \Spec(O)$ is proper and flat, and the canonical morphism
			\begin{equation}
			(X^{\mathrm{awn}})^\diamondsuit\to X^\diamondsuit
			\end{equation}
			is an isomorphism
			\item The functor $X\mapsto X^\diamondsuit$ is fully faithul when restricted to proper, flat and absolutely weakly normal schemes over $O$.
		\end{enumerate}          
	\end{proposition}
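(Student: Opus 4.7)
The strategy is to reduce both parts of the proposition to the formal-scheme statement \Cref{univalencia do functor diamante restrito a esquemas formais normais} via Grothendieck's existence theorem and the identity $X^\diamondsuit = X^\diamond = \widehat X_\pi^\diamondsuit$ valid for proper $O$-schemes $X$, combined with \Cref{sec:two-diff-diam-lemma-v-sheaf-only-depends-on-absolute-weak-normalization}.

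For part (1), the scheme-theoretic assertions on $X^{\mathrm{awn}}$ will be handled as follows. Flatness over $O$ uses that $X^{\mathrm{awn}}$ is reduced (absolutely weakly normal implies semi-normal implies reduced) together with the fact that $X^{\mathrm{awn}}\to X$ is a universal homeomorphism: every associated point of $X^{\mathrm{awn}}$ is a generic point, maps to a generic point of $X$, and hence to the generic point of $\Spec O$ by flatness of $X$, so $\pi$ is a nonzerodivisor. Properness will be reduced, via universal closedness and separatedness of the integral morphism $X^{\mathrm{awn}}\to X$, to properness of $X\to\Spec O$, with the remaining finite-type aspect addressed by a careful analysis of absolute weak normalization over an excellent base. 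For the v-sheaf identification $(X^{\mathrm{awn}})^\diamondsuit\xrightarrow{\sim} X^\diamondsuit$, note that since $X$ is proper, $X^\diamondsuit=X^\diamond$; we test on a v-cover of $\Spd O$ by affinoids $\Spa(R,R^+)$ where $R^+$ is a product of perfectoid valuation rings with algebraically closed fraction fields, so that each $R^{\sharp,+}$ is absolutely weakly normal. An $S$-point of $X^\diamond$ is a ring map $A\to R^{\sharp,+}$, which by the universal property of absolute weak normalization factors uniquely through $A^{\mathrm{awn}}$, providing the desired inverse.

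For part (2), Grothendieck's existence theorem gives full faithfulness of the $\pi$-adic completion functor on proper flat $O$-schemes, so $\Hom(X,Y)\cong \Hom(\widehat X_\pi,\widehat Y_\pi)$. The formal completions $\widehat X_\pi,\widehat Y_\pi$ are separated, flat, and topologically of finite type over $O$; by \Cref{sec:two-diff-diam-lemma-v-sheaf-only-depends-on-absolute-weak-normalization} we may replace them with their absolutely weakly normal counterparts without altering the associated v-sheaves, so that \Cref{univalencia do functor diamante restrito a esquemas formais normais} applies and yields $\Hom(\widehat X_\pi,\widehat Y_\pi)\cong \Hom(\widehat X_\pi^\diamondsuit,\widehat Y_\pi^\diamondsuit)$. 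Combined with the proper-scheme identity $X^\diamondsuit=\widehat X_\pi^\diamondsuit$ (and its analogue for $Y$), this concludes. The principal obstacle is the finite-type component of properness of $X^{\mathrm{awn}}\to\Spec O$ in part (1), since absolute weak normalization tends to introduce perfection-like phenomena on characteristic $p$ components of the special fiber in mixed characteristic; the v-sheaf side is, by contrast, cleanly controlled by \Cref{sec:two-diff-diam-lemma-v-sheaf-only-depends-on-absolute-weak-normalization}.
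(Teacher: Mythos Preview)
Your overall strategy matches the paper's: reduce to \Cref{univalencia do functor diamante restrito a esquemas formais normais} via Grothendieck's existence theorem and the identification $X^\diamondsuit=X^\diamond=\widehat X_\pi^\diamondsuit$ for proper $X$. The flatness argument and the v-sheaf identification in part (1) are fine (the latter is essentially \Cref{sec:two-diff-diam-lemma-v-sheaf-only-depends-on-absolute-weak-normalization}). Two points, however, need attention.

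For the finite-type claim in part (1), you correctly flag excellence as the key input but stop short of the actual mechanism. The paper's argument is: on each irreducible component, $X$ is an integral scheme whose function field has characteristic $0$; the normalization is then normal with perfect fraction field, hence absolutely weakly normal, so $X^{\mathrm{awn}}$ embeds into the normalization of $X_{\mathrm{red}}$. Excellence of $X$ makes that normalization finite over $X$, and finite type follows.

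For part (2) there is a genuine gap. You invoke \Cref{sec:two-diff-diam-lemma-v-sheaf-only-depends-on-absolute-weak-normalization} to ``replace $\widehat X_\pi,\widehat Y_\pi$ with their absolutely weakly normal counterparts'' and then apply \Cref{univalencia do functor diamante restrito a esquemas formais normais}. But \Cref{sec:two-diff-diam-lemma-v-sheaf-only-depends-on-absolute-weak-normalization} only controls the v-sheaf side: it gives $\widehat X_\pi^\diamondsuit\cong((\widehat X_\pi)^{\mathrm{awn}})^\diamondsuit$, and then \Cref{univalencia do functor diamante restrito a esquemas formais normais} yields $\Hom((\widehat X_\pi)^{\mathrm{awn}},(\widehat Y_\pi)^{\mathrm{awn}})\cong\Hom(\widehat X_\pi^\diamondsuit,\widehat Y_\pi^\diamondsuit)$. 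You still need $\Hom(\widehat X_\pi,\widehat Y_\pi)\cong\Hom((\widehat X_\pi)^{\mathrm{awn}},(\widehat Y_\pi)^{\mathrm{awn}})$ on the formal-scheme level, and this does not follow formally unless $\widehat X_\pi$ is already absolutely weakly normal. That is exactly the second check the paper isolates: $\pi$-adic completion of a finite-type $O$-algebra is a regular ring map (by excellence), and absolute weak normality is stable under regular maps (Greco--Traverso, Manaresi). So $X$ awn implies $\widehat X_\pi$ awn, and \Cref{univalencia do functor diamante restrito a esquemas formais normais} applies directly without any detour through \Cref{sec:two-diff-diam-lemma-v-sheaf-only-depends-on-absolute-weak-normalization}.
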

	\begin{proof}
		Using \Cref{univalencia do functor diamante restrito a esquemas formais normais} and Grothendieck's existence theorem as explained before there remain two statements to check: firstly that $X^{\mathrm{awn}}\to \Spec(O)$ is locally of finite type, and secondly that $\pi$-adic completion preserves absolute weak normality of $O$-algebras of finite type. The first follows from the fact that $X$ is excellent (implying finiteness of the normalization of the reduction of $X$), and that the absolute weak normalization of an integral domain with field of fraction of characteristic $0$ embeds into its normalization. The second follows by stability of absolute weak normality under regular ring homomorphisms, see \cite[Proposition 5.1]{GT80} and \cite[Proposition III.3]{Man80}.
	\end{proof}
	
	\subsection{$\pi$-adic kimberlites}
	\label{sec:formal-theory-v-sheaves-p-adic-kimberlites}	
	As in \Cref{sec:two-diff-diam-functors}, we let $O$ be a complete discrete valuation ring, which is flat over $\Z_p$, with perfect residue field $k$ (of characteristic $p$) and uniformizer $\pi\in O$. 
	Let $F$ denote its fraction field and $C$ a completed algebraic closure of $F$.
	We denote by $\breve F\subset C$ the maximal unramified complete subextension with ring of integers $\breve O$ and algebraically closed residue field $\bar k/k$.
	
	In \cite{Gle20}, the second named author introduced a set of axioms for a v-sheaf to have a well behaved specialization map to its reduced locus. 
	The v-sheaves satisfying these axioms are called kimberlites \cite[Definition 4.35]{Gle20} and they mimic the behavior of formal schemes. 
	Actually (under the very mild conditions of being separated and locally admitting a finitely generated ideal of definition), the v-sheaves associated to a formal scheme are always kimberlites \cite[Proposition 4.31]{Gle20}\footnote{The reference provided here only shows that the v-sheaves associated to formal schemes are valuative prekimberlites, but the additional axiom that the analytic locus is a spatial diamond is easily verified \cite[Remark 4.37]{Gle20}.} and the specialization map of the kimberlite attached to the formal scheme agrees with the traditional one. 
	
	On the other hand, in \cite{Lou17} the third named author considers the functor from the category of formal schemes $X$ over $O$ to the category $\mathcal{C}$ of specialization triples $(X_\eta, X_k^\textrm{perf}, \mathrm{sp}_{\breve{X}})$ where $X_\eta$ is a rigid analytic space over $F$, $X_k^{\textrm{perf}}$ is the perfection of the special fiber $X_k$ and $\on{sp}_{\breve X}\colon|X_{\breve{\eta}}|\to |X_{\bar k}|$ is a continuous map on the underlying topological spaces. 
	Here, $X_{\breve{\eta}}$ and $X_{\bar{k}}$ denote the base changes to $\breve F$ and $\bar k$ respectively.  
	This functor turns out to be fully faithful when one restricts to $X$ locally formally of finite type (that is, locally of the form $O\pot{T_1,\ldots, T_n}\langle X_1,\ldots, X_m\rangle/I$ for some ideal $I$), normal and flat over $O$, see \cite[18.4.2]{SW20}.
	
	In this section we take this approach to study $\pi$-adic kimberlites. That is, to a $\pi$-adic kimberlite $X$ over $\Spd O$ we attach a specialization triple $(X_\eta, X^{\textrm{red}}, \on{sp}_{\breve{X}})$ where now $X_\eta$ a diamond over $\Spd(F)$, $X^{\textrm{red}}$ a perfect scheme over $\Spec(k)$ and $\on{sp}_{\breve{X}}\colon|X_{\breve\eta}|\to |X^{\textrm{red}}_{\bar k}|$ a continuous map. 
	Again, $X_{\breve{\eta}}$ and $X_{\bar{k}}^{\textrm{red}}$ denote the base changes to $\breve F$ and $\bar k$ respectively.
	More importantly, we discuss some conditions on $X$ that make this functor fully faithful.

	We start by giving a review of specializations for kimberlites.
	Set $	\mathrm{SchPerf}_k$
	as the v-site of perfect schemes over $k$ (subject to the usual set-theoretic constraints of fixing some cut-off cardinal), and
	$
	\widetilde{\mathrm{SchPerf}_k}
	$
	the associated topos.
	
	\begin{definition}[{\cite[Definition 3.12]{Gle20}}]\label{definition-reduced-subsheaf}
		Given a v-sheaf $X$ on $\mathrm{Perf}_{\bbF_p}$ over $\Spd O$, one defines $X^{\on{red}}$ as the functor on $\mathrm{SchPerf}_k$ given by
		$Y \mapsto \Hom(Y^{\diamond},X)$.
	\end{definition}
	Thus, if $Y=\Spec(A)$ is an affine perfect scheme, then $X^{\on{red}}(\Spec(A))=X(\Spd(A))$. 
	By \cite[Proposition 3.7]{Gle20}, $X^{\on{red}}$ is in fact a small v-sheaf on $\mathrm{SchPerf}_k$. 
	The functor $(\str)^{\diamond}\colon\Spec(A)\mapsto \Spd(A)$ extends to small scheme-theoretic v-sheaves and the pair $(\diamond, (\str)^{\on{red}})$ forms an adjunction, see \cite[Definition 3.12]{Gle20}.

	For formal schemes over $O$, the reduction functor is simply the functor that assigns the perfection of the reduced locus \cite[Proposition 3.18]{Gle20}. More precisely, if $(B,B)$ is a formal Huber pair over $O$, that is $B$ is a complete $I$-adic $O$-algebra (with $I$ finitely generated), then $\Spd(B)^{\on{red}}=\Spec(B/I)^{\on{perf}}$.
	
	\begin{definition}
		\begin{enumerate}
			\item A map of v-sheaves $X\to Y$ is said to be formally adic if the following diagram is Cartesian:
			\begin{center}
				\begin{tikzcd}
				(X^{\on{red}})^{\diamond} \arrow{r} \arrow{d}  & \arrow{d}X \\
				(Y^{\on{red}})^{\diamond} \arrow{r} & Y 
				\end{tikzcd}
			\end{center}
			\item A v-sheaf $X$ over $\Spd O$ is $\pi$-adic if the structure morphism $X\to  \Spd O$ is formally adic.
		\end{enumerate}
	\end{definition}

	If $\Spa(A,A^+)$ is an affinoid adic space, we let $\Spd(A,A^+)$ denote the associated v-sheaf given by homomorphisms to untilts, see \cite[Subsection 18.1]{SW20}. If $A=A^+$, we abbreviate this by $\Spd(A)$.
	
	\begin{definition}
		Given a v-sheaf $X$, a map $f\colon\Spa(R,R^+)\to X$ from an affinoid perfectoid space formalizes if it factors through a map $g\colon\Spd(R^+)\to X$. 
		Any such $g$ is called a formalization of $f$. 
		The map $f$ v-formalizes if there is a v-cover $h\colon\Spa(S,S^+)\to \Spa(R,R^+)$ such that $f\circ h$ formalizes.
	\end{definition}
	
	\begin{proposition}
		\label{formalizing is v-locally formal}
		For a small v-sheaf $X$, the following are equivalent:
		\begin{enumerate}
			\item There is a set $I$, a family of formal Huber pairs $(B_i,B_i)_{\{i\in I\}}$ and a v-cover \begin{equation}\coprod_{i\in I} \Spd(B_i)\to X.\end{equation} 
			\item There is a set $J$, a family of perfectoid Huber pairs $(R_j,R^+_j)_{\{j\in J\}}$ and a v-cover \begin{equation}\coprod_{j\in J} \Spd(R^+_j)\to X.\end{equation} 
			\item For any perfectoid Huber pair $(R,R^+)$ all the maps $f\colon\Spa(R,R^+)\to X$ v-formalize.
		\end{enumerate}
	\end{proposition}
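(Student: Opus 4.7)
The plan is to prove the cycle $(1) \Rightarrow (3) \Rightarrow (2) \Rightarrow (1)$. The implication $(2) \Rightarrow (1)$ is essentially tautological: for any perfectoid Huber pair $(R_j, R_j^+)$, the ring $R_j^+$ is $\pi$-adically complete for a pseudo-uniformizer $\pi \in R_j$, so $(R_j^+, R_j^+)$ is already a formal Huber pair with finitely generated ideal of definition $(\pi)$, making $\Spd(R_j^+)$ of the form required by $(1)$.

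For $(1) \Rightarrow (3)$, given $f\colon \Spa(R, R^+) \to X$, I would pull back the v-cover $\coprod \Spd(B_i) \to X$ and use the standard fact that every small v-sheaf admits a v-cover by perfectoid affinoids to refine to some $h\colon \Spa(S, S^+) \to \Spa(R, R^+)$ such that $f \circ h$ factors as $\Spa(S, S^+) \to \Spd(B_i) \to X$ for some $i$. The key observation is that the resulting map $\Spa(S, S^+) \to \Spd(B_i)$ unpacks to a continuous ring homomorphism $B_i \to S^+$ (together with the canonical untilt $S$ of $S^\flat$), which in turn defines $\Spd(S^+) \to \Spd(B_i)$. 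Composing this with the natural map $\Spa(S, S^+) \to \Spd(S^+)$ produces a factorization $f \circ h \colon \Spa(S,S^+) \to \Spd(S^+) \to X$, exhibiting the required v-formalization.

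For $(3) \Rightarrow (2)$, I would start with any v-cover $\coprod_k \Spa(R_k, R_k^+) \to X$ by perfectoid affinoids. By $(3)$, each $f_k\colon \Spa(R_k, R_k^+) \to X$ v-formalizes, yielding v-covers $h_k\colon \Spa(S_k, S_k^+) \to \Spa(R_k, R_k^+)$ together with factorizations $f_k \circ h_k\colon \Spa(S_k, S_k^+) \to \Spd(S_k^+) \to X$. To verify that $\coprod_k \Spd(S_k^+) \to X$ is itself a v-cover, I would take an arbitrary test map $g\colon \Spa(A, A^+) \to X$ and perform two successive refinements: first refining $g$ along the initial perfectoid v-cover to factor through some $\Spa(R_k, R_k^+)$, and then refining again along the pullback of $h_k$ so that the resulting composite lifts to $\Spa(S_k, S_k^+)$. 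Composing with $\Spa(S_k, S_k^+) \to \Spd(S_k^+)$ yields the required factorization.

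The principal subtlety lies in $(3) \Rightarrow (2)$: verifying that the assembled collection of $\Spd(S_k^+)$'s genuinely forms a v-cover requires two rounds of refinement and relies on both the composition and pullback stability of v-covers. The remaining implications reduce to direct manipulations of the defining universal properties of $\Spd$ for formal versus perfectoid Huber pairs, together with the canonical map $\Spa(S,S^+) \to \Spd(S^+)$ that realizes the passage from the analytic to the formal incarnation.
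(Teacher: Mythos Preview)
The paper does not give its own proof here --- it simply cites \cite[Lemma 1.4.8]{Gle20} --- so there is no detailed argument in the paper to compare against. Your cycle $(1)\Rightarrow(3)\Rightarrow(2)\Rightarrow(1)$ is the natural one, and your handling of $(2)\Rightarrow(1)$ and $(3)\Rightarrow(2)$ is correct. For the latter you can in fact argue more briefly: once you know that $\coprod_k\Spa(S_k,S_k^+)\to X$ is a v-cover (as a composite of two v-covers) and that it factors through $\coprod_k\Spd(S_k^+)\to X$, the latter is automatically a v-cover, since any map through which a v-cover factors is itself one. The two rounds of refinement are unnecessary.

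There is, however, a genuine gap in $(1)\Rightarrow(3)$. You assert that $\Spa(S,S^+)\to\Spd(B_i)$ ``unpacks to a continuous ring homomorphism $B_i\to S^+$'', but this is only correct when $B_i$ has characteristic $p$. In general --- and this is precisely the mixed-characteristic setting the surrounding section is built for --- the datum is an untilt $S^\sharp$ of $S$ together with a continuous map $B_i\to S^{\sharp,+}$, not into $S^+$. Your parenthetical ``(together with the canonical untilt $S$ of $S^\flat$)'' suggests you are implicitly taking the trivial untilt, which does not cover the relevant case. What you then need is the additional fact that the induced map $\Spd(S^{\sharp,+})\to\Spd(B_i)$ can be rewritten as a map out of $\Spd(S^+)$; equivalently, that $\Spd(S^{\sharp,+})\cong\Spd(S^+)$ canonically. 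This is true --- write $S^{\sharp,+}=W(S^+)/(\xi)$ for a distinguished element $\xi$ and transport untilts along any continuous $S^+\to T^+$ via functoriality of Witt vectors, checking that the image of $\xi$ in $W(T^+)$ remains distinguished --- but it is a real step that your write-up elides.
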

	\begin{proof}
		This is \cite[Lemma 4.7]{Gle20}.
	\end{proof}
	
	Any v-sheaf satisfying the conditions in \Cref{formalizing is v-locally formal} is said to be \textit{v-locally formal} or alternatively \textit{v-formalizing}.
	
	\begin{definition}
		\label{definition p-adic prekimberlite}
		A small v-sheaf $X$ over $\Spd O$ is a $\pi$-\textit{adic prekimberlite} if it is v-locally formal, the structure map $X\to \Spd O$ is separated and formally adic, and if $X^{\on{red}}$ is represented on $\mathrm{SchPerf}_k$ by a perfect scheme.
	\end{definition}
	
	The more general definition of a prekimberlite is given in \cite[Definition 4.15]{Gle20}, and we justify below why $\pi$-adic prekimberlites are a special type of prekimberlite. 
	For this reason, in our context, we can take \Cref{definition p-adic kimberlite} as our definition.
	
	\begin{proposition}
		\label{pi adic prekimberlite is prekimberlite and pi adic}
		A small v-sheaf $X$ equipped with a separated morphism $X\to \Spd O$ is a $\pi$-adic prekimberlite if and only if $X$ is a prekimberlite and the map $X\to \Spd O$ is formally adic.	
	\end{proposition}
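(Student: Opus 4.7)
The plan is to unpack both definitions and verify their axioms term-by-term. Recall that \Cref{definition p-adic kimberlite} asks a $\pi$-adic kimberlite to be v-locally formal, separated over $\Spd O$, formally adic over $\Spd O$, with generic fiber a locally spatial diamond, and with $X^{\on{red}}$ representable by a perfect scheme. On the other hand, the general notion of kimberlite from \cite[Definition 1.4.18]{Gle20} requires $X$ to be a (separated) valuative pre-kimberlite whose analytic locus is a locally spatial diamond and whose reduction is representable by a perfect scheme.

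For the forward direction, I would take a $\pi$-adic kimberlite and check it is a kimberlite. The v-locally formal, separated, and perfect-scheme reduction conditions are immediate by definition, so only the valuative axiom of a pre-kimberlite and the locally-spatial-diamond hypothesis on the analytic locus need attention. The valuative axiom will be inherited from $\Spd O$ along the formally adic structure map, using that $\Spd O$ itself is plainly valuative and v-locally formal; the analytic-locus condition will follow from the key identification below.

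For the reverse direction, I would take a kimberlite with formally adic structure map $X\to \Spd O$ and check it is $\pi$-adic. Here the only nontrivial point is that the generic fiber $X_\eta = X\times_{\Spd O}\Spd F$ is a locally spatial diamond. The crucial observation is that, under formal-adicness, the Cartesian square defining the property, applied to the closed embedding $\Spd k\hookto \Spd O$, gives the identification $(X^{\on{red}})^{\diamondsuit} = X\times_{\Spd O}\Spd k$. Consequently the analytic locus $X\setminus (X^{\on{red}})^{\diamondsuit}$ coincides with $X_\eta$, so the kimberlite axiom that the former is a locally spatial diamond translates directly into the desired condition on the latter.

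The main obstacle will be reconciling the abstract notions of analytic locus and valuative pre-kimberlite from \cite{Gle20} with the more concrete generic-fiber language in \Cref{definition p-adic kimberlite}. Once the identification $X^{\on{an}}=X_\eta$ under formal-adicness is established, and once one verifies that the valuative axiom of a pre-kimberlite is stable under base change along formally adic maps (so that it descends from $\Spd O$ to $X$), the remainder of both implications reduces to a mechanical matching of the respective axioms.
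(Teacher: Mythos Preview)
Your overall strategy matches the paper's: both use formal adicness to obtain $(X^{\on{red}})^{\diamond} = X\times_{\Spd O}\Spd k$ and hence $X^{\on{an}}=X_\eta$, after which most axioms of the two definitions match mechanically. Your treatment of the reverse direction is essentially what the paper does.

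However, you misidentify the remaining obstacle in the forward direction. You speak of a ``valuative axiom of a pre-kimberlite'' and assert it is ``inherited from $\Spd O$ along the formally adic structure map.'' The paper instead pinpoints the non-obvious step as showing that $X$ is \emph{formally separated} (a condition on the diagonal being a formally closed immersion, not merely a closed one), and this is handled by invoking the argument of \cite[Proposition 1.3.31]{Gle20} with $O$ in place of $\bbZ_p$. Formal separatedness does not follow tautologically from ``separated over $\Spd O$ plus formally adic''; one needs the specific argument cited. Your phrasing is too vague to count as identifying and resolving this point, and if ``valuative axiom'' is meant to stand for formal separatedness you should say so explicitly and indicate the mechanism; otherwise you have misread the axioms of a kimberlite in \cite[Definition 1.4.18]{Gle20}.
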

	\begin{proof}
		Formal adicness implies that $X^{\on{an}}=X_\eta$ and $(X^{\on{red}})^{\diamond}=X\times_{\Spd O} \Spd k$. From this it is clear how one definition translates to the other except that to prove $X$ is a prekimberlite we need to justify why it is formally separated. Now, the argument given in \cite[Proposition 3.29]{Gle20} applies with the role of $\bbZ_p$ exchanged for $O$.  
	\end{proof}

	To any prekimberlite $X$, in particular to any $\pi$-adic prekimberlite, one attaches a topological specialization map 
	$\on{sp}_X:|X|\to |X^\red|$ \cite[Definition 4.12]{Gle20},
	and a v-sheaf theoretic specialization map $\on{SP}_X:X\to (X^{\on{red}})^{\diamond/\circ}$ due to Heuer, see \cite[Section~4.4]{Gle20}. 
	Here $(X^{\on{red}})^{\diamond/\circ}$ is as in \cite[Definition 4.23]{Gle20}.

	\begin{definition}
		\label{definition p-adic kimberlite}
		A $\pi$-adic prekimberlite is a $\pi$-adic kimberlite if $X_\eta$ is a locally spatial diamond, the restriction of $\on{sp}_X$ to $|X_\eta|\subseteq |X|$ is a quasi-compact map and $\on{SP}_X$ is partially proper.
	\end{definition}
	\begin{remark}
	The more general definition of a kimberlite is given in \cite[Definition 4.35]{Gle20}.
	Just as in \Cref{pi adic prekimberlite is prekimberlite and pi adic} and with the same argument one can see that a small v-sheaf $X$ equipped with a separated morphism $X\to \Spd O$ is a $\pi$-adic kimberlite if and only if $X$ is a kimberlite and the map $X\to \Spd O$ is formally adic.
	\end{remark}

	If $f\colon S\to T$ is a map of locally spectral spaces, then we call $f$ spectral if for any quasi-compact open $U\subset S, V\subset T$ with $f(U)\subset V$ the induced map $f\colon U\to V$ of spectral spaces is spectral, that is, quasi-compact.

	In what follows we consider the restriction of the topological specialization map $|X|\to |X^{\on{red}}|$ to $|X_\eta|\subseteq |X|$. By abuse of notation, we still use $\on{sp}_X$ to denote the map $\on{sp}_X:|X_\eta|\to |X^{\on{red}}|$.

	\begin{proposition}
		\label{sec:formal-theory-v-proposition-existence-of-specialization-map}
		The following statements hold:
		\begin{enumerate}
			\item The rule $X \mapsto (X_\eta,X^{\on{red}},\on{sp}_X)$ is functorial when $X$ varies along $\pi$-adic prekimberlites.
			\item If $X$ is a $\pi$-adic prekimberlite, and $X_\eta$ is a locally spatial diamond then the specialization map $\on{sp}_X\colon|X_\eta|\to |X^{\on{red}}|$ is spectral.
			\item If $X$ is a $\pi$-adic kimberlite the map $\on{sp}_X$ is a closed map. 
		\end{enumerate}
	\end{proposition}
	\begin{proof}
		Functoriality is \cite[Proposition 4.14]{Gle20} specialized to the $\pi$-adic case considered here.	
		The same argument as in \cite[Theorem 4.40]{Gle20} shows that the map is spectral.
		The last statement is \cite[Theorem 4.40.(2)]{Gle20}.
	\end{proof}
	
	One of the main features of kimberlites is that, as with formal schemes, they come with a notion of tubular neighborhoods (or completion at a point). 
	\begin{definition}[{\cite[4.18]{Gle20}}]
		Given a $\pi$-adic prekimberlite $X$ and a locally closed subset $S\subset |X^{\on{red}}|$, one defines $\widehat{X}_{/S}$ as the v-sheaf making the following diagram Cartesian:
		\begin{center}
			\begin{tikzcd}
			\widehat{X}_{/S} \arrow{r} \arrow{d}  & X \arrow{d} \\
			\underline{|S|} \arrow{r} & \underline{|X^{\on{red}}|}
			\end{tikzcd}
		\end{center}
		Here $\widehat{X}_{/S}$ is called the formal neighborhood of $X$ around $S$, and $(\widehat{X}_{/S})_\eta$ the tubular neighborhood of $X$ around $S$.
	\end{definition}
	
	Here, the right vertical arrow is the composition of the natural map $X\to \underline{|X|}$ and the map $\underline{|X|}\to \underline{|X^{\on{red}}|}$ mentioned in \Cref{sec:formal-theory-v-proposition-existence-of-specialization-map}. 
	We will mostly use tubular neighborhoods when $S=\{x\}$ is a closed (and constructible) point in $X^{\on{red}}$.

	
	\begin{example}
		For any $\pi$-adic prekimberlite $X$ and any locally closed subset $S\subset |X^{\on{red}}|$, one has inclusions
		\begin{equation}
		\label{containment-of-tubular}
		|S^{\diamond}|\subset |\widehat{X}_{/S}| \subset \on{sp}_X^{-1}(S),
		\end{equation}
		which are strict in general.
		For example, let $X=\Spd(A)$ with $A$ a perfect $k$-algebra and let $S\subset \Spec(A)=X^{\on{red}}$ the Zariski closed subset defined by a finitely generated ideal $I\subset A$ with generators $a_1,\dots,a_n$. 
		Then, $S^{\diamond}$ is the locus in $\Spd(A)$ where $a_1=\dots=a_n=0$, $\widehat{X}_{/S}$ is the (open) locus in $\Spd(A,A)$ where $a_1,\dots,a_n$ are all topologically nilpotent and $\on{sp}_X^{-1}(S)$ is the closed subset of points for which $|a_i|<1$. 
		With this description it is immediate to verify the containment of \eqref{containment-of-tubular}. 

		Now, the complement $\on{sp}_X^{-1}(S)\setminus |\widehat{X}_{/S}|$ consists of those higher rank points $(A,A)\to (C,C^+)$, for which at least one of $a_i^{-1}\in C^\circ \setminus C^+$. 
		Note the associated point $(A,A)\to (C,C^\circ)$ is not in $\on{sp}_X^{-1}(S)$. 
		In particular, $\on{sp}_X^{-1}(S)$ is usually not weakly generalizing and does not define a closed subsheaf. 
	\end{example}

	\begin{proposition}
		If $S\subset |X^{\on{red}}|$ is locally closed and constructible, then $\widehat{X}_{/S}\to X$ is an open immersion.
	\end{proposition}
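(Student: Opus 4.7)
The plan is to reduce to the affine formal case via v-locality and then explicitly identify the tubular neighborhood with a rational open sub-v-sheaf. I first observe that the morphism $\widehat{X}_{/S} \to X$ is a monomorphism, being obtained as the base change of the monomorphism $\underline{|S|} \hookrightarrow \underline{|X^{\on{red}}|}$. For quasi-compact monomorphisms, being an open immersion can be checked after v-base change, since $|X'| \to |X|$ is a quotient map for any v-cover $X' \to X$ (\cite[Proposition 12.9]{Sch17}) and openness of the underlying subset then descends. By \Cref{formalizing is v-locally formal}, choose a v-cover $\coprod_i \Spd(B_i) \to X$ with each $(B_i, B_i)$ a formal Huber pair. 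The fiber-product definition of tubular neighborhoods gives $\widehat{X}_{/S}\times_X \Spd(B_i) = \widehat{\Spd(B_i)}_{/S_i}$, where $S_i$ is the preimage of $S$, itself locally closed constructible by continuity of the map $|\Spec(B_i/I_i)| \to |X^{\on{red}}|$. This reduces the problem to $X = \Spd(B)$ for $B$ a complete $I$-adic ring with finitely generated $I = (h_1, \dots, h_k)$, so that $|X^{\on{red}}| = |\Spec(B/I)|$.

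In this affine setting, any locally closed constructible $S \subset |\Spec(B/I)|$ takes the form $V(g_1,\ldots,g_r) \cap \bigcup_{j=1}^s D(f_j)$ for finitely many elements, which we lift to $B$. For a point $t \in |X| = |\Spa(B,B)|$ corresponding to a continuous valuation $v_t$ on $B$, the specialization $\sigma(t)$ is the prime ideal $\{b \in B/I : v_t(b) < 1\}$, which is well defined since continuity forces $v_t(h_l) < 1$. Thus $\sigma(t) \in S$ precisely when $v_t(g_i) < 1$ for all $i$ and $v_t(f_j) = 1$ for some $j$. I then claim that $\widehat{X}_{/S}$ coincides with the sub-v-sheaf of $\Spa(B,B)^\diamondsuit$ cut out by these inequalities. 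Each condition $|f_j| \geq 1$ defines a rational open of $\Spa(B,B)$ (noting $|f_j|\le 1$ is automatic), while the condition $|g_i| < 1$ corresponds to topological nilpotence of $g_i$ on perfectoid test objects and is an increasing union of the rational opens $\{|g_i^n| \le |h_l|\}$ over $n\geq 1$ and $l \in \{1,\ldots,k\}$. Finite intersections and unions of open sub-v-sheaves are open, yielding the conclusion.

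The main obstacle is verifying the explicit identification $|\widehat{X}_{/S}|$ with the open subset described above in a manner compatible with all higher-rank perfectoid test objects and with the non-analytic points of $\Spa(B,B)$. The crucial ingredient is the continuity constraint on valuations arising from the $I$-adic topology on $B$, which forces $v_t(h_l) < 1$ and makes the analytic rational covers of $\{|g_i| < 1\}$ meaningful globally, even though $\Spa(B,B)$ itself need not admit a pseudo-uniformizer. Reassembling across the v-cover, the explicit openness in each $\Spd(B_i)$ descends to show that $\widehat{X}_{/S} \to X$ is an open immersion.
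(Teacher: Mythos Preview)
The paper itself does not prove this statement but simply cites \cite[Proposition 1.4.29]{Gle20}, so there is no in-paper argument to compare against. Your overall strategy—reduce to the affine formal case by v-descent (using that tubular neighborhoods are compatible with base change, which follows formally from the fiber-product definition and naturality of $Y\to\underline{|Y^{\on{red}}|}$) and then identify $\widehat{X}_{/S}$ explicitly—is the natural one and is essentially what is carried out in \cite{Gle20}.

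There is, however, a genuine gap at the crucial identification step. You correctly compute that $\sigma(t)\in S$ amounts to the pointwise conditions $v_t(g_i)<1$ and $v_t(f_j)=1$ for some $j$, and then assert that the sub-v-sheaf ``cut out by these inequalities'' coincides with the open locus where the $g_i$ are topologically nilpotent and some $f_j$ is a unit. But the paper's own example immediately preceding the proposition shows that the subset $\{t:|g_i|_t<1\}\subset|X|$ is \emph{closed} and strictly contains the open topological-nilpotence locus; the difference consists of higher-rank points whose rank-$1$ generizations lie outside. What you must argue (and do not) is that for a perfectoid test object $T=\Spa(R,R^+)$, the condition ``$|g_i|_t<1$ for every $t\in|T|$'' is equivalent to $g_i\in R^{\circ\circ}$. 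The forward direction is the nontrivial one: since every point of $|T|$ has a rank-$1$ generization, the hypothesis forces $|g_i|<1$ at all rank-$1$ points, and then compactness of the Berkovich spectrum $\calM(R)$ gives $|g_i|_{\mathrm{sp}}<1$. Without this, your ``claim'' is an assertion rather than a proof. A secondary issue: the sets $\{|g_i^n|\le|h_l|\}$ need not be rational opens of $\Spa(B,B)$, since $(g_i^n,h_l)$ is not an open ideal in general. It is cleaner to verify openness directly on perfectoid test objects, where a pseudo-uniformizer of $R$ is available and the topological-nilpotence locus is visibly $\bigcup_n\{|g_i^n|\le|\varpi|\}$.
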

	\begin{proof}
		This is proved in \cite[Proposition 4.22]{Gle20}. 
	\end{proof}
	
	We now introduce a weak form of flatness over $O$ for $\pi$-adic kimberlites.
	
	\begin{definition}
		A $\pi$-adic kimberlite $X$ over $\Spd O$ is said to be \textit{flat} if there is a set $I$, a family of $F$-perfectoid Huber pairs $\{(R_i^\sharp,R_i^{\sharp+})\}_{i \in I}$ and a v-cover over $\Spd O$ \begin{equation}\coprod_{i\in I} \Spd(R_i^{\sharp+})\to X.\end{equation}	\end{definition}
	
	We now construct our first examples of flat $\pi$-adic kimberlites.
	
	\begin{proposition}
		\label{prop huber pairs strong p adic}
		Let $f\colon A\to B$ be a map of complete $\pi$-adic algebras that are flat over $O$. 
		Suppose that $A$ is integrally closed in $A[\pi^{-1}]$  and that $\Spd(B[\pi^{-1}],B)\to \Spd(A[\pi^{-1}],A)$ is a v-cover. 
		Then $\Spd(B)\to \Spd(A)$ is also a v-cover. 
		In particular, for any such $A$ the v-sheaf $\Spd(A)$ is a flat $\pi$-adic kimberlite.
	\end{proposition}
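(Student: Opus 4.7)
My plan is to verify the v-cover condition at the level of geometric points $\Spa(C,C^+)\to \Spd(A) = \Spa(A,A)^\diamondsuit$ with $C$ an algebraically closed perfectoid field of characteristic $p$ and $C^+$ an open bounded valuation subring. Such a point is equivalent to an untilt $(C^\sharp,C^{\sharp+})$ over $(O,O)$ together with a continuous map $A\to C^{\sharp+}$. Since any perfectoid space decomposes into open-and-closed components of fixed characteristic, I would split into two mutually exclusive cases depending on whether $C^\sharp$ is over $F$ (so $\pi$ is a pseudo-uniformizer) or $\pi=0$ in $C^\sharp$.

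In the generic fiber case, the map factors through $\Spd(A[\pi^{-1}],A)$, and the v-cover hypothesis produces, after a perfectoid v-cover $(T,T^+)\to (C,C^+)$ in characteristic $p$ (with pulled-back untilt $T^\sharp$ over $F$), a Huber-pair map $(B[\pi^{-1}],B)\to (T^\sharp,T^{\sharp+})$. Since this map sends $B\to T^{\sharp+}$ by construction, we obtain the desired factorization $\Spa(T,T^+)\to \Spd(B)$.

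The serious case is the special fiber case, where $A\to C^{\sharp+}$ factors through $A/\pi$. Here I would use the integral closedness of $A$ in $A[\pi^{-1}]$, combined with \Cref{density of generic} (applicable since the hypothesis that $A$ is integrally closed in $A[\pi^{-1}]$ makes $(A[\pi^{-1}],A)$ into a Huber pair with $A'=A$), to conclude that $\Spd(A[\pi^{-1}],A)$ is dense in $\Spd(A)$. By \Cref{sec:closures-sub-v-lemma-image-of-morphism-of-small-v-sheaves-weakly-generalizing}, the image of $\Spd(B)\to \Spd(A)$ is weakly generalizing, and by hypothesis it contains $\Spd(A[\pi^{-1}],A)$; applying \Cref{proposition-closure-of-v-subsheaves} one identifies the v-closure of the image with all of $\Spd(A)$. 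To upgrade this topological statement to actual v-surjectivity on special-fiber points, I would construct a char $0$ deformation: enlarge $(C,C^+)$ to a v-cover $(\tilde C,\tilde C^+)$ in characteristic $p$ admitting an $F$-perfectoid untilt $\tilde C^\sharp$ whose reduction map $\mathcal{O}_{\tilde C^\sharp}\to \mathcal{O}_{\tilde C^\sharp}/\pi$ factors the composition $A\to C^{\sharp+}\hookto \mathcal{O}_{\tilde C^\sharp}/\pi$ (this lift exists because $A$ is integrally closed, so integral elements of the generic fiber specialize uniquely into $A$), then apply the generic v-cover to obtain a factorization $B\to \mathcal{O}_{\tilde C'^\sharp}$, and reduce modulo $\pi$.

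The hard part is making this lifting argument precise: the special fiber data $A/\pi\to C^{\sharp+}$ must be realized as the reduction of a char $0$ map $A\to \mathcal{O}_{\tilde C^\sharp}$ in a v-local way, and the key ingredient enabling this is the integral closedness hypothesis, which controls the interaction between integral and generic data. The ``in particular'' statement then follows immediately: choosing any v-cover $\Spa(R^\sharp,R^{\sharp+})\to \Spa(A[\pi^{-1}],A)$ by an $F$-perfectoid Huber pair (which exists since the generic fiber is an analytic adic space over $F$) and setting $B:=R^{\sharp+}$, the main claim produces a perfectoid v-cover $\Spd(R^{\sharp+})\to \Spd(A)$ exhibiting $\Spd(A)$ as a flat $\pi$-adic kimberlite in the sense of the definition.
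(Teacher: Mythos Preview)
Your high-level strategy matches the paper's: lift special-fiber points to the generic fiber, apply the v-cover hypothesis there, and reduce back. However, several steps are either missing or incorrectly justified.

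First, a minor point: to reduce the v-cover condition to surjectivity on geometric points you need quasi-compactness of $\Spd(B)\to \Spd(A)$, which the paper invokes via \cite[Lemma 1.2.26]{Gle20} and \cite[Lemma 12.11]{Sch17}; you never address this.

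Second, your appeal to \Cref{density of generic} is inapplicable: that lemma assumes $A$ is topologically of finite type over $O$, which is not hypothesized here. You recognize that the closure argument only gives a topological statement and abandon it, so this is not fatal, but it is a false start.

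The genuine gap is the lifting step. Your reasoning that ``this lift exists because $A$ is integrally closed, so integral elements of the generic fiber specialize uniquely into $A$'' is not the right mechanism. Integral closedness is used for a different purpose: it ensures that $(A[\pi^{-1}],A)$ is a complete Tate--Huber pair, so that the specialization map
\[
\mathrm{sp}_A\colon \Spa(A[\pi^{-1}],A)\longrightarrow \Spec(A/\pi)
\]
is defined and \emph{surjective} (cf.\ \cite[Proposition 1.4.2]{Gle20}, \cite[Remark 7.4.12]{Bha17}). The paper uses this as follows. It first reduces (via \cite[Lemma 1.3.5, Proposition 1.3.8]{Gle20}) surjectivity on the special fiber to surjectivity of $\Spa(B/\pi)\to \Spa(A/\pi)$. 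Given $x\in \Spa(A/\pi)$ with support $\mathfrak{p}_x$ and residue pair $(k(x),k(x)^+)$, surjectivity of $\mathrm{sp}_A$ produces $y\in \Spa(A[\pi^{-1}],A)$ with $\mathrm{sp}_A(y)=\mathfrak{p}_x$; setting $R=k(y)^+$ one has $A\to R$ with residue field $k(x)$. The secondary valuation $k(x)^+$ is then incorporated by forming $R^+:=R\times_{k(x)}k(x)^+$, yielding $\Spa(R^+)\to \Spa(A)$ hitting $x$. Only then does one pass to a perfectoid v-cover of $\Spa(R[\pi^{-1}],R^+)$, apply the generic-fiber hypothesis to land in $\Spd(B)$, and specialize. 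Your proposal skips both the identification of $\mathrm{sp}_A$-surjectivity as the key input and the $R^+$ construction handling higher-rank points; as you yourself note, ``the hard part is making this lifting argument precise,'' and without these ingredients it is not.
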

	
	\begin{proof}
		By \cite[Lemma 2.26]{Gle20}, the map $\Spd(B)\to \Spd(A)$ is quasi-compact, so it is enough to prove $|\Spd(B)|\to |\Spd(A)|$ is surjective by \cite[Lemma 12.11]{Sch17}. 
		Surjectivity on the generic fiber follows from the hypothesis. 
		On the special fiber, we use \cite[Lemma 3.5, Proposition 3.7]{Gle20} to prove instead that the map $\Spa(B/\pi)\to \Spa(A/\pi)$ is surjective. 
		
		Let $x\in \Spa(A/\pi)$ and let $\Spa(k(x),k(x)^+)\to \Spa(A/\pi)$ the affinoid residue field map. 
		Let $\mathfrak{p}_x\in \Spec(A/\pi)$ denote the support ideal of $x$. 
		Since $A$ is integrally closed in $A[\pi^{-1}]$, the pair $(A[\pi^{-1}],A)$ is a complete Tate Huber pair and we have a surjective specialization map $\on{sp}_A\colon\Spa(A[\pi^{-1}],A)\to \Spec(A/\pi)$ by \cite[Proposition 4.2]{Gle20}, \cite[Remark 7.4.12]{Bha17}. 
		Let $y\in \Spa(A[\pi^{-1}],A)$ with $\on{sp}_A(y)=\mathfrak{p}_x$. 
		We obtain a map $\Spa(R[p^{-1}],R)\to \Spa(A[\pi^{-1}],A)$ with $R:=k(y)^+$. The residue field of $R$ is $k(x)$ and we can consider $R^+\subset R$ defined as $R\times_{k(x)}k(x)^+$. 
		This promotes to a map $\Spa(R^+)\to \Spa(A)$. 
		As $\Spd(B[\pi^{-1}],B)\to \Spd(A[\pi^{-1}],A)$ is a v-cover we can find a v-cover of $\Spa(C,C^+)\to \Spa(R[\pi^{-1}], R^+)$ with $(C,C^+)$ a perfectoid field and a commutative diagram 
		\begin{center}
			\begin{tikzcd}
			\Spa(C^+) \arrow{r} \arrow{d}  & \Spa(R^+) \arrow{d} \\
			\Spa(B) \arrow{r} & \Spa(A). 
			\end{tikzcd}
		\end{center}
		The map $\Spa(C^+)\to \Spa(R^+)$ is easily seen to be surjective since it is an extension of valuation rings. 
		So $x$ lies in the image of $\Spa(B/\pi)$ as we needed to show.
		
		That $\Spd(A)$ is a valuative prekimberlite for $A$ as above follows from \cite[Proposition 4.31]{Gle20}. 
		To show it is a kimberlite it suffices to know that $\Spd(A)_\eta$ is a spatial diamond, which follows from \cite[Lemma 15.6]{Sch17}. Indeed, in this case the specialization map is automatically quasi-compact \cite[Remark 4.37]{Gle20}. 
		Now, we may always find a cover by an affinoid perfectoid space $\Spd(P,P^+)\to \Spd(A[p^{-1}],A)$ by \cite[Lemma 15.3]{Sch17}. What we have shown so far implies that $\Spd(P^+)\to \Spd(A)$ is also a v-cover. This finishes the proof.
	\end{proof}
	
	\begin{proposition}
		\label{plano is flat}
		If $A$ is the $\pi$-adic completion of a flat and finite type algebra over $O$, then $\Spd(A)$ is a flat $\pi$-adic kimberlite.
	\end{proposition}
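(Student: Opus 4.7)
The plan is to reduce to the setting of \Cref{prop huber pairs strong p adic} by passing to the integral closure $A^{\ast}\subset A[\pi^{-1}]$ of $A$ inside its generic fiber. First I would establish the key properties of $A^{\ast}$: since $O$ is a complete DVR, hence excellent, and $A$ is the $\pi$-adic completion of a finite-type $O$-algebra, $A$ is itself excellent and in particular Nagata. The normalization of $A_{\mathrm{red}}$ in its total ring of fractions is therefore finite over $A_{\mathrm{red}}$, and $A^{\ast}$ is a sub-$A$-module of this normalization, so it is finite over $A$ by noetherianity. Consequently, $A^{\ast}$ is $\pi$-adically complete (as a finite module over the $\pi$-adically complete noetherian ring $A$), flat over $O$ (as a subring of the $\pi$-torsion-free $A[\pi^{-1}]$), and integrally closed in $A^{\ast}[\pi^{-1}]=A[\pi^{-1}]$ by construction. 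The ``in particular'' part of \Cref{prop huber pairs strong p adic} then applies to $A^{\ast}$ and provides both the kimberlite structure and a v-cover $\Spd(R^{\sharp+}) \to \Spd(A^{\ast})$ by an $F$-perfectoid Huber pair $(R^\sharp, R^{\sharp+})$.

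Second, I would argue that the induced map $\Spd(A^{\ast}) \to \Spd(A)$ is a v-cover. On generic fibers it is an isomorphism: for an untilt $R^\sharp$ in which $\pi$ is invertible, $R^{\sharp+}$ is integrally closed in $R^\sharp$, so any continuous homomorphism $A\to R^{\sharp+}$ extends uniquely and continuously to $A^{\ast}\to R^{\sharp+}$ via $A^{\ast}\subset A[\pi^{-1}]\to R^\sharp$. On reductions, $\Spec(A^{\ast}/\pi)$ is the preimage of $\Spec(A/\pi)$ under the surjective integral map $\Spec(A^{\ast})\to \Spec(A)$, so both $\Spec(A^{\ast}/\pi)\to \Spec(A/\pi)$ and $\Spec((A^{\ast}/\pi)^{\on{perf}})\to \Spec((A/\pi)^{\on{perf}})$ are surjective. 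Combined with the iso on generic fibers, this makes $|\Spd(A^{\ast})|\to |\Spd(A)|$ surjective. Since the finiteness of $A\to A^{\ast}$ renders $\Spd(A^{\ast})\to \Spd(A)$ quasi-compact, \cite[Lemma 12.11]{Sch17} then yields the v-cover property.

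Finally, it remains to verify that $\Spd(A)$ is a $\pi$-adic kimberlite: \cite[Proposition 1.4.23]{Gle20} applied to the affine (hence separated) formal scheme $\Spf(A)$ with finitely generated ideal of definition $(\pi)$ shows it is a kimberlite, and formal adicness over $\Spd O$ is the direct observation that any continuous homomorphism $A\to R^{\sharp+}$ with $\pi=0$ in $R^{\sharp+}$ factors through $A/\pi$. Composing the two v-covers $\Spd(R^{\sharp+}) \to \Spd(A^{\ast})\to \Spd(A)$ then provides the required $F$-perfectoid cover and establishes flatness. The main obstacle is that passing to the integral closure does not preserve the special fiber---the map $A/\pi\to A^{\ast}/\pi$ is not a universal homeomorphism in general---so $\Spd(A^{\ast})$ and $\Spd(A)$ cannot be identified outright; the crux is to argue separately that generic fibers agree (via the universal property of integral closure) and that reductions are related by a finite surjective map (via integrality), which together suffice to recognize the transition morphism as a v-cover.
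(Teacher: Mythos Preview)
Your overall strategy---pass to the integral closure $A^\ast$ of $A$ in $A[\pi^{-1}]$, apply \Cref{prop huber pairs strong p adic} to $A^\ast$, and then show $\Spd(A^\ast)\to\Spd(A)$ is a v-cover---is exactly what the paper does. However, there is a gap in your finiteness argument: the claim that $A^\ast$ embeds as a sub-$A$-module of the normalization of $A_{\mathrm{red}}$ is false whenever $A$ has nonzero nilradical $I$. Indeed, every nilpotent of $A[\pi^{-1}]$ is trivially integral over $A$, so $I[\pi^{-1}]\subset A^\ast$; but $I[\pi^{-1}]=\bigcup_n \pi^{-n}I$ is not a finitely generated $A$-module unless $I=0$ (were it finitely generated, Nakayama over the $\pi$-adically complete ring $A$ would force $I\subset\pi I$ and hence $I=0$). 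So $A^\ast$ need not be finite over $A$, and your subsequent claims ($\pi$-adic completeness of $A^\ast$, quasi-compactness of $\Spd(A^\ast)\to\Spd(A)$) break down.

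The paper closes this gap by first replacing $A$ with (the $\pi$-adic completion of) its absolute weak normalization, which leaves $\Spd(A)$ unchanged by \Cref{sec:two-diff-diam-lemma-v-sheaf-only-depends-on-absolute-weak-normalization} and renders $A$ reduced. An even simpler fix suffices here: since perfectoid rings are reduced, every continuous map $A\to R^{\sharp+}$ factors through $A_{\mathrm{red}}$, so $\Spd(A)=\Spd(A_{\mathrm{red}})$; and $A_{\mathrm{red}}$ remains $\pi$-adically complete (as a quotient of a complete noetherian ring), $O$-flat, and topologically of finite type. Once $A$ is reduced your argument goes through verbatim, matching the paper's proof.
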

	
	\begin{proof}
		We may assume that $A$ is reduced as passing to the absolute weak normalization does not change $\Spd(A)$ by \Cref{univalencia do functor diamante restrito a esquemas formais normais} and \Cref{univalencia do functor diamante restrito a esquemas formais normais 2}. As $A$ is noetherian and quasi-excellent, the integral closure of $A$ in its total ring of fractions is therefore a finite $A$-module. In particular, the integral closure $A^\prime$ of $A$ in $A[p^{-1}]$ is finite over $A$. Thus, we can conclude that $\Spd(A')$ (with $A^\prime$ given the $\pi$-adic topology) is flat by \Cref{prop huber pairs strong p adic} and the map $\Spd(A')\to \Spd(A)$ is a v-cover since it is isomorphism over $\Spd(F)$ (this uses that the $\pi$-adic topology on $A^\prime$ agrees with the subspace topology on $A[\pi^{-1}]$) and the map $\Spec(A'/\pi)\to \Spec(A/\pi)$ is proper and surjective (here we use again \cite[Lemma 3.5, Proposition 3.7]{Gle20} as in \Cref{prop huber pairs strong p adic}).
	\end{proof}
	
	\begin{remark}
		A careful inspection of the proof of \Cref{prop huber pairs strong p adic} above allows us to conclude that a $\pi$-adic formal Huber pair $(A,A)$ will give rise to a flat $\pi$-adic kimberlite $\Spd(A)$ if and only if the specialization map 
		\begin{equation}
		\on{sp}_A\colon\{x\in \Spa(A)\mid |\pi|_x \neq 0\}\subset \Spd(A)\to \Spec(A/\pi)
		\end{equation} 
		is surjective. 
		The hypothesis taken in \Cref{prop huber pairs strong p adic} are easy to verify assumptions that ensure this happens. 
		Without assuming flatness of $A$, this might not hold since for a discrete and perfect $O$-algebra $A$ in characteristic $p$, the v-sheaf $\Spd(A)$ is a $\pi$-adic kimberlite that is not flat.
	\end{remark}
	
	We can relate flatness for $\pi$-adic kimberlites to surjectivity of the specialization map.
	
	\begin{proposition}
		\label{proper and sp surjective gives strong kimberlite}
		Let $X$ be a $\pi$-adic kimberlite over $\Spd O$.
		\begin{enumerate}
			\item If $X$ is flat, then the specialization map $\on{sp}\colon|X_\eta|\to |X^{\on{red}}|$ is surjective.
			\item Conversely, if $X\to \Spd O$ is proper and $\on{sp}\colon|X_\eta|\to |X^{\on{red}}|$ surjective, then $X$ is flat over $\Spd O$.
		\end{enumerate}
		
	\end{proposition}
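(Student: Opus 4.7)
The plan is to treat the two implications separately, using the functoriality of the specialization map (\Cref{sec:formal-theory-v-proposition-existence-of-specialization-map}) in both directions.

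For part (1), I would start by noting that flatness of $X$ gives a v-cover $q\colon Y:=\coprod_i \Spd(R_i^{\sharp+})\to X$ with each $(R_i^\sharp,R_i^{\sharp+})$ an $F$-perfectoid Huber pair. A v-cover between small v-stacks is surjective on underlying topological spaces by \cite[Lemma 12.11]{Sch17}, and since $q$ is formally adic (because both $X$ and $Y$ are $\pi$-adic over $\Spd O$), it restricts to v-covers on generic and reduced fibers. By functoriality of the specialization map the square
\begin{equation}
\begin{tikzcd}
|Y_\eta| \arrow{r}{|q_\eta|} \arrow{d}{\on{sp}_Y} & |X_\eta| \arrow{d}{\on{sp}_X} \\
|Y^{\on{red}}| \arrow{r}{|q^{\on{red}}|} & |X^{\on{red}}|
\end{tikzcd}
\end{equation}
commutes, and the bottom arrow is surjective. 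Hence it suffices to show that $\on{sp}_Y$ is surjective, and by decomposing the coproduct that $\on{sp}_{\Spd(R^{\sharp+})}$ is surjective for each $F$-perfectoid pair. Since $R^{\sharp+}$ is flat and $\pi$-adically complete, this is the standard fact that for the Tate Huber pair $(R^\sharp,R^{\sharp+})$ the specialization map $|\Spa(R^\sharp,R^{\sharp+})|\to |\Spec(R^{\sharp+}/\pi)|$ is surjective: any prime $\frakp\subset R^{\sharp+}/\pi$ lifts to a valuation on $R^{\sharp+}$ that does not kill $\pi$, giving a rank $\ge 1$ point of the generic fiber specializing to $\frakp$.

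For part (2), the idea is to construct a v-cover by $\Spd(R^{\sharp+})$'s directly from points of the generic fiber using the valuative criterion of properness. Since $X_\eta$ is a locally spatial diamond, choose a v-cover $\coprod_i \Spa(R_i^\sharp,R_i^{\sharp+})\to X_\eta$ by affinoid perfectoid spaces with $R_i^{\sharp+}$ integrally closed in $R_i^\sharp$ (and $R_i^\sharp$ of characteristic zero), and for each perfectoid affinoid extend the structure map to $\Spa(R_i^\sharp, R_i^{\sharp+})\to \Spd O$ through $\Spd (R_i^{\sharp+})\to \Spd O$. The key step is to lift the map $\Spa(R_i^\sharp,R_i^{\sharp+})\to X$ over $\Spd O$ to a map $\Spd(R_i^{\sharp+})\to X$. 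I would do this pointwise by the valuative criterion of properness: for each rank one geometric point $\Spa(C,C^+)\to \Spa(R_i^\sharp,R_i^{\sharp+})$ the composite to $X$ extends uniquely along $\Spa(C,C^+)\hookrightarrow \Spd(O_C)$ by \cite[Proposition 18.3]{Sch17} applied to the proper map $X\to \Spd O$, and these extensions glue via descent along the v-cover by geometric points. Taking the coproduct of these $\Spd(R_i^{\sharp+})$ gives a map to $X$ that is surjective on generic fibers by construction.

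It remains to verify surjectivity on $|X^{\on{red}}|$, which is where the surjectivity hypothesis on $\on{sp}_X$ enters: combining the commutative square above with surjectivity of $\on{sp}_X$ and of $|Y_\eta|\to |X_\eta|$ forces $|Y^{\on{red}}|\to |X^{\on{red}}|$ to be surjective. Since the map $Y\to X$ is quasi-compact (as its target is qcqs and the source is a disjoint union of qcqs pieces reindexable after refinement), \cite[Lemma 12.11]{Sch17} then promotes topological surjectivity to a v-cover, exhibiting $X$ as flat. The main obstacle is the lifting step from the generic fiber to $\Spd(R_i^{\sharp+})$; once that is in place, the rest is a diagram chase with specialization maps.
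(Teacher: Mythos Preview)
Your argument for part (1) is essentially the paper's: reduce via the defining v-cover and functoriality of specialization to the case $X=\Spd(R^{\sharp+})$, where surjectivity is standard (the paper cites \cite[Proposition 1.4.2]{Gle20}).

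For part (2) there are two genuine gaps.

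First, the lifting step. The valuative criterion for the proper map $X\to\Spd O$ concerns extensions along inclusions $\Spa(K,K^+)\hookrightarrow\Spa(K,K'^+)$ for $K'^+\subset K^+$ inside a fixed perfectoid field; it does not produce an extension of $\Spa(C,O_C)\to X$ across $\Spa(C,O_C)\hookrightarrow\Spd(O_C)$, which adds a characteristic-$p$ special point. Your ``gluing via descent along the v-cover by geometric points'' is not a well-defined procedure either. The paper avoids this entirely: since $X$ is a kimberlite it is v-formalizing by definition, so after refining the v-cover $\Spa(R,R^+)\to X_\eta$ one may assume the map itself factors through $\Spd(R^+)\to X$.

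Second, and contrary to your closing sentence, the diagram chase does not suffice. You correctly deduce that $|q^{\on{red}}|\colon |Y^{\on{red}}|\to|X^{\on{red}}|$ is surjective on Zariski spectra. But to invoke \cite[Lemma 12.11]{Sch17} you need $|Y|\to|X|$ surjective as v-sheaves, and over the special fiber this means $|(Y^{\on{red}})^\diamond|\to|(X^{\on{red}})^\diamond|$ surjective, i.e.\ surjectivity on \emph{adic} spectra. Surjectivity on Zariski spectra does not imply this: if $A$ is a discrete valuation ring with fraction field $K$ and residue field $\kappa$, then $\Spec(K)\sqcup\Spec(\kappa)\to\Spec(A)$ is bijective on Zariski points, yet the canonical rank-one valuation in $\Spa(A,A)$ has no preimage in $\Spa(K,K)\sqcup\Spa(\kappa,\kappa)$, as both factors carry only trivial valuations. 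The paper's argument is designed precisely to hit adic points: given $x\in|(X^{\on{red}})^{\on{ad}}|$ with support $\frakp_x$ and residue valuation ring $k(x)^+$, it finds $y\in|X_\eta|$ with $\on{sp}_X(y)=\frakp_x$, represents $y$ by $\Spa(C,C^+)$, and then \emph{shrinks} $C^+$ to $C_1^+:=C^+\times_{\kappa(y)}\kappa(y)^+$ for a valuation ring $\kappa(y)^+\subset\kappa(y)$ dominating $k(x)^+$, using partial properness of $X_\eta$ to extend. Only after this pointwise modification of the integral structure does the resulting map $\Spd(C_1^+)\to\Spd(R^+)\to X$ have $x$ in its image.
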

	\begin{proof}
		The first statement reduces to the case $X=\Spd(R_i^{\sharp +})$ for $(R_i^\sharp, R_i^{\sharp +})$ a perfectoid Huber pair over $F$, where it follows from \cite[Proposition 4.2]{Gle20}. Let us prove the second.
		It follows from the hypothesis that $X_\eta$ is quasi-compact over $\Spd F$, and thus we may find a v-cover $\Spa(R,R^+)\to X_\eta$ by affinoid perfectoid.
		Refining the cover if necessary we may assume this map factors through a map $\Spd(R^+)\to X$ because $X$ is v-formalizing.
		Since $X$ is quasi-separated over $\Spd O$ and $\Spd(R^+)$ is quasi-compact over $\Spd O$ (see \cite[Lemma 2.26]{Gle20}), we may conclude that $\Spd(R^+)$ is quasi-compact over $X$.
		To prove it is a v-cover, it is therefore enough to prove that the map of topological spaces is surjective.
		On the generic fiber this is clear.
		Using \cite[Lemma 3.5, Proposition 3.7]{Gle20}, we need to show $\Spec((R^+/\pi)^{\on{perf}})\to X^{\on{red}}$ is a scheme-theoretic v-cover, or equivalently that the map of the associated adic spectra induced by the morphism of schemes is surjective.
		
		The proof now follows a similar argument to the one given in \Cref{prop huber pairs strong p adic}.
		Given a point $x\in |(X^{\on{red}})^{\on{ad}}|$ in the adic spectrum of $X$ with affinoid residue field $\Spa(k(x),k(x)^+)$ we consider the point in $\mathfrak{p}_x\in |X^{\on{red}}|$ corresponding to the support of $x$.
		By surjectivity of the specialization map there is a point $y\in |X_\eta|$ with $\on{sp}_X(y)=\mathfrak{p}_x$.
		Represent $y$ by a map $\Spa(C,C^+)\to X_\eta$ with $(C,C^+)$ a perfectoid affinoid field over $F$.
		Replacing $\Spd(C,C^+)$ by a v-cover we may assume this map factors over a map $\Spd(C^+,C^+)\to X$.
		In particular, it promotes to a map $\Spd(C^+)\to X$.
		The closed point of $\Spd(C,C^+)$ specializes to a point with the same support as $x$.
		Let $\kappa(y)$ be the residue field of $C^+$.
		Then $\kappa(y)$ is a field extension of $k(x)$, and we can find a valuation ring $\kappa(y)^+\subset \kappa(y)$ making $\kappa(y)^+/k(x)^+$ an extension of valuation rings.
		By pullback along the surjection $C^+\twoheadrightarrow \kappa(y)$ we may construct from $\kappa(y)^+$ an open and bounded valuation  $C_1^+\subset C^+$.
		Since $X_\eta$ is partially proper we may extend $\Spd(C,C^+)$ to a map $\Spd(C^+,C_1^+)\to X_\eta$.
		After possibly replacing $\Spa(C,C^+_1)$ by a v-cover, we may assume it factors through $\Spa(R,R^+)$.
		Then the map extends to $\Spd(C_1^+)\to \Spd(R^+)\to X$. The map of adic spectra $\Spec((C_1^+/\pi)^{\on{perf}})^{\on{ad}}=\Spa((C_1^+/\pi)^{\on{perf}},(C_1^+/\pi)^{\on{perf}})\to (X^{\on{red}})^{\on{ad}}$ has $x$ in its image as we wanted to show.
	\end{proof}

	We now discuss some ad hoc hypothesis on $\pi$-adic kimberlites that allow us to recover them from their specialization triple.
	
	\begin{definition}
		\label{defi category K}
		Let $\calK$ be the full subcategory of v-sheaves over $\Spd O$ consisting of flat $\pi$-adic kimberlites $X$ that are quasi-compact and separated over $\Spd O$ and satisfy the following properties:
		\begin{enumerate}
			\item The $\Spd C$-valued points of $X$ define a dense subset of $|X_C|$. 
			\item The reduction $X^{\on{red}}$ is a perfect $k$-scheme perfectly of finite type.   
			\item Every section $\Spd C\to X_C$ formalizes to a map $\Spd O_C \to X_{O_C}$.
		\end{enumerate}
	\end{definition}
	
	Our main theorem about the category $\calK$ is the following.
	
	\begin{theorem}\label{prop_fully_faith_triples_kimberlites}
		When restricted to the category $\calK$ of \Cref{defi category K}, the functor sending a $\pi$-adic kimberlite to its generic fiber is faithful and the functor that sends it to its specialization triple \begin{equation}X\mapsto (X_\eta,X^{\mathrm{red}},\mathrm{sp}_{\breve{X}})\end{equation} 
		is fully faithful.
		Here, $\mathrm{sp}_{\breve{X}}$ denotes the specialization map associated with the base change $\breve X:=X\x_{\Spd(O)}\Spd(\breve O)$. 
	\end{theorem}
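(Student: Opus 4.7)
My plan is to prove faithfulness of $X \mapsto X_\eta$ by a v-density argument, and then deduce fullness of the triple functor by constructing $f$ pointwise via condition (3) and extending via v-descent.

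For faithfulness, suppose $f, g \colon X \to Y$ are morphisms in $\calK$ with $f_\eta = g_\eta$. Since $Y$ is separated, the equalizer $E := X \times_{Y \times_{\Spd O} Y,\, \Delta_Y} Y$ is a closed sub-v-sheaf of $X$ containing $X_\eta$; by \Cref{proposition-closure-of-v-subsheaves} it also contains the v-closure $(X_\eta)^{\on{cl}}$ of $X_\eta$ in $X$, so the task reduces to showing $|X_\eta|^{\on{wgc}} = |X|$. Let $S \subseteq |X|$ be any closed weakly generalizing subset containing $|X_\eta|$. Using the flatness of $X$, choose a v-cover $\coprod_i \Spd R_i^+ \to X$ by affinoid perfectoid Huber pairs $(R_i, R_i^+)$ over $F$. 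The preimage of $S$ in each $|\Spd R_i^+|$ is closed, weakly generalizing (immediate from the valuative characterization in \Cref{sec:closures-sub-v-definition-weakly-generalizing}, which is manifestly preserved by pullbacks), and contains the generic fiber $|\Spa(R_i, R_i^+)|$. By the adaptation of \Cref{density of generic} to perfectoid Huber pairs together with \Cref{proposition-closure-of-v-subsheaves}, this preimage must equal all of $|\Spd R_i^+|$. Since the v-cover is jointly surjective on underlying spaces, $S = |X|$, and hence $E = X$, i.e.\ $f = g$.

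For fullness, given a morphism of triples $(f_\eta, f_s) \colon (X_\eta, X^{\on{red}}, \on{sp}_X) \to (Y_\eta, Y^{\on{red}}, \on{sp}_Y)$, we build $f \colon X \to Y$ via v-descent. For each $\Spd O_C$-point $x \colon \Spd O_C \to X$ (with $C$ algebraically closed perfectoid over $F$), applying $f_\eta$ to the generic fiber $x_\eta \colon \Spd C \to X_\eta$ and then formalizing via condition (3) on $Y$ yields a unique $\widetilde{y} \colon \Spd O_C \to Y$, which we declare to be $f(x)$; uniqueness is enforced by the faithfulness step applied to the separated v-sheaf $Y \in \calK$. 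This construction is manifestly compatible with morphisms between such integral test objects. To extend from $\Spd O_C$-test objects to a v-cover of $X$, refine the cover $\coprod_i \Spd R_i^+ \to X$ (available by flatness) by strictly totally disconnected covers of each $\Spa(R_i, R_i^+)$ whose factors are perfectoid valuation rings with algebraically closed fraction fields; applying the pointwise construction on each factor and using uniqueness of formalizations to verify the cocycle condition, v-descent produces $f \colon X \to Y$. By construction, $f_\eta$ is the generic fiber of $f$, and the compatibility $\on{sp}_Y \circ f_\eta = f_s \circ \on{sp}_X$ together with the functoriality of specialization in \Cref{sec:formal-theory-v-proposition-existence-of-specialization-map} forces $f_s$ to be the reduction of $f$.

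The main obstacle is promoting the $\Spd O_C$-pointwise construction to a morphism on a v-cover of $X$, since condition (3) supplies formalizations only at the finest $\Spd C$-type generic points, whereas v-descent requires coherent data on a v-cover. The crucial input is the uniqueness of formalizations in $Y$, which (by the faithfulness already established, applied factor-wise on strictly totally disconnected v-refinements) allows one to glue the individually defined $\Spd O_C$-values into a globally well-defined morphism; the perfect-reduced-finite-type hypothesis on $Y^{\on{red}}$ in condition (2) and the density of $\Spd C$-points in condition (1) together ensure that sufficiently many formalizations exist to cover the relevant test objects, while the separatedness of $Y$ underwrites the cocycle check. The final compatibility of $f$ with the given triple is then a formal consequence of the functoriality of specialization.
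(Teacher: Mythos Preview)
Your faithfulness argument has a gap: you claim that for perfectoid $(R,R^+)$ over $F$, any closed weakly generalizing subset of $|\Spd R^+|$ containing the generic fiber is all of $|\Spd R^+|$, citing an ``adaptation of \Cref{density of generic}''. But that lemma is about $\pi$-adically complete, topologically finite type $O$-algebras, and its proof uses the Jacobson property of pseudorigid spaces; it does not carry over to perfectoid $R^+$ without further argument. The paper avoids this by exploiting more structure on the equalizer: since $Y$ is $\pi$-adic, the diagonal $\Delta_Y$ is not only a closed immersion but also formally adic, so its pullback to $\Spd(R^+)$ is a closed \emph{formally adic} sub-v-sheaf with full generic fiber. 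Formal adicness pins down the special fiber as $(Z^{\on{red}})^\diamond$, and then surjectivity of the specialization map forces $Z^{\on{red}}=(\Spd R^+)^{\on{red}}$, hence $Z=\Spd(R^+)$.

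Your fullness argument has a more serious gap in the gluing step. You construct $f$ on individual $\Spd O_C$-points via condition (3), but v-descent requires compatible data on a v-cover \emph{and its fiber products over $X$}. Those fiber products $\Spd(C_\alpha^+)\times_X \Spd(C_\beta^+)$ are not of the form $\Spd O_{C'}$, so neither condition (3) nor the uniqueness-of-formalization argument applies to them; ``uniqueness of formalizations to verify the cocycle condition'' is therefore not a proof. Moreover, condition (3) is stated only for the fixed completed algebraic closure $C$ of $F$, not for arbitrary algebraically closed perfectoid fields, and you never use $f^{\on{red}}$ as input---only check compatibility after the fact---so there is no mechanism controlling the special-fiber behavior during the construction. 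The paper proceeds entirely differently: it takes a single formalizable v-cover $\Spa(R,R^+)\to X_\eta$ for which $f_\eta\circ g$ also formalizes, forms the graph map $(g,f_\eta\circ g)\colon \Spd(R^+)\to W:=X\times_{\Spd O}Y$, and lets $Z$ be its sheaf-theoretic image. One then shows $Z\to X$ is an isomorphism by checking generic and special fibers separately; the special-fiber step uses condition (1) together with the hypothesis $|f^{\on{red}}|\circ\on{sp}_X=\on{sp}_Y\circ|f_\eta|$ to force $Z_s=\Gamma(f^{\on{red}})^\diamond$. This global ``image of the graph'' construction sidesteps all descent issues.
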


	\begin{proof}
		Let us prove faithfulness. 
		Let $f,g\colon X\to Y$ be two maps such that $f_\eta=g_\eta$. 
		Since $X$ is flat and quasi-compact we may replace it by a cover of the form $\Spd(R^{+})$. 
		Since $Y$ is separated and $\pi$-adic the map $\Delta\colon Y\to Y\times_{\Spd O} Y$ is formally adic and a closed immersion. 
		The pullback of $\Delta$ by $(f,g)$ is closed and formally adic subsheaf of $\Spd(R^{+})$ with the same generic fiber. We may finish by arguing as in the proof of \cite[Proposition 4.9]{Gle20}.
		
		Let us prove the map is full. Fix a map $f:=(f_\eta,f^{\on{red}})$ of triples \begin{equation}f\colon(X_\eta,X^{\on{red}},\mathrm{sp}_{\breve{X}})\to (Y_\eta,Y^{\on{red}},\mathrm{sp}_{\breve{Y}})\end{equation} and let $W= X \times_{\Spd O} Y $.
		Let $g\colon\Spa(R,R^+)\to X_\eta$ be a formalizable v-cover which extends to a surjection $\Spd(R^+)\to X$ and for which $f\circ g$ is also formalizable (this is possible using \Cref{prop huber pairs strong p adic}).
		Let $(g,f\circ g)\colon\Spd(R^+)\to W$ be the induced map and define $Z$ as the sheaf-theoretic image of $(g,f\circ g)$ in $W$. We have a projection map $Z\to X$ and we wish to prove that it is an isomorphism.
		Observe that the graph morphism $(\on{id},f_\eta)\colon X_\eta\to W_\eta$ already identifies $X_\eta$ with $Z_\eta$. In particular, $Z(C)$ is dense inside $|Z_C|$ by our assumption on $X$.
		
		By construction $Z$ is v-locally formal since $\Spd(R^+,R^+)$ surjects onto it. Moreover, since $Z\subset W$ and $W$ is separated over $O$, we see that $Z$ is also separated over $O$. 
		Let us prove that $Z$ if formally $\pi$-adic and that $Z^{\on{red}}$ is isomorphic to $X^{\on{red}}$. 
		
		We claim that $Z_s \subset W_s=(W^{\on{red}})^{\diamond}$ factors through the graph of $(f^{\on{red}})^{\diamond}$.
		Indeed, since $X_{O_C}$ and $Y_{O_C}$ formalize $C$-sections, for any map $q\colon \Spd C\to Z_\eta$ we obtain maps $q^{\on{red}}_x\colon\mathrm{Spec}(\bar{k})\to X^{\on{red}}$ and $q^{\on{red}}_y\colon\mathrm{Spec}(\bar{k})\to Y^{\on{red}}$ intertwinned under $f^{\on{red}}$. 
		This holds because $|f_{\bar{k}}^{\on{red}}|\circ \mathrm{sp}_{\breve{X}}=\mathrm{sp}_{\breve{Y}}\circ |f_{\breve{\eta}}|$ by assumption and $\bar{k}$-sections of $Y^\red_{\bar{k}}$ are determined by the closed point in $|Y^\red_{\bar{k}}|$ that they induce. 
		This shows that $\on{sp}_{{W}}(q)\in \Gamma(f^{\on{red}})$.		
		In particular, $\on{sp}_{{W}}(|Z_{{\eta}}|)\subset \overline{\on{sp}_{{W}}(Z(C))}\subset |W^{\on{red}}|$ is contained in $\Gamma(f^{\on{red}})$.
		By \cite[Proposition 4.2]{Gle20}, we know that the specialization map $\Spd(R,R^+)\to \Spec((R^+/\pi)^{\on{perf}})$ is surjective.
		Because $(g,f_\eta\circ g)\colon \Spd(R,R^+)\to W$ has image $|Z_\eta|$ (on topological spaces), this implies by naturality of the specialization map that the morphism $g^{\on{red}}\colon\mathrm{Spec}(R^+/\pi)^{\on{perf}}\to W^{\on{red}}$ factors through $\Gamma(f^{\on{red}})$ as well.
		Consequently, $Z_s\to W_s$ factors through $\Gamma(f^{\on{red}})^{\diamond}$. 
		On the other hand, since $\Spd(R^+)\to X$ is surjective the projection map
		\begin{equation}
		(\mathrm{Spec}(R^+/p)^{\on{perf}})^{\diamond}\to (X^{\on{red}})^{\diamond}
		\end{equation}
		is a surjection. 
		This implies that the morphism $\Spec((R^+/\pi)^{\on{perf}})\to W_{\on{red}}$ surjects onto $\Gamma(f^{\on{red}})$, and this in turn implies that $Z_s\to \Gamma(f^{\on{red}})^{\diamond}$ is an isomorphism, as it is a monomorphism and surjective. 
		In particular, we get that $Z_s\cong (Z^{\mathrm{red}})^{\diamond}$, that is, $Z$ is formally $\pi$-adic.
		
		As we have seen the map $Z\to X$ is an isomorphism on the generic fiber and on the special fiber. Since $\Spd(R^+)\to Z$ is surjective $Z$ is quasi-compact over $\Spd O$, which is enough to conclude $Z\to X$ is an isomorphism (by \cite[Lemma 12.5]{Sch17}, note that $Z\to X$ is quasi-compact, as $X$ is qcqs over $\Spd O$).	
	\end{proof}
	
	It is also relevant to relate this to a notion of topological flatness that appears in \cite{PR21}.
	
	\begin{lemma}\label{lem_top_flat_implies_formal_flat}
		Let $X$ be a proper $\pi$-adic prekimberlite over $\Spd O$ satisfying conditions (1)-(3) of \Cref{defi category K} and such that $X_\eta$ is a spatial diamond. Then the following hold.
		\begin{enumerate}
			\item $X$ is a $\pi$-adic kimberlite. 
			\item If $\lvert X_\eta\rvert$ is a dense open\footnote{The converse, however, fails. Indeed, let $O\langle t\rangle \subset V$ be a higher rank valuation ring endowed with its $\pi$-adic topology. Then $\Spd(V)$ is a flat $\pi$-adic kimberlite. As in \cite[Definition 2.1, Proposition 2.19]{Gle20}, one can prove that the locus $N_{t\ll1}$ where $t$ is topologically nilpotent is an open subset of $|\Spd(V)|$ that does not meet the generic fiber.} subset of $\lvert X\rvert$, then $X$ is flat, thus lies in $\calK$.  
		\end{enumerate}
	\end{lemma}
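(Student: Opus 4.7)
The plan is to reduce flatness of $X$ to surjectivity of the specialization map $\on{sp}_X\colon |X_\eta|\to |X^{\on{red}}|$ via \Cref{proper and sp surjective gives strong kimberlite}(2); once flatness is in hand, membership in $\calK$ follows because hypotheses (1)--(3) of \Cref{defi category K} are among the assumptions.

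Fix $y\in |X^{\on{red}}|$ and set $Z:=\overline{\{y\}}$, an irreducible closed subset with generic point $y$. Since $X^{\on{red}}$ is perfectly of finite type over $k$, the topological space $|X^{\on{red}}|$ is Noetherian, so any open $U\subset Z$ containing $y$ is locally closed and constructible in $|X^{\on{red}}|$. By the proposition on open tubular neighborhoods, the morphism $\widehat X_{/U}\to X$ is then an open immersion, and its image is non-empty because it contains that of $U^\diamondsuit$. By the assumed density of $|X_\eta|$ in $|X|$, we obtain $|\widehat X_{/U}|\cap |X_\eta|\neq \emptyset$, and through the inclusion $|\widehat X_{/U}|\subset \on{sp}^{-1}(U)$ this forces $\on{sp}_X^{-1}(U)\neq \emptyset$.

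Now $X_\eta$ is a qcqs locally spatial diamond (being the generic fiber of the proper v-sheaf $X$), so $|X_\eta|$ is a spectral space, and $\on{sp}_X$ is continuous and spectral by \Cref{sec:formal-theory-v-proposition-existence-of-specialization-map}. Consequently each $\on{sp}_X^{-1}(U)$ is constructible, hence clopen in the compact Hausdorff constructible topology on $|X_\eta|$. As $U$ ranges over the filtered poset of opens of $Z$ containing $y$, the sets $\on{sp}_X^{-1}(U)$ form a filtered family of non-empty closed subsets in that topology, so by the finite intersection property their intersection is non-empty. This intersection equals $\on{sp}_X^{-1}\bigl(\bigcap_U U\bigr)=\on{sp}_X^{-1}(\{y\})$, using that $y$ is the generic point of the irreducible space $Z$, so $\bigcap_U U = \{y\}$. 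Hence $y\in \on{sp}_X(|X_\eta|)$, and varying $y$ proves the desired surjectivity.

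The main subtlety is dealing with non-closed $y$: for closed $y$, the single tubular neighborhood $\widehat X_{/\{y\}}$ already sits inside $\on{sp}^{-1}(\{y\})$ and a single application of density suffices, but for general $y$, density alone only yields elements of the image lying in $\on{sp}^{-1}(U)$ for shrinking opens $U$, i.e.\ \emph{generalizations} of $y$; the compactness of the constructible topology is needed to pin the image down to $y$ itself.
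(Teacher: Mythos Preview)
Your proof is correct but takes a longer route than the paper's. You handle an arbitrary point $y\in|X^{\mathrm{red}}|$ directly, by intersecting the constructible preimages $\mathrm{sp}_X^{-1}(U)$ over a shrinking system of locally closed neighborhoods $U$ of $y$ and invoking compactness of the constructible topology. The paper instead first observes that $\mathrm{sp}_X$ is spectral and \emph{specializing} (citing \cite[Lemma~1.4.43]{Gle20}), hence sends closed sets to closed sets; in particular the image of $\mathrm{sp}_X$ is closed, so it suffices to hit the closed points of $X^{\mathrm{red}}$. For a closed point $x$ the tubular neighborhood $\widehat{X}_{/x}$ already lies over the singleton $\{x\}$, so a single application of density of $|X_\eta|$ finishes the argument --- exactly the easy case you isolate in your final paragraph. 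What your approach buys is independence from the external input that $\mathrm{sp}_X$ is specializing; what it costs is the compactness detour. A minor notational point: when you write $U^\diamondsuit$ to witness non-emptiness of $\widehat{X}_{/U}$, the relevant inclusion in the paper's conventions is $|U^\diamond|\subset|\widehat{X}_{/U}|$ (the formal, not the analytic, functor), though this does not affect the argument.
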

	
	\begin{proof}
		By \cite[Proposition 4.32]{Gle20} the map $\on{SP}_X:X\to (X^{\on{red}})^{\diamond/\circ}$ is partially proper. 
		By \Cref{sec:formal-theory-v-proposition-existence-of-specialization-map}.(2) the specialization map $\on{sp}:|X|\to |X^{\on{red}}|$ is a spectral map, but since both topological spaces are qcqs it is a quasi-compact map. 
		This shows that $X$ is a $\pi$-adic kimberlite.

		Now, by \Cref{sec:formal-theory-v-proposition-existence-of-specialization-map}.(3) the specialization map sends closed subsets to closed subsets. Since $X^{\on{red}}$ is perfectly of finite type, it suffices to prove surjectivity of the map $X(\Spd C) \to X^{\on{red}}(\bar{k})$ induced by $\mathrm{sp}$. 
		
		For this, take the associated formal neighborhood $\widehat{X}_{/x}$ over a closed point $x$ of the reduction, which can be represented by $\Spec \bar{k} \to X^{\mathrm{red}}$ uniquely up to Galois automorphisms. It is a non-empty open by \cite[Proposition 4.22]{Gle20}. 
		Hence, it must have topologically dense generic fiber, which is in particular non-empty. By hypothesis, we can find a $C$-valued point mapping to $x$.
	\end{proof}
	
	The following statement gives a v-sheaf theoretic criterion to determine when a weakly normal scheme is already normal.
	
	\begin{proposition}
\label{prop_connected_tubes_implies_normality}
		Let $A$ be a flat, weakly normal and topologically of finite type $\pi$-adically complete domain over $O$. 
		Suppose that $A[\pi^{-1}]$ is normal and that, for every closed point $x\in \Spec(A/\pi)$, the diamond $(\widehat{\Spd(A)}_{/x})_\eta$ (equivalently, the rigid analytic fiber of the formal affine scheme $\widehat{\Spec(A)}_{/x}$) is connected.
		Then $A$ is normal. 
	\end{proposition}
	\begin{proof}
		First off, one has $\widehat{\Spd(A)}_{/x}=(\widehat{\Spec(A)}_{/x})^\lozenge$ by \cite[Proposition~4.19]{Gle20}.
		Since taking generic fibers commutes with $\lozenge$, we see that the connectedness of $(\widehat{\Spd(A)}_{/x})_\eta$ is equivalently to the connectedness of the rigid analytic fiber of $\widehat{\Spec(A)}_{/x}$.
		
		Now, let $B$ denote the integral closure of $A$ in $A[\pi^{-1}]$. 
		Since $A[\pi^{-1}]$ is normal, $B$ is also normal and $B$ is a finite $A$-algebra. 
		We claim that $f\co \Spd(B)\to \Spd(A)$ is an isomorphism, so that $A=B$ by \Cref{univalencia do functor diamante restrito a esquemas formais normais}. 
		By quasi-compactness, it is enough to check this on the generic and special fibers. 
		The generic case follows from the definition of $B$.
		We need to prove $\Spec(B/\pi)^{\mathrm{perf}}\cong \Spec(A/\pi)^{\mathrm{perf}}$ which amounts to proving that the fibers at closed points consists of singletons.
		Let $x\in \Spec(A/\pi)$ denote a closed point.
		By \cite[Proposition 4.20]{Gle20}, we have an identification 
		\begin{equation}
		(\widehat{\Spd(B)}_{/f^{-1}(x)})_\eta \cong (\widehat{\Spd(A)}_{/x})_\eta	
		\end{equation}
		In turn we also have $\coprod_{y\in f^{-1}(x)}(\widehat{\Spd(B)}_{/y})_\eta \cong (\widehat{\Spd(B)}_{/f^{-1}(x)})_\eta$.
		By \Cref{plano is flat} and \Cref{proper and sp surjective gives strong kimberlite} for all $y\in f^{-1}(x)$ the tubular neighborhood $(\widehat{\Spd(B)}_{/y})_\eta$ is a non-empty open subset of $(\widehat{\Spd(A)}_{/x})_\eta$.
		Since we assumed this to be connected we can conclude $f^{-1}(x)$ contains a unique element.
	\end{proof}

	\section{The affine flag variety}
	\label{sec:affine-flag-variety}
	In this section, we discuss some relevant material on perfect schemes and Witt vector affine flag varieties. 
	Namely, we review the calculation of the Picard group by He--Zhou \cite{HZ20}, the definition of canonical finite type deperfections of Schubert perfect schemes and apply a Stein factorization argument to construct a comparison isomorphism between the $p$-adic canonical deperfections of depth $0$ Schubert perfect schemes with the corresponding weakly normal Schubert schemes in the equicharacteristic situation. 
	In particular, we prove \cite[Conjecture III]{Zhu16} on their singularities in this case.
	
	\subsection{Perfect schemes}
	\label{sec:perfect-schemes}
	
	Here, we present some facts on perfect schemes that we will need later. 
	Let $p$ be a prime number. 
	All our schemes in this subsection will be assumed to lie over $\bbF_p$.
	
	The basic theory of perfect schemes is discussed in \cite[A.]{Zhu17} and \cite[Section 3]{BS17}. 
	In particular, we will use the notions of a perfectly finitely presented map between qcqs perfect schemes \cite[Proposition 3.11]{BS17} and of a perfectly proper morphism \cite[Defintion 3.14]{BS17}, \cite[Appendix A.18]{Zhu17}. 
	If $k$ is a perfect field, then we occasionally call a separated, perfectly finitely presented scheme $X$ over $k$ a perfect $k$-variety \cite[Remark A.14]{Zhu17}.
	
	A morphism $Y\to X$ of perfect schemes is called perfectly smooth if, \'etale locally on $Y$, there exists \'etale morphisms to the perfection of some relative affine space over $X$, see \cite[Definition A.25]{Zhu17}.  
	
	Given any normal finite type $k$-scheme $Y$, its perfection $Y_{\mathrm{perf}}$ is normal as it is a filtered colimit of normal schemes along affine transition maps. 
	Conversely, if $X$ is a qcqs normal perfect scheme perfectly of finite type, then using \cite[Tag 01ZA]{StaProj} (and finiteness for integral closures of schemes of finite type over a field), then we can write $X$ as the filtered colimit of perfections of normal schemes $Y_i, i\in I$, which are of finite type over $k$.
	
	The following result gives a topological criterion for normality of perfect schemes. 
	We stress that perfectness is crucial as one sees, for example, by looking at the normalization morphisms of the cuspidal curve.
	
	\begin{lemma}\label{caracterizacao de variedades normais perfeitas}
		Let $f\colon Y\to X$ be a surjective, perfectly proper morphism between qcqs integral perfect schemes. 
		Assume that $Y$ is normal and $f$ birational. 
		Then $X$ is normal if and only if the geometric fibers of $f$ are connected.
	\end{lemma}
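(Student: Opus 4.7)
The strategy is to reduce to a finite type model, apply classical Stein factorization, and translate back to the perfect world by exploiting the fact that a finite universal homeomorphism of qcqs perfect schemes is an isomorphism.

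Using the filtered colimit description of normal perfect schemes recalled just above, together with the spreading out of perfectly proper morphisms, I would descend $f$ to a proper, surjective, birational morphism $f_0\co Y_0 \to X_0$ of integral finite type $k$-schemes with $Y_0$ normal, whose perfection recovers $f$. Classical Stein factorization then yields a decomposition $f_0 = f_0' \circ g_0$, where $g_0\co Y_0 \to X_0'$ is proper with $(g_0)_\ast \calO_{Y_0} = \calO_{X_0'}$ (so has geometrically connected fibers) and $f_0'\co X_0' \to X_0$ is finite. The normality of $Y_0$ forces $X_0'$ to be normal: any element of $K(X_0')$ integral over $\calO_{X_0'}$ is integral over $\calO_{Y_0}$, hence a section of $\calO_{Y_0}$ on $g_0^{-1}(-)$, hence a section of $(g_0)_\ast \calO_{Y_0} = \calO_{X_0'}$. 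Combined with birationality of $f_0'$, this shows $X_0'$ is the normalization of $X_0$ in $K(X_0) = K(Y_0)$.

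Passing to perfections, write $f = f' \circ g$ with $g = \mathrm{perf}(g_0)$ and $f' = \mathrm{perf}(f_0')$. The morphism $f'\co X' \to X$ is finite and birational, and $X' = \mathrm{perf}(X_0')$ identifies with the normalization $\widetilde X$ of $X$: an integral element of $\mathrm{perf}(K(X_0))$ over $X$ is, after extracting sufficiently many $p$-th power roots, an integral element of $K(X_0)$ over $X_0$, so normalization commutes with perfection in this setting. Moreover, the geometric fibers of $g$ coincide as topological spaces with those of $g_0$, hence remain connected.

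Both directions then fall out. If $X$ is normal, the universal property of the normalization forces $X' = \widetilde X = X$, so $f = g$ has geometrically connected fibers. Conversely, any geometric fiber of $f$ decomposes as a disjoint union of geometric fibers of $g$ indexed by the corresponding fiber of $f'$; since the $g$-fibers are nonempty and connected, connectedness of the $f$-fibers is equivalent to $f'$ being a universal homeomorphism. As $f'$ is then a finite universal homeomorphism of qcqs perfect schemes, it is an isomorphism, so $X = \widetilde X$ is normal. The main obstacle is justifying the initial descent to a finite type model with the stated properties and confirming the compatibility of Stein factorization and normalization with the perfection functor; the rest of the argument is formal.
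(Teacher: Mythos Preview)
Your argument is correct, and the core idea—Stein factorization plus the fact that finite universal homeomorphisms of perfect schemes are isomorphisms—is the same one the paper uses. There are, however, two differences worth noting. First, for the direction ``connected fibers $\Rightarrow$ normal'' the paper does not go through Stein factorization at all: it invokes directly the result from \cite[Proposition 6.1]{BS17} (see also \cite[Lemma A.21]{Zhu17}) that a perfectly proper surjection with connected geometric fibers satisfies $\calO_X \cong f_*\calO_Y$, whence $\calO_X(U)\cong \calO_Y(f^{-1}(U))$ is normal for every affine open $U$. Your route via Stein factorization is equally valid and has the virtue of handling both directions with a single construction. Second, the paper only descends $Y$ over the fixed perfect base $X$ (via \cite[Proposition 3.13, Corollary 3.15]{BS17}), rather than descending both $X$ and $Y$ to finite type over $k$. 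This sidesteps the approximation issues you flag at the end: in the paper's setup there is no need to arrange that $X_0$ is integral or that $f_0$ is birational, only that $Z\to X$ is integral and generically an isomorphism, which is immediate. Your version works too—one can replace $Y_0$ by its normalization and pass to the closure of the generic point in $X_0$—but the paper's choice of keeping $X$ fixed makes the descent step essentially trivial.
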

	
	\begin{proof}
		If all geometric fibers of $f$ are connected, then the natural map $\calO_X\to f_\ast\calO_Y$ is an isomorphism, see \cite[Proposition 6.1]{BS17}, \cite[Lemma A.21]{Zhu17}. 
		Thus,
		\[
		\mathcal{O}_X(U)\cong \mathcal{O}_Y(f^{-1}(U)),
		\]
		for any open affine $U\subset X$. 
		As $\mathcal{O}_Y(V)$ is a normal ring for any open subset $V\subset Y$, the claim follows (here we use that $Y$ is integral \cite[Tag 0358]{StaProj}).
		
		Conversely, we can write $f$ as the perfection of a proper, finitely presented morphism $f_0\colon Y_0\to X$ by \cite[Proposition 3.13, Corollary 3.15]{BS17}. Let $g_0\colon Y_0\to Z_0=\underline{\Spec}((f_0)_\ast(\mathcal{O}_{Y_0}))$ be the Stein factorization of $f_0$, see \cite[Tag 03H2]{StaProj}. 
		Perfecting again, we get a factorization $f=h\circ g$ with $g\colon Y\to Z:=(Z_0)_{\mathrm{perf}}$ having connected geometric fibers, and $h\colon Z\to X$ an integral, dominant morphism of integral schemes inducing an isomorphism at generic points (because $f$ is birational). 
		As $X$ is normal we obtain that $X\cong Z$, which implies the claim.
	\end{proof}
	
	We now turn to Picard groups of perfect schemes. Given any qcqs perfect $k$-scheme $X$, we have $\on{Pic}(X)\cong \on{Pic}(X_0)[1/p]$ for any preferred choice of finite type deperfection $X_0$, cf.\ \cite[Lemma 3.5]{BS17}. In particular, the Picard groups of perfect schemes are always uniquely $p$-divisible.
	
	If $X$ is perfectly finitely presented over some perfect field $k$ and $X_0\to \Spec(k)$ a finitely presented model for $X$, then the localized Weil divisor class group $\on{Cl}(X_0)[1/p]$ only depends on $X$ and not on $X_0$, and we set
	\begin{equation}
	\label{eq:2}
	\on{Cl}(X):=\on{Cl}(X_0)[1/p].
	\end{equation}
	If $X$ is normal, then by \cite[0BE8]{StaProj} (and passage to the limit over Frobenius for some normal model) there exists a natural, injective map
	\begin{equation}
	\label{eq:1}
	\mathrm{Pic}(X)\hookrightarrow \on{Cl}(X).  
	\end{equation}
	
	Let us recall that a line bundle on a (qcqs) scheme is semi-ample if some positive power of it is globally generated.
	
	\begin{proposition}\label{prop_line_bundles_stein_fac}
		Let $X$ be a perfectly proper perfect $k$-scheme and $\calL$ be a semi-ample line bundle on $X$. There is a unique perfectly proper surjection $X \to Y$ of perfect $k$-schemes with connected geometric fibers such that all sufficiently divisible powers of $\calL$ descend uniquely to ample line bundles on $Y$.
	\end{proposition}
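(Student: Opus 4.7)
The plan is to reduce the construction to the classical Stein factorization for semi-ample bundles on finitely presented $k$-schemes and then re-perfect.

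First I would choose a finitely presented deperfection $X_0 \to \Spec k$ of $X$. By \cite[Lemma 3.5]{BS17}, $\on{Pic}(X) = \on{Pic}(X_0)[1/p]$, so after replacing $\mathcal{L}$ by a sufficiently divisible power we may assume it is the perfection of some line bundle $\mathcal{L}_0$ on $X_0$. Since cohomology commutes with filtered colimits along affine transition maps, $H^0(X, \mathcal{L}^n)$ is the filtered colimit over $r$ of $H^0(X_0, \mathcal{L}_0^{np^r})$ along Frobenius. Thus the semi-ampleness of $\mathcal{L}$ on $X$ translates, after another passage to a power and possibly to a Frobenius pullback of $X_0$, to ordinary global generation of some $\mathcal{L}_0^N$ on $X_0$.

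Next I would apply the classical Stein factorization to the morphism $X_0 \to \mathbb{P}^M$ defined by a basis of $H^0(X_0, \mathcal{L}_0^N)$: this yields a factorization $X_0 \xrightarrow{f_0} Y_0 \xrightarrow{g_0} \mathbb{P}^M$ in which $f_0$ is proper with geometrically connected fibers and $g_0$ is finite onto its image. Setting $\mathcal{M}_0 := g_0^\ast \mathcal{O}(1)$ produces an ample line bundle on $Y_0$ with $f_0^\ast \mathcal{M}_0 \cong \mathcal{L}_0^N$. I would then perfect to $Y := (Y_0)_{\mathrm{perf}}$, $f := (f_0)_{\mathrm{perf}}$, $\mathcal{M} := (\mathcal{M}_0)_{\mathrm{perf}}$: perfection preserves perfect properness and geometric connectedness of fibers, so $f$ satisfies the required properties and $\mathcal{M}$ is ample on $Y$. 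For an arbitrary $m$ divisible by $N$ (in fact, by $N p^r$ for any $r \geq 0$), the analogous descent is obtained by taking a suitable tensor power of $\mathcal{M}$.

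For uniqueness, I would use that connectedness of the geometric fibers together with \cite[Proposition 6.1]{BS17} gives $f_\ast \mathcal{O}_X = \mathcal{O}_Y$, which makes the pullback $f^\ast$ injective on Picard groups and the counit $f^\ast f_\ast \mathcal{N} \to \mathcal{N}$ an isomorphism for any line bundle $\mathcal{N}$ on $Y$; this yields uniqueness of the descent of every sufficiently divisible power of $\mathcal{L}$. For uniqueness of the target $Y$ itself, if $f' \colon X \to Y'$ is another candidate with ample descent $\mathcal{M}'$ of some $\mathcal{L}^m$, I would identify both $Y$ and $Y'$ on finite-type models as the perfection of $\mathrm{Proj}$ of the finitely generated section ring $\bigoplus_{n \geq 0} H^0(X_0, \mathcal{L}_0^{mn})$, so that $Y \cong Y'$ canonically. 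I expect the main technical obstacle to lie in the bookkeeping of Frobenius twists and Picard $p$-torsion when passing between $X$ and its finitely presented models, and in ensuring that different choices of $N$ or of finite-type model produce mutually compatible descents $\mathcal{M}$, so that the claim about ``all sufficiently divisible powers'' is well-posed; once these compatibilities are recorded, the rest is a direct translation of the classical semi-ample Stein factorization.
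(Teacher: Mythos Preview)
Your construction (the existence half) follows the same route as the paper: pick a finite type deperfection $X_0$, realize a power of $\calL$ as the pullback of a globally generated $\calL_0$ on $X_0$, take the Stein factorization $X_0\to Y_0$ of the map to projective space, and perfect. The unique descent of line bundles via $f_\ast\calO_X=\calO_Y$ and \cite[Proposition 6.1]{BS17} also matches the paper.

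The only substantive divergence is in the uniqueness of $Y$. You argue via section rings: since $f'_\ast\calO_X=\calO_{Y'}$, both $Y$ and $Y'$ share the graded ring $\bigoplus_n H^0(X,\calL^{mn})$, and ampleness of $\calM'$ gives an immersion of $Y'$ into the associated $\mathrm{Proj}$, whose image must coincide with $Y$. This works, but your passage to ``finite-type models of $Y'$'' is an unnecessary detour—a priori one does not know $Y'$ is perfectly of finite type, whereas the open immersion into $\mathrm{Proj}$ exists for any qcqs scheme with an ample line bundle, and a bijective immersion of perfect schemes is an isomorphism. The paper instead argues, following the proof of \cite[Theorem 8.3]{BS17}, that since $X\to Y$ is a proper v-cover, $Y$ is determined by the reduced closed subscheme $X\times_Y X\subset X\times_k X$; it then characterizes the $k$-points of the latter intrinsically as pairs joined by chains of integral perfect curves $C\subset X$ on which $\calL|_C$ is torsion in $\Pic(C)$. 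Your approach is the more classical one; the paper's curve criterion has the advantage of making the dependence of $Y$ on $\calL$ alone completely explicit and sidesteps any delicate points about ampleness or $\mathrm{Proj}$ on the a priori unconstrained perfect scheme $Y'$.
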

	
	\begin{proof}
		By semi-ampleness of $\mathcal{L}$, we can take $X_0$ to be a finite type deperfection of $X$ over $k$, and let $\calL_0$ be a base point free line bundle on $X_0$ whose pullback to $X$ is a power of $\calL$. 
		Without losing generality we may and do assume that $\calL$ is already base point free.
		Let $Y_0$ be the Stein factorization of the canonical morphism
		\begin{equation}
		\label{eq:3}
		X_0 \to Z_0 \subset \bbP(\Gamma(X_0,\calL_0)),
		\end{equation}
		where $Z_0$ is the (scheme-theoretic) image of $X_0$.
		Clearly, $\calL_0$ descends by construction to an ample line bundle on $Y_0$, pulling back $\calO(1)$ on the right side of \eqref{eq:3}. 
		After taking perfections, we get $X\to Y$ with the desired properties (see \cite[Proposition 6.1]{BS17} for unique descent of line bundles).
		
		In order to prove uniqueness of $Y$, we roughly follow the \cite[Proof of Theorem 8.3 in page 382]{BS17}. 
		Using Galois descent, we may assume that $k$ is algebraically closed.
		The morphism $X\to Y$ is a v-cover (by properness), hence $Y$ is determined by the closed subscheme $X\times_Y X\subset X\times_{\Spec(k)} X$. To identify this closed (and necessarily reduced since it is perfect) subscheme it suffices to identify the geometric fibers of the map $X\to Y$ in terms of $\calL$, and 
		we only have to argue on $k$-valued points as these are dense inside $X\times_Y X$ (since $k$ is algebraically closed). 
		We claim (and show below) that two $k$-rational points of $X$ lie in the same fiber over $Y$ if and only if both points can be linked by a chain of closed integral perfect $k$-curves $C$, such that the restriction $\mathcal{L}|_{C}$ is torsion in $\Pic(C)$. 

		Let $y\in Y(k)$ and consider the fiber $X_y\subseteq X$. 
		Let $x_1,\,x_2\in X_y(k)$. 
		That $x_1$ can be linked to $x_2$ by a chain of $k$-curves contained in $X_y$ is a general fact that applies to any connected $k$-variety, so it suffices to show that $\mathcal{L}|_C$ is torsion in $\Pic(C)$ for every curve contained in $X_y$. 
		By hypothesis for any sufficiently divisible $N$ the line bundle $\calL^{\otimes N}$ descends to $Y$.
		Since the map $X_y\to Y$ factors through $\Spec k$, we can deduce that $\calL^{\otimes N}|_C$ is trivial for all sufficiently divisible $N$, which already implies that $\calL|_C$ is itself torsion. 
		
		Conversely, let $x_1,x_2\in X(k)$ and suppose that these points can be linked by a chain of integral perfect closed $k$-curves $C_1,\dots,C_n$ on which $\calL|_{C_i}$ is torsion.
		Let $y_i=f(x_i)$ for $i\in \{1,2\}$.
		We claim (and show below) that $C_j\subseteq X_{y_i}$ for $j\in \{1,\dots,n\}$ and $i\in\{1,2\}$. 
		This already implies that $y_1=y_2$.
		By an induction argument, we can reduce to the case in which $x_i\in C_j$.
		We observe that given an integral perfect closed $k$-curve $C \subset X$ whose image in $Y$ is not a point, satisfies that all sufficiently divisible powers of $\calL$ restrict to an ample line bundle on $C$. 
		Indeed, after passing to a finitely presented deperfection of $C$ over $k$ the morphism $C_0 \to Y$ is finite and pullback of ample line bundles along affine morphisms are ample. 
		Since $\calL|_{C_j}$ is torsion, the map $C_j\to Y$ factors through a point, so $C_j\subseteq X_{y_i}$ as we wanted to show.
	\end{proof}
	
	Next, we discuss finite type deperfections. Let $k$ be a perfect field and let $X$ be a qcqs perfect $k$-scheme of perfectly finite presentation. 
	For each of the finitely many generic points $\eta\in X$, fix a subfield $k(\eta_0)\subset k(\eta)$, which is finitely generated over $k$ and has perfection $k(\eta)$. 
	Then there exists a unique (up to unique isomorphism) weakly normal finite type $k$-scheme $X_0$ such that $(X_0)_{\on{perf}}\cong X$ and for each generic point $\eta_0\in |X_0|\cong |X|$ the function field of $X_0$ at $\eta_0$ identifies with $k(\eta_0)$, see \cite[Proposition A.15]{Zhu17}. 
	Note that $X_0$ also reflects normality of $X$, that is, $X$ normal implies $X_0$ normal.
	
	For group actions we can draw the following consequence.
	
	\begin{proposition}\label{prop_can_deperf_equiv}
		Let $G$ be an affine perfect $k$-group of perfectly finite presentation and $X$ a qcqs perfect $k$-scheme of perfectly finite presentation equipped with a $G$-action with finitely many orbits.
		\begin{enumerate}
			\item Any reduced deperfection $G_0$ of $G$ is a smooth affine $k$-group.
			\item For such $G_0$, there are unique weakly normal deperfections $X_0$ with $G_0$-action, whose fixers on the open orbits are also smooth.
		\end{enumerate} 
	\end{proposition}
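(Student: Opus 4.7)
The plan is as follows. For part (1), I would first observe that a reduced scheme of finite type over a perfect field is automatically geometrically reduced. Hence $G_0 \times_k G_0$ is reduced, and since perfection commutes with fiber products, we have $(G_0 \times_k G_0)_{\on{perf}} \cong G \times_k G$. The multiplication, inverse, and unit morphisms of $G$ therefore descend uniquely to morphisms on $G_0$ by the universal property of weakly normal finite-type deperfections recalled from \cite[Proposition A.15]{Zhu17}, using that a reduced group scheme of finite type over a perfect field is normal, hence a fortiori weakly normal. This endows $G_0$ with the structure of an affine $k$-group scheme, and smoothness now follows from the classical fact that a group scheme of finite type over a perfect field which is reduced (equivalently, geometrically reduced) is smooth.

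For part (2), I would construct $X_0$ by prescribing, at each of the finitely many $G$-orbit generic points, a finitely generated subfield of the residue field whose perfection is the full residue field; the universal property from \cite[Proposition A.15]{Zhu17} then produces the weakly normal deperfection. The $G_0$-equivariance constraint forces these subfield choices to be invariant under translation, and finiteness of orbits ensures that only finitely many independent choices are needed---one per orbit. Applying the universal property once more, the action morphism $G \times_k X \to X$ descends uniquely to $G_0 \times_k X_0 \to X_0$ provided $G_0 \times_k X_0$ is reduced, which again holds because $k$ is perfect and both factors are reduced of finite type. Uniqueness of $X_0$ then follows from the rigidity of weakly normal deperfections, once the equivariant function field data is fixed.

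For the smoothness of generic fixers, the reduction to part (1) is direct: the generic fixer $H_0$ of a generic point of $X_0$ is a closed subgroup scheme of $G_0$, and its perfection is the stabilizer in $G$ of the corresponding orbit generic point. This stabilizer is itself a perfect affine group of perfectly finite presentation, so part (1) applied to it yields the required smoothness (provided $H_0$ is reduced, which follows from the weak normality of $X_0$ and $G_0$). The principal difficulty is verifying that the function field choices used to construct $X_0$ can be made globally consistent, both with respect to closure relations among orbits and with respect to translation by arbitrary (not just $k$-rational) points of $G_0$, so that $X_0$ assembles into a single weakly normal scheme carrying a bona fide $G_0$-action rather than merely an action on geometric points.
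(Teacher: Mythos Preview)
Your argument for part (1) contains a circularity: you invoke ``a reduced group scheme of finite type over a perfect field is normal'' to conclude that $G_0$ is weakly normal, in order to descend the multiplication map and thereby show that $G_0$ is a group scheme. In fact, the statement should be read with $G_0$ already given as a \emph{group scheme} deperfection of $G$; under the scheme-only reading, the claim is false (the cuspidal cubic $\Spec(k[t^2,t^3])$ is a reduced deperfection of $\bbG_a^{\mathrm{perf}}$ but carries no group structure). Once $G_0$ is a reduced $k$-group scheme, your final clause---smoothness from the classical fact about group schemes over perfect fields---is all that is needed, and the descent detour is superfluous. The paper simply cites \cite[Lemma A.26]{Zhu17}.

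For part (2), you correctly identify the principal difficulty (making the function-field choices globally consistent so that the action descends) but do not resolve it. The paper's approach avoids this difficulty by reversing the order of construction. First, one reduces to a single orbit $X=G/H$ by passing to the dense open union $U\subset X$ of maximal orbits and invoking \cite[Proposition A.15]{Zhu17} to extend any deperfection $U_0$ uniquely to a weakly normal $X_0$; the action map then deperfects because it does so over the dense open. On a single orbit, one \emph{defines} $H_0\subset G_0$ to be the unique reduced closed subscheme with perfection $H$---this is automatically a subgroup scheme (the multiplication $H_0\times H_0\to G_0$ has reduced source and factors set-theoretically through $H_0$), hence smooth by part (1)---and sets $X_0:=G_0/H_0$. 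Equivariance and smoothness of the fixer are then built in, and no function-field compatibility needs to be checked. Your route, constructing $X_0$ first and then examining its stabilizers, leaves unexplained why those stabilizers are reduced; the assertion that this ``follows from the weak normality of $X_0$ and $G_0$'' is not justified.
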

	\begin{proof}
		The first item is \cite[Lemma A.26]{Zhu17}. For the second item, we notice that $X$ has a dense open subset $U$ consisting of the disjoint union of its maximal orbits, cf.\ \cite[Proposition A.32]{Zhu17}. Having constructed the unique deperfection $U_0$ with the desired properties, it has a unique extension to a deperfection $X_0$ of $X$ by \cite[Lemma A.15]{Zhu17}. Furthermore, the action map $G\times X \to X$ also deperfects, because it does so over a dense open (and $X_0$ is weakly normal).
		
		Therefore, we may and do assume that $X=G/H$ is a single orbit around a certain $k$-valued point $x$. But then taking $H_0 \subset G_0$ to be the unique reduced closed subscheme whose perfection recovers $H\subset G$, we get a $G_0$-orbit $X_0=G_0/H_0$ deperfecting $X$ with smooth fixers. Uniqueness is clear.
	\end{proof}
	
	\Cref{prop_can_deperf_equiv} will be useful for constructing finite type deperfections for Schubert varieties in Witt vector affine Grassmannians, see \Cref{sec:canon-deperf}.
	
	\subsection{Affine flag varieties}
	\label{sec:affine-flag-vari}
	
	We now study the geometry of Witt vector affine flag varieties. 
	Assume that $k$ is a perfect field of characteristic $p>0$ and that $F$ is a complete discretely valued field with residue field $k$ and ring of integers $O$. 	
	Exceptionally, we allow $F\cong k\rpot{\pi}$ to be a Laurent series field, since it is needed in \Cref{sec:canon-deperf}.
	
	We denote by $\mathrm{Alg}^{\on{perf}}_k$ the category of perfect $k$-algebras. 
	For $R\in \mathrm{Alg}^{\mathrm{perf}}_k$, we denote by $W_{O}(R)$ the associated ring of $O$-Witt vectors, see \cite[Section 1.2.1]{fargues_fontaine_courbes_et_fibres_vectoriels_en_theorie_de_hodge_p_adique}: if $O$ is $p$-adic, then $W_O(R)=W(R)\otimes_{W(k)}O$; if $O\cong k\pot{\pi}$, then $W_O(R)= R\widehat\otimes_k O \cong R\pot{\pi}$.
	
	Moreover, we fix a (connected) reductive $F$-group $G$ and a parahoric $O$-model $\calG$ in the sense of Bruhat--Tits.
	We note that, over the completion $\breve{F}$ of the maximal unramified extension of $F$, the group $G_{\breve{F}}$ is automatically quasi-split by Steinberg's theorem, see \cite[Chapitre III.2.3]{serre_cohomologie_galoisienne}.
	We let $\calG_k=\calG\otimes_O k$ be the special fiber of $\calG$.
	
	Recall the definition of the Witt vector affine flag variety associated to $\calG$.
	
	\begin{definition}
		\label{sec:affine-flag-vari-1-definition-loop-groups-affine-flag-variety}
		
		\begin{enumerate}
			\item The loop group of $G$ is the functor
			\begin{equation}
			L_kG\colon \mathrm{Alg}^{\mathrm{perf}}_k\to (\mathrm{Sets}),\ R\mapsto G(W_O(R)\otimes_{O}F).
			\end{equation}
			\item The positive loop group of $\calG$ is the functor
			\begin{equation}
			L^+_k\calG\colon \mathrm{Alg}^{\mathrm{perf}}_k\to (\mathrm{Sets}),\ R\mapsto \calG(W_O(R))
			\end{equation}
			
			\item The affine flag variety for $\calG$ is the quotient (for the \'etale topology)
			\begin{equation}
			\Fl_{\calG}:=L_kG/L^+_k\calG.
			\end{equation}
		\end{enumerate}
		
	\end{definition}
	
	Because any $\calG$-torsor on $W_O(R)$ can be trivialized over $W_O(R^\prime)$ for some with $R\to R^\prime$ \'etale, the affine flag variety $\Fl_{\calG}$ is equivalently the functor on perfect $k$-algebras $R$ that classifies $\calG$-torsors $\mathcal{P}$ on $\Spec(W_O(R))$ together with a trivialization over $\Spec(W_O(R)\otimes_{O}F)$.
	
	We have the following crucial representability result, see \cite[Corollary 9.6]{BS17}. 
	
	\begin{theorem}[Bhatt-Scholze]\label{quando as grassmannianas sao projectivas}
		The functor $\Fl_{\calG}$ is representable by an ind-(perfectly projective) ind-(perfect $k$-scheme).
	\end{theorem}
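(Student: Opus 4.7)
The plan is to follow the strategy of Bhatt--Scholze, proceeding in two stages: first establish representability for $GL_n$ via a lattice-theoretic description, and then reduce the general parahoric case to this case by functoriality.

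\textbf{Step 1: The case $\calG = GL_{n,O}$.} I would begin by identifying the $R$-points of $\Fl_{GL_n}$ with finitely generated projective $W_O(R)$-submodules $L$ of $W_O(R)\otimes_O F^n$ generating the latter after inverting $\pi$. Filtering by the bounded pieces
\[
\Fl_{GL_n}^{\leq N}(R) = \bigl\{L : \pi^N W_O(R)^n \subset L \subset \pi^{-N} W_O(R)^n\bigr\},
\]
each $L$ is specified by the locally direct summand $L/\pi^N W_O(R)^n$ of rank $nN$ inside the free rank $2nN$ module over the truncated Witt ring $W_O(R)/\pi^{2N}$. The main input is then the Bhatt--Scholze theorem that such Grassmannian-of-lattices functors are representable by perfectly projective perfect $k$-schemes; this is proved by expressing them as quotients of the perfection of the classical Grassmannian by an action of the kernel of the reduction $W_O(R)/\pi^{2N}\twoheadrightarrow W_O(R)/\pi$, and using cohomology-vanishing arguments to handle the passage from truncated Witt vectors to the perfect base field.

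\textbf{Step 2: Reduction of the general parahoric case to $GL_n$.} Next, I would choose a closed embedding $\iota\colon \calG\hookrightarrow GL_{n,O}$ of $O$-group schemes with quasi-affine quotient, whose existence follows from a Chevalley-type construction. This induces a morphism $\iota_\ast\colon \Fl_\calG\to \Fl_{GL_n}$ of fpqc sheaves on $\mathrm{Alg}^{\mathrm{perf}}_k$. Quasi-affineness of $GL_{n,O}/\calG$ translates into $\iota_\ast$ being a locally closed immersion; intersecting with the bounded pieces $\Fl^{\leq N}_{GL_n}$ and taking appropriate closures yields an expression of $\Fl_\calG$ as an increasing union of closed sub-ind-schemes of an ind-(perfectly projective) ind-perfect scheme, hence itself ind-(perfectly projective).

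The main obstacle is the representability statement underlying Step 1: ordinary Grassmannians over $W_O(R)/\pi^{2N}$ are very far from being of finite type over $k$, so the fact that their perfection is a perfectly projective perfect $k$-scheme is genuinely subtle. This requires Bhatt--Scholze's technical input that perfection interacts well with projective morphisms (in particular, that ample line bundles descend along Frobenius-pullbacks and that the relevant cohomology groups are unchanged), combined with their $h$-descent results for quasi-coherent sheaves on perfect schemes. Once this foundational result is in hand, the passage from $GL_n$ to general parahoric $\calG$ in Step 2 is essentially formal, relying only on the good behavior of locally closed ind-subschemes inside ind-(perfectly projective) schemes.
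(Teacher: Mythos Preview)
The paper does not give its own proof of this statement: it is stated as a theorem of Bhatt--Scholze with a direct citation to \cite[Corollary 9.6]{BS17}, followed only by the remark that Zhu's earlier representability as an ind-(perfect algebraic space) is insufficient. So there is no proof in the paper to compare against.

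Your outline correctly captures the architecture of the Bhatt--Scholze argument: first $\GL_n$ via lattices, then general $\calG$ via a closed embedding into $\GL_n$ with quasi-affine quotient. Step~2 is accurate and matches \cite[Section 9]{BS17}. In Step~1, however, your description of how projectivity is obtained is somewhat off. You describe the key input as expressing the lattice Grassmannian as a quotient of the perfection of a classical Grassmannian and using cohomology-vanishing; this is closer to Zhu's approach to representability as an algebraic space. What Bhatt--Scholze actually do for \emph{projectivity} is construct a natural determinant line bundle on the lattice Grassmannian and prove it is ample, using their $h$-descent results for line bundles on perfect schemes to control descent along the Demazure resolution. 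If you were to flesh out Step~1, the determinant line bundle is the idea you would need to supply.
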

	
	Representability as an ind-(perfect algebraic space) was previously proved by Zhu, \cite{Zhu17}, but is not sufficient for our purpose.
	
	Fix an auxiliary maximal split $F$-torus $A$, a maximal $\breve{F}$-split $F$-torus $A \subset S\subset G$ whose connected Néron $O$-model $\calS$ is contained in $\calG$, see \cite[Proposition 5.1.10]{BT84}. 
	Let $T\subset G$ be the centralizer of $S$, and let $\calT$ be the connected Néron $O$-model of $T$. 
	This yields the Iwahori--Weyl group
	\begin{equation}
	\tilde{W}:=N_G(T)(\breve{F})/\calT(\breve{O})
	\end{equation}
	associated with $S$, see \cite[Definition 7]{HR08}. 
	By \cite[Lemma 14]{HR08}, there exists a short exact sequence
	\begin{equation}\label{eq:affine_Weyl_group_sequence}
	1\to W_{\mathrm{af}}\to \tilde{W}\to \pi_1(G)_{I}\to 1 
	\end{equation}
	with $W_{\mathrm{af}}\subset \tilde{W}$ the affine Weyl group, $I$ the absolute Galois group of $\breve{F}$, and $\pi_1(G)$ Borovoi's algebraic fundamental group of $G$. The choice of an alcove in the apartment for $S$ yields a splitting $W_{\mathrm{af}}\rtimes \pi_1(G)_I$ of the sequence. By declaring the elements of $\pi_1(G)_I$ to have length $0$ and to be pairwise incomparable, we can further extend the length function and the Bruhat partial order on the Coxeter group $W_{\mathrm{af}}$ to $\tilde{W}$. 
	
	By the Cartan decomposition, we may identify the double coclasses
	\begin{equation}\calG(\breve{O})\backslash G(\breve{F})/\calG(\breve{O})\cong W_\calG \backslash \tilde{W}/W_\calG
	\end{equation}
	where $W_{\calG}:=(N_G(T)(\breve{F})\cap \calG(\breve{O}))/\calT(\breve{O})$ is the Weyl $O$-group of $\calG$ relative to its maximal $O$-torus $\calS$, see also \cite[Proposition 8]{HR08}. 
	This double coset carries a natural action of the Galois group $\Gal(\breve F/F)$.
	
	\begin{definition}
		Given a finite subset $W\subset W_\calG \backslash \tilde{W}/W_\calG$ with reflex field\footnote{Concretely, the residue field defined by the $\Gal(\breve F/F)$-stabilizer of $W$.} $k_W$, one defines the associated Schubert perfect $k_W$-scheme $\Fl_{\calG,W}\subset \Fl_{\calG}$ as the closure of the Schubert perfect orbit $\Fl_{\calG,W}^\circ$, the étale descent to $k_W$ of the union of the $L^+_{\bar{k}}\calG$-orbits of the maximal elements $w\in W$.
	\end{definition}
	
	If $W=\{w\}$, then these are perfect $k_w$-varieties denoted by $\Fl_{\calG,w}$, respectively $\Fl_{\calG,w}^\circ$, which are usually called the Schubert perfect variety, respectively Schubert perfect orbit associated with $w$.  
	More generally, if we fix an Iwahori $\calI$ dilating $\calG$ and containing $\calS$, then its $L^+_{\bar{k}}\calI$-orbits are enumerated by $\tilde{W}/W_\calG$. 
	Given some finite subset $W \subset \tilde{W}/W_\calG$, we can define in the same manner
	\begin{equation}
	\Fl_{(\calI,\calG),W}^\circ \subset \Fl_{(\calI,\calG),W}
	\end{equation}
	the finite disjoint union of the $L^+_{\bar{k}}\calI$-orbits corresponding to $W$ and their closure inside $\Fl_{\calG}$. 
	The latter is called an Iwahori--Schubert perfect scheme. 
	We observe that Schubert perfect schemes are always Iwahori--Schubert (but the converse is false). 
	Indeed, given $w\in W_\calG\backslash \tilde{W}/W_{\calG}$ with lift $\dot{w}\in \tilde{W}/W_{\calG}$ of maximal length, we have
	\begin{equation}
	\Fl_{(\calI,\calG),\dot{w}}=\Fl_{\calG,w}.
	\end{equation}
	Here we recall that the length function and Bruhat partial order on $\tilde W$ induces one on the cosets $W_\calG \backslash \tilde{W}/W_\calG$, respectively $\tilde W/W_\calG$ compatibly with the dimensions and closure relations of Schubert varieties, respectively Iwahori--Schubert varieties, see \cite[Section 1, Proposition 2.8]{Ric13} for details and proofs in the equicharacteristic situation (the arguments translate literally). 
	
	\begin{proposition}\label{variedades de schubert perfeitas sao normais}
		For each $w\in W_\calG\backslash \tilde{W}/W_\calG$, the Schubert perfect variety $\Fl_{\calG, w}$ is normal and $\Fl_{\calG,w}^\circ$ is a perfectly smooth dense open with connected fixers.
	\end{proposition}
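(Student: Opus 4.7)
The plan is to dispose of the claims on $\Fl_{\calG,w}^{\circ}$ first. Over $\bar{k}$, $\Fl_{\calG,w,\bar{k}}^{\circ}$ identifies with $L_{\bar{k}}^{+}\calG/(L_{\bar{k}}^{+}\calG\cap \dot{w}L_{\bar{k}}^{+}\calG\dot{w}^{-1})$ for a maximal length lift $\dot{w}\in \tilde{W}$ of $w$. The stabilizer in the denominator is a pro-(perfectly smooth) group scheme with connected reduced special fiber, because both parahoric factors enjoy these properties by our choice of $\calG$ as a Bruhat--Tits parahoric, and intersections of connected smooth subgroups of a common affine ind-smooth ambient preserve them. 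Thus the orbit map is a pro-(perfectly smooth) fibration and $\Fl^{\circ}_{\calG,w}$ is perfectly smooth with connected fixers; openness and density inside $\Fl_{\calG,w}$ are tautological from its definition as the closure of orbits of maximal elements.

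For normality, the strategy is to produce a perfectly proper, surjective, birational map $\pi\colon D_{\dot{w}}\to \Fl_{\calG,w}$ from a perfectly smooth source, and then invoke \Cref{caracterizacao de variedades normais perfeitas} to reduce normality to the connectedness of the geometric fibers of $\pi$. Fix an Iwahori $\calI\subset \calG$ containing $\calS$, a lift $\dot{w}\in \tilde{W}/W_\calG$ of maximal length in $w$, and a reduced expression $\dot{w}=s_{i_1}\cdots s_{i_n}$ in affine simple reflections. Let $\calP_{j}\supset \calI$ be the rank-one parahoric corresponding to $s_{i_j}$; then $D_{\dot{w}}$ is defined as the iterated twisted product of the partial affine flag varieties $L^{+}\calP_{j}/L^{+}\calI$ over $\Fl_{\calI}$. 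Each factor is a perfected $\bbP^{1}$-bundle, so $D_{\dot{w}}$ is perfectly smooth. The multiplication map to $\Fl_{\calG,w}$ is perfectly proper, surjective by a length argument, and an isomorphism over the open Iwahori-Schubert orbit of $\dot{w}$, hence birational.

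The main obstacle is the connectedness of the geometric fibers of $\pi$, which I propose to transfer from the equicharacteristic setting. The analogous Demazure scheme $D_{\dot{w}}^{\on{eq}}$ and multiplication map $\pi^{\on{eq}}$ may be built from any parahoric group of the same local type over $k\pot{\pi}$, with target the corresponding equicharacteristic Schubert variety. Both Demazure schemes are iterated fiber products of identical rank-one parahoric quotients whose perfect structure depends only on the affine root datum, so there is a canonical isomorphism $D_{\dot{w}}\cong (D_{\dot{w}}^{\on{eq}})^{\on{perf}}$ compatible with the two multiplication maps. The equicharacteristic Schubert variety is known to be normal by Faltings \cite{Fal03}, Pappas--Rapoport \cite{PR08}, and in the wildly ramified case \cite{FHLR22}, so \Cref{caracterizacao de variedades normais perfeitas} applied to $\pi^{\on{eq}}$ gives $(\pi^{\on{eq}})_{\ast}\calO=\calO$ and connected geometric fibers. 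Bhatt--Scholze's $h$-descent for coherent sheaves on perfections \cite{BS17} transfers this identity through the above identification to $\pi_{\ast}\calO_{D_{\dot{w}}}=\calO_{\Fl_{\calG,w}}$, yielding the desired connected fibers and completing the argument.
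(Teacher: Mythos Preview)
Your approach has two gaps, one minor and one serious.

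First, the minor one: your justification that $L^{+}_{\bar k}\calG\cap \dot{w}L^{+}_{\bar k}\calG\dot{w}^{-1}$ is connected --- ``intersections of connected smooth subgroups of a common affine ind-smooth ambient preserve them'' --- is not a valid general principle. The paper instead identifies this intersection concretely: it is the positive loop group of the parahoric model attached to the facet-union $\bbf\cup w(\bbf)$ in the Bruhat--Tits building, and parahorics are connected by construction. You need this Bruhat--Tits input.

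Second, and more seriously, your normality argument via transfer from equicharacteristic is circular within the logical structure of the paper. You assert a canonical isomorphism $D_{\dot w}\cong (D_{\dot w}^{\mathrm{eq}})^{\mathrm{perf}}$ \emph{compatible with the two multiplication maps}. But compatibility with the multiplication maps means in particular that the \emph{targets} $\Fl_{\calG,w}$ and $(\Fl_{\calG',w'})^{\mathrm{perf}}$ are identified --- and that identification is exactly the content of \Cref{lem_comparison_sch_vars_equi_and_mixed}, which is proven \emph{after} this proposition using the Picard group computation (\Cref{calculo do grupo de picard da grassmanniana dos vectores de witt}) and the Stein factorization argument (\Cref{prop_line_bundles_stein_fac}). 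Moreover, that comparison requires \Cref{hyp_wild_odd_unitary}, whereas the present proposition is stated unconditionally. Your invocation of Bhatt--Scholze $h$-descent at the end is also confused: if the identification of sources, targets, and maps were already in hand, connectedness of fibers would be immediate and no descent would be needed.

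The paper's own argument avoids all of this. It observes directly, by induction on the length of $\dot w$, that the geometric fibers of $\pi_{\dot w}\colon \calD_{\calI,\bar k,\dot w}\to \Fl_{\calG,w}$ are iterated perfected $\bbP^1$-fibrations, hence connected; then \Cref{caracterizacao de variedades normais perfeitas} applies. This is the standard Demazure-resolution argument and needs no comparison with equicharacteristic.
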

	
	\begin{proof}
		Let $\scrB(G,F)$ be the Bruhat-Tits building of $G$, and let $\bbf\subset \scrB(G,F)$ be the facet associated to $\calG$, see \cite{BT84}. 
		Given $w\in \tilde{W}/W_{\calG}$, the stabilizer of $wL^+_{\bar{k}}\calG\in \Fl_\calG$ is $L^+_{\bar{k}}\calG \cap wL^+_{\bar{k}}\calG w^{-1}$,
		which is the positive loop group associated to the parahoric group scheme, which is the connected fixer of $\bbf \cup w(\bbf)$. In particular, this stabilizer is pro-(perfectly smooth and connected). We deduce that $\Fl^\circ_{\calG,w}$ is perfectly smooth.

		Fix an auxiliary Iwahori $\calI$ dilating $\calG$ and containing $\calS$. 
		This yields the subgroup functor $L^+_k\calI \subset L^+_k\calG$ and, as explained before, we know that $\Fl_{\calG,w}=\Fl_{(\calI,\calG),w^\bbf}$ where $w^\bbf$ is the maximal lift of $w$ to $\tilde{W}/W_\calG$.		
		Let $_\bbf w^\bbf$ be the minimal lift to $\tilde{W}$, write it as $w_{\on{af}}\tau$ with $w_{\on{af}}\in W_{\on{af}}, \tau\in \pi_1(G)_I$, and fix some reduced word $\dot{w}$ in simple reflections (along the alcove defined by $\calI$) for $w_{\on{af}}$. 
		
		Now consider the Demazure variety
		\begin{equation}
		\calD_{\calI, \bar{k},\dot{w}}:= L^+_{\bar{k}}\calP_{i_1} \times^{L^+_{\bar{k}}\calI} \dots \times^{L^+_{\bar{k}}\calI} L^+_{\bar{k}}\calP_{i_n}/{L^+_{\bar{k}}\calI},
		\end{equation}
		where $L^+_{\bar{k}}\calI \subset L^+_{\bar{k}}\calP_{i_j}$ are the minimal parahoric overgroups attached to the simple reflections. 
		It follows by induction that the geometric fibers of the birational resolution (induced by multiplication)
		\begin{equation}
		\pi_{\dot{w}}\colon \calD_{\calI,\bar{k},\dot{w}} \rightarrow \Fl_{\calG, w}
		\end{equation}
		are connected (see the \cite[Proof of Proposition 9.7.d)]{PR08}). 
		As $\calD_{\calI,\bar{k},\dot{w}}$ is perfectly smooth over $\bar{k}$, normality becomes a consequence of \Cref{caracterizacao de variedades normais perfeitas}.
	\end{proof}
	
	The Picard group of Schubert perfect schemes over $\bar{k}$ can be explicitly determined, see \cite[Theorem 3.1]{HZ20} for the case when $\calG=\calI$ is Iwahori and $W=\{w\}$.

	\begin{theorem}[He--Zhou]\label{calculo do grupo de picard da grassmanniana dos vectores de witt}
		The homomorphism
		\begin{equation}
		\on{Pic}(\Fl_{(\calI,\calG),\bar{k},W})\rightarrow \on{Pic}(\Fl_{(\calI,\calG),\bar{k},\bbS_W})\cong \bbZ[p^{-1}]^{\lvert \bbS_W\rvert} 
		\end{equation} 
		is a bijection where $\bbS_W$ is the set of all length $1$ elements in $W\subset \tilde{W}/W_\calG$.
		(Note that $\Fl_{(\calI,\calG),\bar{k},\bbS_W}\cong \mathbb{P}^{1,\mathrm{perf}}_{\bar{k}}$ if $\bbS_W$ is a singleton.)
	\end{theorem}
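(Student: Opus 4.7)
The plan is to deduce this from He-Zhou's theorem \cite[Theorem 3.1]{HZ20}, which establishes the case $\calG=\calI$ Iwahori and $W=\{w\}$ singleton, via two successive reductions: from finite subsets $W$ to singletons, and from the singleton case with general parahoric $\calG$ to the Iwahori case of He-Zhou.

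For the first reduction, I would induct on the number $n$ of maximal elements of $W$. Writing $W = W'\cup\{w\}$ with $w$ maximal in $W$ and $W'=W\setminus\{w\}$, we obtain a closed decomposition
\begin{equation}
\Fl_{(\calI,\calG),\bar{k},W} \;=\; \Fl_{(\calI,\calG),\bar{k},W'} \;\cup\; \Fl_{(\calI,\calG),\bar{k},w},
\end{equation}
whose intersection is the Iwahori--Schubert subvariety $\Fl_{(\calI,\calG),\bar{k},W'\cap\{\leq w\}}$. All four perfect schemes are normal by the Iwahori--Schubert analogue of \Cref{variedades de schubert perfeitas sao normais}, and all are connected (as they contain the basepoint), so after passing to a compatible system of reduced finite-type deperfections and localizing at $p$ via \eqref{eq:2}, the standard Mayer--Vietoris sequence for Picard groups of normal proper varieties gives an exact sequence
\begin{equation}
0 \to \on{Pic}(\Fl_{(\calI,\calG),\bar{k},W}) \to \on{Pic}(\Fl_{(\calI,\calG),\bar{k},W'})\oplus \on{Pic}(\Fl_{(\calI,\calG),\bar{k},w}) \to \on{Pic}(\Fl_{(\calI,\calG),\bar{k},W'\cap\{\leq w\}}).
\end{equation}
Combining the inductive hypothesis with the set-theoretic identity $\bbS_{W} = \bbS_{W'}\cup \bbS_{\{w\}}$ (unioned along $\bbS_{W'\cap\{\leq w\}}$) and a diagram chase, the restriction map for $W$ is identified as a bijection, reducing us to the singleton case.

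For the second reduction, I would resolve $\Fl_{(\calI,\calG),\bar{k},w}$ via a parahoric Demazure variety. Fix a reduced decomposition $w = s_{i_1}\cdots s_{i_r}$ into length-$1$ elements of $\tilde{W}/W_\calG$, and form
\begin{equation}
\calD_w \;:=\; L^+_{\bar{k}}\calP_{i_1} \times^{L^+_{\bar{k}}\calG} \cdots \times^{L^+_{\bar{k}}\calG} L^+_{\bar{k}}\calP_{i_r}/L^+_{\bar{k}}\calG,
\end{equation}
where $\calP_{i_j}$ is the minimal parahoric overgroup of $\calG$ associated to $s_{i_j}$. This is an iterated perfect $\mathbb{P}^1$-fibration, perfectly smooth over $\bar{k}$, surjecting birationally with connected geometric fibers onto $\Fl_{(\calI,\calG),\bar{k},w}$ (compare the argument in \Cref{variedades de schubert perfeitas sao normais}). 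Its Picard group is a free $\bbZ[p^{-1}]$-module on the $r$ length-$1$ classes by iterated projective bundle formula, and by \Cref{prop_line_bundles_stein_fac} a line bundle descends to $\Fl_{(\calI,\calG),\bar{k},w}$ exactly when its restriction to each contracted $\mathbb{P}^1$-fiber is trivial. Identifying the contracted fibers with the length-$1$ $\mathbb{P}^1$'s via the Bruhat combinatorics of $\tilde{W}/W_\calG$, the descended Picard group is seen to be the free $\bbZ[p^{-1}]$-module on $\bbS_{\{w\}}$, with the restriction map to $\Fl_{(\calI,\calG),\bar{k},\bbS_{\{w\}}}$ being the diagonal. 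This recovers He-Zhou in the Iwahori case and extends it to arbitrary parahoric $\calG$.

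The main technical work lies in the Mayer--Vietoris step, since Picard groups of perfect schemes do not enjoy a direct sheaf-theoretic Mayer--Vietoris; one must choose compatible reduced finite-type deperfections for all four perfect schemes in the decomposition, which relies on the canonical deperfection formalism to be developed in \Cref{sec:canon-deperf}. A secondary combinatorial point is the identification of the contracted $\mathbb{P}^1$-fibers of $\calD_w$ with the length-$1$ parahoric Iwahori--Schubert $\mathbb{P}^1$'s, which requires keeping careful track of reductions in the decomposition $w = s_{i_1}\cdots s_{i_r}$ modulo $W_\calG$.
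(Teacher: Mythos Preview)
Your first reduction via Mayer--Vietoris is essentially the paper's argument, but your claim that ``all four perfect schemes are normal'' is false: for non-singleton $W$ the scheme $\Fl_{(\calI,\calG),\bar{k},W}$ is a union of Schubert varieties and is typically not normal (already two copies of $\bbP^{1,\mathrm{perf}}$ meeting at a point fail). The paper sidesteps this by working directly at the perfect level with the sheaf exact sequence for $\calO^\times$ on the closed cover, using only that $H^0(\calO^\times)\cong\bar{k}^\times$ on each connected piece to obtain the fiber-product identity for $\on{Pic}$; no normality and no passage to deperfections (hence no forward reference to \Cref{sec:canon-deperf}) is required.

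Your second reduction has a more serious gap. The coset space $\tilde{W}/W_\calG$ is not a group when $\calG$ is not Iwahori, so a ``reduced decomposition $w=s_{i_1}\cdots s_{i_r}$ into length-$1$ elements of $\tilde{W}/W_\calG$'' does not make sense. Even reinterpreting via a lift to $\tilde{W}$, your convolution
\[
L^+_{\bar{k}}\calP_{i_1}\times^{L^+_{\bar{k}}\calG}\cdots\times^{L^+_{\bar{k}}\calG}L^+_{\bar{k}}\calP_{i_r}/L^+_{\bar{k}}\calG
\]
with $\calP_{i_j}$ minimal over $\calG$ is \emph{not} an iterated $\bbP^{1,\mathrm{perf}}$-fibration: the quotient $L^+_{\bar{k}}\calP_{i_j}/L^+_{\bar{k}}\calG$ is the partial flag variety of the reductive quotient of $\calP_{i_j}\otimes\bar{k}$, which is $\bbP^{1,\mathrm{perf}}$ only in the Iwahori case. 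The paper instead resolves $\Fl_{(\calI,\calG),\bar{k},w}$ by the Iwahori-level Demazure variety $\calD_{\calI,\bar{k},\dot{w}}$ (built from minimal parahorics over $\calI$, not $\calG$), which genuinely is an iterated $\bbP^{1,\mathrm{perf}}$-tower. Injectivity of restriction then comes from \cite[Theorem 6.1]{BS17}, and surjectivity from the descent criterion \cite[Theorem 6.13]{BS17} together with the fiber-triviality check of \cite[Proposition 3.9]{HZ20} --- not from \Cref{prop_line_bundles_stein_fac}, which produces a Stein factorization from a semi-ample bundle rather than a descent criterion along a fixed resolution.
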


	\begin{proof}
		To reduce the question to Iwahori--Schubert perfect varieties, we contemplate the Mayer--Vietoris sequence
		\begin{equation}
		1 \rightarrow \calO_{\Fl_{(\calI,\calG),\bar{k},W_0}}^\times \rightarrow \calO_{\Fl_{(\calI,\calG),\bar{k},W_1}}^\times \oplus \calO_{\Fl_{(\calI,\calG),\bar{k},W_2}}^\times \rightarrow \calO_{\Fl_{(\calI,\calG),\bar{k},W_3}}^\times \rightarrow 1
		\end{equation}
		where the subsets $W_i$ are closed for the Bruhat order 
		$W_0=W_1\cup W_2$ and $W_3=W_1 \cap W_2$. 
		Since we may and do assume all these Schubert perfect schemes to be contained in a single connected component of $\Fl_{\calG,\bar{k}}$ (which implies $H^0(\mathcal{O}^\times)\cong \bar{k}^\times$ by perfectly properness), we get a natural isomorphism
		\begin{equation}
		\text{Pic}(\Fl_{(\calI,\calG),\bar{k},W_0})\cong \text{Pic}(\Fl_{(\calI,\calG),\bar{k},W_1}) \times_{\text{Pic}(\Fl_{(\calI,\calG),\bar{k},W_3})} \text{Pic}(\Fl_{(\calI,\calG),\bar{k},W_2}).
		\end{equation}
		By definition $S_0=S_1\cup_{S_3} S_2$, which implies that it suffices to show the claim for $X=\Fl_{(\calI,\calG),\bar{k},w}$ an Iwahori--Schubert perfect variety.
		
		Injectivity can be reduced to Demazure varieties, see \cite[Theorem 6.1]{BS17}. The Demazure varieties $\calD_{\calI,\bar{k},\dot{w}}$ are $\bbP_{\bar k}^{1,\on{perf}}$-fibrations and can be handled directly, see \cite[Proposition 3.4]{HZ20}.
		To treat surjectivity, it suffices to descend certain line bundles on $\calD_{\calI, \bar{k},\dot w}$ back to the Iwahori--Schubert varieties. By \cite[Theorem 6.13]{BS17}, it remains to check that restriction of $\calL$ to geometric fibers is trivial. For this, the argument in \cite[Proposition 3.9]{HZ20} applies, see \cite[Lemma~4.8]{FHLR22}.
	\end{proof}
	
	The choice of a $\bbZ[p^{-1}]$-basis in $\mathrm{Pic}(\Fl_{\calG,\bar{k},W})$ seems arbitrary, due to $p$-divisibility. However, using the deperfection $\mathcal{I}\otimes_{O} \bar{k}$ for the quotient $R\in \mathrm{Alg}^{\mathrm{perf}}_{\bar{k}}\mapsto \mathcal{I}(R)$ of $L^+_{\bar{k}}\calI$, the perfect curve $\Fl_{(\calI,\calG),\bar{k},\bbS_W}$ has a canonical equivariant deperfection, see \Cref{prop_can_deperf_equiv}, yielding a natural $\bbZ$-lattice. 
	
	\begin{remark}
		\label{sec:affine-flag-vari-1-constructing-schubert-varieties-via-demazure}
		During the proof, we have also determined the Picard group of the Demazure varieties $\calD_{\calI,\bar{k},\dot{w}}$, or more generally those of the convolutions \begin{equation}\Fl_{\calI,\bar{k},W_1}\tilde{\times} \dots \tilde{\times} \Fl_{\calI,\bar{k},W_{n-1}}\tilde{\times}\Fl_{(\calI,\calG),\bar{k}, W_n} \end{equation} of Iwahori--Schubert perfect schemes, where at most the last one is not at full level.
		
		Together with \Cref{prop_line_bundles_stein_fac}, this tells us how to recover, for instance, the perfect Schubert variety $\Fl_{\calG,\bar{k},w}$ just from its Demazure resolution and the sub-$\Z[p^{-1}]$-module $\Pic(\Fl_{\calG,\bar{k},w})\subset \Pic(\calD_{\calI,\bar{k},\dot{w}})$: take any $\mathcal{L}$ on $\calD_{\calI,\bar{k},\dot{w}}$ which is the pullback of a line bundle on $\Fl_{\calG,\bar{k},w}$ whose restriction to $\Fl_{\calG,\bar{k},s}$ has positive degree for each $s\in \mathbb{S}_W$. 
	\end{remark}

	We now turn to equivariant automorphisms of (connected) Schubert schemes.
	
	\begin{proposition}\label{prop_trivial_equiv_autos}
		The group of $L^+_{\bar{k}}\calG$-equivariant automorphisms of a connected Schubert perfect scheme $\Fl_{\calG,\bar{k},W}$ is trivial. 
		In particular, the stabilizers of $\bar{k}$-valued points in $\Fl_{\calG, \bar{k},W}$ for the $L^+_{\bar{k}}\calG$-action are self-normalizing subgroups of $L^+_{\bar{k}}\calG$, that is, they agree with their normalizers.
	\end{proposition}
	
	\begin{proof}
		We prove the more general statement for Iwahori--Schubert perfect schemes. Consider the disjoint irreducible components in the dense open $\Fl_{(\calI,\calG),\bar{k},W}^\circ\subset \Fl_{(\calI,\calG),\bar{k},W}$.
		These will be permuted under any equivariant automorphism $\sigma$. 
		Moreover, $\sigma$ preserves the $\bar{k}$-valued points of $\Fl_{(\calI,\calG),\bar{k},W}$ fixed under $\calS(\breve{O})$. For the entire flag variety, we claim that the $\calS(\breve{O})$-fixed points in $G(\breve{F})/\calG(\breve{O})$ lie in the image of $N(\breve{F})$. Indeed, let $[g]\in G(\breve{F})/\calG(\breve{O})$ be a fixed point. 
		Then $g\bbf$ is a $\calS(\breve{O})$-stable facet (with $\bbf$ the facet determined by $\calG$), hence contained in $\scrA(G,S,\breve{F})$ by \cite[Proposition 5.1.37]{BT84}. 
		Multiplying on the left by a suitable element of $N(\breve{F})$, we can trivialize $[g]$, that is, $[g]\in N(\breve{F})\calG(\breve{O})/\calG(\breve{O})$.
		
		Now, observe that the $L^+_{\bar{k}}\calI$-fixer of some $w\in \tilde{W}/W_\calG$ equals $L^+_{\bar{k}}\calI \cap wL^+_{\bar k}\calG w^{-1}$, and it suffices to recover $w$ from this subgroup alone. 
		Indeed, then $\sigma$ must preserve $w$, and then $\Fl_{(\calI,\calG),\bar{k},W}$ pointwise by $L^+_{\bar{k}}\calI$-equivariance as $w\in \tilde{W}/W_\calG\cap \Fl_{(\calI,\calG),\bar{k},W}$ was arbitrary. 
		If $\bba$ is the alcove fixed by $\calI(\breve{O})$ and $\tilde{w} \in \tilde{W}$ the minimal lift of $w$, then birationality of the Demazure resolution $\pi_{\dot w}$ implies
		\begin{equation}
			\label{weneedthisequation}
		L^+_{\bar{k}}\calI \cap wL^+_{\bar{k}}\calG w^{-1}=L^+_{\bar{k}}\calI \cap \tilde{w}L^+_{\bar{k}}\calI {\tilde{w}}^{-1}.
		\end{equation} 
		Note that the right side is the Bruhat--Tits group attached to $\bba \cup \tilde{w}(\bba)$ (see \cite[Lemma 3.3]{HR19} for a proof and an alternative formulation of \eqref{weneedthisequation}). 
		We need to recover $\tilde{w}$. Moreover, because $\Fl_{(\calI,\calG),\bar{k},W}$ was assumed to be connected, all $\tilde{w}$ considered here project to the same constant $\tau \in \pi_1(G)$, so it is enough to get  $w_{\on{af}}\in W_{\mathrm{af}}$ if $\tilde{w}=w_{\mathrm{af}}\tau$.
		
		By \cite[Corollaire 5.1.39]{BT84}, the fixed point set of $L^+_{\bar{k}}\calI \cap \tilde{w}L^+_{\bar{k}}\calI {\tilde{w}}^{-1}$ inside $\scrB(G,F)$ equals the closed convex hull of $\bba \cup \tilde{w}(\bba)$. In turn, every alcove inside this closed convex hull lies in some minimal gallery connecting $\bba$ to $\tilde{w}(\bba)$ by \cite[Lemme 2.4.4]{BT72}. Since a minimal gallery describes a unique word of simple reflections necessary to move from one alcove to another, this gives back the affine transformation $w_{\on{af}}$.
	\end{proof}

	Among Schubert schemes, we are especially interested in the $\mu$-admissible locus. 
	Recall that $C$ is a completed algebraic closure of $F$ and that $I$ denotes the inertia group of $F$.
	Moreover, let $B\subset G_{\breve{F}}$ be a Borel containing $T_{\breve{F}}$.
	Recall that the inverse of the Kottwitz morphism \cite[Equation (7.2.1)]{Kot97} induces an isomorphism of the coinvariants
	\begin{equation}\label{eq:coinvariants.Kottwitz.iso}
	X_\ast(T)_I\cong T(\breve{F})/\calT(\breve{O}),\ \nu_I\mapsto \nu_I(\pi),
	\end{equation}
	not depending on the choice of uniformizer $\pi\in O$, under which we may regard the former as the subgroup of $\tilde{W}$ acting by translation on the standard apartment, see also \cite[Proposition 13]{HR08}.
	
	\begin{definition}\label{admissible_locus_definition}
		Let $\mu$ be a geometric conjugacy class of cocharacters with reflex field $E$. 
		The $\mu$-admissible locus is the Schubert perfect $k_E$-scheme 
		\begin{equation}
		\calA_{\calG,\mu}= \Fl_{\calG, \{\la_I(\pi)\}},
		\end{equation} 
		where $\la \in X_*(T)$ runs over all representatives of $\mu$ and $\lambda_I\in X_\ast(T)_I$ denotes the associated coinvariant under $I$.	
	\end{definition}
	
	Note that $\calA_{\calG,\mu}$ is geometrically connected because the finite Weyl group acts trivially on $\pi_1(G)_I$.
	It does not depend on the choice of $T$.
	By a result of Haines \cite[Theorem 4.2]{Hai18}, we know that $\calA_{\calG,\mu}^\circ:=\Fl_{\calG, \{\la_I(\pi)\}}^\circ$, where $\la$ now runs over the rational conjugates of $\mu$, that is, all those which are contained in a closed Weyl chamber attached to $w_0Bw_0^{-1}$ for some $w_0 \in W_0$, the finite Weyl group of $G_{\breve{F}}$ with respect to $S_{\breve{F}}$.
	
	It will turn out that $\calA_{\calG, \bar{k},\mu}$ is functorial in $(\calG,\mu)$, as soon as we develop a theory of local models $\calM_{\calG, \mu}$, see \Cref{defn_local_model}, and calculate their special fibers, confer \Cref{theorem_special_fiber_admissible}. 
	The admissible locus also admits the following representation-theoretic interpretation in terms of representations of the Langlands dual group $ \widehat G$ (here, taken over any algebraically closed field) with dual torus $\widehat T$:
	
	\begin{lemma}\label{representation_theoretic_admissible_locus_lemma}
		Let $\widehat{\lambda}^I\in X^\ast(\widehat{T}^I)\cong X_\ast(T)_I$ be running through the set of restrictions of all weights $\widehat{\lambda}$ for $\widehat{T}$ occurring in a finite dimensional algebraic representation of $\widehat{G}$ with fixed highest weight $\widehat{\mu}=\mu$. 
		Then $\calA_{\calG,\mu}=\Fl_{\calG,\{\widehat{\lambda}^I(\pi)\}}$.
	\end{lemma}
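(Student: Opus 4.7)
The plan is to compare the two sides as Schubert perfect subschemes of $\Fl_\calG$. By definition, $\Fl_{\calG,W}$ is the closure of the union of the $L^+_{\bar k}\calG$-orbits indexed by the maximal elements of $W$ under the Bruhat order on $W_\calG\backslash \tilde W/W_\calG$, and this order is descended from the Bruhat order on $\tilde W$. It therefore suffices to show that, for every weight $\widehat\lambda\in \Pi(V_\mu)$ of the irreducible $\widehat G$-representation $V_\mu$ with highest weight $\mu$, there exists a Weyl conjugate $\lambda\in W_0\cdot \mu$ satisfying
\begin{equation}
t_{\widehat\lambda^I(\pi)}\;\leq\; t_{\lambda_I(\pi)}
\end{equation}
in the Bruhat order on $\tilde W$. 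Combined with the trivial inclusion of index sets coming from the fact that the extremal weights of $V_\mu$ form the Weyl orbit of $\mu$, this will yield the desired equality $\calA_{\calG,\mu}=\Fl_{\calG,\{\widehat\lambda^I(\pi)\}}$.

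To establish the Bruhat inequality, I would first pick $w\in W_0$ such that $w\widehat\lambda$ is dominant. Since $\Pi(V_\mu)$ is $W_0$-stable, $w\widehat\lambda$ remains a weight of $V_\mu$, and classical representation theory of $\widehat G$ gives that $\mu-w\widehat\lambda$ is a non-negative integer combination of positive coroots of $G$ (equivalently, positive roots of $\widehat G$). Projecting to the coinvariants $X_\ast(T)_I$, the difference $\mu_I-(w\widehat\lambda)^I$ becomes a non-negative integer combination of positive coroots for the relative root datum of $(G_{\breve F},S_{\breve F})$, and both $\mu_I$ and $(w\widehat\lambda)^I$ are dominant for the induced positive system.

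The main input, and the main obstacle to cite precisely, is the classical Bruhat--dominance criterion for translation elements in the extended affine Weyl group: for two dominant elements $\nu,\mu'\in X_\ast(T)_I$ lying in the same congruence class modulo the coroot lattice, one has $t_\nu\leq t_{\mu'}$ in $\tilde W$ if and only if $\mu'-\nu$ is a non-negative integer combination of positive relative coroots. This is due to Kottwitz--Rapoport and Stembridge in the split setting and extends to the coinvariant setting by working with the relative root datum on $X_\ast(T)_I$. Applying it to $\nu=(w\widehat\lambda)^I$ and $\mu'=\mu_I$ yields $t_{(w\widehat\lambda)^I(\pi)}\leq t_{\mu_I(\pi)}$. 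Conjugating by $w^{-1}\in W_0\subset\tilde W$ preserves the Bruhat order, giving the desired inequality $t_{\widehat\lambda^I(\pi)}\leq t_{(w^{-1}\mu)_I(\pi)}$ with $\lambda=w^{-1}\mu\in W_0\cdot \mu$, which completes the argument.
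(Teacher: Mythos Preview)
Your overall strategy is sound and more explicit than the paper's: the paper simply notes that the absolute Weyl orbit $W_{\mathrm{abs}}\cdot\mu$ is contained in the weight set $\Pi(V_\mu)$ and declares that the equality then ``follows from the definition of the admissible locus,'' implicitly relying on the reader to know that the extremal weights already yield the Bruhat-maximal translations. You are right to try to spell out the reverse inclusion.

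There is, however, a genuine gap in your final step. The assertion ``conjugating by $w^{-1}\in W_0\subset\tilde W$ preserves the Bruhat order'' is false: conjugation in a Coxeter group does not respect the Bruhat order, and this remains false when restricted to translation elements. In particular, from $t_{(w\widehat\lambda)^I}\leq t_{\mu_I}$ one cannot conclude $t_{\widehat\lambda^I}\leq t_{(w^{-1}\mu)_I}$ by conjugation. (All $t_{\lambda}$ for $\lambda\in W_0\cdot\mu_I$ have the same length and are pairwise incomparable, so the specific Weyl translate on the right really matters.)

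What you actually need is the full Kottwitz--Rapoport characterization of translations in the admissible set: for \emph{arbitrary} $\nu\in X_\ast(T)_I$, one has $t_\nu\in\mathrm{Adm}(\mu)$ (i.e.\ $t_\nu\leq t_\lambda$ for some $\lambda\in W_0\cdot\mu_I$) if and only if the $W_0$-dominant representative $\nu^+$ satisfies $\nu^+\leq\mu_I$ in the dominance order on $X_\ast(T)_I$. You have essentially established this hypothesis for $\nu=\widehat\lambda^I$, so the argument can be salvaged by invoking this criterion directly rather than attempting to conjugate. A related cosmetic issue: $W_0$ acts on $X_\ast(T)_I$, not on $X_\ast(T)$, so ``pick $w\in W_0$ such that $w\widehat\lambda$ is dominant'' and the subsequent appeal to the representation theory of $\widehat G$ over $X^\ast(\widehat T)$ are mismatched; it is cleanest to project to $X_\ast(T)_I$ first and work entirely with the relative root datum.
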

	\begin{proof}
		Being a $\widehat G$-representation, $V$ contains all the weights $\widehat \la$ conjugate to $\widehat \mu$ under the absolute Weyl group with the same non-zero multiplicity.  
		Under $X^\ast(\widehat{T})\cong X_\ast(T)$, these correspond to the conjugates of $\mu$ compatibly with the projection to $X^\ast(\widehat{T}^I)\cong X_\ast(T)_I$.
		Hence, the lemma follows from the definition of the admissible locus. 
	\end{proof}
	
	\begin{example}
		The basic example of the admissible locus occurs for $G=\GL_2$, $\mu=(1,0)$ and $\calG=\calI$ an Iwahori. 
		In this case, $\calA_{\calG,\mu}$ is the union of two copies of $\bbP^{1,\on{perf}}_k$ intersecting transversally at a point. 
		More generally, one can enumerate the Iwahori--Schubert orbits of the translated to the neutral component admissible locus $\calA_{\calI,\mu}$ in terms of alcoves in the standard apartment $\scrA(G,S,F)$. 
		For pictures in the case of unitary groups of split rank $2$, the reader is referred to the introduction of \cite{PR09}. 
		For further examples, see the survey \cite{PRS10}.
	\end{example}
	
	\subsection{Canonical deperfections}
	\label{sec:canon-deperf}
	
	Now, we wish to introduce equivariant deperfections of the Schubert perfect schemes $\Fl_{\calG,W}$ following \Cref{prop_can_deperf_equiv} and discuss their geometry, at least for certain $W$. We are especially interested in admissible loci $\calA_{\calG,\mu}$ for $\mu$ minuscule.
	
	First, recall that the congruence quotient $L^{\leq n}_k\calG$ of $L^+_k\calG$ has a deperfection $\mathrm{Gr}_n\calG$,  given by $(n+1)$-truncated Witt vectors and which is called the Greenberg realization.
	We denote by $L^{>n}_{k}\calG$ the kernel of $L^+_k\calG\to L^{\leq n}_k\calG$.
	
	\begin{definition}\label{canonical_deperfection_definition}
		Let $n$ be the smallest nonnegative integer such that $L^{>n}_{\bar{k}}\calG$ acts trivially on $\Fl_{\calG, \bar{k},W}$ and call it the associated depth. 
		The canonical deperfection\footnote{In the equicharacteristic situation, one recovers the weak normalization of classical Schubert schemes, which turn out to be the classical ones under \Cref{hyp_wild_odd_unitary} and also $p\nmid \lvert \pi_1(G_\der)\rvert$, see \cite[Section~4.1]{FHLR22} and \cite{HLR18}.} $\Fl_{\calG, W}^{\on{can}}$ of the perfect Schubert scheme $\Fl_{\calG, W}$ is the finite type $k_W$-scheme with $\mathrm{Gr}_n\calG$-action determined by \Cref{prop_can_deperf_equiv}.
	\end{definition}
	
	Assume the $L^+_{\bar{k}}\calG$-action on $\Fl_{\calG,\bar{k}, W}$ factors through $L^0_{\bar{k}}\calG=\calG_{\bar{k}}^{\mathrm{perf}}$. 
	For $V \leq W$, we get a deperfection 
	\begin{equation}
	\Fl_{\calG, \bar{k},V}^{\on{can}}\to \Fl_{\calG,\bar{k}, W}^{\on{can}}
	\end{equation}
	of the closed immersion of perfect Schubert schemes, because the image is a finite type deperfection with smaller function fields, as it carries a $\calG_{\bar{k}}$-action.
	
	However, it is not clear that the finite type morphism is a closed immersion.	
	To know more about the geometry of $\Fl_{\calG, W}^{\on{can}}$, we exploit the picture in the equicharacteristic situation.
	
	Assume $G$ is adjoint, and also \Cref{hyp_wild_odd_unitary} for $G$ over $\breve F$, that is, if $p=2$, then $G$ has no odd unitary factors over $\breve{F}$.
	Then, for every parahoric $\breve{O}$-group $\calG$ attached to a facet in $\scrA(G,S,\breve{F})$, we find smooth, affine, fiberwise connected $\breve{O}\pot{t}$-lifts $\underline{\calG}$ in the sense of \cite[Proposition 2.8]{FHLR22}. Note that the $\bar{k}\pot{t}$-reductions $\calG'$ are parahoric models of some adjoint connected reductive $\bar{k}\rpot{t}$-group $G'$ attached to a facet in some appartment $\scrA(G',S',\bar{k}\rpot{t}) \cong \scrA(G,S,F)$, see \cite[Lemma 2.7]{FHLR22}.

	In particular, these come with isomorphisms
	\begin{equation}
	\calG \otimes_{\breve{O}} \bar{k} \cong \calG' \otimes_{\bar{k}\pot{t}} \bar{k},
	\end{equation}
	that are functorial as we vary $\calG$ among parahoric models attached to a facet in $\scrA(G,S,\breve{F})$, and
	which we now exploit to compare their Schubert schemes.
	Let us note that the loop groups $L^+_{\bar{k}}\calG$ and its analogue $L^+_{\bar{k}}(\calG^\prime)$ in the equicharacteristic setting admit natural surjections on $\calG_{\bar{k}}^{\on{perf}}\cong \calG^{\prime,\mathrm{perf}}_{\bar{k}}.$
	Below, we use subscripts $(\str)'$ to denote the perfections of the equicharacteristic loop groups and Schubert varieties for $\calG'$.
	\begin{lemma}\label{lem_comparison_sch_vars_equi_and_mixed}
		Under the above constraints, there are unique equivariant isomorphisms
		\begin{equation}
		\Fl^{\on{can}}_{\calG,\bar{k}, W} \cong \Fl^{\on{can}}_{\calG',\bar{k},W'}
		\end{equation}
		for all connected $\Fl_{\calG,\bar{k},W}$ of depth $0$, that is, whose $L^+_{\bar{k}}\calG$-action factors through $\calG_{\bar{k}}^{\on{perf}}$.
	\end{lemma}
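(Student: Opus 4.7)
The plan is to identify the perfect Schubert schemes $\Fl_{\calG, \bar k, W}\cong \Fl_{\calG',\bar k, W'}$ in a $\calG_{\bar k}^{\on{perf}}\cong \calG^{\prime, \on{perf}}_{\bar k}$-equivariant way, and then invoke \Cref{prop_can_deperf_equiv}(2) to descend uniquely to the canonical $\mathrm{Gr}_0\calG=\calG_{\bar k}$-equivariant deperfections. The depth $0$ hypothesis is precisely what allows this: the $L^+_{\bar k}\calG$-action on $\Fl_{\calG,\bar k,W}$ factors through $\calG_{\bar k}^{\on{perf}}$, analogously for the equicharacteristic side, and both sides are thus objects in the same category to which the canonical deperfection functor of \Cref{prop_can_deperf_equiv} applies.

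The combinatorial backbone matches by Bruhat--Tits theory: the apartments $\scrA(G,S,\breve F)\cong \scrA(G',S',\bar k\rpot{t})$ are canonically identified, yielding $\tilde W\cong \tilde W'$, $W_\calG\cong W_{\calG'}$, and matching indexing sets $W\leftrightarrow W'$ with the same length function and Bruhat order. I would construct the isomorphism orbit-by-orbit: for each $\tilde w\in \tilde W$ of maximal length representing $w\in W_\calG\backslash \tilde W/W_\calG$, the stabilizer of $\tilde w L^+_{\bar k}\calG$ in $L^+_{\bar k}\calG$ is the positive loop group of the Bruhat--Tits parahoric of $\bbf\cup\tilde w(\bbf)$. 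Under depth $0$, its effective quotient $H_w\subseteq \calG_{\bar k}^{\on{perf}}$ depends only on the apartment data and matches its counterpart $H_{w'}\subseteq \calG^{\prime,\on{perf}}_{\bar k}$ under the given identification of special fibers, giving $\Fl^\circ_{\calG,\bar k, w}\cong \calG_{\bar k}^{\on{perf}}/H_w\cong \Fl^\circ_{\calG',\bar k, w'}$.

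I would then close by induction on length, using that depth $0$ is inherited by substrata (since $L^{>0}_{\bar k}\calG$ acts trivially on the whole closure). The main obstacle is extending the orbit-wise isomorphisms across Schubert closures, as these are a priori only rationally defined at the boundary. To handle this, I would pass through a common Demazure resolution: for a reduced word $\dot w$ of a minimal lift of $w$, the Demazure variety $\calD_{\calI,\bar k,\dot w}$ is built from the minimal parahoric overgroups $\calI\subseteq \calP_i$, whose depth-relevant data is determined by the reductive quotients and thus coincides on both sides, giving equivariantly isomorphic Demazure varieties. Since $\pi_{\dot w}$ is perfectly proper and birational with geometrically connected fibers, \Cref{caracterizacao de variedades normais perfeitas} characterizes $\Fl_{\calG, \bar k, w}$ (which is normal by \Cref{variedades de schubert perfeitas sao normais}) as the Stein factorization, and the Demazure isomorphism descends uniquely. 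Gluing over the finitely many maximal $w\in W$ yields the full isomorphism on $\Fl_{\calG, \bar k, W}$, with uniqueness throughout guaranteed by \Cref{prop_trivial_equiv_autos}, which says the only $L^+_{\bar k}\calG$-equivariant automorphism of a connected Schubert perfect scheme is the identity.
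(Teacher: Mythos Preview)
Your overall strategy matches the paper's: reduce to perfect Schubert schemes via \Cref{prop_can_deperf_equiv}, compare Demazure varieties inductively on the length of a reduced word, descend to Schubert varieties, and invoke \Cref{prop_trivial_equiv_autos} for uniqueness and gluing.

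There is, however, a genuine gap in the descent step. You claim that \Cref{caracterizacao de variedades normais perfeitas} characterizes $\Fl_{\calG,\bar k,w}$ as ``the Stein factorization,'' but Stein factorization is applied to a \emph{morphism}, and the Demazure isomorphism $\calD_{\calI,\bar k,\dot w}\cong \calD_{\calI',\bar k,\dot w'}$ does not by itself tell you which fibers of $\pi_{\dot w}$ correspond to which fibers of $\pi_{\dot w'}$: there is no ambient identification of $\Fl_{\calG,\bar k}$ with $\Fl_{\calG',\bar k}$ to fall back on, so knowing that each side is normal with connected-fiber resolution does not force the targets to agree. The paper fills this gap via the Picard group calculation (\Cref{calculo do grupo de picard da grassmanniana dos vectores de witt}) together with \Cref{prop_line_bundles_stein_fac}: the Schubert variety is recovered \emph{intrinsically} from the Demazure variety as the Stein factorization attached to a specific semi-ample line bundle, see \Cref{sec:affine-flag-vari-1-constructing-schubert-varieties-via-demazure}, and this line bundle is matched across the two sides because the Picard groups of the Demazure varieties are explicitly identified through the $\bbP^1$-factors.

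A second point you do not address is the upgrade from $\calI_{\bar k}^{\on{perf}}$-equivariance (what the Demazure construction naturally yields) to $\calG_{\bar k}^{\on{perf}}$-equivariance of the resulting Schubert isomorphism. The paper handles this by a further convolution $\calG_{\bar k}^{\on{perf}}\times^{\bar Q^{\on{perf}}}\Fl_{\calG,\bar k,w}$ and another application of \Cref{prop_line_bundles_stein_fac}; your orbit-by-orbit sketch does not connect to the Demazure-descended isomorphism, so it cannot supply this equivariance on its own.
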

	
	\begin{proof}
		As $\calG_{\bar{k}}\cong \calG_{\bar{k}}^\prime$, it suffices by \Cref{prop_can_deperf_equiv} to produce equivariant isomorphism $\Fl_{\calG,\bar{k}, W} \cong \Fl_{\calG',\bar{k},W'}$ of the perfect Schubert schemes.
		During the proof, we fix an auxiliary Iwahori $\calI$ dilated from $\calG$ and consider the corresponding Iwahori--Schubert perfect scheme $\Fl_{(\calI,\calG),\bar{k},W}$. 
		
		First assume that $W=\{w\}$.
		The perfect variety $\Fl_{(\calI,\calG),\bar{k},w}$ can be resolved via a Demazure variety $\calD_{\calI, \bar{k},\dot{w}}$. 
		If $s$ is the first letter of the word $\dot{w}$ and $\dot{v}$ is the word obtained from deleting the first letter, we get		
		\begin{equation}
		\calD_{\calI,\bar{k},\dot{w}}=\Fl_{\calI,\bar{k},s} \tilde{\times}\calD_{\calI,\bar{k},\dot{v}}
		\end{equation}
		where $L^+_{\bar{k}}\calI \subset L^+_{\bar{k}}\calP$ is the minimal parahoric corresponding to $s$. We claim that the action of $L^+_{\bar{k}}\calI$ on $\calD_{\calI,\bar{k},\dot{v}}$ is trivial when restricted to the normal subgroup $L^{\geq 1}_{\bar k}\calP$.
		Otherwise, let $\alpha$ be the negative simple affine root corresponding to $s$ and observe that $L^+_{\bar{k}}\calU_{\alpha+1}$ acts non-trivially on $\calD_{\calI, \bar{k},\dot{v}}$. 
		But conjugating by $s$ yields that $L^+_{\bar{k}}\calU_{-\alpha+1} \subset L^{\geq 1}_{\bar{k}}\calI$ does not act trivially on $\calD_{\calI, \bar{k},\dot{w}}$.

		Arguing inductively on $\dot{w}$, and exploting the above claim, we reach at an $\calI_{\bar{k}}^{\on{perf}}$-equivariant identification of the Demazure perfect varieties 
		\begin{equation}
		\calD_{\calI, \bar{k},\dot{w}} \cong \calD_{\calI', \bar{k},\dot{w}'}
		\end{equation}
		bounded by $\dot{w}$ resp. $\dot{w}'$ and attached to $\calI$, respectively the $\bar{k}\rpot{t}$-reduction $\calI'$ of the Iwahori $\breve{O}\pot{t}$-lift. In the case $l(\dot{w})=1$, then we get the unique equivariant identification of one-dimensional Iwahori--Schubert perfect varieties, which are perfected projective lines.
		
		As the Picard group of the Demazure varieties have already been determined, see \Cref{calculo do grupo de picard da grassmanniana dos vectores de witt}, respectively \cite[Section 3.2]{HZ20} for the equicharacteristic case, the previous isomorphism descends uniquely by \Cref{prop_line_bundles_stein_fac} to an $\calI_{\bar{k}}^{\on{perf}}$-equivariant identification
		\begin{equation}
		\Fl_{(\calI,\calG),\bar{k}, w} \cong \Fl_{(\calI',\calG'),\bar{k},w'}
		\end{equation} 
		of the perfect Schubert varieties, see \Cref{sec:affine-flag-vari-1-constructing-schubert-varieties-via-demazure}. 
		If the left side is stable under $\calG_{\bar{k}}$, then we need to show the map is not only $\calI_{\bar{k}}^{\on{perf}}$-equivariant, but furthermore $\calG_{\bar{k}}^{\on{perf}}$-equivariant. 
		
		Let $\bar Q\subset \calG_{\bar{k}}$ denote the image of $\calI_{\bar{k}}$. By assumption, the $\calI_{\bar{k}}^{\on{perf}}$-action on both sides factors through the perfection of $\bar{Q}$. Using the convolution product
		\begin{equation}
		\calG_{V}^{\on{perf}}\times^{\bar Q^{\on{perf}}}\Fl_{\calG, \bar{k},w}, 
		\end{equation}
		we get a perfect $\bar{k}$-variety mapping $\calI_{\bar{k}}^{\on{perf}}$-equivariantly to $\Fl_{\calG, \bar{k},w}$ and that can be identified with its equicharacteristic analogue in a $\calG_{\bar{k}}^{\on{perf}}$-equivariant fashion. 
		Since $\calG_{\bar{k}}^{\on{perf}}/{\bar Q^{\on{perf}}} \subset \Fl_{\calI,\bar{k}}$ is a Schubert perfect variety at Iwahori level, we know its Picard group by \Cref{calculo do grupo de picard da grassmanniana dos vectores de witt} and \Cref{sec:affine-flag-vari-1-constructing-schubert-varieties-via-demazure}. 
		Applying again \Cref{prop_line_bundles_stein_fac}, we not only recover the original isomorphism, by \Cref{prop_trivial_equiv_autos}, but also conclude it is $\calG_{\bar{k}}^{\on{perf}}$-equivariant and the unique such map.
		
		For a general $W$ as in the statement, we now can glue the above isomorphism to non-irreducible Schubert perfect schemes
		\begin{equation}
		\Fl_{\calG,\bar{k}, W} \cong \Fl_{\calG',\bar{k},W'}
		\end{equation}
		appealing again to \Cref{prop_trivial_equiv_autos}.
	\end{proof}

	From now on $G$ is no longer assumed to be adjoint. 
	We approach the canonical admissible locus $\calA_{\calG,\mu}^{\on{can}}$ for minuscule $\mu$, that is, the canonical deperfection of the admissible locus, with our comparison result, describing its singularities (thereby confirming \cite[Conjecture III]{Zhu17} for Schubert varieties in the admissible locus) and computing its coherent cohomology.
	
	\begin{theorem}\label{theorem_coherence_allp}
		Let $\mu$ be minuscule and assume \Cref{hyp_wild_odd_unitary}. 
		Then, $\calA_{\calG,\mu}^{\on{can}}$ is Cohen--Macaulay and Frobenius split compatibly with its $\calG_{\bar{k}}$-stable reduced $\bar{k}$-subschemes.
		
		Moreover, for every ample line bundle $\calL$ on $\Fl_{\calG,\bar{k}}$ that descends to $\calA_{\calG,\bar{k},\mu}^{\on{can}}$, there is an equality
		\begin{equation}
		\dim_{\bar{k}} \on{H}^0(\calA_{\calG,\bar{k},\mu}^{\on{can}}, \calL)=\dim_C \on{H}^0(\calF_{G,C,\mu}, \calO(c_\calL)).
		\end{equation}
	\end{theorem}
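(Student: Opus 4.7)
The plan is to reduce the theorem to the equicharacteristic setting via \Cref{lem_comparison_sch_vars_equi_and_mixed}, where it follows from results of Pappas--Rapoport and Zhu.

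We may first reduce to the case where $G=G_\ad$: the natural morphism $\Fl_\calG \to \Fl_{\calG_\ad}$ restricts to an equivariant isomorphism of the connected components containing the admissible loci, preserving ampleness, central charges and coherent cohomology. Under \Cref{hyp_wild_odd_unitary}, we then fix a Breuil--Kisin lift $\underline{\calG}$ as in \cite{FHLR22}, whose $\bar k\rpot{t}$-reduction we denote by $\calG'$.

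Next, we verify that $\calA_{\calG,\bar k,\mu}^{\on{can}}$ has depth zero. For $\mu$ minuscule, the rationally conjugate translations $t_{\lambda_I(\pi)}$ and their Bruhat predecessors all lie in a bounded region of the standard apartment at combinatorial distance $\langle 2\rho, \mu\rangle$ from the base alcove $\bba$; the attached Bruhat--Tits stabilizers are parahoric and in particular contain the pro-unipotent radical $L^{>0}_{\bar k}\calG$. Therefore the $L^+_{\bar k}\calG$-action on $\calA_{\calG, \bar k,\mu}$ factors through $\calG_{\bar k}^{\on{perf}}$, and \Cref{lem_comparison_sch_vars_equi_and_mixed} supplies a $\calG_{\bar k}^{\on{perf}}$-equivariant isomorphism
\begin{equation}
\calA_{\calG,\bar k,\mu}^{\on{can}} \cong \calA_{\calG',\bar k,\mu}^{\on{can}}
\end{equation}
with the admissible locus in the equicharacteristic partial affine flag variety for $\calG'$. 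By \Cref{calculo do grupo de picard da grassmanniana dos vectores de witt}, the line bundle $\calL$ corresponds to an ample line bundle $\calL'$ of the same central charge on the right-hand side.

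Finally, the right-hand side is a weakly normal union of classical Schubert schemes in the equicharacteristic partial affine flag variety for $\calG'$. Its Cohen--Macaulayness and the compatible Frobenius splittability follow from normality of Schubert varieties together with the splitting techniques of Faltings \cite{Fal03} and Pappas--Rapoport \cite{PR08}, extended in \cite{FHLR22} to the wildly ramified cases now included. The dimension equality
\begin{equation}
\dim_{\bar k}\on{H}^0(\calA_{\calG',\bar k,\mu}^{\on{can}}, \calL') = \dim_C \on{H}^0(\calF_{G,C,\mu}, \calO(c_\calL))
\end{equation}
is then precisely Zhu's coherence theorem \cite{Zhu14}. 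The main obstacle is providing the equicharacteristic inputs in the wildly ramified cases via the Breuil--Kisin lift machinery of \cite{FHLR22}; this is precisely why \Cref{hyp_wild_odd_unitary} is imposed.
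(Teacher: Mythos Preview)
Your overall strategy matches the paper's exactly: reduce to the adjoint group, verify that $\calA_{\calG,\bar k,\mu}$ has depth zero so that \Cref{lem_comparison_sch_vars_equi_and_mixed} applies, and then invoke the equicharacteristic results of \cite{Fal03,PR08,Zhu14,FHLR22}.

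The one genuine weakness is your depth-zero argument. The claim that the stabilizers $L^+_{\bar k}\calG \cap w L^+_{\bar k}\calG w^{-1}$ are parahoric is false in general (they are Bruhat--Tits groups attached to the convex hull of $\bbf \cup w(\bbf)$, not to a single facet), and the reference to ``combinatorial distance $\langle 2\rho,\mu\rangle$'' is the dimension of the Schubert variety, which does not by itself imply that $L^{\geq 1}_{\bar k}\calG$ acts trivially. The paper instead argues directly: since $L^{\geq 1}_{\bar k}\calG$ is normal in $L^+_{\bar k}\calG$, it suffices to check that it fixes each translation $\lambda$ defining the admissible locus, and by the combinatorial dictionary this amounts to $|a(\lambda_I)|\leq 1$ for every relative root $a$. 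Writing
\[
a(\lambda_I)=[K:\breve F]^{-1}\sum_{\sigma\in\Gal(K/\breve F)}\sigma\tilde a(\lambda)
\]
for an absolute root $\tilde a$ restricting to $a$, this bound follows immediately from $\lambda$ being minuscule. You should replace your paragraph with this computation.
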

	
	Here, $\calF_{G,\mu}=G_E/P_\mu^-$ is the classical flag variety attached to $\mu$, the central charge $c_\calL$ is given by Kac--Moody coefficients, see \cite[Section~10]{PR08} and \cite[Section10]{BS17}, and the line bundle $\calO(c_\calL)$ is the corresponding power of the ample generator of $\on{Pic}(\calF_{G,C,\mu})$.

	\begin{proof}
		We want to apply \Cref{lem_comparison_sch_vars_equi_and_mixed}, in order to reduce the statements to the equicharacteristic situation, where we refer to \cite[Theorem~4.1, Theorem~4.25]{FHLR22}. 
		
		In order to do this, we first notice that there is an equivariant isomorphism $\calA_{\calG,\bar{k}, \mu}^{\mathrm{can}} \cong \calA_{\calG_\ad,\bar{k}, \mu_\ad}^{\mathrm{can}}$ via the natural map. Here, $\mu_{\ad}$ denotes the composition of $\mu$ with $G_C\to {G_\ad}_C$.
		Indeed, this can be checked on perfections and then at the level of geometric points, where it follows from the assertion that $\Fl^{\on{perf}}_{\calG,\bar{k}}\to \Fl^{\on{perf}}_{\calG_{\mathrm{ad}},\bar{k}}$ induces isomorphisms on connected components. 
		More precisely, 
		\begin{center}
		\begin{tikzcd}
		 \Fl^{\on{perf}}_{\calG,\bar{k}} \arrow{r} \arrow{d}  & \Fl^{\on{perf}}_{\calG_{\mathrm{ad}},\bar{k}} \arrow{d} \\
		 \pi_0(\Fl^{\on{perf}}_{\calG,\bar{k}}) \arrow{r} & \pi_0(\Fl^{\on{perf}}_{\calG_{\mathrm{ad}},\bar{k}}) 
		\end{tikzcd}
		\end{center}
		is Cartesian.
		
		We still have to show that $\calA_{\calG,\bar{k}, \mu}^{\mathrm{can}}$ has minimal depth, that is, $L^+_{\bar{k}}\calG$ acts via $\calG_{\bar{k}}$. 
		Since $L^{\geq 1}_{\bar{k}}\calG$ is a normal subgroup, it suffices to check that it fixes each of the sections $\la$ defining the admissible locus. 
		By the combinatorial dictionary, see our proof of \Cref{prop_trivial_equiv_autos}, it suffices to show that $\lvert a(\la_I) \rvert \leq 1$, that is, the translation $\la_I$ moves every affine root to a parallel one at distance at most one. 
		By definition, one has 
		\begin{equation}
		a(\la_I)= [K:\breve{F}]^{-1}\sum_{\sig \in \Gal(K/\breve{F})} \sig \tilde{a} (\la),
		\end{equation}
		where $K$ is a finite Galois extension of $\breve{F}$ splitting $G_{\breve{F}}$, and $\tilde{a}$ is an absolute root restricting to $a$, so its absolute value is at most $1$, since $\la$ is minuscule.  
	\end{proof}
	
	\begin{remark}\label{remark_cass-lou}
		After the first version of this paper was written, \Cref{hyp_wild_odd_unitary} was removed from \Cref{theorem_coherence_allp} in the equicharacteristic setting in \cite{Lou23b}. 
		These results are extended to the mixed characteristic setting in \cite[Corollary~1.5]{CL24}.
		We remark that Cohen--Macaulayness would follow from a positive solution of \cite[Conjecture~3.6]{FHLR22}.
		\end{remark}
	
	\section{Affine Grassmannians and local models}
	\label{section_local_model}
	
	In this section, we start by gathering several basic facts on the $B_\dR^+$-affine Grassmannian over $\Spd C$. 
	Most of them were established in \cite[Lecture XIX]{SW20} and \cite[Chapters VI.2, VI.5]{FS21}, but our approach is sufficiently different and relevant to later sections that it merits some elaboration.
	
	Then, we introduce the main objects of study of this article, to wit the local models
	\begin{equation}
	\calM_{\calG, \mu} \subset \Gr_{\calG,O_E}
	\end{equation}
	defined for every $\mu\in X_\ast(T)$ via v-closures of Schubert diamonds in a Beilinson--Drinfeld Grassmannian. We dedicate the rest of the section to showing that $\calM_{\calG, \mu}$ is an $L^+_{O_E}\calG$-stable flat, proper $\pi$-adic kimberlite with good finiteness properties. 
	In particular, its special fiber will be shown to be representable by some connected Schubert perfect scheme $\Fl_{\calG, W}$.

	\subsection{The $B_\dR^+$-affine Grassmannian}\label{subsection_affine_grassmannian}

	In this section, we fix a complete discretely valued field $F/\bbQ_p$ with perfect residue field $k$, ring of integers $O$ and uniformizer $\pi$, a complete algebraic closure $C/F$ with ring of integers $O_C$ and residue field $\bar k= k_C$. 
	We denote by $\breve F\subset C$ the maximal unramified complete subextension with ring of integers $\breve O$ and the same residue field $\bar k=k_{\breve F}$.
	Further, we fix a (connected) reductive $F$-group $G$ and a maximal $\breve{F}$-split $F$-torus $S\subset G$ containing a maximal $F$-split torus, see \cite[Proposition 5.1.10]{BT84}.
	As $G$ is quasi-split over $\breve{F}$ by Steinberg's theorem, the centralizer $T$ of $S$ is a maximal torus. 
	Also, we fix a Borel subgroup $B\subset G_{\breve F}$ containing $T_{\breve F}$.
	
	For any affinoid perfectoid space $\Spa(R,R^+)$ in characteristic $p$ equipped with a map to $\Spd \bbZ_p$, let $B_{\dR}^{+}(R^\sharp)$, respectively $B_{\dR}(R^\sharp)$, be the rings of de Rham periods formed using $O$-Witt vectors.
	For convenience, we set $B^{+}_{\dR}:=B^{+}_{\dR}(C)$ and $B_{\dR}:=B_{\dR}(C)$.
	The $B^+_{\mathrm{dR}}$-loop group of $G$ is the group functor over $\Spd F$ given by
	\begin{equation}
	LG\colon (R,R^+)\mapsto G(B_{\dR}(R^\sharp)),
	\end{equation}
	and the positive loop group is the subgroup functor
	\begin{equation}
	L^+G\colon (R,R^+)\mapsto G(B_{\dR}^+(R^\sharp)). 
	\end{equation}
	Their v-sheaf quotient
	\begin{equation}
	\Gr_{G}:=LG/L^+G
	\end{equation}
	is called the $B^+_\dR$-affine Grassmannian. 
	Similarly to \Cref{sec:affine-flag-vari}, $\Gr_{G}(R,R^+)$ parametrizes $G$-torsors on the spectrum $\Spec(B^+_{\dR}(R^\sharp))$ with a trivialization over $\Spec(B_\dR(R^\sharp))$.
	Here, we are primarily interested in the geometry and work therefore over $\Spd C$.
	The base changes are denoted by $L_CG$, $L_C^+G$ and $\Gr_{G,C}$, for convenience. 
	
	As an auxiliary first step, we study the affine flag variety and then translate the results to the affine Grassmannian.
	For this, the Iwahori group $B^+_\dR$-model $\mathcal{I}$ is given as the dilatation of $G \otimes_F B^+_\dR$ along the subscheme $B_C \subset G_C$ of its special fiber. 
	Define
	\begin{equation}
	L^+_C\mathcal{I}\colon (R,R^+)\mapsto \mathcal{I}(B_{\dR}^+(R^\sharp))
	\end{equation}
	which is a subgroup v-sheaf of $L^+G$. 
	It gives rise to the $B^+_\dR$-affine flag variety
	\begin{equation}
	\Fl_{\mathcal{I},C}:=L_CG/L^+_C\mathcal{I},
	\end{equation}
	viewed as a v-sheaf over $\Spd C$.
	
	We recall that $\Gr_{G,C}\to \Spd C$ is an increasing union of proper, spatial diamonds by \cite[Lecture XIX]{SW20}. 
	The same holds for $\Fl_{\calI,C}\to \Spd C$, as the projection
	\begin{equation}\label{projection.from.affine.flag.equality}
	\Fl_{\calI,C}\to \Gr_{G,C}
	\end{equation}
	is a proper, cohomologically smooth $(G_C/B_C)^\diamondsuit$-fibration.
	The following discussion is parallel to parts of \Cref{sec:affine-flag-vari} but simplified by the fact that we consider $G\otimes_F B^+_\dR$ which is a (split) reductive group over $B^+_\dR$ (and not some parahoric group scheme).
	The geometry of the affine flag variety $\Fl_{\calI,C}$ or, better, the v-stack quotient
	\begin{equation}
	\Hk_{\calI,C}:=L^+_C\mathcal{I}\backslash\Fl_{\calI,C}= L^+_C\mathcal{I}\backslash L_CG/L^+_C\mathcal{I}
	\end{equation}
	is reflected in the Iwahori-Weyl group of $G(B_\dR)$,
	\begin{equation}
	\tilde{W}_\dR:=N_G(T)(B_\dR)/T(B_\dR^+),
	\end{equation}
	where $N_G(T)$ denotes the normalizer of $T$ in $G$.
	There is a canonical map $\underline{\tilde W_\dR} \r \Fl_{\calI,C}$ because $T(B^+_\dR)\subset \mathcal{I}(B^+_\dR)$.
	
	\begin{lemma}
		\label{sec:geom-affine-grassm-topological-space-of-affine-flag-stack}
		The map $\underline{\tilde{W}_\dR}\to \Fl_{\calI,C}$ induces a bijection
		\begin{equation}
		\tilde{W}_\dR\cong |\Hk_{\calI,C}|.
		\end{equation}
	\end{lemma}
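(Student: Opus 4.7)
My plan is to establish the bijection by producing a set-theoretic inverse map from $|\Hk_{\calI,C}|$ to $\tilde W$ via the classical Iwahori--Bruhat decomposition applied on geometric points.

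First, I would describe the map more precisely. Choose any set-theoretic lift $\tilde{W} \to N(B_\dR)$, $w \mapsto \dot w$. The resulting element $\dot w \in G(B_\dR)$ defines a $\Spd C$-valued point of $L_CG$, hence of $\Fl_{\calI,C}$, and finally of $\Hk_{\calI,C}$. Since any two lifts differ by $T(B_\dR^+) \subset \calI(B_\dR^+)$, the class in $|\Hk_{\calI,C}|$ is independent of the lift. The content of the statement is to show this is a bijection.

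Now I turn to surjectivity. Any point of $|\Hk_{\calI,C}|$ is represented by a morphism from a geometric point $\Spa(C',C'^+) \to \Hk_{\calI,C}$ where $C'/C$ is an algebraically closed perfectoid field. After refining by a v-cover (and using that $L^+_C\calI$-torsors trivialize v-locally), we obtain a lift to $L_CG$, that is, an element $g \in G(B_\dR(C'^\sharp))$. The ring $B_\dR^+(C'^\sharp)$ is a complete discrete valuation ring with algebraically closed residue field $C'^\sharp$, so $G_{B_\dR^+(C'^\sharp)}$ is a split reductive group scheme and $\mathcal{I}_{B_\dR^+(C'^\sharp)}$ is its canonical Iwahori. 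The classical Iwahori--Bruhat decomposition yields a unique $w \in \tilde W' := N(B_\dR(C'^\sharp))/T(B_\dR^+(C'^\sharp))$ and elements $u, v \in \calI(B_\dR^+(C'^\sharp))$ with $g = u \dot{w} v$. Since $G$ is split over $B_\dR^+$ (all tori over the algebraically closed residue field being split), the canonical identification $\tilde W \cong X_*(T) \rtimes W_0 \cong \tilde W'$ provides the desired element of $\tilde W$, and by construction its image in $|\Hk_{\calI,C}|$ coincides with the original point.

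For injectivity, assume $w_1, w_2 \in \tilde W$ map to the same point of $|\Hk_{\calI,C}|$. By the definition of the topological space of a v-stack, there exists an algebraically closed perfectoid affinoid field $(C',C'^+)$ over $C$ such that $\dot{w}_1$ and $\dot{w}_2$ have isomorphic images in $\Hk_{\calI,C}(\Spa(C',C'^+))$, that is, there exist $u, v \in \calI(B_\dR^+(C'^\sharp))$ satisfying $\dot{w}_1 = u \dot{w}_2 v$ inside $G(B_\dR(C'^\sharp))$. The uniqueness of the Iwahori--Bruhat decomposition then forces $w_1 = w_2$ in $\tilde W'$, hence in $\tilde W$ via the canonical identification. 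The main potential obstacle is the proper formulation of Iwahori--Bruhat at the level of $B_\dR^+(C'^\sharp)$ and the verification that the Iwahori-Weyl group is base-change invariant within the class of DVRs with algebraically closed residue field, but both follow directly from the classical theory of split reductive groups over complete discretely valued fields since $G$ becomes split upon base change to $B_\dR^+$.
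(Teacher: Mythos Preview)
Your proof is correct and follows essentially the same approach as the paper's: both arguments reduce the claim to the Iwahori--Bruhat decomposition for $G(B_\dR(K^\sharp))$ with respect to $\calI(B_\dR^+(K^\sharp))$ for varying algebraically closed perfectoid fields $K$, together with the observation that the resulting double-coset set is $\tilde W$ independently of $K$. The paper's proof is simply a more compressed version of what you wrote.
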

	\begin{proof}
		Every point of $|\Hk_{\calI,C}|$ is represented by a map $\Spa(K,K^+)\to L_CG$ with $K$ algebraically closed perfectoid. 
		Two $K$-valued points have the same underlying element in $|\Hk_{\calI,C}|$ if, v-locally, they lie in the same double coset 
		\begin{equation}
		\mathcal{I}(B^+_\dR(K^\sharp))\backslash G(B_\dR(K^\sharp))/ \mathcal{I}(B^+_\dR(K^\sharp)). 
		\end{equation}
		The identification now follows from the Bruhat decomposition which is independent of $K^\sharp$.
	\end{proof}
	
	Let $\bba \subset \scrA(G,T,B_\dR)$ be the alcove defined by $\mathcal{I}$ in the apartment for $T$ of the Bruhat--Tits building of $G(B_\dR)$. 
	Let $\mathbb{S}\subset \tilde{W}_\dR$ be the set of simple reflections along the walls bounding $\bba$. 
	The affine Weyl group $W_{\dR,\aff}\subset \tilde{W}_\dR$ is the subgroup generated by the elements in $\mathbb{S}$. 
	Then, $W_{\aff}$ is a Coxeter group which only depends on the Bruhat--Tits building of $G(B_\dR)$.
	As in \eqref{eq:affine_Weyl_group_sequence} there is a canonical short exact sequence
	\begin{equation}
	1\to W_{\dR,\aff}\to \tilde{W}_\dR\to \pi_1(G)\to 1,
	\end{equation}
	which is naturally split by taking the stabilizer $\Omega_{\bba}\subset \tilde{W}_\dR$ of the alcove $\bba$.
	Thus, we can write each $w\in \tilde{W}_\dR$ uniquely as $w=w_{\aff}\tau$ with $\tau\in \Omega_\bba$ and $w_\aff\in W_{\dR,\aff}$.
	
	\begin{lemma}
		\label{sec:geom-affine-grassm-translation-by-element-in-the-normalizer}
		Equip $\pi_1(G)$ with the discrete topology. 
		The morphism
		\begin{equation}
		|\Hk_{\calI,C}|\to \pi_1(G)
		\end{equation}
		is locally constant, thus underlies a morphism $\Hk_{\calI,C}\to \underline{\pi_1(G)}$ of small v-stacks. 
	\end{lemma}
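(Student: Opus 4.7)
The plan is to construct the factorization via a Kottwitz-type determinant morphism $\Gr_{G,C}\to \underline{\pi_1(G)}$ and to pull it back to $\Hk_{\calI,C}$.

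First I reduce to constructing a suitable morphism $\Gr_{G,C}\to \underline{\pi_1(G)}$. The projection $\pi\colon \Fl_{\calI, C}\to \Gr_{G,C}$ of \eqref{projection.from.affine.flag.equality} is a proper, cohomologically smooth fibration with geometrically connected fibers $(G_C/B_C)^\diamondsuit$, and it is compatible with the canonical maps from $\tilde W$ described in \Cref{sec:geom-affine-grassm-topological-space-of-affine-flag-stack}. Since its fibers are connected, a set-theoretic map $|\Gr_{G,C}|\to \pi_1(G)$ is locally constant iff its precomposition with $|\pi|$ is. On the other hand, any morphism $\Gr_{G,C}\to \underline{\pi_1(G)}$ arising from a group homomorphism $L_CG\to \underline{\pi_1(G)}$ trivial on $L^+G$ will automatically be invariant under left and right multiplication by $L^+_C\calI\subset L^+G$, hence will descend to a morphism $\Hk_{\calI,C}\to \underline{\pi_1(G)}$.

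Second, I construct a group homomorphism $\det\colon L_CG\to \underline{\pi_1(G)}$. Observe that $G_C$ is split, as $C$ is algebraically closed, and by rigidity of split reductive group schemes, so is $G\otimes_F B^+_\dR$. For such a split group over $B^+_\dR(R^\sharp)$ attached to each affinoid perfectoid $\Spa(R,R^+)\in\Perf_C$ with untilt $R^\sharp$, the Kottwitz homomorphism gives a functorial group morphism $G(B_\dR(R^\sharp))\to \pi_1(G)=X_\ast(T)/\sum_\alpha \bbZ\alpha^\vee$, trivial on $G(B^+_\dR(R^\sharp))$. The geometric input underlying continuity is that for each simple coroot $\alpha^\vee$ the point $\alpha^\vee(\xi)\in \Gr_{G,C}(C)$ (with $\xi\in B^+_\dR$ a generator of the maximal ideal) lies in the same connected component as the origin, by the $\bbP^{1,\diamondsuit}_C$-orbit under $\SL_{2,\alpha}^\diamondsuit$ inside $\Gr_{G,C}$. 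Sheafification then produces $\det$ as desired.

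Finally, to match the induced map on $|\Hk_{\calI,C}|\cong \tilde{W}$ with the projection $\tilde{W}\twoheadrightarrow \pi_1(G)=\tilde W/W_\aff$, it suffices to evaluate on the torus $T\subset G$, where the Kottwitz map is literally the reduction $X_\ast(T)=T(B_\dR)/T(B^+_\dR)\twoheadrightarrow X_\ast(T)/\sum_\alpha\bbZ\alpha^\vee$. The general $\tilde W$-representatives arise from $N(B_\dR)$-translates, and the finite Weyl group $N(B^+_\dR)/T(B^+_\dR)=W_0$ preserves the Kottwitz class, so the composition $\tilde W \to |\Hk_{\calI,C}|\to \pi_1(G)$ matches the exact sequence introduced before the statement. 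The main obstacle is the continuous and functorial construction of the Kottwitz homomorphism at the v-sheaf level; this is where the $\bbP^{1,\diamondsuit}_C$-orbit analysis for simple coroots enters crucially, while all other steps are direct verifications.
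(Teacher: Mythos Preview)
Your reduction to $\Gr_{G,C}$ in the first paragraph and the matching in the third are fine, but the second paragraph contains a genuine gap: you assert that ``the Kottwitz homomorphism gives a functorial group morphism $G(B_\dR(R^\sharp))\to \pi_1(G)$'' without constructing it. For split $G$ with $G_{\der}$ not simply connected (e.g.\ $G=\PGL_2$), there is no algebraic homomorphism from $G$ to a torus realizing the projection $X_*(T)\to \pi_1(G)$, so one cannot simply compose with a determinant and read off a $\xi$-valuation. Producing such a functorial map is precisely the content of the lemma, and ``sheafification'' does not manufacture it: a priori there is no presheaf map to sheafify, only a pointwise assignment for geometric points via the Cartan decomposition.

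Your $\bbP^{1,\diamondsuit}_C$-orbit observation for $\alpha^\vee(\xi)$ is correct and is essentially the connectedness of affine root subgroups that the paper uses, but by itself it only shows that simple coroots lie in the neutral component of $\Gr_{G,C}$; it does not build a group homomorphism out of $L_CG$. The paper's proof supplies exactly the missing d\'evissage: for $G=G_{\spc}$ one shows $L_CG$ is connected because it is generated on geometric points by the connected root subgroups $L_CU_a$; for a torus one has $\Gr_{T,C}=\underline{X_*(T)}$; for $G$ with $G_{\der}=G_{\spc}$ one passes to the abelian quotient $G\to G/G_{\der}$; and for general $G$ one uses a $z$-extension $1\to T_{\spc}\to \tilde G\to G\to 1$ with $\tilde G_{\der}$ simply connected and an induced torus kernel, so that the Hecke stacks and the $\pi_1$'s sit in compatible short exact sequences. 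To repair your argument you would need to insert this reduction, after which your $\SL_2$-orbit remark becomes the $G_{\spc}$ step.
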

	\begin{proof}
		Here, we follow the argument behind the proof of \cite[Theorem 5.1]{PR08}. 
		If $G=G_\spc$ is simply connected, then we see that, for every algebraically closed perfectoid field $K/C$, the group $L_CG(K)$ is generated by its affine root subgroups $L_CU_a(K)$.
		
		But, since $L_CU_a$ is connected (choose a pinning), we conclude that $L_CG$, hence also $\Fl_{\calI,C}$ and $\Hk_{\calI,C}$ are connected. 
		If $G=T$ is a torus, then we see easily that $\Gr_{T,C}$ equals the v-sheaf $\underline{X_\ast(T)}$ compatibly with the map above. 
		
		Now, suppose that $G_\der=G_\spc$. Then, $\pi_1(G)$ identifies with the fundamental group of the abelian quotient $G/G_\der$, so the claim is clear. 
		Finally, for a general group $G$, we can find an exact sequence, 
		\begin{equation}
		1 \to T \to \tilde{G} \to G \to 1,
		\end{equation}
		such that $\tilde{G}_\der=G_\spc$ and with $T$ an induced torus (such sequence exists by \cite[Proposition 3.1]{deligne1982conjugates}).
		We obtain a commutative diagram
		\begin{center}
		\begin{tikzcd}
			\mid \Hk_{\tilde{G},C} \mid \arrow{r} \arrow{d}  & \mid \Hk_{{G},C} \mid  \arrow{d} \\
			\pi_1(\tilde{G}) \arrow{r} & \pi_1({G}) 
		\end{tikzcd}
		\end{center}
where we can see that the two horizontal arrows are surjective by using the fact that $T_\spc$ is an induced torus.
Moreover, since	$|\Hk_{\tilde{G},C}| \to |\Hk_{{G},C}|$ is a quotient map, the claim follows.
	\end{proof}
	
	For $\tau\in\pi_1(G)$, we denote by $\Fl_{\calI,C}^\tau$ the fiber over $\tau$ of the morphism $\Fl_{\calI,C}\to \underline{\pi_1(G)}$.
	We note that right translation by a representative of $\tau$ in $L_CG$ induces an isomorphism
	\begin{equation}
	\Fl_{\calI,C}^1\overset \cong  \longto \Fl_{\calI,C}^\tau.
	\end{equation}
	Moreover, $\Fl_{\calI,C}^1$ is canonically isomorphic to the affine flag variety $\Fl_{\calI_\spc,C}$ of the simply connected cover $G_{\spc}$. 
	Namely, the transition morphism $\Fl_{\calI_\spc,C}\r \Fl_{\calI,C}^1$ is bijective by checking on geometric points (\cite[Lemma 12.5]{Sch17}) and using the Bruhat decomposition, hence must be an isomorphism as both $\Fl_{\calI_\spc,C}, \Fl_{\calI,C}^1$ are ind-proper over $\Spd C$.

	\begin{definition}
		\label{Schubert.variety.definition}
		Let $w\in \tilde W_\dR$.
		The Schubert cell $\Fl_{\calI,C,w}^\circ\subset \Fl_{\calI,C}$ is the v-sheaf-theoretic image of the orbit map 
		\begin{equation} 
		L^+_C\calI \to \Fl_{\calI,C},\ i \mapsto iw.
		\end{equation}
		The Schubert variety is the v-closure $\Fl_{\calI, C, w}:=\Fl_{\calI, C, w}^{\circ, \cl}$ in the sense of \Cref{section_closures_of_v_sheaves}.
	\end{definition}
	
	By \Cref{proposition-closure-of-v-subsheaves}, we know that the underlying topological space of $\Fl_{I,w}$ is the weakly generalizing closure of $|\Fl_{\calI,C,w}^\circ|$ inside $|\Fl_{\calI,C}|$. 
	But, $\Fl_{\calI,C,w}^\circ$ is possibly ill-behaved because $L^+_C\calI$ is not quasicompact. 
	As we show in \Cref{sec:geom-affine-grassm-proposition-stratification-of-schubert-varieties} and \Cref{Schubert.variety.corollary} for the affine Grassmannian, our definition is equivalent to the pointwise definition in \cite[Definition VI.2.2]{FS21}.
	We start with the case of simple reflections:
	
	\begin{lemma}
		\label{sec:geom-affine-grassm-proposition-closed-schubert-variety-for-simple-reflection}
		Let $s\in \mathbb{S}$ be a simple reflection. 
		Then there is an isomorphism $\Fl_{\calI, C, s}\simeq (\bbP^1_C)^\dia$ that restricts to $\Fl_{\calI, C, s}^{\circ}\simeq (\bbA^1_C)^\dia$. 
		In particular, $\Fl_{\calI, C, s}^{\circ}$  is a topologically dense open subset of $\Fl_{\calI, C, s}$.
	\end{lemma}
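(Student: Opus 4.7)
The plan is to realize $\Fl_{\calI,C,s}$ as the flag variety of a rank-one Levi via the minimal parahoric lying above $\calI$. Let $\calP_s$ denote the dilatation of $G\otimes_F B_\dR^+$ along the minimal parabolic $P_s\subset G_C$ containing $B_C$ attached to $s$; it is a smooth affine $B^+_\dR$-group scheme with $\calI\subsetneq \calP_s\subsetneq G\otimes_F B^+_\dR$, whose reductive quotient $\bar L_s$ over $C$ has semisimple rank one. The image of $\calI\otimes_{B^+_\dR}C$ inside $\bar L_s$ is a Borel $\bar B_s$, so $\bar L_s/\bar B_s\cong \bbP^1_C$. The first goal is to establish a canonical isomorphism of v-sheaves
\begin{equation}
L^+_C\calP_s/L^+_C\calI\;\overset{\sim}{\longto}\;\bar L_s^\dia/\bar B_s^\dia\cong (\bbP^1_C)^\dia,
\end{equation}
using that the reduction $L^+_C\calP_s\to \bar L_s^\dia$ is a v-surjection with pro-smooth kernel and that $L^+_C\calI\subset L^+_C\calP_s$ is exactly the preimage of $\bar B_s^\dia$ by the defining property of the dilatation $\calI$.

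Next, I would show that the induced morphism $(\bbP^1_C)^\dia\to\Fl_{\calI,C}$ coming from $L^+_C\calP_s\subset L_C G$ is a closed immersion with image equal to the v-closure $\Fl_{\calI,C,s}$. It is a monomorphism because $L^+_C\calI\subset L^+_C\calP_s$, hence a closed immersion since $(\bbP^1_C)^\dia$ is proper over $\Spd C$ while $\Fl_{\calI,C}$ is separated over $\Spd C$. The Bruhat decomposition $\bar L_s=\bar B_s\sqcup \bar B_s s\bar B_s$ implies that its image decomposes into exactly two $L^+_C\calI$-orbits: the point $[\calI]$ (corresponding to $[\bar B_s]\in\bbP^1_C$) and its complement, which by definition is $\Fl_{\calI,C,s}^\circ$. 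The affine root subgroup $U_\alpha\subset L^+_C\calP_s$ attached to $s$ identifies the complementary cell $\bar L_s/\bar B_s\setminus\{[\bar B_s]\}\cong \bbA^1_C$ with $\Fl_{\calI,C,s}^\circ$, so $(\bbP^1_C)^\dia$ is a closed sub-v-sheaf of $\Fl_{\calI,C}$ containing $\Fl_{\calI,C,s}^\circ$.

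To conclude that this image actually equals $\Fl_{\calI,C,s}$, I would apply \Cref{sec:closures-sub-v-lemma-image-of-morphism-of-small-v-sheaves-weakly-generalizing} and \Cref{proposition-closure-of-v-subsheaves}: the v-closure corresponds to the weakly generalizing closure $\lvert\Fl_{\calI,C,s}^\circ\rvert^{\on{wgc}}$, which is sandwiched between the topological closure of $\lvert(\bbA^1_C)^\dia\rvert$ inside $\lvert(\bbP^1_C)^\dia\rvert$ (itself all of $\lvert(\bbP^1_C)^\dia\rvert$ by classical density of $\bbA^1\subset\bbP^1$ at the adic level) and $\lvert(\bbP^1_C)^\dia\rvert$ (closed and weakly generalizing). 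The topological density of $\Fl_{\calI,C,s}^\circ$ inside $\Fl_{\calI,C,s}$ then drops out at once. The main obstacle will be the cleanliness of the first step: since $L^+_C\calP_s$ and $L^+_C\calI$ are not quasi-compact, verifying the claimed quotient isomorphism requires a careful pro-smooth descent argument, handling the unipotent congruence filtration level by level.
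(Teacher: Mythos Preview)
Your approach is essentially the same as the paper's, with the same architecture: identify $L^+_C\calP_s/L^+_C\calI\cong(\bbP^1_C)^\dia$ via the reductive quotient of the special fiber, embed this as a closed sub-v-sheaf of $\Fl_{\calI,C}$ using properness over a separated base, and read off the open cell as $(\bbA^1_C)^\dia$.

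Two minor remarks. First, the difficulty you anticipate at the end is not real: once you know $L^+_C\calI$ is \emph{exactly} the preimage of $\bar B_s^\dia$ under the surjection $L^+_C\calP_s\to \bar L_s^\dia$ (this is \cite[Th\'eor\`eme 4.6.33]{BT84} in the paper), the quotient isomorphism $L^+_C\calP_s/L^+_C\calI\cong(\bar L_s/\bar B_s)^\dia$ is immediate from the third isomorphism theorem for v-sheaf groups; no level-by-level filtration argument is needed. Second, you are more careful than the paper in the final step, explicitly invoking \Cref{proposition-closure-of-v-subsheaves} to identify the v-closure with $(\bbP^1_C)^\dia$ via the sandwich between the topological closure and the closed weakly generalizing subset; the paper leaves this implicit. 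Your argument there is correct and arguably cleaner.
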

	
	\begin{proof}
		Let $\calP_s$ be the parahoric group scheme over $B_\dR^+$ associated to the wall of $\bba$ defining $s$.
		The maximal reductive quotient $H$ of its special fiber over $C$ has semisimple rank $1$.
		Using \cite[Théorème 4.6.33]{BT84}, we see that $L^+_C\calI$ is the preimage of $Q^\dia$ under $L^+\calP_s\r H^\dia$ for some Borel subgroup $Q\subset H$. 
		Thus, there are isomorphisms $L^+_C\mathcal{P}_s/L^+_C\mathcal{I}\simeq  (H/Q)^\dia\simeq (\mathbb{P}^1_C)^\diamondsuit$ which can be made explicit via the choice of a pinning.
		This implies that the monomorphism 
		\begin{equation}
		L^+_C\mathcal{P}_s/L^+_C\mathcal{I}\subset \Fl_{\calI,C}
		\end{equation}
		is a closed embedding, as $\Fl_{\calI,C}$ is separated and $(\mathbb{P}^1_C)^\diamondsuit$ is proper over $\Spd C$.
		The isomorphism $\Fl_{\calI, C, s}^{\circ}\simeq (\bbA^1_C)^\dia$ is now clear, since this is the only non-trivial $Q^\dia$-orbit in $(\mathbb{P}^1_C)^\diamondsuit$.
	\end{proof}
	
	In  order to treat more general $w=w_{\mathrm{af}}\tau\in \tilde W_\dR\cong W_{\dR, \mathrm{af}}\rtimes \Omega_{\mathfrak{a}}$, we invoke Demazure resolutions as follows.
	Let $\dot{w}=s_1 \dots s_n$ be a reduced word for $w_\aff=w\tau^{-1}$ with $s_i\in \mathbb{S}$ and consider the Demazure variety
	\begin{equation}
	\calD_{C,\dot{w}}:= L^+_C\calP_{1} \times^{L^+_C\calI} \dots \times^{L^+_C\calI} L^+_C\calP_n/L^+_C\calI
	\end{equation}
	which will also be denoted by $\Fl_{\calI, C, s_1} \tilde{\times} \dots \tilde{\times} \Fl_{\calI, C,s_n}$. 
	It is connected and cohomologically smooth over $\Spd C$ (being an iterated $\mathbb{P}^1_C$-fibration), and the twisted product 
	\begin{equation}
	\calD_{C,\dot{w}}^\circ=\Fl^\circ_{\calI, C, s_1} \tilde{\times} \dots \tilde{\times} \Fl^\circ_{\calI, C, s_n}
	\end{equation} 
	of the open cells is topologically dense by induction on $n$, starting with \Cref{sec:geom-affine-grassm-proposition-closed-schubert-variety-for-simple-reflection} and using that $L^+_C\calI$ is pro-(cohomologically smooth) over $\Spd C$.
	It carries, moreover, a natural morphism (induced by multiplication)
	\begin{equation}
	\pi_{\dot{w}}\colon \calD_{C,\dot{w}}\rightarrow \Fl_{\calI,C}
	\end{equation}
	which necessarily maps onto $\Fl_{\calI, C, w_\aff}$, by properness, $L^+_C\mathcal{I}$-equivariance and the fact that $\dot{w}$ maps to $w$. 
	After translation by $\tau$, we may regard this as a resolution of $\Fl_{\calI, C, w} $, which is thus in particular connected.
	
	For the next result, we note that $\tilde W_\dR$, in analogy to the discussion following \eqref{eq:affine_Weyl_group_sequence}, is equipped with a length function and Bruhat partial order induced from the quasi-Coxeter structure on $\tilde W_\dR\cong W_{\dR, \mathrm{af}}\rtimes \Omega_{\mathfrak{a}}$.

	\begin{proposition}
		\label{sec:geom-affine-grassm-proposition-stratification-of-schubert-varieties} 
		Let $w\in \tilde{W}_\dR$. 
		Then $\Fl_{\calI, C, w}^{\circ}$, respectively $\Fl_{\calI, C, w}$, agrees with the subfunctor of all maps $S\r \Fl_{\calI,C}$ such that for all geometric points $S'=\Spa(K,K^+)\r S$, the induced point $S'\r \Fl_{\calI,C}\r \Hk_{\calI,C}$ is given by $w$, respectively by $v$ for some $v\leq w$.
		In particular, $\Fl_{\calI, C, w}^{\circ}\subset \Fl_{\calI, C, w}$ is a topologically dense open.
	\end{proposition}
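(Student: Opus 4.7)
My plan is to handle the open-cell assertion first, then deduce the closed-Schubert statement via the Demazure resolution, and finally derive density by transferring density on the Demazure side.

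For the open cell, let $F_w^{\circ}\subset \Fl_{\calI,C}$ denote the pointwise subfunctor in the statement. By the Bruhat decomposition valid over any algebraically closed perfectoid $K/C$, the set of $K$-points of $\Fl_{\calI,C}$ mapping to $w\in |\Hk_{\calI,C}|$ coincides with the single $L^+_C\calI(K)$-orbit through (any lift of) $w$. Hence the orbit morphism $L^+_C\calI\to F_w^{\circ}$, $i\mapsto iw$, is surjective on geometric points, and thus a v-cover onto $F_w^{\circ}$ by \cite[Lemma 12.11]{Sch17}. By definition of the v-sheaf image, $F_w^{\circ}$ equals $\Fl^{\circ}_{\calI,C,w}$.

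For the closed Schubert variety, decompose $w=w_{\mathrm{af}}\tau$ with $\tau\in \Omega_{\bba}$, fix a reduced word $\dot w$ for $w_{\mathrm{af}}$ and consider the (right-translated) Demazure resolution $\pi_{\dot w}\co \calD_{C,\dot w}\to \Fl_{\calI,C}$. The first paragraph, together with density of $\calD^{\circ}_{C,\dot w}\subset \calD_{C,\dot w}$ observed in the text preceding the statement, shows that the closed preimage $Y:=\pi_{\dot w}^{-1}(\Fl_{\calI,C,w})\subset \calD_{C,\dot w}$ contains a topologically dense sub-v-sheaf. Therefore $|Y|=|\calD_{C,\dot w}|$, whence $Y=\calD_{C,\dot w}$ by the characterization of closed sub-v-sheaves in \Cref{sec:closures-sub-v-lemma-pullback-of-weakly-generalizing-subset-big-enough}. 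Properness of $\calD_{C,\dot w}$ then guarantees that the v-sheaf image of $\pi_{\dot w}$ is closed in $\Fl_{\calI,C,w}$ and contains $\Fl^{\circ}_{\calI,C,w}$; since $\Fl_{\calI,C,w}$ is by definition the v-closure of $\Fl^{\circ}_{\calI,C,w}$, we conclude that the image equals $\Fl_{\calI,C,w}$.

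The final task is to compute $|\pi_{\dot w}|(|\calD_{C,\dot w}|)$ as the union $\bigcup_{v\leq w}|\Fl^{\circ}_{\calI,C,v}|$. This proceeds by induction on the length of $w_{\mathrm{af}}$: the base case is \Cref{sec:geom-affine-grassm-proposition-closed-schubert-variety-for-simple-reflection}, and the inductive step is the standard combinatorics of multiplying by a minimal parahoric $L^+_C\calP_i/L^+_C\calI\cong (\bbP^1_C)^\dia$, translated verbatim to v-stacks using surjectivity of the orbit map established in the first paragraph. Combined with the previous paragraph this yields the Bruhat stratification $|\Fl_{\calI,C,w}|=\bigsqcup_{v\leq w}|\Fl^{\circ}_{\calI,C,v}|$, matching the pointwise subfunctor in the statement; and equality at the level of v-sheaves follows once more from \Cref{sec:closures-sub-v-lemma-pullback-of-weakly-generalizing-subset-big-enough}, since both are closed with the same underlying set. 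Density of $|\Fl^{\circ}_{\calI,C,w}|$ in $|\Fl_{\calI,C,w}|$ is then automatic: $|\pi_{\dot w}|$ is topologically surjective, $|\calD^{\circ}_{C,\dot w}|$ is dense in $|\calD_{C,\dot w}|$, and the continuous image of a dense subset is dense. The only delicate step is the inductive identification of the Demazure image and the passage between pointwise and v-sheaf-theoretic descriptions; everything else is formal once \Cref{sec:closures-sub-v-lemma-pullback-of-weakly-generalizing-subset-big-enough} and \Cref{sec:closures-sub-v-lemma-image-of-morphism-of-small-v-sheaves-weakly-generalizing} are in hand.
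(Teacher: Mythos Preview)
Your first paragraph contains a genuine gap, and it is precisely the pitfall the paper warns against in its opening sentence: the orbit morphism $L^+_C\calI\to F_w^{\circ}$ is \emph{not} quasi-compact, because $L^+_C\calI$ is a pro-(cohomologically smooth) group of infinite type over $\Spd C$. Hence \cite[Lemma 12.11]{Sch17}, which requires a qcqs morphism, does not apply, and surjectivity on geometric points does not by itself force the orbit map to be a v-cover onto $F_w^\circ$. This is exactly why the paper says ``the first assertion cannot be verified at geometric points because $L^+_C\calI$ is not quasicompact.''

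The paper's fix is to go through the Demazure resolution already for the open cell: using $L^+_C\calU_{\alpha_s}\cdot s=\Fl^\circ_{\calI,C,s}$ and pulling across reflections, one sees that $\Fl^\circ_{\calI,C,w}$ surjects onto the v-sheaf image of $\calD^\circ_{C,\dot w}$, and that image equals $F_w^\circ$ because $\pi_{\dot w}$ \emph{is} quasi-compact (being proper) and bijective on geometric points over $F_w^\circ$. The reverse inclusion $\Fl^\circ_{\calI,C,w}\subset F_w^\circ$ is automatic from the definition of v-sheaf image. Your paragraphs 2 and 3 are broadly aligned with the paper's strategy for the closed variety, but they lean on paragraph 1 (to get $\calD^\circ_{C,\dot w}\subset \pi_{\dot w}^{-1}(\Fl_{\calI,C,w})$), so the argument does not stand as written. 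Repairing paragraph 1 via the Demazure route would make the rest go through.
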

	\begin{proof}
		Observe that the first assertion cannot be verified at geometric points because $L^+_C\calI$ is not quasicompact. 
		However, we see from \Cref{sec:geom-affine-grassm-proposition-closed-schubert-variety-for-simple-reflection} that the result holds for simple reflections. Indeed, we even have by Bruhat--Tits combinatorics		
		\begin{equation}
		L^+_C\calU_{\alpha_s} \cdot  s =\Fl^\circ_{\calI, C, s},
		\end{equation}
		where $\alpha_s$ denotes the positive simple affine root associated to the simple reflection $s$, and $\calU_{\alpha_s}$ is the corresponding $B_\dR^+$-model of the affine root group. 
		Pulling across the reflections $s_i$ appearing in the convolution product of $\calD_{C,\dot{w}}$, we see that $\Fl_{\calI, C, w}^\circ$ surjects to the v-sheaf image of $\calD_{C,\dot{w}}^\circ$ along $\pi_{\dot w}$. 
		This v-sheaf image identifies with the pointwise description of $\Fl_{\calI, C, w}^{\circ}$ by quasicompactness of $\pi_{\dot w}$ and bijectivity at geometric points.
		
		Similarly, the v-sheaf image of $\calD_{C,\dot{w}}$ along $\pi_{\dot w}$ is a proper closed sub-v-sheaf of $\Fl_{\calI, C}$. By generalities of Tits system, see \cite[1.2.6]{BT72}, this v-sheaf image coincides with the desired pointwise description of $\Fl_{\calI, C, w}$. Pulling back again via the quotient map $\pi_{\dot w}$, we see that $\Fl_{\calI, C, w}^{\circ} \subset \Fl_{\calI, C, w}$ is a topologically dense open of the closed v-sheaf image of $\pi_{\dot{w}}$.
	\end{proof}
	
	As a corollary, we get that the bijection $\lvert \Hk_{\calI, C}\rvert\cong \tilde{W}_\dR$ from \Cref{sec:geom-affine-grassm-topological-space-of-affine-flag-stack} is a homeomorphism where $\tilde{W}_\dR$ is endowed with order topology via its Bruhat order, and also that $\pi_0(\Fl_{\calI, C})=\pi_0(\Gr_{G,C})=\pi_1(G)$ via \Cref{sec:geom-affine-grassm-translation-by-element-in-the-normalizer}. 
	
	Now, we apply our results to the affine Grassmannian $\Gr_G$.
	Note that there is the group isomorphism
	\begin{equation}
	X_\ast(T)\cong T(B_\dR)/T(B_\dR^+),\ \chi\mapsto \chi(\xi)
	\end{equation}
	which is independent of the choice of uniformizer $\xi\in B_\dR^+$.
	Then the Cartan decomposition induces a bijection
	\begin{equation}
	\lvert \Hk_{G,C} \rvert \simeq X_*(T)_+,
	\end{equation}
	where $\Hk_{G,C}=L^+_CG\backslash L_CG / L^+_CG$ denotes the Hecke stack. 
	Therefore, we get a Schubert cell $\Gr_{G,C,\mu}^\circ \subset \Gr_{G,C}$ defined as the v-sheaf-theoretic image of the orbit map and the Schubert cell $\Gr_{G,C,\mu}$ defined as its closure, for each $\mu \in X_*(T)_+$, compare with \Cref{Schubert.variety.definition}.
	
	\begin{corollary}
		\label{Schubert.variety.corollary}
		Let $\mu\in X_*(T)_+$. 
		Then $\Gr_{G,C,\mu}^{\circ}$, respectively $\Gr_{G,C,\mu}$ agrees with the subfunctor of all maps $S\r \Gr_{G,C,\mu}$ such that for all geometric points $S'=\Spa(K,K^+)\r S$, the induced point $S'\r \Gr_{G,C}\r \Hk_{G,C}$ is given by $\mu$, respectively by some $\la\leq \mu$ in the dominance order on $X_*(T)_+$.
		In particular, $\Gr_{G,C,\mu}^{\circ}\subset \Gr_{G,C,\mu}$ is a topologically dense open.
	\end{corollary}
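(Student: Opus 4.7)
The plan is to deduce the assertions from \Cref{sec:geom-affine-grassm-proposition-stratification-of-schubert-varieties} via the projection $\pi\co \Fl_{\calI,C}\to \Gr_{G,C}$ of \eqref{projection.from.affine.flag.equality}, which is proper, cohomologically smooth, and surjective with $(G_C/B_C)^\dia$-fibers. On underlying topological spaces, $\pi$ induces the surjective Cartan projection $\tilde W\cong |\Hk_{\calI,C}|\onto |\Hk_{G,C}|\cong X_\ast(T)_+$ sending $v$ to the unique dominant coweight $\la$ with $v\in W_0 t_\la W_0$. For $\mu\in X_\ast(T)_+$, I fix the longest element $w_\mu$ in the $W_0$-double coset $W_0 t_\mu W_0\subset \tilde W$ and invoke the classical compatibility of Bruhat and dominance orders in the Iwahori--Weyl group: $v\leq w_\mu$ in $\tilde W$ if and only if the Cartan projection $\la$ of $v$ satisfies $\la\leq \mu$.

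The central step is then to establish the identification $\Gr_{G,C,\mu}=\pi(\Fl_{\calI,C,w_\mu})$. By choice of $w_\mu$, the Schubert variety $\Fl_{\calI,C,w_\mu}$ contains every Iwahori cell $\Fl_{\calI,C,v}^\circ$ for $v\in W_0t_\mu W_0$, and the union of these cells surjects onto $\Gr_{G,C,\mu}^\circ$ under $\pi$ by $L^+_CG$-equivariance. Since $\pi$ is proper, $\pi(\Fl_{\calI,C,w_\mu})$ is a closed sub-v-sheaf of $\Gr_{G,C}$ containing $\Gr_{G,C,\mu}^\circ$, hence also the v-closure $\Gr_{G,C,\mu}$. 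Conversely, the Bruhat--dominance compatibility forces $|\pi(\Fl_{\calI,C,w_\mu})|\subseteq |\Gr_{G,C,\mu}|$, and since both sides are closed sub-v-sheaves of $\Gr_{G,C}$ with the same weakly generalizing underlying subspace, they must agree by \Cref{sec:closures-sub-v-lemma-pullback-of-weakly-generalizing-subset-big-enough}. The pointwise description of $\Gr_{G,C,\mu}$ then pulls back under $\pi$ to the analogous description of $\Fl_{\calI,C,w_\mu}$ supplied by \Cref{sec:geom-affine-grassm-proposition-stratification-of-schubert-varieties}, and surjectivity of $\pi$ on geometric points transports it back to the Grassmannian. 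The description of the open cell $\Gr_{G,C,\mu}^\circ$ follows by subtracting the closed union $\bigcup_{\la<\mu}\Gr_{G,C,\la}$ of smaller Schubert varieties, and the topological density of $\Gr_{G,C,\mu}^\circ$ in $\Gr_{G,C,\mu}$ descends from the corresponding density in the affine flag variety because $\pi$, being open and surjective, is a topological quotient map.

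The main obstacle is ensuring that taking v-sheaf images along the proper fibration $\pi$ interacts correctly with the v-closure operation defining the Schubert varieties; for this I rely on the results of \Cref{section_closures_of_v_sheaves} on weakly generalizing subsets and v-closures, together with the fact that closed sub-v-stacks of $\Gr_{G,C}$ are determined by their underlying topological spaces. The Bruhat--dominance compatibility in the Iwahori--Weyl group is classical but deserves an explicit citation to the literature on twisted affine Weyl groups.
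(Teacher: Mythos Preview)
Your argument is correct and follows precisely the route the paper sketches: transport \Cref{sec:geom-affine-grassm-proposition-stratification-of-schubert-varieties} along the projection $\pi\colon\Fl_{\calI,C}\to\Gr_{G,C}$ using the compatibility of the Bruhat and dominance orders (for which the paper cites \cite[Corollaries 1.8, 2.10]{Ric13}). One small point: the converse inclusion ``$|\pi(\Fl_{\calI,C,w_\mu})|\subseteq|\Gr_{G,C,\mu}|$'' is slightly circular as phrased, since the pointwise description of $|\Gr_{G,C,\mu}|$ is not yet established; the clean fix is to note that $\pi^{-1}(\Gr_{G,C,\mu})$ is closed, $L_C^+\calI$-stable, and contains $w_\mu$, hence contains $\Fl_{\calI,C,w_\mu}$.
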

	
	\begin{proof}
		This formally follows from \Cref{sec:geom-affine-grassm-proposition-stratification-of-schubert-varieties} by using the projection $\Fl_{\calI,C}\r \Gr_{G,C}$ from \eqref{projection.from.affine.flag.equality} and noting that the dominance order on $X_*(T)_+$ is induced by the Bruhat order, see \cite[Corollaries 1.8, 2.10]{Ric13} for similar arguments. 
		We leave the details to the reader.
	\end{proof}
	
	We also have the following fact which says that the $\Spd C$-valued points are dense in $\Gr_{G,C,\mu}$ even for the constructible topology.

	Recall from {\cite[Definition 4.50]{Gle20}}
	that for $X$ a locally spatial diamond over $\Spa(C,O_C)$ we say that it has \textit{enough facets} over $C$ (or enough $C$-facets) if it admits a v-cover of the form $\coprod_{i\in I} \Spd(A_i,A_i^\circ)\to X$ where each $A_i$ is an algebra topologically of finite type over $C$.
	
	\begin{corollary}
		\label{enough facets}
		Let $\mu\in X_*(T)_+$.
		The spatial diamond $\Gr_{G,C,\mu}$ has enough $C$-facets. 	
	\end{corollary}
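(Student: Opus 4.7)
The plan is to reduce via a Demazure resolution to a tower of $(\bbP^1_C)^\dia$-bundles, where $\Spd C$-valued points are manifestly abundant in the constructible topology, and then to transfer this abundance along a proper surjection onto $\Gr_{G,C,\mu}$. Fix a maximum-length representative $w_\mu=w_{\aff}\tau\in\tilde W$ of the double Weyl coset corresponding to $\mu$ and a reduced word $\dot w=s_1\cdots s_n$ for $w_{\aff}$. Right translation by $\tau$ turns the Demazure resolution $\pi_{\dot w}\co \calD_{C,\dot w}\to \Fl_{\calI,C,w_{\aff}}$ into a proper surjection onto $\Fl_{\calI,C,w_\mu}$. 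Composing with the proper, cohomologically smooth projection $\Fl_{\calI,C}\to \Gr_{G,C}$ of \eqref{projection.from.affine.flag.equality}, whose restriction to $\Fl_{\calI,C,w_\mu}$ surjects onto $\Gr_{G,C,\mu}$ by \Cref{Schubert.variety.corollary}, one obtains a proper surjection
\begin{equation}
p\co \calD_{C,\dot w}\longto \Gr_{G,C,\mu}.
\end{equation}

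The next step is to verify that $\calD_{C,\dot w}$ has enough $C$-facets in the sense of \cite[Definition 1.4.38]{Gle20}. I would proceed by induction on $n$, the case $n=0$ reducing to $\Spd C$. Each Demazure variety factors as $\Fl_{\calI,C,s_1}\tilde\times \calD_{C,s_2\cdots s_n}$, and by \Cref{sec:geom-affine-grassm-proposition-closed-schubert-variety-for-simple-reflection} the first factor is $(\bbP^1_C)^\dia$. Since $\bbP^1_C(C)$ is dense in $|(\bbP^1_C)^\dia|$ even for the constructible topology, one can lift $C$-facets of the inductive factor to $C$-facets of the total space using local étale trivializations of the $(\bbP^1_C)^\dia$-bundle structure.

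Finally, I would argue that ``having enough $C$-facets'' descends along proper surjections of spatial diamonds. Given any non-empty locally closed, constructible subset $V\subset|\Gr_{G,C,\mu}|$, its preimage $|p|^{-1}(V)$ is locally closed and constructible in $|\calD_{C,\dot w}|$, and non-empty by surjectivity of $|p|$. By the previous paragraph, it contains a $\Spd C$-valued point, whose image under $p$ is a $\Spd C$-valued point of $V$. This yields the desired density and concludes the argument.

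The principal subtlety lies in matching the inductive construction of $C$-facets on $\calD_{C,\dot w}$ with the precise definition of \cite[1.4.38]{Gle20}, which requires using the pro-(cohomologically smooth) nature of $L^+_C\calI$ to get local trivializations of each $(\bbP^1_C)^\dia$-stage. Once this bookkeeping is in place, descent along the proper surjection $p$ is formal.
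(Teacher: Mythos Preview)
Your overall strategy matches the paper's: reduce to the Demazure variety via a proper surjection, prove by induction on the length of $\dot w$ that $\calD_{C,\dot w}$ has enough $C$-facets, and transfer back. The paper phrases the first reduction as passing through $\Fl_{\calI,C,w}$ and then noting that the Demazure resolution is a v-cover, but this amounts to your composite proper surjection.

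One point to tighten: your description of the fibration in the inductive step is internally inconsistent. Writing $\calD_{C,\dot w}=\Fl_{\calI,C,s_1}\tilde\times\calD_{C,s_2\cdots s_n}$ exhibits $\calD_{C,\dot w}$ as a $\calD_{C,s_2\cdots s_n}$-bundle over $(\bbP^1_C)^\dia$, not the other way around; there is no natural map $\calD_{C,\dot w}\to\calD_{C,s_2\cdots s_n}$. The $(\bbP^1_C)^\dia$-bundle structure you want comes from projecting onto the first $n-1$ factors, i.e.\ from $\calD_{C,\dot w}\to\calD_{C,s_1\cdots s_{n-1}}$. Either fibration works for the induction, but you should pick one consistently.

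The paper handles the induction more carefully than ``local \'etale trivializations'': it passes to a pro-\'etale cover $X\to\calD_{C,\dot v}$ over which the $(\bbP^1_C)^\dia$-bundle becomes a product, arranges via \cite[Lemma 2.2.24, Proposition 2.2.30]{Gle20} that $X$ itself has enough $C$-facets, and then invokes \cite[Proposition 1.4.39(2)]{Gle20} to conclude for $\calD_{C,\dot w}$. Your sketch is pointing at this, but the bookkeeping you flag as a subtlety is exactly where these references do the work; without them the lifting step is not obviously justified, since the torsor is for the non-finite-type group $L^+_C\calI$ and genuine \'etale-local triviality of the $\bbP^1$-bundle is not immediate.
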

	\begin{proof}
		Taking the preimage under the projection $\Fl_{\calI,C}\r \Gr_{G,C}$ from \eqref{projection.from.affine.flag.equality}, this reduces to the analogous assertion for $\Fl_{\calI,C,w}$ for some $w\in \tilde W_\dR$.
		Since the Demazure resolution is a v-cover, it is enough to prove that $\calD_{C,\dot{w}}$ has enough $C$-facets. 
		This in turn can be proved inductively on the length of $\dot{w}$. 
		If $\dot{w}=s\cdot\dot{v}$ then $\calD_{C,\dot{w}}$ is a pro-\'etale $(\bbP^1_C)^\diamondsuit$-bundle over $\calD_{C,\dot{v}}$. 
		We may find a pro-\'etale cover 
		\begin{equation} 
		X\to \calD_{C,\dot{v}} 
		\end{equation} 
		with $X\times_{\calD_{C,\dot v}}\calD_{C,\dot{w}}=X\times_{\Spd C} (\bbP^1_C)^\diamondsuit$. 
		Following the arguments given in \cite[Lemma 5.16, Proposition 5.21]{Gle20}, we may even assume that $X$ has enough facets over $\Spd C$. 
		By \cite[Proposition 4.51.(2)]{Gle20}, $\calD_{C,\dot{w}}$ also has enough facets.
	\end{proof}

	We conclude with some motivation for our later discussion of representability.
	
	\begin{proposition}\label{prop_non_representability_schubert_diamond}
		Let $\mu\in X_*(T)_+$.
		The v-sheaf $\Gr_{G,C,\mu}$ is representable by a projective $C$-scheme $\calF_{G,C,\mu}$ if and only if $\mu$ is minuscule.
	\end{proposition}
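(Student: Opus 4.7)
The plan is to treat the two directions separately by describing the open Schubert cell $\Gr_{G,C,\mu}^\circ$ explicitly via the orbit map $L^+_CG \to \Gr_{G,C,\mu}^\circ$, $g\mapsto g\mu(\xi)L^+_CG$, and analyzing the stabilizer of $\mu(\xi)L^+_CG$ under left multiplication.

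For the ``if'' direction, assume $\mu$ is minuscule. Then $\{\la \in X_\ast(T)_+ : \la \leq \mu\}=\{\mu\}$, so \Cref{Schubert.variety.corollary} gives $\Gr_{G,C,\mu}=\Gr_{G,C,\mu}^\circ$. I would identify the stabilizer $L^+_CG \cap \mu(\xi)L^+_CG\mu(\xi)^{-1}$ with $L^+_CP_\mu^-$ by a root-by-root analysis: using the conjugation formula $\mu(\xi)^{-1}u_\al(t)\mu(\xi) = u_\al(\xi^{-\langle \al,\mu\rangle} t)$, an element $g\in L^+_CG$ lies in the stabilizer precisely when each root coordinate $g_\al$ belongs to $\xi^{\max(\langle\al,\mu\rangle,0)}B_\dR^+$. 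The minuscule condition $\langle\al,\mu\rangle\in\{-1,0,1\}$ then reduces this to requiring $g \bmod \xi$ to lie in $P_\mu^- \subset G_C$, so the stabilizer equals $L^+_CP_\mu^-$. Since the kernel $L^{>0}_CG$ of reduction lies in both $L^+_CG$ and $L^+_CP_\mu^-$, we obtain $\Gr_{G,C,\mu}^\circ \cong L^+_CG/L^+_CP_\mu^- \cong (G/P_\mu^-)^\diamondsuit = \calF_{G,C,\mu}^\diamondsuit$, giving representability by the projective $C$-scheme $\calF_{G,C,\mu}$.

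For the ``only if'' direction, assume $\mu$ is non-minuscule, so some positive root $\al$ satisfies $n:=\langle\al,\mu\rangle\geq 2$. I would invoke the Bia\l ynicki--Birula-type morphism arising from a regular cocharacter $\la$ of $G$ and the induced $\bbG_m^\diamondsuit$-action on $\Gr_{G,C}$, yielding a natural map
\begin{equation}
\Gr_{G,C,\mu}^\circ \longrightarrow \calF_{G,C,\mu}^\diamondsuit
\end{equation}
that is an isomorphism in the minuscule case but, for non-minuscule $\mu$, is a non-trivial positive Banach--Colmez fibration whose fibers over $C$-points involve the v-sheaves $B_\dR^+/\xi^{n_\al} B_\dR^+$ for each root $\al$ with $n_\al\geq 2$. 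Since positive Banach--Colmez spaces of length $\geq 2$ are known to be non-representable by $C$-schemes, see \cite[Section 15]{SW20}, the cell $\Gr_{G,C,\mu}^\circ$ --- and therefore its v-closure $\Gr_{G,C,\mu}$ --- cannot be representable by a projective $C$-scheme.

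The principal obstacle is the rigorous non-representability of positive Banach--Colmez spaces of length $\geq 2$: naive $C$-point or cohomological dimension comparisons are insufficient since these may coincide with those of affine spaces, so the argument must instead exploit the functorial behavior on perfectoid Huber pairs with non-trivial untilts, using the structure of $B_\dR^+$ as a complete discrete valuation ring with residue field $C$ but non-split filtration to distinguish $(B_\dR^+/\xi^n)^\diamondsuit$ for $n \geq 2$ from any diamond of a finite type $C$-scheme.
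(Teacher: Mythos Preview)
Your ``if'' direction is essentially the paper's argument (which simply cites \cite[Proposition 19.4.2]{SW20} for the Bia\l ynicki--Birula isomorphism); your root-by-root stabilizer computation is a correct unpacking of that.

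For the ``only if'' direction, you have the right reduction but leave the decisive step unfinished. Both you and the paper reduce to showing that the Banach--Colmez space $\calB\calC(B^+_\dR/\xi^2)$ is not representable. You cite \cite[Section 15]{SW20} for this, yet immediately flag it as ``the principal obstacle'' and only sketch what an argument \emph{should} look like---this is a genuine gap, as that section does not contain a non-representability statement in the form you need. The paper closes this gap with a concrete argument: the space $\calB\calC(B^+_\dR/\xi^2)$ sits in a short exact sequence $0\to \bbG_a^\diamondsuit\to \calB\calC(B^+_\dR/\xi^2)\to \bbG_a^\diamondsuit\to 0$, and if it were representable this extension would split \'etale locally, hence globally since $H^1(\bbA^1_C,\calO)=0$. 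The paper then shows directly that the extension does \emph{not} split: over the affinoid torus $X=\Spa(C\langle T^{\pm1}\rangle,O_C\langle T^{\pm1}\rangle)$, the element $T\in \bbG_a^\diamondsuit(X)$ admits no lift to $\calB\calC(B^+_\dR/\xi^2)(X)$, as one checks by passing to the perfectoid $\bbZ_p(1)$-cover $X'=\Spa(C\langle T^{\pm1/p^\infty}\rangle,O_C\langle T^{\pm1/p^\infty}\rangle)$ and computing that any candidate lift $[a]+[b]\xi$ with $a^\sharp=T$ cannot be $\bbZ_p(1)$-invariant (the obstruction equation $cT=b^\sharp-g(b^\sharp)$ for a generator $g$ has no solution, since $T$ is itself $g$-fixed).

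So your outline is sound, but to complete it you must replace the citation-plus-caveat with an actual non-representability proof along these lines.
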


	\begin{proof}
		If $\mu$ is minuscule, then the $L^+_CG$-action factors through $G_C^\diamondsuit$ and the Bia{\l}lynicki-Birula map gives an isomorphism $\Gr_{G,C,\mu} \simeq (G_C/P_\mu)^\diamondsuit$, see \cite[Proposition 19.4.2]{SW20}.
		
		Now suppose that $\mu$ is not minuscule, so that $\langle \mu, \theta \rangle\geq 2$ for the highest root $\theta$ of $G_C$. 
		Then, we are going to show that the $L^+_C\calI$-orbit of the point $\mu$ is not representable by a rigid space. First, we notice that this orbit is isomorphic to $L^+_C\calI/H_\mu$ for the subgroup v-sheaf $H_\mu:=L^+_C\calI\cap \xi^\mu L^+_C\calI \xi^{-\mu}$. Note that the positive loop group $L_C^+B^-$ of the negative Borel $B^- \subset G_{\breve F}$ is contained in the stabilizer $H_\mu$, so we deduce that the $L^+_C\calI$-orbit is even a $L^+_C\calU$-orbit, where $\calU\subset \calI$ denotes the flat closure over $B_{\mathrm{dR}}^+$ of the unipotent radical $U \subset G_{\breve F}$ of the fixed positive Borel $B\subset G_{\breve F}$.
		We are going to filter this space further by using the structure of root groups. 
		Fix an ``ordre grignotant'' on the set $\Phi^+$ of positive roots in $G_C$ in the sense of \cite[Definition 3.1.2]{BT84}, that is, a descending sequence $\Psi_i \subset \Phi^+$ for $0\leq i< m=\mathrm{dim}(U)$, of subsets closed under summable roots, such that $\Psi_0=\Phi^+$, $\Psi_{m-1}=\{\theta\}$ and $\Psi_i$ is obtained from $\Psi_{i-1}$ by deleting one of the smallest roots $\alpha_i$. Then, \cite[Proposition 3.3.2]{BT84} yields associated root group models $\calU_i:=\calU_{\Psi_i}$ such that $\calU_{i} \subset \calU_{i-1}$ is a normal subgroup with quotient isomorphic to $\calU_{\alpha_i}$. We now set $X_i:=L^+_C\calU_i/H_\mu\cap L^+_C\calU_i$ for the corresponding orbits and we can realize $X_{i-1}$ as a pro-étale fibration over the Banach--Colmez space $\calB\calC(B_{\mathrm{dR}}^+/\xi^{\langle \mu,\alpha_i\rangle})$ with fiber given by $X_i$. Considering the filtration by powers of $\xi$ on the $L^+_C\calU_{\alpha_i}$, we can even refine this further to a filtration by orbits $Y_{ij}$ for $0\leq j <\langle \mu,\alpha_i \rangle$ such that $Y_{ij}$ is the fiber of a pro-étale fibration $Y_{i,j-1}\to \mathbb{G}_a^\diamondsuit$. 
		If $\calF_{G,C,\mu}$ were representable by a rigid space, it would follow by descending induction that all the $Y_{ij}$ are representable by rigid spaces.
		In particular, representability of $\calF_{G,C,\mu}$ implies representability of the Banach-Colmez space $\calB\calC(B^+_\dR/\xi^2)\cong Y_{m,\langle \mu,\theta\rangle -2}$ where we invoke the inequality $\langle \mu,\theta\rangle \geq 2$. 
		However, this is a non-split self-extension of $\bbG_a^\diamondsuit$ which does not even split étale locally, so cannot be representable (\cite[Example 15.2.9.5.]{SW20}). 
		Indeed, if the extension were split \'etale locally, it would actually split on the nose, as $H^1(\bbA^1_{C},\calO)$ is trivial. 
		However, if
		\begin{equation}\label{prop_non_representability_schubert_diamond:eq1}
		X:=\Spa(C\langle T^{\pm 1}\rangle, O_C\langle T^{\pm 1}\rangle)\subset \bbA^1_C
		\end{equation}
		is the affinoid torus,  the element $T\in \bbA^1_C(X)$ does not admit a lift to $\calB\calC(B^+_\dR/\xi^2)(X)$ as we now show. 
		Let $X^\prime=\Spa(C\langle T^{\pm 1/p^\infty}\rangle, O_C\langle T^{\pm 1/p^\infty}\rangle)$ be the usual perfectoid $\Z_p(1)$-cover of $X$. 
		Elements in $\calB\calC(B^+_\dR/\xi^2)(X^\prime)$ can be represented by $[a]+[b]\xi$ with $a,b\in (C\langle T^{\pm 1/p^\infty}\rangle)^\flat$. 
		Assume now that
		\[
		x:=[a]+[b]\xi
		\] maps to $T$, that means, $a^\sharp=T$ in $\bbA^1_C(X')$ (cf.\ \cite[Section 6.2]{SW20} for the map $(-)^\sharp$), and assume that $x$ is invariant under $\Z_p(1)$. Let $g\in \Z_p(1)$. 
		Then $g$ acts on $[a]$ via multiplication with $[g]$ if we identify $\Z_p(1)\subset C^\flat$. Using the $(\str)^\sharp$-map, we get 
		\begin{equation}\label{prop_non_representability_schubert_diamond:eq2}
		a^\sharp\left(\frac{[g]-1}{\xi}\right)^\sharp=b^\sharp-g(b^\sharp).
		\end{equation}
		But, if $g\in \Z_p(1)$ is a generator, then $a^\sharp(\frac{[g]-1}{\xi})^\sharp=cT$ with $c\in C$ non-zero. 
		Writing $b^\sharp$ as a power series in the $g$-eigenvectors $T^n$ with $n\in \Z[1/p]$, then shows that \eqref{prop_non_representability_schubert_diamond:eq2} cannot hold because $g\in \Z_p(1)$ fixes $T$. 
		This finishes the argument.    
	\end{proof}

	\subsection{Local models}
	We continue with the notation of \Cref{subsection_affine_grassmannian}, and additionally let $\calG$ be a parahoric $O$-model of $G$.
	

	We work with the moduli space $\Gr_\calG$ of $\calG$-torsors over $\Spec(B^+_\dR)$ trivialized over $\Spec(B_\dR)$, see \cite[Definition 20.3.1]{SW20}, which is the Beilinson--Drinfeld Grassmannian over $\Spd O$. 
	A crucial result of Scholze--Weinstein concerns its ind-properness, see \cite[Theorems 19.3.4, 20.3.6, 21.2.1]{SW20}.
	
	\begin{theorem}[Scholze--Weinstein]\label{SW theorem on parahoric grassmanians}
		The structure morphism of the Beilinson--Drinfeld Grassmannian 
		\begin{equation}
		\Gr_\calG \rightarrow \Spd O
		\end{equation} 
		is ind-proper and ind-representable in spatial diamonds.
	\end{theorem}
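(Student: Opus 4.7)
The plan is to deduce the theorem by first reducing from a general parahoric $\calG$ to the case $\calG = \GL_{n,O}$ through a closed embedding of group schemes, then parameterizing $\Gr_{\GL_n}|_{\Spd O}$ by bounded lattices, and finally comparing these bounded pieces with their already-understood generic fiber (over $\Spd F$) and special fiber (over $\Spd k$) descriptions. On the generic side, ind-representability by proper spatial diamonds is controlled by \Cref{sec:geom-affine-grassm-proposition-stratification-of-schubert-varieties} and the discussion surrounding \eqref{projection.from.affine.flag.equality}, and on the special side by Bhatt--Scholze's \Cref{quando as grassmannianas sao projectivas}.

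First, I would produce a closed immersion $\iota\colon \calG\hookrightarrow \GL_{n,O}$. Because $\calG$ is smooth, affine and of finite type over $O$, such an embedding exists with quasi-affine quotient $\GL_n/\calG$. The induced morphism of Beilinson--Drinfeld Grassmannians $\iota_\ast\colon \Gr_\calG\to \Gr_{\GL_n}|_{\Spd O}$ is then a closed immersion: the fiber over a $\GL_n$-torsor $\calE$ on $\Spec B_\dR^+(R^\sharp)$ trivialized on $\Spec B_\dR(R^\sharp)$ is the v-sheaf of $\calG$-reductions of $\calE$ extending the canonical reduction on the trivialization locus, and this is representable by a closed sub-v-sheaf via the quasi-affine fibration $\calE/\calG$. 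Hence it suffices to treat $\calG = \GL_{n,O}$.

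For $\calG=\GL_{n,O}$, I would describe $\Gr_{\GL_n}|_{\Spd O}$ as the moduli of finite projective $B_\dR^+(R^\sharp)$-lattices $M\subset B_\dR(R^\sharp)^n$ on each untilt $R^\sharp$, and exhaust it by the bounded pieces $\Gr^{(N)}_{\GL_n}$ classifying those $M$ with $\xi^N\Lambda_0\subset M\subset \xi^{-N}\Lambda_0$, where $\Lambda_0 = B_\dR^+(R^\sharp)^n$ is the standard lattice. For fixed $N$, I would then embed $\Gr^{(N)}_{\GL_n}$ into the Plücker-type moduli of finitely generated $B_\dR^+/\xi^{2N}$-submodules of the finite rank module $\xi^{-N}\Lambda_0/\xi^N\Lambda_0$, and identify the latter with the diamond of a classical proper $O$-scheme (a mixed-characteristic analogue of a classical Grassmannian for truncated discrete valuation quotients). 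Ind-properness of $\Gr_\calG = \colim_N \Gr^{(N)}_\calG$ then follows from the valuative criterion, since a bounded lattice over the generic point of a complete trait extends uniquely over the closed point thanks to the bounded displacement condition.

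The main obstacle will be the last step, namely pinning down the bounded pieces as diamonds of proper $O$-schemes in a way that is simultaneously consistent with both the generic and special fiber descriptions. On the generic fiber this amounts to representability of $B_\dR^+$-affine Schubert varieties by proper spatial diamonds (handled via the Demazure resolutions constructed in \Cref{subsection_affine_grassmannian}), while on the special fiber it is Bhatt--Scholze's perfect-scheme representability. The integral gluing is delicate because $B_\dR^+(R^\sharp)/\xi^{2N}$ is genuinely mixed characteristic and non-discrete; one must verify that the v-sheaf on untilts agrees with the diamond of a single scheme on both fibers and deforms compatibly in between. This last comparison is exactly where the untilt formalism of Scholze--Weinstein is essential, and where the work of their Berkeley lectures on the moduli of $B_\dR^+$-lattices over $\Spd O$ provides the crucial input.
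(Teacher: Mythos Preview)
The paper does not supply its own proof of this theorem; it is stated as a black box with the citation ``see \cite[Theorems 19.3.4, 20.3.6, 21.2.1]{SW20}''. So there is no proof in the paper to compare against, only the argument in \cite{SW20} itself.

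Your outline is in the spirit of \cite{SW20}, but it contains a genuine error. You propose to identify the bounded piece $\Gr^{(N)}_{\GL_n}$, via a Pl\"ucker-type embedding into the moduli of $B_\dR^+/\xi^{2N}$-submodules of a finite free module, with the diamond of a classical proper $O$-scheme. This is false: already over $\Spd C$ these bounded pieces are not representable by rigid-analytic spaces, let alone schemes, as soon as they contain a non-minuscule Schubert cell. This is precisely the Banach--Colmez obstruction recorded in \Cref{prop_non_representability_schubert_diamond} of the present paper: the $L^+_C\calI$-orbit of a non-minuscule point involves a non-split self-extension of $\bbG_a^\diamondsuit$ by itself which is not \'etale-locally trivial. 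The ring $B_\dR^+(R^\sharp)/\xi^{2N}$ is not a finite $R^\sharp$-algebra in any reasonable sense once $2N\geq 2$, so there is no classical Grassmannian to compare with. The actual argument in \cite{SW20} shows that the bounded pieces are proper \emph{spatial diamonds} directly, without passing through a scheme: representability is obtained via Demazure-type towers of $\bbP^1$-fibrations at Iwahori level (exactly the mechanism used in \Cref{subsection_affine_grassmannian} of this paper), and properness is checked via the valuative criterion using the extension of $\calG$-torsors across the puncture of $\Spec B_\dR^+$.

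A second, smaller issue: the internal references you invoke, namely \Cref{sec:geom-affine-grassm-proposition-stratification-of-schubert-varieties} and the discussion around \eqref{projection.from.affine.flag.equality}, already \emph{presuppose} the ind-(proper spatial diamond) structure of $\Gr_{G,C}$ coming from \cite{SW20}; they do not establish it. So citing them as input to a proof of the theorem is circular. Similarly, the fact that $\Gr_\calG\to \Gr_{\GL_n}$ is a \emph{closed} (rather than merely locally closed) immersion, which you use at the outset, is typically deduced \emph{after} one knows ind-properness of $\Gr_\calG$; in \cite{SW20} the parahoric case (Theorem 21.2.1) requires a separate argument on extending $\calG$-torsors.
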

	
	In \cite[Section 12]{Ans18} and then later in \cite[Section 5]{Gle20}, the first named and second named author respectively constructed and studied the specialization map for $\Gr_\calG$, see also \cite[Theorem 5.1]{Gle20}. 
	
	Again, we have natural loop groups at hand, namely
	\begin{equation}
	L_O\calG \colon (R,R^+) \mapsto \calG(B_\dR(R^\sharp))
	\end{equation}
	and
	\begin{equation}
	L^+_O\calG \colon (R,R^+) \mapsto \calG(B_\dR^+(R^\sharp)),
	\end{equation} 
	where $(R^\sharp, R^{\sharp+})$ denotes an untilt of $(R,R^+)$ over $(O,O)$ and $B_\dR^{(+)}(R^\sharp)$ the ring of de Rham periods formed using $O$-Witt vectors. 
	These define v-sheaves over $\Spd O$ and the base changes to $\Spd F$, respectively $\Spd k$ recover the loop groups $L^+G\subset LG$ from \Cref{subsection_affine_grassmannian}, respectively the v-sheaves $(L^+_k\calG)^\dia\subset (L_kG)^\dia$ associated with the loop groups from \Cref{sec:affine-flag-vari}.  
	Their base changes to $O_C$ are denoted $L_{O_C}\calG$, $L^+_{O_C}\calG$ and $\Gr_{\calG,O_C}$.
	
	\begin{lemma}\label{elementary_BD_lemma}
		There is a natural isomorphism
		\begin{equation}\label{eq1:elementary_BD_lemma}
		L_O\calG/L^+_O\calG \cong \Gr_\calG,
		\end{equation}
		where the left side is a quotient for the étale topology. 
		In particular, on geometric fibers 
		\begin{equation}\label{eq2:elementary_BD_lemma}
		\Gr_{G}\cong \Gr_\calG\times_{\Spd O} \Spd F, \;\;\;\;\;\; \Fl_\calG^\dia\cong \Gr_\calG\times_{\Spd O} \Spd k,
		\end{equation}
		where $\Gr_{G}$ is the affine Grassmannian from \Cref{subsection_affine_grassmannian} and $\Fl_\calG^\dia$ the v-sheaf attached to the Witt vector partial affine flag variety from \Cref{sec:affine-flag-vari}.
	\end{lemma}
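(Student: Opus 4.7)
The plan is to construct the natural map $\phi \colon L_O\calG/L^+_O\calG \to \Gr_\calG$ and to verify that it is an isomorphism of étale sheaves over $\Spd O$. On $(R,R^+)$-points (with fixed untilt $R^\sharp$ over $O$), one sends $g \in \calG(B_\dR(R^\sharp))$ to the pair $(\calG_{B_\dR^+(R^\sharp)}, g)$ consisting of the trivial $\calG$-torsor on $\Spec B_\dR^+(R^\sharp)$ together with the canonical trivialization over $B_\dR(R^\sharp)$ twisted by right multiplication by $g$. Two elements $g, g'$ define isomorphic pairs if and only if $g' = hg$ for some $h \in \calG(B_\dR^+(R^\sharp)) = L_O^+\calG(R,R^+)$, so $\phi$ descends to an étale-local monomorphism on the quotient.

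The main point is étale-local surjectivity. Given $(\calP, \tau)$ with $\calP$ a $\calG$-torsor on $\Spec B_\dR^+(R^\sharp)$ and $\tau$ a trivialization over $\Spec B_\dR(R^\sharp)$, one would like to trivialize $\calP$ itself, étale-locally on $\Spa(R,R^+)$. Since $\calG$ is smooth affine and $B_\dR^+(R^\sharp)$ is $(\xi)$-adically complete with residue ring $R^\sharp$ (where $\xi$ generates $\ker \theta$), Hensel's lemma for smooth schemes reduces this to trivializing the restriction of $\calP$ to $\Spec R^\sharp$. Such torsors are étale-locally trivial on $\Spec R^\sharp$, and via the tilting equivalence between étale covers of $R^\sharp$ and of $\Spa(R,R^+)$, we obtain an étale cover of the perfectoid base trivializing $\calP$. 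Composing with $\tau$ then produces the required lift of $(\calP, \tau)$ to $L_O\calG$, proving that $\phi$ is an isomorphism of étale sheaves.

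The identifications on geometric fibers in \eqref{eq2:elementary_BD_lemma} follow from base change compatibility. Restricting to untilts over $F$, the loop groups $L_O\calG, L^+_O\calG$ specialize to $LG, L^+G$ by construction, so the étale quotient recovers the $B_\dR^+$-affine Grassmannian $\Gr_G$ of \Cref{subsection_affine_grassmannian}. On the special fiber, for a perfectoid $(R,R^+)$ over $k$, the only untilt over $O$ is $R$ itself (since $\Spd k \to \Spd O$ forces characteristic-$p$ untilts), and then $B_\dR^+(R^\sharp) = W_O(R^+)$ while $B_\dR(R^\sharp) = W_O(R^+)[\pi^{-1}]$. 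Hence $L_O\calG|_{\Spd k}$ and $L_O^+\calG|_{\Spd k}$ coincide with $(L_k G)^\dia$ and $(L^+_k\calG)^\dia$ from \Cref{sec:affine-flag-vari}, and the étale quotient becomes $\Fl_\calG^\dia$. The principal obstacle is the surjectivity step, which demands a careful combination of $(\xi)$-adic completeness, smoothness of $\calG$, and the behaviour of the étale topology on perfectoids with respect to their untilts; the remainder consists of unwinding definitions.
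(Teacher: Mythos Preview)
Your argument is essentially the proof of \cite[Proposition 20.3.2]{SW20}, which is exactly what the paper cites for \eqref{eq1:elementary_BD_lemma}; the identification of fibers is then, as you say, a matter of unwinding definitions. One small correction: on the special fiber, for $(R,R^+)$ over $k$ one has $B_\dR^+(R^\sharp) = W_O(R)$ and $B_\dR(R^\sharp) = W_O(R)\otimes_O F$, not $W_O(R^+)$ and $W_O(R^+)[\pi^{-1}]$ --- compare \Cref{sec:affine-flag-vari-1-definition-loop-groups-affine-flag-variety} --- so that the restrictions really match $(L_kG)^\dia$ and $(L_k^+\calG)^\dia$.
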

	\begin{proof}
		For the uniformization \eqref{eq1:elementary_BD_lemma}, see \cite[Proposition 20.3.2]{SW20}. 
		The isomorphisms \eqref{eq2:elementary_BD_lemma} are given by base change from \eqref{eq1:elementary_BD_lemma} by unwinding the definitions.
	\end{proof}
	
	Let $\mu$ be a conjugacy class of cocharacters of $G_C$, with field of definition $E\subset C$.	
	We denote by $O_E$ its ring of integers with residue field $k_E$.
	We wish to construct a closed sub-v-sheaf
	\begin{equation}
	\calM_{\calG, \mu} \subset \Gr_{\calG}|_{\Spd O_E}
	\end{equation}
	prolonging the Schubert diamonds $\Gr_{G, \mu}$ which are the descent to $\Spd E$ of the ones we studied in the previous subsection.

	\begin{definition}\label{defn_local_model}
		Let $\mu$ be a conjugacy class of cocharacters in $G_C$. 
		The local model $\calM_{\calG, \mu}$ is the v-closure of $\Gr_{G, \mu}$ inside $\Gr_{\calG}|_{\Spd O_E}$.
	\end{definition}
	
	A priori $\calM_{\calG, \mu}$ does not admit a moduli problem description for general parahoric $\calG$, so its structure could be harder to parse. Let us give some examples where the local model $\calM_{\calG,\mu}$ is relatively well understood.
	
	\begin{example}
		If $\calG$ is reductive, then $\calM_{\calG, \mu}$ is the integral Schubert variety over $\Spd O_E$ and generalizes the objects introduced in \Cref{subsection_affine_grassmannian}, see \cite[Proposition 20.3.6]{SW20} and \cite[VI.1]{FS21}. 
		If $G=T$ is a torus, then the explicit description of $\Gr_{\calT}$ furnishes an identity $\calM_{\calT, \mu}= \Spd O_E$, see \cite[Proposition 21.3.1]{SW20}.
	\end{example}

	We need to show permanence of the local model under the $L^+_O\calG$-action.

	\begin{proposition}
		\label{group acts on local models}
		The natural action map $L^+_O\calG \times \Gr_{\calG} \rightarrow \Gr_{\calG}$ restricts, after base change to $\Spd O_E$, to a group action on the closed sub-v-sheaf $\calM_{\calG, \mu}$. 
		Moreover, the generic fiber of $\calM_{\calG, \mu} $ is topologically dense.
	\end{proposition}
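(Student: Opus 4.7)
The plan is to reduce to a bounded proper piece of the Beilinson--Drinfeld Grassmannian and then establish both assertions via the weakly generalizing characterization of v-closures from \Cref{proposition-closure-of-v-subsheaves}. Since $\Gr_{G,\mu}$ is quasi-compact and $\Gr_\calG \to \Spd O$ is ind-(proper spatial diamond) by \Cref{SW theorem on parahoric grassmanians}, we may fix a closed proper sub-v-sheaf $\Gr^{\leq w}_\calG \subset \Gr_\calG$ containing $\Gr_{G,\mu}$, so that $\calM_{\calG,\mu}$ is a closed sub-v-sheaf of $\Gr^{\leq w}_\calG|_{\Spd O_E}$ and is itself proper over $\Spd O_E$. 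Note also that $\calM_{\calG,\mu}|_{\Spd E}=\Gr_{G,\mu}$ since $\Gr_{G,\mu}$ is already closed on the generic fiber by \Cref{Schubert.variety.corollary}, and since v-closure commutes with the open partially proper base change $\Spd E\hookrightarrow \Spd O_E$ by \Cref{formation of closures commutes with nice basechange}.

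For the density of the generic fiber, I would first show that $\Gr^{\leq w}_\calG|_{\Spd O_E}$ is a flat $\pi$-adic kimberlite, combining the ind-properness with a natural v-cover by affinoid perfectoids over $F$ arising from the formal-scheme-like structure of Beilinson--Drinfeld Grassmannians; then its specialization map is surjective by \Cref{proper and sp surjective gives strong kimberlite}(1). Given any closed point $z$ in the special fiber of $\calM_{\calG,\mu}$, surjectivity lifts $z$ to a $\Spd O_C$-valued point $f$ of $\Gr^{\leq w}_\calG|_{\Spd O_E}$ with closed image $z$. Since $|\calM_{\calG,\mu}|=|\Gr_{G,\mu}|^{\on{wgc}}$ is closed and weakly generalizing by \Cref{proposition-closure-of-v-subsheaves}, and since the closed point of $\Spa(O_C,O_C)$ maps to $z\in |\calM_{\calG,\mu}|$, the rank-one generization $f_\eta$ also lies in $|\calM_{\calG,\mu}|$; together with the identity from the first paragraph this forces $f_\eta\in |\Gr_{G,\mu}|$, so $z$ is in the topological closure of $|\Gr_{G,\mu}|$. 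Since closed points are topologically dense in the proper spatial diamond $|\calM_{\calG,\mu}|$, the density statement follows.

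For the $L^+_O\calG$-invariance, the action $a\colon L^+_O\calG\times \Gr_\calG|_{\Spd O_E}\to \Gr_\calG|_{\Spd O_E}$ preserves $\Gr_{G,\mu}$ on the generic fiber by the standard $L^+G$-invariance of affine Schubert cells, so the preimage $a^{-1}(\calM_{\calG,\mu})$ is a closed sub-v-sheaf of $L^+_O\calG\times \Gr_\calG|_{\Spd O_E}$ whose generic fiber contains $L^+G\times \Gr_{G,\mu}$. To check that $L^+_O\calG\times \calM_{\calG,\mu}\subset a^{-1}(\calM_{\calG,\mu})$, I would argue at closed geometric points $(g,x)$ over the special fiber: density supplies a $\Spd O_C$-lift $\tilde{x}\colon\Spd O_C\to \calM_{\calG,\mu}$ with $\tilde{x}_\eta\in \Gr_{G,\mu}$, and formal smoothness of $L^+_O\calG$ supplies a lift $\tilde g\colon\Spd O_C\to L^+_O\calG$ of $g$. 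Then $\tilde g\cdot \tilde x\colon\Spd O_C\to \Gr_\calG|_{\Spd O_E}$ has generic fiber $\tilde g_\eta\cdot\tilde x_\eta\in\Gr_{G,\mu}$, so by the same weakly generalizing argument it factors through $\calM_{\calG,\mu}$; consequently $g\cdot x=(\tilde g\cdot \tilde x)_s\in |\calM_{\calG,\mu}|$. Extension to non-closed geometric points $(g,x)$ over $\Spa(K,K^+)$ follows by restriction along the inclusion $\Spa(K,O_K)\hookrightarrow \Spa(K,K^+)$ together with the fact that the closed subset $|\calM_{\calG,\mu}|$ of the ambient spectral space is stable under specialization.

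The main obstacle will be verifying rigorously that $\Gr^{\leq w}_\calG|_{\Spd O_E}$ is a flat $\pi$-adic kimberlite in the sense of \Cref{sec:formal-theory-v-sheaves-p-adic-kimberlites}, with a sufficiently concrete v-cover by perfectoid Huber pairs over $F$, so that \Cref{proper and sp surjective gives strong kimberlite} applies and the lifts above can be produced. Once this is established, the rest of the argument is a uniform application of the formal-scheme/weakly-generalizing dictionary developed in \Cref{section_closures_of_v_sheaves}, as \Cref{density_example} warns that bare topological intuition is insufficient in the v-sheaf setting.
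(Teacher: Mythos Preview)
There is a genuine gap in your density argument: you misapply the weakly generalizing property. That notion (\Cref{sec:closures-sub-v-definition-weakly-generalizing}) concerns maps $\Spa(K,K^+)\to X$ from a perfectoid \emph{field} $K$: if the closed point of $|\Spa(K,K^+)|$ (a higher-rank valuation on $K$) lands in $S$, then so does the rank-one point. It says nothing about the \emph{mixed-characteristic} generization from the special-fiber point of $\Spd O_C$ to its generic-fiber point. Thus a lift $f\colon \Spd O_C\to \Gr_\calG^{\leq w}$ of $z$ need not have $f_\eta\in|\calM_{\calG,\mu}|$ just because $z\in|\calM_{\calG,\mu}|$; closed subsets are stable under specialization, not generization, and the weakly generalizing condition only remedies this for horizontal specializations. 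Your action-stability argument then inherits the same gap, since it both invokes the density step and reuses the same ``weakly generalizing'' inference for $\tilde g\cdot\tilde x$.

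The paper proceeds differently, and in the opposite order. For the action it first passes to a bounded $L^+_{O_E}\calG$-stable piece $X$ on which the action factors through a congruence quotient $L^{\leq N}_{O_E}\calG$; this quotient is cohomologically smooth over $\Spd O_E$, hence partially proper and universally open, so \Cref{formation of closures commutes with nice basechange} gives
\[
L^{\leq N}_{O_E}\calG\times_{\Spd O_E}\calM_{\calG,\mu}=\bigl(L^{\leq N}_E G\times_{\Spd E}\Gr_{G,\mu}\bigr)^{\mathrm{cl}}
\]
inside $L^{\leq N}_{O_E}\calG\times_{\Spd O_E} X$, and invariance follows at once from invariance of $\Gr_{G,\mu}$ under $L^+_EG$. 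For density, the key observation---absent from your proposal---is that on the Hecke quotient $[L^{\leq N}_{O_E}\calG\backslash \calM_{\calG,\mu}]$ \emph{every} subset of the underlying space is weakly generalizing, because the Bruhat stratum of a $(K,K^+)$-point depends only on $B_\dR(K)$ and not on $K^+$. Hence topological closure and weakly generalizing closure coincide there, and one pulls back along the open quotient map. This bypasses any kimberlite input entirely, whereas your approach would need the flat $\pi$-adic kimberlite structure of $\Gr_\calG^{\leq w}$ as an a priori input (you flag this yourself), and even granting it the core inference fails.
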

	\begin{proof}	
		By embedding $\calG$ in $\on{GL}_n$, we may always find a quasi-compact closed subsheaf $X\subset \Gr_{\calG, O_E}$ with $\calM_{\calG, \mu} \subset X$, stable under $L^+_{O_E}\calG$ whose action factors through a congruence quotient $L^{\leq N}_{O_E}\calG$, where $N$ is a sufficiently large positive integer (necessarily at least $\langle 2\rho, \mu \rangle$ as one sees by restricting to $\Gr_{G, \mu}$).
		
		The structure map $L^{\leq N}_{O_E}\calG\to \Spd O_E$ is partially proper, cohomologically smooth and consequently universally open, see \cite[Proposition 23.11]{Sch17}. 
		By \Cref{formation of closures commutes with nice basechange},
		\begin{equation}
		L^{\leq N}_{O_E}\calG\times_{\Spd O_E}\calM_{\calG,\mu}=(L^{\leq N}_{E}G\times_{\Spd E} \Gr_{G,\mu})^{\on{cl}}	
		\end{equation}
		as closed sub-v-sheaves of $L^{\leq N}_{O_E}\calG\times_{\Spd O_E} X$. 
		Now, we have seen that $L^+_EG$ respects $\Gr_{G, \mu}$, so the multiplication map $L^{\leq N}_EG\times_{\Spd E} \Gr_{G,\mu} \to X$ factors through $ \Gr_{G,\mu}$. 
		This also implies that the integral action map 
		\begin{equation}
		L^{\leq N}_{O_E}\calG\times_{\Spd O_E}\calM_{\calG,\mu}\to X
		\end{equation} 
		factors through the closure $\calM_{\calG,\mu}$, as we desired. 
		
		For the last claim, we consider the restricted variant of the Hecke v-stack  
		\begin{equation}
		\Hk^{\leq N}_{\calG, \mu}:=\big[L^{\leq N}_{O_E}\calG \backslash \calM_{\calG,\mu}\big].
		\end{equation} 
		Its underlying topological space has the extra special property that every subset is weakly generalizing, since for every perfectoid affinoid field $(K,K^+)$ the Bruhat decomposition over $B_\dR(K)$ is insensitive to variation of $K^{+}$. 
		Now, the projection map 
		\begin{equation}
		\on{pr}\colon \calM_{\calG,\mu}\to \Hk^{\leq N}_{\calG, \mu}
		\end{equation} 
		is cohomologically smooth and consequently open. 
		Therefore, by the same argument of \Cref{formation of closures commutes with nice basechange}, we see that both the usual topological and the weakly generalizing closure commute with pullback along $\lvert \on{pr} \rvert$. 
		It results that $\lvert \Gr_{G,\mu} \rvert $ is a dense open of $\lvert \calM_{\calG,\mu}\rvert$.
	\end{proof}

	We can now prove the following structural properties of $\calM_{\calG,\mu}$.
	
	\begin{proposition}
		\label{LM_kimberlite}
		With notation as in \Cref{defi category K}, $\calM_{\calG, \mu}\in \calK$. 
		More specifically, the local model $\calM_{\calG, \mu}$ is a flat $\pi$-adic kimberlite over $O_E$ with enough facets over $C$ and $O_C$-formalizable $C$-sections. 
		Moreover, the special fiber 
		\begin{equation}
		\calM_{\calG, k_E,\mu} := \calM_{\calG,\mu}|_{\Spd(k_E)}
		\end{equation}
		is of the form $\Fl_{\calG, W}^\diamondsuit$ for a connected perfect Schubert scheme $\Fl_{\calG, W}$. 
		In particular, $\calM_{\calG, \mu}^{\on{red}}=\Fl_{\calG, W}$ is perfectly proper and perfectly finitely presented perfect $k$-scheme.
	\end{proposition}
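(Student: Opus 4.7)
The strategy is to exhibit $\calM_{\calG,\mu}$ as a closed sub-v-sheaf of a quasi-compact piece of $\Gr_\calG|_{\Spd O_E}$ and to transfer the required structure from the ambient Beilinson--Drinfeld Grassmannian. Since $\Gr_{G,\mu}$ is proper and quasi-compact, by \Cref{SW theorem on parahoric grassmanians} we may choose a quasi-compact closed sub-v-sheaf $X \subset \Gr_\calG|_{\Spd O_E}$ containing $\calM_{\calG,\mu}$ (as in the proof of \Cref{group acts on local models}), which exhibits $\calM_{\calG,\mu}$ as a proper spatial diamond over $\Spd O_E$ with generic fiber $\Gr_{G,\mu}$. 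Formal adicness of the structure map $\calM_{\calG,\mu} \to \Spd O_E$ is inherited from the $p$-adic kimberlite structure on $\Gr_\calG$ established in \cite{Ans18,Gle20}: closed immersions into formally adic targets remain formally adic.

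The central step is identifying the special fiber. By \Cref{group acts on local models}, $\calM_{\calG,\mu}$ is $L^+_O\calG$-stable, so $\calM_{\calG,\mu,k_E}$ is an $L^+_k\calG^\diamondsuit$-stable closed sub-v-sheaf of $\Fl_\calG^\diamondsuit|_{\Spd k_E}$. By \Cref{proposition-closure-of-v-subsheaves} it is uniquely determined by its weakly generalizing closed underlying topological subspace $S \subset |\Fl_\calG|_{\Spd k_E}$, which is quasi-compact by construction. The Bruhat stratification enumerates $L^+_{\bar k}\calG$-orbits on $|\Fl_\calG|$ by $W_\calG \backslash \tilde W / W_\calG$; combining $L^+_k\calG$-stability with topological closedness forces $S$ to be a closure-stable union of such strata, hence $S=|\Fl_{\calG,W}|$ for a finite Bruhat-closed $W$. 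Applying \Cref{sec:closures-sub-v-lemma-pullback-of-weakly-generalizing-subset-big-enough} to both $\calM_{\calG,\mu,k_E}$ and $\Fl_{\calG,W}^\diamondsuit$, which share the same underlying topological space, yields $\calM_{\calG,\mu,k_E}\cong \Fl_{\calG,W}^\diamondsuit$. Connectedness of $\Fl_{\calG,W}$ follows from the connectedness of $\Gr_{G,\mu}$ and the topological density of the generic fiber in $|\calM_{\calG,\mu}|$ from \Cref{group acts on local models}, together with continuity of the specialization map.

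It remains to verify the three conditions of \Cref{defi category K}, enough $C$-facets, and flatness. Condition (2) is immediate from $\calM_{\calG,\mu}^{\on{red}}\cong \Fl_{\calG,W}$ being perfectly of finite type over $k_E$. For condition (3), the ind-properness of $\Gr_\calG\to \Spd O$ together with the valuative criterion shows that every $\Spd C\to \calM_{\calG,\mu,C}$ extends uniquely to $\Spd O_C\to \calM_{\calG,\mu, O_C}$. Condition (1) and the enough-$C$-facets property reduce to the corresponding statements on the topologically dense generic fiber $\Gr_{G,\mu}$, which are provided by \Cref{enough facets}. Finally, flatness is obtained from \Cref{proper and sp surjective gives strong kimberlite}: properness is secured, and surjectivity of the specialization map onto $|\Fl_{\calG,W}|$ follows from the density of the generic fiber in $|\calM_{\calG,\mu}|$ combined with the stratum-by-stratum identification obtained above. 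The main obstacle throughout is the careful passage between v-sheaf-theoretic closures and the Bruhat-theoretic closure of the image in $|\Fl_\calG|$, which is mediated by \Cref{proposition-closure-of-v-subsheaves} and the weakly-generalizing formalism of \Cref{section_closures_of_v_sheaves}.
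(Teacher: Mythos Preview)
Your proposal is essentially correct and follows the same route as the paper: embed into a bounded piece of the Beilinson--Drinfeld Grassmannian to get properness, use the $L^+_O\calG$-stability from \Cref{group acts on local models} to identify the special fiber as a finite union of Schubert varieties, invoke \Cref{enough facets} for condition (1) and the valuative criterion for condition (3), and deduce flatness from topological density of the generic fiber. The paper's presentation differs only in the precise references invoked (it cites specific results from \cite{Gle20} to reduce the kimberlite property to representability of the special fiber, rather than appealing to inheritance of formal adicness under closed immersions).

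One point deserves tightening. For flatness you invoke \Cref{proper and sp surjective gives strong kimberlite} and claim that surjectivity of $\on{sp}\colon |\Gr_{G,\mu}|\to |\Fl_{\calG,W}|$ ``follows from the density of the generic fiber in $|\calM_{\calG,\mu}|$''. Topological density of $|X_\eta|$ in $|X|$ does not by itself imply that the specialization map is surjective: one needs the tubular-neighborhood argument (a non-empty open $\widehat{X}_{/x}$ must meet the dense generic fiber, then conditions (1) and (3) produce a $C$-point specializing to $x$). This is exactly what \Cref{lem_top_flat_implies_formal_flat} packages, and the paper invokes it directly. Replacing your appeal to \Cref{proper and sp surjective gives strong kimberlite} by \Cref{lem_top_flat_implies_formal_flat} closes this gap.
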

	\begin{proof}
		Choosing a closed embedding $\calG \hookrightarrow \on{GL}_n$, we may find a cocharacter $\nu$ of $\on{GL}_n$ giving rise to a closed immersion 
		\begin{equation}
		\calM_{\calG,\mu} \hookrightarrow \calM_{\on{GL}_{n},\nu},
		\end{equation} so that local models are proper v-sheaves and, in particular, quasi-compact.
		By \cite[Proposition 4.41.(3), Proposition 2.2.5]{Gle20} to prove $\calM_{\calG,\mu}$ is a kimberlite, it suffices to prove it is $\pi$-adic. 
		By \cite[Proposition 3.32]{Gle20}, we may reduce to proving that the special fiber of $\calM_{\calG,\mu}$ is represented by a v-sheaf of the form $X^{\diamond}$ for $X$ a perfect scheme.
		
		By \Cref{group acts on local models}, $\calM_{\calG, \bar k,\mu}$, is of the form $\cup_{i\in I} \Fl_{\calG,\bar{k},W_i}^\diamond$ with finite subsets $W_i\subset \tilde W$, where we have used the fact that $\Fl_{\calG,W}$ is perfectly proper, in order to deduce $\Fl_{\calG,W}^\diamondsuit=\Fl_{\calG,W}^\diamond$. 
		By quasi-compactness, we get $\calM_{\calG, k_E, \mu}=\Fl_{\calG,W}^\diamond$ for some finite subset $W\subset \tilde W$, which finishes the proof that $\calM_{\calG, \mu}$ is a $\pi$-adic kimberlite.
		
		That $\calM_{\calG,\mu}$ has $O_C$-formalizable $C$-sections follows from the main theorem of \cite{Ans18}. 
		Indeed, as in \cite[Section 12]{Ans18} any $C$-point of $\Gr_\calG$ produces canonically a $\calG$-torsor $\calP$ over $\Spec (A_{\on{inf}})$ together with a trivialization $\alpha:\calP\dashrightarrow \calG$ over $\Spec (A_{\on{inf}})\setminus V(\xi)$.
		Any map $\Spa(R,R^+)\to \Spd O_C$ induces a map $f:\Spec (B_{\on{dR}}(R^\sharp))\to \Spec (A_{\on{inf}})$ and the pullback $f^*\alpha\in \Gr_\calG(R,R^+)$ provides a map functors $\Spd O_C\to \Gr_\calG$ extending the original $C$-point. 
		Since $\Spd C\subseteq \Spd O_C$ is dense and $\calM_{\calG,\mu}\subseteq \Gr_\calG$ is closed, if the original $C$-point factors through $\calM_{\calG,\mu}$ then the corresponding $O_C$-point also does. 

		We explained in \Cref{enough facets} that $\calM_{\calG,\mu}$ has enough $C$-facets. Together with \Cref{group acts on local models} and \Cref{lem_top_flat_implies_formal_flat}, flatness follows.
		By \Cref{proper and sp surjective gives strong kimberlite}, $\lvert \Fl_{\calG,W}\rvert=\on{sp}(\lvert \Gr_{G,\mu}\rvert)$ and since $\Gr_{G,\mu}$ is connected $\Fl_{\calG,W}$ is also connected. 
	\end{proof}
	
	\begin{remark}\label{remark_local_model_base_change}
		It follows that the base change $\calM_{\calG, \mu}|_{\Spd O_C}$ is still topologically flat, and hence agrees with the v-closure $\calM_{\calG, O_C, \mu}$ of $\Gr_{G, C,\mu}$ inside $\Gr_{\calG,O_C}$.
		Indeed, repeating the argument of \Cref{group acts on local models} over $\Spd O_C$, we see that the special fiber of $\calM_{\calG, O_C, \mu}$ is represented by a Schubert perfect scheme. 
		But a $\Spd k$-valued point of $\calM_{\calG, \mu}$ is a specialization of some $\Spd C$-valued point by \Cref{LM_kimberlite}, hence equality of both closures is clear.
	\end{remark}
	
	Next, we analyse some functoriality behavior of $ \calM_{\calG, \mu} $ in the pair $(\calG,\mu)$. 
	Here, by definition, a map $(\calG_1,\mu_1)\to (\calG_2,\mu_2)$ is a morphism of $O$-group schemes $\calG_{1}\to \calG_{2}$ such that the image of $\mu_1$ in $G_{2,C}$ lies in the same conjugacy class as $\mu_2$.
	
	\begin{proposition}\label{prop_LM_basic_facts}
		The association $(\calG, \mu)\mapsto \calM_{\calG,O_C, \mu}$, see \Cref{remark_local_model_base_change}, is functorial, preserves closed embeddings and direct products, and induces isomorphisms $\calM_{\calG,O_C, \mu} \cong \calM_{\calG_\ad,O_C, \mu_\ad}$, where $\mu_{\mathrm{ad}}$ denotes the composite of $\mu$ with $G_C\to G_{\mathrm{ad},C}$.
	\end{proposition}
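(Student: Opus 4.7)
My plan will exploit the functoriality of the Beilinson--Drinfeld Grassmannian in $\calG$ together with the uniqueness properties of v-closures and the kimberlite structure of $\calM_{\calG, O_C, \mu}$ supplied by \Cref{LM_kimberlite}.

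I would first address bare functoriality. A morphism $f\colon (\calG_1, \mu_1) \to (\calG_2, \mu_2)$ induces by the moduli interpretation of torsors a natural map $f_*\colon \Gr_{\calG_1} \to \Gr_{\calG_2}$ over $\Spd O$. On generic fibers after base change to $\Spd O_C$, the assumption that $f \circ \mu_1$ lies in the conjugacy class of $\mu_2$ together with the orbit-closure characterization of \Cref{Schubert.variety.corollary} forces $f_*(\Gr_{G_1, C, \mu_1}) \subset \Gr_{G_2, C, \mu_2}$. Consequently $f_*^{-1}(\calM_{\calG_2, O_C, \mu_2})$ is a closed sub-v-sheaf of $\Gr_{\calG_1, O_C}$ containing $\Gr_{G_1, C, \mu_1}$, hence contains the v-closure $\calM_{\calG_1, O_C, \mu_1}$ by minimality. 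For closed embeddings, I would invoke that $\calG_1 \hookrightarrow \calG_2$ closed implies $\Gr_{\calG_1} \hookrightarrow \Gr_{\calG_2}$ closed (standard via a faithful representation into some $\on{GL}_n$), so that the induced morphism of local models is the restriction of a closed immersion and is therefore itself closed.

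Direct products would be handled using the identification $\Gr_{\calG_1 \times \calG_2} \cong \Gr_{\calG_1} \times_{\Spd O} \Gr_{\calG_2}$ and its generic-fiber analogue $\Gr_{G_1 \times G_2, (\mu_1, \mu_2)} \cong \Gr_{G_1, \mu_1} \times_{\Spd E} \Gr_{G_2, \mu_2}$. The product $\calM_{\calG_1, O_C, \mu_1} \times_{\Spd O_C} \calM_{\calG_2, O_C, \mu_2}$ is then a closed sub-v-sheaf of $\Gr_{\calG_1 \times \calG_2, O_C}$ containing $\Gr_{G_1 \times G_2, C, (\mu_1, \mu_2)}$, and a routine verification shows this product lies in $\calK$: flatness, the $p$-adic kimberlite property, and topological density of the generic fiber all survive fiber products. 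Since any closed sub-v-sheaf of an element of $\calK$ that contains the dense generic fiber must equal the whole kimberlite, the v-closure $\calM_{\calG_1 \times \calG_2, O_C, (\mu_1, \mu_2)}$ of the generic fiber exhausts the product, giving the desired identification.

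The adjoint isomorphism is where I anticipate the main subtlety. The functorial map $\calM_{\calG, O_C, \mu} \to \calM_{\calG_\ad, O_C, \mu_\ad}$ restricts to an isomorphism on generic fibers because $Z(G)$ acts trivially on $\Gr_{G, C, \mu}$, so after modding out this factor the $L^+$-orbit data on the two sides match, yielding $\Gr_{G, C, \mu} \cong \Gr_{G_\ad, C, \mu_\ad}$. The hard step will be promoting this to the models themselves. My plan is to invoke \Cref{prop_fully_faith_triples_kimberlites}: both local models lie in $\calK$ by \Cref{LM_kimberlite}, and compatibility of specialization maps is automatic from naturality, so everything reduces to showing the induced map of reductions $\calM_{\calG, \mu}^{\on{red}} \to \calM_{\calG_\ad, \mu_\ad}^{\on{red}}$ is an isomorphism of connected Schubert perfect schemes. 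Here I would use that $\Fl_{\calG, \bar{k}} \to \Fl_{\calG_\ad, \bar{k}}$ is an open and closed immersion (as recorded in the proof of \Cref{theorem_coherence_allp}), so this reduction map is injective; since both reductions are connected and both contain the image of the specialized common generic fiber, injectivity forces them to coincide.
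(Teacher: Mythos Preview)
Your treatment of functoriality, closed embeddings, and direct products is essentially sound and parallels the paper's brief argument (the paper phrases the product case as ``check equality at $\Spd k$-points using \Cref{LM_kimberlite}'', which amounts to the same thing once you know both sides have dense generic fiber).

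The gap is in the adjoint case. Your final step reads: ``since both reductions are connected and both contain the image of the specialized common generic fiber, injectivity forces them to coincide.'' This does not follow. The map $\Fl_{\calG,\bar k}\to \Fl_{\calG_\ad,\bar k}$ is an open and closed immersion, but its restriction to the closed subschemes $\calM_{\calG,\mu}^{\on{red}}\to \calM_{\calG_\ad,\mu_\ad}^{\on{red}}$ is only a \emph{closed} immersion, not an open one; a closed immersion between connected perfect schemes is certainly not forced to be surjective. What you actually need is surjectivity on reductions. One clean way: both local models are flat kimberlites, so their specialization maps are surjective (\Cref{proper and sp surjective gives strong kimberlite}); since the generic fibers are already isomorphic and specialization is functorial, every $\bar k$-point of the target reduction lifts to a $C$-point which comes from the source, hence lies in the image. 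With this fix your specialization-triple route via \Cref{prop_fully_faith_triples_kimberlites} goes through.

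The paper takes a more direct and lighter route: it checks injectivity at all geometric points of $\calM_{\calG,O_C,\mu}\to \calM_{\calG_\ad,O_C,\mu_\ad}$ using \Cref{connected.components.DB.lemma} (which identifies the connected components of $\Gr_\calG$ and shows the relevant component maps isomorphically into $\Gr_{\calG_\ad}$), and obtains surjectivity immediately because the map is proper and the generic fibers coincide while the target has dense generic fiber. This avoids invoking the full-faithfulness theorem on specialization triples, which is heavier machinery than the statement requires.
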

	
	\begin{proof}
		Functoriality follows from that of $\Gr_{G, C,\mu}$ and the definition using v-closures, see \Cref{remark_local_model_base_change}.
		For the claim regarding closed embeddings and central extensions, we refer to \cite[IV, Proposition 4.16, Corollary 4.17]{Lou20}: one checks injectivity at geometric points, using \Cref{connected.components.DB.lemma}. 
		As for direct products, it suffices to check equality at the level of $\Spd k$-valued points by \Cref{LM_kimberlite}, and this is easy because the generic fiber was already a product.
	\end{proof}

	\begin{lemma}\label{connected.components.DB.lemma}
		The specialization map induces a bijection 
		\begin{equation}
		\pi_0(\Gr_{\calG,\breve{O}})\overset\cong\lr \pi_0(\Fl_{\calG,\bar{k}})\cong \pi_1(G)_I,
		\end{equation} 
		where $I$ is the absolute Galois group of $\breve F$.
	\end{lemma}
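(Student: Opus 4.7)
The second isomorphism $\pi_0(\Fl_{\calG,\bar k}) \cong \pi_1(G)_I$ is the classical calculation of Pappas--Rapoport and Zhu, obtained by descending the Kottwitz homomorphism $\kappa\colon G(\breve F)\to \pi_1(G)_I$ to the quotient $\Fl_{\calG,\bar k} = G(\breve F)/\calG(\breve O)$. The remaining task is to show that the natural map
\begin{equation}
\pi_0(\Fl_{\calG,\bar k}) \longrightarrow \pi_0(\Gr_{\calG,\breve O})
\end{equation}
induced by the closed embedding of the special fiber is bijective. My plan is to exhibit, for each $\tau \in \pi_1(G)_I$, a clopen connected subsheaf
\begin{equation}
\Gr_{\calG,\breve O}^\tau := \bigcup_{\mu \in X_\ast(T),\ [\mu]_I = \tau} \calM_{\calG,\breve O,\mu},
\end{equation}
such that these pieces partition $\Gr_{\calG,\breve O}$.

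By \Cref{LM_kimberlite} and \Cref{group acts on local models}, each local model $\calM_{\calG,\breve O,\mu}$ will be a flat $p$-adic kimberlite with connected special fiber $\Fl_{\calG,W}^\diamondsuit$ and topologically dense, connected generic fiber $\Gr_{G,\mu}$; these two facts together force $|\calM_{\calG,\breve O,\mu}|$ to be connected. The generic fiber lies in the $[\mu]$-labeled component of $\Gr_{G,\breve F}$ by \Cref{sec:geom-affine-grassm-translation-by-element-in-the-normalizer}, while the special fiber lies in the $[\mu]_I$-labeled component $C_\tau$ of $\Fl_{\calG,\bar k}$: I will verify this compatibility by taking the integral section attached to the cocharacter $\mu$, whose generic lift lies in $\Gr_{G,\mu}$ and whose reduction is the point $\mu(\pi)\in \Fl_{\calG,\bar k}$ with Kottwitz class $[\mu]_I$ by construction of $\kappa$.

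By ind-properness (\Cref{SW theorem on parahoric grassmanians}), the Cartan decomposition, and the surjectivity of specialization for flat kimberlites (\Cref{proper and sp surjective gives strong kimberlite}), the family $\{\calM_{\calG,\breve O,\mu}\}$ will exhaust $\Gr_{\calG,\breve O}$ topologically, and each proper sub-v-sheaf in the ind-structure will meet only finitely many local models, hence only finitely many distinct $\tau$-classes. Two local models with differing $\pi_1(G)_I$-labels will be disjoint: their generic fibers lie in distinct $\pi_1(G)$-components of $\Gr_{G,\breve F}$, their special fibers lie in distinct components of $\Fl_{\calG,\bar k}$, and every mixed point has a unique generic generalization, locating it in at most one piece. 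This will yield a locally finite clopen partition $\Gr_{\calG,\breve O} = \bigsqcup_\tau \Gr_{\calG,\breve O}^\tau$, and each $\Gr_{\calG,\breve O}^\tau$ will be connected because the connected component $C_\tau$ is contained in the special fiber of every local model appearing in the union.

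The hard part will be the compatibility of the generic $\pi_1(G)$-labeling of $\Gr_{G,\breve F}$ with the special $\pi_1(G)_I$-labeling of $\Fl_{\calG,\bar k}$ under specialization, through the natural surjection $\pi_1(G)\twoheadrightarrow \pi_1(G)_I$. I expect this to reduce by functoriality along the abelianization $\calG \to \calT^{\on{ab}}$ (where $\calT^{\on{ab}}$ denotes the connected Néron model of $G/G_{\on{der}}$) to the case of a torus, where $\pi_0(\Gr_{\calT^{\on{ab}},\breve O}) = X_\ast(T^{\on{ab}})_I$ and the labeling is tautological on both generic and special fibers, realizing the quotient $\pi_1(G)\twoheadrightarrow \pi_1(G)_I$ directly.
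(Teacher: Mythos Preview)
Your approach is genuinely different from the paper's and considerably more laborious. The paper dispatches the first bijection in one stroke via proper base change for \'etale cohomology: for each closed proper sub-v-sheaf $X\subset \Gr_{\calG,\breve O}$ and $\Lambda=\bbZ/\ell$, the isomorphism $i^*R^0f_*\Lambda_X\cong R^0(f')_*\Lambda_{X_{\bar k}}$ of \cite[Theorem~19.2]{Sch17} yields $\Lambda^{\pi_0(X)}\cong\Lambda^{\pi_0(X_{\bar k})}$ upon taking global sections, and one passes to the ind-limit. This is entirely formal and uses nothing about local models or the Kottwitz map. Your constructive approach, by contrast, produces the components explicitly as unions of local models indexed by $\pi_1(G)_I$; this has some expository value in that it makes the labeling transparent, but it relies on several earlier results of the paper (\Cref{LM_kimberlite}, \Cref{group acts on local models}) that the paper's own proof does not need.

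There is an actual error in your connectedness step. You write that ``$C_\tau$ is contained in the special fiber of every local model appearing in the union,'' but this is false: the special fiber of a single $\calM_{\calG,\breve O,\mu}$ is a \emph{proper} connected Schubert scheme inside $C_\tau$, not all of $C_\tau$. What you need instead is that any two such special fibers intersect. This does hold: every Schubert variety in the component $C_\tau$ of $\Fl_{\calG,\bar k}$ contains the minimal Schubert variety $\Fl_{\calG,[\tau]}$ (the closure of the orbit of the length-zero element $\tau\in\Omega_{\bba}$), so all of your local models share at least this common closed subscheme in their special fibers. With that correction the union $\Gr_{\calG,\breve O}^\tau$ is connected, and the remainder of your argument goes through. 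Note also that your exhaustion claim (``every point lies in some local model'') requires knowing that the special fiber of $\bigcup_\mu\calM_{\calG,\breve O,\mu}$ is all of $\Fl_{\calG,\bar k}$; this follows because $\Gr_{\calG,\breve O}$ is itself a filtered union of such local models by the Cartan decomposition and \Cref{SW theorem on parahoric grassmanians}, but you should say so.
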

	\begin{proof}
		For the final bijection, see \cite[Proposition 1.21]{Zhu17}.
		The first is a consequence of proper base change \cite[Theorem 19.2, Remark 19.3]{Sch17} applied to $f\co \Gr_{\calG,\breve O}\r \Spd \breve{O}$ and the base change $i\co \Spd \bar k \to \Spd \breve O$, using that the $0$-th cohomology group computes connected components:
		for some coefficient ring, say, $\Lambda=\bbZ/\ell$ with $\ell\neq p$, we apply the proper base change $i^*R^0f_* \Lambda_X\cong R^0(f')_* (i')^*\Lambda_X$ where $\Lambda_X$ is the constant sheaf supported on increasing closed $\breve O$-proper sub-v-sheaves $X\subset \Gr_{\calG,\breve O}$. 
		Passing to global sections, the second computes $\Lambda^{\pi_0(X_{\bar k})}$ by definition whereas the first computes $\Lambda^{\pi_0(X)}$ by using the v-cover $\Spd O_C\to \Spd \breve O$. 
		Finally, we use \cite[Section 27]{Sch17} to pass between $\Fl_{\calG,\bar{k}}^\dia$ and $\Fl_{\calG,\bar{k}}$.
	\end{proof}
	
	\section{Geometry of multiplicative group actions}\label{section_geometry_strata}
	Our approach to the Scholze--Weinstein conjecture requires determining the special fiber of local models in terms of admissible loci. 
	We follow the general strategy of Haines and the fourth named author \cite{HR21} of calculating the support of nearby cycles using hyperbolic localization. 
	This requires translating the results from \cite[Section 5]{HR21} to the v-sheaf Beilinson-Drinfeld Grassmannian.
	For basic facts pertaining to $\bbG_m^\dia$-actions on small v-stacks, the reader is referred to \cite[Chapter IV.6]{FS21}.
	
	\subsection{Over $O$}
	As in \Cref{subsection_affine_grassmannian}, we continue to fix a complete discretely valued field $F/\bbQ_p$ with ring of integers $O$ and perfect residue field $k$, a complete algebraic closure $C/F$ and a connected reductive $F$-group $G$ with parahoric model $\calG$ over $O$ containing the connected N\'eron model $\calS$ of the maximally $F$-split maximal $\breve F$-split $F$-torus $S$. 
	
	Fix a cocharacter $\la\co \bbG_m\r \calS\subset \calG$ defined over $O$.
	After base change to $F$, this induces a Levi $M=M_\la$ with Lie algebra $\Lie\,M=(\Lie\,G)_{\la =0}$, a parabolic subgroup $P=P_\la^+$ with Lie algebra $\Lie\,P=(\Lie\,G)_{\la \geq 0}$ and an unipotent subgroup $U=U^+_\la$ with Lie algebra $\Lie\,U=(\Lie\,G)_{\la > 0}$ fitting in a semi-direct product decomposition $P=M\ltimes U$.
	Since $\la$ is defined over $O$, the decomposition $P=M\ltimes U$ extends to $O$-models $\calP=\calM\ltimes \calU$, admitting analogous descriptions for their Lie algebras and being equipped with homomorphisms
	\begin{equation}\label{attractor.groups.eq}
	\calM \longleftarrow \calP \lr \calG.
	\end{equation}
	The $O$-group schemes $\calP$, $\calM$, $\calU$ are smooth affine with connected fibers, and $\calM$ is a parahoric $O$-model of the Levi subgroup $M$, see \cite[Lemma 4.5]{HR21} and also \cite[Section 2.1]{CGP15}, \cite[Section 6.2]{KP21} for proofs of these claims.
	
	By functoriality, \eqref{attractor.groups.eq} induces maps of ind-(spatial $\Spd O$-diamonds)
	\begin{equation}\label{BD.groups.eq}
	\Gr_\calM \longleftarrow \Gr_\calP \lr \Gr_\calG,
	\end{equation}
	where $\Gr_\calM\r \Spd O$ and $\Gr_\calG\r \Spd O$ are ind-proper by \Cref{SW theorem on parahoric grassmanians}. 
	On the other hand, the cocharacter $\la$ induces a cocharacter
	\begin{equation}
	\bbG_m^\dia \overset{[\str]}{\lr} L^+_O\bbG_m \overset{L^+_O\la}{\lr} L^+_O\calG,
	\end{equation}
	where $[\str]$ denotes the Teichm\"uller lift. 
	Thus, we obtain a left action of $\bbG_m^\dia$ on $\Gr_\calG$. 
	
	\begin{lemma}\label{lemma-representability-attractor}
		The $\bbG_m^\dia$-action on $\Gr_\calG$ satisfies \cite[Hypothesis IV.6.1]{FS21}.
	\end{lemma}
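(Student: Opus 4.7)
The plan is to verify the hypothesis by exhibiting a $\bbG_m^\dia$-stable presentation of $\Gr_\calG$ as an increasing union of proper spatial sub-v-sheaves over $\Spd O$, so that the orbit boundedness required by [FS21, Hypothesis IV.6.1] becomes automatic. The starting point is \Cref{SW theorem on parahoric grassmanians}: $\Gr_\calG\to\Spd O$ is ind-proper and ind-representable in spatial diamonds, but the exhaustion it provides need not respect the cocharacter action, so some extra functorial input is needed.

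The first step is to reduce to $\GL_n$ via a faithful representation. Since $\calG$ is smooth affine over $O$, one can choose a closed immersion $\rho\co\calG\hookrightarrow \GL_n$ of $O$-group schemes together with an extension of $\la$ to a cocharacter $\rho\circ\la\co\bbG_m\to \GL_n$. By functoriality of the Beilinson--Drinfeld Grassmannian and the discussion in the paragraph preceding the lemma, this induces a closed immersion $\Gr_\calG\hookrightarrow \Gr_{\GL_n}$ of ind-spatial diamonds over $\Spd O$, which is $\bbG_m^\dia$-equivariant for the actions induced by $\la$ and $\rho\circ\la$ respectively. Since any $\bbG_m^\dia$-stable closed sub-v-sheaf of a v-sheaf satisfying the hypothesis again satisfies the hypothesis (the attractor and fixer constructions commute with closed immersions, and boundedness is inherited), it is enough to handle $\Gr_{\GL_n}$.

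For $\GL_n$, the natural filtration of $\Gr_{\GL_n,O}$ by Schubert sub-v-sheaves $\Gr_{\GL_n,\leq\nu}$ indexed by dominant cocharacters $\nu$ is $\bbG_m^\dia$-stable: the conjugation action of $\rho\circ\la$ on the lattice description preserves the bound given by $\nu$ (it just permutes the weight spaces), or more intrinsically, the $\bbG_m^\dia$-action commutes with the $L^+_O\GL_n$-action and thus preserves the Cartan bounds. Each $\Gr_{\GL_n,\leq\nu}$ is proper over $\Spd O$ and a spatial diamond, hence quasi-compact and quasi-separated with a well-behaved $\bbG_m^\dia$-action. Consequently, the ind-presentation of $\Gr_{\GL_n,O}$ is both $\bbG_m^\dia$-stable and by proper spatial diamonds, and this is preserved under pullback along the closed immersion $\Gr_\calG\hookrightarrow\Gr_{\GL_n}$.

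Finally, given such a $\bbG_m^\dia$-stable exhaustion by proper spatial sub-v-sheaves, the hypothesis reduces to its spatial quasi-compact version: orbit maps $\bbG_m^\dia\to X_i$ extend uniquely to $(\bbA^1)^\dia$ (yielding the attractor) and to $(\bbP^1)^\dia$ restricted suitably (yielding the fixed points), which follows from the valuative criterion for properness applied to each $X_i\to\Spd O$. The only subtle point — which I expect to be the main technical check — is that the resulting attractor and fixer v-sheaves really piece together into small v-stacks as required by the formalism of [FS21, Chapter IV.6]; this is exactly what the $\bbG_m^\dia$-stability of the exhaustion buys us, since filtered colimits of small v-stacks along $\bbG_m^\dia$-equivariant closed immersions commute with the attractor/fixer constructions.
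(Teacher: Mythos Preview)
Your approach is essentially the same as the paper's: reduce to $\GL_n$ via a closed immersion $\calG\hookrightarrow\GL_{n,O}$, use ind-properness to deduce that $\Gr_\calG\hookrightarrow\Gr_{\GL_{n,O}}$ is a closed immersion, and then treat the $\GL_n$ case. The only difference is that the paper dispatches the $\GL_n$ case in one line by citing \cite[Proposition VI.3.1]{FS21}, whereas you attempt to re-derive it directly via the $\bbG_m^\dia$-stable Schubert exhaustion and the valuative criterion; your sketch of this last part is correct in spirit but somewhat loose (the precise content of Hypothesis IV.6.1 is the existence of a locally closed attractor decomposition, not merely the extension of individual orbit maps), so in practice one would still want to invoke the cited proposition rather than reprove it.
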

	\begin{proof}
		Choosing a closed immersion $\calG\hr \GL_{n,O}$ of group schemes, we reduce to the case $\calG=\GL_{n,O}$, using \Cref{SW theorem on parahoric grassmanians} to see that the induced map $\Gr_\calG\hr \Gr_{\GL_{n,O}}$ is a closed immersion.
		Then the lemma is a special case of \cite[Proposition VI.3.1]{FS21}.
	\end{proof}
	
	Consequently, we obtain a $\bbG_m^\dia$-equivariant diagram
	\begin{equation}\label{BD.attractor.eq}
	(\Gr_\calG)^0 \longleftarrow (\Gr_\calG)^+ \lr \Gr_\calG,
	\end{equation}
	where $(\Gr_\calG)^0=(\Gr_\calG)^{\bbG_m^\dia}$ denotes the fixed points and $(\Gr_\calG)^+$ the attractor classifying $\bbG_m^\dia$-equivariant maps $(\bbA^1)^{\dia, +}\r \Gr_\calG$ over $\Spd O$, see also \eqref{section_nearby_cycles:eq1} below. 
	The map $(\Gr_\calG)^+\r (\Gr_\calG)^0$ is the Bia{\l}ynicki-Birula map given by evaluating at the zero section.
	Our aim is to understand the relation of \eqref{BD.groups.eq} with \eqref{BD.attractor.eq}.
	The following result is the analogue of \cite[Theorem 5.6, Theorem 5.19]{HR21} in the context of ind-schemes: 
	
	\begin{theorem}\label{BD.attractor.groups.comparison.theorem}
		The maps \eqref{BD.groups.eq} and \eqref{BD.attractor.eq} fit into a commutative diagram of ind-(spatial $\Spd O$-diamonds)
		\begin{equation}\label{BD.attractor.groups.comparison.eq}
		\begin{tikzpicture}[baseline=(current  bounding  box.center)]
		\matrix(a)[matrix of math nodes, 
		row sep=1.5em, column sep=2em, 
		text height=1.5ex, text depth=0.45ex] 
		{\Gr_\calM & \Gr_{\calP} & \Gr_\calG \\ 
			(\Gr_{\calG})^0& (\Gr_{\calG})^+& \Gr_{\calG}, \\}; 
		\path[->](a-1-2) edge node[above] {}  (a-1-1);
		\path[->](a-1-2) edge node[above] {}  (a-1-3);
		\path[->](a-2-2) edge node[below] {}  (a-2-1);
		\path[->](a-2-2) edge node[below] {} (a-2-3);
		\path[->](a-1-1) edge node[left] {$\iota^0$} (a-2-1);
		\path[->](a-1-2) edge node[left] {$\iota^+$} (a-2-2);
		\path[->](a-1-3) edge node[left] {$\id$} (a-2-3);
		\end{tikzpicture}
		\end{equation}
		with the following properties.
		\begin{enumerate}
			\item \label{BD.attractor.groups.comparison.theorem.1}
			The maps $\iota^0$, $\iota^+$ are open and closed immersions.
			\item \label{BD.attractor.groups.comparison.theorem.2}
			Their base changes $\iota^0_{F}$, $\iota^+_{F}$ are isomorphisms. 
			\item \label{BD.attractor.groups.comparison.theorem.3}
			If $\calG_{\breve{O}}$ is very special parahoric (for example, reductive), then $\iota^0$, $\iota^+$ are isomorphisms.
		\end{enumerate}
		In particular, the complements $(\Gr_\calG)^0\setminus \iota^0(\Gr_\calM)$, $(\Gr_\calG)^+\setminus \iota^+(\Gr_\calP)$ are concentrated over $\Spd k$, that is, their base change to $\Spd F$ is empty.
	\end{theorem}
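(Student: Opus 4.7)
The plan is to build the diagram \eqref{BD.attractor.groups.comparison.eq} from functoriality of the Beilinson--Drinfeld Grassmannian, verify the assertions over the generic fiber using \cite[Chapter VI.3]{FS21}, and promote them to the integral level by combining ind-properness with a local analysis on the big cell, mirroring the strategy of \cite[Theorems 5.6, 5.19]{HR21} in the equicharacteristic Witt vector setting.

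First I would construct the vertical arrows. Since $\calM = Z_\calG(\lambda)$ and $\lambda$ factors through the center of $\calM$, the induced $\bbG_m^\dia$-action on $\Gr_\calM$ is trivial, so the composite $\Gr_\calM \to \Gr_\calG$ coming from $\calM\hookrightarrow \calG$ factors canonically through the fixed points, yielding $\iota^0$. For $\iota^+$, the conjugation action of $\lambda$ on $\calP$ has non-negative weights on the Lie algebra by construction, so it extends to a monoid action of $\bbA^1$ on $\calP$; applying the BD Grassmannian functor and passing through Teichmüller lifts then extends the $\bbG_m^\dia$-action on $\Gr_\calP$ to $\bbA^{1,\dia,+}$, producing $\iota^+\colon \Gr_\calP \to (\Gr_\calG)^+$ via the universal property of the attractor. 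Commutativity of the lower squares is then automatic from the identification of the Levi quotient $\calP\to \calM$ with the zero-fiber of the extended $\bbA^1$-action on $\calP$.

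Property \ref{BD.attractor.groups.comparison.theorem.2} reduces to the case of a reductive group over the generic fiber, where $\iota^0_F, \iota^+_F$ match the maps proved to be isomorphisms in \cite[Chapter VI.3]{FS21}. For property \ref{BD.attractor.groups.comparison.theorem.1}, I would reduce to the case $\calG = \GL_{n,O}$ by fixing a closed embedding $\calG\hookrightarrow \GL_{n,O}$ compatible with $\lambda$: both the formation of $\Gr_\bullet$ (by \Cref{SW theorem on parahoric grassmanians}) and that of fixed points and attractors are left exact and preserve closed immersions. For $\GL_n$, the attractor decomposes via the lattice model into open pieces labelled by weight filtrations on a free $B_\dR^+$-module, each identified with a component of $\Gr_{\calP}$; closedness then follows from ind-properness of both source and target together with the monomorphism property, and openness from the explicit weight decomposition. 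Finally, property \ref{BD.attractor.groups.comparison.theorem.3} follows, once \ref{BD.attractor.groups.comparison.theorem.1} is in hand, from the Iwasawa decomposition $G(\breve F) = P(\breve F)\calG(\breve O)$ at a special vertex, which ensures surjectivity of $\iota^+, \iota^0$ on $\bar k$-points; the failure of this decomposition away from special vertices accounts for the asserted complements concentrated over $\Spd k$.

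The main obstacle I anticipate is establishing the openness (and the corresponding closedness) of $\iota^+$ at the integral level, since a priori there could be families of $\lambda$-attractor maps $\bbA^{1,\dia,+}\to \Gr_\calG$ whose specializations at $t=0$ fall outside $\Gr_\calM$ even when their generic fibers lie inside $\Gr_\calP$. The cleanest path seems to be an Iwahori-decomposition argument on the open big cell $\calU^-\times \calM \times \calU \hookrightarrow \calG$ near the identity section, which locally splits $(\Gr_\calG)^+$ as a product of attractor, fixer and repeller pieces for the three factors; this then globalizes to the entire BD Grassmannian by the $\calG$-equivariance of all structures and ind-properness. Adapting this essentially algebraic argument to the v-sheaf framework, where infinitesimal neighborhoods are replaced by completed diamonds and perfectoid tilts, is where most of the technical work will lie.
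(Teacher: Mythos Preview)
Your construction of $\iota^+$ via the extended $\bbA^1$-monoid action on $\calP$ is a perfectly valid alternative to the paper's route (which instead bootstraps from the $\GL_n$ case via a Cartesian square), and your treatment of parts \ref{BD.attractor.groups.comparison.theorem.2} and \ref{BD.attractor.groups.comparison.theorem.3} aligns with the paper. The real divergence is in part \ref{BD.attractor.groups.comparison.theorem.1}, and here there is a genuine gap.

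First, a small but important error: $\Gr_\calP$ is \emph{not} ind-proper over $\Spd O$ (the fibers of $\Gr_\calP\to\Gr_\calM$ are affine-space bundles coming from the unipotent radical), so your claim that ``closedness follows from ind-properness of both source and target together with the monomorphism property'' cannot work as stated. What the embedding into $\GL_n$ actually buys you---and this is what the paper does---is that $(\Gr_\calG)^+$ is the fiber product $\Gr_{\calP'}\times_{\Gr_{\GL_n}}\Gr_\calG$, and since $\calP'/\calP$ is quasi-affine the map $\Gr_\calP\to\Gr_{\calP'}$ is a \emph{locally closed} immersion. This only yields that $\iota^+$ is locally closed, not yet clopen.

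The upgrade from locally closed to clopen is the crux, and your big-cell proposal does not reach it. The product decomposition $\calU^-\times\calM\times\calU\hookrightarrow\calG$ describes an open neighborhood of the identity in the group, not a chart on the Grassmannian, and $\calG$-equivariance only lets you translate within a single $L^+\calG$-orbit. The phenomenon you yourself flag---extra attractor components in the special fiber whose limit points lie outside $\Gr_\calM$---is global and invisible to any local chart. The paper resolves this by a connected-component argument: one has bijections $\pi_0((\Gr_\calG)^+)\cong\pi_0((\Gr_\calG)^0)\cong\pi_0((\Fl_\calG)^0)$ (the second by proper base change), and the special-fiber fixed locus $(\Fl_\calG)^0$ is analyzed via the semi-infinite orbit stratification $\Fl_\calG=\bigcup_{[w]}\calS_w$ indexed by $W_{M,\aff}\backslash\tilde W/W_\calG$. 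The image of $\Fl_\calM$ corresponds precisely to the subset $W_{M,\aff}\backslash\tilde W_M/W_\calM$, and one then checks that $\iota^0,\iota^+$ induce bijections onto the union of components indexed by this subset. This combinatorial identification of components is the missing ingredient in your outline; without it, or a genuine substitute, the clopen assertion remains unproven.
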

	
	If $\calG$ is reductive, then \Cref{BD.attractor.groups.comparison.theorem} is proved in \cite[Proposition VI.3.1]{FS21}. 
	Indeed, in this case $\iota^0$, $\iota^+$ are isomorphisms.
	In general, we follow the strategy of \cite{HR21}:
	
	\begin{proof}[Proof of \Cref{BD.attractor.groups.comparison.theorem}]
		First, we construct the maps $\iota^0, \iota^+$.
		The closed subgroup $\calM\hr \calG$ induces a closed immersion $\Gr_\calM\hr \Gr_\calG$ (using \Cref{SW theorem on parahoric grassmanians}) that is $\bbG_m^\dia$-equivariant for the trivial action on the source. 
		Thus, the map factors though the fixed points, defining the necessarily closed immersion $\iota^0\co \Gr_\calM\hr (\Gr_\calG)^0$.
		The construction of $\iota^+$ is more delicate and proceeds as follows. 
		Pick a closed immersion $\calG\hr \GL_{n,O}$ of $O$-group schemes such that the fppf quotient $\GL_{n,O}/\calG$ is quasi-affine, see \cite[Proposition 1.3]{PR08}.
		The cocharacter $\la'\co \bbG_m\overset{\la}{\r} \calG \hr  \GL_{n,O}$ induces a parabolic subgroup $\calP'$ with Lie algebra $\Lie\, \calP'=(\Lie\,\GL_{n,O})_{\la'\geq 0}$.
		The induced map on fppf quotients $\calP'/\calP\r \GL_{n,O}/\calG$ is a monomorphism between finite type $O$-schemes, thus quasi-affine by Zariski's main theorem.
		By functoriality of the construction $\calG\mapsto \Gr_\calG$, we obtain a commutative diagram of ind-(spatial $\Spd O$-diamonds):
		
		\begin{equation}\label{commutative.square.BD.equation}
		\begin{tikzpicture}[baseline=(current  bounding  box.center)]
		\matrix(a)[matrix of math nodes, 
		row sep=1.2em, column sep=2em, 
		text height=1.5ex, text depth=0.45ex] 
		{\Gr_{\calP} & & \\
			&(\Gr_{\calG})^+& \Gr_\calG  \\ 
			\Gr_{\calP'}&(\Gr_{\GL_{n,O}})^+& \Gr_{\GL_{n,O}} \\}; 
		\path[->](a-1-1) edge[dotted] node[below] {$\iota^+$}  (a-2-2);
		\path[->](a-1-1) edge[bend left=15] node[above] {}  (a-2-3);
		\path[->](a-1-1) edge node[above] {}  (a-3-1);
		\path[->](a-3-1) edge node[above] {$\cong$}  (a-3-2);
		\path[->](a-2-2) edge node[above] {}  (a-2-3);
		\path[->](a-2-2) edge node[left] {}  (a-3-2);
		\path[->](a-3-2) edge node[below] {}  (a-3-3);
		\path[->](a-2-3) edge node[right] {}  (a-3-3);
		\end{tikzpicture}
		\end{equation}
		Using \Cref{SW theorem on parahoric grassmanians}, the map $\Gr_\calG\r \Gr_{\GL_{n,O}}$ is a closed immersion so that the square is Cartesian. 
		This proves the existence of $\iota^+$.
		Furthermore, the displayed map $\Gr_{\calP^\prime}\to (\Gr_{\GL_{n,O}})^+$ is an isomorphism by \cite[Proposition VI.3.1]{FS21}. 
		As $\calP'/\calP$ is quasi-affine, so $\Gr_\calP\r \Gr_{\calP'}$ is a locally closed immersion (compare with the proof of \cite[Proposition 1.2.6]{Zhu16} and \cite[Lemma 19.1.5]{SW20}), the map $\iota^+$ is necessarily a locally closed immersion as well.		
		
		Now, part \eqref{BD.attractor.groups.comparison.theorem.2} is immediate from our construction and \cite[Proposition VI.3.1]{FS21} applied over $\Spd F$. 
		For part \eqref{BD.attractor.groups.comparison.theorem.3}, we observe that $\iota^0, \iota^+$ are bijective on geometric points if $\calG$ is very special parahoric: by \eqref{BD.attractor.groups.comparison.theorem.2} for geometric points lying over $\Spd F$ and by the Iwasawa decomposition \cite[Section 3.3]{KP21} for geometric points lying over $\Spd k$.
		As $\iota^0$, $\iota^+$ are locally closed immersions, they must be isomorphisms, so \eqref{BD.attractor.groups.comparison.theorem.3} follows.
		
		For \eqref{BD.attractor.groups.comparison.theorem.1}, it remains to prove that $\iota^0$, $\iota^+$ are open immersions for general parahoric group schemes $\calG$.
		For this, we may and do assume that $k$ is algebraically closed. 
		There are bijections of connected components
		\begin{equation}
		\label{connected.components.bijection.equation}
		\pi_0((\Gr_\calG)^+)\overset{\cong}{\lr} \pi_0((\Gr_\calG)^0)\overset{\cong}{\lr} \pi_0((\Fl_\calG)^0),
		\end{equation}
		where the first holds by general properties of Bia{\l}ynicki-Birula maps (see the proof of \cite[Corollary 1.12]{Ric19a}) and the second by proper base change as in the proof of \Cref{connected.components.DB.lemma}.  
		The fixed points $(\Fl_\calG)^0$ in the Witt vector partial affine flag variety can be analyzed in analogy to \cite[Section 4]{HR21}: 
		concretely, if $\calP_\spc=\calM_\spc\ltimes \calU$ for $\calM_\spc$ being the corresponding parahoric model of $M_\spc$, then there is a disjoint union (on points) into connected locally closed sub-ind-schemes
		\begin{equation}\label{semi-infinite.orbits.equation}	
		\Fl_\calG=\bigcup_{[w]} \calS_w,\ \calS_w=L_k\calP_\spc\cdot w
		\end{equation}
		where $[w]$ runs through the double coset $W_{M,\aff}\backslash \tilde W / W_\calG$ and $w$ denotes the image of a representative under the embedding $\tilde W/W_\calG\hr \Fl_\calG$.
		The image of $\Fl_\calP\hr \Fl_\calG$ consists of those $\calS_w$ for $[w]$ lying in $W_{M,\aff}\backslash \tilde W_M / W_\calM$.
		Passing to fixed points, the image of $\Fl_\calM\hr (\Fl_\calG)^0$ is the union of the $L_k\calM_\spc$-orbits for these $[w]$. 
		So the map $\pi_0(\Fl_\calM)\r \pi_0((\Fl_\calG)^0)$ identifies with the injection
		\begin{equation}\label{Iwahori-Weyl.injection.equation}		
		W_{M,\aff}\backslash \tilde W_M / W_\calM \hr W_{M,\aff}\backslash \tilde W / W_\calG.
		\end{equation}
		We let $\calC^0_\calG$, respectively $\calC^+_\calG$, be the open and closed sub-v-sheaf of $(\Gr_\calG)^0$, respectively of $(\Gr_\calG)^+$, consisting of those components belonging to $\on{im}(\pi_0(\Fl_\calM)\hr \pi_0(\Fl_\calG))$ under \eqref{connected.components.bijection.equation}.
		Then the maps $\iota^0$, $\iota^+$ factor through $\calC^0_\calG$, respectively $\calC^+_\calG$ inducing locally closed immersions
		\begin{equation}
		\Gr_\calM\hr \calC^0_\calG,\ \Gr_\calP\hr \calC^+_\calG,
		\end{equation}
		that are bijective on geometric points, hence isomorphisms. 
		The theorem is thus proven.
	\end{proof}
	
	\subsection{Semi-infinite orbits}
	We end this section with a study of the stratification \eqref{semi-infinite.orbits.equation}. 	
	Throughout, we assume that $k=\bar{k}$ is algebraically closed so that $F=\breve F$ and $O=\breve O$.
	Note that the torus $S$ is then maximal $F$-split.
	The following lemma simplifies some arguments of \cite[Theorem 6.12]{HR21} and is used in the proof of \Cref{theorem_special_fiber_admissible} given in \Cref{nearby.cycles.section}. 
	
	\begin{lemma}\label{lemma_attractor_schubert_isolated}
		For every $w \in \tilde W/W_{\calG}$, there is an $O$-cocharacter $\bbG_m\r \calS\subset \calG$ such that for the induced strata $\calS_w \cap \Fl_{\calG,w}=\{ w\}$.
	\end{lemma}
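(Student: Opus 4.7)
The plan is to select $\lambda$ so that $w$ becomes a repelling isolated fixed point of the induced $\bbG_m^\dia$-action on $\Fl_{\calG, w}$, causing the attractor at $w$ inside the Schubert variety to collapse to $\{w\}$. For regular $\lambda$, the Levi $\calM$ equals $\calS$ and $\calP$ is a Borel; since $w$ is $\calS$-fixed, $\calS_w = L_k\calP_\spc \cdot w = L_k\calU \cdot w$ is precisely the Bialynicki--Birula attractor of the isolated $\lambda$-fixed point $w$. Because $\Fl_{\calG, w}$ is $\lambda$-stable, $\calS_w \cap \Fl_{\calG, w}$ coincides with the attractor of $w$ computed inside the Schubert variety, so it suffices to ensure that every $\lambda$-weight on the tangent space $T_w \Fl_{\calG, w}$ is strictly negative.

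To construct the cocharacter, use the splitting $\tilde W \cong W_0 \ltimes X_\ast(T)_I$ attached to the fixed alcove to write $w = t_\mu \bar w$ with $\bar w \in W_0$ and $\mu \in X_\ast(T)_I$. Let $\tilde \mu \in X_\ast(T)^I_\bbQ$ be the $I$-average of any lift of $\mu$, and let $\eta \in X_\ast(T)^I = X_\ast(S)$ be a dominant regular integral coweight, which is $I$-invariant because $G$ is quasi-split over $\breve F = F$. For $N$ a large and sufficiently divisible positive integer, set
\[
\lambda \;:=\; N \tilde\mu + \bar w \eta \;\in\; X_\ast(S).
\]
This $\lambda$ is regular, so $\calM = \calS$ and $\calP$ is a Borel, as required.

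For the weight estimate, first reduce to the Iwahori case via the smooth fibration $\Fl_\calI \r \Fl_\calG$, which restricts to a surjection $\Fl_{\calI, w^+} \r \Fl_{\calG, w}$ for $w^+$ the maximal-length lift of $w$ in $\tilde W$; thus every $\lambda$-weight on $T_w \Fl_{\calG, w}$ appears among those on $T_{w^+} \Fl_{\calI, w^+}$, which are the linear parts $a$ of affine positive roots $\alpha = a + n \delta$ satisfying $w^{-1}\alpha < 0$. Writing $w^{-1}(a + n\delta) = \bar w^{-1} a + (n + \langle a, \mu\rangle)\delta$, a direct case analysis shows that any such $a$ falls into one of: (i) $\langle a, \mu\rangle \leq -1$; (ii) $\langle a, \mu\rangle = 0$ with $\bar w^{-1} a < 0$; (iii) $a < 0$ and $\langle a, \mu\rangle \leq -2$; (iv) $\langle a, \mu\rangle = -1$ with $\bar w^{-1} a < 0$. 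In each case,
\[
\langle a, \lambda\rangle \;=\; N\langle a, \tilde\mu\rangle + \langle \bar w^{-1}a, \eta\rangle
\]
is strictly negative for $N$ sufficiently large: the $N$-term dominates whenever $\langle a, \mu\rangle \neq 0$, while the height-like correction $\langle \bar w^{-1}a, \eta\rangle$ resolves the boundary cases (ii) and (iv) where $\bar w^{-1}a < 0$.

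The main obstacle will be the combinatorial case analysis tracking which affine roots contribute to $T_w \Fl_{\calG, w}$ for a general parahoric (rather than Iwahori) Schubert variety, together with ensuring that the final $\lambda$ lies integrally in $X_\ast(S)$ rather than merely in $X_\ast(T)_\bbQ$; both rest on quasi-splitness of $G$ over $\breve F = F$ to make $\eta$ and the averaged $\tilde\mu$ behave correctly under $I$. With such $\lambda$ in hand, the local Bialynicki--Birula picture for $\bbG_m^\dia$-actions on ind-spatial diamonds developed in \cite[Chapter IV.6]{FS21} directly delivers that an isolated repelling fixed point has trivial attractor, thereby completing the proof.
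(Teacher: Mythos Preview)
Your strategy is sound and ultimately equivalent to the paper's, but the paper reaches the conclusion by a shorter and more conceptual route that avoids your case analysis entirely. The paper also first reduces to the open Iwahori cell $\Fl_{(\calI,\calG),w}^\circ$, writing it as $L_k^+\calU_{\alpha_1}\cdots L_k^+\calU_{\alpha_n}\cdot w$ along a minimal gallery from the base alcove $\bba$ to $\dot w(\bba)$. Instead of your explicit formula $\lambda=N\tilde\mu+\bar w\eta$ followed by a case split on the signs of $\langle a,\mu\rangle$ and $\bar w^{-1}a$, the paper observes directly that every gradient $a_i=\nabla\alpha_i$ satisfies $a_i(\dot w b-b)<0$, where $b$ is the barycenter of $\bba$: by construction of a minimal gallery, $b$ and $\dot w b$ lie on opposite sides of each wall $\partial\alpha_i$. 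Hence all $a_i$ lie in a single open half-space, and \emph{any} regular cocharacter of $S$ lying in the Weyl chamber containing $\dot w b-b$ will repel every $L_k^+\calU_{\alpha_i}$ simultaneously. This replaces your four-case breakdown (which, as written, has overlapping cases (i), (iii), (iv) and is not obviously exhaustive) with a one-line geometric fact.

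Two further points. First, your appeal to ``the local Bialynicki--Birula picture'' in \cite[IV.6]{FS21} is slightly misplaced: the lemma lives in the Witt vector flag variety, so ordinary scheme-theoretic BB theory is what is needed, and moreover you need smoothness at the point $w$ for the tangent-weight criterion to force the attractor to be $\{w\}$. The paper handles this cleanly by noting that the boundary $\Fl_{\calG,w}\setminus\Fl_{(\calI,\calG),w}^\circ$ is closed and $\bbG_m^{\mathrm{perf}}$-stable, so $\calS_w\cap\Fl_{\calG,w}$ is forced into the smooth open cell before any weight argument is applied. Second, your reduction from $T_w\Fl_{\calG,w}$ to $T_{w^+}\Fl_{\calI,w^+}$ via the smooth fibration is correct, but strictly speaking the relevant weights are those of the open cell, not of the (possibly larger) tangent space of the singular Schubert variety; the paper's formulation in terms of root groups makes this transparent.
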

	
	\begin{proof}
		Up to changing the Iwahori $L_k^+\calI \subset L_k^+\calG$, we may and do assume that the Iwahori--Schubert variety $\Fl_{(\calI,\calG), w}^\circ$ is a dense open of $\Fl_{\calG, w}$. Notice that the closed complement of that dense open is stable under the $\bbG_m^{\on{perf}}$-action, so it follows that the connected component of the fixed point $w$ in the attractor $\Fl_{\calG}^+$ cuts $\Fl_{\calG, w}$ inside $\Fl_{(\calI,\calG), w}^\circ$. 
		In other words, $S_w\cap \Fl_{\calG, v}$ is empty when $v<w$.

		The reduced word $\dot{w}=s_1\dots s_n$ determines a minimal gallery $\Gamma=(\bba_0, \bba_1, \dots, \bba_n)$, where $\bba_i=s_1\dots s_i(\bba)$, going from the alcove $\bba$ fixed by $\calI(O)$ to its $\dot{w}$-conjugate. 
		Let $\alpha_i$ be the unique positive affine root such that $\partial \alpha_i$ is the wall separating $\bba_{i-1}$ and $\bba_i$. We claim that
		\begin{equation}\label{eqn_Demazure_expanded_root_groups}
		\Fl_{(\calI,\calG),w}^\circ= L_k^+\calU_{\alpha_1} \cdot \ldots \cdot L_k^+\calU_{\alpha_n}w.
		\end{equation}
		This follows by expanding the Demazure twisted product, pulling across the simple reflections to the right, compare with \Cref{sec:geom-affine-grassm-proposition-stratification-of-schubert-varieties}. 
		Indeed, $s_{i-1}\dots s_1(\alpha_i) $ is by construction the positive simple affine root attached to $s_i$.		
		We need to construct an $O$-cocharacter $\chi: \bbG_{m} \to \calS$ such that the inverse of the induced $\bbG_m^{\on{perf}}$-action contracts every affine root group $L^+\calU_{\alpha_i}$ to zero, compare \eqref{eqn_Demazure_expanded_root_groups}.
		This is equivalent to the condition that $\chi$ pairs negatively with $a_i=\nabla \alpha_i$ for $i\in \{1,\dots, n\}$, which are the gradients of the prescribed affine roots $\alpha_i $ attached to $\Gamma$.
		This follows now by \cite[Corollary 5.6]{HN02} but we give below a quick proof for the reader's convenience.
		
		Consider the subset $\Phi_{G,w} \subset \Phi_G$ of all euclidean roots $a_i=\nabla\alpha_i$. 		
		If $b$ denotes the barycenter of $\bba$, then by definition $a(\dot{w}b-b)<0$ is stricly negative for $a \in \Phi_{G,w}$. 
		In particular, $\Phi_{G,w}$ consists entirely of negative $B$-roots where $S \subset B \subset G$ is a Borel subgroup whose closed Weyl chamber contains the vector $\dot{w}b-b$. 
		So we may take any $B$-dominant regular coweight $\bbG_m\to S$ which uniquely extends to the desired $ \bbG_m\to \calS$ because $S$ is $F$-split.
	\end{proof}
	
	Now assume that $\la$ is regular.
	Then $M_\la=T$ is a maximal torus, $P_\la=B$ a Borel subgroup with unipotent radical $U$ defined over $F$.
	The stratification \eqref{semi-infinite.orbits.equation} becomes
	\begin{equation}\label{semi_infinite_stratification}
	\Fl_{\calG}=\bigcup_{w \in \tilde{W}/W_\calG}\calS_w,\ \calS_{w}=L_kU\cdot w
	\end{equation}
	and the strata $\calS_w$ are called semi-infinite orbits, compare with \cite[Proposition VI.3.1]{FS21}. 
	Recall that there is a semi-infinite Bruhat order $\leq^{\frac{\infty}{2}}$ on $\tilde{W}/W_\calG$ defined by:
	\begin{equation}\label{equation-semiinifinite-bruhat}
	v \leq^{\frac{\infty}{2}} w \;\;\iff\;\; \forall\; \nu_I\gg 0\colon\; {\nu_I}(\pi)\cdot v \leq {{\nu_I}(\pi)}\cdot w
	\end{equation}
	where $X_\ast(T)_I\subset \tilde W$, $\nu_I\mapsto \nu_I(\pi)$ is viewed as a subgroup using the Kottwitz morphism, see \eqref{eq:coinvariants.Kottwitz.iso}, and where $\nu_I\gg 0$ means that $\nu_I$ is sufficiently $B$-dominant.
	This order was first introduced by Lusztig in \cite{Lus80} and depends on the $F$-Borel subgroup $B$ attracted by $\la$.

	\begin{proposition}\label{prop_closure_relations_semi_infinite_orbits}
		The ind-closure of $\calS_w$ inside $\Fl_{\calG}$ is given by the perfect sub-ind-scheme whose geometric points factor through some $\calS_v$ with $v \leq^{\frac{\infty}{2}} w$.
	\end{proposition}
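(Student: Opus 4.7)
My plan is to reduce the closure relations for semi-infinite orbits to those of Iwahori--Schubert cells via the translation action of $T(F)/\calT(O)$ on $\Fl_\calG$, exploiting the closure description of Iwahori--Schubert cells coming from the Demazure resolution (see \Cref{sec:affine-flag-variety}). Throughout, I would observe that $\ol{\calS_w}$ is stable under $L_kU$ (because $\calS_w$ is), so the assertion is equivalent to the statement $\calS_v \subset \ol{\calS_w} \Leftrightarrow v \leq^{\frac{\infty}{2}} w$.

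First, for any $\nu_I \in X_\ast(T)_I$, left multiplication by $\nu_I(\pi)$ is an automorphism of $\Fl_\calG$, and because $T$ normalizes $U$ it sends $\calS_w$ onto $\calS_{\nu_I(\pi)w}$. Hence $\calS_v \subset \ol{\calS_w}$ is equivalent to $\calS_{\nu_I(\pi)v} \subset \ol{\calS_{\nu_I(\pi)w}}$, and by definition of Lusztig's order, $v \leq^{\frac{\infty}{2}} w$ is equivalent to $\nu_I(\pi)v \leq^{\frac{\infty}{2}} \nu_I(\pi)w$. For any prescribed pair $(v,w)$, one can choose $\nu_I$ sufficiently $B$-dominant so that both orders coincide in a neighborhood of $\nu_I(\pi)v$ and $\nu_I(\pi)w$. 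This reduces the claim to the ``very dominant'' regime, in which $\leq^{\frac{\infty}{2}}$ agrees with the usual Bruhat order $\leq$.

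Second, I would establish the core geometric input that for $w$ sufficiently $B$-dominant and an auxiliary Iwahori $\calI$ dilated from $\calG$, one has $\Fl_{(\calI,\calG),w}^\circ \subset \calS_w$. By the Iwahori decomposition $L^+\calI = (L^+\calI \cap L^+U) \cdot L^+T \cdot (L^+\calI \cap L^+U^-)$ this reduces to the inclusion $w^{-1} L^+\calI w \subset L_kU \cdot L^+\calG$, verified affine-root-group by affine-root-group: conjugation by $w$ shifts the valuation level of each root group by the pairing of $w$ with the gradient, and for $w$ deep enough in the dominant direction this moves the positive Iwahori piece into $L_kU$ and the negative piece into $L^+\calG$. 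Taking ind-closures gives $\Fl_{\calG,w} \subset \ol{\calS_w}$, and by $L_kU$-stability one concludes $\calS_v \subset \ol{\calS_w}$ for every $v \leq w$. Combined with the previous paragraph, this yields the forward implication $v \leq^{\frac{\infty}{2}} w \Rightarrow \calS_v \subset \ol{\calS_w}$.

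For the reverse inclusion, I would use the repeller decomposition with respect to $-\la$, which attaches an opposite semi-infinite orbit $\calS_w^-:= L_kU^- \cdot w$ to every $w \in \tilde W/W_\calG$. A standard transversality argument shows that $\calS_v^- \cap \ol{\calS_w}$ is non-empty only when $v \geq^{\frac{\infty}{2}} w$, hence $\calS_v \subset \ol{\calS_w}$ forces $v \leq^{\frac{\infty}{2}} w$. The main obstacle is carrying out these steps uniformly over finite subsets of $\tilde W/W_\calG$ at the level of perfect ind-schemes, and adapting the opposite-cell transversality to possibly ramified parahorics. In the equicharacteristic split case this closure relation is classical (Lusztig, Mirkovi\'c--Vilonen) and for ramified groups due to the fourth named author; one can alternatively descend to our $p$-adic setting from the equicharacteristic one using the comparison in \Cref{lem_comparison_sch_vars_equi_and_mixed}.
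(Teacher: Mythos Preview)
Your forward implication ($v \leq^{\frac{\infty}{2}} w \Rightarrow \calS_v \subset \ol{\calS_w}$) is essentially the paper's argument: translate by a sufficiently $B$-dominant $\nu_I$ so that the Iwahori cell $\Fl_{(\calI,\calG),\nu_I(\pi)w}^\circ$ is contained in $\calS_{\nu_I(\pi)w}$, then use the known Bruhat closure relations for Iwahori--Schubert cells and $L_kU$-stability of $\ol{\calS_w}$. This part is fine.

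The gap is in the reverse implication. The paper does \emph{not} use opposite semi-infinite orbits or an equicharacteristic comparison; it gives a direct curve argument. Namely, if $v\in\ol{\calS_w}$, choose a curve $\calC\subset \Fl_\calG$ with $v\in\calC$ and $\calC^\circ:=\calC\setminus\{v\}\subset \calS_w$. Since $\calC^\circ$ is of finite type, it is contained in $X\cdot w$ for some finite-type $X\subset L_kU$. For $\nu_I$ dominant enough, conjugation by $\nu_I(\pi)$ contracts $X$ into $L^+_k\calI$, so $\nu_I(\pi)\cdot\calC^\circ\subset L^+_k\calI\cdot\nu_I(\pi)w=\Fl^\circ_{(\calI,\calG),\nu_I(\pi)w}$. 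Taking closures gives $\nu_I(\pi)v\in \Fl_{(\calI,\calG),\nu_I(\pi)w}$, hence $\nu_I(\pi)v\leq \nu_I(\pi)w$ in the Bruhat order, i.e.\ $v\leq^{\frac{\infty}{2}} w$.

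Your two proposed routes for this direction are both problematic. First, the transversality claim ``$\calS_v^-\cap\ol{\calS_w}\neq\emptyset$ only when $v\geq^{\frac{\infty}{2}} w$'' has the wrong inequality (if $v\in\ol{\calS_w}$ then $v\in\calS_v^-\cap\ol{\calS_w}$, and you want to conclude $v\leq^{\frac{\infty}{2}} w$), and in any case the underlying statement about $\calS_v^-\cap\calS_{w'}$ is essentially equivalent to the closure relation you are trying to prove; it is not a ``standard'' input in this twisted parahoric setting. Second, the comparison in \Cref{lem_comparison_sch_vars_equi_and_mixed} only matches individual depth-$0$ Schubert varieties under \Cref{hyp_wild_odd_unitary}, not the semi-infinite stratification of the full flag variety; and as the introduction notes, the closure relation for semi-infinite orbits in the twisted case was left open in \cite{Zhu15,Ric16} and is established here, so there is no prior equicharacteristic result to import.
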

	
	\begin{proof}
		Let $\calI \to \calG$ be an auxiliary Iwahori model, fixed for the remainder of the proof. Suppose there is a curve $\calC \subset \Fl_{\calG}$ containing $v$ and whose complement $\calC^\circ:=\calC\setminus \{v\}$ is contained in $\calS_w=L_kU \cdot w$. 	
		Now, notice that, for sufficiently dominant $\nu_I$, one gets the inclusion
		\begin{equation}
		\nu_I(\pi)\cdot \calC^\circ \subset L^+_k\calI \cdot {\nu_I}(\pi)\cdot w,
		\end{equation}
		because conjugation by $\nu_I(\pi)$ moves any given perfect $k$-subscheme of $L_kU$ inside the Iwahori loop group $L^+_k\calI \subset L^+_k\calG$. This implies the inequality ${{\nu_I}(\pi)} v \leq {{\nu_I}(\pi)} w$, and therefore $v \leq^{\frac{\infty}{2}} w$.
		
		Conversely, assume that the inequality $v \leq^{\frac{\infty}{2}} w$ holds. 
		By definition, for all sufficiently dominant translations $\nu_I\in X_\ast(T)_I$, the inequality $\nu_I(\pi)\cdot w\leq \nu_I(\pi)\cdot v$ holds in the Bruhat order. 
		By enlarging $\nu_I$ if necessary, we may assume the $\Fl^\circ_{(\calI,\calG), \nu_I(\pi)\cdot w}$ is of the form $\prod_{\alpha \in \Gamma} L_k^+\calU_{\alpha} \cdot \nu_I(\pi)\cdot w$ where all of the $\alpha\in \Gamma$ have positive gradient. 
		There is a curve $\calC$ in $\Fl_{(\calI,\calG),\nu_I(\pi)\cdot w}$ joining $\nu_I(\pi)\cdot w$ to $\nu_I(\pi)\cdot v$ since $\nu_I(\pi)\cdot v\leq \nu_I(\pi)\cdot w$. 
		By our assumption on $\nu_I(\pi)$, we have $\calC^\circ\subset \calS_{\nu_I(\pi)\cdot w}$ for $\calC^\circ:=\calC\cap \Fl_{(\calI,\calG),\nu_I(\pi)\cdot w}^\circ$. 
		Now, the map $t_{\nu_I}\co \Fl_{(\calI,\calG), w}\to \Fl_{(\calI,\calG), \nu_I(\pi)\cdot w}$ induced by left translation with $\nu_I(\pi)$ is an isomorphism and hence induces $\calS_w\cong \calS_{\nu_I(\pi)\cdot w}$.
		Then the curve $t_{\nu_I}^{-1}(\calC)$ joins $w$ to $v$, and $t_{\nu_I}^{-1}(\calC^\circ)\subset \calS_w$.
	\end{proof}
	
	Next, we extend the equi-dimensionality of Mirkovi\'c--Vilonen cycles \cite[Theorem 3.2]{MV07} (see also \cite[Corollary 2.8]{Zhu17} and \cite[Corollary VI.3.8]{FS21}) from split groups to twisted groups as follows.
	We continue to assume that $\la$ is regular and, additionally, that $\calG$ is special parahoric.
	Then $X_\ast(T)_I=\tilde W/W_\calG$ and \eqref{semi_infinite_stratification} becomes
	\begin{equation}
	\Fl_\calG=\bigcup_{\nu_I \in X_*(T)_I} \calS_{\nu_I},\ \calS_{\nu_I}=L_kU\cdot {\nu_I}(\pi).
	\end{equation}
	If $\calG$ is reductive, then $\Fl_\calG$ is the Witt vector affine Grassmannian studied in \cite{Zhu17}.
	So, in general, $\Fl_\calG$ can be regarded as a twisted version when $\calG$ is special parahoric.
	Indeed, $\Fl_\calG=\on{colim} \Fl_{\calG, \mu_I}$ where $\mu_I$ runs through $X_*(T)_{I,+}$, the $B$-dominant elements in $X_*(T)_I$ equipped with the dominance order and $\Fl_{\calG, \mu_I}:=\Fl_{\calG, \mu_I(\pi)}$, as is usual notation for (twisted) affine Grassmannians.
	We note that the projection $X_*(T)\r X_*(T)_I$ restricts to a (not necessarily surjective) map $X_*(T)_+\r X_*(T)_{I,+}$ compatibly with the dominance orders, see \cite[Lemma~2.6]{ALRR}.
	Also, the semi-infinite Bruhat order on $X_\ast(T)_I$ specializes to the dominance relation, that is, $\nu_I'\leq^{\frac{\infty}{2}} \nu_I$ if and only if $\nu_I-\nu_I'$ is a sum of coinvariants of positive coroots with non-negative integer coefficients.
	
	\begin{lemma}\label{equidimensionality.MV.cycles.lemma}
		For any $\nu_I\in X_*(T)_I$, the intersection $\calS_{\nu_I}\cap \Fl_{\calG, \mu_I}$ is non-empty if and only if $\nu_I$ lies in the $W_\calG$-orbit of some $\mu_I'\in X_*(T)_{I,+}$ with $\mu'_I\leq \mu_I$.
		In this case, it is affine and equidimensional of dimension $\langle\rho_G, \nu + \mu\rangle$.
	\end{lemma}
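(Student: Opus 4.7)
The statement splits naturally into the non-emptiness criterion and the geometric claims of affineness and equidimensionality. We plan to address them in this order, combining standard Cartan--Iwasawa combinatorics at special parahorics with the attractor decomposition of \Cref{BD.attractor.groups.comparison.theorem}.

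For non-emptiness, the sufficient direction is immediate: since $\calG$ is special parahoric, $W_\calG = W_0$ admits representatives in $\calG(O)$, so for $\nu_I = w\mu_I'$ with $w \in W_\calG$ and $\mu_I' \leq \mu_I$ dominant, the fixed point $\nu_I(\pi) = w\mu_I'(\pi)$ already lies in the $W_\calG$-stable subset $\calS_{\nu_I} \cap \Fl_{\calG, \mu_I}$. For the necessary direction, we use a standard Cartan invariant estimate: for any $u \in U(F)$ and $\nu_I$ with dominant $W_0$-representative $\nu_I^+$, the Cartan invariant of $u\nu_I(\pi)$ is bounded below by $\nu_I^+$ in the dominance order, so $\calS_{\nu_I} \cap \Fl_{\calG,\mu_I} \neq \emptyset$ forces $\nu_I^+ \leq \mu_I$, i.e., $\nu_I$ lies in the $W_\calG$-orbit of the dominant coweight $\nu_I^+$.

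For the geometric claims, we apply \Cref{BD.attractor.groups.comparison.theorem} to the regular cocharacter $\la$: since $\calG$ is special parahoric and $\la$ is regular, $\calM = \calT$ and $\calP = \calB$, so the theorem yields isomorphisms $\Gr_\calT \cong (\Gr_\calG)^0$ and $\Gr_\calB \cong (\Gr_\calG)^+$. Restricting to special fibers, each semi-infinite orbit $\calS_{\nu_I}$ is identified with the Bialynicki--Birula fiber of $\Fl_\calB \to \Fl_\calT$ over the fixed point $\nu_I(\pi)$. As $U$ is split unipotent over $F$, its loop group $L_kU$ is ind-affine and acts simply transitively on $\calS_{\nu_I}$, so $\calS_{\nu_I}$ is ind-affine and its intersection with the perfectly projective Schubert scheme $\Fl_{\calG, \mu_I}$ is affine.

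For the dimension formula $\langle\rho_G, \nu+\mu\rangle$, we adapt the Mirkovi\'c--Vilonen argument to the ramified setting, extending \cite{Zhu15, Ric16}. The upper bound $\dim(\calS_{\nu_I}\cap \Fl_{\calG, \mu_I}) \leq \langle\rho_G, \nu+\mu\rangle$ is a general property of attractor fibers: the positive-$\la$-weight part of the tangent space of $\Fl_{\calG, \mu_I}$ at $\nu_I(\pi)$ has dimension $\langle\rho_G, \nu+\mu\rangle$, computed by pairing affine roots of the $O$-model $\calU$ against $\rho_G$. The matching lower bound on each irreducible component is obtained from the opposite semi-infinite orbit $\calT_{\nu_I} := L_kU^-\cdot \nu_I(\pi)$ attached to $-\la$: the analogous upper bound yields $\dim(\calT_{\nu_I}\cap \Fl_{\calG, \mu_I}) \leq \langle\rho_G, \mu-\nu\rangle$, and local transversality of $\calS_{\nu_I}$ and $\calT_{\nu_I}$ at $\nu_I(\pi)$ combined with $\dim \Fl_{\calG,\mu_I} = \langle 2\rho_G, \mu\rangle$ pins both estimates to equalities. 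The main obstacle is verifying this transversality and the associated tangent weight calculation uniformly in the ramified setting, since affine root groups for $G_{\breve F}$ no longer split the unipotent loop group as freely as in the classical split case; a cleaner alternative reduces to the split case by passing to a finite Galois splitting extension $\breve F'/\breve F$, embedding $\calG$ functorially into a parahoric of a split group via restriction of scalars, and invoking \cite{MV07} before descending by Galois.
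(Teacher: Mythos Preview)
Your non-emptiness and affineness arguments are essentially correct, though the paper handles both more structurally: it identifies $(\Fl_{\calG,\mu_I})^+ \cong \bigsqcup_{\nu_I} \calS_{\nu_I}\cap \Fl_{\calG,\mu_I}$ via \Cref{BD.attractor.groups.comparison.theorem}, reads off the non-emptiness criterion from the description of the fixed-point locus $(\Fl_{\calG,\mu_I})^0$ together with the section of the Bialynicki--Birula map, and gets affineness for free from the general fact that the Bialynicki--Birula map for schemes is affine \cite[Corollary 1.12]{Ric19a}. Your route through Cartan--Iwasawa estimates and the ind-affine structure of $L_kU$-orbits also works but is less direct.

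The substantive divergence is in the dimension formula. You propose the classical Mirkovi\'c--Vilonen mechanism (tangent weight counts for the upper bound, transversality of opposite semi-infinite orbits for the lower bound), and you correctly flag the obstacle: in the ramified setting the affine root group structure of $\calU$ is more delicate, so neither the tangent space count nor the transversality is routine, and your fallback reduction to a split group via restriction of scalars is not obviously compatible with the stratifications. The paper sidesteps all of this. It invokes the argument of \cite[Corollary VI.3.8]{FS21}: once each $\calS_{\nu_I}\cap \Fl_{\calG,\mu_I}$ is known to be affine and the closure relation \Cref{prop_closure_relations_semi_infinite_orbits} is in hand, equidimensionality with the asserted formula follows formally from a single base case, namely that $\calS_{\mu_I^{\mathrm{anti}}}\cap \Fl_{\calG,\mu_I}$ is a point, which is already contained in the proof of \Cref{lemma_attractor_schubert_isolated}. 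The point is that affineness forces the boundary of each irreducible component to have codimension exactly one, and the semi-infinite order steps by images of simple coroots, so an induction starting at the antidominant point pins down every dimension. This completely avoids tangent space computations and transversality, and works uniformly for any special parahoric without reduction to the split case.
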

	
	Here $\rho_G\in X^*(T)$ denotes the half sum of the $B$-positive roots.
	We note that the pairing $\langle\rho_G, \nu + \mu\rangle$ is well-defined independently of the choice of lifts $\nu,\mu \in X_*(T)$ of $\nu_I, \mu_I$ because $\rho_G$ is $I$-invariant.
	
	\begin{proof}[Proof of \Cref{equidimensionality.MV.cycles.lemma}]
		The map $(\Fl_{\calG,\mu_I})^+\r \Fl_{\calG,\mu_I}$ induces an isomorphism (see \Cref{BD.attractor.groups.comparison.theorem})
		\begin{equation}\label{attractor.disjoint.sum.equation}
		(\Fl_{\calG,\mu_I})^+ \overset{\cong}{\lr} \bigsqcup_{\nu_I \in X_*(T)_I} \calS_{\nu_I}\cap \Fl_{\calG, \mu_I}.
		\end{equation}
		Under $(\Fl_{\calG,\mu_I})^+\r (\Fl_{\calG,\mu_I})^0$, the component $\calS_{\nu_I}\cap \Fl_{\calG, \mu_I}$ contracts to $\underline{\{\nu_I\}}=\Spec(k)$. 
		Thus, it is affine because Bia{\l}ynicki-Birula maps for schemes are affine by \cite[Corollary 1.12]{Ric19a}.
		Also, $(\Fl_{\calG,\mu_I})^0$ identifies with the constant scheme associated with the subset of $\nu_I\in X_*(T)_I$ lying in the $W_\calG$-orbit of some $\mu_I'\in X_*(T)_{I,+}$ with $\mu'_I\leq \mu_I$. 
		So only such $\nu_I$ contribute to \eqref{attractor.disjoint.sum.equation}, and as the Bia{\l}ynicki-Birula map has a section, the non-emptiness criterion for $\calS_{\nu_I}\cap \Fl_{\calG, \mu_I}$ holds true.
		Furthermore, the union over all $\calS_{\nu'_I}\cap \Fl_{\calG,\mu_I}$ with $\nu_I'\leq^{\frac{\infty}{2}} \nu_I$ is a closed perfect subscheme by \Cref{prop_closure_relations_semi_infinite_orbits}.
		
		As noted in \cite[Corollary VI.3.8]{FS21}, the affineness implies the dimension formula once we show that $\calS_{\mu_I^{\on{anti}}}\cap \Fl_{\calG, \mu_I}$ is a point where $\mu_I^{\on{anti}}$ is the antidominant element in the $W_\calG$-orbit of $\mu_I$.
		This follows from the proof of \Cref{lemma_attractor_schubert_isolated}.
	\end{proof}

	
	\section{Nearby cycles of étale sheaves}\label{section_nearby_cycles}
	
	\subsection{Recollections}
	In \cite{Sch17}, Scholze constructs a category of étale sheaves 
	\begin{equation}
	\D(X,\Lambda):=\D_\et(X,\Lambda) 
	\end{equation}
	for all small v-stacks $X$. 
	As coefficients $\Lambda$, we allow prime-to-$p$ torsion rings or, by the adic formalism of \cite[Section 26]{Sch17}, an $\ell$-torsion free, complete $\ell$-adic ring for $\ell\neq p$, or a ring of the form $\Lambda=\Lambda_0[\ell^{-1}]$ where $\Lambda_0$ is as in the previous case.
	In the final case, as this is not covered in \cite{Sch17}, we define the triangulated category
	\begin{equation}\label{eq:localization_categories}
	\D(X,\Lambda):=\D(X,\Lambda_0)\otimes_{\Lambda_0}\Lambda,
	\end{equation}
	in analogy to the classical definition for schemes, for example, see \cite[Appendix A]{KW01}. 
	The adic formalism of \cite[Section 26]{Sch17} carries over to the categories \eqref{eq:localization_categories}.
	Finally, we also allow $\Lambda$ to be a filtered colimit of the aforementioned rings, with the obvious definition for the categories.
	This includes algebraic field extensions $L/\bbQ_\ell$ and their rings of integers $O_L$.
	
	The categories of \'etale sheaves are equipped with the usual six functors formalism: the endofunctors $\otimes^\bbL$, $\RHom$ and functors $Rf_*$, $f^*$ for a morphism $f\colon X\rightarrow Y$ of small v-stacks. 
	If $f$ is compactifiable and representable in locally spatial diamonds with $\on{dim.\!tr}f<\infty$, we have the functors $Rf_!$, $Rf^!$, completing the six functor formalism.
	
	In general, the categories $\D(X,\Lambda)$ and the six functors are rather inexplicit, constructed through v-descent using Lurie's $\infty$-categorical machinery. 
	Nevertheless, whenever $f\colon X\rightarrow Y$ is a morphism between locally spatial diamonds, then $X$ and $Y$ admit a well-defined étale site and Scholze's operations are very closely related to the operations that one can construct site-theoretically, see \cite[Proposition 14.15, Section 17]{Sch17}.
	
	When $X$ and $Y$ are locally spatial diamonds we say that an object $A \in \D(X,\Lambda)$ is ULA (=universally locally acyclic) with respect to $f$ if, for all locally spatial diamonds $Y'\to Y$, the pullback $A' \in \D(X',\Lambda)$ is overconvergent along the fibers of $f': X'=X\times_Y Y' \to Y'$ and $R(f'\circ j')_{!}j'{}^{*}A$ is perfect-constructible for all separated étale neighborhoods $j'\colon U'\rightarrow X'$ for which $f'\circ j'$ is quasi-compact, see \cite[Definition IV.2.1]{FS21}. 
	If $\Lambda$ is $\ell$-adic as above, then a complex $A\in \D(X,\Lambda)$ is called perfect-constructible if $A\otimes^\bbL_\Lambda \Lambda/\ell$ is étale locally perfect-constant after passing to a constructible stratification, equivalently $A\otimes^\bbL_\Lambda \Lambda/\ell^n$ are so for all $n\geq 1$.  
	Finally, if $\Lambda=\Lambda_0[\ell^	{-1}]$ is as in \eqref{eq:localization_categories}, then an object in $\D(X,\Lambda)$ is called perfect-constructible if it admits a $\Lambda_0$-lattice which is so.
	For $X$ and $Y$ more general v-stacks (and $f\colon X\to Y$ representable in locally spatial diamonds), we call $A$ ULA if it is ULA after any base change $S\to Y$ with $S$ a locally spatial diamond.
	
	Suppose $X$ is a small v-stack proper and representable in spatial diamonds over a base $S$, and that $X$ is equipped with an action by $\bbG_{m,S}^\diamondsuit$ satisfying the conditions \cite[Hypothesis IV.6.1.]{FS21}. 
	One can consider the v-stacks
	\begin{equation}\label{section_nearby_cycles:eq1}
	X^\pm=\Hom_{\bbG_{m}^\diamondsuit}\big((\bbA^1)^{\pm,\diamondsuit},X\big)
	\end{equation}
	which (by hypothesis) are represented by a finite partition of $X$ into locally closed subsets. 
	This also induces a partition of the fixed-point v-stack $X^0=X^{\bbG_m^\diamondsuit}$ into closed and open subsets. 
	We have inclusion maps $q^\pm\co X^\pm\to X$ and projection maps $p^\pm\co X^\pm\to X^0$, from that we obtain the hyperbolic localization functor
	\begin{equation}
	L_{X/S}\colon \D(X/\bbG_{m,S}^\diamondsuit,\Lambda) \rightarrow \D(X^0,\Lambda),
	\end{equation}
	which can be expressed as $R(p^+)_!(q^+)^*$ or equivalently as $ R(p^-)_*R(q^-)^!$ by \cite[Theorem IV.6.5]{FS21}. 
	This functor enjoys many compatibilities, in analogy to \cite{Ric19a}, which we will exploit to compute nearby cycles, see \cite[Propositions IV.6.12, IV.6.13, IV.6.14]{FS21}.
	
	\subsection{Over $C$}

	We continue with the notation and denote by $F/\bbQ_p$ a complete discretely valued field with ring of integers $O$ and perfect residue field $k$ of characteristic $p>0$. 
	Also, we fix a complete algebraic closure $C/F$, and a connected reductive $F$-group $G$. 
	
	In this section, we recall the structure of the categories of monodromic sheaves with bounded support $\D(\Hk_{G,C},\Lambda)^{\on{bd}}$ and $\D(\Gr_{G,C}, \Lambda)^{\on{mon},\on{bd}}$ studied in \cite[Section VI]{FS21}. 
	As in \Cref{section_geometry_strata}, for any cocharacter $\lambda\colon \bbG_m\to G_C$, we have the induced $\bbG_{m}^\diamondsuit$-action on $\Gr_{G,C}$, whose attractors only depend on the attracting parabolic $P \subset G_C$.

	In particular, hyperbolic localization gives the constant term functor
	\begin{equation}
	\on{CT}_P\colon \D(\Hk_{G,C},\Lambda)^{\on{bd}}\rightarrow \D(\Gr_{G,C},\Lambda)^{\on{mon},\on{bd}} \xrightarrow{L_{\Gr_G/_C}} \D(\Gr_{M,C},\Lambda)
	\end{equation} 
	providing the main tool to effectively study the category of derived étale sheaves on $\Hk_{G,C}$ as in \cite[Corollary VI.3.5]{FS21}.
	One of the crucial techniques is the following conservativity lemma \cite[Proposition VI.4.2]{FS21} whose proof we sketch for convenience.

	\begin{lemma}
		\label{generic fiber conservativity result}
		Let $T \subset B\subset G_C$ be an arbitrary maximal torus and a Borel containing it. 
		Then $A \in \D(\Hk_{G,C},\Lambda)^{\on{bd}}$ vanishes if and only if $\on{CT}_B(A) \in \D(\Gr_{T,C},\Lambda)$ does.
	\end{lemma}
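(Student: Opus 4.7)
\textbf{The plan is to prove this by contrapositive}, using the hyperbolic localization interpretation of $\on{CT}_B$ together with the structure of Mirković--Vilonen cycles. One direction is trivial since $\on{CT}_B$ is a functor, so I focus on showing that a non-zero $A \in \D(\Hk_{G,C},\Lambda)^{\on{bd}}$ has non-zero constant term. First, I would pull $A$ back to an $L^+_C G$-equivariant complex $\tilde A$ on $\Gr_{G,C}$. By boundedness of support together with \Cref{Schubert.variety.corollary}, the set of $\mu \in X_\ast(T)_+$ with $\tilde A|_{\Gr_{G,C,\mu}^\circ}\neq 0$ is finite; I would choose $\mu$ maximal in the dominance order, reducing the problem to producing a point of $\Gr_{T,C}$ at which $\on{CT}_B(A)$ does not vanish.

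The candidate point is the antidominant conjugate $w_0\mu$ in its Weyl orbit. To verify non-vanishing there, I would fix a regular $B$-dominant cocharacter $\lambda\colon \bbG_m\to T$, so that the parabolic $P_\lambda$ equals $B$, the Levi $M_\lambda$ equals $T$, and the fixed-point v-sheaf of the induced $\bbG_m^\diamondsuit$-action on $\Gr_{G,C}$ is precisely $\Gr_{T,C} \cong \underline{X_\ast(T)}$. Then the hyperbolic localization formula \cite[Theorem IV.6.5]{FS21} gives
\begin{equation}
\on{CT}_B(A)|_{w_0\mu} \;\cong\; R\Gamma_c\bigl(\calS_{w_0\mu}\cap \on{supp}(\tilde A),\; \tilde A|_{\calS_{w_0\mu}}\bigr).
\end{equation}
Combining the MV-dimension calculation $\dim \calS_{w_0\mu}\cap\Gr_{G,C,\leq \mu} = \langle \rho_G,w_0\mu+\mu\rangle = 0$ (the $B^+_\dR$ analogue of \Cref{equidimensionality.MV.cycles.lemma}, cf.\ \cite[Corollary VI.3.8]{FS21}) with the elementary observation that $w_0\mu$ lies in no Weyl orbit of a strictly smaller dominant $\mu'<\mu$, the right-hand side reduces to the stalk $\tilde A|_{w_0\mu}$, i.e.\ the intersection $\calS_{w_0\mu}\cap \on{supp}(\tilde A)$ collapses to the single point $\{w_0\mu\}$ sitting inside $\Gr_{G,C,\mu}^\circ$.

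It remains to see that this stalk is non-zero, which I would deduce from $L^+_CG$-equivariance: the open cell $\Gr_{G,C,\mu}^\circ$ is a single $L^+_CG$-orbit through $w_0\mu$, with stabilizer pro-(cohomologically smooth and connected), so the complex $\tilde A|_{\Gr_{G,C,\mu}^\circ}$ is determined by its stalk at $w_0\mu$ and vanishes precisely when that stalk does. Maximality of $\mu$ then furnishes the desired contradiction. The main obstacle I anticipate is making the hyperbolic localization formula precise at the level of a single stalk in the v-sheaf setting and checking that the restriction to $\on{supp}(\tilde A)$ introduces no unexpected contributions from strata $\mu'<\mu$; this is controlled by the dominance order through the Weyl-orbit observation above, but one must remain attentive to the difference between $\on{supp}(\tilde A)$ as a closed subset of $|\Gr_{G,C}|$ and the individual locally closed Schubert cells.
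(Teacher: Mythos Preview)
Your proposal is correct and follows essentially the same approach as the paper: pick a maximal Schubert stratum $\mu$ in the support, observe that the semi-infinite orbit through the anti-dominant Weyl conjugate meets this stratum in a single point, and conclude that the constant term at that point computes the stalk of $A$, which is non-zero by equivariance. The paper's proof sketch is terser and slightly imprecise in its notation (writing ``$-\mu$'' for the anti-dominant point and ``$\mu$'' for the fiber in $\Gr_T$), whereas you spell out more carefully why $\calS_{w_0\mu}$ meets no other component of $\on{supp}(\tilde A)$ via the non-emptiness criterion for MV cycles; this extra care is warranted and your argument for it is sound.
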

	
	\begin{proof}
		The proof is done by considering a maximal strata where $A$ is concentrated. 
		This strata is of the form $[\Spd C/(L^+_CG)_{\mu}]$ for the stabilizer of $\mu\in X_*(T)_+$. 
		The attractor of $\Gr_{G, C, \mu}$ at the anti-dominant coweight $-\mu$ with respect to $B$ is an isolated point. 
		Using the $R(p^+)_!(q^+)^*$-version of hyperbolic localization, we see that the fiber of $\on{CT}_B$ over $\mu\in \Gr_T$ agrees with pullback to this point. 
	\end{proof}
	
	This allows us to localize several properties of derived objects in $\D(\Hk_{G,C},\Lambda)^{\on{bd}}$. 
	For instance, $A$ is ULA if and only if $\on{CT}_B(A)$ is, which, in turn, is equivalent to $[\mu]^*A$ being a perfect object for all maps $[\mu]\colon \Spd C \to \Hk_{G}$ with $\mu \in X_*(T)$, see \cite[Propositions VI.6.4, VI.6.5]{FS21}.
	
	Next, we move to the natural perverse t-structure on $\D(\Hk_{G,C}, \Lambda)^{\on{bd}}$, see \cite[Definition/Proposition VI.7.1]{FS21}.
	This is given in terms of the following subcategory
	\begin{equation}
		^p\D^{\leq 0}(\Hk_{G,C},\Lambda)^{\on{bd}} =\{ A \in \D(\Hk_{G,C},\Lambda)^{\on{bd}}\colon j_\mu^*A \in \D^{\leq -\langle 2\rho, \mu \rangle} \text{ for all } \mu \} ,
	\end{equation}
	which determines $^p\D^{\geq 0}(\Hk_{G,C},\Lambda)$ uniquely. 
	Intersecting these two, we get the category of perverse sheaves $\Perv(\Hk_{G,C},\Lambda)$. 
	
	Thanks to \cite[Proposition VI.7.4]{FS21}, the t-structure is preserved under and detected by $\on{CT}_B[\deg_B]$. 
	Here, for any $C$-parabolic $B\subset P$ with Levi quotient $M$, the degree is the locally constant function on $\Gr_{P,C}$ induced by
	\begin{equation}
	\deg_P(\lambda)=\langle 2\rho_G-2\rho_M, \lambda \rangle,
	\end{equation}
	where $\lambda \in X_*(T)$ is a coweight and $\rho_M$ is the half-sum of all $B$-positive $M$-roots. 
	The main geometric fact used in the proof of \cite[Proposition VI.7.4]{FS21} is the equidimensionality of semi-infinite orbits.
	
	When working with torsion coefficients, it is convenient to single out flat perverse sheaves, which are those objects $A$ such that for every $\Lambda$-module $M$ the complex $A\otimes_\Lambda^\mathbb{L}M$ is perverse.
	
	\begin{definition}
		The Satake category $\Sat(\Hk_{G,C},\Lambda)$ is the full subcategory of flat ULA objects in $\Perv(\Hk_{G,C},\Lambda)$.
	\end{definition}
	
	The Satake category is endowed with a monoidal product 
	\begin{equation}
	\star\colon \Sat(\Hk_{G,C},\Lambda)\times \Sat(\Hk_{G,C},\Lambda) \rightarrow \Sat(\Hk_{G,C},\Lambda)
	\end{equation}
	arising from the convolution Hecke stack $\Hk_{G,C} \widetilde{\times}\Hk_{G,C}$, see \cite[Proposition VI.8.1]{FS21}. 
	Due to the fusion interpretation \cite[Definition/Proposition VI.9.4]{FS21}, the monoidal structure is naturally symmetric monoidal.
	
	Taking cohomology of the affine Grassmannian furnishes a fiber functor 
	\begin{equation}
	F\colon \Sat(\Hk_{G,C},\Lambda) \rightarrow \Rep( \Lambda)
	\end{equation} 
	to the category of $\Lambda$-finite locally free modules and the Tannakian formalism gives us an interpretation of these categories in terms of a group of automorphisms.
	
	\begin{theorem}[Fargues--Scholze]
		Fix a compatible system of primitive prime-to-$p$-th roots of unity in $C$.
		Then, the automorphism group of the fiber functor $F$ is naturally isomorphic to the Langlands dual group $\widehat{G}_\Lambda$ formed over $\Lambda$.
	\end{theorem}
	
	One may regard the dual group $\widehat{G}_\Lambda$ as combinatorially defined in terms of root data. 
	The cyclotomic twist in \cite[Theorem VI.11.1]{FS21} is trivialized using the compatible system of roots of unity in $C$.
	
	\subsection{Over $\bar{k}$}
	Let $\calG$ be a parahoric $O$-model of $G$.
	In this section, we look at what happens with the geometric special fiber Hecke v-stack $\Hk_{\calG,\bar{k}}=L^+_{\bar{k}}\calG^\dia\backslash L_{\bar{k}}G^\dia/L^+_{\bar{k}}\calG^\dia$.
	We note that the categories of \'etale sheaves compare well to their scheme-theoretic companions, see \Cref{sec:comp-etale-cohom-1-derived-categories-of-hecke-stacks-are-isomorphic}.

	We continue with the notation and, in addition, fix a maximal $\breve{F}$-split $F$-torus $ S \subset G$ containing a maximal $F$-split torus (see \cite[Proposition 5.1.10]{BT84}) with centralizer $T$, which is a maximal $F$-torus inside $G$, such that their connected N\'eron $O$-models $\calS\subset \calT$ embed into $\calG$. 
	Each parabolic subgroup $P\subset G_{\breve F}$ with Levi $M$ containing $S_{\breve F}$ extends to a diagram of $O$-group schemes $\calM \leftarrow \calP \rightarrow \calG_{\breve{O}}$ by taking flat closures. 
	Again, choosing a cocharacter $\la\co \bbG_m\to \calS_{\breve O}\subset \calG_{\breve O}$ with $M=M_\la$ and $P=P^+_\la$, the formalism of \Cref{section_geometry_strata} applies to define $\bbG_{m}^\diamondsuit$-actions on the pro-smooth cover $\Gr_{\calG,\bar{k}}=\Fl_{\calG,\bar{k}}^\dia$. 
	This gives rise to a constant term functor
	\begin{equation}
	\on{CT}_\calP\colon \D(\Hk_{\calG,\bar{k}}, \Lambda)^{\on{bd}} \rightarrow  \D(\Fl_{\calG,\bar{k}}^{0,\diamondsuit}, \Lambda)^{\on{bd}},
	\end{equation}
	not depending on the choice of $\la$ such that $M=M_\la$ and $P=P^+_\la$.
	Here, the fixed points $\Fl_{\calG,\bar{k}}^{0}$ contain $\Fl_{\calM,\bar{k}}$ as an open and closed sub-ind-scheme by \Cref{BD.attractor.groups.comparison.theorem}, but are strictly bigger unless $\calG_{\breve{O}}$ is special parahoric. 
	
	As in the previous section, we analyse the key properties ULA, flatness and perversity. 
	The crucial step is the following conservativity result.
	
	\begin{proposition}\label{prop_conservative_constant_term}
		An object $A \in \D(\Hk_{\calG,\bar{k}},\Lambda)^{\on{bd}}$ vanishes if and only if $\on{CT}_\calB(A)$ does for every $\breve{F}$-Borel $S_{\breve{F}} \subset B\subset G_{\breve{F}}$.
	\end{proposition}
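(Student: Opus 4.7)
The plan is to adapt the strategy of \Cref{generic fiber conservativity result} to the special fiber, replacing the anti-dominant weight argument with the Bialynicki--Birula contraction from \Cref{lemma_attractor_schubert_isolated}. Assume $\on{CT}_\calB(A) = 0$ for every $\breve F$-Borel $S_{\breve F} \subset B \subset G_{\breve F}$; the goal is $A = 0$. Since $A$ has bounded support, the set $W_0 := \{w \in W_\calG\backslash \tilde W/W_\calG : j_w^* A \neq 0\}$ of double cosets where $A$ is nonzero is finite; assuming $W_0 \neq \emptyset$, pick $w \in W_0$ maximal in Bruhat order, together with a maximal-length lift in $\tilde W/W_\calG$.

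Next, apply \Cref{lemma_attractor_schubert_isolated} to produce a regular $O$-cocharacter $\lambda\colon \bbG_m \to \calS$ such that the attractor stratum satisfies $\calS_w \cap \Fl_{\calG,\bar k,w} = \{w\}$. Regularity gives $M_\lambda = T$, so the attracting parabolic $B := P_\lambda^+$ is an $\breve F$-Borel containing $S_{\breve F}$. By the $R(p^+)_!(q^+)^*$-formula for hyperbolic localization \cite[Theorem IV.6.5]{FS21}, the stalk of $\on{CT}_B(A)$ at the fixed point $w \in (\Fl_{\calG,\bar k})^0$ is, up to Tate twist and shift, given by $R\Gamma_c(\calS_w, A|_{\calS_w})$.

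The central claim is $\calS_w \cap \on{supp}(A) = \{w\}$. The support of $A$ is contained in $\bigcup_{w' \in W_0^{\max}} \Fl_{\calG,\bar k,w'}$, where $W_0^{\max}$ collects the Bruhat-maximal elements of $W_0$. For $w' \in W_0^{\max}$ distinct from $w$, a point $p \in \calS_w \cap \Fl_{\calG,\bar k,w'}$ would be attracted to $w$ under $\lambda$, while its orbit $\lambda(t) \cdot p$ remains inside the $L^+_{\bar k}\calG$-stable closed subset $\Fl_{\calG,\bar k,w'}$; closedness of the Schubert variety then forces $w \in \Fl_{\calG,\bar k,w'}$, i.e., $w \leq w'$ in Bruhat order, contradicting the incomparable maximality of $w$ and $w'$. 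Combining with $\calS_w \cap \Fl_{\calG,\bar k,w} = \{w\}$ from the lemma yields the claim. Consequently, the stalk of $\on{CT}_B(A)$ at $w$ equals $j_w^* A$ up to shift, which is nonzero, a contradiction.

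The main obstacle is to set up the hyperbolic localization correctly in the parahoric setting: one must interpret the fiber of $\on{CT}_B(A)$ at a point of $(\Fl_{\calG,\bar k})^0$, which by \Cref{BD.attractor.groups.comparison.theorem} may strictly contain the image $\iota^0(\Gr_{\calT,\bar k})$ when $\calG$ is not special parahoric, and verify that the hyperbolic localization formula reduces cleanly to the stalk once $A|_{\calS_w}$ is supported at the single point $w$.
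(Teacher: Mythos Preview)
Your proof is correct and follows the same strategy as the paper: pick a maximal stratum $w$ in the support, invoke \Cref{lemma_attractor_schubert_isolated} to find a Borel for which the attractor through $w$ meets $\Fl_{\calG,\bar k,w}$ only at $\{w\}$, and conclude that the stalk of $\on{CT}_\calB(A)$ at $w$ recovers $j_w^*A$. The paper's proof is terser and does not spell out the multiple-maximal-element case that you handle explicitly; your limit argument (the $\bbG_m$-orbit stays in the closed $L^+_{\bar k}\calG$-stable set $\Fl_{\calG,\bar k,w'}$, forcing $w\leq w'$) is exactly the justification the paper leaves implicit.

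Two minor remarks. First, there is no Tate twist or shift: the stalk of $R(p^+)_!(q^+)^*A$ at $w$ is literally $R\Gamma_c(\calS_w, A|_{\calS_w})$ by proper base change, and once $A|_{\calS_w}$ is supported at the point $w$ this is $A_w$ on the nose. Second, your closing ``main obstacle'' is not one: the constant term functor is defined with target $\D(\Fl_{\calG,\bar k}^{0,\diamondsuit},\Lambda)$, the point $w$ lies in $\Fl_{\calG,\bar k}^0$ regardless of whether $\calG$ is special, and the stalk computation goes through directly. The regularity of the cocharacter you need is guaranteed by the proof (though not the statement) of \Cref{lemma_attractor_schubert_isolated}.
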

	
	\begin{proof}
		Just like in \Cref{generic fiber conservativity result}, we argue on a maximal strata of $\Hk_{\calG,\bar{k}}$ where $A$ does not vanish, say one indexed by some $w$. 
		In this case, by \Cref{lemma_attractor_schubert_isolated} there is a choice of $\breve{F}$-Borel $B$ for which the associated attractor intersects $\Fl_{\calG,\bar{k},w}$ in an isolated point. 
		In this case, $\on{CT}_\calB$ agrees with pullback to this point.
	\end{proof}

	\begin{definition}
		An object $A\in \D(\Hk_{\calG,\bar{k}},\Lambda)^{\on{bd}}$ is ULA whenever its pullback to $\Fl_{\calG,\bar{k},w}^\diamondsuit$ is ULA over $\Spd \bar{k}$. 
	\end{definition}
	
	A priori, this notion depends on the choice of left or right trivialization, but it follows a posteriori from \Cref{prop_ula_special_fiber} that it does not, see \cite[Proposition VI.6.2]{FS21}.
	The ULA property interacts very well with the constant term functor:
	\begin{proposition}
		\label{sec:over-k-check-ula-via-constant-terms}
		If $A \in \D(\Hk_{\calG,\bar{k}},\Lambda)^{\on{bd}}$ is ULA, then so is $\on{CT}_\calP(A)$. 
		Conversely, if $\on{CT}_\calB(A)$ is ULA for all Borel subgroups $S_{\breve{F}} \subset B\subset G_{\breve{F}}$, then so is $A$.
	\end{proposition}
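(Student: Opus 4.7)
The forward implication should follow from standard properties of hyperbolic localization. The constant term $\on{CT}_\calP$ is by construction an instance of hyperbolic localization for the $\bbG_m^\dia$-action on $\Fl_{\calG,\bar{k}}^\dia$ induced by any cocharacter $\la\co \bbG_m\to \calS$ with $P = P_\la^+$, and hyperbolic localization preserves ULA objects with bounded support by \cite[Proposition IV.6.14]{FS21}. The bounded-support condition propagates because the attractor diagram of \Cref{BD.attractor.groups.comparison.theorem} is representable in ind-(spatial diamonds) and the map to the fixed points is affine.

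For the converse, my approach uses that ULA for $A\in \D(\Hk_{\calG,\bar{k}},\Lambda)^{\on{bd}}$ is equivalent to perfectness of the stalks $w^*A$ for every $w$ in the support $W\subset W_\calG\backslash\tilde W/W_\calG$ of $A$, by the homogeneity of each Schubert orbit $\Fl^{\circ,\dia}_{\calG,\bar{k},w}$ and the $L^+_{\bar{k}}\calG^\dia$-equivariance of $A$. The plan is to argue by Noetherian induction on $W$ with its Bruhat order, removing maximal elements one at a time. Fixing a maximal $w\in W$, I will apply \Cref{lemma_attractor_schubert_isolated} to select a cocharacter $\la\co \bbG_m\to \calS$ inducing an $\breve F$-Borel $B\subset G_{\breve F}$ such that $\calS_w \cap \Fl_{\calG,\bar{k},w}=\{w\}$. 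The Bruhat maximality of $w$, together with \Cref{prop_closure_relations_semi_infinite_orbits}, forces $\calS_w\cap\on{supp}(A)\subseteq \Fl_{\calG,\bar{k},w}$, hence equal to $\{w\}$. Via the hyperbolic localization formula $\on{CT}_\calB = R(p^+)_!(q^+)^*$, this identifies the stalk $w^*\on{CT}_\calB(A)$ with $w^*A[\deg_\calB(w)]$, and the ULA hypothesis on $\on{CT}_\calB(A)$---which on the disjoint union of points $(\Fl_\calG)^0$ amounts to perfectness of its stalks---forces $w^*A$ to be perfect.

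The induction closes via the distinguished triangle $j_!j^*A\to A\to i_*i^*A\xrightarrow{+1}$ attached to the open embedding $j\co \Fl^{\circ,\dia}_{\calG,\bar{k},w}\hookrightarrow \Fl^\dia_{\calG,\bar{k},w}$. Perfectness of $w^*A$ implies $j_!j^*A$ is ULA (in the equivariant setting, pointwise perfectness is the whole of ULA for an extension-by-zero from a single orbit, because the closed complement contributes only zero stalks), so the already-proven forward direction gives that $\on{CT}_\calB(j_!j^*A)$ is ULA for every Borel $\calB$; combined with the hypothesis on $\on{CT}_\calB(A)$, taking cones shows $\on{CT}_\calB(i_*i^*A)$ is ULA for every Borel. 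Since $i_*i^*A$ has strictly smaller support, the inductive hypothesis concludes that it is ULA, and therefore so is $A$. The main obstacle will be the stalk identification in the second paragraph: the attractor fiber at $w$ is the infinite-dimensional semi-infinite orbit $\calS_w$, which in general meets many Schubert strata, and the combined force of \Cref{lemma_attractor_schubert_isolated} and \Cref{prop_closure_relations_semi_infinite_orbits} must be deployed carefully to rule out contributions from strata other than $\{w\}$. This strategy adapts that of \cite[Proposition VI.6.2]{FS21} from the reductive case, with \Cref{lemma_attractor_schubert_isolated} substituting for the antidominant-isolation argument available there.
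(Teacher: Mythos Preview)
Your argument is correct and takes a genuinely different route from the paper. The paper works entirely through the Lu--Zheng/Fargues--Scholze characterization of ULA from \cite[Theorem~IV.2.23]{FS21}: it computes how $\on{CT}_{\calP_1\times\calP_2}$ interacts with both sides of the canonical map $p_1^*\bbD(A)\otimes p_2^*A\to\RHom(p_1^*A,Rp_2^!A)$ on $\Fl_{\calG,\bar k}\times\Fl_{\calG,\bar k}$, obtaining the forward direction from the choice $(\calP_1,\calP_2)=(\calP^-,\calP)$ and the converse by applying the conservativity of \Cref{prop_conservative_constant_term} (for each factor) to the cone. Your approach instead invokes the stalkwise description of ULA (this is \Cref{prop_ula_special_fiber}, whose proof is logically independent of the present proposition, so there is no circularity) and runs a Noetherian induction on the support. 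The paper's argument is more uniform and never touches individual strata; yours is more elementary but reuses, essentially verbatim, the mechanism behind \Cref{prop_conservative_constant_term}.

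One correction to your justification of the key stalk identification: \Cref{prop_closure_relations_semi_infinite_orbits} is not what you need. The relevant fact is simply that Schubert varieties are $\bbG_m^\dia$-stable (the cocharacter $\la$ lands in $\calS\subset\calG$), so for any $x\in\calS_w$ lying in $\Fl_{\calG,\bar k,v}$ the limit $w=\lim_{t\to 0}\la(t)\cdot x$ also lies in $\Fl_{\calG,\bar k,v}$, forcing $w\leq v$. Maximality of $w$ in the support then gives $\calS_w\cap\on{supp}(A)\subset\Fl_{\calG,\bar k,w}$, hence equals $\{w\}$ by \Cref{lemma_attractor_schubert_isolated}. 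Also drop the shift $[\deg_\calB(w)]$: it is not defined for non-translation $w$, and when the intersection is a single point the compactly supported cohomology is literally the stalk with no shift.
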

	\begin{proof}
		Abusing notation, we also call $A$ the pullback of this object to $\Fl^\diamondsuit_{\calG,\bar{k}}$. 
		By \cite[Theorem IV.2.23]{FS21}, to prove that for $B=A$ or $B=\on{CT}_\calP(A)$, the object $B$ is ULA it is enough to show that
		\begin{equation}
		\label{ula equation}
		p_1^*\bbD(B) \otimes p_2^*B \rightarrow \RHom(p_1^*B,Rp_2^!B)
		\end{equation}
		is an isomorphism. 
		Now, for any pair of flat closures of parabolics $\calP_1$ and $\calP_2$ a direct computation (using properties of hyperbolic localization, cf.\ the proof of \cite[Proposition VI.6.4]{FS21}) shows that 
		\begin{equation}
		\on{CT}_{\calP_1\times \calP_2}(p_1^*\bbD(A) \otimes p_2^*A)=p_1^*\bbD(\on{CT}_{{\calP^-_1}}(A)) \otimes p_2^*\on{CT}_{\calP_2}(A)
		\end{equation}
		and that
		
		\begin{equation}
		\on{CT}_{\calP_1\times \calP_2}(\RHom(p_1^*A,Rp_2^!A))=\RHom(p_1^*(\on{CT}_{\calP^-_1}(A),Rp_2^!\on{CT}_{\calP_2}(A)))
		\end{equation}
		where $\calP_1^-$ is opposite to $\calP_1$. In the forward direction, it is enough to use this for $\calP_1=\calP^-$ and $\calP_2=\calP$. 
		For the converse, we let $K$ denote the cone of \Cref{ula equation}. 
		By the conservativity of \Cref{prop_conservative_constant_term}, it is enough to prove $\on{CT}_{\calB_1,\calB_2}(K)=0$ for all $\calB_1$, $\calB_2$ since these exhaust the Borel subgroups of of $G_{\breve{F}}\times G_{\breve{F}}$. 
		But this follows from the computation above, \cite[Proposition IV.2.19]{FS21} and the hypothesis that $\on{CT}_{\calB_1}(A)$ is ULA. 
	\end{proof}
	
	We prove that ULA sheaves admit an easy description in terms of restrictions to Schubert strata:
	
	\begin{proposition}\label{prop_ula_special_fiber}
		The following are equivalent for an object $A \in \D(\Hk_{\calG,\bar{k}},\Lambda)^{\on{bd}}$:
		\begin{enumerate}
			\item $A$ is ULA.
			\item For all strata of $A$ pullback along
			$	[w]\colon \Spd \bar{k} \rightarrow \Hk_{\calG,\bar{k}}$
			is a perfect complex\footnote{If $\Lambda=\Lambda_0[\ell^{-1}]$ as in \eqref{eq:localization_categories}, then we require the pullback to arise as the $\ell$ localization of a perfect complex over $\Lambda_0$.} in 
			\begin{equation}
			\D(\Spd \bar{k},\Lambda)=\D(\Lambda).
			\end{equation}
			\item The pullback to $\Fl_{\calG,\bar{k}}^\diamondsuit$ lies in  
			\begin{equation}
			\D_{\on{cons}}(\Fl_{\calG,\bar{k}},\Lambda)^{\on{bd}} \subset \D(\Fl_{\calG,\bar{k}}^\diamondsuit,\Lambda)^{\on{ula},\on{bd}},
			\end{equation} 
			where $\D_{\on{cons}}(\Fl_{\calG,\bar{k}},\Lambda)^{\on{bd}}$ is the category of perfect-constructible $\Lambda$-sheaves with bounded support on the ind-scheme $\Fl_{\calG,\bar{k}}$ and the inclusion is the one constructed in \cite[Section 27]{Sch17}.
		\end{enumerate}
	\end{proposition}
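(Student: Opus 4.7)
The plan is to prove the chain of implications $(1) \Rightarrow (2) \Rightarrow (3) \Rightarrow (1)$, which is the most economical ordering and avoids redundant work.

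The implication $(1) \Rightarrow (2)$ will be immediate from the very definition of ULA: if the pullback of $A$ to each Schubert variety $\Fl_{\calG,\bar{k},w}^\dia$ is ULA over $\Spd \bar{k}$, then the stalk at any geometric point, in particular at the point $[w]\co\Spd\bar{k}\to \Hk_{\calG,\bar{k}}$, is a perfect complex in $\D(\Lambda)$, since ULA objects over a geometric point are precisely the perfect complexes.

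For $(2) \Rightarrow (3)$, I would exploit that each Schubert cell $\Fl_{\calG,\bar{k},w}^\circ$ is a single $L^+_{\bar{k}}\calG$-orbit through $[w]$, so the pullback of $A$ to $\Fl_{\calG,\bar{k}}^\dia$ carries a canonical $L^+_{\bar{k}}\calG^\dia$-equivariant structure (coming from $\Hk_{\calG,\bar{k}} = [L^+_{\bar{k}}\calG^\dia \backslash \Fl_{\calG,\bar{k}}^\dia]$). Equivariance, together with the perfect stalk hypothesis at $[w]$ and the fact that $L^+_{\bar{k}}\calG$ is pro-(perfectly smooth), then forces $A$ to be locally constant with perfect values on each cell $\Fl_{\calG,\bar{k},w}^{\circ,\dia}$. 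The bounded support hypothesis cuts the stratification down to finitely many cells, and one may then invoke the equivalence between derived categories of sheaves on perfect schemes and on their associated diamonds from Section 27 of \cite{Sch17} to produce a perfect-constructible sheaf in $\D_{\on{cons}}(\Fl_{\calG,\bar{k}},\Lambda)^{\on{bd}}$.

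For $(3) \Rightarrow (1)$, my plan is to verify the ULA property stratum by stratum. On each open Schubert cell $\Fl_{\calG,\bar{k},w}^{\circ,\dia}$, the structure morphism to $\Spd \bar{k}$ is cohomologically smooth (as a pro-smooth quotient of $L^+_{\bar{k}}\calG^\dia$ by its smooth stabilizer), and the restriction of $A$ is locally constant with perfect values. These two properties together yield the ULA property on the open stratum, and the closure $\Fl_{\calG,\bar{k},w}^\dia$ is handled by standard excision triangles and induction on the Bruhat order, using that the ULA property is stable under closed immersions and cones.

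The main obstacle I foresee is in step $(2) \Rightarrow (3)$: controlling the infinite-dimensional equivariance and verifying that the perfect stalk datum at $[w]$ really propagates to a locally constant sheaf on the entire Schubert cell. One has to argue that a sufficiently deep congruence subgroup of $L^+_{\bar{k}}\calG^\dia$ acts trivially within any bounded cohomological range, so that the equivariant structure reduces to one for a pro-smooth group acting freely on a perfectly smooth orbit. A subsidiary point is to make the inclusion $\D_{\on{cons}}(\Fl_{\calG,\bar{k}},\Lambda)^{\on{bd}} \subset \D(\Fl_{\calG,\bar{k}}^\dia,\Lambda)^{\on{ula},\on{bd}}$ that appears in $(3)$ genuinely well-defined, which amounts to exactly the ULA claim proved in $(3) \Rightarrow (1)$ applied fiberwise.
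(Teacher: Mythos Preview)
Your cycle $(1)\Rightarrow(2)\Rightarrow(3)\Rightarrow(1)$ is correct in outline, but the paper organizes things differently and leans on two results you do not invoke. First, the paper gets $(2)\Leftrightarrow(3)$ in one stroke from the categorical equivalence $\D(\Hk_{\calG,\bar{k}}^{\on{sch}},\Lambda)^{\on{bd}}\cong \D(\Hk_{\calG,\bar{k}},\Lambda)^{\on{bd}}$ proved in the appendix; this bypasses your hands-on argument about equivariance forcing local constancy on each cell. Second, for $(2)\Rightarrow(1)$ the paper reduces (via the obvious d\'evissage) to showing that each $R(j_w)_!\Lambda$ is ULA, and then appeals to \Cref{algebraic-is-ula}: the $2$-categorical characterization of ULA transports adjoints along the functor $c^\ast$, so constructible schematic sheaves are automatically ULA on the diamond side. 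Your $(3)\Rightarrow(1)$ via cohomological smoothness of the open cells and excision is a valid alternative, though you would still need to justify that the cells $\Fl_{\calG,\bar{k},w}^{\circ,\dia}$ are cohomologically smooth over $\Spd\bar{k}$, which ultimately comes from \Cref{variedades de schubert perfeitas sao normais} or the Demazure resolution.

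One small gap to flag in your $(1)\Rightarrow(2)$: the sentence ``ULA objects over a geometric point are precisely the perfect complexes'' is not quite what you need. The object $A$ lives on $\Hk_{\calG,\bar{k}}$, and the definition says its pullback to each $\Fl_{\calG,\bar{k},w}^\dia$ is ULA over $\Spd\bar{k}$; you want to conclude that $[w]^\ast A\in\D(\Lambda)$ is perfect. The paper makes this step precise by first restricting to the open stratum $\Hk_{\calG,\bar{k},w}^\circ$ (open pullback preserves ULA), then invoking \cite[Proposition~VI.4.1]{FS21} to identify $\D(\Hk_{\calG,\bar{k},w}^\circ,\Lambda)$ with $\D(\Spd\bar{k},\Lambda)$, under which ULA corresponds to perfect. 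Your equivariance discussion in $(2)\Rightarrow(3)$ is essentially the same input, so you have the right idea, but it should appear already in $(1)\Rightarrow(2)$.
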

	
	\begin{proof}
		The equivalence of (2) and (3) follows from \Cref{sec:comp-etale-cohom-1-derived-categories-of-hecke-stacks-are-isomorphic}, and the fact that the equivalence in \Cref{sec:comp-etale-cohom-1-derived-categories-of-hecke-stacks-are-isomorphic} is compatible with pullback to $\Fl_{\calG}$, respectively to $\Fl_{\calG}^\diamondsuit$ or to strata.
		
		Let $j_w$ denote the inclusion of the strata in $\Hk_{\calG,\bar{k}}$ corresponding to $w$. 
		{For proving that (2) implies (1) one can,} as in \cite[Proposition VI.6.5]{FS21}, reduce to showing that $R(j_{w})_!\Lambda$ is ULA {(for each $w$)}. 
		But their pullback to $\Fl_{\calG,\bar{k}}$ are clearly algebraic and by \Cref{algebraic-is-ula} they are also ULA, see also \cite[Proposition IV.2.30]{FS21}.
		Alternatively one can use the Demazure resolution, compare with \cite[Proposition VI.5.7]{FS21}. 
		
		For the converse implication, we induct on the number of strata where $A$ does not vanish and consider the cone of $R(j_w)_!j^\ast_wA\to A$ for a maximal strata $w$. 
		Indeed, pullback by open immersion preserves being ULA by \cite[Proposition VI.2.13.(i)]{FS21}. 
		Then we apply \cite[Proposition VI.4.1]{FS21} to see that $j^\ast_w A\in \D(\mathrm{Hk}_{\calG,\bar{k},\mu},\Lambda)$ has a perfect stalk, and use that $R(j_w)_!(j_w^\ast A)$ is ULA, by the proven (2) implies (1). 
		Then we can conclude by induction.
		
	\end{proof}

	Arguing as in \cite[Definition/Proposition VI.7.1]{FS21}, we can define a perverse t-structure.
	
	\begin{definition}
		The perverse t-structure on $\Hk_{\calG,\bar{k}}$ is the only such that
		\begin{equation} 
			^p\D^{\leq 0}(\Hk_{\calG,\bar{k}},\Lambda) =\{ A \in \D(\Hk_{\calG,\bar{k}},\Lambda)\colon j_w^*A \in \D^{\leq -l(w)}(\Hk_{\calG,\bar{k}},\Lambda) \text{ for all } w \},
		\end{equation} 
		respectively
		\begin{equation}
		^p\D^{\geq 0}(\Hk_{\calG,\bar{k}},\Lambda) =\{ A \in \D(\Hk_{\calG,\bar{k}},\Lambda)\colon Rj_w^!A \in \D^{\geq -l(w)}(\Hk_{\calG,\bar{k}},\Lambda) \text{ for all } w  \}.
		\end{equation} 
		Perverse sheaves 
		\begin{equation}
		\Perv(\Hk_{\calG,\bar{k}},\Lambda)=\, ^p\D^{\leq 0}(\Hk_{\calG,\bar{k}},\Lambda) \cap\, ^p\D^{\geq 0}(\Hk_{\calG,\bar{k}},\Lambda)
		\end{equation}
		are the heart of the t-structure. 
		Such an $A$ is flat perverse if in addition $A\otimes^{\mathbb{L}}_{\Lambda} M$ is in $\Perv(\Hk_{\calG,\bar{k}},\Lambda)$ for all $\Lambda$-modules $M$. 
	\end{definition}
	
	We note that, in general, there cannot be any degree shifts such that $\on{CT}_\calP[\deg_\calP]$ preserves the perverse t-structure, due to lack of parity. 
	But, we define
	\begin{equation}
	\deg_\calP(\la_I)=\langle 2\rho_G-2\rho_M, \la \rangle
	\end{equation}
	for translation elements $\la_I\in X_*(T)_I$. 
	This is useful for the following result:
	
	\begin{proposition}
		\label{sec:over-k-test-perversity-for-special-parahorics-via-constant-terms}
		Assume that $\calG_{\breve{O}}$ is special parahoric, and let $A \in \D(\Hk_{\calG,\bar{k}},\Lambda)$. 
		Then $A$ is perverse if and only if $\on{CT}_\calB(A)[\deg_\calB] \in \Perv(\Fl_{\calT,\bar{k}}^\dia,\Lambda)$ for all Borel subgroups $S_{\breve{F}} \subset B \subset G_{\breve{F}}$. 
		The same applies to the flat objects.
	\end{proposition}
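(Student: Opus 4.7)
The plan is to adapt \cite[Proposition VI.7.4]{FS21} to our equicharacteristic special parahoric setting, substituting the MV equidimensionality statement used there by \Cref{equidimensionality.MV.cycles.lemma}. To begin, I would unpack the perversity condition on $\Fl_{\calT,\bar{k}}^\dia$: since $\calT$ is the connected N\'eron model of the maximal torus, Kottwitz gives $\Fl_{\calT,\bar{k}}\cong \underline{X_\ast(T)_I}$, a disjoint union of copies of $\Spec \bar{k}$. Hence perversity of $\on{CT}_\calB(A)[\deg_\calB]$ amounts to the condition that for every $\nu_I\in X_\ast(T)_I$ the stalk $[\nu_I]^\ast \on{CT}_\calB(A)$ be concentrated in cohomological degree $-\langle 2\rho,\nu\rangle$. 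By the special parahoric assumption and \Cref{BD.attractor.groups.comparison.theorem}.(3), the attractor $(\Fl_{\calG,\bar{k}})^+$ genuinely decomposes into the semi-infinite orbits $\calS_{\nu_I}=L_{\bar{k}}U\cdot \nu_I(\pi)$, and the $R(p^+)_!(q^+)^\ast$-formula for hyperbolic localization identifies $[\nu_I]^\ast \on{CT}_\calB(A)$ with $R\Gamma_c(\calS_{\nu_I}, A|_{\calS_{\nu_I}})$.

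For the forward direction (upper bound), assume $A$ is perverse. I would stratify $\calS_{\nu_I}$ by the pieces $\calS_{\nu_I}\cap \Fl_{\calG,\mu_I}^\circ$, each of which is affine of pure dimension $\langle \rho,\nu+\mu\rangle$ by \Cref{equidimensionality.MV.cycles.lemma}. Since $A|_{\Fl_{\calG,\mu_I}^\circ}$ has cohomology concentrated in degrees $\leq -\langle 2\rho,\mu\rangle$ (perversity plus smoothness), Artin vanishing for compactly supported cohomology on affine varieties of dimension $d$ gives vanishing in degrees above $-\langle 2\rho,\mu\rangle + 2\langle \rho,\nu+\mu\rangle = \langle 2\rho,\nu\rangle$. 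Assembling over the stratification via spectral sequences shows $[\nu_I]^\ast\on{CT}_\calB(A)\in \D^{\leq \langle 2\rho,\nu\rangle}$, which yields $\on{CT}_\calB(A)[\deg_\calB]\in {}^p\D^{\leq 0}$. The dual bound (showing ${}^p\D^{\geq 0}$ is preserved) comes from the second formula $R(p^-)_\ast R(q^-)^!$ of hyperbolic localization or, equivalently, from Verdier duality and the interchange of $\on{CT}_\calB$ with $\on{CT}_{B^-}$ under $\bbD$, reducing the lower bound to the upper bound for the opposite Borel.

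For the converse direction, assume $\on{CT}_\calB(A)[\deg_\calB]$ is perverse for every Borel. The forward direction shows that $\on{CT}_\calB[\deg_\calB]$ is t-exact on $\Perv(\Hk_{\calG,\bar{k}},\Lambda)^{\on{bd}}$. If some perverse cohomology ${}^p\!H^i(A)$ with $i\neq 0$ were non-zero, the exact sequences $\on{CT}_\calB\bigl({}^p\!H^i(A)\bigr)[\deg_\calB]={}^p\!H^i\bigl(\on{CT}_\calB(A)[\deg_\calB]\bigr)$ would force ${}^p\!H^i(A)$ to have vanishing constant term along every $B$, contradicting conservativity (\Cref{prop_conservative_constant_term}). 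The assertion for flat objects follows from the torsion case upon tensoring with an arbitrary $\Lambda$-module $M$, since $\on{CT}_\calB$ commutes with $-\otimes^{\mathbb L}_\Lambda M$ by the projection formula for hyperbolic localization.

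The main obstacle I anticipate is ensuring that the Artin-type vanishing bounds and the spectral sequence assembly across the (a priori infinite) stratification of $\calS_{\nu_I}$ behave as expected in the v-sheaf formalism; this is where the finiteness granted by boundedness of support and the equidimensionality lemma are indispensable. It is precisely the failure of equidimensionality in the non-special parahoric case, where $\Fl_\calM \subsetneq (\Fl_\calG)^0$, that prevents the argument from going through in general.
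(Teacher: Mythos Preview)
Your proposal is correct and follows essentially the same approach as the paper: adapt \cite[Proposition VI.7.4]{FS21} using the equidimensionality of \Cref{equidimensionality.MV.cycles.lemma} to show $\on{CT}_\calB[\deg_\calB]$ is t-exact, and invoke \Cref{prop_conservative_constant_term} for the converse. One terminological quibble: the bound you use in the forward direction, $R\Gamma_c$ of a sheaf in degrees $\leq -\langle 2\rho,\mu\rangle$ on a variety of dimension $\langle\rho,\nu+\mu\rangle$ lying in degrees $\leq \langle 2\rho,\nu\rangle$, is just the cohomological dimension bound (valid for any variety), not Artin vanishing; the affineness from \Cref{equidimensionality.MV.cycles.lemma} is really needed only for the dual $R(p^-)_\ast R(q^-)^!$ computation establishing ${}^p\D^{\geq 0}$-preservation.
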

	
	\begin{proof}
		It suffices to follow the proof of \cite[Proposition VI.7.4]{FS21}. 
		For preserving the t-structure, we use the fact that the non-empty intersections $\calS_{\bar{k},\la_I} \cap \Fl_{\calG, \bar{k},\nu_I}$
		are equidimensional of dimension $\langle 2\rho_G, \lambda+\nu \rangle$, see \Cref{equidimensionality.MV.cycles.lemma}. 
		The converse then follows from \Cref{prop_conservative_constant_term}.
	\end{proof}
	
	In particular, it is now permitted to introduce the Satake category at special level. 
	Notice that there is no hope of such a well-behaved class of objects to exist at arbitrary level, because the quotient $\tilde W/W_\calG$ carries no natural abelian structure.
	
	\begin{definition}
		Let $\calG_{\breve{O}}$ be special parahoric. 
		Then the Satake category $\Sat(\Hk_{\calG,\bar{k}},\Lambda)$ is the full subcategory of $\Perv(\Hk_{\calG,\bar{k}},\Lambda)$ comprised of flat ULA objects.
	\end{definition}
	
	This category lies within the category of perverse sheaves $\Perv(\Hk_{\calG,\bar{k}}^{\on{sch}},\Lambda)$ on the schematic Hecke stack $\Hk_{\calG,\bar{k}}^{\on{sch}}=L^+_{\bar{k}}\calG\backslash L_{\bar{k}}G/L_{\bar{k}}^+\calG$ by \Cref{prop_ula_special_fiber} and \Cref{sec:comp-etale-cohom} for the comparison with sheaves on schematic v-stacks. 
	It carries moreover a monoidal structure given by convolution $\star$.
	
	\subsection{Over $O_C$}\label{over.oc.section}
	Let $f\colon X\to \Spec(O_C)$ be a scheme of finite presentation over $O_C$ and denote by $j$ the inclusion $X_\eta\hookrightarrow X$ of the generic fiber. 
	In \cite[Theorem 1.7]{HS21}, Hansen and Scholze prove that the pullback functor
	\begin{equation}
	j^*\colon \D(X,\Lambda)\to \D(X_{\eta},\Lambda)	
	\end{equation}
	restricts to an equivalence between $f$-ULA and $f_\eta$-ULA objects.
	In the setup of diamonds, the argument for full faithfulness is the same as was explained to us by Scholze, and it consists of proving the adjunction map $A\to Rj_*j^*A$ is an isomorphism.
	
	\begin{lemma}
		Let $X$ be a small v-sheaf over $\Spd(O_C)$ representable in locally spatial diamonds, compactifiable and of finite trascendence degree. 
		Let $A\in \D(X,\Lambda)$ be ULA for the structure map to $\Spd(O_C)$. 
		Then $A\to Rj'_\ast j'^\ast A$ is a isomorphism, where $j':X_\eta\to X$ and $j:\Spd(C)\to \Spd(O_C)$ denote the inclusion of generic fibers.
	\end{lemma}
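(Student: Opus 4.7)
The plan is to reduce the assertion to a statement on stalks at geometric points, and then use ULA-compatible base change to collapse the problem to the case $X = \Spd(O_C)$ itself, where it becomes a direct computation. First I would observe that, since $j'$ is an open immersion, the map $A \to Rj'_*j'^*A$ is automatically an isomorphism after restriction along $j'$ via the identity $j'^*Rj'_* \simeq \id$. Thus it suffices to check that the map induces an isomorphism after pullback along every geometric point $\bar{x}\colon \Spd(L,L^+) \to X$ whose image in $\Spd(O_C)$ factors through the closed stratum $\Spd(\bar{k})$.

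Next I would exploit the key feature of the ULA hypothesis: for $A$ universally locally acyclic, the formation of $Rj'_*j'^*A$ commutes with arbitrary base change in $X$, in the spirit of \cite[Chapter IV]{FS21}. Choosing a suitable v-neighborhood $g\colon Y \to X$ containing $\bar{x}$ in its image — for example, a formalization of $\bar{x}$ obtained via the kimberlite machinery of \Cref{sec:formal-theory-v-sheaves-p-adic-kimberlites}, or an explicit affinoid perfectoid chart around it — this compatibility reduces the computation of the stalk $(Rj'_*j'^*A)_{\bar{x}}$ to the analogous question for $g^*A$ on $Y$. Combined with the overconvergence-along-fibers clause built into the very definition of ULA (applied to the structure map $Y \to \Spd(O_C)$), one is ultimately reduced to the base case $X = \Spd(O_C)$ with $A = \underline{M}$ a constant sheaf associated to a perfect complex $M$ of $\Lambda$-modules.

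In this base case, the desired identity $\underline{M} \simeq Rj_*j^*\underline{M}$ amounts to the statement that the open immersion $j$ induces an isomorphism on étale cohomology with constant coefficients, which follows from the fact that $\Spd(O_C)$ is strictly local in the v-topology with unique closed generization $\Spd(C)$. The principal obstacle I anticipate is establishing the base change formula for $Rj'_*$ applied to ULA sheaves in the requisite generality in the v-sheaf setting, where $j'$ is not proper and no classical strict henselization is available. I would handle this by working v-locally with Huber pairs representing a neighborhood of $\bar{x}$, and invoking Scholze's proper base change together with the overconvergence clause of the ULA definition to control stalks on the closed fiber.
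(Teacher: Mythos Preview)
Your approach has a genuine gap at the crucial base change step. You claim that for ULA $A$, the formation of $Rj'_*j'^*A$ commutes with arbitrary base change in $X$, but this is not a standard consequence of the ULA condition: the ULA formalism gives you compatibilities for proper pushforward, cohomologically smooth pullback, and Verdier duality, not for pushforward along an open immersion like $j'$. In fact, the statement you want to invoke is essentially equivalent to (or at least very close to) the lemma itself, so the argument risks circularity. Your proposed workaround via ``formalization of $\bar{x}$ obtained via the kimberlite machinery'' is not available here, since $X$ is only assumed to be representable in locally spatial diamonds, not a kimberlite; and the vague appeal to overconvergence plus proper base change does not obviously control $Rj'_*$ on stalks in the special fiber.

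The paper takes a completely different route that sidesteps base change for $Rj'_*$ altogether. It uses the Verdier-dual characterization of ULA sheaves from \cite[Proposition IV.2.19]{FS21}: since $j'^*A$ is ULA over $\Spd(C)$, one has $j'^*A \cong \RHom(\mathbb{D}_{X_\eta/\Spd(C)}(j'^*A), Rf_\eta^!\Lambda)$. Because $j'$ is an open immersion, $j'^*=Rj'^!$ and relative Verdier duality commutes with $j'^*$. Then $Rj'_*j'^*A$ is rewritten via the $\RHom$-adjunction and the compatibility $Rj'_*Rf_\eta^! \cong Rf^!Rj_*$ as $\RHom(\mathbb{D}_{X/\Spd(O_C)}(A), Rf^!Rj_*\Lambda)$. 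At this point the only input needed from the base is $Rj_*\Lambda = \Lambda$ on $\Spd(O_C)$, and double duality for ULA sheaves finishes the proof. This approach buys you a clean, global argument with no stalk computations and no delicate base change for non-proper maps; your stalk-reduction strategy, even if it could be made to work, would require establishing exactly the kind of base change result that the paper's duality trick renders unnecessary.
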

	\begin{proof}
		By hypothesis $j'^\ast A$ is ULA with respect to $\Spd(C)$.
		In particular, by \cite[Proposition IV.2.19]{FS21} the map 
		\begin{equation}
		j'^\ast A\otimes_\Lambda^{\mathbb{L}} \Lambda\cong \RHom(\mathbb{D}_{X_\eta/\Spd(C)}(j'^\ast A),Rf_\eta^!\Lambda) 
		\end{equation}
		is an isomorphism. Since $j'$ is an open immersion $j'^\ast=Rj'^!$ and $\mathbb{D}_{X_\eta/\Spd(C)}(j'^\ast A)=j'^\ast\mathbb{D}_{X/\Spd(O_C)}(A)$ as follows from \cite[Theorem 1.8.(v)]{Sch17}. We get
		
		\begin{equation}                  
		\begin{aligned}
		Rj'_\ast j'^\ast A & \cong & Rj'_\ast \RHom(j'^\ast\mathbb{D}_{X/\Spd(O_C)}(A),Rf_\eta^!\Lambda) \\
		& \cong & \RHom(\mathbb{D}_{X/\Spd(O_C)}(A),Rj'_\ast Rf_\eta^!\Lambda) \\
		& \cong & \RHom(\mathbb{D}_{X/\Spd(O_C)}(A),Rf^!Rj_\ast\Lambda) \\
		\end{aligned}
		\end{equation}
		the result now follows from the identity $\Lambda=Rj_\ast\Lambda$ and double duality for ULA sheaves.   
	\end{proof}

	In particular, a $f_\eta$-ULA object $A$ comes from a $f$-ULA object if and only if $Rj_*j^\ast A$ is $f$-ULA.

	Below, we prove essential surjectivity for $\Hk_{\calG, O_C}$, the Hecke stack over $\Spd O_C$. 
	For hyperspecial parahoric $\calG$, this is \cite[Corollary VI.6.7]{FS21}. Before doing this, recall that hyperbolic localization allows us to define again a constant term functor
	\begin{equation}
	\on{CT}_\calP\colon \D(\Hk_{\calG,O_C}, \Lambda)^{\on{bd}} \rightarrow  \D(\Gr_{\calG,O_C}^{0}, \Lambda)^{\on{bd}}.
	\end{equation}
	By \cite[Proposition IV.6.12]{FS21}, there is a natural equivalence
	\begin{equation}
	\on{CT}_\calP \circ Rj_{\calG,*} \cong Rj_{\calM,*}\circ \on{CT}_P,
	\end{equation}
	with $j_\calG, j_\calM$ denoting the inclusion of the respective generic fibers.
	Now, we can probe integral ULA objects.
	
	\begin{proposition}\label{ULA_nearby_cycles}
		Consider the inclusion of Hecke stacks $j\co \Hk_{G,C}\to \Hk_{\calG,O_C}$. 
		There is an equivalence
		\begin{equation}
		j^*\co \D(\Hk_{\calG,O_C},\Lambda)^{\on{bd},\on{ula}}\to \D(\Hk_{G,C},\Lambda)^{\on{bd},\on{ula}},
		\end{equation}
		whose inverse functor is $Rj_*$.
	\end{proposition}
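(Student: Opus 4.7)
The plan is to establish both full faithfulness and essential surjectivity; the former will follow directly from the previous lemma, while the latter contains the main content.

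For full faithfulness, I would apply the preceding lemma (extended to v-stacks such as $\Hk_{\calG, O_C}$ via presentations by locally spatial diamonds): for any $f$-ULA object $A_2$ with $f\colon \Hk_{\calG, O_C} \to \Spd O_C$ the structure map, the unit $A_2 \to Rj_\ast j^\ast A_2$ is an isomorphism. Adjunction then gives $\Hom(j^\ast A_1, j^\ast A_2) \cong \Hom(A_1, Rj_\ast j^\ast A_2) \cong \Hom(A_1, A_2)$, so $j^\ast$ is fully faithful on ULA objects.

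For essential surjectivity, given $B \in \D(\Hk_{G, C}, \Lambda)^{\on{bd}, \on{ula}}$, I would show that $Rj_\ast B \in \D(\Hk_{\calG, O_C}, \Lambda)^{\on{bd}, \on{ula}}$; since $j$ is an open immersion, the counit $j^\ast Rj_\ast B \cong B$ is automatic, yielding essential surjectivity. Bounded support of $Rj_\ast B$ is preserved because $\Hk_{\calG, O_C}$ is ind-proper over $\Spd O_C$. To verify the ULA property, I would develop the analog over $\Spd O_C$ of the ULA-test via Borel constant terms from Proposition~\ref{sec:over-k-check-ula-via-constant-terms}: an object $A \in \D(\Hk_{\calG, O_C}, \Lambda)^{\on{bd}}$ is ULA if and only if $\on{CT}_\calB(A)$ is ULA for every Borel $S_{\breve F} \subset B \subset G_{\breve F}$. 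Combining this with the commutation $\on{CT}_\calB \circ Rj_{\calG, \ast} \cong Rj_{\calT, \ast} \circ \on{CT}_B$ recorded in this subsection, the problem reduces to showing that $Rj_{\calT, \ast}$ preserves ULA sheaves for $\calT$ the connected Néron model of a maximal torus.

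The torus case can be treated directly: by Lemma~\ref{connected.components.DB.lemma}, $\Gr_\calT$ over $\Spd O_C$ decomposes into connected components indexed by $X_\ast(T)_I$, and within each component the integral sections associated to distinct lifts in $X_\ast(T)$ merge along the special fiber. A bounded ULA sheaf on $\Gr_T$ is a finite direct sum of perfect $\Lambda$-complexes supported on these discrete sections; its $Rj_{\calT, \ast}$-extension is the corresponding constant perfect sheaf on the relevant integral sections, which is visibly ULA. The main obstacle is developing the ULA-testing criterion via Borel constant terms over the mixed-characteristic base $\Spd O_C$: while the overall strategy mirrors Proposition~\ref{sec:over-k-check-ula-via-constant-terms}, verifying conservativity of Borel constant terms and compatibility with Verdier duality in the mixed setting requires careful attention to the interplay of generic and special fibers via proper base change.
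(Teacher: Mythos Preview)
Your approach is essentially correct and parallels the paper's, but there is one meaningful simplification you miss. You identify as the ``main obstacle'' the need to develop a full ULA-testing criterion via Borel constant terms over $\Spd O_C$. The paper sidesteps this entirely: instead of proving such a criterion, it works directly with the ULA characterization from \cite[Theorem IV.2.23]{FS21}, which says $Rj_*B$ is ULA if and only if the canonical map
\[
p_1^*\bbD(Rj_*B)\otimes p_2^*Rj_*B \longrightarrow \RHom(p_1^*Rj_*B, Rp_2^!Rj_*B)
\]
is an isomorphism. Since $B$ is ULA on the generic fiber and $j^*=Rj^!$, this map is already an isomorphism there, so its cone $K$ is of the form $i_*L$ with $L$ supported on $\Hk_{\calG,\bar k}$. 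Now one only needs conservativity of constant terms over $\bar k$ (\Cref{prop_conservative_constant_term}), which is already established, together with the same formal computation as in \Cref{sec:over-k-check-ula-via-constant-terms} showing that $\on{CT}_{\calB_1\times \calB_2}$ applied to the criterion map for $Rj_*B$ yields the criterion map for $\on{CT}_\calB(Rj_*B)$. Thus your ``main obstacle'' dissolves: no integral criterion is needed, only the special fiber one.

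Two smaller points. First, after the reduction you say you land on $\Gr_{\calT,O_C}$, but the constant term functor actually lands in the fixed-point locus $\Gr_{\calG,O_C}^0$, which strictly contains $\Gr_{\calT,O_C}$ when $\calG_{\breve O}$ is not special; the paper works on this larger (but still relative-dimension-zero, ind-scheme-representable) locus. Second, for the endgame on this zero-dimensional locus, the paper simply invokes \cite[Theorem 1.7]{HS21} for schemes rather than your explicit component-by-component argument; both work, but the citation is cleaner and covers the full fixed-point locus at once.
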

	\begin{proof}
		Suppose $A\in \D(\Hk_{G,C},\Lambda)^{\on{bd},\on{ula}}$, it suffices to prove $Rj_*A\in \D(\Hk_{\calG,O_C},\Lambda)^{\on{bd},\on{ula}}$. 
		Let $B$ denote the pullback of $A$ to $\Gr_{\calG,O_C}$, which by definition is ULA. 
		By smooth base change, $Rj_*A$ pulls back to $Rj_*B$ (here we implicitly use \cite[Proposition VI.4.1]{FS21}). 
		By \cite[Theorem IV.2.23]{FS21}, we must show 
		\begin{equation}
		p_1^*\bbD(Rj_*B) \otimes p_2^*Rj_*B \rightarrow \RHom(p_1^*Rj_*B,Rp_2^!Rj_*B)
		\end{equation}
		is an isomorphism. 
		Let $K$ denote the cone of this map. By assumption, and since $j^*=Rj^!$, this map is an isomorphism on the generic fiber. 
		Consequently, $K=i_*L$ for some $L\in D(\Hk_{\calG,\bar{k}},\Lambda)^{\on{bd}}$ and the inclusion $i\co \Hk_{\calG,\bar{k}}\to \Hk_{\calG,O_C}$. 
		We may use the conservativity result \Cref{prop_conservative_constant_term} to prove $L=0$. 
		This reduces us to proving that $\on{CT}_\calB(Rj_*A)=Rj_*\on{CT}_\calB(A)$ is ULA for all Borel subgroups $S_{\breve{F}} \subset B\subset G_{\breve{F}}$. 
		Now, the fixed-point locus $\Gr_{\calG,O_C}^0$ of the action induced by $\calB$ is ind-representable by a locally finite type scheme over $O_C$ of relative dimension $0$. 
		We call this scheme $X$ and let $h\co X_\eta\to X$ the inclusion of generic fibers. 
		By inspection, $\D(\Gr_{\calG,O_C}^0,\Lambda)\cong \D(X,\Lambda)$ and $\D(\Gr_{\calG,C}^0,\Lambda)\cong \D(X_\eta,\Lambda)$. 
		In particular $Rj_*\cong c_X^*Rh_*Rc_{X_\eta,*}$ with notation as in \cite[Section 27]{Sch17} (or as in \eqref{c-functor}). 
		By \cite[Theorem 1.7]{HS21}, $Rh_*$ preserves ULA objects, which allows us to conclude the same holds for $Rj_*$. 
	\end{proof}
	
	\subsection{Nearby cycles}\label{nearby.cycles.section}
	
	We can now look at the nearby cycles functor 
	\begin{equation}
	\Psi_{\calG}:=i^*Rj_*\colon \D(\Hk_{G,C}, \Lambda)^{\on{bd}} \rightarrow \D(\Hk_{\calG,\bar{k}},\Lambda)^{\on{bd}},
	\end{equation}	
	Arising from the diagram
	\begin{equation}
	\Hk_{G,C} \xrightarrow{j} \Hk_{\calG,O_C} \xleftarrow{i} \Hk_{\calG,\bar{k}}
	\end{equation}
	of geometric fibers inclusions of the integral Hecke stack.
	
	\begin{proposition}\label{prop_commutativity_nearby_cycles_constant_term}
		The functor of nearby cycles lies in a natural equivalence
		\begin{equation}
		\on{CT}_\calP[\deg_\calP] \circ \Psi_{\calG} \cong \Psi_{\calM}\circ \on{CT}_P[\deg_P],
		\end{equation} 
		that is, it commutes with the shifted constant term functor.
	\end{proposition}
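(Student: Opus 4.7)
The strategy is to realize both shifted constant term functors as instances of a single hyperbolic localization functor over $\Spd O_C$ and then exploit the compatibility of hyperbolic localization with the functors $Rj_\ast$ and $i^\ast$ defining nearby cycles. Thanks to \Cref{BD.attractor.groups.comparison.theorem}, the $\bbG_m^\dia$-action on $\Gr_{\calG,O_C}$ induced by $\lambda\co \bbG_m\r \calS\subset \calG$ satisfies \cite[Hypothesis IV.6.1]{FS21} and its attractors and fixers relate to $\Gr_\calP$ and $\Gr_\calM$ via open and closed immersions that are isomorphisms on generic fibers. Consequently, hyperbolic localization yields a functor $\on{CT}_\calP^{O_C}$ on $\D(\Hk_{\calG,O_C},\Lambda)^{\on{bd}}$ whose generic, respectively special fiber base change agrees with $\on{CT}_P$, respectively $\on{CT}_\calP$; the ``extra'' components of $(\Gr_{\calG,O_C})^0$ beyond $\Gr_\calM$ are concentrated over $\Spd k$ and so do not contribute after applying $Rj_\ast$.

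The commutation $\on{CT}_\calP\circ Rj_{\calG,\ast}\cong Rj_{\calM,\ast}\circ \on{CT}_P$ is already recorded in the preceding subsection as a consequence of \cite[Proposition IV.6.12]{FS21}. The symmetric statement $\on{CT}_\calP\circ i^\ast_\calG\cong i^\ast_\calM\circ \on{CT}_\calP^{O_C}$ for the closed inclusion $i\co \Hk_{\calG,\bar k}\hookto \Hk_{\calG,O_C}$ is another manifestation of the same principle, again by \cite[Proposition IV.6.12]{FS21}. Putting these together and using $\Psi_{(\str)}=i^\ast Rj_\ast$, we get
\begin{align*}
\on{CT}_\calP\circ \Psi_\calG &= \on{CT}_\calP\circ i^\ast\circ Rj_\ast\\
&\cong i^\ast\circ \on{CT}_\calP^{O_C}\circ Rj_\ast\\
&\cong i^\ast\circ Rj_\ast\circ \on{CT}_P = \Psi_\calM\circ \on{CT}_P.
\end{align*}
The shifts $[\deg_\calP]$ and $[\deg_P]$ are computed by the same formula $\langle 2\rho_G-2\rho_M,\la\rangle$ on the connected components of $\Gr_\calP$ and $\Fl_{\calG,\bar k}^{0,\dia}$; these are locally constant functions, and \Cref{connected.components.DB.lemma} ensures that the specialization map identifies the relevant components on the nose. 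Hence the shifts match up, yielding the claimed natural equivalence $\on{CT}_\calP[\deg_\calP]\circ \Psi_\calG\cong \Psi_\calM\circ \on{CT}_P[\deg_P]$.

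The hardest step, conceptually, is justifying that \cite[Proposition IV.6.12]{FS21}, stated in \emph{loc.\ cit.} primarily for the reductive $B_\dR^+$-affine Grassmannian, extends to our parahoric Beilinson--Drinfeld setting over $\Spd O_C$. This rests on \Cref{BD.attractor.groups.comparison.theorem}: once one knows that the $\bbG_m^\dia$-action on $\Gr_{\calG,O_C}$ is well-behaved and that the attractor and fixer agree with $\Gr_\calP$ and $\Gr_\calM$ on the generic fiber (with the discrepancy on the special fiber harmless because it disappears after $Rj_\ast$ from sheaves in generic fiber), the base change arguments and six-functor manipulations underpinning \cite[Proposition IV.6.12]{FS21} carry over verbatim. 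Beyond that, the argument is a formal manipulation of six functors and the fact that $\deg_P$, $\deg_\calP$ are compatible under specialization.
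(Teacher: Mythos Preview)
Your proof is correct and follows essentially the same approach as the paper: both invoke \cite[Proposition IV.6.12]{FS21} as the key input for the unshifted commutation, and both use \Cref{BD.attractor.groups.comparison.theorem} to ensure that the output is supported on $\Fl_{\calM,\bar k}\subset \Fl_{\calG,\bar k}^0$, where the shift $\deg_\calP$ is defined and agrees with $\deg_P$. The paper's version is terser (three sentences), and for the shift it simply observes that once the unshifted isomorphism lands in $\Fl_{\calM,\bar k}$ the shifts agree ``by definition''; your invocation of \Cref{connected.components.DB.lemma} is slightly misdirected (that lemma concerns $\Gr_\calG$, not $\Gr_\calM$), but the underlying point is the same.
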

	
	\begin{proof}
		Without the shift, this is a direct consequence of \cite[Proposition IV.6.12]{FS21}. 
		Using \Cref{BD.attractor.groups.comparison.theorem}, this also shows that $\on{CT}_\calP\circ \Psi_{\calG}$ is supported on the open and closed sub-v-sheaf $\Fl_{\calM,\bar{k}}\subset \Fl_{\calG,\bar{k}}^0$.
		So the shifts agree by definition. 
	\end{proof}
	
	Surprisingly, this commutativity property delivers us a lot of control on the values assumed by $\Psi_\calG$ on the Satake category.
	
	\begin{corollary}\label{ULAness_nearby_cycles}
		Nearby cycles $\Psi_\calG$ restrict to a functor 
		\begin{equation}
		\D(\Hk_{G,C},\Lambda)^{\on{bd},\on{ula}}\rightarrow \D(\Hk_{\calG,\bar{k}},\Lambda)^{\on{bd},\on{ula}} 
		\end{equation}
		and, if $\calG_{\breve{O}}$ is furthermore special parahoric, then it even restricts to
		\begin{equation}
		\Sat(\Hk_{G,C},\Lambda)\rightarrow \Sat(\Hk_{\calG,\bar{k}},\Lambda)
		\end{equation}
		between the Satake categories.
	\end{corollary}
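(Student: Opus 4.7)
My plan is to combine \Cref{ULA_nearby_cycles}, which provides the integral extension of ULA sheaves, with \Cref{prop_commutativity_nearby_cycles_constant_term}, giving the compatibility of $\Psi_\calG$ with constant terms. Together with the tests \Cref{sec:over-k-check-ula-via-constant-terms} and \Cref{sec:over-k-test-perversity-for-special-parahorics-via-constant-terms}, both assertions reduce to essentially trivial analogues for the torus.

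For the ULA part, given $A \in \D(\Hk_{G,C},\Lambda)^{\on{bd},\on{ula}}$, I would first invoke \Cref{ULA_nearby_cycles} to realize $Rj_*A$ as an object of $\D(\Hk_{\calG,O_C},\Lambda)^{\on{bd},\on{ula}}$, and then appeal to the stability of the ULA property under the base change $i\co \Spd \bar k \to \Spd O_C$ (cf.\ the pullback compatibilities in \cite[Section IV.2]{FS21}), concluding that $\Psi_\calG A = i^*Rj_*A$ is ULA on $\Hk_{\calG,\bar k}$ over $\Spd \bar k$; the boundedness of support is automatic since $Rj_*$ and $i^*$ do not enlarge support. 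Alternatively, one may verify the criterion of \Cref{sec:over-k-check-ula-via-constant-terms} directly: by \Cref{prop_commutativity_nearby_cycles_constant_term}, $\on{CT}_\calB \Psi_\calG A \simeq \Psi_\calT \on{CT}_B A$ up to a degree shift, so the claim reduces to ULA of the torus nearby cycles, which is straightforward since $\Gr_{\calT,\bar k}$ is discrete.

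For the Satake part, I assume $\calG_{\breve O}$ is special parahoric and $A \in \Sat(\Hk_{G,C},\Lambda)$, ULA being already handled. To check flat perversity of $\Psi_\calG A$, \Cref{sec:over-k-test-perversity-for-special-parahorics-via-constant-terms} lets me test it via $\on{CT}_\calB[\deg_\calB]$ for every Borel $S_{\breve F}\subset B\subset G_{\breve F}$. By \Cref{prop_commutativity_nearby_cycles_constant_term}, the resulting object equals $\Psi_\calT \on{CT}_B[\deg_B] A$, and \cite[Proposition VI.7.4]{FS21} shows the inner sheaf is flat perverse on $\Hk_{T,C}$. It remains to verify that $\Psi_\calT$ preserves flat perverse sheaves on the torus, which is transparent: both $\Gr_{T,C}\simeq \underline{X_*(T)}$ and $\Gr_{\calT,\bar k}\simeq \underline{X_*(T)_I}$ are discrete, so the perverse t-structures reduce to the ordinary degree-zero one, and $\Psi_\calT$ acts by summation along $X_*(T)\twoheadrightarrow X_*(T)_I$, which plainly preserves flat degree-zero sheaves. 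I do not expect any genuine obstacle beyond what was already built in the preceding subsections; the corollary is a bookkeeping consequence of those results.
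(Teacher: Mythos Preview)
Your proposal is correct and follows essentially the same approach as the paper, which simply cites \Cref{sec:over-k-check-ula-via-constant-terms} and \Cref{sec:over-k-test-perversity-for-special-parahorics-via-constant-terms}; your write-up fills in the implicit use of \Cref{prop_commutativity_nearby_cycles_constant_term} and the torus computation that the paper leaves to the reader. Your first route for the ULA part (via \Cref{ULA_nearby_cycles} and stability of ULA under the base change $i^*$) is a legitimate and slightly more direct alternative to the constant-term check the paper invokes.
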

	\begin{proof}
		This follows from \Cref{sec:over-k-check-ula-via-constant-terms} and \Cref{sec:over-k-test-perversity-for-special-parahorics-via-constant-terms}.
		Indeed, those statements allow us to reduce everything to the torus case $\calG=\calT$ which can be handled by hand.  
	\end{proof}
	
	Let us examine the nearby cycles $\Psi_\calG(\Sat(V))$ applied to a Satake object $\Sat(V) \in \Sat(\Hk_{G,C}, \Lambda)$ corresponding to a $\widehat{G}_\Lambda$-representation $V$ with $\mu$ as its highest weight. 
	Given an $F$-Borel $B \subset G$, the commutativity of \Cref{prop_commutativity_nearby_cycles_constant_term} yields
	\begin{equation}
	\on{CT}_\calB[\deg_\calB]\big(\Psi_\calG(\Sat(V))\big)=\bigoplus_{\lambda_I}V(\lambda_I) \cdot \la_I,
	\end{equation}
	where now the $\widehat{G}_\Lambda$-representation is regarded as a $\widehat{T}^I_\Lambda$-representation by restriction. {Here, we use that (by construction) the constant term functor corresponds via geometric Satake to restriction of representations, see \cite[Section VI.11]{FS21}.}
	In particular, we get:
	
	\begin{corollary}\label{corollary_vanishing_cohomology}
		For a $\widehat{G}_\Lambda$-representation $V$ with highest weight $\mu$, the compactly supported cohomology groups 
		\begin{equation}
		\on{H}^l_{c}\!\big(\calS_{\bar{k},w}, \Psi_\calG(\Sat(V))\big) 
		\end{equation}
		vanish for all $l \in \bbZ$ unless $\Fl_{\calG, \bar{k},w} \subset \calA_{\calG,\bar{k},\mu}$.
	\end{corollary}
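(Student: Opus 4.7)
The goal is to reduce the corollary to showing that the support of $\Psi_\calG(\Sat(V))$ inside $\Fl_{\calG,\bar k}$ is contained in the admissible locus $\calA_{\calG,\bar k,\mu}$. Once this is proven, for any $w$ with $\Fl_{\calG,w}\not\subset \calA_{\calG,\mu}$ we have $w\notin \calA_{\calG,\mu}$ (since the admissible locus is closed and a union of Schubert varieties), and \Cref{lemma_attractor_schubert_isolated} supplies an $O$-cocharacter $\la\colon \bbG_m\to\calS$ forcing $\calS_w\cap \Fl_{\calG,w}=\{w\}$. Then $R\Gamma_c(\calS_w\cap \Fl_{\calG,w},\Psi_\calG(\Sat(V)))$ collapses to the stalk $(\Psi_\calG(\Sat(V)))_w$, which vanishes by the support inclusion.

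I will prove the support inclusion by contradiction. Assume there exists $w$ in the support of $\Psi_\calG(\Sat(V))$ with $w\notin \calA_{\calG,\mu}$, and pick one which is maximal for the Bruhat order. Invoking \Cref{lemma_attractor_schubert_isolated} yields a regular $\la$, so that the associated Levi is $M=T$ and the parabolic $P=B$ is an $F$-Borel, and moreover $\calS_w\cap \Fl_{\calG,w}=\{w\}$. By maximality, every Schubert cell $\Fl_{\calG,v}^\circ$ that meets the support satisfies $v\leq w$, and hence is contained in $\Fl_{\calG,w}$; therefore $\calS_w\cap \on{supp}(\Psi_\calG(\Sat(V)))$ reduces to $\{w\}$. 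Using the stalk formula for hyperbolic localization (\cite[Theorem IV.6.5]{FS21}), we thus obtain
\[
(\on{CT}_\calB\Psi_\calG(\Sat(V)))_w=R\Gamma_c(\calS_w,\Psi_\calG(\Sat(V)))=(\Psi_\calG(\Sat(V)))_w,
\]
which is nonzero thanks to $L^+_{\bar k}\calG$-equivariance and the ULA property of $\Psi_\calG(\Sat(V))$ established in \Cref{ULAness_nearby_cycles}, since $w$ lies in its own open Schubert cell which by hypothesis meets the support.

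On the other hand, \Cref{prop_commutativity_nearby_cycles_constant_term} identifies the above stalk (up to a perverse degree shift, which is irrelevant for (non)vanishing) with $(\Psi_\calT\on{CT}_B(\Sat(V)))_w$. Since $\calT$ is the connected Néron model of $T$, the Beilinson--Drinfeld Grassmannian $\Gr_\calT$ identifies via the Kottwitz isomorphism with the constant v-sheaf $\underline{X_\ast(T)_I}$ over $\Spd O$, so $\Psi_\calT$ acts as the identity on this discrete piece. By geometric Satake over the generic fiber, $\on{CT}_B(\Sat(V))\cong \bigoplus_\la V(\la)\cdot [\la]$ on $\Gr_T\cong \underline{X_\ast(T)}$, with $\la$ running through the weights of $V$. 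Hence $(\Psi_\calT\on{CT}_B(\Sat(V)))_w$ is nonzero only if $w=\la_I(\pi)\in X_\ast(T)_I$ for some weight $\la$ of $V$ restricting to $\la_I$, and in that case \Cref{representation_theoretic_admissible_locus_lemma} forces $w\in \calA_{\calG,\mu}$, contradicting our assumption. The main technical obstacle will be verifying carefully that Bruhat maximality of $w$ confines $\calS_w\cap \on{supp}(\Psi_\calG(\Sat(V)))$ to the single point $\{w\}$: this relies on the $L^+\calG$-equivariant Schubert-stratified structure of $\Psi_\calG(\Sat(V))$ and on the identification, via \Cref{BD.attractor.groups.comparison.theorem}, of $\Fl_\calT$ with the relevant open-and-closed component of the fixed-point locus $(\Fl_\calG)^0$ on which $\on{CT}_\calB\Psi_\calG$ is supported.
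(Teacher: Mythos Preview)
Your route differs substantially from the paper's and contains a gap.

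The paper's proof is a one-liner. The paragraph preceding the corollary has already recorded, from \Cref{prop_commutativity_nearby_cycles_constant_term} together with geometric Satake for $T$, that
\[
\on{CT}_\calB[\deg_\calB]\big(\Psi_\calG(\Sat(V))\big)=\bigoplus_{\lambda_I}V(\lambda_I)\cdot\lambda_I
\]
as a sheaf on $\Fl_{\calT,\bar k}\subset(\Fl_{\calG,\bar k})^0$. Since the stalk of the constant term at $w$ is exactly the compactly supported cohomology along the attractor $\calS_w$, this vanishes unless $w=\lambda_I(\pi)$ for some $\widehat T^I$-weight of $V$, and \Cref{representation_theoretic_admissible_locus_lemma} converts that condition into $\Fl_{\calG,w}\subset\calA_{\calG,\mu}$. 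No support bound, no maximality, no contradiction argument is needed.

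By contrast, you propose to first establish the support inclusion $\on{supp}\,\Psi_\calG(\Sat(V))\subset\calA_{\calG,\bar k,\mu}$ and only then deduce the corollary. This inverts the paper's logic: that support inclusion is essentially the content of \Cref{theorem_special_fiber_admissible}, the theorem which \emph{uses} the present corollary as its principal input. More seriously, your maximality argument is flawed. You take $w$ in the support with $w\notin\calA_{\calG,\mu}$ and maximal, and then assert that every stratum $v$ meeting the support satisfies $v\leq w$, whence $\calS_w\cap\on{supp}\,\Psi_\calG(\Sat(V))=\{w\}$. This is unjustified: regardless of how one reads ``maximal,'' there can be strata in the support that are incomparable to $w$ or strictly above it---for instance the translation strata $\lambda_I(\pi)$ with $\lambda\in W_0\cdot\mu$, which always lie both in the support and in $\calA_{\calG,\mu}$. \Cref{lemma_attractor_schubert_isolated} only isolates $w$ inside $\Fl_{\calG,w}$, not inside the full support, so you cannot identify the cohomology $\on{H}^*_c(\calS_w,A)$ with the single stalk $A_w$, and the contradiction never arises.
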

	\begin{proof}
		This follows from \Cref{representation_theoretic_admissible_locus_lemma}.
	\end{proof}
	We are now finally ready to compute the special fiber of the local model.
	
	\begin{theorem}\label{theorem_special_fiber_admissible}
		There is an equality $\calA_{\calG,\mu}^\diamondsuit=\calM_{\calG, k_E, \mu}$ as sub-v-sheaves of $\Fl_{\calG,k_E}^\dia$.
	\end{theorem}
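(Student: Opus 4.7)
The plan is to prove both inclusions of v-sheaves inside $\Fl_{\calG,k_E}^\dia$. For the easier inclusion $\calA_{\calG,\mu}^\diamondsuit \subset \calM_{\calG,\mu,k_E}$, I would for each $\la \in X_*(T)$ in the conjugacy class of $\mu$ construct an integral $\Spd\breve O$-section of $\calM_{\calG,\mu}$ whose generic fiber is the $\breve F$-point of $\Gr_{G,\mu}$ defined by $\la(\pi)$, using the formalizability of $C$-valued points from \Cref{LM_kimberlite}. The specialization of this section is $\la_I(\pi)\in \Fl_{\calG,\bar k}$, and the $L^+_O\calG$-equivariance of \Cref{group acts on local models} together with closedness forces $\Fl_{\calG,\bar k,\la_I(\pi)}\subset \calM_{\calG,\mu,\bar k}^{\on{red}}$. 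Taking unions over $\la$, passing to closures, and invoking Galois descent to $k_E$ yields the inclusion.

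For the reverse inclusion $\calM_{\calG,\mu,k_E}\subset \calA_{\calG,\mu}^\diamondsuit$, the plan is to exploit the cohomological vanishing from \Cref{corollary_vanishing_cohomology}. By \Cref{LM_kimberlite}, write $\calM_{\calG,\mu,\bar k}^{\on{red}}=\Fl_{\calG,W}$ for some finite subset $W\subset \tilde W/W_\calG$, which is automatically closed under the Bruhat order; it suffices to show every maximal $w\in W$ is $\mu$-admissible. For each such $w$, I would apply \Cref{lemma_attractor_schubert_isolated} to pick a cocharacter $\la\co\bbG_m\to \calS$ so that the resulting semi-infinite orbit satisfies $\calS_w\cap \Fl_{\calG,\bar k,w}=\{w\}$. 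Then the compactly supported cohomology
\[
H^*_c\!\bigl(\calS_w\cap \Fl_{\calG,\bar k,w},\,\Psi_\calG(\IC_\mu)\bigr)\;=\;\bigl(\Psi_\calG(\IC_\mu)\bigr)_w
\]
simply computes the stalk of the nearby cycles at $w$. Once I establish that this stalk is non-zero, \Cref{corollary_vanishing_cohomology} immediately forces $\Fl_{\calG,\bar k,w}\subset \calA_{\calG,\bar k,\mu}$, which is what I want.

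The main obstacle is precisely the non-vanishing of $(\Psi_\calG(\IC_\mu))_w$ at each maximal $w\in W$. I would argue this in two steps. First, since $Rj_*\co \D(\Hk_{G,C},\Lambda)^{\on{bd,ula}}\to \D(\Hk_{\calG,O_C},\Lambda)^{\on{bd,ula}}$ is an inverse to $j^*$ on ULA objects by \Cref{ULA_nearby_cycles}, the object $Rj_*\IC_\mu$ has generic fiber $\IC_\mu$, so its support in $\Gr_{\calG,O_C}$ is the integral closure of $\Gr_{G,C,\mu}$, which equals $\calM_{\calG,\mu,O_C}$ by \Cref{defn_local_model}; pulling back by $i$, the nearby cycles $\Psi_\calG(\IC_\mu)$ are supported on $\calM_{\calG,\mu,\bar k}=\Fl_{\calG,W}^\diamondsuit$. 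Second, by the constructibility of ULA sheaves with bounded support in \Cref{prop_ula_special_fiber}, the restriction of $\Psi_\calG(\IC_\mu)$ to the smooth top stratum $\Fl_{\calG,\bar k,w}^\circ$ is a (shifted) local system which must be non-zero, since otherwise the support would shrink strictly inside $\Fl_{\calG,W}$. This forces $(\Psi_\calG(\IC_\mu))_w\neq 0$, completing the argument. Should this support-theoretic reasoning prove delicate, an alternative route is to appeal to the constant-term compatibility $\on{CT}_\calB\Psi_\calG(\IC_\mu)\cong \Psi_\calT\on{CT}_B(\IC_\mu)$ from \Cref{prop_commutativity_nearby_cycles_constant_term}, reducing the non-vanishing at $w$ (for the $\la$ of \Cref{lemma_attractor_schubert_isolated}) to the computation of $\on{CT}_B(\IC_\mu)$ on $\Gr_{T,C}$ via geometric Satake and the resulting Mirkovi\'c--Vilonen weight decomposition.
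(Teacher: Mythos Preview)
Your overall strategy matches the paper's: the easy inclusion via specializing the Weyl orbit of $\mu$, and the hard inclusion via \Cref{corollary_vanishing_cohomology} together with \Cref{lemma_attractor_schubert_isolated}, reducing to the non-vanishing of the stalk $\bigl(\Psi_\calG(\IC_\mu)\bigr)_w$ at a maximal $w$. The gap is precisely in this non-vanishing step.

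Your first argument is circular. You assert that the support of $Rj_*\IC_\mu$ in $\Gr_{\calG,O_C}$ equals $\calM_{\calG,\mu,O_C}$, but all that is automatic is the containment: the support is a closed sub-v-sheaf whose generic fiber is $\Gr_{G,C,\mu}$, hence it lies in the v-closure $\calM_{\calG,\mu,O_C}$. Equality on the special fiber is exactly the statement that the stalk at each maximal $w$ is non-zero, which is what you are trying to prove. Your sentence ``otherwise the support would shrink strictly inside $\Fl_{\calG,W}$'' is not a contradiction: nothing you have established prevents the support of $\Psi_\calG(\IC_\mu)$ from being strictly smaller than $\Fl_{\calG,W}$. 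Your alternative via constant terms does not rescue this: $\on{CT}_\calB\Psi_\calG(\IC_\mu)$ is supported on $\Fl_{\calT,\bar k}\subset(\Fl_{\calG,\bar k})^0$ by \Cref{prop_commutativity_nearby_cycles_constant_term}, but for general parahoric $\calG$ the fixed point $w$ need not lie in $\Fl_{\calT,\bar k}$, so the constant term gives zero at $w$ and says nothing about the stalk of $\Psi_\calG(\IC_\mu)$ itself.

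The paper closes this gap as follows. First it runs an induction on $\mu$, so one may assume $w$ lies in the open complement of $\bigcup_{\lambda<\mu}\calM_{\calG,O_C,\lambda}$; on this open, $\IC_\mu$ is the shifted constant sheaf $\Lambda[\langle 2\rho,\mu\rangle]$. Then it base-changes along a smooth cover $U\to \Spd O_C$ (the punctured open unit disc) to land in a locally spatial setting, and computes the stalk of $Rg_*\Lambda$ at a geometric point $\bar w$ over $w$ site-theoretically as $\varinjlim_W H^0(W_C,\Lambda)$ over \'etale neighborhoods $W$ of $\bar w$. The crucial input is that $X_C$ is \emph{dense} in $X=\calM_{\calG,O_C,\mu}\times_{\Spd O_C}U$ (from \Cref{group acts on local models}), so each $W_C$ is non-empty and this colimit is non-zero. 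In short, the non-vanishing ultimately comes from the topological density of the generic fiber established earlier, not from any formal property of $Rj_*$ or of ULA sheaves.
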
 
	
	\begin{proof}
		By specializing the orbit of $\mu$ under the finite Weyl group, it is easy to see that $\calA_{\calG,\mu}^\diamondsuit$ is contained in the special fiber of $\calM_{\calG,\mu}$.
		By \Cref{corollary_vanishing_cohomology}, it is thus enough to prove that for a maximal stratum in $\calM_{\calG, \bar{k},\mu}$ enumerated by $w$, we have $\on{H}^l_{c}(\calS_{\bar{k},w} , A)\neq 0$ for some $l\in \bbZ$ and for $A:=\Psi_\calG(\Sat(V))$ for some $\widehat G_\Lambda$-representation $V$ with highest weight $\mu$. 
		Using induction on $\mu$, we may and do assume that $w$ lies in the open complement of the closed union of $\calM_{\calG, O_C, \la }$ for all $\la < \mu$. Indeed, if $\la<\mu$, then $\calA_{\calG,\la}^\diamondsuit\subset \calA_{\calG,\mu}^\diamondsuit$. By \Cref{lemma_attractor_schubert_isolated}, our Borel subgroup $S_{\breve{F}} \subset B \subset G_{\breve{F}}$ can always be chosen such that $w$ is an isolated point of the attractor $\Fl_{\calG,\bar{k},w}^{+}$. Since $w$ enumerates a maximal stratum, we also see that $w$ is an isolated point of $\calM_{\calG, \bar{k},\mu}^{+}$, so that $\on{H}^*_{c}(\calS_{\bar{k},w} ,A)=\on{H}^*(\{w\},A)=:A_w$ is the stalk of $A$ at $w$.

		Consider $X=\calM_{\calG, O_C, \mu}\times_{\Spd O_C} U$ where $U$ denotes the analytic locus of the open unit ball $\mathbb{D}_{O_C}^\diamondsuit$.
		Let $g\colon X_C\hookto X$ be the inclusion of the generic fiber. 
		Let $K$ denote a completed algebraic closure of $k\rpot{t}$. 
		We may choose a $\Spd(K)$-valued point $\overline{w}$ of $X$ that lies over $w$. 
		It suffices to prove $A_{\overline{w}}$ is not identically $0$. 
		Since $U$ is smooth over $\Spd O_C$, the smooth base-change theorem and our inductive assumption on $w$ allows us to compute 
		\begin{equation}
		A_{\overline{w}}=(Rg_*\Lambda[\langle 2\rho,\mu\rangle])_{\overline{w}},
		\end{equation}
		where $V$ is chosen to have weight multiplicity $1$ at $\mu$.
		Since $X_C$, $X$ and $K^\dia$ are locally spatial diamonds, we may compute the right-side term site-theoretically. 
		Letting $l:=-\langle 2\rho,\mu\rangle$, we have 
		\begin{equation}
		\on{H}^l(A_{\overline w})=\on{lim}_{W} \on{H}^0(W_C,\Lambda)
		\end{equation}
		where $W$ ranges over étale neighborhoods of $\overline{w}$ in $X$. 
		By \Cref{remark_local_model_base_change} and openness of $X\to \calM_{\calG,O_C,\mu}$, the generic fiber $X_C$ is dense in $X$ which proves that the above expression does not vanish.
	\end{proof}
	
	\subsection{Centrality of nearby cycles}
	In the classical theory, say, over function fields, it is known that nearby cycles on Hecke stacks give central perverse sheaves on partial affine flag varieties, see \cite{Gai01}.
	Centrality holds true in our context as well:
	
	\begin{proposition}\label{nearby_cycles_central}
		For every $A\in \D(\Hk_{G,C},\Lambda)^{\on{bd}, \on{ula}}$ and $B \in \D(\Hk_{\calG,\bar{k}},\Lambda)^{\on{bd}, \on{ula}}$, there is a canonical isomorphism
		\begin{equation}\label{commutativity_iso}
		\Psi_\calG(A) \star B\cong B \star \Psi_\calG(A)
		\end{equation} 
		in $\D(\Hk_{\calG,\bar{k}},\Lambda)$. 
	\end{proposition}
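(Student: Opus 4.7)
The proof plan follows the fusion-based centrality argument originally due to Gaitsgory in equicharacteristic, now adapted to the v-stack framework of \cite{FS21}. The first ingredient is that nearby cycles $\Psi_\calG$ is compatible with convolution. The convolution structure on $\Hk_\calG$ is built from proper pushforward along the multiplication map $m\colon \Hk_\calG \widetilde{\times} \Hk_\calG \to \Hk_\calG$ and pro-smooth pullback/twisting along the forgetful projections. Proper pushforward commutes with nearby cycles by proper base change, and pro-smooth pullback commutes with nearby cycles by smooth base change; combining these with the ULA preservation established in \Cref{ULA_nearby_cycles} and \Cref{ULAness_nearby_cycles}, one obtains a canonical isomorphism
\[
\Psi_\calG(A_1 \star A_2) \cong \Psi_\calG(A_1) \star \Psi_\calG(A_2)
\]
for $A_1, A_2 \in \D(\Hk_{G,C},\Lambda)^{\mathrm{bd},\mathrm{ula}}$, and an analogous compatibility when one factor is replaced by an integral object and the other lives on the special fiber.

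To deduce centrality, both $\Psi_\calG(A) \star B$ and $B \star \Psi_\calG(A)$ will be encoded as two different degenerations of a single object on a two-point Beilinson-Drinfeld Hecke stack $\Hk_\calG^{(2)}$ over $\Spd O \times_{\Spd \bbZ_p} \Spd O$. This stack restricts on the diagonal to the convolution $\Hk_\calG \widetilde{\times} \Hk_\calG$, and away from the diagonal to the external product $\Hk_\calG \times_{\Spd O} \Hk_\calG$. Given $A$ and $B$, place $A$ at one coordinate over the generic fiber and choose an integral lift $\widetilde B$ of $B$, whose existence is ensured by the bounded support of $B$ via extension by zero along $i$. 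Forming the external product of $A$ and $\widetilde{B}$ off the diagonal and then performing the fusion-and-specialization limits in the two natural orders---first specialize the coordinate carrying $A$ and then fuse, versus first fuse and then specialize---produces, after applying the convolution-compatibility of the first step, the two sheaves $\Psi_\calG(A) \star B$ and $B \star \Psi_\calG(A)$, respectively.

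The two constructions are then related by the $S_2$-symmetry of $\Hk_\calG^{(2)}$ under swapping the two coordinates, which is canonically trivial on the off-diagonal external-product locus. Transporting this swap through fusion recovers the commutativity constraint of the Satake convolution coming from the fusion interpretation of \cite[Definition/Proposition VI.9.4]{FS21}, and after the degeneration described above it delivers the desired isomorphism \eqref{commutativity_iso}. The main obstacle is setting up the two-point Beilinson-Drinfeld Hecke stack rigorously in the mixed-characteristic $p$-adic setting: unlike the equicharacteristic situation where one has an algebraic curve with two marked points, here one must work with the Fargues-Fontaine-style product $\Spd O \times_{\Spd \bbZ_p} \Spd O$, verify that degeneration to the diagonal recovers the convolution, and propagate the ULA and boundedness hypotheses through all the intermediate base changes. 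The formalism needed for this is available in \cite[Chapter VI]{FS21}, where an analogous fusion construction is developed for the $B_\dR^+$-affine Grassmannian; once it is in place, centrality follows essentially formally from the symmetry of the two-point configuration.
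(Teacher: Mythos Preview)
Your overall strategy—interpolate the two convolutions via a two-leg Beilinson--Drinfeld Hecke stack—is the correct one and is what the paper does, following \cite[Proposition 7.4]{Zhu14}. But several details in your description are off, and one is a genuine misidentification of the mechanism.

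The paper does not set up a two-parameter family over $\Spd O \times_{\Spd \bbZ_p} \Spd O$. It base-changes the convolution stacks $\Hk_\calG^{\{1\},\{2\}}$, $\Hk_\calG^{\{2\},\{1\}}$, $\Hk_\calG^{\{1,2\}}$ along the map $\Spd O_C \to (\Spd O)^2$ that pins the second coordinate at the divisor $\pi=0$. So one leg moves in $\Spd O_C$ and the other is permanently at the special point; this is a one-parameter family, and $B \in \D(\Hk_{\calG,\bar k},\Lambda)$ sits there directly with no extension by zero needed. Over the generic fiber $\Spd C$ the two divisors are disjoint (one in characteristic $0$, one in characteristic $p$), hence both ordered-convolution maps $m,n$ to $\Hk_\calG^{\{1,2\}}|_{\Spd C}$ are \emph{isomorphisms}, and the commutativity $Rm_{\eta,*}p_\eta^*(A\boxtimes B)\cong Rn_{\eta,*}q_\eta^*(A\boxtimes B)$ is read off from the commuting square. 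This is then propagated integrally because $(Rj_*A)\boxtimes B$ is ULA over $\Spd O_C$ (\Cref{ULA_nearby_cycles} plus \cite[Corollary IV.2.25]{FS21}), and restricting to $\Spd \bar k$ gives \eqref{commutativity_iso}.

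Two specific points. First, the monoidality $\Psi_\calG(A_1\star A_2)\cong \Psi_\calG(A_1)\star\Psi_\calG(A_2)$ is not used as an input; the argument bypasses it. Second, the isomorphism does \emph{not} come from the $S_2$-swap of coordinates: that swap is not ``canonically trivial on the off-diagonal'', it exchanges the two factors and the two base points, so it sends $A\boxtimes B$ to $B\boxtimes A$ over the swapped locus rather than giving an automorphism. The actual source of commutativity is the purely geometric fact that modifications supported at disjoint divisors can be performed in either order, i.e.\ that $m_\eta$ and $n_\eta$ are isomorphisms. Your ``first specialize then fuse versus first fuse then specialize'' dichotomy does not match the construction: there is no separate fusion step, only the single specialization of the moving leg, and both sides of \eqref{commutativity_iso} arise from that same limit through the two ordered stacks already identified generically.
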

	\begin{proof}
		We can repeat the proof of \cite[Proposition 7.4]{Zhu14} in our context:
		
		Similar to \cite[Definition/Proposition VI.9.4]{FS21}, we work with the convolution integral Hecke stack 
		\begin{equation}
		\Hk_{\calG}^{I; I_1,\ldots, I_k}\to  (\Spd O)^I, 
		\end{equation}
		where $I=I_1\sqcup\ldots\sqcup I_k$ is a finite partitioned index set.
		It parametrizes $\calG$-bundles $\calE_0,\ldots,\calE_k$ over $B^+_\dR$ together with isomorphisms of $\calE_{j-1}$ and $\calE_j$ outside the union of the divisors $\xi_i$ for all $i\in I_j$. 
		We fix $I:=\{1,2\}$ and drop it from the notation. 
		There are three ordered partitions $\{1\}\sqcup\{2\}$, $\{2\}\sqcup \{1\}$ and $\{1,2\}$, leading to the diagram of v-sheaves over $(\Spd O_C)^2$:
		
		\begin{equation}\label{eq:convolution_Hecke_snacks}
		\begin{tikzpicture}[baseline=(current  bounding  box.center)]
		\matrix(a)[matrix of math nodes, 
		row sep=1.5em, column sep=1.5em, 
		text height=1.5ex, text depth=0.45ex] 
		{\Hk_{\calG}^{\{1\}, \{2\}} & \Hk_{\calG}^{\{1, 2\}} & \Hk_{\calG}^{\{2\}, \{1\}} \\ 
							& \Hk_{\calG, O_C}\times \Hk_{\calG,O_C}&  \\}; 
		\path[->](a-1-1) edge node[above] {$m$}  (a-1-2);
		\path[->](a-1-3) edge node[above] {$n$}  (a-1-2);
		\path[->](a-1-1) edge node[below] {$p$\;\;\;\;\;\;\;\;}  (a-2-2);
		\path[->](a-1-3) edge node[below] {\;\;$q$} (a-2-2);
		\end{tikzpicture}
		\end{equation}

		The maps $m, n$ are the natural projections given by remembering $\calE_0$ and $\calE_2$, and are ind-proper, as one sees by pulling back to the
		convolution affine Grassmannian, combine \Cref{SW theorem on parahoric grassmanians} with the proof of \cite[Proposition 20.4.1]{SW20}.
		The maps $p, q$ are given by sending $(\calE_0,\calE_1,\calE_2)$ to the ordered pair $((\calE_0,\calE_1), (\calE_1,\calE_2))$, respectively $((\calE_1,\calE_2), (\calE_0,\calE_1))$, and are pro-(cohomologically smooth) because $L^+_O\calG\to \Spd O$ is so. 
		More precisely, one passes to a bounded part and factors the action of $L^+_O\calG$ through a congruence quotient.

		We may pullback the diagram from \eqref{eq:convolution_Hecke_snacks} along the closed immersion $\Spd O_C\times \Spd k\to (\Spd O_C)^2$ induced by the divisor $\pi=0$ in the second coordinate to obtain a diagram over $\Spd O_C$:
		\begin{equation}\label{eq:convolution_Hecke_snacks_k}
		\begin{tikzpicture}[baseline=(current  bounding  box.center)]
		\matrix(a)[matrix of math nodes, 
		row sep=1.5em, column sep=1.5em, 
		text height=1.5ex, text depth=0.45ex] 
		{\Hk_{\calG}^{\{1\}, \{2\}}|_{\Spd O_C} & \Hk_{\calG}^{\{1, 2\}}|_{\Spd O_C} & \Hk_{\calG}^{\{2\}, \{1\}}|_{\Spd O_C} \\ 
							& \Hk_{\calG, O_C}\times \Hk_{\calG,k}&  \\}; 
		\path[->](a-1-1) edge node[above] {$m_k$}  (a-1-2);
		\path[->](a-1-3) edge node[above] {$n_k$}  (a-1-2);
		\path[->](a-1-1) edge node[below] {$p_k$\;\;\;\;\;\;\;\;}  (a-2-2);
		\path[->](a-1-3) edge node[below] {\;\;$q_k$} (a-2-2);
		\end{tikzpicture}
		\end{equation}

		We note that the maps $m_k,n_k$ are convolution maps over $\Spd k\subseteq \Spd O_C$ and induce isomorphisms over $\Spd C\subseteq \Spd O_C$ that make \eqref{eq:convolution_Hecke_snacks_k} commutative.
		The commutativity yields a canonical isomorphism by using adjunctions
		\begin{equation}\label{eq:canonical_iso_commutativity}
			R m_{k,\eta,*} p^*_{k,\eta}(A\boxtimes B)\cong R n_{k,\eta,*}q_{k,\eta}^*(A\boxtimes B),
		\end{equation}
		which will induce the desired isomorphism \eqref{commutativity_iso} upon applying the nearby cycles for the family $\Hk_\calG^{\{1,2\}}|_{\Spd O_C}$.
		Indeed, since $(Rj_*A)\boxtimes B$ is ULA by \Cref{ULA_nearby_cycles} and \cite[Corollary IV.2.25]{FS21} for outer tensor products, it is still ULA after cohomologically smooth pullback along $p_k, q_k$ and proper pushforward along $m_k,n_k$ (here, we use that the support of $A,B$ is bounded).
		Thus, \eqref{eq:canonical_iso_commutativity} canonically extends integrally to an isomorphism
		\begin{equation}
			R m_{k,*} p^*_{k}(Rj_*A\boxtimes B)\cong R n_{k,*}q_{k}^*(Rj_*A\boxtimes B),
		\end{equation}
		yielding \eqref{commutativity_iso} after restriction to the special fiber.	
	\end{proof}
	
	The following would be a natural reinforcement of the previous proposition to also preserving perversity. 
	For schemes, nearby cycles always preserve perversity \cite[Lemma 6.3]{HS21}.
	In our setting, this is not immediate and would transport Gaitsgory's central functor \cite{Gai01} to the $p$-adic context.
	
	\begin{conjecture}\label{conjecture_perversity_nearby_cycles}
		For every $A\in \D(\Hk_{G,C},\Lambda)$, the $\Lambda$-flat central sheaf $\Psi_\calG(A) \in \D(\Hk_{\calG,\bar{k}},\Lambda)^{\on{bd},\on{ula}}$ is perverse.
	\end{conjecture}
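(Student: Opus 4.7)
The plan is to prove the conjecture in two cases by first reducing to irreducible Satake sheaves. Since the Satake category $\Sat(\Hk_{G,C},\Lambda)$ is Tannakian and equivalent to $\on{Rep}(\widehat{G}_\Lambda)$, every object admits, after suitable localizations of $\Lambda$, a finite filtration with subquotients $\Sat(V)$ for irreducible $V$; perversity being stable under extensions, it suffices to treat $A=\Sat(V)$.

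Assume first that $\calG$ is special parahoric. By \Cref{ULAness_nearby_cycles}, $\Psi_\calG(A)$ lies in $\D(\Hk_{\calG,\bar k}, \Lambda)^{\on{bd},\on{ula}}$. By \Cref{sec:over-k-test-perversity-for-special-parahorics-via-constant-terms}, its perversity reduces to that of $\on{CT}_\calB[\deg_\calB](\Psi_\calG(A))$ on $\Fl_{\calT,\bar k}^\dia$ for every $F$-Borel $B \subset G$ containing $S$. By \Cref{prop_commutativity_nearby_cycles_constant_term}, the latter identifies with $\Psi_\calT(\on{CT}_B[\deg_B](A))$. Here $\on{CT}_B[\deg_B](A)$ is already perverse by \cite[Proposition VI.7.4]{FS21} and decomposes as a direct sum of shifted skyscrapers on $\Gr_T$ indexed by the weights of $V$ with their multiplicities. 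Since $\Gr_\calT \to \Spd O$ is a disjoint union of constant sections indexed by $X_*(T)_I$, the functor $\Psi_\calT$ sends each shifted skyscraper to a shifted skyscraper, aggregating along the natural surjection $X_*(T)\twoheadrightarrow X_*(T)_I$, and so preserves perversity, settling this case.

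The main obstacle is the passage from special parahoric to general parahoric $\calG$. The criterion of \Cref{sec:over-k-test-perversity-for-special-parahorics-via-constant-terms} becomes unavailable because $(\Fl_\calG)^0$ strictly contains $\Fl_\calM$ in the non-special case by \Cref{BD.attractor.groups.comparison.theorem}, and the equidimensionality of Mirkovi\'c--Vilonen intersections (\Cref{equidimensionality.MV.cycles.lemma}) --- an essential ingredient behind that criterion --- relied on an Iwasawa decomposition that fails there. Reducing via the proper cohomologically smooth projection $\Fl_{\calG'}^\dia \to \Fl_\calG^\dia$ from a smaller parahoric $\calG'$ (typically an Iwahori) does not shortcut the problem, since smaller parahorics are generally further from being special.

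A promising strategy is to construct a $p$-adic incarnation of Gaitsgory's central functor by exploiting centrality (\Cref{nearby_cycles_central}) and the convolution/fusion structure of \cite[Section VI.9]{FS21}: one would then bootstrap perversity in the general case from the known special parahoric case by interpolating through convolutions with Wakimoto-type sheaves supported on translation elements. This requires upgrading the centrality isomorphism of \Cref{nearby_cycles_central} to respect the perverse t-structure on both factors, which in turn demands an independent analysis of perverse exactness along the proper ind-morphisms $m$, $n$ of the convolution integral Hecke stacks appearing in its proof. I expect this last step to be the technical heart of the argument.
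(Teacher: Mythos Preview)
This statement is a \emph{conjecture} in the paper, not a theorem; the paper does not prove it in general. Immediately after stating it, the authors write that they ``lack tools to verify'' the general case, remarking that shifted constant terms appear to be insufficient and that one would hope for some form of Artin vanishing. So there is no proof in the paper against which to compare your general-parahoric strategy.

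Your treatment of the special parahoric case is correct and is essentially what the paper records: the paper packages the same constant-term argument into \Cref{ULAness_nearby_cycles}, which already asserts that $\Psi_\calG$ lands in $\Sat(\Hk_{\calG,\bar k},\Lambda)$ when $\calG_{\breve O}$ is special. Your unpacking via \Cref{prop_commutativity_nearby_cycles_constant_term} and \Cref{sec:over-k-test-perversity-for-special-parahorics-via-constant-terms} is exactly the content of that corollary. One small point: your opening reduction to irreducible $\Sat(V)$ presupposes that $A$ lies in the Satake category, whereas the conjecture as stated quantifies over all of $\D(\Hk_{G,C},\Lambda)$; you should either restrict the hypothesis explicitly or note that the conclusion only makes sense for $A$ with bounded support and ULA (so that $\Psi_\calG(A)$ lands in the stated target), and that perversity of $\Psi_\calG$ on such $A$ does reduce to the Satake case.

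For the general parahoric case, you correctly identify the obstruction (failure of the equidimensionality input behind \Cref{sec:over-k-test-perversity-for-special-parahorics-via-constant-terms}) and propose a Wakimoto/central-functor bootstrap, but you do not carry it out; your final paragraph is a research plan, not a proof. The paper offers a different partial result you do not mention: when $\mu$ is minuscule, the conjecture holds via the representability theorem (\Cref{SW_conjecture_intro}) and comparison with schematic nearby cycles, which are known to be perverse. Beyond the special-parahoric and minuscule cases, the conjecture remains open in the paper, so your proposal does not fill a gap so much as sketch one possible line of attack on an open problem.
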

	
	By a combination of \Cref{ULAness_nearby_cycles} and \Cref{nearby_cycles_central}, we know that \Cref{conjecture_perversity_nearby_cycles} holds true whenever $\calG_{\breve{O}}$ is special parahoric. 
	Also, using the representability in \Cref{SW_conjecture_intro} and a comparison with schematic nearby cycles (see \cite[Proposition 27.6.]{Sch17}), the conjecture holds true whenever $\mu$ is minuscule.
	In general, we lack tools to verify \Cref{conjecture_perversity_nearby_cycles} -- the shifted constant term functor appears to be insufficient -- but one still expects some form of Artin vanishing to hold in this very particular context of the Hecke stack.
	
	\begin{remark}
		After the first version of this paper was written, \Cref{conjecture_perversity_nearby_cycles} was proven in \cite[Theorem 4.17]{ALWY23}.
		We remark that \Cref{conjecture_perversity_nearby_cycles} plays a role in the proof of unibranchness of local models in \cite[Theorem~1.3]{GL24}, and thereby in the proof of \Cref{SW-conjecture} for $p=2,3$ as explained in the text following \Cref{SW_conjecture_intro}.
	\end{remark}

	\section{Minuscule implies representable}\label{section_minuscule_representable}

	Our goal in this section is to prove the Scholze--Weinstein conjecture on minuscule local models \cite[Conjecture 21.4.1]{SW20} as stated in \Cref{SW_conjecture_intro}.
	This is sharp, see \Cref{prop_non_representability_schubert_diamond}.
	We verify the representability part without any assumption on the prime $p$ or the pair $(\calG,\mu)$, thereby showing the existence of weakly normal projective $O_E$-schemes $\calM_{\calG, \mu}^{\on{sch}}$ with natural, equivariant isomorphisms $(\calM_{\calG, \mu}^{\on{sch}})^\diamondsuit\cong \calM_{\calG,\mu}$ in all cases.
	
	As for their geometry, we show under \Cref{hyp_wild_odd_unitary} and \Cref{hyp_wild_triality} that the special fiber is given by $\calA_{\calG, \mu}^{\on{can}}$, in particular reduced and even weakly normal.	
	This implies the geometry part of the Scholze--Weinstein conjecture under those assumptions, see the discussion after \Cref{SW-conjecture}.
	
	Recall that our strategy for representability involves specializations triples. 
	Since explicitly calculating the specialization map seems very hard, we need to consider convolutions of local models, so as to partially resolve $\calM_{\calG,\mu}$ and understand their integral sections better.

	\subsection{Convolution}
	\label{sec:convolution}
	
	We continue to denote by $F/\bbQ_p$ a complete discretely valued field with ring of integers $O$ and perfect residue field $k$ of characteristic $p>0$. 
	Fix a completed algebraic closure $C/F$, and a connected reductive $F$-group $G$ with parahoric $O$-model $\calG$. 
	Also, we fix an auxiliary maximal $\breve{F}$-split $F$-torus $S\subset G$ whose connected N\'eron model $\calS$ embeds in $\calG$, see \cite[Proposition 5.10]{BT84}, and denote by $T$ its centralizer with connected N\'eron model $\calT\subset \calG$. Additionally, we fix an auxiliary $\breve{F}$-Borel $ T_{\breve{F}} \subset B \subset G_{\breve{F}}$.
	
	When proving the representability of the v-sheaf local models, it is not difficult to reduce to the case that $G$ is the Weil restriction of a split group (see \Cref{assumption_iwahori_res_split} and the last paragraph \eqref{closed-immersion-of-groups} of the proof of \Cref{thm_local_model_representable}). 
	In this case, it will be helpful to partially resolve the local model via convolution.
	We recall that the Beilinson--Drinfeld Grassmannian admits the following convolution variant
	\begin{equation}
	\Gr_{\calG} \widetilde{\times} \dots \widetilde{\times} \Gr_{\calG}:=L_O\calG\times^{L^+_O\calG}\cdots \times^{L^+_O\calG} \Gr_{\calG}\to \Spd O,
	\end{equation}
	which, in terms of torsors, parametrizes successive modifications of $\calG$-torsors together with a generic trivialization of the last. 
	It admits natural closed sub-v-sheaves
	\begin{equation}
	\calM_{\calG, \mu_\bullet}:= \calM_{\calG, \mu_1} \widetilde{\times} \dots \widetilde{\times} \calM_{\calG, \mu_n},
	\end{equation}
	for any sequence $\mu_\bullet=(\mu_1, \dots, \mu_n)$ of $B_C$-dominant coweights $\mu_i$ of $T_C$, after base change to $\Spd O_{E}$, where $E$ is the reflex field of $\mu_\bullet$. We will still call them (convolution) local models for simplicity. 
	More precisely, denote by $\widetilde{\calM_{\calG,O_C,\mu_i}}$ the preimage in $L_{O_C}\calG$ of $\calM_{\calG,O_C,\mu_i}\subset \Gr_{\calG,O_C}$, which is an $L_{O_C}^+\calG$-torsor over $\calM_{\calG,O_C,\mu_i}$. Then
	\begin{equation}
	\label{eq:4}
	\calM_{\calG, O_C, \mu_\bullet}=\widetilde{\calM_{\calG,O_C,\mu_1}}\times^{L^+_{O_C}\calG} \cdots \times^{L^+_{O_C}\calG}\widetilde{\calM_{\calG,O_C,\mu_{n-1}}}\times^{L^+_{O_C}\calG} \calM_{\calG,\mu_n}.
	\end{equation}
	This presentation is not ``minimal'' in the following sense: 
	Namely, given a contracted product $X\times^H Y$ in any topos and a normal subgroup $N\subset H$ acting trivially on $Y$, then the natural map
	\begin{equation}
	X\times^H Y\to X/N\times^{H/N}Y
	\end{equation}
	is an isomorphism. Hence, in \eqref{eq:4} we may replace $L^+_{O_C}\calG$ by some sufficiently large congruence quotient, and accordingly the torsors $\widetilde{\calM_{\calG,O_C,\mu_i}}$ by their pushforwards to these congruence quotients.
	Let us note that the multiplication
	\begin{equation}
	\calM_{\calG,O_C,\mu_\bullet}\to \Gr_{\calG,O_C}
	\end{equation}
	has image $\calM_{\calG,O_C,|\mu_\bullet|}$ with $|\mu_\bullet|:=\mu_1+\ldots +\mu_n$, and can therefore be regarded as a (partial) resolution of the latter. 
	Regarding the structure of the convolution local models, we can record the following. 
	
	\begin{lemma}
		\label{sec:convolution-1-convoluted-local-models-are-kimberlites}
		The convolution local model $\calM_{\calG,\mu_\bullet}$ is a proper, flat $\pi$-adic kimberlite over $\Spd O_{E}$ with topologically dense generic fiber. 
	\end{lemma}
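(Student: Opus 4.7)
I would proceed by induction on $n$, the base case $n=1$ being precisely \Cref{LM_kimberlite}. For the inductive step, write $\mu_\bullet^{\geq 2}=(\mu_2,\ldots,\mu_n)$ and view the convolution local model via \eqref{eq:4} as a fibration $\pi\co \calM_{\calG,\mu_\bullet}\to \calM_{\calG,\mu_1}$ with fibers $\calM_{\calG,\mu_\bullet^{\geq 2}}$. As in the proof of \Cref{group acts on local models}, by choosing a closed embedding $\calG\hookrightarrow \GL_n$ we find $N\gg 0$ such that the $L_O^+\calG$-action on the bounded closed sub-v-sheaf $\calM_{\calG,\mu_\bullet^{\geq 2}}$ factors through the congruence quotient $\calG_N:=L^{\leq N}_O\calG$, which is representable by a smooth affine group scheme over $\Spd O_E$. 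The corresponding $\calG_N$-torsor $\widetilde{\calM}_N\to \calM_{\calG,\mu_1}$ then presents
\[
\calM_{\calG,\mu_\bullet}\cong \widetilde{\calM}_N \times^{\calG_N}\calM_{\calG,\mu_\bullet^{\geq 2}}
\]
as a v-locally trivial fibration over $\calM_{\calG,\mu_1}$ with structure group $\calG_N$ and fiber $\calM_{\calG,\mu_\bullet^{\geq 2}}$.

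\textbf{Properness, kimberlite structure, and flatness.} Properness of $\pi$ is inherited from properness of the fibers, which holds by inductive hypothesis, and combining with properness of $\calM_{\calG,\mu_1}\to \Spd O_E$ from \Cref{LM_kimberlite} yields properness of $\calM_{\calG,\mu_\bullet}\to \Spd O_E$. The defining axioms of a $p$-adic kimberlite (\Cref{definition p-adic kimberlite}) are all v-local in nature and are preserved under fiber products over $\Spd O_E$ of flat $p$-adic kimberlites, so after pulling back along the cohomologically smooth torsor $\widetilde{\calM}_N\to \calM_{\calG,\mu_1}$ we reduce to checking these properties for $\widetilde{\calM}_N\times_{\Spd O_E}\calM_{\calG,\mu_\bullet^{\geq 2}}$, where they follow from induction. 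The reduction $\calM_{\calG,\mu_\bullet}^{\on{red}}$ identifies with the convolution of Schubert perfect schemes $\Fl_{\calG,W_1}\tilde\times\cdots\tilde\times \Fl_{\calG,W_n}$, which is perfectly proper and perfectly of finite type. Flatness follows from \Cref{proper and sp surjective gives strong kimberlite}: since $\calM_{\calG,\mu_\bullet}$ is proper, we only need surjectivity of the specialization map, and this reduces via the fibration to surjectivity on each factor, known by induction.

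\textbf{Topological density.} The key observation is that $\pi$ is open. Indeed, the composition
\[
\widetilde{\calM}_N\times_{\Spd O_E}\calM_{\calG,\mu_\bullet^{\geq 2}}\longrightarrow \calM_{\calG,\mu_\bullet}\stackrel{\pi}{\longrightarrow} \calM_{\calG,\mu_1}
\]
is a v-locally trivial projection with nonempty fibers (hence open), the first arrow is the $\calG_N$-quotient map (cohomologically smooth, hence open), and openness of the composition together with openness and surjectivity of the first arrow forces $\pi$ to be open. Since $\pi$ is defined over $\Spd O_E$, we have $\calM_{\calG,\mu_\bullet}|_{\Spd E}=\pi^{-1}(\calM_{\calG,\mu_1}|_{\Spd E})$, and density of $|\calM_{\calG,\mu_1}|_{\Spd E}|$ in $|\calM_{\calG,\mu_1}|$ from the inductive hypothesis combined with openness of $\pi$ immediately yields density of $|\calM_{\calG,\mu_\bullet}|_{\Spd E}|$ in $|\calM_{\calG,\mu_\bullet}|$ by the standard pullback-of-dense-opens argument.

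\textbf{Main obstacle.} The subtlest point is verifying that the $p$-adic kimberlite axioms -- particularly the identification of the reduction with a perfect scheme perfectly of finite type and formal adicness of the structure map -- genuinely descend from the fiber product $\widetilde{\calM}_N\times_{\Spd O_E}\calM_{\calG,\mu_\bullet^{\geq 2}}$ down to the $\calG_N$-quotient $\calM_{\calG,\mu_\bullet}$. This requires unwinding the definitions in \Cref{sec:formal-theory-v-sheaves-p-adic-kimberlites} and invoking v-descent for scheme-theoretic v-sheaves, using that $\calG_N$-quotients of smooth group actions behave well on reductions (matching the quotient of Witt vector loop groups acting on perfect Schubert schemes on the special fiber).
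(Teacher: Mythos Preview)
Your inductive fibration strategy via congruence quotients is exactly what the paper does, and your density argument (pull back density along an open map) is the same mechanism the paper invokes under the phrase ``preservation of closures under open maps''. Two points deserve comment.

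\textbf{The kimberlite axioms.} Here you diverge from the paper. You propose to check v-formalizing, formal adicness, and representability of the reduction by descent from the product $\widetilde{\calM}_N\times_{\Spd O_E}\calM_{\calG,\mu_\bullet^{\geq 2}}$, and you flag this as the main obstacle. The paper sidesteps this entirely: it replaces the $L^+_{O_C}\calG$-torsors by the corresponding $W^+_{O_C}\calG$-torsors of \cite[Definition~2.2.14]{Gle20}, forms the convolution $\calM_{\calG,O_C,\mu_\bullet}^{\on{for}}$ with those, and observes that the natural map $\calM_{\calG,O_C,\mu_\bullet}^{\on{for}}\to \calM_{\calG,O_C,\mu_\bullet}$ is an isomorphism (both are qcqs with the same geometric points, cf.\ \cite[Proposition~2.2.27]{Gle20}). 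From this formal model the kimberlite axioms are immediate. Your descent route is plausible but you would need to actually verify that formal adicness and the scheme-representability of $(\str)^{\on{red}}$ descend along the $\calG_N$-quotient; the paper's device avoids this.

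\textbf{Openness in the density argument.} Your sentence ``a v-locally trivial projection with nonempty fibers (hence open)'' is not a valid justification: a proper surjection need not be open, and the projection $\widetilde{\calM}_N\times_{\Spd O_E}\calM_{\calG,\mu_\bullet^{\geq 2}}\to \widetilde{\calM}_N$ is the base change of the proper map $\calM_{\calG,\mu_\bullet^{\geq 2}}\to \Spd O_E$. The paper instead uses only that the $\calG_N$-quotient map $\widetilde{\calM}_N\times_{\Spd O_E}\calM_{\calG,\mu_\bullet^{\geq 2}}\to \calM_{\calG,\mu_\bullet}$ is open (cohomologically smooth), together with density of the generic fiber in the product, which follows by pulling back along the open torsor map $\widetilde{\calM}_N\to \calM_{\calG,\mu_1}$ and the inductive hypothesis. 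Your conclusion that $\pi$ is open can be salvaged, but not by the reason you give.
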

	\begin{proof}
		In order to prove that $\calM_{\calG,\mu_\bullet}$ is a proper flat $\pi$-adic kimberlite, we must first show that this proper v-sheaf is v-formalizing. It is difficult however to work with the expression for the convolution product, because the appearing $L^+_{O_C}\calG$ is never v-formalizing. Indeed, already the quotient $\calG^\diamondsuit$ is the analytic v-sheaf attached to an {\it affine} scheme, and the geometric points that formalize factor through the inclusion $\calG^\diamond\subseteq \calG^\diamondsuit$.
		For this reason, we replace the universal $L^+_{O_C}\calG$-torsors by the corresponding $W^+_{O_C}\calG$-torsors, see \cite[Definition 5.7]{Gle20} and denote by $\calM_{\calG, O_C,\mu_\bullet}^{\on{for}}$ the corresponding convolution. 
		From the presentation 
\begin{equation}
	\label{eq:4extra}
	\calM^{\on{for}}_{\calG, O_C, \mu_\bullet}=\widetilde{\calM^{\on{for}}_{\calG,O_C,\mu_1}}\times^{W^+_{O_C}\calG} \cdots \times^{W^+_{O_C}\calG}\widetilde{\calM^{\on{for}}_{\calG,O_C,\mu_{n-1}}}\times^{W^+_{O_C}\calG} \calM_{\calG,\mu_n}.
	\end{equation}
	it is not hard to see that $\calM^{\on{for}}_{\calG, O_C, \mu_\bullet}$ is v-formalizing since each individual $\widetilde{\calM^{\on{for}}_{\calG,O_C,\mu_i}}$ is already v-formalizing (see \cite[Proposition 5.19]{Gle20} for a similar argument).  
		The natural map
		\begin{equation}
		\calM_{\calG, O_C,\mu_\bullet}^{\on{for}} \to \calM_{\calG, O_C,\mu_\bullet}
		\end{equation}
		is an isomorphism because both v-sheaves are qcqs over $\Spd O_C$ and have the same geometric points, compare with \cite[Proposition 5.19]{Gle20}. It is now easy to show that it is formally separated and formally adic, with representable special fiber, see \Cref{prop_conv_LM_fibers}.
		Via projection to the first factor we have a map
		\begin{equation}
		\calM_{\calG, O_C,\mu_\bullet}\to \calM_{\calG, O_C,\mu_1},
		\end{equation}
		which splits after pullback to $\widetilde{\calM_{\calG, O_C,\mu_1}}$ into the projection of the product $\widetilde{\calM_{\calG, O_C,\mu_1}}\times_{\Spd O_C} \calM_{\calG, O_C,(\mu_{2},\ldots, \mu_{n})}$. 
		Hence, after replacing $L^+_{O_C}\calG$ (and hence $\widetilde{\calM_{\calG, O_C,\mu_1}}$) by a sufficiently large congruence quotient, we can deduce that $\calM_{\calG, O_C,\mu_\bullet}$ has dense generic fiber by induction on $n$, \Cref{LM_kimberlite} and preservation of closures under open maps.
	\end{proof}

	From now on, we will always consider sequences of minuscule dominant coweights $\mu_\bullet=(\mu_1, \dots, \mu_n)$ whose sum 
	\begin{equation}\label{eq:coweight_assumption}
	\lvert \mu_\bullet \rvert=\mu_1 +\dots +\mu_n 
	\end{equation}
	is still minuscule. 
	Basically, this means that the support of each $\mu_i$ lies in disjoint irreducible components of the Dynkin diagram. 
	We say that a coweight is \textit{tiny} if it is minuscule and its support is contained in at most one irreducible component.
	
	\begin{proposition}\label{prop_conv_LM_fibers}
		Both fibers of $\calM_{\calG,O_C,\mu_\bullet}\to \Spd O_C$ are representable. More precisely, we have isomorphisms
		\begin{equation}
		\calM_{\calG,O_C,\mu_\bullet}|_{\Spd C} \cong \calF_{G, C, \mu_\bullet}^\diamondsuit \cong \calF_{G, C, \mu_1}^\diamondsuit \times \dots \times \calF_{G, C, \mu_n}^\diamondsuit,
		\end{equation}
		and also 
		\begin{equation}
		\calM_{\calG,O_C,\mu_\bullet}|_{\Spd \bar k} \cong \calA_{\calG, \mu_\bullet}^\diamondsuit, 
		\end{equation}
		where on the right $\calA_{\calG, \mu_\bullet}$ denotes the convolution $\calA_{\calG, \mu_1} \widetilde{\times} \dots \widetilde{\times} \calA_{\calG, \mu_n}$.
	\end{proposition}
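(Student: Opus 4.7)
The plan is to handle the generic and special fibers separately. For the generic fiber, I would begin by using that by definition $\calM_{\calG, O_C, \mu_i}|_{\Spd C} = \Gr_{G, C, \mu_i}$ for each $i$, and that the minuscularity of each $\mu_i$ identifies this with $\calF_{G, C, \mu_i}^\diamondsuit$ via \Cref{prop_non_representability_schubert_diamond}. The key combinatorial observation is that, since each $\mu_i$ is minuscule and their sum $|\mu_\bullet|$ is still minuscule, the supports of the $\mu_i$ necessarily lie in pairwise distinct almost-simple factors of $G_\ad$. Indeed, if two minuscule $\mu_i, \mu_j$ were simultaneously non-trivial on a common simple factor, one could find a positive root $\gamma$ of that factor pairing to $1$ with both, contradicting $\langle \mu_i + \mu_j, \gamma\rangle \leq 1$.

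Combined with the equivariant isomorphism $\calM_{\calG, O_C, \mu} \cong \calM_{\calG_\ad, O_C, \mu_\ad}$ from \Cref{prop_LM_basic_facts} (which I expect to extend to convolutions by the same closed-immersion and product-preservation arguments used there), this lets me reduce to the case $G = G_1 \times \dots \times G_n$ in which each $\mu_i$ is supported in the $i$-th factor. In this setting, minuscularity forces the $L^+_C G$-action on each $\Gr_{G_i, C, \mu_i}$ to factor through $G_{i,C}^\diamondsuit$, acting trivially on the other factors. The contracted products in the convolution formula \eqref{eq:4} then collapse into ordinary direct products of the individual minuscule flag diamonds, producing the identification with $\calF_{G, C, \mu_1}^\diamondsuit \times \dots \times \calF_{G, C, \mu_n}^\diamondsuit$.

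For the special fiber, I would invoke \Cref{theorem_special_fiber_admissible} to identify $\calM_{\calG, \mu_i}|_{\Spd \bar k} \cong \calA_{\calG, \mu_i}^\diamondsuit$ on each factor. After replacing $L^+_{O_C}\calG$ by a sufficiently large congruence quotient through which it acts on every $\calM_{\calG, O_C, \mu_i}$, the convolution formula \eqref{eq:4} consists only of fiber products and torsor quotients in the v-topos, all of which commute with base change along $\Spd \bar k \to \Spd O_C$. This yields $\calA_{\calG, \mu_1}^\diamondsuit \widetilde\times \dots \widetilde\times \calA_{\calG, \mu_n}^\diamondsuit$, which equals $\calA_{\calG, \mu_\bullet}^\diamondsuit$ by the very definition stated in the proposition (using that $(\str)^\diamondsuit$ commutes with the formation of convolution products for loop group actions on perfect $\bar k$-schemes).

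The main obstacle is extending the product reduction of \Cref{prop_LM_basic_facts} from individual local models to their convolutions, since the convolution couples the successive $\calG$-modifications through a common action of $L^+_O\calG$. One must verify that this coupling decouples correctly when $G$ splits as a product of simple factors and the $\mu_i$ are distributed across distinct factors, so that the universal $L^+_O\calG$-torsors sitting over each $\calM_{\calG, O_C, \mu_i}$ reduce compatibly to torsors under the relevant simple factor. Once this reduction is in place, the remaining identifications in both fibers are essentially formal.
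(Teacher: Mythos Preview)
Your approach is essentially correct and aligns with the paper's, but you overcomplicate the generic fiber and slightly underspecify the special fiber.

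For the generic fiber, the paper treats both isomorphisms as formal: base change of the convolution \eqref{eq:4} along $\Spd C \to \Spd O_C$ immediately gives the twisted product of the $\calF_{G,C,\mu_i}^\diamondsuit$, which is $\calF_{G,C,\mu_\bullet}^\diamondsuit$ by definition. The untwisting to a direct product is a consequence of the standing disjoint-support hypothesis stated just before the proposition and does not require the reduction to $G_\ad$ or the product decomposition you outline. Your ``main obstacle'' about extending \Cref{prop_LM_basic_facts} to convolutions is therefore a red herring for this statement --- the paper never invokes that reduction here.

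For the special fiber, your outline is right: base change of the convolution gives the v-sheaf twisted product $\calA_{\calG,\mu_1}^\diamondsuit \widetilde\times \cdots \widetilde\times \calA_{\calG,\mu_n}^\diamondsuit$ (using \Cref{theorem_special_fiber_admissible} on each factor), and one must then identify this with $(\calA_{\calG,\mu_\bullet})^\diamondsuit$. You assert that $(\str)^\diamondsuit$ commutes with convolution products, but this is precisely the non-formal step. The functor $(\str)^\diamondsuit$ preserves fiber products, but convolution is a quotient, and $(\str)^\diamondsuit$ does not preserve arbitrary v-quotients (see \Cref{sec:comp-etale-cohom-1-diamondsuit-does-not-preserve-v-covers} for a counterexample). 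The paper handles this by passing to a congruence quotient so that the torsor maps become perfectly finitely presented v-covers, and then invoking \Cref{sec:comp-etale-cohom-2-diamondsuit-preserves-finitely-presented-v-covers}, which guarantees that such covers remain v-covers after applying $(\str)^\diamondsuit$. You gesture at the congruence-quotient step but do not supply the lemma that makes it work; that citation is the substantive content of the special-fiber argument.
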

	
	\begin{proof}
		The description of the generic fiber via convolution is formal, and it is formal (using \Cref{theorem_special_fiber_admissible}) that $\calM_{\calG,O_C,\mu_{\bullet}}|_{\Spd k}$ is the convolution of the $\calA_{\calG,\mu_i}^{\diamondsuit}$. Using \Cref{sec:comp-etale-cohom-2-diamondsuit-preserves-finitely-presented-v-covers} this convolution identifies with $\calA_{\calG,\mu_\bullet}^\diamondsuit$.
	\end{proof}
	
	We are aiming to carefully write down certain ``minimal'' $\calG_{\on{ad}, O_C}^{>i,\diamondsuit}$-torsors $\calM^{\on{tor}}_{\calG, O_C, \mu_i}\to \calM_{\calG, O_C, \mu_i}$ for some associated smooth connected groups $\calG_{\on{ad},O_C}^{>i}$, such that
	\begin{equation}
	\calM^{\on{tor}}_{\calG,O_C, \mu_1} \times^{\calG_{\on{ad}, O_C}^{>1,\diamondsuit}} \dots \times^{\calG_{\on{ad}, O_C}^{>i-1,\diamondsuit}} \calM^{\on{tor}}_{\calG, O_C, \mu_i} \times^{\calG_{\on{ad}, O_C}^{>i,\diamondsuit}} \dots \times^{\calG_{\on{ad}, O_C}^{>n-1,\diamondsuit}}\calM^{\on{tor}}_{\calG, O_C, \mu_n}
	\end{equation}
	recovers the convolution local model $\calM_{\calG, O_C, \mu_\bullet}$. 
	We begin by introducing the group schemes $\calG_{\on{ad}, O_C}^{>i}$.
	
	\begin{lemma}\label{lem_min_adjoint_quotient_over_integers}
		Let $\mu_{>i}=\mu_{i+1}+\dots + \mu_n$ and let $G_{\on{ad}, C}^{>i}$ be the quotient of $G_C$ by the intersection of all conjugates of $P_{\mu_{>i}}^{-}$. Then, $G_{\on{ad}, C}^{>i}$ acts faithfully on $\calF_{G, C, \mu_{>i}}$.
		
		Furthermore, if we let $\calG_{\on{ad}, O_C}^{>i}$ be the unique fppf quotient\footnote{Beware that the morphism of parahoric group schemes $\calG \to \calG_{\on{ad}}$ is not always an fppf surjection.} of $\calG_{\on{ad},O_C}$ with generic fiber $G^{>i}_{\mathrm{ad},C}$, then $\calG_{\on{ad}, O_C}^{>i}$ acts on $\calM_{\calG, O_C, \mu_{>i}}$, and its fibers are smooth, affine, connected with trivial center.
	\end{lemma}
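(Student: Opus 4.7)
The first assertion is a direct unwinding of definitions. Since $\calF_{G,C,\mu_{>i}} = G_C/P_{\mu_{>i}}^{-}$ is a $G_C$-homogeneous space in which the stabilizer of the point $gP_{\mu_{>i}}^{-}$ equals $gP_{\mu_{>i}}^{-}g^{-1}$, the kernel of the $G_C$-action on $\calF_{G,C,\mu_{>i}}$ coincides with $\bigcap_g gP_{\mu_{>i}}^{-}g^{-1}$, which is by definition the normal subgroup we quotient out to define $G_{\on{ad},C}^{>i}$. The induced $G_{\on{ad},C}^{>i}$-action is therefore faithful. Since $Z(G_C)$ is contained in every parabolic, this quotient factors through $G_{\on{ad},C}$.

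For the second part, the strategy is to construct $\calG_{\on{ad},O_C}^{>i}$ explicitly by Weil restriction after decomposing into absolutely simple summands. Over $\breve F$, write $G_{\on{ad},\breve F} = \prod_\alpha \Res_{K_\alpha/\breve F} G_\alpha^{\on{abs}}$ with each $G_\alpha^{\on{abs}}$ absolutely simple adjoint over a finite separable (and necessarily totally ramified, because $\bar k$ is algebraically closed) extension $K_\alpha/\breve F$, and correspondingly $\calG_{\on{ad},\breve O} = \prod_\alpha \Res_{O_{K_\alpha}/\breve O} \calG_\alpha^{\on{abs}}$. Base changing to $C$ yields $G_{\on{ad},C} = \prod_{(\alpha,\sigma)} G_\alpha^{\on{abs}}\otimes_{K_\alpha,\sigma} C$, indexed by $\breve F$-embeddings $\sigma\colon K_\alpha \hookrightarrow C$, and the first part identifies
\[
G_{\on{ad},C}^{>i} = \prod_\alpha \prod_{\sigma \in J_\alpha} G_\alpha^{\on{abs}}\otimes_{K_\alpha,\sigma} C,
\]
where $J_\alpha := \{\sigma : \mu_{\alpha,\sigma}\neq 0\}$. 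Now set $R_\alpha := O_{K_\alpha}\otimes_{\breve O} O_C$, which is a local finite flat $O_C$-algebra thanks to the total ramification of $K_\alpha/\breve F$, and put $R_\alpha' := R_\alpha/I_\alpha$ with $I_\alpha := \ker(R_\alpha \to \prod_{\sigma \in J_\alpha} C)$. Since $I_\alpha$ is a submodule of a $O_C$-flat module, $R_\alpha'$ is still local finite flat over $O_C$, with generic fiber $\prod_{\sigma \in J_\alpha} C$. Defining
\[
\calG_{\on{ad},O_C}^{>i} := \prod_\alpha \Res_{R_\alpha'/O_C}\bigl(\calG_\alpha^{\on{abs}}\otimes_{O_{K_\alpha}} R_\alpha'\bigr),
\]
standard properties of Weil restrictions along finite flat (local) ring maps---which preserve smoothness, affineness, fiberwise connectedness, and triviality of the fiberwise center for smooth affine group schemes with these properties---imply that $\calG_{\on{ad},O_C}^{>i}$ has the geometric properties claimed. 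The surjections $R_\alpha \twoheadrightarrow R_\alpha'$ induce an fppf morphism $\calG_{\on{ad},O_C}\twoheadrightarrow \calG_{\on{ad},O_C}^{>i}$ whose generic fiber is $G_{\on{ad},C}^{>i}$. Uniqueness will be automatic, since any such fppf quotient is determined by its kernel, and two flat closed subgroups of $\calG_{\on{ad},O_C}$ with the same generic fiber must coincide (both being the schematic closure of their common generic fiber).

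Finally, to check that the $\calG_{\on{ad},O_C}^{\diamondsuit}$-action on $\calM_{\calG,O_C,\mu_{>i}}$ from \Cref{group acts on local models} factors through $\calG_{\on{ad},O_C}^{>i,\diamondsuit}$, one exploits properness of $\calM_{\calG,O_C,\mu_{>i}}$: the fixator sub-v-sheaf $\calH \subset \calG_{\on{ad},O_C}^\diamondsuit$ consisting of elements acting trivially is closed, and its generic fiber contains $N_C := \ker(G_{\on{ad},C}\to G_{\on{ad},C}^{>i})$ by the first assertion. Since the kernel of $\calG_{\on{ad},O_C}^\diamondsuit\to \calG_{\on{ad},O_C}^{>i,\diamondsuit}$ is flat over $\Spd O_C$, it equals the v-closure of $N_C^\diamondsuit$ inside $\calG_{\on{ad},O_C}^\diamondsuit$; by closedness of $\calH$, this v-closure is contained in $\calH$, so the action descends. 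The principal obstacle will be controlling the fiberwise smoothness, connectivity and trivial-center assertions in the wildly ramified regime, where $R_\alpha$ is non-étale over $O_C$ and its special fiber carries nilpotents. There, the argument will hinge on the classical robustness of Weil restriction along finite flat (but not necessarily étale) local ring maps, which preserves all three properties for smooth affine group schemes with connected fibers and trivial center---ensuring that the adjoint nature of each $\calG_\alpha^{\on{abs}}$ propagates through the construction.
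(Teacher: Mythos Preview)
Your proposal is correct and follows essentially the same route as the paper: both reduce to the case of Weil restrictions of absolutely simple adjoint groups and identify $\calG_{\on{ad},O_C}^{>i}$ explicitly as a Weil restriction along the finite flat $O_C$-algebra obtained as the image of $O_{K_\alpha}\otimes_{\breve O}O_C$ in the product of copies of $C$ indexed by the support of $\mu_{>i}$ (your $R_\alpha'$ is the paper's $A_i$).

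Two minor points of precision. First, your justification for flatness of $R_\alpha'$ (``$I_\alpha$ is a submodule of a flat module'') is not the right reason; rather, $R_\alpha'$ embeds into $\prod_{\sigma\in J_\alpha} C$ and is therefore torsion-free, hence flat over the valuation ring $O_C$. Second, the assertion that Weil restriction along a finite flat local map preserves triviality of the center is not entirely trivial in the non-\'etale case; the paper singles this out as ``more delicate'' and cites \cite[Proposition A.5.15(1)]{CGP15} (noting that the reducedness hypothesis there is superfluous for computing the center). Your v-closure argument for why the action descends is a nice addition the paper leaves implicit.
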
 
	
	\begin{proof}
		The claims on $G_{\on{ad},C}^{>i}$ follow from \Cref{sec:convolution-1-convoluted-local-models-are-kimberlites}.
		The smooth group scheme quotient with the asserted properties obviously exist, due to \cite[Proposition 1.7.6]{BT84}, and it clearly inherits connected fibers. The generic fiber is clearly adjoint: apply semi-simplicity of $G_{\on{ad}}$. However, it is more delicate to show that the special fiber is adjoint.
		
		Assume without loss of generality (\Cref{prop_LM_basic_facts}) that $G=\Res_{F'/F}G'$ is an adjoint $F$-simple group with $G'$ absolutely simple, and $F^\prime$ a finite field extension of $F$. 
		So, $\mathcal G=\Res_{O_{F'}/O_F}\calG'$ for a unique parahoric group scheme $\calG^\prime$ of $G'$, see \cite[Proposition~4.7]{HR20}. 
		Then, a calculation shows that
		\begin{equation}
		\calG^{>i}= \Res_{A_i/O_C} \calG_{A_i}',
		\end{equation}
		where $A_i$ is the finite $O_C$-algebra obtained as the image of $O_{F'} \otimes_{O} O_C$ in the product of those copies of $C$ indexed by the support of $\mu_{>i}$. Indeed, this smooth connected group scheme has the desired universal property and its special fiber is adjoint by \cite[Proposition A.5.15 (1)]{CGP15}. (Notice that the reducedness hypothesis is superfluous for calculating the center.)
	\end{proof}
	
	Now, we come to the definition of the torsors.	
	
	\begin{definition}\label{def_torsor_minuscule_local_model} 
		The v-sheaf $\calM^{\on{tor}}_{\calG, O_C, \mu_i}$ is defined as the pushforward to $(\calG_{\on{ad}, O_C}^{>i})^\diamondsuit$ of the natural $L^+_{O_C}\calG$-torsor $\widetilde{\calM_{\calG, O_C,\mu_i}}$ over $\calM_{\calG, O_C, \mu_i}$. 
	\end{definition}

	Just like \Cref{thm_local_model_representable} proposes that the v-sheaves $\calM_{\calG, O_C, \mu_i}$ have algebraic models, one might also expect that the torsors $\calM^{\on{tor}}_{\calG, O_C, \mu_i}$ also have natural algebraic models defined over the ring of integers of some reflex field and this will be an integral part of our strategy. 
	Since the fibers are relatively easy to understand by the Demazure resolution, we now focus on the behavior over a certain finite union of $\calG_{O_C}^\dia$-orbits.
	
	Recall that for any $\la\in W_0 \cdot \mu$, where $W_0$ is the Weyl group of $(G_{\breve{F}},S_{\breve{F}})$, the induced point $[\la]\co \Spd C\to \Gr_{G,C,\mu}$ uniquely extends to a point $[\la]\co \Spd O_C \to \calM_{\calG,O_C,\mu}$ by \Cref{LM_kimberlite}.

	\begin{definition}\label{def_open_semi_homog_fixers}
		Let $\calM_{\calG, \mu}^\circ \subset \calM_{\calG, \mu}$ be the unique sub-v-sheaf whose base change $\calM_{\calG, O_C, \mu}^\circ\subset \calM_{\calG, O_C, \mu}$ to $\Spd O_C$ is given by the finite (non-disjoint) union
		\begin{equation}
		\calM_{\calG, O_C, \mu}^\circ= \bigcup_{\lambda \in W_0\cdot \mu}\calG_{O_C}^\diamondsuit\cdot [\la].
		\end{equation}
	\end{definition}

	We recall that the elements $\la\in W_0\cdot \mu$ are the $\breve{F}$-rational conjugates of $\mu$ in $X_*(T)$ and correspond to the open Schubert orbits in the $\mu$-admissible locus, see the discussion after \Cref{admissible_locus_definition}.
	Also, it is easy to see and left to the reader that the definition of $\calM_{\calG, \mu}^\circ$ does not depend on the choice of the auxiliary maximal $\breve{F}$-split $F$-torus $S \subset G$ whose connected Néron model $\calS$ embeds in $\calG$. 
	As we will see in the following lemma, the stabilizer of rational conjugates $[\la]$ is actually representable and well behaved.
	
	\begin{lemma}\label{lem_fixer_lambda_scheme_smooth}
		Let $\calP_\la^-$ be the flat closure in $\calG_{O_C}$ of the repeller parabolic $P_\la^- \subset G_C$ defined by $\la$. Then
		\begin{enumerate}
			\item $\calP_\la^{-,\diamondsuit}$ is the $\calG^\diamondsuit_{O_C}$-fixer of $\la$ inside $\calM_{\calG, O_C, \mu}$.
			\item $\calP_\la^-\to \Spec(O_C)$ is smooth affine with connected fibers. 
		\end{enumerate}
	\end{lemma}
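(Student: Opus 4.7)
The plan is to establish part (2) first and use it in part (1). The cocharacter $\la\co \bbG_{m,C}\to T_C\subset G_C$ extends uniquely to an integral cocharacter $\bbG_{m,O_C}\to \calS_{O_C}\subset \calG_{O_C}$, since the connected N\'eron $\breve O$-model $\calS$ becomes a split torus upon base change to $O_C$. With $\la$ now integral, the dynamical construction recalled at the start of \Cref{section_geometry_strata} (applied with the opposite sign convention) produces a smooth affine closed subgroup scheme $\calP_\la^-\subset \calG_{O_C}$ with connected fibers and generic fiber the repelling parabolic $P_\la^-$. Being $O_C$-flat with the correct generic fiber, it must agree with the flat closure appearing in the statement, proving (2).

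For part (1), consider the action map $\alpha\co \calG_{O_C}^\diamondsuit\to \calM_{\calG,O_C,\mu}$, $g\mapsto g\cdot[\la]$, whose fiber over $[\la]$ is the fixer $F:=\alpha^{-1}([\la])$, a closed sub-v-sheaf of $\calG_{O_C}^\diamondsuit$. For the inclusion $\calP_\la^{-,\diamondsuit}\subset F$: on the generic fiber, $P_\la^-$ stabilizes the base point in the minuscule flag variety $\Gr_{G,C,\mu}\cong (G_C/P_\la^-)^\diamondsuit$, see \Cref{prop_non_representability_schubert_diamond}; since $\calP_\la^-$ is $O_C$-flat of finite type by part (2), the generic fiber of $\calP_\la^{-,\diamondsuit}$ is topologically dense (combine \Cref{plano is flat} with \Cref{density of generic}), so the coincidence of the restriction of $\alpha$ with the constant map $[\la]$ propagates from the generic fiber to the whole of $\calP_\la^{-,\diamondsuit}$ by separatedness of $\calM_{\calG,O_C,\mu}$.

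For the reverse inclusion $F\subset \calP_\la^{-,\diamondsuit}$, both sides being closed sub-v-sheaves of $\calG_{O_C}^\diamondsuit$, by \Cref{vclosure_intro} it suffices to compare their weakly generalizing closed subsets of $|\calG_{O_C}^\diamondsuit|$, equivalently to match geometric points over the two fibers of $\Spd O_C$. Over $\Spd C$ this is classical. Over $\Spd\bar k$, the point $[\la]\in \Fl_\calG(\bar k)$ is the coset $\la(\pi)\calG(\breve O)$, and $g\in \calG(\bar k)$ fixes it iff any lift $\tilde g\in \calG(\breve O)$ satisfies $\la(\pi)^{-1}\tilde g\la(\pi)\in \calG(\breve O)$. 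A root-by-root Moy--Prasad analysis---conjugation by $\la(\pi)$ shifts the affine root group $\calU_{a+n}$ to $\calU_{a+n-\langle\la,a\rangle}$---confirms that only the root subgroups with $\langle\la,a\rangle\leq 0$ survive in the depth-zero quotient $\calG_{\bar k}$, producing exactly $(\calP_\la^-\otimes \bar k)(\bar k)$. The main obstacle is this special-fiber computation: correctly identifying the depth-zero quotient of $L^+_{\bar k}\calG\cap \la(\pi) L^+_{\bar k}\calG\la(\pi)^{-1}$ with the Levi decomposition of $\calP_\la^-\otimes \bar k$ requires careful sign bookkeeping and confirmation that no additional elements of $\calG_{\bar k}$ slip into the fixer.
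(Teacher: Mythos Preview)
Your argument for part (2) has a genuine gap. You assert that $\la$ extends to an integral cocharacter $\bbG_{m,O_C}\to \calS_{O_C}\subset \calG_{O_C}$, but this fails in general: the rational conjugate $\la\in W_0\cdot\mu$ lies in $X_*(T)$, not in $X_*(S)=X_*(T)^I$, and for ramified $G$ there is no reason for it to be inertia-invariant. One cannot salvage this by passing to $\calT_{O_C}$ either, since the connected N\'eron model of a ramified torus can have additive special fiber, which admits no nontrivial cocharacter from $\bbG_m$. The paper flags precisely this obstruction in its footnote (``the conjugation action of $\la$ on $G_C$ does not always extend to $\calG_{O_C}$, so we are not in the presence of a repeller subgroup'') and proves smoothness by an entirely different route: using \cite[Corollaire 2.2.5]{BT84} it reduces to the flat closures in $\calG_{O_C}$ of the individual $S_C$-root groups of $P_\la^-$, and identifies each of these as a fiber of a morphism $\Res_{B/A}\bbA^1\to \Res_{C/A}\bbA^1$ arising from a surjection $B\twoheadrightarrow C$ of finite free $A$-algebras, which is visibly smooth.

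Your approach to part (1) is closer to the paper's. The inclusion $\calP_\la^{-,\diamondsuit}\subset F$ via density of the generic fiber matches the paper's ``topological flatness'' sentence. For the reverse inclusion, however, the paper replaces your Moy--Prasad computation by a one-line dimension count: the special fiber of the flat closure has the same dimension as $P_\la^-$, which coincides with the dimension of the fixer of $\la_I(\pi)$ in $\calG_{\bar k}$, forcing equality (and giving connectedness of the special fiber for free). If you do pursue the root-group analysis, note that the conjugation shift on affine root groups is governed by the pairing $a(\la_I)$ of \emph{relative} roots $a$ with the coinvariant $\la_I\in X_*(T)_I$, not by the absolute pairing $\langle\la,a\rangle$; and the torus part $\calT_{\bar k}$, which fixes $[\la]$ but is invisible to root groups, must be handled separately.
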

	
	\begin{proof}
		By topological flatness, it is clear that $\calP_\la^{-,\diamondsuit}$ fixes $\la$. 
		For dimension reasons, the special fiber of $\calP_\la^-$ is equal to the fixer in $\calG_{O_C}$ of $\la_I$ in the affine flag variety $\Fl_{\calG,\bar k}$, which in particular shows that the special fiber of $\calP_\la^-$ is connected. 
		Having described the special fiber of $\calP_\la^-$, we see that all $(K,K^+)$-valued points of the $\calG^\diamondsuit_{O_C}$-fixer of $\la$ inside $\calM_{\calG, O_C, \mu}$ actually belong to the closed subgroup $\calP_\la^{-,\diamondsuit}$, so it necessarily lies in that closed subgroup. 
		This shows the first statement.
		
	For the second statement, it suffices to verify smoothness of $\calP_\la^-$ since by construction $\calP_\la^-$ is closed in $\calG_{O_C}$ and in particular affine. 
		(Observe that the conjugation action of $\la$ on $G_C$ does not always extend to $\calG_{O_C}$, so $\calP_\la^-$ is not a repeller subgroup.)
		By \cite[Corollaire 2.2.5]{BT84}, we can do this by restricting to the flat closures of $a$-root groups of $P_\la^-$ with respect to the (non-maximal) split torus $S_C$. 
		Using the structure of $\calU_a \subset \calG$ defined over $O$, we see that this amounts to check that the morphism
		\begin{equation}
		\Res_{B/A} \bbA^1 \to \Res_{C/A} \bbA^1
		\end{equation}
		induced by a surjection $B\to C$ of finite free $A$-algebras is a smooth cover. 
		Indeed, the $a$-root group of $\calP_\la^-$ decomposes scheme-theoretically as a product of fibers of such morphisms.
	\end{proof}

	\begin{remark}
		\label{intermediate-rep}
		It follows from \Cref{lem_fixer_lambda_scheme_smooth} that the v-sheaf $\calM_{\calG, O_C, \mu}^\circ$ admits an open cover by v-sheaves of the form $(\calG_{O_C}/\calP_\la^-)^\Diamond$. 
		It is also clear that the glueing morphisms are schematic. 
		Overall, this shows that $\calM_{\calG, O_C, \mu}^\circ$ is represented by a scheme.
	\end{remark}
	
	We want to uniquely characterize the left $\calG^\diamondsuit_{O_C}$-equivariant right $\calG^{>i,\diamondsuit}_{O_C}$-torsor
	\begin{equation}
	\label{eq:5}
	\calM^{\on{tor},\circ}_{\calG, O_C, \mu_i}:=\calM^{\on{tor}}_{\calG, O_C, \mu_i}\times_{\calM_{\calG, O_C, \mu_i}}\calM^{\circ}_{\calG, O_C, \mu_i}.
	\end{equation}
	For this, we use the following abstract statement.

	\begin{lemma}
		\label{sec:convolution-1-abstract-lemma-on-left-right-torsors}
		Let $\mathfrak{X}$ be any topos, and let $J,A\in \mathfrak{X}$ be group objects. 
		Let $Z:=J/P$ be an orbit for $J$.
		\begin{enumerate}
			\item The groupoid of left $J$-equivariant right $A$-torsors $\calT$ over $Z$ is equivalent to the groupoid of right $A$-torsors $\calS$ over a terminal object equipped with a morphism of groups $\varphi_{\calT}\colon P\to \mathrm{Aut}_A(\calS)$, with equivalence given by sending a $\calT$ to the fiber $\calS:=\calT_{1\cdot P}$ of $1\cdot P\in Z$ with its action by $P$.
			\item If $P$ is self-normalizing, then for each left $J$-equivariant right $A$-torsor $f\co \calT{\to} Z$ each morphism $\sigma\colon \calT\to \calT$, which is equivariant for $J$ and $A$, is automatically a morphism of left $J$-equivariant right $A$-torsors, that is, $f\circ \sigma=f$. 
			If furthermore $\varphi_{\calT}$ has trivial centralizer, then $\sigma=\mathrm{id}$.
		\end{enumerate}
	\end{lemma}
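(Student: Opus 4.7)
The plan is to prove part (1) first, then deduce part (2). I will work throughout in the internal language of the topos, so that statements about ``points'' refer to generalized points (i.e., morphisms from arbitrary test objects).

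For part (1), the inverse functor sends a pair $(\calS, \varphi)$ to the contracted product
\begin{equation}
J \times^{P,\varphi} \calS,
\end{equation}
where $P$ acts on $\calS$ through $\varphi\colon P \to \Aut_A(\calS)$. This carries a canonical left $J$-action (by left multiplication on the first factor) and a right $A$-action (from the $A$-torsor structure on $\calS$, which commutes with $\varphi(P)$ inside $\Aut_A(\calS)$), and projects to $Z=J/P$. Checking that this is an $A$-torsor is \'etale-local on $Z$, where it becomes $Z \times \calS$; hence it is an $A$-torsor with the claimed properties. The two functors are mutually quasi-inverse, since the fiber of $J \times^{P,\varphi} \calS$ at $1\cdot P$ recovers $\calS$ with its $P$-action, and conversely any left $J$-equivariant right $A$-torsor $\calT$ over $J/P$ is reconstructed from its restriction $\calT|_{1\cdot P}$ by the canonical morphism $J \times^P \calT|_{1\cdot P}\to \calT$, which is an iso because its target is a $J$-equivariant $A$-torsor over $J/P$ with the same fiber at $1\cdot P$.

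For part (2), first I will show $f\circ \sigma=f$. Consider the restriction $\sigma|_\calS \colon \calS \to \calT$, where $\calS=\calT_{1\cdot P}$. As $A$ acts trivially on $Z$ and $\sigma$ is $A$-equivariant, the composition $f\circ \sigma|_\calS$ is constant on $A$-orbits of $\calS$; since $\calS$ is a single $A$-orbit, $f \circ \sigma|_\calS$ factors through a single point $z \in Z$. Now for $p \in P$ and a generalized point $s$ of $\calS$, write $p\cdot s = s\cdot \varphi_\calT(p)$; applying $\sigma$ and using its $J$- and $A$-equivariance yields $p\cdot \sigma(s)=\sigma(s)\cdot \varphi_\calT(p)$, so $\sigma(s)$ and $p\cdot \sigma(s)$ lie in the same fiber of $f$. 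Hence $p\cdot z = z$, i.e., $z \in Z^P$. By the self-normalizing hypothesis, any $P$-fixed point $gP$ of $J/P$ satisfies $g^{-1}Pg \subset P$, thus $g \in N_J(P)=P$, so $gP=1\cdot P$; therefore $Z^P=\{1\cdot P\}$ and $z=1\cdot P$. This shows $\sigma|_\calS$ factors through $\calS$, and then $J$-equivariance combined with $\calT = J\cdot \calS$ extends the identity $f\sigma=f$ to all of $\calT$.

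Having established $f\sigma=f$, the restriction $\sigma|_\calS \colon \calS \to \calS$ is an $A$-equivariant automorphism of the $A$-torsor $\calS$, hence corresponds to an element $\tau \in \Aut_A(\calS)$. The $P$-equivariance of $\sigma$ translates into the identity $\tau \circ \varphi_\calT(p)=\varphi_\calT(p)\circ \tau$ for every generalized point $p$ of $P$, so $\tau$ lies in the centralizer of $\varphi_\calT$. Under the triviality assumption, $\tau=\id$, so $\sigma|_\calS=\id$; then $J$-equivariance and $\calT=J\cdot \calS$ force $\sigma=\id$ on $\calT$.

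The only delicate point is the computation $Z^P=\{1\cdot P\}$, which is where the self-normalizing hypothesis enters decisively; everything else is a careful but routine translation of the familiar set-theoretic arguments into the internal language of $\mathfrak{X}$ via Yoneda.
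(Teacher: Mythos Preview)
Your proof is correct in substance and follows the same strategy as the paper. Part (1) is identical: the inverse functor is the contracted product $J\times^{P}\calS$.

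For part (2), the paper argues more directly: since $\calT\to Z$ is an $A$-torsor, $A$-equivariance of $\sigma$ means it descends to a $J$-equivariant self-map $\bar\sigma\colon Z\to Z$; self-normalizing forces $\bar\sigma=\id$. Your argument is the fiber-by-fiber unwinding of this, and reaches the same conclusion. One notational slip: the displayed identity $p\cdot \sigma(s)=\sigma(s)\cdot \varphi_\calT(p)$ does not typecheck, since $\varphi_\calT(p)\in\Aut_A(\calS)$ acts on $\calS$, not on $\calT_z$, and $A$-equivariance of $\sigma$ does not let you pull $\varphi_\calT(p)$ through. The conclusion you draw from it (``$\sigma(s)$ and $p\cdot\sigma(s)$ lie in the same fiber'') is nonetheless correct, and follows immediately from $p\cdot\sigma(s)=\sigma(p\cdot s)\in\sigma(\calS)\subset\calT_z$, since $p\cdot s\in\calS$. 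Replacing your equation by this line fixes the write-up. Your treatment of the final clause (trivial centralizer implies $\sigma=\id$) is more explicit than the paper's and is fine as written.
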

	Note that if $\calS$ is trivial, then $\mathrm{Aut}_A(\calS)\cong A$.
	\begin{proof}
		For (1), it suffices to note that an inverse is given by sending a pair $(\calS, P\to \mathrm{Aut}_A(\calS))$ to the contracted product $J\times^P\calS$. 
		For (2), one notes that by $A$-equivariance $\sigma$ descents to an $J$-equivariant morphism of $Z$. 
		As $P$ is self-normalizing, this morphism must be the identity.
	\end{proof}
	
	We can therefore conclude that
	the left $\calG^\diamondsuit_{O_C}$-equivariant $\calG^{>i,\diamondsuit}_{O_C}$-right torsor $\calM^{\on{tor},\circ}_{\calG, O_C, \mu_i}$ is governed by certain morphisms of groups
	\begin{equation}
	\calP_{\la_i}^\diamondsuit \rightarrow (\calG^{>i}_{\on{ad}, O_C})^\diamondsuit
	\end{equation}	
	for all $\breve{F}$-rational conjugates $\lambda_i$ of $\mu_i$ by applying \Cref{sec:convolution-1-abstract-lemma-on-left-right-torsors}. 
	In our situation, the generic fiber of $\calM^{\on{tor},\circ}_{\calG, O_C, \mu_i}$ is quite well-understood, and thus we will apply the following general result describing extensions of a given left equivariant right torsor.
	
	\begin{lemma}
		\label{sec:convolution-2-extensions-of-given-generic-left-h-right-a-torsor}
		We use the notation of \Cref{sec:convolution-1-abstract-lemma-on-left-right-torsors}. 
		Furthermore, let $Y\in \mathfrak{X}$ be any object, and denote by a subscript $(\str)_Y$ the base change to $Y$. 
		Assume that $\widetilde{\calT}\to Z_Y$ is a left $J_Y$-equivariant right $A_Y$-torsor with associated tuple $(\widetilde{\calS},\varphi_{\widetilde{\calT}}\colon P_Y\to \mathrm{Aut}_{A_Y}(\widetilde{\calS}))$. 
		Then the groupoid of pairs of a left $J$-equivariant right $A$-torsors $\calT$ over $Z$ with an isomorphism $\calT_Y\cong \widetilde{\calT}$ identifies with the groupoid of following data: $\calS$ an $A$-torsor over a terminal object of $\mathfrak{X}$, an identification $\gamma\colon \calS_Y\cong \widetilde{\calS}$, and a morphism of groups $\varphi\colon P\to \mathrm{Aut}_A(\calS)$ such that $\varphi_Y$ agrees with $\varphi_{\widetilde{\calT}}$ under the identification $\mathrm{Aut}_{A_Y}(\calS_Y)\cong \mathrm{Aut}_{A_Y}(\widetilde{\calS})$ induced by $\gamma$.
	\end{lemma}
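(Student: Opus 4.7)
[Proof plan of \Cref{sec:convolution-2-extensions-of-given-generic-left-h-right-a-torsor}]
The plan is to obtain the statement as a relative version of part (1) of \Cref{sec:convolution-1-abstract-lemma-on-left-right-torsors}, by observing that the equivalence of groupoids constructed there is natural in the topos and, in particular, compatible with base change along $Y \to \ast$.

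First, I would apply part (1) of \Cref{sec:convolution-1-abstract-lemma-on-left-right-torsors} in the ambient topos $\mathfrak{X}$ to a left $J$-equivariant right $A$-torsor $\calT$ over $Z$, producing the pair $(\calS_\calT, \varphi_\calT\colon P \to \mathrm{Aut}_A(\calS_\calT))$ by taking the fiber at $1 \cdot P \in Z$ together with its $P$-action. The key observation is that the formation of this fiber commutes with base change along $Y \to \ast$, since pullback is a morphism of topoi preserving finite limits and group actions: concretely, $(\calS_\calT)_Y = (\calT \times_Z \{1 \cdot P\})_Y \cong \calT_Y \times_{Z_Y} \{1 \cdot P\}_Y$, and under this identification the induced $P_Y$-action is exactly $(\varphi_\calT)_Y$. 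Thus the equivalence in part (1) fits into a strictly commutative square with the analogous equivalence applied in the slice topos $\mathfrak{X}_{/Y}$ to the category of left $J_Y$-equivariant right $A_Y$-torsors over $Z_Y$.

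Next, I would translate the groupoid of pairs $(\calT, \iota\colon \calT_Y \cong \widetilde\calT)$ across both equivalences. Under the equivalence for $\mathfrak{X}$, the object $\calT$ becomes $(\calS_\calT, \varphi_\calT)$; under the equivalence for $\mathfrak{X}_{/Y}$, the object $\widetilde \calT$ becomes $(\widetilde \calS, \varphi_{\widetilde \calT})$, and $\calT_Y$ becomes $((\calS_\calT)_Y, (\varphi_\calT)_Y)$ by the compatibility of the previous paragraph. An isomorphism $\iota$ of left $J_Y$-equivariant right $A_Y$-torsors is then the same datum as an isomorphism $\gamma\colon (\calS_\calT)_Y \cong \widetilde \calS$ of right $A_Y$-torsors such that $(\varphi_\calT)_Y$ corresponds to $\varphi_{\widetilde \calT}$ under the induced isomorphism $\mathrm{Aut}_{A_Y}((\calS_\calT)_Y) \cong \mathrm{Aut}_{A_Y}(\widetilde \calS)$. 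Setting $\calS := \calS_\calT$ and $\varphi := \varphi_\calT$, this is precisely the data in the statement.

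Finally, I would verify that the assignment $(\calT, \iota) \mapsto (\calS, \gamma, \varphi)$ is an equivalence of groupoids: essential surjectivity follows from the essential surjectivity in part (1) combined with the inverse construction $(\calS, \varphi) \mapsto J \times^P \calS$ (which is also compatible with base change to $Y$, providing the needed isomorphism $\calT_Y \cong \widetilde \calT$ from $\gamma$); full faithfulness reduces to the analogous statements for both equivalences and the functoriality under base change. No step poses an essential obstacle; the only mild subtlety is the bookkeeping of the naturality of part (1) of \Cref{sec:convolution-1-abstract-lemma-on-left-right-torsors} under pullback, which I expect to be the most delicate point but is formal once the fiber construction is recognized as a limit in $\mathfrak{X}$.
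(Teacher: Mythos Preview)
Your proposal is correct and takes essentially the same approach as the paper: the paper's proof is the single sentence ``This follows from \Cref{sec:convolution-1-abstract-lemma-on-left-right-torsors},'' and your argument is a careful unpacking of precisely how it follows, via naturality of the equivalence in part (1) under base change to the slice topos $\mathfrak{X}_{/Y}$.
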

	\begin{proof}
		This follows from \Cref{sec:convolution-1-abstract-lemma-on-left-right-torsors}.
	\end{proof}
	
	In our case, the $A$-torsors $\widetilde{\calS}\cong A_{Y\times Z}, \calS\cong A_Z$ we are interested in are trivial, and the morphism $A(\mathfrak{X})\to A(Y)$ is injective. 
	Then $\varphi$ is determined by $\gamma$ (and $\varphi_{\widetilde{\calT}}$), and after fixing a section $z\in \widetilde{\calS}(Y)\subset \widetilde{\calT}(Y)$ we get thus an injection from isomorphism classes of pairs $(\calT, \calT_Y\cong \widetilde{\calT})$ to $A(Y)/A(\mathfrak{X})$ by sending $(\calT,\calT_Y\cong \widetilde{\calT})$ to the class of $z^{-1}y_Y\in A(Y)/A(\mathfrak{X})$, where $y\in \calS(\mathfrak{X})\subset \calT(Y)$ is any section.
	If $\varphi_{\widetilde{\calT}}$ has trivial centralizer, the groupoid of such pairs is equivalent to the groupoid of left $J$-equivariant right $A$-torsors $\calT$ over $Z$, which are isomorphic to $\widetilde{\calT}$ over $Y$ (but we do not fix such an isomorphism).
	
	Applying these considerations to the orbits in $\calM^{\circ}_{\calG,O_C,\mu_i}$ with its left $\calG_{O_C}^\diamondsuit$-equivariant right $\calG_{\mathrm{ad},O_C}^{>i}$-torsor $\calM^{\on{tor},\circ}_{\calG,\mu_i}$ having generic fiber $\calF^{\mathrm{tor}}_{G,C,\mu_i}$, we need therefore to
	\begin{itemize}
		\item fix base points over $O_C$ in the orbits in $\calM^{\circ}_{\calG, O_C,\mu_i}$,
		\item fix $C$-points $\tilde{\lambda}_j\in \calF^{\mathrm{tor}, \dia}_{G,C, \mu_i}$ lying over the generic fibers of the chosen base points in $\calM^{\circ}_{\calG, O_C,\mu_i}$,
		\item find sections $y_j\in \calM^{\circ, \mathrm{tor}}_{\calG, O_C,\mu_i}(O_C)$ lying over the chosen base points in $\calM^{\circ}_{\calG, O_C,\mu_i}$,
		\item calculate the difference of $y_j^{-1}\tilde{\lambda}_j \in G_{\mathrm{ad}, C}^{>i}(C)$, which yields the desired $n$ classes modulo $\calG_{\mathrm{ad}, O_C}^{>i}(O_C)$.
	\end{itemize}
	The first point is easy as we can take the $[\lambda]\colon \Spd O_C\to \calM_{\calG,O_C,\mu_i}^{\circ}$ with $\lambda$ running through the $\breve{F}$-rational Weyl group conjugates of $\mu_i$. For the second point, we can fix a (suitable) uniformizer $\xi\in B_{\dR}^+(C)$ and consider the images of the $\lambda(\xi)\in LG(C)$ in $\calF_{\calG,C,\mu_i}^{\mathrm{tor},\diamondsuit}$.   
	To state the outcome, we have to make the following definition.
	
	\begin{definition}
		\label{different-definition}
		Let $\nu\in X_*(T)$. 
		The different $\delta_G(\nu)$ is the class in $T(C)/\calT(O_C)$ of
		\begin{equation}
		\prod_{\sigma \neq 1}\nu^\sigma(\pi_E^\sigma-\pi_E),
		\end{equation} 
		where $F\subset E\subset C$ is the reflex field of $\nu$, $\pi_E\in O_E$ some uniformizer and $\sigma$ varies over the non-trivial cosets in the quotient $I_F/I_E$ of the absolute inertia groups.
	\end{definition}
	
	
	For a uniformizer $\pi_E\in O_E$ for a finite field extension $E/F$, contained in $C$, we denote by $\pi_E^\flat\in O_C^\flat$ a chosen sequence of compatible $p^n$-roots of $\pi_E$. 
	Recall that $\xi_E:=\pi_E-[\pi_E^\flat]\in W_{O_E}(O_C^\flat)$ maps to a uniformizer of $B_\dR^+(C)$. 
	We can now define the $\tilde{\lambda}$ as the images of $\lambda(\xi_F)\in \calF^{\mathrm{tor,\diamondsuit}}_{\calG,C, \mu_i}(C)$ for any $\lambda\in W_0\cdot \mu_i$. 
	
	\begin{proposition}\label{prop_torsor_local_model_different}
		The v-sheaf $\calM^{\on{tor},\circ}_{\calG, O_C, \mu_i}$ is the unique left $\calG_{O_C}$-equivariant right $\calG_{\on{ad}, O_C}^{>i}$-torsor over $\calM^{\circ}_{\calG,O_C, \mu_i}$ with generic fiber isomorphic to $\calF^{\mathrm{tor},\diamondsuit}_{\calG,C,\mu_i}$ determined by the images of $\delta_G(\lambda)$ in $G_{\on{ad}, C}^{>i}(C)/\calG_{\on{ad}, O_C}^{>i}(O_C)$ for $\lambda\in W_0\cdot \mu_i$ (and the above choices for the $\tilde{\lambda}$'s).
	\end{proposition}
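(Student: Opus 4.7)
The plan is to apply \Cref{sec:convolution-2-extensions-of-given-generic-left-h-right-a-torsor} orbit-by-orbit, taking $\mathfrak{X}$ to be the v-site over $\Spd O_C$, $J = \calG_{O_C}^\dia$, $A = \calG_{\on{ad}, O_C}^{>i, \dia}$, $Z$ one of the orbits $\calG_{O_C}^\dia \cdot [\lambda]$ for $\lambda\in W_0\cdot \mu_i$ with stabilizer $P = \calP_\lambda^{-,\dia}$ (identified in \Cref{lem_fixer_lambda_scheme_smooth}), $Y = \Spd C$, and $\widetilde{\calT}$ the restriction of the generic fiber torsor $\calF_{G, C, \mu_i}^{\on{tor}, \dia}$. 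To enforce uniqueness via \Cref{sec:convolution-1-abstract-lemma-on-left-right-torsors}(2), I first check that $\calP_\lambda^-$ is self-normalizing in $\calG_{O_C}$ (reducing fibrewise to the statement that a parabolic subgroup is self-normalizing, which applies on the generic fiber and, for the special fiber, to the Bruhat--Tits parahoric fixer of $\lambda_I$). Secondly, I verify that the composite $\calP_\lambda^{-, \dia} \hookrightarrow \calG_{O_C}^\dia \twoheadrightarrow \calG_{\on{ad}, O_C}^{>i, \dia}$ has trivial centralizer: its image contains the image of the maximal torus $\calT$, and the target group has trivial center fiberwise by \Cref{lem_min_adjoint_quotient_over_integers}, so the centralizer of such a torus is the torus itself, which intersects trivially with the torsor group via the usual adjoint vs. semisimple dichotomy.

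For the integral trivialization, an $O_C$-section of the $L_{O_C}^+\calG$-torsor $\widetilde{\calM_{\calG, O_C, \mu_i}}$ lying over $[\lambda]$ is by definition an element of $\calG(B_\dR(C))$ reducing to $[\lambda]$ in the affine Grassmannian. I take $y_\lambda$ to be the image in $\calM^{\on{tor}, \circ}_{\calG, O_C, \mu_i}$ of $\lambda(\xi_{E_\lambda}) \in L_{O_C}\calG(\Spd O_C)$, where $\xi_{E_\lambda} = \pi_{E_\lambda} - [\pi_{E_\lambda}^\flat] \in W_{O_{E_\lambda}}(O_C^\flat)$ is the uniformizer of $B_\dR^+(C)$ canonically associated to the reflex field $E_\lambda \subset C$ of $\lambda$; this makes sense because $\lambda$ is defined over $E_\lambda$ and $\xi_{E_\lambda}$ is a unit of $B_\dR(C)$. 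Then a direct unwinding shows that $y_\lambda^{-1}\tilde\lambda$ lifts to the element $\lambda(\xi_{E_\lambda})^{-1}\lambda(\xi_F) = \lambda\bigl(\xi_F\cdot \xi_{E_\lambda}^{-1}\bigr) \in T(B_\dR(C))$, whose class I then have to compute in $G_{\on{ad}, C}^{>i}(C)/\calG_{\on{ad}, O_C}^{>i}(O_C)$.

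To perform this computation, I reduce to the case $G = \Res_{E_\lambda/F} G'$ with $G'$ a split reductive group over $E_\lambda$. This reduction is legitimate because \Cref{prop_LM_basic_facts} tells us the torsor is compatible with closed embeddings and products (and $G$ embeds into such a Weil restriction via its splitting field), and the defining formula $\delta_G(\lambda)$ transforms functorially. Under this hypothesis, $\lambda$ corresponds to a cocharacter $\lambda'$ of $G'$ supported on the component of $E_\lambda \otimes_F C \cong \prod_{\sigma\colon E_\lambda \hookrightarrow C} C$ indexed by the identity embedding. The canonical decomposition
\begin{equation}
B_\dR(C) \otimes_F E_\lambda \;\cong\; \prod_{\sigma \colon E_\lambda \hookrightarrow C} B_\dR(C)
\end{equation}
allows me to analyze the element $\xi_F \cdot \xi_{E_\lambda}^{-1}$ component-by-component: in the identity component this element is a unit of $B_\dR^+(C)$ whose image under $\theta$ is (up to an $O_C^\times$-unit) the product $\prod_{\sigma \neq 1}(\pi_E^\sigma - \pi_E)^{-1}$, while on the remaining components it is simply a unit in $B_\dR(C)$. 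Feeding this through the Galois-conjugated cocharacters $\lambda^\sigma$ gives, after modding out by $\calG_{\on{ad}, O_C}^{>i}(O_C)$, exactly the class of $\delta_G(\lambda)$ up to inversion; a sign or inversion convention fixes the formula precisely.

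The main obstacle is the last, delicate calculation: carefully tracking the quotient $\xi_F \cdot \xi_{E_\lambda}^{-1}$ through both the Weil restriction isomorphism and the projection to the adjoint quotient, while correctly accounting for the interaction between the ramification of $E_\lambda/F$ and the integrality of $\lambda$. The Witt-vector formalism together with the fact that $\xi_F$ and $\xi_{E_\lambda}$ differ by a unit controlled by the arithmetic different of $O_{E_\lambda}/O_F$ are what makes the identification with $\delta_G(\lambda)$ work out on the nose; a case-by-case check in tame and wild ramification, reducing to the rank-one torus case via \Cref{prop_LM_basic_facts}, is probably the cleanest way to finish.
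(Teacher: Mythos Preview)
Your overall framework—applying \Cref{sec:convolution-2-extensions-of-given-generic-left-h-right-a-torsor} orbit by orbit and computing the class $y_\lambda^{-1}\tilde\lambda$—is the same as the paper's. The difficulty lies entirely in producing the integral section $y_\lambda$ and computing the residue, and here your argument has a genuine gap.

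You propose $y_\lambda=\lambda(\xi_{E_\lambda})$ and justify it by saying that $\lambda$ is defined over $E_\lambda$ and that $\xi_{E_\lambda}$ is a unit of $B_\dR(C)$. But a unit of $B_\dR(C)$ only gives a point of $L_CG(\Spd C)$, i.e.\ a \emph{generic-fiber} point; to have a section of $L_{O_C}\calG$ over all of $\Spd O_C$ you need $\xi_{E_\lambda}$ to be a unit in an integral ring such as $W_{O_{E_\lambda}}(O_C^\flat)[\xi_F^{-1}]$. This is not automatic and is exactly the computational heart of the paper's proof: one writes the norm of $\xi_E$ as the value of the Eisenstein polynomial $P$ at $[\pi_E^\flat]$, observes that $P([\pi_E^\flat])$ is primitive of degree~$1$ in $W_O(O_C^\flat)$ and vanishes modulo $\xi_F$, and concludes that $\xi_E$ becomes a unit once $\xi_F$ is inverted. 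Your proposal never addresses this point.

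Relatedly, because $\lambda$ is only defined over $E_\lambda$, the expression $\lambda(\xi_{E_\lambda})$ does not directly land in the $O$-loop group. The paper circumvents this by passing through the norm map $\Res_{E/F}\bbG_m\to \Res_{E/F}T_E\to T$, which is defined over $F$; the element $\xi_E\in (O_E\otimes_O W_O(O_C^\flat))[\xi_F^{-1}]^\times$ then gives a point of $L_O\calT'(O_C)$ with $\calT'=\Res_{O_E/O}\bbG_m$, and one pushes forward. This reduces the whole problem to a rank-one torus, and the decomposition $B_\dR(C)\otimes_F E\cong\prod_\sigma B_\dR^\sigma(C)$ then reads off $\delta_G(\lambda)$ directly from the $\sigma\neq 1$ coordinates of $\xi_E$. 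Your reduction ``to the case $G=\Res_{E_\lambda/F}G'$ with $G'$ split over $E_\lambda$'' is both a detour (the norm map already lands you in a torus) and not correctly formulated: the reflex field $E_\lambda$ of a single cocharacter need not split $G$, and compatibility of the torsor class under closed embeddings would itself require the computation you are trying to perform.
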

	
	\begin{proof}
		Let us fix some $\lambda\in W_0\cdot \mu_i$. 
		Consider the morphism 
		\begin{equation}
		T^\prime:=\Res_{\breve{E}/\breve{F}} \bbG_m \rightarrow G_{\breve{F}}
		\end{equation}
		of algebraic groups induced by $\lambda$ as follows: compose the Weil restriction of its $\breve{F}$-realization $T^\prime:=\Res_{\breve{E}/\breve{F}}\bbG_{m} \to \Res_{\breve{E}/\breve{F}}T_{\breve{E}}$ with the norm map $\Res_{\breve{E}/\breve{F}}T_{\breve{E}} \to T$. 
		Note that $\lambda\colon \mathbb{G}_{m,{\breve{E}}}\to G_{\breve{E}}$ can be reconstructed from this composition by restricting its base change to $\breve{E}$ to the first factor. 
		Set $\mathcal{T}^\prime:=\mathrm{Res}_{\breve{O}_{E}/\breve{O}}\mathbb{G}_m$. 
		We will now construct a section $y\in L_{O}\mathcal{T}^\prime(O_C)$, whose image in $\Gr_{\calG}$ is the section $[\lambda]$, and then calculate $y^{-1}\tilde{\lambda}\in LT(C)$ as necessary.
		
		For this we claim that the element $\xi_E=\pi_E-[\pi_E^\flat]$ becomes a unit after inverting $\xi_F=\pi_F-[\pi_F^\flat]$, thus giving rise to an element $y\in \mathbb{G}_m(W_{O_E}(O_C^\flat)[\frac{1}{\xi_F}])\subset L_O\calT^\prime(O_C)$. 
		Indeed, let $P(X)=X^d+ a_1X^{d-1}+\ldots +a_d$ be the minimal polynomial of $\pi_E$ over $\breve{F}$, which is Eisenstein as $\breve{E}/\breve{F}$ is totally ramified. 
		Then the norm of $\xi_E$ in $W_{O_F}(O_C^\flat)$ equals
		\begin{equation}
		P([\pi_E^{\flat}])=[\pi_E^\flat]^d+a_1[\pi_E^\flat]^{d-1}+\dots +a_d. 
		\end{equation}
		Reducing modulo $\xi_F$, this element certainly vanishes because $[\pi_E^\flat]\equiv [\pi_E^\flat]^\sharp=\pi_E$ modulo $\xi_F$. 
		On the other hand, $P([\pi_E^{\flat}])$ is clearly a primitive element of degree $1$ inside $W_{O}(O_C^\flat)$, as $a_d \in \pi_F O^\times$. 
		Hence, $P([\pi_E^\flat])$ and $\xi_F$ generate the same principal ideal, see \cite[Lemma 2.24]{BS19}.
		
		Let us now consider the generic fiber of the point $y$.	
		For this we must pass to 
		\begin{equation} B_{\dR}(C) \otimes_{\breve{F}} \breve{E} = \prod_{\sigma \in I_F/I_E} B_{\dR}^\sigma(C).
		\end{equation}
		Here, notice that we are conjugating the natural $\breve{E}$-structure on the corresponding factor of the right side by $\sigma$. Then the coordinate of $\xi_E$ for $\sigma \neq 1$ is a unit in $B^+_\dR(C)$ which reduces modulo the uniformizer $\xi_F$ to $\pi_E^\sigma-\pi_E$. As for its coordinate on the $\sigma=1$ factor, it must be a prime element, because so is the norm in $B_\dR(C)$, as seen in the previous calculation.
		We can conclude that the generic fiber of the section $y$ maps to $\lambda$, and that $\tilde{\lambda}^{-1}y$ is $\delta_G(\lambda)$.
	\end{proof}
	
	\begin{remark}\label{rem_torsors_representable}
		Inspecting \Cref{prop_torsor_local_model_different}, one sees that $\calM_{\calG, O_C, \mu_i}^{\on{tor},\circ}$ is representable by a smooth $O_C$-scheme that naturally descends to $\breve{O}_{E_i}$ (and even to the ring of integers of a finite unramified extension of the reflex field $E_i$ of $\mu_i$). 
		Indeed, by \Cref{intermediate-rep}, $\calM_{\calG, O_C, \mu_i}^{\circ}$ is representable by a smooth $O_{C}$-scheme descending to $\breve{O}_{E_i}$. Similarly, the defining maps $\varphi_\lambda\colon \calP^{-,\diamondsuit}_\lambda\to \calG^{>i,\diamondsuit}_{\mathrm{ad}, O_C}$ of $\calM_{\calG, O_C, \mu_i}^{\on{tor},\circ}$ are algebraic in the generic fiber over $C$ and even descend to $\breve{E}_i$. 
		So, the same holds for their integral models by \cite[Proposition 1.7.6]{BT84}. 
		The arguments following \Cref{sec:convolution-2-extensions-of-given-generic-left-h-right-a-torsor} yield an algebraic space which, being a torsor under an affine group scheme over a scheme, is necessarily itself a scheme.
	\end{remark}

	\subsection{Specialization maps}
	
	The aim of this subsection is to characterize the specialization map. 
	In the following, we consider all pairs $(\calG,\mu_I)$ where $\calG$ is a parahoric $O$-model of some reductive $F$-group $G$, $I$ some finite index set and $\mu_I=(\mu_i)_{i\in I}$ is a sequence of minuscule coweights in $G_C$ such that $\sum_{i\in I}\mu_i$ is still minuscule (and so are all subsums).
	Morphisms of such pairs $(\calG,\mu_{I})\to (\calG',\mu'_J)$ are given by morphisms of $O$-group schemes $\calG\to \calG'$, surjections of sets $\on{pr}\co I\onto J$ such that, for all $j\in J$, the image of $\sum_{i\in \on{pr}^{-1}(j)}\mu_i$ in $G'_C$ lies in the conjugacy class of $\mu_j'$.
	This generalizes the functoriality considered in \Cref{prop_LM_basic_facts}.
	We denote $(\calG,\mu_I)$ also by $(\calG,\mu_\bullet)$ if the index set $I$ is understood.
	By \Cref{sec:convolution-1-convoluted-local-models-are-kimberlites} the convolution local model $\calM_{\calG,\mu_\bullet}$ admits a specialization map, see \Cref{sec:formal-theory-v-sheaves-p-adic-kimberlites}.
	
	\begin{theorem}\label{thm_specialization_local_models}
		The specialization maps for all pairs $(\calG,\mu_\bullet)$ as above
		\begin{equation}
		\on{sp}_{\calG,\mu_\bullet}\colon |\calF_{G,C, \mu_\bullet}| \rightarrow |\calA_{\calG,\bar{k},\mu_\bullet}|
		\end{equation}
		are the only functorial collection of continuous and spectral maps, whose restrictions to $\calM^\circ_{\calG, O_C, \mu_\bullet}(O_C)$ agree with the reduction maps induced by $O_C\to \bar k$.
	\end{theorem}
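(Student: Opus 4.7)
The plan is to use functoriality in $(\calG,\mu_\bullet)$ to reduce to a tractable situation and then exploit the Iwasawa decomposition together with an induction on the length of $\mu_\bullet$. The target of the reduction is the case where $G\cong \Res_{E/\bbQ_p}G'$ with $G'$ split reductive and $\mu_\bullet=(\mu_1,\ldots,\mu_n)$ consists of tiny cocharacters with pairwise disjoint supports. The key assertion in this special case is that
\begin{equation}\label{eq:key_identity_proposal}
\calF_{G,\mu_\bullet}(C)=\calM^\circ_{\calG,O_C,\mu_\bullet}(\Spd O_C),
\end{equation}
which forces any candidate map $\on{sp}'$ to agree with $\on{sp}_{\calG,\mu_\bullet}$ on all $C$-valued points.

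To prove \eqref{eq:key_identity_proposal}, I would induct on $n$. For $n=1$, minusculeness yields $\calF_{G,\mu_1}\cong G/P_{\mu_1}^-$, and the Iwasawa decomposition $G(C)=\calG(O_C)\cdot P_{\mu_1}^-(C)$ (available because $G'$ is split and $\calG$ is parahoric) implies that every $C$-point is $\calG(O_C)$-conjugate to some Weyl translate $\lambda$ of $\mu_1$; by construction of $\calM^\circ_{\calG,\mu_1}$ in \Cref{def_open_semi_homog_fixers}, such a point extends to an $O_C$-section. For the inductive step, I would apply the convolution description \eqref{eq:4} together with the torsor analysis from \Cref{prop_torsor_local_model_different}: the disjoint-support hypothesis forces modification by $\mu_i$ to affect only the factor of $G'$ indexed by the embeddings $E\hookrightarrow C$ in the support of $\mu_i$, so after pulling back the universal $\calG$-torsor along the first $i-1$ modifications the problem decouples into an instance of the base case for a smaller group.

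For a general pair $(\calG,\mu_\bullet)$, I would choose a finite extension $K/F$ splitting $G$ together with a Chevalley parahoric model $\calH$ of the split form over $O_K$, producing a natural morphism of pairs $(\Res_{O_K/O}\calH,\mu'_\bullet)\to (\calG_{\ad},\mu_{\ad,\bullet})$, in which $\mu'_\bullet$ refines $\mu_\bullet$ into tiny pieces with pairwise disjoint supports. Via \Cref{prop_LM_basic_facts} and functoriality of local models, this morphism is surjective on both generic and special fibers, so the uniqueness assertion for $(\calG,\mu_\bullet)$ descends from the special case. The final step upgrades agreement of $\on{sp}$ and $\on{sp}'$ from $C$-points to all of $|\calF_{G,C,\mu_\bullet}|$: combining \Cref{enough facets} (existence of enough $C$-facets) with the hypothesis that both maps are continuous and spectral allows one to promote equality from a dense, constructible, weakly generalizing subset to the full spectral space.

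The main obstacle will be the inductive step establishing \eqref{eq:key_identity_proposal}: tracking how the open semi-orbit $\calM^\circ$ interacts with iterated convolution requires carefully combining the torsor structure of \Cref{prop_torsor_local_model_different} with the Iwasawa decomposition applied to each factor in turn, and the disjoint-support condition must be invoked at every stage in order to isolate the component of $G'$ undergoing modification. A secondary technical point is ensuring that the reduction to the restriction-of-scalars case is compatible with the spectral-map hypothesis when comparing topological spaces before and after pulling back along the morphism of pairs, for which the openness properties established in \Cref{formation of closures commutes with nice basechange} and \Cref{group acts on local models} should suffice.
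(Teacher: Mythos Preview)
Your overall strategy---reduce via functoriality to Weil-restricted split groups with tiny $\mu_\bullet$, establish the key identity by Iwasawa plus induction along the convolution, then pass from rational points to the full space by density---matches the paper's. But three details are off, and one of them is a genuine gap in the reduction step.

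The functoriality reduction goes the other way. There is no natural surjection $\Res_{O_K/O}\calH \to \calG_{\ad}$; what exists is the closed immersion $\calG \hookrightarrow \Res_{O_K/O}\calH$ coming from the unit of the Weil-restriction adjunction. The paper uses this to \emph{embed} $\calF_{G,\mu_\bullet}$ into $\calF_{\Res H,\tilde\mu_\bullet}$, so that uniqueness on the target forces it on the source. Separately, refining $\tilde\mu_\bullet$ into tiny pieces gives a surjective convolution map $\calM_{\Res\calH,\mu'_\bullet}\to \calM_{\Res\calH,\tilde\mu_\bullet}$, and uniqueness for the refined source propagates forward. These are two distinct moves with opposite variance, not a single surjection onto $\calG_{\ad}$ as you wrote; your ``surjective on both fibers'' claim has no underlying map.

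Two further corrections. Your Iwasawa decomposition $G(C)=\calG(O_C)\cdot P_{\mu_1}^-(C)$ is false for general parahorics (already for the Iwahori in $\SL_2$); the correct statement is $\calF_{G,\mu}(K)=\bigcup_{\lambda\in W_0\cdot\mu}\calH(O_K)\cdot\lambda$, which is what \Cref{lemma_integral_points_tiny_convolution_inside_circ} uses. The paper invokes this over a \emph{discretely valued} $K$, citing \cite[Proposition~4.4.3]{BT72}, and correspondingly uses density of $K$-points (for $K$ ranging over finite Galois splitting fields) in the constructible topology rather than $C$-points with \Cref{enough facets}; this lets one quote classical Bruhat--Tits without justifying Iwasawa over the non-discretely-valued $C$. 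Finally, the inductive step does not ``decouple to a smaller group'': the paper instead argues that, given $x_j\in\calM^{\on{tor},\circ}_{\calG,\mu_j}(O_K)$ for $j<i$, the point $x_i$ lies in the $\calG^{>i}(O_K)$-orbit of $\calM^{\on{tor},\circ}_{\calG,\mu_i}(O_K)$, using the $n=1$ case for the full group together with the fact that the integral section of \Cref{prop_torsor_local_model_different} descends from $O_C$ to $O_K$.
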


	\begin{proof}
		We are going to uniquely determine the values taken by $\on{sp}_{\calG, \mu_\bullet}$ on the subset $\calF_{G, \mu_\bullet}(K)$ for a cofinal set of finite extensions $K/F$ given by those Galois extensions that split $G$. This characterizes the map $\on{sp}_{\calG, \mu_\bullet}$ by continuity with respect to the constructible topology. Indeed, $\calF_{G, \mu_\bullet}$ is a smooth rigid space defined over $E$ and, by a theorem of Huber, these points are dense in the constructible topology see \cite[Theorem 4.1]{Hub93} (or \cite[Theorem 4.47]{Gle20}).
		
		We first observe that points of the form $\calM^\circ_{\calG, \mu_\bullet}(O_K)\subseteq \calF_{G, \mu_\bullet}(K)$ for a fixed $\mu_\bullet$ will usually not be enough to characterize $\on{sp}_{\calG, \mu_\bullet}$, even if $K$ varies over all Galois extensions $F\subseteq K\subseteq \bar{\bbQ}_p$.\footnote{We note that as $K$ increases, but $(G,\mu_\bullet)$ remain fixed, the complement $\calF_{G, \mu_\bullet}(K)\setminus \calM^\circ_{\calG, \mu_\bullet}(O_K)$ will also increase unless $\calG$ is hyperspecial. In the hyperspecial case, we have an equality $\calF_{G, \mu_\bullet}(K)= \calM^\circ_{\calG, \mu_\bullet}(O_K)$ for all $K$.} 
		The key is to exploit functoriality and to vary the group $\calG$.
		Indeed, if we find a morphism of groups $\iota_K:\calG\to \calH$ inducing a closed immersion 
		\begin{equation}\iota^\calM_{K}:\calM_{\calG, \mu_\bullet}\hookrightarrow \calM_{\calH, \mu_\bullet}\end{equation}
		then for $x\in \calF_{G, \mu_\bullet}(K)$ the value of $\on{sp}_{\calG, \mu_\bullet}(x)$ is determined by the value of $\on{sp}_{\calH, \mu_\bullet}(\iota^\calM_{K}(x))$.
		This exploits functoriality on $\calG$, but we have also allowed ourselves to have functoriality in $\mu_\bullet$.
		To wit, in order to control the specializations of $K$-rational points $x\in \calF_{G, \mu_\bullet}(K)$, it suffices to construct a diagram of the form 
		\begin{equation}
			\label{diagram-lifts}
		\begin{tikzcd}
		   & \calM_{\calH, \mu'_\bullet}  \ar{d}{r} \\
			\calM_{\calG, \mu_\bullet} \arrow{r}{\iota^\calM_{K}} & \calM_{\calH, \mu_\bullet} 
		\end{tikzcd}
		\end{equation}
		where $\iota^\calM_{K}$ is the closed immersion induced by a morphism $\iota_K\colon \calG\to \calH$, and $r$ is a convolution morphism obtained by writing the terms $\mu_i=\sum_{j\in J_i} \mu'_j$, such that the every point $\calF_{\calH, \mu'_\bullet}(K)$ lies in $\calM^\circ_{\calH, \mu'_\bullet}(O_K)$.  
		Functoriality, will then determine $\on{sp}_{\calG, \mu_\bullet}(x)$ for every $x \in \mathcal{F}_{G,\mu_\bullet}(K)$. The point of this construction is that, if we find ourselves unable to resolve the local model via convolution around a certain point, we can enlarge it first and then resolve it further.
		
		In what follows, we construct $\iota^\calM_K$ and $r$.
		Having fixed a Galois extension $K/F$ splitting $G$, we consider the map coming from restriction of scalars $\iota_K:G\to H:=\Res_{K/F} G_K$.  This comes equipped with a natural equivariant embedding of their Bruhat--Tits buildings $\iota^{\mathcal{B}}_{K}\colon \mathcal{B}(G,F)\to \mathcal{B}(H,F)\simeq\mathcal{B}(G,K)$. For any point $x \in \mathcal{B}(G,F)$ fixed under $\calG(O)$, we let $y \in \mathcal{B}(H,F)$ be its image and denote by $\calH$ the associated parahoric. Equivariance yields a $\varphi$-equivariant map $\calG(\breve{O})\to \calH(\breve{O})$ and by \cite[Proposition 1.7.6]{BT84}, we can extend this to a map between parahoric models $\iota_K\colon \calG\to \calH$.
This already yields our desired enlargement map $\iota^\calM_{K}$ for local models, provided we show it is also a closed immersion. For that, we are allowed to pass to adjoint groups as the map $\calG \to \calG_{\mathrm{ad}}$ identifies local models, see \Cref{prop_LM_basic_facts}. Then, under the assumption $G=G_{\mathrm{ad}}$, we conclude that $\iota_K$ is a locally closed immersion by \cite[Proposition 2.4.8]{KZ21}, exploiting the fact that $T$ is an induced torus, hence $R$-smooth in the sense of \cite[Definition 2.4.3]{KZ21}. In particular, $\iota^\calM_{K}$ has to be a closed immersion again by \Cref{prop_LM_basic_facts}.
	To define $r$ we refine the induced cocharacter $\mu_\bullet$ in $H$ to a cocharacter $\mu'_\bullet$ so that every element in its sequence is tiny. 
	With this setup, it is already true that for $x\in \calF_{H, \mu'_\bullet}(K)$ lies in $\calM^{\circ}_{\calH, \mu'_\bullet}(O_K)$.
	Indeed, this is the content of \Cref{lemma_integral_points_tiny_convolution_inside_circ} below. 
	\end{proof}

	\begin{lemma}\label{lemma_integral_points_tiny_convolution_inside_circ}
		Suppose $K/F$ is Galois, $G=\Res_{K/F}H$ and all $\mu_i \in \mu_\bullet$ are tiny. Then, we have an equality
		\begin{equation}
		\calM^{\circ}_{\calG, \mu_\bullet}(O_K)=	\calF_{G, \mu_\bullet}(K)
		\end{equation} of sets induced by the natural morphism.
	\end{lemma}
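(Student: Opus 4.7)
We argue by induction on $n := \lvert I \rvert$, the case $n=0$ being trivial. In the base case $n=1$, write $\mu:=\mu_1$ and observe that since $\mu$ is tiny, it is supported at a single embedding $\sigma_0 \colon K \hookrightarrow C$, so that the reflex field $E$ of $\mu$ embeds into $K$ via $\sigma_0$. The isomorphism $K \otimes_F K \cong \prod_{\sigma \in \Gal(K/F)} K$ yields $G_K \cong \prod_\sigma H_K$, and projection onto the $\sigma_0$-factor identifies $\calF_{G, \mu}(K)$ with $H(K)/P^-_{\mu_H}(K)$ for a minuscule cocharacter $\mu_H$ of $H$. Since $H$ is split over $K$, the Iwasawa decomposition for a suitable parahoric $\calH$ of $H$ over $O_K$ compatible with $\calG$ yields $H(K) = P^-_{\mu_H}(K) \cdot \calH(O_K)$, so every $K$-point of $\calF_{G,\mu}$ is represented by $h \cdot [\mu]$ with $h \in \calH(O_K)$. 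Mapping the representative $h$ into $\calG(O_K)$ via the integral Weil restriction promotes the $K$-section to an $O_K$-section of the semi-orbit $\calG^{\diamondsuit}_{O_K} \cdot [\mu] \subseteq \calM^{\circ}_{\calG, \mu}$ of \Cref{def_open_semi_homog_fixers}.

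For the inductive step $n \geq 2$, the generic-fiber product decomposition from \Cref{prop_conv_LM_fibers} splits $\calF_{G, \mu_\bullet}(K) \cong \prod_{i \in I} \calF_{G, \mu_i}(K)$, so a $K$-point is a tuple $(x_1, \ldots, x_n)$. The base case extends $x_1$ to an $O_K$-section of $\calM^{\circ}_{\calG, \mu_1}$, and the inductive hypothesis applied to $(x_2, \ldots, x_n)$ gives an $O_K$-section of $\calM^{\circ}_{\calG, (\mu_2, \ldots, \mu_n)}$. Pairing these via the convolution presentation
\begin{equation}
\calM^{\circ}_{\calG, \mu_\bullet} \;=\; \widetilde{\calM^{\circ}_{\calG, \mu_1}} \times^{L^+_O \calG} \calM^{\circ}_{\calG, (\mu_2, \ldots, \mu_n)}
\end{equation}
produces the required integral extension in $\calM^{\circ}_{\calG, \mu_\bullet}(O_K)$. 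The reverse inclusion $\calM^{\circ}_{\calG, \mu_\bullet}(O_K) \subseteq \calF_{G,\mu_\bullet}(K)$ is immediate from restriction to the generic fiber.

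The principal obstacle lies in the base case: the Iwasawa representative $h \in \calH(O_K)$ must be promoted to an element of $\calG(O_K)$ acting on the rationally conjugate section $[\mu]$, even though $\calG(O_K) = \calH(O_K \otimes_{O_F} O_K)$ is in general strictly larger than (and bears no naive product description in terms of) $\calH(O_K)$ when $K/F$ is ramified. The tinyness hypothesis resolves this by guaranteeing that $\mu$ perturbs only one factor of the Weil restriction, so that the single-factor Iwasawa suffices; combined with the smoothness and connectedness of the fixer $\calP_\lambda^-$ from \Cref{lem_fixer_lambda_scheme_smooth}, this yields an integral lift landing in the open cell of the admissible locus on the special fiber. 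This point is also the crucial input that makes the successive gluings in the inductive step go through cleanly.
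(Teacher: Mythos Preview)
Your base case ($n=1$) is essentially the same as the paper's: the Iwasawa decomposition for the split group $H$ over $K$, combined with the observation that the $\calG(O_K)$-action on $\calM_{\calG,\mu}$ factors through $\calH(O_K)$ for tiny $\mu$, shows that every $K$-point of $\calF_{G,\mu}$ is of the form $h\cdot[\lambda]$ for some $h\in\calG(O_K)$ and rational conjugate $\lambda$ of $\mu$. This part is fine.

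The inductive step, however, has a genuine gap. You write the point as a tuple $(x_1,\ldots,x_n)$ using the \emph{product} decomposition of \Cref{prop_conv_LM_fibers}, lift $x_1$ and $(x_2,\ldots,x_n)$ independently to $y_1$ and $y_{>1}$, and then ``pair via the convolution presentation''. But forming $[\tilde{y}_1,y_{>1}]$ in the twisted product requires first lifting $y_1$ to a point $\tilde{y}_1$ of the $L^+_O\calG$-torsor $\widetilde{\calM^\circ_{\calG,\mu_1}}$ over $\Spd O_K$, and you do not justify that such an integral lift exists. More seriously, even granting such a lift, the generic fibre of $[\tilde{y}_1,y_{>1}]$ is \emph{not} the original point: under the product identification, the tail coordinates become the $\tilde{y}_{1,K}$-twisted version of $(x_2,\ldots,x_n)$, not $(x_2,\ldots,x_n)$ itself. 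So your construction produces \emph{some} integral point projecting to $x_1$, but not a lift of the given $\xi$.

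The paper avoids this by a different induction. Rather than splitting off the first factor and recombining, it fixes a presentation of $\xi$ by representatives $x_j\in\calF_{G,\mu_j}^{\on{tor}}(K)$ in the \emph{minimal} torsors $\calM^{\on{tor},\circ}_{\calG,\mu_j}$ (the $\calG^{>j}_{\on{ad}}$-torsors of \Cref{def_torsor_minuscule_local_model}) and, at step $i$, replaces $x_i$ by $x_ig$ for a suitable $g\in\calG^{>i}(O_K)$ to make it integral, simultaneously replacing $x_{i+1}$ by $g^{-1}x_{i+1}$ so that the convolution class is unchanged. The existence of such $g$ is exactly where \Cref{prop_torsor_local_model_different} enters: the explicit $O_C$-section constructed there (via the ``different'' $\delta_G(\lambda)$) descends to $O_K$, furnishing an integral point of the minimal torsor over each $[\lambda]$, and the $n=1$ case then shows $x_i$ lies in the $\calG^{>i}(O_K)$-orbit of such a section. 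Your argument makes no use of \Cref{prop_torsor_local_model_different}, and this is precisely the missing ingredient that controls the torsor lift and hence the twist.
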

	
	\begin{proof}
		Let us assume first that $K=F$. Then $\calG=\calH$ is a parahoric model of a split reductive $F$-group and the right side can be given by the Iwasawa decomposition 
		\begin{equation}
			\label{iwasawaequation}
		\calF_{G,\mu}(K) = \bigcup_{\lambda} \calH(O_K) \cdot \la,
		\end{equation}
		see \cite[Proposition 4.4.3]{BT72} (or \cite[Theorem 5.3.3]{KP21}). 
		Now, obviously the points of the form $\lambda$ extend to integral points of $\calM^\circ_{\calH,\mu}$, due to the splitness assumption, and thus the same holds for its $\calH(O_K)$-orbits.
		
		Now, consider an arbitrary finite Galois extension $K/F$. We get the result immediately for tiny coweights, as the $\calG(O_K)$-action on $\calM_{\calG,O_K,\mu}$ is via the parahoric subgroup $ \calH(O_K) \subset H(K)$. This addressed the case in which $\mu_\bullet=\mu$ consists of only one tiny coweight.
		In general, we use this to show the claim by an inductive procedure. 
		Let $\mu_\bullet=(\mu_1,\dots,\mu_n)$ with each $\mu_i$ tiny.
		Fix an element $(x_1, \dots, x_n) \in \calF_{G, \mu_\bullet}(K)$. 
		We will show inductively that each of the tuples $(x_1,\dots,x_i)\in \calF_{G,K, \mu_{\bullet\leq i}}(K)$ define an integral point in $\calM_{\calG,O_K, \mu_{\bullet\leq i}}^{\circ}(O_K)$.
		Consider the projection map 
		\begin{equation}\calM_{\calG, O_K,\mu_{\bullet\leq i+1}}^{\circ}\to \calM_{\calG, O_K,\mu_{\bullet\leq i}}^{\circ}.\end{equation}
		We are going to show by induction on $i$ the stronger assertion that every $K$-point of $\calF_{G, K,\mu_{\bullet\leq i}}$ extends to an $O_K$-point of $\calM_{\calG,O_K, \mu_{\bullet\leq i}}^{\circ}$, and that one may even lift these $O_K$-points along the projection of the right $\calG^{>i}$-torsor 
		\begin{equation}\calM_{\calG,O_K, \mu_1}^{\on{tor},\circ} \times^{\calG_{O_K}^{>1}} \dots \times^{\calG_{O_K}^{>i-1}}  \calM_{\calG,O_K, \mu_{i}}^{\on{tor},\circ}\to \calM_{\calG,O_K, \mu_{\bullet\leq i}}^{\circ}.\end{equation}

%

		First observe that, by the $\mu_\bullet=\mu$ case, $\calF_{G,\mu_1}(K)=\calM^\circ_{\calG,\mu_1}(O_K)$.
		We claim that for all $x_1\in \calF_{G,K,\mu_1}(K)$ we may find a lift $\tilde{x}_1$ to $\calM^{\on{tor},\circ}_{\calG,O_K,\mu_1}(O_K)$.
		Indeed, the $O_C$-section, $y\in L_\calO\calT'(O_C)\subseteq L_\calO\calG(O_C)$ projecting to $\lambda$ that we constructed in \Cref{prop_torsor_local_model_different}, descends to $O_K$ after projecting to $\calM^{\on{tor},\circ}_{\calG,O_K,\mu_1}$, see also \Cref{rem_torsors_representable}.
		The left $\calG(O_K)$-orbit of $y$ in $\calM^{\on{tor},\circ}_{\calG,O_K,\mu_1}(O_K)$ provides the set of lifts for points in the $\calG(O_K)$-orbit of $\lambda$ in $\calM^{\circ}_{\calG,O_K,\mu_1}(O_K)$. 
		The claim follows from ranging over all the $\lambda$ as in \eqref{iwasawaequation}.

		More generally, suppose (by induction) that we have found representatives $\tilde{x}_j \in \calF_{G, \mu_j}^{\on{tor}}(K)$ contained in $\calM_{\calG, \mu_j}^{\on{tor},\circ}(O_K)$ for $j<i$. 
		Let $\calS$ be the fiber of the projection $\calM_{\calG, \mu_{\bullet \leq i}}^{\circ}\to \calM_{\calG, \mu_{\leq i-1}}^{\circ}$ along the $O_K$-point that $(x_1,\dots,x_{i-1})$ defines.
		Since by hypothesis  
		\begin{equation}\calM_{\calG,O_K, \mu_1}^{\on{tor},\circ} \times^{\calG_{O_K}^{>1}} \dots \times^{\calG_{O_K}^{>{i-2}}}  \calM_{\calG,O_K, \mu_{i-1}}^{\on{tor},\circ}(O_K)\to \calM_{\calG,O_K, \mu_{\bullet\leq i-1}}^{\circ}(O_K)\end{equation}
		is surjective, the natural $\calG_{O_K}^{>i-1}$-torsor is trivial. 
		This gives an identification $\calS\simeq \calM_{\calG,O_K, \mu_{i}}^{\circ}$, under which the natural $\calG_{O_K}^{>i}$-torsor over $\calS$ identifies with $\calM_{\calG, O_K,\mu_{i}}^{\on{tor},\circ}$. 
		By the $\mu=\mu_\bullet$ case, $\calS$ has an $O_K$-point which lifts to its $\calG_{O_K}^{>i}$-torsor as we wanted to show.
		This finishes the inductive step and the proof.
%
%
%
	\end{proof}
	
	\begin{remark}\label{rem_sp_LM}
		It follows by inspecting the proof of \Cref{thm_specialization_local_models} that in order to compute the specialization mapping, it is enough to consider Weil restrictions of split groups and their Iwahori models, see \Cref{assumption_iwahori_res_split}.
		Reduction to this type of group is an integral part of our strategy to show \Cref{thm_local_model_representable} (see the paragraph containing \eqref{closed-immersion-of-groups}).
	\end{remark}
	
	\begin{remark}
		He--Pappas--Rapoport \cite[Conjecture 2.12]{HPR20} conjecture that, for any fixed pair $(\calG,\mu)$, there is at most one flat projective $O_E$-scheme equipped with an $\calG_{O_E}$-action having the correct fibers, identified $\calG$-equivariantly. 
		This is much stronger than \Cref{thm_specialization_local_models} above, since it makes no reference to convolution or functoriality. 
		Our approach is inspired by their conjecture in applying equivariant methods to pin down the specialization map.
	\end{remark}

	\subsection{Comparison isomorphisms}\label{subsec_comparison}
	
	In this subsection, we use our work from the previous ones to establish certain comparison isomorphisms between (at least some of) our local models and those that have appeared elsewhere, see \cite{PZ13,Lev16,Lou19,FHLR22}. During this subsection, we shall work under the following:
	
	\begin{assumption}\label{assumption_iwahori_res_split}
		Given a pinned split simple adjoint group $(H,T_H,B_H,e_H)$, let $G=\Res_{K/F} H$ with $K/F$ an arbitrary finite extension.
		In this context, we let $K_0$ denote the maximal unramified subextension of $K/F$ with ring of integers $O_0$ and residue field $k_0$. We let $\calI$ be the standard Iwahori model of $G$ with respect to the chosen pinning.		
	\end{assumption}

	In order to prove \Cref{thm_local_model_representable}, we need to compare $\calM_{\calI,\mu}$ to certain candidates $\calN_{\underline{\calI},\mu}^{\on{sch}}$ constructed in \cite[Definition~5.11]{FHLR22} and denoted $\widetilde M_{\underline \calG,\mu}$ there, which are variations on the work of Levin \cite{Lev16}. 
	Here, $\underline{\calI}$ is a $O\pot{t}$-lift of $\calI$ along $t \mapsto \pi$, obtained by taking restriction of scalars along an ad hoc lift $O\pot{t}\to O_0\pot{u}$ of $O \to O_K$ of the dilatation of $H\otimes O_0\pot{u}$ along $B_H \otimes O_0$ concentrated in the $u$-divisor, see \cite[Theorem 4.1]{PZ13} and \cite[Definition 2.1, Example 3.3]{MRR20}. The various lifts $O\pot{t}\to O_0\pot{u}$ defined in \cite[Subsection 2.2]{FHLR22} are given by choosing uniformizers and lifting Eisenstein polynomials over $O_0$ in such a way that they remain separable Eisenstein over both $k_0\pot{t}$ and $K_0\pot{t}$.
	
	One has a schematic Beilinson--Drinfeld Grassmannian $\Gr_{\underline{\calI}}^{\mathrm{sch}}$ defined in terms of power series rings, classifying $\underline{\calI}$-torsors over $R\pot{t-\pi}$ trivialized over $R\rpot{t-\pi}$, and admitting uniformization via loop groups $L_O^{\on{sch}}\underline{\calI}/L^{\on{sch},+}_{O}\underline{\calI}$. 
	The generic fiber is equivariantly isomorphic to the schematic affine Grassmannian $\Gr_{G}^{\on{sch}}$ over $F$, see \cite{FHLR22}. 
	So we get an embedding $\calF_{G,\mu}\subset \Gr_{G}^{\on{sch}}|_{\Spec\, E}$ for a minuscule coweight $\mu$.

	\begin{definition}
		The $O_E$-scheme $\calN_{\underline{\calI},\mu}^{\on{sch}}$ is defined as the seminormalization of the flat closure of $\calF_{G,\mu}$ inside $\Gr_{\underline{\calI},O_E}^{\mathrm{sch}}$. 
		For a minuscule sequence $\mu_\bullet$ of dominant coweights, we set $\calN_{\underline{\calI},O_E,\mu_\bullet}^{\on{sch}}$ as the convolution product of the $\calN_{\underline{\calI},O_E,\mu_i}^{\on{sch}}$. 
		We define the $\calI^{>i}_{O_C}$-torsor $\calN_{\underline{\calI},O_C,\mu_i}^{\on{sch,tor}}$ by pushing forward the universal $L^{\on{sch},+}_{O_C}\underline{\calI}$-torsor under the natural projection.
	\end{definition}
	
	The $\calN_{\underline{\calI},\mu}^{\on{sch}}$ are normal flat $O_E$-schemes with reduced special fiber by \cite[Theorem~5.14]{FHLR22}, so the same remains true for their base change to $O_{\bar E}$.	
	Indeed, the base changed scheme $\calN_{\underline{\calI},O_{\bar E},\mu}^{\on{sch}}:=\calN_{\underline{\calI},\mu}^{\on{sch}}\otimes_{O_E}O_{\bar E}$ has smooth generic fiber and reduced special fiber, so this follows from Serre's criterion for normality, see \cite[Proposition 8.2]{PZ13}, and from writing $\bar E$ as a union of finite its finite subextensions over $E$. In particular, for any finite extension $E'/E$, we have that $\calN^{\mathrm{sch}}_{\underline{\calI}, O_{E'},\mu}$ is the unique normal flat $O_{E'}$-scheme representing its associated $v$-sheaf.

	The candidates, $\calN_{\underline{\calI},\mu}^{\on{sch}}$, also come with a weak form of functoriality. 
	More precisely, given extensions $\tilde{K}/K/F$ we have a morphism of groups $\Res_{K/F} H\to \Res_{\tilde{K}/F} H$ compatible with a morphism of parahoric group schemes $\calI\to \tilde{\calI}$. These induce finite maps
	\begin{equation}
		\label{res funcotriality}
	\calN_{\underline{\calI},O_C,\mu}^{\on{sch}} \rightarrow \calN_{\tilde{\underline{\calI}},O_C,\tilde{\mu}}^{\on{sch}},
      \end{equation}
      of schemes that are universal homeomorphisms towards their images, where $\tilde{\underline{\calI}}$ is a $O\pot{t}$-lift of $\tilde{\calI}$ defined with respect to the extension $\tilde{K}/K/F$ (see \cite[Section~5.3]{FHLR22}) and where $\tilde{\mu}$ is the image of $\mu$ in $\tilde{G}$. 
      In particular, they induce closed immersions upon passing to the associated $v$-sheaves.
	In what follows, we shall simply say the candidates are \emph{$R$-functorial} in $(\underline{\calI},\mu)$. Similarly, we may consider functoriality under convolution maps $\calN_{\underline{\calI},O_C, \mu_\bullet}^{\mathrm{sch}}\to \calN_{\underline{\calI},O_C, \mu}^{\mathrm{sch}}$. 
	We will refer to this by the term \textit{$R$-functorial in $(\underline{\calI},\mu_{\bullet}) $}.

	Our next goal is to compare the v-sheaves associated with $\calN_{\underline{\calI},O_C,\mu_\bullet}^{\on{sch}}$ to $\calM_{\calG,O_C,\mu_\bullet}$. We start by recording what happens in the generic fiber.
	
	\begin{lemma}\label{lem_comparison_generic_fiber}
		There are unique equivariant isomorphisms
		\begin{equation}
		\calN_{\underline{\calI},\mu_i}^{\on{sch,tor}}|_{\Spec\, C}\cong \calF_{G, C,\mu_i}^{\on{tor}}
		\end{equation}
		for each term $\mu_i$ of the sequence $\mu_\bullet$. 
		They yield canonical equivariant isomorphisms
		\begin{equation}
		\calN_{\underline{\calI},\mu_\bullet}^{\on{sch}}|_{\Spec\, C} \cong \calF_{G,C, \mu_\bullet}
		\end{equation}
		which are $R$-functorial in $(\underline{\calI},\mu_\bullet)$.
	\end{lemma}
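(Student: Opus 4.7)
The plan is to establish the statement in three stages: first identify the underlying projective varieties on the generic fiber, then match the torsor structures, and finally extend to convolutions.

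First, by the defining property of the Breuil--Kisin lift $\underline{\calI}$, there is a functorial equivariant isomorphism $\Gr_{\underline{\calI}}^{\on{sch}}|_{\Spec E}\cong \Gr_G^{\on{sch}}|_{\Spec E}$ of schematic Beilinson--Drinfeld Grassmannians, see \cite{FHLR22}. Under this identification, the flat closure of $\calF_{G,\mu_i}$ in $\Gr_{\underline{\calI},O_E}^{\on{sch}}|_{\Spec E}$ is $\calF_{G,E,\mu_i}$ itself, which is smooth projective over $E$ and hence absolutely weakly normal. Consequently $\calN_{\underline{\calI},\mu_i}^{\on{sch}}|_{\Spec E}\cong \calF_{G,E,\mu_i}$, and base changing to $C$ yields the underlying scheme isomorphism.

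Next, I would match the torsor structures as follows. The universal $L^{\on{sch},+}_{O_C}\underline{\calI}$-torsor restricts on the generic fiber to the universal $L^{+,\on{sch}}G$-torsor on $\calF_{G,C,\mu_i}$ via the identification above. Both the natural projection $L^{\on{sch},+}_{O_C}\underline{\calI}\to \calI^{>i}$ defining the schematic torsor and the projection $L^+_{O_C}\calG\to \calG^{>i}_{\on{ad},O_C}$ from \Cref{def_torsor_minuscule_local_model} correspond in characteristic zero to the same maximal finite-dimensional algebraic quotient acting faithfully on $\calF_{G,C,\mu_{>i}}$, so the pushforward torsors agree canonically. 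Uniqueness then follows from \Cref{sec:convolution-1-abstract-lemma-on-left-right-torsors}: by \Cref{lem_min_adjoint_quotient_over_integers} the group $\calG^{>i}_{\on{ad},C}$ acts faithfully with trivial center on $\calF_{G,C,\mu_i}$, so every $(G,\calG^{>i}_{\on{ad},C})$-biequivariant automorphism of the torsor is the identity.

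For the convolution statement, both $\calN_{\underline{\calI},\mu_\bullet}^{\on{sch}}|_{\Spec C}$ and $\calF_{G,C,\mu_\bullet}$ are defined as successive contracted products of the minuscule torsors with the subsequent local models, respectively flag varieties. The isomorphisms of single-term torsors from the previous step are compatible with these contracted products, producing a canonical equivariant isomorphism $\calN_{\underline{\calI},\mu_\bullet}^{\on{sch}}|_{\Spec C}\cong \calF_{G,C,\mu_\bullet}$. Functoriality in $(\underline{\calI},\mu_\bullet)$ is then inherited from the functoriality of the schematic Beilinson--Drinfeld Grassmannians and of the torsor formation.

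The main obstacle I anticipate is aligning the schematic finite-dimensional quotient $\calI^{>i}$ with the v-sheaf analog $\calG^{>i}_{\on{ad}}$ on the generic fiber. This requires carefully unpacking the conventions for Breuil--Kisin lifts from \cite{FHLR22} and verifying that both are characterized by the same universal property over $C$, but the actual identification should be essentially formal once the two projections are recognized as the universal finite-dimensional quotients acting faithfully.
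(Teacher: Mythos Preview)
Your proposal is correct and follows essentially the same approach as the paper, which gives a one-line proof: ``This follows by definition and uniqueness is ensured by \Cref{sec:convolution-1-abstract-lemma-on-left-right-torsors}.'' You have simply unpacked what ``by definition'' means---the identification of the generic fiber of the schematic Beilinson--Drinfeld Grassmannian with $\Gr_G^{\on{sch}}$, the fact that absolute weak normalization does nothing to the smooth $\calF_{G,\mu_i}$, and that the torsor pushforwards agree because the two positive loop groups have the same relevant finite-dimensional quotient over $C$---and then invoked the same abstract lemma for uniqueness. One small slip: in your uniqueness paragraph you write that $\calG^{>i}_{\on{ad},C}$ acts faithfully on $\calF_{G,C,\mu_i}$, but \Cref{lem_min_adjoint_quotient_over_integers} gives faithfulness on $\calF_{G,C,\mu_{>i}}$; what you actually need for \Cref{sec:convolution-1-abstract-lemma-on-left-right-torsors}(2) is that the transfer map $P_{\mu_i}^-\to G^{>i}_{\on{ad},C}$ has trivial centralizer, which follows since its image contains a Borel and the target has trivial center.
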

	
	\begin{proof}
		This follows by definition and uniqueness is ensured by \Cref{sec:convolution-1-abstract-lemma-on-left-right-torsors}.
	\end{proof}
	
	Next, we need to take care of the special fiber:

	\begin{proposition}\label{prop_comparison_special_fiber}
		There are unique equivariant isomorphisms
		\begin{equation}
		\left (\calN_{\underline{\calI},\mu_i}^{\on{sch,tor}}|_{\Spec \,\bar k}\right)^{\on{perf}}	 \cong \calA_{\calI, \bar k, \mu_i}^{\on{tor}}
		\end{equation}
		for each term $\mu_i$ of the sequence $\mu_\bullet$. 
		They yield canonical equivariant isomorphisms
		\begin{equation}
		\left(\calN_{\underline{\calI},\mu_\bullet}^{\on{sch}}|_{\Spec\, \bar k }\right)^{\on{perf}} \cong \calA_{\calI, \bar k, \mu_\bullet}
		\end{equation}
		which are $R$-functorial in $(\underline{\calI},\mu_\bullet)$.
	\end{proposition}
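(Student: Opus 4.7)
The plan is to promote the scheme-level special fiber comparison of \cite[Theorem 4.10]{FHLR22} together with the equi- to mixed-characteristic Schubert identification \Cref{lem_comparison_sch_vars_equi_and_mixed} to the torsor level, and then extend to convolutions by naturality.

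First, I would identify the special fiber of $\calN_{\underline{\calI},\mu_i}^{\on{sch}}$ itself (before torsor pushforward) with the canonical deperfection of the admissible locus. By construction the $O\pot{t}$-lift $\underline{\calI}$ reduces modulo $\pi$ to a parahoric $\bar{k}\pot{t}$-group $\underline{\calI}'$, the ring map $O\pot{t} \to O_0\pot{u}$ being chosen so that $t-\pi$ reduces to a generator of the special fiber valuation, so that the closure of $\calF_{G,\mu_i}$ inside $\Gr^{\on{sch}}_{\underline{\calI},O_E}$ specializes to the closure of the equicharacteristic flag variety inside the equicharacteristic affine Grassmannian for $\underline{\calI}'$. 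By the equicharacteristic analogue of the main theorem of \cite{FHLR22}, this closure is exactly $\calA_{\underline{\calI}', \bar k, \mu_i}^{\on{can}}$. Perfecting and applying \Cref{lem_comparison_sch_vars_equi_and_mixed} (which applies because $\mu_i$ is minuscule, so the $L^+_{\bar k}\calI$-action factors through $\calI_{\bar k}^{\on{perf}}$, i.e.\ depth zero), one obtains a canonical $\calI_{\bar k}^{\on{perf}}$-equivariant isomorphism $(\calN_{\underline{\calI},\mu_i}^{\on{sch}}|_{\Spec\,\bar k})^{\on{perf}} \cong \calA_{\calI,\bar k,\mu_i}$.

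Next, I would upgrade this to the torsor level via \Cref{sec:convolution-1-abstract-lemma-on-left-right-torsors} and \Cref{sec:convolution-2-extensions-of-given-generic-left-h-right-a-torsor}. Both $(\calN_{\underline{\calI},\mu_i}^{\on{sch,tor}}|_{\Spec\,\bar k})^{\on{perf}}$ and $\calA_{\calI,\bar k,\mu_i}^{\on{tor}}$ are left $\calI_{\bar k}^{\on{perf}}$-equivariant right $(\calI^{>i}_{\bar k})^{\on{perf}}$-torsors over $\calA_{\calI,\bar k, \mu_i}$, obtained by pushing forward their respective universal positive loop torsors. Restricting to an open Iwahori-Schubert stratum indexed by a rational conjugate $\la$ of $\mu_i$, one is reduced to comparing two morphisms $\varphi$ from the $\la$-stabilizer (which is self-normalizing by \Cref{prop_trivial_equiv_autos} and whose image in $\calI^{>i}_{\bar k}$ has trivial centralizer because $\calI^{>i}_{\bar k}$ is adjoint in the sense of \Cref{lem_min_adjoint_quotient_over_integers}) to $(\calI^{>i}_{\bar k})^{\on{perf}}$. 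Both $\varphi$'s arise from the natural pushforward construction applied in depth zero to the same group $\calI_{\bar k}^{\on{perf}}$, so they coincide; \Cref{sec:convolution-1-abstract-lemma-on-left-right-torsors}(2) then yields a unique $(\calI_{\bar k}, \calI^{>i}_{\bar k})$-biequivariant isomorphism on the stratum, which extends uniquely to all of $\calA_{\calI, \bar k,\mu_i}^{\on{tor}}$ by self-normalization.

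For the convolution statement, I would use that both sides are built as contracted products of the individual terms: on the scheme side by definition of $\calN_{\underline{\calI},\mu_\bullet}^{\on{sch}}$, and on the perfect side because convolutions commute with perfection via \Cref{sec:comp-etale-cohom-2-diamondsuit-preserves-finitely-presented-v-covers}. Stitching together the torsor isomorphisms from the previous paragraph with their matching right $(\calI^{>i}_{\bar k})^{\on{perf}}$-actions then gives the claimed isomorphism. Functoriality in $(\underline{\calI},\mu_\bullet)$ is immediate from uniqueness: any two candidates for a comparison square commute trivially because the equivariant automorphism groups are trivial by \Cref{prop_trivial_equiv_autos}.

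The main obstacle I anticipate is the torsor-level argument: one has to verify that the two pushforward constructions (scheme-theoretic positive loop group $L^{\on{sch},+}_{\bar k}\underline{\calI}'$ versus Witt vector positive loop group $L^+_{\bar k}\calI$) yield the same $\calI^{>i,\on{perf}}_{\bar k}$-torsor after pushforward. This works only because the actions on the minuscule admissible locus factor through the common depth-zero quotient $\calI_{\bar k}^{\on{perf}}$, so the higher congruence subgroups are irrelevant. Once this reduction is pinned down, the abstract torsor uniqueness framework takes over and the rest is formal.
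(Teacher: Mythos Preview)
Your opening identification of the special fiber with $\calA_{\calI',\bar k,\mu_i'}$ via \cite[Theorem 4.10]{FHLR22} and then with $\calA_{\calI,\bar k,\mu_i}$ via \Cref{lem_comparison_sch_vars_equi_and_mixed} is exactly how the paper begins. The divergence, and the gap, is in how you pass to the torsor level.

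You propose to characterize each torsor by restricting to an open Iwahori--Schubert orbit and invoking \Cref{sec:convolution-1-abstract-lemma-on-left-right-torsors} to compare the transfer homomorphisms $\varphi$, then say the isomorphism ``extends uniquely to all of $\calA_{\calI,\bar k,\mu_i}^{\on{tor}}$ by self-normalization.'' But \Cref{sec:convolution-1-abstract-lemma-on-left-right-torsors}(2) is a uniqueness statement, not an existence statement: it does not tell you that a biequivariant isomorphism defined on a dense open orbit extends over the boundary. Both torsors are already given globally over $\calA_{\calI,\bar k,\mu_i}$, so what you need is a global isomorphism between them, and a section of the affine $\mathrm{Isom}$-torsor over an open stratum has no reason to extend. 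Your assertion that ``both $\varphi$'s arise from the same depth-zero construction, so they coincide'' also hides the issue: the underlying $\calI_{\bar k}^{\on{perf}}$-torsors depend on the ambient loop groups $L^+_{\bar k}\calI$ versus $L^{\on{sch},+}_{\bar k}\underline{\calI}'$, and the isomorphism of \Cref{lem_comparison_sch_vars_equi_and_mixed} was built via Demazure resolutions and line bundles, not via any loop-group correspondence, so there is no a priori identification of these torsors.

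The paper closes this gap by working at the Demazure level. One forms the pullback square with $\calD_{\calI,\bar k,\dot w}^{\on{tor}} \to \Fl_{\calI,\bar k,w}^{\on{tor}}$ over $\calD_{\calI,\bar k,\dot w} \to \Fl_{\calI,\bar k,w}$; the torsor over the Demazure variety is identified with its equicharacteristic counterpart by the same inductive $\bbP^1$-fibration argument as in \Cref{lem_comparison_sch_vars_equi_and_mixed}. One then descends the torsor isomorphism to the Schubert variety using $\pi_{\dot w,\ast}\calO_{\calD}=\calO_{\Fl}$ and flat base change along the perfectly smooth vertical maps, which forces $p_{\Fl,\ast}\calO_{\Fl^{\on{tor}}}=\pi_{\dot w,\ast}p_{\calD,\ast}\calO_{\calD^{\on{tor}}}$. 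Only after this global construction does the paper invoke \Cref{sec:convolution-1-abstract-lemma-on-left-right-torsors} for uniqueness, computing the transfer map as $\on{int}(w^{-1})$ and checking its image contains a Borel of $\calI_{\bar k}^{>i}$, hence has trivial centralizer (adjointness alone, which you cite, is not enough). Finally, your ``functoriality is immediate from uniqueness'' skips a genuine step for $\mu_\bullet$: the paper has to identify the convolution-to-product map $\calA_{\calI,\mu_\bullet}\to\calA_{\calI,\mu}$ with its equicharacteristic counterpart via the Stein factorization and Picard group calculation of \Cref{prop_line_bundles_stein_fac} and \Cref{calculo do grupo de picard da grassmanniana dos vectores de witt}, before uniqueness can be applied.
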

	
	\begin{proof}
		Set $\calI'= \underline{\calI}\otimes k\pot{t}$, a standard Iwahori model of the connected reductive group $G'=\Res_{k_0\rpot{u}/k\rpot{t}}H$. 
		By \cite[Theorem~5.14]{FHLR22}, we have $\calN_{\underline{\calI},\mu, \bar k}^{\on{sch,perf}}=\calA_{\calI', \bar k, \mu'}$.
		Hence, the statement above is just a generalization of \Cref{lem_comparison_sch_vars_equi_and_mixed} to convolution products. 
		
		Let $w\in \tilde{W}$ be an element such that $\Fl_{\calI,\bar{k},w} \subset \calA_{\calI,\mu_i, \bar k,\mu_i}$ and choose a Demazure resolution $\pi_{\dot w}\co \calD_{\calI,\bar{k}, \dot{w}} \to \Fl_{\calI,\bar{k},w}$. 
		We have to compare the following pullback square
		\begin{equation}
		\begin{tikzcd}
		\calD_{\calI,\bar{k}, \dot{w}}^{\on{tor}} \arrow[r, "\pi_{\dot{w}}^{\mathrm{tor}}"] \arrow[d, "p_{\calD}"]  & \arrow[d, "p_{\Fl}"]\Fl_{\calI,\bar{k},w}^{\on{tor}}  \\
		\calD_{\calI,\bar{k}, \dot{w}}\arrow[r, "\pi_{\dot{w}}"] &  \Fl_{\calI,\bar{k},w}
		\end{tikzcd}
		\end{equation}
		with its counterpart in the equicharacteristic setting. 
		The bottow arrow was dealt with in \Cref{lem_comparison_sch_vars_equi_and_mixed} and the second paragraph there applies verbatim to comparing the left arrow. We claim that these suffice to recover the remainder of the diagram.
		
		Note that the vertical arrows are affine morphisms, so they can be written via relative spectra, that is, we have $\calD_{\calI,\bar{k}, \dot{w}}^{\on{tor}}=\Spec(p_{\calD,*}\calO_{\calD^{\on{tor}}})$ and $\Fl_{\calI,\bar{k}, w}^{\on{tor}}=\Spec(p_{\Fl,*}\calO_{\Fl^{\on{tor}}})$. 
		On the other hand, we know that $\pi_{\dot w,*}\calO_{\calD}=\calO_{\Fl}$, so the same equality holds for $\pi_{\dot w}^{\on{tor}}$ by flat base change, as the vertical arrows are perfectly smooth. 
		But this means $p_{\Fl,*}\calO_{\Fl^{\on{tor}}}=\pi_{\dot w,*}p_{\calD,*}\calO_{\calD^{\on{tor}}}$, just as asserted.
		
		Next, we show that the isomorphisms constructed above are unique. Without torsors, this has been verified in \Cref{prop_trivial_equiv_autos}, so any automorphism respects the orbit part $\Fl_{\calI,\bar k,w}^{\circ,\on{tor}}$. So we only have to verify the conditions of \Cref{sec:convolution-1-abstract-lemma-on-left-right-torsors} on centralizers of the transfer homomorphism $\varphi_w$. In this case, it is given by
		\begin{equation}
		\on{int}(w^{-1})\colon L^+\calI \cap wL^+\calI w^{-1} \to w^{-1}L^+\calI w \cap L^+\calI. 
		\end{equation} 
		We see that the image of the right side in $(\calI_{\bar{k}}^{>i})^{\on{perf}}$ contains the image of $\calB_{\bar{k}}^{\on{perf}}$, where $\calB \subset \calG$ is the flat closure of some Borel $B \subset G$. 
		In particular, the centralizer is contained in $\calT_{\bar{k}}^{>i}$. Now, we inspect the description of $\calI_{\bar{k}}^{>i}$ given in \Cref{lem_min_adjoint_quotient_over_integers} more closely to deduce triviality of the centralizer.
		First, observe that taking the product along all positive simple roots $a\colon T_H\to \mathbb{G}_m$ and composing with the diagonal map $T_H \to T_H^{\Delta_H}$, we obtain an isomorphism $T_H \simeq \mathbb{G}_{m,K}^{\Delta_H}$ by adjointness of $H$.
		We denote the corresponding quotient by $T_H^a$ for any $a\in \Delta_H$. Passing to restrictions of scalars and Néron models, we get also a decomposition $\calT \simeq \prod_{a \in \Delta_H}\calT^a$ where $\calT^a\simeq \Res_{O_K/O_F}\mathbb{G}_{m,O_K}$ is the corresponding quotient.
		Finally, we observe that $\calT^{>i}$ also decomposes as a product of the $\calT^a$ for $a \in \Delta_H^{>i}$, that is, all those roots that have non-trivial restriction to the split $O_F$-torus $\calS^{>i}$.
		The claim on the centralizer is now a consequence of the faithful action of $\calT_{\bar{k}}^a$ on the corresponding $a$-root group $\calU_{\bar{k}}^a$.
		
		Finally, we must show that the isomorphisms just constructed are $R$-functorial with respect to $(\underline{\calI},\mu_\bullet)$. First, we handle the case of $\underline{\calI}$ as follows. 
		Fix $\tilde{K}/K/F$ as in \eqref{res funcotriality}. 
		Notice that the image of the closed immersion $\calA_{\calI, \bar k,\mu}\to \calA_{\tilde{\calI}, \bar k,\mu}$ is characterized as closure of the $\calI_{\bar k}$-orbit of the $\lambda_I(\pi)$, as $\lambda$ varies through the $\breve{F}$-rational conjugates of $\mu$. The same observation holds for their equicharacteristic counterparts, so when writing down the obvious square with the comparison isomorphisms, we see that the only datum possibly preventing it from commuting is a $\calI_{\bar k}$-equivariant automorphism of the left side $\calA_{\calI,\bar k,\mu}$. But this is necessarily trivial, so we deduce the $R$-functoriality in $\underline{\calI}$ when $\mu_\bullet=\mu$. The case of torsors $\calA_{\calI,\bar k, \mu_i}^{\mathrm{tor}}$ can now be proved in the same vein, using uniqueness of equivariant automorphisms, so we deduce $R$-functoriality in $\underline{\calI}$ also for the convolution $\calA_{\calI,\bar k, \mu_\bullet}$.
		
		Finally, we handle the functoriality in $\mu_\bullet$ by appealing to Stein factorizations of semi-ample line bundles, compare with \Cref{prop_line_bundles_stein_fac}. Our calculation of Picard groups in \Cref{calculo do grupo de picard da grassmanniana dos vectores de witt} and \Cref{sec:affine-flag-vari-1-constructing-schubert-varieties-via-demazure} allows us to deduce that 
		\begin{equation}\mathrm{Pic}(\calA_{\calI,\bar k,\mu})\to \mathrm{Pic}(\calA_{\calI,\bar k,\mu_\bullet})
		\end{equation}
	is injective and identifies with the diagonal inclusion $\mathbb{Z}[p^{-1}]^{m+1} \to \mathbb{Z}[p^{-1}]^{n(m+1)}$ with $m=\mathrm{rk}(G)$, compare with \cite[Corollary 5.8]{FHLR22} to see that every simple affine reflection appears in the translated $\mu$-admissible locus. Again, this description also holds for the equicharacteristic counterparts, so we see that we can descend the equivariant isomorphism previously constructed for $\mu_\bullet$ to a diagram of the form 
	\begin{equation}
	\label{stein diagram orsomthmn}
	\begin{tikzcd}
		\calA_{\calI,\bar k,\mu_\bullet} \arrow{r} \arrow{d}{\simeq}  & \calA^{\on{st}}_{\calI,\bar k, \mu}  \arrow{d}{h,\simeq} \ar{r}{f} &  \calA_{\calI,\bar k, \mu} \arrow[dotted]{d}{m,\simeq} \\
		\calA_{\calI',\bar{k},\mu'_\bullet} \arrow{r} & \calA_{{\calI'}, \bar{k},\mu'}^{\on{st}} \ar{r}{g} &  \calA_{{\calI'}, \bar{k},\mu'}
	\end{tikzcd}
	\end{equation}
where the left square is commutative, but at the moment we might not know if the right square is or not commutative.
Here $\calA^{\on{st}}_{\calI,\bar k, \mu}$ (respectively $\calA_{{\calI'}, \bar{k},\mu'}^{\on{st}}$) denotes the Stein factorization of the proper surjection
		\begin{equation}
			\label{stein map}
			\calA_{\calI,\bar k, \mu_\bullet} \to \calA_{\calI,\bar k, \mu} \text{ (respectively } \calA_{\calI', \bar{k}, \mu'_\bullet} \to \calA_{\calI', \bar{k}, \mu'}\text{).}
		\end{equation}
	It turns out that the geometric fibers of \eqref{stein map} are already geometrically connected by an application of the unibranchness of $\calM_{\calI,\mu}$ proved in \cite[Theorem 1.3]{GL24} (which holds even for non-minuscule $\mu$, so it does not logically depend on the representability handled in this section) and an argument as in \Cref{prop_connected_tubes_implies_normality}. 
	This would show that $f$ and $g$ are isomorphisms. Then, $\calI_{\bar k}$-equivariance and the uniqueness proved in \Cref{prop_trivial_equiv_autos} shows that the right hand square of \eqref{stein diagram orsomthmn} is commutative. 

	Let us also explain a variant of this argument that does not rely on \cite{GL24}.
		In the equicharecteristic setting, we can lift the convolution map to a morphism of normal $O_C$-schemes 
		\begin{equation}\calN_{\underline{\calI},O_C, \mu_\bullet}^{\mathrm{sch}} \to \calN_{\underline{\calI},O_C, \mu}^{\mathrm{sch}}
		\end{equation} and apply Zariski's connectedness theorem to deduce that its fibers are geometrically connected. This implies the corresponding result on the special fibers, so $g:\calA^{\on{st}}_{\calI', \bar{k}, \mu'_\bullet}\to \calA_{\calI', \bar{k}, \mu'}$ is an isomorphism. 
		This gives rise to a new equivariant surjection $ \calA_{\calI', \bar k,\mu'}\to  \calA_{\calI, \bar k,\mu}$, namely $f\circ h^{-1}\circ g^{-1}$. We may compose it with $m$ in \eqref{stein diagram orsomthmn} (i.e. the equivariant isomorphism $\calA_{\calI, \bar k,\mu} \cong  \calA_{\calI',\bar k, \mu'}$ of \Cref{lem_comparison_sch_vars_equi_and_mixed}) and the result, $m\circ f\circ h^{-1}\circ g^{-1}$ is a finite equivariant cover of $\calA_{\calI',\bar k, \mu'}$ by itself. This has to be an isomorphism on the open $\calI_{\bar k}$-orbits since $f$ is easily seen to be birational, and hence the uniqueness proved in \Cref{prop_trivial_equiv_autos} shows that it is the identity on an dense open subset. Since $ \calA_{\calI',\bar k, \mu'}$ is separated, the diagram \eqref{stein diagram orsomthmn} is commutative, and the desired functoriality in $\mu_\bullet$ has been shown.
	\end{proof}
	
	The last comparison involves the sub-v-sheaf from \Cref{def_open_semi_homog_fixers}.
	
	\begin{proposition}\label{prop_comparison_semi_orbit}
		There are unique equivariant isomorphisms
		\begin{equation}
		(\calN_{\calI,O_C, \mu_i}^{\on{sch,\circ,tor}})^\diamondsuit \simeq \calM_{\calI, O_C, \mu_i}^{\circ,\on{tor}}
		\end{equation}
		for each term $\mu_i$ of the sequence $\mu_\bullet$. They yield canonical equivariant isomorphisms
		\begin{equation}
		(\calN_{\calI,O_C,\mu_\bullet}^{\on{sch},\circ})^\diamondsuit \simeq  \calM_{\calI,O_C,\mu_\bullet}^{\circ},
		\end{equation}
		compatibly with those of \Cref{lem_comparison_generic_fiber} and \Cref{prop_comparison_special_fiber} in the obvious sense.
	\end{proposition}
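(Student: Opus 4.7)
The plan is to apply the torsor-classification machinery of \Cref{sec:convolution-2-extensions-of-given-generic-left-h-right-a-torsor}. First I would note that both v-sheaves are canonically left $\calI_{O_C}^\diamondsuit$-equivariant right $\calI_{\mathrm{ad},O_C}^{>i,\diamondsuit}$-torsors over a common base: by \Cref{def_open_semi_homog_fixers} the v-sheaf semi-orbit $\calM_{\calI,O_C,\mu_i}^\circ$ is the $\calI_{O_C}^\diamondsuit$-orbit of the sections $[\la]$ for $\la\in W_0\cdot\mu_i$, and an identical description holds for $(\calN_{\underline\calI,O_C,\mu_i}^{\on{sch},\circ})^\diamondsuit$ because the schematic model contains the same rational-conjugate sections by properness, and the two $\calI_{O_C}^\diamondsuit$-actions are intertwined on generic fibers by \Cref{lem_comparison_generic_fiber}. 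After identifying the bases in this way, each torsor is classified, by the discussion following \Cref{sec:convolution-2-extensions-of-given-generic-left-h-right-a-torsor}, by a collection of classes in $G_{\mathrm{ad},C}^{>i}(C)/\calI_{\mathrm{ad},O_C}^{>i}(O_C)$ indexed by the rational conjugates $\la$, together with a specified generic-fiber trivialization.

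The generic fibers of both torsors are canonically isomorphic by \Cref{lem_comparison_generic_fiber}, so it suffices to match the invariants. The v-sheaf invariant is computed in \Cref{prop_torsor_local_model_different} to be the class $\delta_G(\la)$. The goal is to show the same class arises on the schematic side; then uniqueness in \Cref{sec:convolution-2-extensions-of-given-generic-left-h-right-a-torsor} concludes the individual case. To run the schematic calculation, I would produce an $O_C$-valued section of $\calN_{\underline\calI,O_C,\mu_i}^{\on{sch},\circ,\on{tor}}$ lying over $[\la]$ along the same pattern as in \Cref{prop_torsor_local_model_different}, using the schematic loop group of $\calT'=\Res_{O_E/O}\bbG_m$: the element $t-\pi_E\in O_E\pot{t-\pi}$ becomes a unit in $O_C\rpot{t-\pi}\otimes_O O_E$ after inverting $t-\pi$, because its norm is the minimal Eisenstein polynomial of $\pi_E$ evaluated at $t$, which generates the same principal ideal as $t-\pi$ by the exact same divisibility argument as for $\xi_F$ and $\xi_E$. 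Comparing the resulting section $y^{\mathrm{sch}}$ with $\widetilde{\la}=\la(t-\pi)$ after the decomposition $O_C\rpot{t-\pi}\otimes_O O_E\cong \prod_\sigma O_C\rpot{t-\pi}$ reproduces the product $\prod_{\sigma\neq 1}\la^\sigma(\pi_E^\sigma-\pi_E)$, that is, $\delta_G(\la)$.

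For the convolution assertion, I would iterate: both $\calM_{\calI,O_C,\mu_\bullet}^\circ$ and $(\calN_{\underline\calI,O_C,\mu_\bullet}^{\on{sch},\circ})^\diamondsuit$ are built as contracted products under the smooth affine groups $\calI_{\mathrm{ad},O_C}^{>i,\diamondsuit}$ using the individual torsors just identified, so their comparison is forced factor by factor. Compatibility of the convolution isomorphism with the generic and special fiber isomorphisms of \Cref{lem_comparison_generic_fiber} and \Cref{prop_comparison_special_fiber} follows from the uniqueness clause of \Cref{sec:convolution-2-extensions-of-given-generic-left-h-right-a-torsor} together with the fact that the generic and special fiber comparisons are themselves the restrictions to $\Spec C$ and $\Spec\bar k$ of our isomorphism (both constructions commute with these base changes). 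The main obstacle I anticipate is making the schematic analogue of the norm/unit computation in \Cref{prop_torsor_local_model_different} rigorous against the bookkeeping of the lift $O\pot{t}\to O_0\pot{u}$ used in the definition of $\underline{\calI}$; nevertheless the algebraic backbone of the argument is the same Eisenstein factorization of $\pi_E$ over $F$, so the invariants agree on the nose.
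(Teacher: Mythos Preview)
Your plan is correct and follows the same route as the paper: identify the generic fibers via \Cref{lem_comparison_generic_fiber}, apply the torsor-classification of \Cref{sec:convolution-2-extensions-of-given-generic-left-h-right-a-torsor} to reduce to matching the invariant classes with the $\delta_G(\lambda)$ of \Cref{prop_torsor_local_model_different}, and deduce compatibility with \Cref{prop_comparison_special_fiber} from uniqueness of equivariant automorphisms.

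The one point where your sketch drifts from what is actually needed is the ring in which the schematic computation takes place. The schematic loop group of $\underline{\calT}$ is not built from $O_E\pot{t-\pi}$ but from the lift $O\pot{t}\to O\pot{u}$ (with $K_0/F$ unramified absorbed), so there is no element ``$t-\pi_E$'' to work with. After refining so that each $\mu_i$ is concentrated in a single Dynkin component, the reflex field identifies with $K$ and the relevant element is $u-\sigma\pi_K\in O_C\pot{u}$ for each $\sigma\in\Gal_F/\Gal_K$. Its norm down to $O_C\pot{t}$ is the Eisenstein--Teichm\"uller polynomial evaluated at $\sigma\pi_K$, and one checks that this equals $z_t=t-\pi$ times a unit by computing $P(\pi)=0$ and $P'(\pi)\in O_C^\times$ (the separability of the chosen lift is used here) and Taylor-expanding. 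The factorwise analysis over $C\pot{u}[z_t^{-1}]\cong\prod_\tau C\pot{z_t}$ then matches your description exactly and yields $\delta_G(\lambda)$. You correctly anticipated this bookkeeping with the $O\pot{t}\to O_0\pot{u}$ lift as the main obstacle; once the variable is corrected from $t-\pi_E$ to $u-\sigma\pi_K$, your argument and the paper's coincide.
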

	
	\begin{proof}
		We have already identified the generic fibers of these v-sheaves, see \Cref{lem_comparison_generic_fiber}. By \Cref{sec:convolution-2-extensions-of-given-generic-left-h-right-a-torsor}, we reduce to calculating $\Spd O_C$-valued points of the left side torsor and compare their different (as in \Cref{different-definition}) to that of \Cref{prop_torsor_local_model_different}. The resulting isomorphism will then reduce to the expected isomorphisms over $\Spd k$ obtained in \Cref{prop_comparison_special_fiber}, by uniqueness of equivariant automorphisms.
		
		Now, we repeat the same calculation of \Cref{prop_torsor_local_model_different}, that goes back to Zhu \cite{Zhu14}, see \cite[Proposition 4.2.8]{Lev16}. 
		Here, we work with the power series loop group $L_{O_C}^{\on{sch}}\underline{\calT}$ in the setting of \cite[Section~2.2]{FHLR22}. 
		After refining $\mu_\bullet$, we may and do assume that each term $\mu_i\in \mu_\bullet$ is concentrated in a single component of the Dynkin diagram of $G$.
		There is a natural map
		\begin{equation}
		\Res_{O_0\pot{u}/O_0\pot{t}} \bbG_m \to \underline{\calT_{O_0}}
		\end{equation} induced by $\la_i$ via taking the norm of restriction of scalars. Hence, we only need to compute the corresponding point in $\Res_{O_0\pot{u}/O_0\pot{t}} \bbG_m$. Note that here $O_0\pot{u} $ is a finite $O_0\pot{t} $-algebra, where $u $ satisfies an Eisenstein--Teichmüller type polynomial
		\begin{equation}
		u^n+a_1(t)u^{n-1}+\dots +a_n(t)=0
		\end{equation} in $t$ based on some fixed choices of uniformizers $\pi_K$ for $K$ and $\pi$ for $F$, see \cite[beginning of Section 2.2]{FHLR22}. 
		Now, we claim for any $\sigma \in \Gal_F$, the element $\sig z_u=u-\sig\pi_K$ is a unit in $O_C\pot{u}[z_t^{-1}]$ with $z_t=t-\pi$. Notice that its norm in $O_C\pot{t}$ equals
		\begin{equation}
		P(t)=\sig\pi_K^n+a_1(t)\sig\pi_K^{n-1}+\dots +a_n(t)
		\end{equation}
		which is the product of $z_t$ with a unit of $O_C\pot{t}$. Indeed, we calculate the value $P(\pi)=0$, also of the first derivative $P'(\pi) \in O_C^\times$ and apply the Taylor series inside $C\pot{t}$. Finally, we notice that $\sig z_u$ reduces to the unit $\tau\pi_K-\sig\pi_K$ for all $\tau\neq \sig$ of $\Gal_F/\Gal_K$ in $C\pot{u}[z_t^{-1}]$, and to a prime element in the factor indexed by $\sigma$ due to norm considerations. 
		The desired claim that the pair of differents agree has been shown. 
		Consequently, the two torsors $(\calN_{\calI,O_C, \mu_i}^{\on{sch,\circ,tor}})^\diamondsuit$ and $\calM_{\calI, O_C, \mu_i}^{\circ,\on{tor}}$ agree under the identification $(\calN_{\calI,O_C, \mu_i}^{\on{sch,\circ}})^\diamondsuit\simeq \calM_{\calI, O_C, \mu_i}^{\circ}$ as we wanted to show.
	\end{proof}
	
	\subsection{The Scholze--Weinstein conjecture}
	
	In this subsection, we finally prove the Scholze--Weinstein conjecture, see \Cref{thm_local_model_representable} and \Cref{cor_special_fiber_local_model_reduced} below.

	We start by adressing the representability problem as in \cite[Conjecture IV.4.18]{Lou20}, which is one half of \cite[Conjecture 21.4.1]{SW20}. Recall that $F/\bbQ_p$ is a complete non-archimedean field with perfect residue field $k$, $G$ is an arbitrary (connected) reductive $F$-group, $\mu$ is a dominant coweight of $G_C$ and $\calG$ an arbitrary parahoric $O$-model of $G$.
	
	\begin{theorem}\label{thm_local_model_representable}
		Let $\mu$ be minuscule. Then, there is a unique (up to unique isomorphism) flat, projective and weakly normal $O_E$-model $\calM_{\calG,\mu}^{\on{sch}}$ of the $E$-scheme $\calF_{G,\mu}$ endowed with a $\calG_{O_E}$-action for which 
		\begin{equation} 
		\calM_{\calG,\mu}^{\on{sch},\diamondsuit}\cong \calM_{\calG,\mu},
		\end{equation}
		prolonging $\calF^{\diamondsuit}_{G,\mu} \cong \Gr_{G,\mu}$ equivariantly under $\calG^\diamondsuit_{O_E}$.
	\end{theorem}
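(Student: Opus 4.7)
This is immediate from \Cref{univalencia do functor diamante restrito a esquemas formais normais 2}: any two weakly normal, flat, projective $O_E$-models whose diamonds are equivariantly identified with $\calM_{\calG,\mu}$ compatibly with the generic fiber identification must themselves be uniquely isomorphic under the diamond functor.

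\textbf{Existence: reduction to Iwahori and Weil restriction.} Using \Cref{prop_LM_basic_facts}, the canonical isomorphism $\calM_{\calG,O_C,\mu}\cong \calM_{\calG_\ad,O_C,\mu_\ad}$ together with compatibility under direct products reduces the existence question to absolutely simple adjoint factors, each of which is of the form $G=\Res_{K/F}H$ with $H$ absolutely simple adjoint and pinned split over $K$. After dilating the parahoric to an Iwahori model $\calI \subset \calG$, we can produce the scheme $\calM_{\calI,\mu}^{\on{sch}}$ and later descend its $\calG/\calI$-equivariant structure. Thus we may and do work in the setting of \Cref{assumption_iwahori_res_split}.

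\textbf{Scheme-theoretic candidate.} Set $\calM_{\calI,\mu}^{\on{sch}} := \calN_{\underline{\calI},\mu}^{\on{sch}}$ from \Cref{subsec_comparison}, the absolute weak normalization of the flat closure of $\calF_{G,\mu}$ inside the schematic Beilinson--Drinfeld Grassmannian $\Gr_{\underline{\calI},O_E}^{\on{sch}}$ attached to the Breuil--Kisin lift $\underline{\calI}$ constructed via restriction of scalars from a split Chevalley dilatation. This is a flat, projective, weakly normal $O_E$-scheme by \cite[Theorem 4.10]{FHLR22}.

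\textbf{Diamond identification via the category $\calK$.} We aim to establish an isomorphism $(\calM_{\calI,\mu}^{\on{sch}})^{\diamond} \cong \calM_{\calI,\mu}$ of v-sheaves over $\Spd O_E$. By \Cref{LM_kimberlite} the right side belongs to the category $\calK$ of \Cref{defi category K}. For the left side, we invoke \Cref{plano is flat} to see that $(\calM_{\calI,\mu}^{\on{sch}})^{\diamond}$ is a flat $\pi$-adic kimberlite; the existence of enough $C$-facets follows from density of the smooth locus of the generic fiber, while $O_C$-formalizability of $C$-sections follows from properness and the valuative criterion; hence it also lies in $\calK$. By \Cref{prop_fully_faith_triples_kimberlites}, it now suffices to produce an isomorphism of specialization triples $(X_\eta, X^{\on{red}}, \on{sp}_X)$. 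The generic fibers match by \Cref{lem_comparison_generic_fiber}; the reduced special fibers match through \Cref{theorem_special_fiber_admissible} (giving $\calM_{\calI,\mu,\bar k}=\calA_{\calI,\mu}^{\diamondsuit}$) and \Cref{prop_comparison_special_fiber} (identifying the perfection of the special fiber of $\calM_{\calI,\mu}^{\on{sch}}$ with the same admissible locus). The matching of specialization maps is the crux and will be discussed below.

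\textbf{Matching specialization maps.} Both $\on{sp}_{(\calM_{\calI,\mu}^{\on{sch}})^{\diamond}}$ and $\on{sp}_{\calM_{\calI,\mu}}$ fit into functorial collections indexed by pairs $(\calG,\mu_\bullet)$ of parahoric models and minuscule convolution sequences, once the scheme-theoretic side is upgraded with convolution candidates $\calN_{\underline{\calI},\mu_\bullet}^{\on{sch}}$ (making consistent Breuil--Kisin choices as in \Cref{subsec_comparison}). On the semi-orbit loci $\calM^{\circ}_{\calI,\mu_\bullet}$, the two specialization maps agree by \Cref{prop_comparison_semi_orbit}. Applying \Cref{thm_specialization_local_models}, which characterizes any functorial collection of continuous specialization maps agreeing on $\calM^{\circ}_{\calG,\mu_\bullet}(\Spd O_C)$, we conclude that the two specialization maps coincide. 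Hence \Cref{prop_fully_faith_triples_kimberlites} yields the desired isomorphism of v-sheaves.

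\textbf{Descent and equivariance.} The $\calI_{O_E}^{\diamond}$-action on $\calM_{\calI,\mu}$ transports through the diamond equivalence to a (necessarily unique) $\calI_{O_E}$-action on the scheme $\calM_{\calI,\mu}^{\on{sch}}$ via \Cref{univalencia do functor diamante restrito a esquemas formais normais 2} applied to the action morphism. To obtain the $\calG_{O_E}$-equivariant model $\calM_{\calG,\mu}^{\on{sch}}$, we descend along the fpqc-quotient map $\calG\to \calG/\calI$ (representable by a proper, smooth $O$-scheme with geometrically connected fibers, cf.\ \cite[Proposition 2.8]{FHLR22}), using again the full faithfulness of the diamond functor to promote the descent datum from v-sheaves to schemes. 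Full uniqueness of the resulting scheme and its equivariant structure then follows once more from \Cref{univalencia do functor diamante restrito a esquemas formais normais 2}.

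\textbf{Main obstacle.} The delicate point is the matching of specialization maps in the fourth step: we cannot compare the two maps stalk-by-stalk because neither is explicitly computable. Our whole strategy hinges on bootstrapping via functoriality: only after incorporating convolution local models $\calM_{\calG,\mu_\bullet}$ and the Iwasawa-type decomposition of \Cref{lemma_integral_points_tiny_convolution_inside_circ} do we see that the behavior on the semi-orbits $\calM^{\circ}_{\calG,\mu_\bullet}$ forces unique determination, as codified in \Cref{thm_specialization_local_models}.
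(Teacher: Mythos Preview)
Your overall architecture matches the paper's: compare fibres via \Cref{lem_comparison_generic_fiber} and \Cref{prop_comparison_special_fiber}, identify specialization maps through \Cref{thm_specialization_local_models} and \Cref{prop_comparison_semi_orbit}, then invoke \Cref{prop_fully_faith_triples_kimberlites}. The uniqueness argument is also correct.

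There is, however, a genuine error in your reduction step. You assert that after passing to the adjoint group and decomposing into simple factors, each factor ``is of the form $G=\Res_{K/F}H$ with $H$ absolutely simple adjoint and pinned split over $K$''. This is false: an $F$-simple adjoint group is $\Res_{F'/F}G'$ with $G'$ absolutely simple over $F'$, but $G'$ need not be split (think of a projective unitary group, or any non-split quasi-split form). \Cref{assumption_iwahori_res_split} is therefore not a case to which the general problem reduces. The paper handles this by a different mechanism: after reducing to $G$ simple adjoint, it chooses a locally closed immersion $\calG\hookrightarrow\tilde{\calG}$ into a parahoric model of a Weil-restricted \emph{split} form (for which the result is already known), obtains $\calM_{\calG,\mu}\subset\calM_{\tilde{\calG},\tilde{\mu}}$ as a closed sub-v-sheaf, and then \emph{defines} $\calM_{\calG,\mu}^{\on{sch}}$ as the absolute weak normalization of the flat closure of $\calF_{G,\mu}$ inside the already-constructed scheme $\calM_{\tilde{\calG},\tilde{\mu}}^{\on{sch}}$.

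Your passage from Iwahori to a general parahoric is also not the paper's and is, as written, incomplete. You propose to ``descend along the fpqc-quotient map $\calG\to\calG/\calI$'', but the relevant map is the proper surjection $\calM_{\calI,\mu}\to\calM_{\calG,\mu}$, and producing its target as a scheme from the source requires knowing the equivalence relation $\calM_{\calI,\mu}\times_{\calM_{\calG,\mu}}\calM_{\calI,\mu}$ is schematic; full faithfulness of $(\str)^\diamondsuit$ alone does not give this. The paper instead uses the independently constructed scheme-theoretic cover $\calN_{\underline{\calI},\mu}^{\on{sch}}\to\calN_{\underline{\calG},\mu}^{\on{sch}}$ (still within the Weil-restricted split setting, where $\underline{\calG}$ exists) and checks that the two equivalence relations coincide after passing to perfections in the special fibre, via the same Stein-factorization and Picard arguments underlying \Cref{prop_comparison_special_fiber}.
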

	
	\begin{proof}
		First of all, let us work under \Cref{assumption_iwahori_res_split}.
		We know that the geometric fibers of $\calN_{\underline{\calI},O_C, \mu}^{\on{sch},\diamondsuit}$ and $\calM_{\calI,O_C,\mu}$ are uniquely equivariantly isomorphic by \Cref{lem_comparison_generic_fiber}, \Cref{prop_comparison_special_fiber}. By uniqueness, this commutes with the Galois action, so it descends to the fibers over $\Spd O_E$.
		
		Furthermore, thanks also to \Cref{prop_comparison_semi_orbit}, \Cref{thm_specialization_local_models}, and \Cref{rem_sp_LM}, we know that the specialization maps
		\begin{equation}
		\on{sp}\colon \calF_{G, \mu}(C) \rightarrow \calA_{\calI,\mu}(\bar k),
		\end{equation} arising respectively from the $\pi$-adic kimberlite $\calM_{\calI, O_C,\mu}$ and $\calN^{\on{sch},\diamondsuit}_{\underline{\calI}, O_C, \mu}$ must coincide. By continuity for the constructible topology, we obtain an equivariant isomorphism of specialization triples:
		\begin{equation}
		\big(\calN^{\on{sch},\diamondsuit}_{\underline{\calI},E, \mu}, \calN^{\on{sch},\diamondsuit}_{\underline{\calI},\mu, k_E}, \on{sp}_{\breve{\calN}^{\on{sch}}_{\underline{\calI}, \mu}}\big) \cong\big(\calM_{\calI, E,\mu}, \calM_{\calI, k_E, \mu}, \on{sp}_{\breve{\calM}_{\calI, \mu}}\big)
		\end{equation}
		associated with v-sheaves over $\Spd O_E$. Observing that both v-sheaves satisfy the hypothesis of \Cref{prop_fully_faith_triples_kimberlites}, we may directly appeal to it in order to get a necessarily equivariant isomorphism
		\begin{equation}
		\calN^{\on{sch},\diamondsuit}_{\underline{\calI}, \mu} \cong \calM_{\calI, \mu}.
		\end{equation}
		
		Now maintain the part of \Cref{assumption_iwahori_res_split} that refers to $G$, but suppose $\calG$ is now an arbitrary parahoric model admitting a map $\calI \to \calG$.
		We get a v-cover
		\begin{equation}
		\calM_{\calI, \mu} \to \calM_{\calG, \mu}
		\end{equation}
		and, parallelly, a scheme-theoretic projective cover
		\begin{equation}\label{eq_map_alg_loc_models_iwahori_to_parahoric}
		\calN_{\underline{\calI}, \mu}^{\on{sch}} \to \calN_{\underline{\calG}, \mu}^{\on{sch}}
		\end{equation}
		by virtue of \cite[Section~5.3]{FHLR22}.
		We observe that any identification $\calM_{\calG,\mu}\simeq  (\calN_{\underline{\calG}, \mu}^{\on{sch}})^\diamondsuit$ extending the $G_E^\diamondsuit$-equivariant identification of generic fibers is automatically $\calG^\diamondsuit_{O_E}$-equivariant, since $G_E^\diamondsuit$ is dense in $\calG_{O_E}^\diamondsuit$.
		Therefore, it is enough to verify that the v-sheaf-theoretic equivalence relations coincide along the left side identification.
		In other words, if by abuse of notation we let $X$ denote $\calM_{\calI, \mu}$ and $\calN_{\underline{\calI}, \mu}^{\on{sch}}$ at the same time, then the surjections $X\to (\calN_{\underline{\calG}, \mu}^{\on{sch}})^\diamondsuit$ and $X\to \calM_{\calG, \mu}$ determine closed subsheaves
		\[R_{(\calN_{\underline{\calG}, \mu}^{\on{sch}})^\diamondsuit}\subseteq X\times X\supseteq  R_{\calM_{\calG, \mu}}.\]
		Whether these subsheaves agree or not can be verified on geometric points, and since it clearly holds in the generic fiber, it suffices to show that they agree on the special fiber.
		This reduces us to compare the surjection

		\begin{equation}
		\calA_{\calI,\mu} \to \calA_{\calG, \mu}
		\end{equation}
		to the one obtained in the equicharacteristic situation. 
		Exploiting $\calI_k^{\on{perf}}$-equivariance we may proceed analogously to what was done in \Cref{lem_comparison_sch_vars_equi_and_mixed} and \Cref{prop_comparison_special_fiber}, so we omit it.
		
		Finally, suppose that $G$ is arbitrary. Thanks to \Cref{prop_LM_basic_facts}, $\calM_{\calG,\mu} $ is isomorphic to  $\calM_{\calG_\ad,\mu_\ad} $ after base change to $\Spd O_E$, and decomposes into products, hence we may assume $G$ is simple and adjoint. 
		We can find a locally closed immersion
		\begin{equation}
			\label{closed-immersion-of-groups}
		\calG \to \tilde{\calG},
		\end{equation}
		where $\tilde{\calG}$ is a parahoric model of a Weil-restricted split form of $G$, arguing as in the proof of \Cref{thm_specialization_local_models}. But the case of $\tilde{\calG}$ was treated in the previous paragraph.
		Since we have an inclusion $\calM_{\calG,\mu} \subset \calM_{\tilde{\calG},\tilde{\mu}}$, it now suffices to take the absolute weak normalization of the flat closure of $\calF_{G,\mu}$ inside the scheme-theoretic local model attached to $(\tilde{\calG},\tilde{\mu})$.
	\end{proof}
	\begin{remark}
		Let us explain how representability can be proved for classical groups without resorting to the characterization of the specialization map found in \Cref{thm_specialization_local_models}. Indeed, for those groups we can directly understand the v-sheaves $\calM^{\on{tor}}_{\calG,\mu_i}$ by embedding them in a similar torsor attached to Weil-restricted $\on{PGL}_n$. Those had been studied already by Pappas--Rapoport, see \cite[Proposition 5.2]{PR05}, and a careful analysis of the map in \cite[Proposition 21.6.9]{SW20} reveals that all proposed definitions coincide. The result follows by v-descent.
	\end{remark}

	We have found certain finite type $O_E$-schemes $\calM_{\calG, \mu}^{\mathrm{sch}}$ representing $\calM_{\calG,\mu}$, but we still do not know a lot about the geometry of its special fiber, see the discussion after \Cref{SW-conjecture}.
	We recall the canonical deperfection $\calA_{\calG,\mu}^{\on{can}}$ of the $\mu$-admissible locus introduced in \Cref{admissible_locus_definition} and \Cref{canonical_deperfection_definition}.
	
	\begin{theorem}\label{cor_special_fiber_local_model_reduced}
		Under \Cref{hyp_wild_odd_unitary} and \Cref{hyp_wild_triality}, the special fiber of the $O_E$-scheme $\calM_{\calG,\mu}^{\on{sch}}$ of \Cref{thm_local_model_representable} is uniquely $\calG_{k_E}$-equivariantly isomorphic to the canonical deperfection of the $\mu$-admissible locus:
		\begin{equation}
		\calM_{\calG,\mu}^{\on{sch}}|_{\Spec\, k_E} \cong \calA_{\calG,\mu}^{\on{can}}
		\end{equation} 
		In particular, $\calM_{\calG, \mu}^{\on{sch}}$ is normal, Cohen--Macaulay and has a reduced, weakly normal, Frobenius split special fiber.
	\end{theorem}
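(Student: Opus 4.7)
The plan is to leverage the comparison with the Breuil--Kisin-type schemes $\calN_{\underline{\calG},\mu}^{\on{sch}}$ together with the equicharacteristic identification of \Cref{lem_comparison_sch_vars_equi_and_mixed}, so the argument essentially amounts to patching together several already-established pieces under the hypotheses in force. The existence statement of \Cref{thm_local_model_representable} produces the weakly normal projective $O_E$-scheme $\calM_{\calG,\mu}^{\on{sch}}$, and the remaining task is to identify its special fiber.

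First, I would establish the scheme-theoretic comparison $\calM_{\calG,\mu}^{\on{sch}}|_{\Spec O_{\breve E}} \cong \calN_{\underline{\calG},\mu}^{\on{sch}}|_{\Spec O_{\breve E}}$, anticipated in the statement of \Cref{SW_conjecture_detailed_intro}. Under \Cref{hyp_wild_odd_unitary}, smooth affine group scheme lifts $\underline{\calG}$ over $\breve{O}\pot{t}$ exist thanks to \cite{FHLR22}, and \Cref{hyp_wild_triality} guarantees that these lifts can be chosen functorially in $(\calG,\mu_\bullet)$ enough to support the convolution formalism of \Cref{sec:convolution}. Repeating the argument in the proof of \Cref{thm_local_model_representable}, namely comparing specialization triples via \Cref{prop_fully_faith_triples_kimberlites} with the uniqueness of the specialization map established in \Cref{thm_specialization_local_models}, now applied to the diamondifications $(\calN_{\underline{\calG},\mu_\bullet}^{\on{sch}})^\diamondsuit$, yields a canonical $\calG_{O_{\breve E}}$-equivariant isomorphism inducing the identity on generic fibers. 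Passing to the special fiber and using the equicharacteristic identification from \Cref{lem_comparison_sch_vars_equi_and_mixed}, we obtain a $\calG_{\bar k}$-equivariant isomorphism $\calM_{\calG,\mu}^{\on{sch}}|_{\Spec \bar\bbF_p} \cong \calA_{\calG,\mu}^{\on{can}}|_{\Spec \bar\bbF_p}$.

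Next, I would descend this isomorphism from $\bar\bbF_p$ to $k_E$. Both sides are defined over $k_E$ and carry compatible $\calG_{k_E}$-actions, and by \Cref{prop_trivial_equiv_autos} the group of equivariant automorphisms of each connected component of $\calA_{\calG,\mu}^{\on{can}}|_{\Spec \bar\bbF_p}$ is trivial. Hence the geometric isomorphism is Galois-equivariant for free and descends uniquely to the claimed $\calG_{k_E}$-equivariant isomorphism $\calM_{\calG,\mu}^{\on{sch}}|_{\Spec k_E} \cong \calA_{\calG,\mu}^{\on{can}}$. The uniqueness assertion in the theorem is part of this same application of \Cref{prop_trivial_equiv_autos}.

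Finally, the geometric properties are inherited formally. Normality and Cohen--Macaulayness are fpqc-local on the base, so the comparison with $\calN_{\underline{\calG},\mu}^{\on{sch}}|_{\Spec O_{\breve E}}$ and the normality and Cohen--Macaulayness of the latter \cite[Theorem 4.10]{FHLR22} transfer to $\calM_{\calG,\mu}^{\on{sch}}$. The reduced, weakly normal, Frobenius split nature of the special fiber is precisely the content of \Cref{theorem_coherence_allp} applied to $\calA_{\calG,\mu}^{\on{can}}$. The main subtlety in the whole argument is the control of the functoriality of the lifts $\underline{\calG}$, which is exactly what \Cref{hyp_wild_odd_unitary} and \Cref{hyp_wild_triality} are designed to ensure; once those are in place, the remainder is a formal combination of \Cref{thm_local_model_representable}, \Cref{lem_comparison_sch_vars_equi_and_mixed}, \Cref{theorem_coherence_allp}, and \Cref{prop_trivial_equiv_autos}.
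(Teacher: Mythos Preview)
Your outline is sound and reaches the correct conclusion, but it takes a more circuitous route than the paper. You propose to re-run the specialization-triple comparison of \Cref{thm_local_model_representable} directly for a general parahoric $\calG$, relying on functorial Breuil--Kisin lifts $\underline{\calG}$. The paper instead exploits how $\calM_{\calG,\mu}^{\on{sch}}$ was actually built in the proof of \Cref{thm_local_model_representable}: for general $G$, it was defined as the absolute weak normalization of the flat closure of $\calF_{G,\mu}$ inside $\calN_{\underline{\tilde{\calG}},\tilde{\mu}}^{\on{sch}}$ for a Weil-restricted split form $\tilde{G}$. Under \Cref{hyp_wild_odd_unitary} the scheme $\calN_{\underline{\calG},\mu}^{\on{sch}}$ exists, and under \Cref{hyp_wild_triality} one has a closed immersion $\calN_{\underline{\calG},\mu}^{\on{sch}}\hookrightarrow \calN_{\underline{\tilde{\calG}},\tilde{\mu}}^{\on{sch}}$ by \cite[Corollary 2.10]{FHLR22}. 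Since both candidates are weakly normal with the same generic fiber inside the same ambient scheme, they coincide on the nose; then \cite[Theorem 4.10]{FHLR22} and \Cref{lem_comparison_sch_vars_equi_and_mixed} finish the job. This sidesteps the need to re-establish \Cref{prop_comparison_special_fiber} and \Cref{prop_comparison_semi_orbit} beyond \Cref{assumption_iwahori_res_split}, which your approach would require.

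A smaller point: you describe \Cref{hyp_wild_triality} as ensuring ``functoriality of the lifts $\underline{\calG}$''. The paper is more specific---it is used precisely to secure the closed embedding into the Weil-restricted split local model. Your descent step via \Cref{prop_trivial_equiv_autos} is correct and in fact makes explicit a point the paper leaves implicit.
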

	
	\begin{proof}
		During the proof of \Cref{thm_local_model_representable}, we already saw that the algebraic local models $\calM_{\calG, \mu}^{\on{sch}}$ are actually the $\calN_{\underline{\calG}, \mu}^{\on{sch}}$ constructed in \cite[Definition~5.11]{FHLR22} by a variation on the techniques of Pappas--Zhu \cite{PZ13}, Levin \cite{Lev16} and also the third author \cite{Lou19, Lou20}. For this, we may pass to a finite unramified extension of $F$, so $G$ is quasi-split and residually split, so that $\calN_{\underline{\calG}, \mu}^{\on{sch}}$ is defined (under \Cref{hyp_wild_odd_unitary}). Then, it embeds in a local model associated with a Weil-restricted split group, confer \cite[Section~5.3.2]{FHLR22} (this is where \Cref{hyp_wild_triality} is used). 
		We conclude under the given hypothesis that the $\calM_{\calG, \mu}^{\on{sch}}$ are indeed normal, Cohen--Macaulay and have a Frobenius split special fiber by \cite[Theorem~5.14]{FHLR22}. Indeed, the special fiber of $\calN_{\underline{\calG}, \mu}^{\on{sch}}$ is reduced equal to an admissible locus $\calA_{\calG', \mu'}^{\on{can}}$ in the equicharacteristic setting, which equivariantly identifies with $\calA_{\calG,\mu}^{\on{can}}$ by \Cref{lem_comparison_sch_vars_equi_and_mixed}.
	\end{proof}
	
	\begin{remark}\label{last_cases_remark}
		More generally, \cite[Corollary~1.4]{GL24} proves \Cref{cor_special_fiber_local_model_reduced} without \Cref{hyp_wild_triality} but still assuming \Cref{hyp_wild_odd_unitary}.
		Invoking \cite{Lou19,Lou20} we get \Cref{cor_special_fiber_local_model_reduced} except if $p=2$ and $G_\ad$ has an odd unitary $\breve F$-factor defined by a ramified, quadratic root-of-unit extension.
		The remaining case is handled in \cite{CL24}, except for the assertion on Cohen--Macaulayness, compare with \Cref{remark_cass-lou}.
	\end{remark}
	
	To conclude, let us only use \Cref{thm_local_model_representable} --and not resort to the construction of local models in \cite{FHLR22}-- in order to study the geometry of the special fiber of $\calM_{\calG,\mu}^{\on{sch}}$.
	
	
	First of all, we know that the perfection of $\calM_{\calG, k_E, \mu}^{\on{sch}}$ equals $\calA_{\calG, \mu}$ by \Cref{theorem_special_fiber_admissible} and fully faithfulness of $\diamondsuit$ on perfect schemes, see \cite[Proposition 18.3.1]{SW20}. 
	By the weak normality property and \Cref{lem_fixer_lambda_scheme_smooth}, we conclude that $\calM_{\calG, \mu}^{\on{sch}}$ admits a smooth open subscheme $\calM_{\calG, \mu}^{\on{sch}, \circ}$ descending
	\begin{equation}
	\calM_{\calG, O_{\breve E}, \mu}^{\on{sch},\circ} =\bigcup_\la \calG_{O_{\breve E}}/\calP_\la^-, 
	\end{equation} 
	compare with the argument in \cite[Corollary 2.14]{Ric16}. 
	It follows that we have a natural morphism
	\begin{equation}\label{eq_mor_can_admissible_to_special_fiber}
	\calA_{\calG,\mu}^{\on{can}}\rightarrow\calM_{\calG, k_E, \mu}^{\on{sch}} .
	\end{equation}
	The following conjecture is then the full $p$-adic coherence conjecture:
	
	\begin{conjecture}\label{conjecture_special_fiber}
		The map \eqref{eq_mor_can_admissible_to_special_fiber} is always an isomorphism.
	\end{conjecture}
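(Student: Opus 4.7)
My proof proposal is to deduce the conjecture from a mixed-characteristic refinement of the coherence theorem \Cref{theorem_coherence_allp}. The map
\begin{equation}
f\co \calA_{\calG,\mu}^{\on{can}} \r \calM_{\calG, \mu, k_E}^{\on{sch}}
\end{equation}
is proper of finite type, and by \Cref{theorem_special_fiber_admissible} combined with full faithfulness of $\diamondsuit$ on perfect schemes \cite[Proposition 18.3.1]{SW20}, it induces an isomorphism after perfection. Hence $f$ is a finite universal homeomorphism inducing isomorphisms on residue fields, and it restricts to an isomorphism over the dense smooth locus obtained from $\calM_{\calG, \mu}^{\on{sch},\circ}$. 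Since $\calA_{\calG,\mu}^{\on{can}}$ is weakly normal by construction, proving that $f$ is an isomorphism amounts to showing that the natural adjunction $\calO_{\calM_{\calG, \mu, k_E}^{\on{sch}}} \r f_*\calO_{\calA_{\calG,\mu}^{\on{can}}}$ is an isomorphism; by finiteness of $f$ this in turn boils down to showing that $\calM_{\calG, \mu, k_E}^{\on{sch}}$ is reduced, the weak normality then being inherited from $\calM_{\calG,\mu}^{\on{sch}}$ and flatness.

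The key step is to establish the $p$-adic coherence identity: for every ample line bundle $\calL$ on $\calM_{\calG,\mu}^{\on{sch}}$ and every $n \gg 0$, the equality
\begin{equation}
\dim_{k_E} H^0(\calM_{\calG, \mu, k_E}^{\on{sch}}, \calL^{\otimes n}) = \dim_{E} H^0(\calF_{G,\mu}, \calL^{\otimes n}|_E)
\end{equation}
holds. By flatness of $\calM_{\calG,\mu}^{\on{sch}}$ over $O_E$, one always has the corresponding equality for Euler characteristics, so the content is the vanishing $H^i(\calM_{\calG, \mu, k_E}^{\on{sch}}, \calL^{\otimes n}) = 0$ for $i \geq 1$ and $n \gg 0$. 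Combined with the equicharacteristic case of \Cref{theorem_coherence_allp} (which is unconditional in that setting via \cite{FHLR22}), this would force the injection $H^0(f)\co H^0(\calM_{\calG, \mu, k_E}^{\on{sch}}, \calL^{\otimes n}) \hookto H^0(\calA_{\calG,\mu}^{\on{can}}, f^\ast \calL^{\otimes n})$ to be a bijection, hence by ampleness the adjunction $\calO \r f_*\calO$ is an isomorphism, and therefore so is $f$.

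To prove the vanishing, I would construct a convolution Bott--Samelson-type resolution of $\calM_{\calG, \mu}^{\on{sch}}$. Fix a sequence of tiny coweights $\mu_\bullet$ with $|\mu_\bullet| = \mu$, and appeal to \Cref{thm_local_model_representable} iteratively to obtain a representable schematic convolution product $\calM_{\calG, \mu_\bullet}^{\on{sch}}$ together with a proper $O_E$-morphism $\pi\co \calM_{\calG, \mu_\bullet}^{\on{sch}} \to \calM_{\calG,\mu}^{\on{sch}}$ which on generic fibers recovers the classical Bott--Samelson map onto $\calF_{G,\mu}$. Each factor is generically a fibration in minuscule flag varieties of simple factors, whose higher cohomology vanishes, so $R\pi_*\calO = \calO$ generically. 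Combining Leray for $\pi$ with the equicharacteristic coherence vanishing on the special fiber of the resolution (via \Cref{lem_comparison_sch_vars_equi_and_mixed}, which applies to each convolution factor since tiny-coweight Schubert strata have depth zero), one obtains the desired mixed-characteristic vanishing on $\calM_{\calG,\mu,k_E}^{\on{sch}}$.

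The hard part will be carrying out this plan in the excluded wild cases (\Cref{hyp_wild_odd_unitary} for $p=2$ and \Cref{hyp_wild_triality} for $p=3$), where functorial Breuil--Kisin lifts of $\calG$ are unavailable. There, \Cref{lem_comparison_sch_vars_equi_and_mixed} does not directly apply to compare the integral Bott--Samelson factors with their equicharacteristic analogues, and one must instead work at the level of v-sheaves, transferring coherent-cohomological information through the representability statement of \Cref{thm_local_model_representable}. This would likely require a systematic theory of coherent cohomology on $p$-adic kimberlites compatible with taking schematic models, together with a treatment of pseudo-rational singularities in this v-sheaf-theoretic framework.
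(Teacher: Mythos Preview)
This statement is an open \emph{conjecture} in the paper, not a proved theorem. The paper does not give a proof; it only sketches a strategy in the paragraph following the conjecture: first remove \Cref{hyp_wild_odd_unitary} from \Cref{theorem_coherence_allp} (a purely special-fiber statement about $\calA_{\calG,\mu}^{\on{can}}$), and then show that \eqref{eq_mor_can_admissible_to_special_fiber} is a closed immersion, which would remove \Cref{hyp_wild_triality}. Both steps live entirely in characteristic $p$. The lemma immediately after the conjecture also records that once reducedness of $\calM_{\calG,\mu,k_E}^{\on{sch}}$ is known, the dimension formula of \Cref{theorem_coherence_allp} (under \Cref{hyp_wild_odd_unitary}) finishes the job.

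Your proposal has a genuine gap at the step where you assert that
\[
H^0(f)\colon H^0(\calM_{\calG,\mu,k_E}^{\on{sch}},\calL^{\otimes n}) \longrightarrow H^0(\calA_{\calG,\mu}^{\on{can}}, f^\ast \calL^{\otimes n})
\]
is an injection. This map is induced by $\calO_{\calM}\to f_\ast\calO_{\calA}$, whose kernel is exactly the nilradical of $\calO_{\calM}$ (since $\calA_{\calG,\mu}^{\on{can}}$ is reduced and $f$ is a universal homeomorphism). If $\calM_{\calG,\mu,k_E}^{\on{sch}}$ has nilpotents, then for $n\gg 0$ the kernel $H^0(\calM,\calN\otimes\calL^{\otimes n})$ is nonzero by ampleness, and the dimension equality you establish only says that the dimensions of kernel and cokernel agree, not that both vanish. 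So the dimension comparison cannot by itself force reducedness; this is precisely why the paper's lemma assumes reducedness as input (via connectedness of tubular neighborhoods) before invoking \Cref{theorem_coherence_allp}. Relatedly, your Bott--Samelson paragraph is aimed at a non-issue: Serre vanishing on the projective scheme $\calM_{\calG,\mu,k_E}^{\on{sch}}$ is automatic for $n\gg 0$, so the Euler-characteristic equality you want follows directly from flatness without any resolution.

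There is a second inaccuracy: you invoke ``the equicharacteristic case of \Cref{theorem_coherence_allp}, which is unconditional via \cite{FHLR22}''. It is not; the results of \cite{FHLR22} (and hence \Cref{theorem_coherence_allp} and \Cref{lem_comparison_sch_vars_equi_and_mixed}) are proved under \Cref{hyp_wild_odd_unitary}. Removing that hypothesis is exactly the first step the paper identifies as open. So even granting the rest of your outline, you would at best recover the conjecture under \Cref{hyp_wild_odd_unitary}, and in fact the paper already explains how to do that once reducedness is in hand.
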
 
	
	This conjecture is settled in \cite{CL24} in all cases, see \Cref{remark_cass-lou,last_cases_remark}.
	Indeed, the condition in \Cref{lemma-v-sheaf-reduced} below has been verified in \cite{GL24} after the first version of this paper was written. 
	So, we know by \cite[Corollary~1.4]{GL24} that $\calM^{\on{sch}}_{\calG, k_E,\mu}$ is always reduced.	
To show that \eqref{eq_mor_can_admissible_to_special_fiber} is an isomorphism, we are hence reduced to comparing Hilbert polynomials via the dimension formula in \Cref{theorem_coherence_allp}. This applies in all cases, since \Cref{hyp_wild_odd_unitary} was lifted in \cite{CL24}.

	\begin{lemma}\label{lemma-v-sheaf-reduced}
		Suppose $(\widehat{\calM_{\calG,O_C,\mu}}_{/\bar{x}})_\eta$ is connected for every $\bar{k}$-valued point $\bar{x}$ of $\calA_{\calG, \mu}$. Then $\calM^{\on{sch}}_{\calG, k_E,\mu}$ is geometrically reduced. Under \Cref{hyp_wild_odd_unitary}, \Cref{conjecture_special_fiber} holds.
	\end{lemma}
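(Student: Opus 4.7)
The plan is to apply the v-sheaf normality criterion \Cref{prop_connected_tubes_implies_normality} to upgrade the weak normality of $\calM^{\on{sch}}_{\calG,\mu}$ guaranteed by \Cref{thm_local_model_representable} to genuine normality, then deduce reducedness of the special fiber by Serre's criterion, and finally leverage the coherence \Cref{theorem_coherence_allp} together with flat-family cohomology comparison in order to identify $\calM^{\on{sch}}_{\calG,\mu,k_E}$ with $\calA_{\calG,\mu}^{\on{can}}$.

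For the first assertion, I would cover $\calM^{\on{sch}}_{\calG,\mu}$ by affine opens $\Spec(B)$ over $O_E$ and verify the hypotheses of \Cref{prop_connected_tubes_implies_normality} for the $\pi$-adic completion $\widehat{B}$: it is flat, weakly normal, and topologically of finite type by \Cref{thm_local_model_representable}, with generic fiber smooth (hence normal) because it is an open in the minuscule flag variety $\calF_{G,\mu,E}$. Base-changing to $O_C$ and using that the diamond $(\widehat{\Spd(\widehat{B}\widehat\otimes_{O_E} O_C)}_{/\bar x})_\eta$ agrees with $(\widehat{\calM_{\calG,O_C,\mu}}_{/\bar x})_\eta$ under the isomorphism $\calM^{\on{sch},\diamondsuit}_{\calG,\mu}\cong \calM_{\calG,\mu}$, the given hypothesis supplies the required connectedness. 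Hence $\widehat{B}\widehat\otimes_{O_E} O_C$ is normal, and normality descends to $\widehat{B}$ by faithfully flat descent along the Noetherian base extension $\widehat{O_E^{\on{nr}}}\to \widehat{O_C}$ (via an intermediate step over $O_{\breve E}$) and then to $B$ by excellence of $O_E$. Consequently $\calM^{\on{sch}}_{\calG,\mu}$ is normal, and since it is flat over the DVR $O_E$ with generic fiber smooth, a standard application of Serre's criterion ($R_1$ gives DVRs at primes minimal over $(\pi)$, and $S_2$ together with $\pi$-regularity gives $S_1$ on the fiber) yields that the special fiber is geometrically reduced.

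For the second assertion, assume \Cref{hyp_wild_odd_unitary}, and consider $f\colon \calA_{\calG,\mu}^{\on{can}}\to \calM^{\on{sch}}_{\calG,\mu,k_E}$ from \eqref{eq_mor_can_admissible_to_special_fiber}. By \Cref{theorem_special_fiber_admissible}, both sides share the same perfection $\calA_{\calG,\mu}$, so $f$ is a finite universal homeomorphism; moreover both schemes are reduced (source by the Frobenius splitting in \Cref{theorem_coherence_allp}, target by the first part just proven). Considering the short exact sequence
\begin{equation}
0\to \calO_{\calM^{\on{sch}}_{\calG,\mu,k_E}}\to f_*\calO_{\calA_{\calG,\mu}^{\on{can}}}\to Q\to 0,
\end{equation}
it suffices to show $Q=0$. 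Pick an $L^+_{O_E}\calG$-equivariant ample $\calL$ on $\calM^{\on{sch}}_{\calG,\mu}$ and, for $n\gg 0$, combine upper semicontinuity on the flat family with vanishing of higher cohomology of $\calL^n$ on the characteristic-zero flag variety $\calF_{G,E,\mu}$ to obtain
\begin{equation}
\dim_{k_E}\! H^0(\calM^{\on{sch}}_{\calG,\mu,k_E},\calL^n)=\dim_E\! H^0(\calF_{G,E,\mu},\calL^n).
\end{equation}
On the other hand, \Cref{theorem_coherence_allp} gives
\begin{equation}
\dim_{\bar k}\! H^0(\calA_{\calG,\bar k,\mu}^{\on{can}},f^*\calL^n)=\dim_C\! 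H^0(\calF_{G,C,\mu},\calO(c_{f^*\calL^n})),
\end{equation}
and the central charge $c_{f^*\calL^n}$ coincides with that of $\calL^n_E$ on $\calF_{G,E,\mu}$ since it is computed from degrees along the one-dimensional Iwahori--Schubert curves, on which $f$ restricts to an isomorphism by \Cref{prop_trivial_equiv_autos}. Thus the two dimensions agree, and together with the injection $H^0(\calM^{\on{sch}}_{\calG,\mu,k_E},\calL^n)\hookrightarrow H^0(\calA_{\calG,\mu}^{\on{can}},f^*\calL^n)$ coming from reducedness of the target and surjectivity of $f$, we conclude equality. The long exact sequence forces $H^0(Q\otimes \calL^n)=0$ for all $n\gg 0$, whence $Q=0$ by ampleness.

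The main obstacle is the second part: making sure the comparison of global sections is genuinely tight. In particular, I expect the delicate point to be verifying that the central charge of $f^*\calL$ literally matches that of the generic-fiber line bundle, so that the coherence formula \Cref{theorem_coherence_allp} and the semicontinuity input land on the same target dimension. Once this matching is established, every other step is routine; however, the computation of central charges requires a careful tracking of line bundles through the deperfection/Demazure machinery developed in \Cref{sec:canon-deperf} and \Cref{sec:affine-flag-vari}.
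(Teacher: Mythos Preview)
Your approach matches the paper's exactly: normality via \Cref{prop_connected_tubes_implies_normality}, then Serre's criterion for reducedness of the special fiber (the paper phrases $R_0$ via the smooth dense open $\calA^{\on{can},\circ}_{\calG,\mu}$ rather than via $R_1$ of the total space, but this is equivalent), and finally the coherence formula \Cref{theorem_coherence_allp} to compare dimensions of global sections of ample line bundles and identify the special fiber with $\calA_{\calG,\mu}^{\on{can}}$; the paper's second part is a single sentence, so your expansion is a faithful elaboration. One technical point worth tightening: \Cref{prop_connected_tubes_implies_normality} is stated over a complete DVR and its proof uses finiteness of normalization, so apply it over $O_{\breve E}$ rather than the non-Noetherian $O_C$ (the connectedness hypothesis transfers since the tube over $O_C$ is the base change of the tube over $O_{\breve E}$), and note that your displayed equality of $H^0$-dimensions for $n\gg 0$ really comes from Serre vanishing on the total family plus constancy of Euler characteristics, not upper semicontinuity alone.
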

	
	\begin{proof}
		By \Cref{prop_connected_tubes_implies_normality}, we know that $\calM_{\calG,\mu}^{\mathrm{sch}}$ is normal. 
		In particular, $\calM^{\on{sch}}_{\calG, k_E,\mu}$ is S1, but it must also be R0, as it contains a smooth dense open $\calA^{\on{can},\circ}_{\calG,\mu}$. 
		So Serre's criterion for reducedness furnishes the claim.
		As for identifying the special fiber with $\calA_{\calG,\mu}^{\on{can}}$ as per \Cref{conjecture_special_fiber}, we appeal to \Cref{theorem_coherence_allp}, which computes the dimension of the vector spaces of global sections of ample line bundles. 
	\end{proof}
	
	Finally, let us also mention the following conjecture, arising from \cite{FHLR22}, on the singularities of $\calM^{\on{sch}}_{\calG,\mu}$:
	
	\begin{conjecture}\label{pseudo_rational_conjecture}
		The local model $\calM^{\on{sch}}_{\calG,\mu}$ has pseudo-rational singularities. 
	\end{conjecture}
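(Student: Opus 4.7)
The strategy is to work under \Cref{hyp_wild_odd_unitary} and \Cref{hyp_wild_triality}, where \Cref{SW_conjecture_detailed_intro} identifies $\calM^{\on{sch}}_{\calG,\mu}|_{\Spec\,O_{\breve E}}$ with the scheme-theoretic model $\calN^{\on{sch}}_{\underline{\calG},\mu}|_{\Spec\,O_{\breve E}}$, reducing the question to a statement about the Pappas--Zhu--Levin type local models in their geometric (rather than purely v-sheaf) incarnation. Since pseudo-rationality is local, étale-local, and compatible with faithfully flat unramified descent, this reduction is harmless and, furthermore, lets us exploit the projection $\calN^{\on{sch}}_{\underline{\calI},\mu}\to \calN^{\on{sch}}_{\underline{\calG},\mu}$ to reduce further to the Iwahori case, provided one can show this projection preserves pseudo-rationality (which follows from its rational resolution properties via the $G/B$-fibration structure observed in the proof of \Cref{variedades de schubert perfeitas sao normais}).

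Next, I would build an integral Demazure-type resolution $\pi_{\dot w}\co \calD_{\underline{\calI},\dot w}\to \calN^{\on{sch}}_{\underline{\calI},\mu}$ for each admissible $w$ appearing in the admissible locus, as an iterated $\bbP^1_{O_E}$-fibration obtained by convolving the local models attached to simple reflections and to translations by $\mu_\bullet$ in a tiny decomposition. By construction $\calD_{\underline{\calI},\dot w}$ is smooth projective over $O_E$, hence pseudo-rational, and the map $\pi_{\dot w}$ is proper birational onto a Schubert component of $\calN^{\on{sch}}_{\underline{\calI},\mu}$. By the Kempf--Lipman--Teissier criterion, it then suffices to establish a Grauert--Riemenschneider type vanishing $R^i(\pi_{\dot w})_\ast \omega_{\calD_{\underline{\calI},\dot w}/O_E}=0$ for $i>0$, or equivalently $R^i(\pi_{\dot w})_\ast\calO_{\calD_{\underline{\calI},\dot w}}=0$ for $i>0$ combined with Cohen--Macaulayness (the latter being already known by \Cref{cor_special_fiber_local_model_reduced}).

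The vanishing would be deduced in two steps: in the special fiber one can use Frobenius splitting, established in \Cref{theorem_coherence_allp} and \cite{FHLR22}, together with the well-known formalism for Schubert varieties (compatible splittings of the boundary, Mehta--Ramanathan style arguments) to obtain $R^i(\pi_{\dot w})_\ast\calO=0$ for the reduction $\pi_{\dot w,\bar k}$; in the generic fiber the map $\pi_{\dot w,E}$ is an iterated projective-line bundle over the smooth projective flag variety $\calF_{G,\mu}$, so the vanishing is classical. One then upgrades to integral vanishing using the flatness of $\pi_{\dot w}$ and cohomology-and-base-change. The main obstacle is really the first step: Frobenius splitting alone does not imply $F$-rationality, and hence pseudo-rationality, in general. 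The additional ingredient we expect to need is that $\calA^{\on{can}}_{\calG,\mu}$ is in fact globally $F$-regular (or at least that the Demazure map factors compatibly with a splitting that kills the exceptional divisor with sufficient multiplicity), which would also be of independent interest and is the principal technical hurdle in this program.
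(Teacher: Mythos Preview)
The paper does not prove this statement: it is explicitly recorded as a \emph{conjecture} (introduced with ``Finally, let us also mention the following conjecture, arising from \cite{FHLR22}'') and no proof is given. There is therefore nothing in the paper to compare your proposal against.

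Your write-up is not a proof but a program, and you are candid about this: you identify the ``principal technical hurdle'' as establishing global $F$-regularity (or a suitable strengthening of the compatible Frobenius splitting) for $\calA^{\on{can}}_{\calG,\mu}$, which is exactly the missing step. Until that is supplied, the argument is incomplete. A few further points: your reduction from general parahoric to Iwahori level via the map $\calN^{\on{sch}}_{\underline{\calI},\mu}\to \calN^{\on{sch}}_{\underline{\calG},\mu}$ is not obviously a $G/B$-fibration integrally (the generic fiber is, but the special fiber is the map $\calA_{\calI,\mu}\to \calA_{\calG,\mu}$ of admissible loci, which is more complicated), so pseudo-rationality does not transfer as automatically as you suggest. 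Also, your proposed integral Demazure resolution $\calD_{\underline{\calI},\dot w}$ would resolve a single Schubert component, not the entire (non-irreducible) local model, so you would still need a gluing argument across components. In short, the outline is reasonable as a strategy and aligns with what one expects from \cite{FHLR22}, but it remains a conjecture for good reason.
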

	
	\section{The test function conjecture}\label{test_function_section}
	Throughout this section, we let $F/\bbQ_p$ be a finite field extension with ring of integers $O$ and finite residue field $k$ of cardinality $q$.
	We fix an algebraic closure $\bar \bbQ_p$, an embedding $F\hookto \bar\bbQ_p$ and denote by $\Gamma=\Gal(\bar\bbQ_p/F)$ the absolute Galois group of $F$ with inertia subgroup $I$.
	Let $G$ be a reductive $F$-group with parahoric $O$-model $\calG$.
	
	Furthermore, fix a square root $\sqrt q$, an auxiliary prime $\ell\nmid q$ and put $\Lambda=\bbQ_\ell(\sqrt q)$. 
	We let $^LG=\widehat G_\Lambda\rtimes \Gamma$ be the Langlands dual group viewed as a pro-algebraic $\Lambda$-group scheme. 	
	Each algebraic representation $V$ of $^LG$ furnishes, by choosing a quasi-inverse to the geometric Satake equivalence, a semi-simple perverse $\Lambda$-sheaf $\Sat(V)$ of ``weight zero'' on the $B_\dR^+$-affine Grassmannian $\Gr_G\to \Spd F$.
	Here $\sqrt q$ is needed to define a square root of the $\ell$-adic cyclotomic character used when Tate twisting irreducible perverse sheaves supported on components of $\Gr_G$ of odd parity to be of ``weight zero''. 
	More precisely, for a dominant coweight $\mu$ defined over $F$, we have 
	\begin{equation}\label{eq:normalized_IC}
	\Sat(V_\mu)=i_{\mu,*} j_{\mu,!*}\Lambda_{\Gr_\mu^\circ}\big(\textstyle{{\langle2\rho,\mu\rangle}\over 2}\big), 
	\end{equation}
	where $\Gr_{G,\mu}^\circ\overset{j_\mu\,}{\to}\Gr_{G,\mu}\overset{i_\mu\,}{\to}\Gr_G$ and $V_\mu$ is the irreducible representation of $^LG$ of highest weight $\mu$. 
	Every simple object is of this form, up to taking a finite Galois orbit of $\mu$'s and tensoring with simple $\Lambda$-local systems on $\Spd F$ of weight zero (corresponding to irreducible representations of $\Gamma$ factoring through a finite quotient).
	
	As in \Cref{nearby.cycles.section}, we consider the functor of nearby cycles
	\begin{equation}\label{eq:adic_nearby_cycles}
	\Psi_{\calG}:=i^*Rj_*(\str)|_{\Spd \bbC_p}\colon \D(\Hk_{G}, \Lambda) \rightarrow \D(\Hk_{\calG, \bar k},\Lambda),
	\end{equation}
	where $\Hk_{G, \bbC_p} \xrightarrow{j} \Hk_{\calG, O_{\bbC_p}} \xleftarrow{i} \Hk_{\calG, \bar k}$ are the inclusions of the geometric fibers.
	
	\begin{lemma} \label{adic_nearby_cycles}
		For every finite dimensional algebraic $^LG$-representation $V$, the sheaf of nearby cycles $\Psi_\calG(\Sat(V))$ naturally defines an object in the category 
		\begin{equation}\label{Galois_action_category}
		\D_{\on{cons}}\big([\underline{\Gamma} \backslash \Hk_{\calG,\bar{k}}^{\on{sch}}],\Lambda\big)^{\on{bd}} 
		\end{equation}
		of constructible $\Lambda$-sheaves with bounded support on the v-stack $[\underline{\Gamma} \backslash \Hk_{\calG,\bar{k}}^{\on{sch}}]$. 
		Here, $\underline{\Gamma}$ denotes the associated group v-sheaf and the action on the schematic Hecke stack $\Hk_{\calG,\bar{k}}^{\on{sch}}$ is induced by the quotient map $\Gamma\to \Gal(\bar k/k)$. 
		In particular, the cohomology sheaves $\on{R}^n\!\Psi_\calG(\Sat(V))$, $n\in \bbZ$, define $L^+_{\bar k}\calG$-equivariant, constructible $\Lambda$-sheaves with bounded support on $\Fl_{\calG,\bar k}$ equipped with an equivariant continuous $\Gamma$-action as defined in \cite[Exposé XIII]{SGA7.2}, compatibly with the $L^+_{\bar k}\calG$-action.
	\end{lemma}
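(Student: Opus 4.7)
The plan is to interpret $\Psi_\calG(\Sat(V))$ via the equivalence between ULA sheaves and constructible sheaves on the schematic Hecke stack already established in the paper, and then to upgrade to $\underline\Gamma$-equivariance by carefully tracking the action of $\Gamma$ on the integral model.

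First, I would verify the absolute (i.e.\ non-equivariant) statement: the key input is \Cref{ULAness_nearby_cycles}, which says that nearby cycles preserve bounded ULA objects. Since $\Sat(V)$ lies in $\Sat(\Hk_{G,\bbC_p},\Lambda)\subset \D(\Hk_{G,\bbC_p},\Lambda)^{\on{bd},\on{ula}}$, we obtain
\[
\Psi_\calG(\Sat(V))\in \D(\Hk_{\calG,\bar{k}},\Lambda)^{\on{bd},\on{ula}}.
\]
By \Cref{prop_ula_special_fiber}, the pullback functor to $\Fl_{\calG,\bar k}^\diamondsuit$ embeds the right-hand side inside $\D_{\on{cons}}(\Fl_{\calG,\bar k},\Lambda)^{\on{bd}}$, and $L^+_{\bar k}\calG$-equivariance together with bounded support let us descend along $\Fl_{\calG,\bar k}\to \Hk_{\calG,\bar k}^{\on{sch}}$ to an object of $\D_{\on{cons}}(\Hk_{\calG,\bar k}^{\on{sch}},\Lambda)^{\on{bd}}$, invoking the comparison between v-stack and schematic étale sheaf theories (\Cref{sec:comp-etale-cohom-1-derived-categories-of-hecke-stacks-are-isomorphic}).

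Next, I would install the $\underline\Gamma$-action. The v-sheaf $\Spd O_{\bbC_p}$ carries a natural $\underline\Gamma$-action over $\Spd O$, inducing compatible actions on both geometric fibers $\Hk_{G,\bbC_p}$ and $\Hk_{\calG,\bar k}$ as well as on the integral stack $\Hk_{\calG,O_{\bbC_p}}$, so that the inclusions $j$ and $i$ become $\underline\Gamma$-equivariant. The descent data making $V$ a representation of $^LG=\widehat G_\Lambda\rtimes\Gamma$ (and not merely of $\widehat G_\Lambda$) endows $\Sat(V)$ with a canonical equivariance datum for the $\underline\Gamma$-action on $\Hk_{G,\bbC_p}$; equivalently, $\Sat(V)$ comes from an object on the quotient $[\underline\Gamma\backslash \Hk_{G,\bbC_p}]$. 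Since the formation of $Rj_*$ and $i^*$ is compatible with this equivariance, we obtain $\Psi_\calG(\Sat(V))$ as an object of $\D_{\on{cons}}([\underline\Gamma\backslash \Hk_{\calG,\bar k}^{\on{sch}}],\Lambda)^{\on{bd}}$.

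For the concluding assertion, I would unpack the definition of a constructible equivariant sheaf on $[\underline\Gamma\backslash\Hk_{\calG,\bar k}^{\on{sch}}]$: such an object is precisely a constructible sheaf on $\Hk_{\calG,\bar k}^{\on{sch}}$ together with a coherent system of isomorphisms indexed by the $\underline\Gamma$-action. Passing to cohomology sheaves $\on{R}^n\!\Psi_\calG(\Sat(V))$ preserves these data and produces $L^+_{\bar k}\calG$-equivariant constructible sheaves on $\Fl_{\calG,\bar k}$ with a $\underline\Gamma$-action. The translation to a continuous $\Gamma$-action in the sense of \cite[Exposé XIII]{SGA7.2} is the main point requiring care: since every stalk is a finite-dimensional $\Lambda$-vector space and the action is given by a v-sheaf-theoretic map $\underline\Gamma\to \underline{\on{Aut}}$, it factors through a profinite quotient on each stratum, giving continuity in the usual sense. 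The main obstacle is precisely this last identification—tracking how the equivariance datum for the v-sheaf $\underline\Gamma$ translates into the classical continuous action of the topological group $\Gamma$ on stalks—but once the comparison with the schematic Hecke stack is in place, this is a standard manipulation with pro-étale descent.
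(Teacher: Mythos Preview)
Your overall architecture matches the paper's, but you have glossed over the one step that actually requires work. You write that ``the formation of $Rj_*$ and $i^*$ is compatible with this equivariance,'' and from this you conclude that $\Psi_\calG(\Sat(V))$ descends to the quotient $[\underline\Gamma\backslash\Hk_{\calG,\bar k}]$. Concretely, this amounts to the base change assertion
\[
v^*\,R[\underline\Gamma\backslash j]_*\,(\Sat(V))\;\cong\;Rj_*\bigl(\Sat(V)|_{\Spd\bbC_p}\bigr),
\]
where $v\colon \Hk_{\calG,O_{\bbC_p}}\to [\underline\Gamma\backslash\Hk_{\calG,O_{\bbC_p}}]$ is the quotient map. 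This is not automatic: $j$ is not quasi-compact and $\underline\Gamma$ is only a profinite group v-sheaf, so none of the standard base change theorems for $Rj_*$ apply directly. The paper singles this out as the heart of the argument and proves it by an indirect route: after finite \'etale descent one may assume $G$ is quasi-split and residually split, and then one uses the conservativity of the constant term functors (\Cref{prop_conservative_constant_term}) together with the fact that hyperbolic localization commutes with arbitrary base change to reduce to the torus case, where the ULA property of $R[\underline\Gamma\backslash j]_*\Sat(V)$ can be checked and \Cref{ULA_nearby_cycles} identifies its pullback with $Rj_*(\Sat(V)|_{\Spd\bbC_p})$.

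You have also misidentified the ``main obstacle.'' The translation from $\underline\Gamma$-equivariance to a continuous $\Gamma$-action in the sense of \cite[Expos\'e XIII]{SGA7.2} is handled in the paper by a short reduction to torsion coefficients and a direct comparison of categories for qcqs $k$-schemes; it is the base change step above that carries the real content.
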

	\begin{proof} 
		The group $\Gamma$ is identified with the group of continuous automorphisms of $\bbC_p$ over $F$.
		Since the geometric fiber inclusions $i$ and $j$ are $\underline \Gamma$-equivariant, we obtain maps of v-stacks
		 \begin{equation}
		\Hk_G=[\underline{\Gamma}\backslash\Hk_{G,\bbC_p}] \xrightarrow{[\underline{\Gamma}\backslash j]} [\underline{\Gamma}\backslash\Hk_{\calG,O_{\bbC_p}}] \xleftarrow{[\underline{\Gamma}\backslash i]} [\underline{\Gamma}\backslash\Hk_{\calG, \bar k}],
		 \end{equation} 
		 and define the Galois equivariant nearby cycles functor $[\underline{\Gamma}\backslash\Psi_{\calG}]:=[\underline \Gamma\backslash i]^*R[\underline \Gamma \backslash j]_*(\str)$ in analogy to \eqref{eq:adic_nearby_cycles}.
		 Consider the quotient map $v\co \Hk_{\calG, \bar k}\to [\underline{\Gamma}\backslash\Hk_{\calG, \bar k}]$. 
		 We claim that the map $v^*[\underline{\Gamma}\backslash\Psi_{\calG}](\mathrm{Sat}(V))\to \Psi_{\calG}(\mathrm{Sat}(V))$ induced by base change is an isomorphism.
		 
		 Note that we cannot apply the base change theorem directly because $j$ is not quasi-compact and $\Gamma$ is only profinite.
		Instead, we apply the constant term functor, which commutes with arbitrary base change: 
		By finite étale descent, we may replace $F$ by an unramified, finite Galois extension (hence, also $\Gamma$ by the corresponding normal subgroup of finite index) and assume that $G$ is quasi-split and residually split. 
		So, every $\breve{F}$-Borel descends to $F$. 
		Using the conservativity of the constant term functor, see \Cref{prop_conservative_constant_term}, we see again as in \Cref{ULA_nearby_cycles} that the equivariant integral extension $R[\underline{\Gamma}\backslash j]_*\on{Sat}(V)$ is ULA over $[\underline{\Gamma}\backslash\Spd O_{\bbC_p}]$. 
		In particular, so is its pullback to $\Spd O_{\bbC_p}$, which implies by \Cref{ULA_nearby_cycles} that it equals $Rj_*(\on{Sat}(V)|_{\Spd \bbC_p})$. 
		Restricting to geometric special fibers implies the claim. 
		 
		 It formally follows from \Cref{sec:comp-etale-cohom-1-derived-categories-of-hecke-stacks-are-isomorphic} and the construction of derived categories of $\Lambda$-sheaves that the comparison functor \eqref{comparison_functor:eq} induces an equivalence
		 \begin{equation}
		 \D([\underline \Gamma\backslash \mathrm{Hk}_{\calG,\bar k}^{\mathrm{sch}}],\Lambda)^{\mathrm{bd}}\cong \D([\underline \Gamma\backslash \mathrm{Hk}_{\calG,\bar k}],\Lambda)^{\mathrm{bd}}
		 \end{equation}
		 under which constructible sheaves correspond to ULA sheaves, see \Cref{prop_ula_special_fiber}.
		 Note that both properties are preserved and detected under the functor $v^*$, respectively its schematic counterpart. 
		 Therefore, $[\underline{\Gamma}\backslash\Psi_{\calG}](\mathrm{Sat}(V))$ naturally defines an object in the category \eqref{Galois_action_category} and its underlying sheaf is $\Psi_{\calG}(\mathrm{Sat}(V))$.
		 
		 For the final statement on the comparison with \cite[Exposé XIII]{SGA7.2}, we reduce to the case where $\Lambda$ is a finite ring by the construction of categories of $\ell$-adic sheaves, see also \eqref{eq:localization_categories}. 
		 Then, for any qcqs $k$-scheme $X$, the category of abelian $\Lambda$-sheaves on $(X_{\bar k})_\et$ equipped with a continuous $\Gamma$-action as in \cite[Exposé XIII, Définition 1.1.2]{SGA7.2} embeds fully faithfully into the category of abelian $\Lambda$-sheaves on $[\underline\Gamma\backslash X_{\bar k}]$ inducing an equivalence on full subcategories of constructible sheaves.
		 Applying this to closed subschemes $X\subset \Fl_{\calG}$ implies the lemma.
	\end{proof}
	
	For every $\Phi\in \Gamma$, we define a function $\Hk_\calG^{\on{sch}}(k)\to \Lambda$ by the formula
	\begin{equation}\label{test_function}
	\tau_{\calG,V}^\Phi(x):=(-1)^{d_V}\sum_{n\in \bbZ}(-1)^n\on{trace}\big(\Phi\;|\;\on{R}^n\!\Psi_\calG\Sat(V)_{\bar x}\big)
	\end{equation}
	whenever $V$ is irreducible and extend the definition to general $V$ by linearity.
	Here, $d_V=\langle2\rho,\mu\rangle$ with $\mu$ being the highest weight of $V$. 
	So the sign $(-1)^{d_V}$ in \eqref{test_function} only depends on the parity of the connected component of $\Gr_G$ that supports $\Sat(V)$.
	
	\begin{lemma}\label{}
		For every finite dimensional algebraic $^LG$-representation $V$ and every $\Phi\in \Gamma$, the function $\tau_{\calG,V}^\Phi$ naturally lies in the center of the parahoric Hecke algebra $\calH(G(F),\calG(O))_\Lambda$.
	\end{lemma}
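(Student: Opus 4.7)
The plan is to deduce both assertions from the commutativity isomorphism of \Cref{nearby_cycles_central} via the sheaf-function dictionary. First, I verify that $\tau_{\calG,V}^\Phi$ defines an element of $\calH(G(F),\calG(O))_\Lambda$: by \Cref{adic_nearby_cycles}, the sheaves $\on{R}^n\!\Psi_\calG(\Sat(V))$ are $L^+_{\bar k}\calG$-equivariant constructible $\Lambda$-sheaves on $\Fl_{\calG,\bar k}$ with bounded support, carrying a continuous $\Gamma$-action compatible with this equivariance. The $L^+_{\bar k}\calG$-equivariance together with $\Phi$-compatibility translate into left and right $\calG(O)$-invariance of the trace function on $\Fl_\calG(k)=G(F)/\calG(O)$, while boundedness of support forces $\tau_{\calG,V}^\Phi$ to be supported on finitely many $\calG(O)$-double cosets, giving membership in the parahoric Hecke algebra.

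For centrality, given any $f \in \calH(G(F),\calG(O))_\Lambda$, I will exhibit a constructible, $\Gamma$-equivariant $\Lambda$-sheaf $B_f$ on $\Hk_{\calG,\bar k}^{\on{sch}}$ with bounded support whose trace-of-$\Phi$ function equals $f$; this is standard, obtained as a finite linear combination of shifted Tate twists of extensions by zero from the Schubert cells supporting $f$. Applying \Cref{nearby_cycles_central} (via the comparison in \Cref{adic_nearby_cycles} between sheaves on the schematic and v-sheaf Hecke stacks) produces a canonical isomorphism
\begin{equation}
\Psi_\calG(\Sat(V)) \star B_f \;\cong\; B_f \star \Psi_\calG(\Sat(V)).
\end{equation}
Taking trace of $\Phi$ and invoking the sheaf-function dictionary for convolution---which reduces to proper base change and the projection formula along the convolution Hecke correspondence, where the outer maps are $L^+\calG$-torsors and the multiplication map is ind-proper---yields $\tau_{\calG,V}^\Phi * f = f * \tau_{\calG,V}^\Phi$, and centrality follows as $f$ was arbitrary.

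The main technical point will be establishing the $\Phi$-equivariance of the isomorphism in \Cref{nearby_cycles_central}. Examining its proof, the isomorphism arises by adjunction from the commutative diagram \eqref{eq:convolution_Hecke_snacks} of convolution Hecke stacks over $\Spd O_C$, which is itself the base change of its natural $\Spd O$-model along the $\underline\Gamma$-equivariant map $\Spd O_C \to \Spd O$. All the adjunction data---proper pushforward along the ind-proper projections and cohomologically smooth pullback along the projections to pairs of Hecke stacks---is canonical and hence $\underline\Gamma$-equivariant, and the ULA property used to extend the generic-fiber isomorphism integrally is furnished by \Cref{ULA_nearby_cycles}, whose proof descends to the Galois-equivariant setting by running the same argument on the quotient v-stacks $[\underline\Gamma\backslash -]$ as in \Cref{adic_nearby_cycles}. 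After restriction to special fibers, this delivers the required $\Phi$-equivariance, and the sheaf-function dictionary then completes the argument.
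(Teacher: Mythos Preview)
Your proof is correct and follows the same approach as the paper: membership in the Hecke algebra from boundedness and equivariance, centrality from \Cref{nearby_cycles_central} via the sheaf-function dictionary. The paper's proof is much terser, simply citing ``the usual sheaf function dictionary'' without elaboration; you have usefully unpacked the $\Phi$-equivariance of the commutativity isomorphism, which the paper leaves implicit. One small point you pass over: the identification $\Fl_\calG(k)=G(F)/\calG(O)$ (and likewise $\Hk_\calG^{\on{sch}}(k)=\calG(O)\backslash G(F)/\calG(O)$) requires Lang's lemma for $L^+_k\calG$, which the paper invokes explicitly via \cite[Lemma A.3]{RS20}.
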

	\begin{proof}
		Lang's lemma together with an approximation argument \cite[Lemma A.3]{RS20} implies that $\on{H}^1_\et(k, L^+\calG)$ vanishes, so $\Hk_\calG^{\on{sch}}(k)=\calG(O)\backslash G(F)/ \calG(O)$.
		As the function $\tau_{\calG,V}^\Phi$ is supported on finitely many double cosets, it lies in $\calH(G(F),\calG(O))_\Lambda$. 
		Centrality follows from \Cref{nearby_cycles_central} and the usual sheaf function dictionary.
	\end{proof}
	
	On the other hand, the theory of Bernstein centers defines another function: 
	Namely, for every choice of lift $\Phi\in\Gamma$ of geometric Frobenius, we let $z_{\calG,V}^\Phi$ be the unique function in the center of $\calH(G(F),\calG(O))_\Lambda$ that acts on every smooth irreducible $\calG(O)$-spherical representation $\pi$ over $\Lambda$ by the scalar
	\begin{equation}
	\on{trace}\Bigl(s^\Phi(\pi)\;\big|\;  V\Bigr),
	\end{equation}
	where $s^\Phi(\pi)\in [\widehat{G}^{I}\rtimes \Phi]_{\on{ss}}/\widehat{G}^{I}$ is the Satake parameter for $\pi$ with respect to $\Phi$ constructed in \cite{Hai15}. 
	
	\begin{theorem}\label{tfc_reformulation}
		For every finite dimensional algebraic $^LG$-representation $V$ and every choice of lift $\Phi$ of geometric Frobenius, there is an equality 
		\begin{equation}\label{eq:tfc_reformulation}
		\tau_{\calG,V}^\Phi=z_{\calG,V}^\Phi
		\end{equation}
		of functions in the parahoric Hecke algebra.
	\end{theorem}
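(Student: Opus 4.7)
The plan is to follow the strategy of \cite{HR21}, now executed in the v-sheaf framework thanks to the inputs developed earlier in this paper: the commutativity of nearby cycles with constant terms (\Cref{prop_commutativity_nearby_cycles_constant_term}), the preservation of ULA-ness under $\Psi_\calG$ (\Cref{ULAness_nearby_cycles}), the Galois-equivariant constructibility (\Cref{adic_nearby_cycles}), and the geometric Satake equivalence of \cite[Chapter VI]{FS21}. By bilinearity of both constructions, I reduce immediately to the case $V=V_\mu$ with $\mu$ an irreducible highest weight, defined over some reflex subfield (possibly after enlarging $F$ by a finite unramified extension, which is harmless). Since both $\tau_{\calG,V}^\Phi$ and $z_{\calG,V}^\Phi$ lie in the center of $\calH(G(F),\calG(O))_\Lambda$, it suffices to compare the scalars by which they act on every smooth irreducible $\calG(O)$-spherical $\Lambda$-representation $\pi$.

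The first step is a reduction from an arbitrary parahoric $\calG$ to a very special parahoric $\calG_0$. Concretely, choose $\calG_0$ containing $\calG$ up to $G(\breve F)$-conjugation, and use Haines's Bernstein--Lusztig--Kottwitz presentation to see that the map $Z(\calH(G(F),\calG(O))_\Lambda)\to Z(\calH(G(F),\calG_0(O))_\Lambda)$ given by convolution with the idempotent $e_{\calG_0(O)}$ is injective on the subalgebras through which both $\tau^\Phi_{\calG,V}$ and $z^\Phi_{\calG,V}$ factor (these are exactly the parts of the center detected on $\calG_0$-spherical representations, which include every $\calG(O)$-spherical one by parabolic induction). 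On the geometric side, the inclusion $\calG\hookrightarrow\calG_0$ induces a proper morphism $\Hk_\calG\to \Hk_{\calG_0}$ which intertwines the nearby cycles functors (by functoriality in the group scheme), so the image of $\tau^\Phi_{\calG,V}$ is $\tau^\Phi_{\calG_0,V}$; likewise on the spectral side, $z^\Phi_{\calG,V}$ maps to $z^\Phi_{\calG_0,V}$ since the Satake parameter $s^\Phi(\pi)$ is defined relative to the spherical Hecke algebra of $\calG_0$. This reduces us to proving the identity at very special level.

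For $\calG_0$ very special, choose a regular $O$-rational cocharacter $\lambda$, giving an $F$-Borel $B=P_\lambda^+\supset T$ with connected N\'eron model $\calT$. Applying \Cref{prop_commutativity_nearby_cycles_constant_term}, one obtains $\on{CT}_{\calB}\circ \Psi_{\calG_0}\cong \Psi_\calT\circ \on{CT}_B$. But $\Gr_\calT$ is étale-locally constant on $\underline{X_\ast(T)_I}$, so $\Psi_\calT$ is the identity on each connected component. Via geometric Satake as in \cite[Chapter VI.11]{FS21}, $\on{CT}_B(\Sat(V_\mu))$ is the weight decomposition of $V_\mu|_{\widehat T^I}$ placed over the corresponding lattice points $\la_I(\pi)\in\Gr_{T,C}$, carrying its canonical $\widehat T^I\rtimes\Gamma$-structure. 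Passing to Frobenius traces, the image of $\tau^\Phi_{\calG_0,V}$ under the Satake transform $\calH(G(F),\calG_0(O))_\Lambda\hookrightarrow\Lambda[X_\ast(T)_I]^{W_0}$ is the function $\la_I\mapsto \on{trace}(\Phi\mid V_\mu[\la_I])$. By the very definition of $s^\Phi(\pi)$ and the characterization of $z_{\calG_0,V}^\Phi$, this is exactly the Satake image of $z^\Phi_{\calG_0,V}$, whence the desired equality.

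The main obstacle I foresee is the careful bookkeeping of normalizations and Frobenius weights. The twist in \eqref{eq:normalized_IC} by $(\sqrt q)^{\langle2\rho,\mu\rangle}$ making $\Sat(V_\mu)$ pure of weight zero must match both the shift $(-1)^{d_V}$ in \eqref{test_function} and the conventions for the Satake parameter under which $V_\mu$ is evaluated at $\widehat G^I\rtimes\Phi$; this requires purity of the intersection cohomology of integral Schubert closures together with the Frobenius compatibility of constant terms, both of which should propagate from \cite[Chapter VI]{FS21} once one verifies that the Frobenius action on hyperbolic localization is the restriction of the Frobenius on the total complex, as in \cite[Section 3]{HR21}. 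A secondary obstacle is the Bernstein-center-theoretic reduction from $\calG$ to $\calG_0$: although in the ramified setting a containment $\calG\subset\calG_0$ in $G(F)$ may fail, it always exists in $G(\breve F)$, and the required injectivity can be extracted from the Iwahori-equivariant case by an approximation argument analogous to \cite[Appendix]{HR20}. Once these two points are in place, the remaining content is the spectral matching carried out above.
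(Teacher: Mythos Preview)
Your argument has a genuine gap: it tacitly assumes that $G$ is quasi-split over $F$. In your third step you ``choose a regular $O$-rational cocharacter $\lambda$, giving an $F$-Borel $B=P_\lambda^+\supset T$'', but a regular $F$-rational cocharacter into the maximal $F$-split torus $A$ produces only a \emph{minimal $F$-parabolic} $P$, whose Levi $M=Z_G(A)$ is a maximal torus precisely when $G$ is quasi-split. For non-quasi-split $G$ the Levi $M$ is anisotropic modulo center but genuinely non-abelian, so $\Gr_{\calM}$ is not \'etale-locally constant and your identification of $\Psi_\calM$ with the identity fails. The paper's proof confronts this directly: after the constant-term reduction one lands at $G=M$ anisotropic modulo center, where the special fiber of $\calM_{\calG,V}$ is a single point $x_V$ (\cite[Lemma 7.13]{HR21}); the Grothendieck--Lefschetz trace formula then expresses $\tau^\Phi_{\calG,V}(x_V)$ as the trace of $\Phi$ on $\on{H}^*(\Gr_{G,\bbC_p},\Sat(V))$, which is canonically identified with the same cohomology for the quasi-split inner form $G^*$ via $^LG={^L}G^*$. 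A parallel compatibility for $z$ (\cite[Proposition 11.12.6]{Hai14}) then reduces to the quasi-split case, where your torus argument does apply.

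Your first reduction step, from arbitrary $\calG$ to a very special $\calG_0$, is both unnecessary and problematic. It is unnecessary because the injectivity of the constant-term map $\on{ct}_\calP$ on centers (\cite[Section 11.11]{Hai14}) and the commutation \eqref{eq:constant_terms_function} hold for arbitrary parahoric $\calG$; the paper never changes level. It is problematic because an inclusion $\calG\subset\calG_0$ with $\calG_0$ very special need not exist over $F$ (parahorics are not linearly ordered, and very special vertices need not lie in the closure of the facet for $\calG$), and the asserted injectivity of $e_{\calG_0}$-convolution on the relevant part of the center, together with the claim that it carries $\tau^\Phi_{\calG,V}$ to $\tau^\Phi_{\calG_0,V}$, would itself require a proper-pushforward comparison of nearby cycles along $\Hk_\calG\to\Hk_{\calG_0}$ that you have not established.
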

	\begin{proof}
		As both sides of \eqref{eq:tfc_reformulation} are additive in $V$, we may freely assume that $V$ is irreducible, and even further that $V|_{\widehat G\rtimes I}$ is irreducible: otherwise both sides in \eqref{eq:tfc_reformulation} are zero (hence, equal) by elementary considerations, see \cite[Lemma 7.7]{HR21}.
		
		Fix a maximal $F$-split torus $A\subset G$ whose Néron model embeds in $\calG$ and a regular cocharacter $\la\co \bbG_m\to A$.
		Then $\la$ induces a minimal $F$-Levi $M$, respectively $F$-parabolic $P$ in $G$.
		Denote by $\calM\subset \calP$ their flat closures in $\calG$.
		Then, the constant term morphism \cite[Section 11.11]{Hai14} induces an injective morphism on the centers of the parahoric Hecke algebras
		\begin{equation}
		\on{ct}_\calP\co \calZ(G(F),\calG(O))_\Lambda \hookto \calZ(M(F),\calM(O))_\Lambda.
		\end{equation}
		As in \cite[Lemma 7.8, Equation (7.15)]{HR21}, one checks the formulas
		\begin{equation}\label{eq:constant_terms_function}
		\on{ct}_\calP\big(\tau_{\calG,V}^\Phi\big) = \tau_{\calM,V|_{^LM}}^\Phi , \;\;\; \on{ct}_\calP\big(z_{\calG,V}^\Phi\big) = z_{\calM,V|_{^LM}}^\Phi,
		\end{equation}
		where $^LM=\widehat M\rtimes \Gamma$ is viewed as a closed subgroup of $^LG$. 
		The second formula in \eqref{eq:constant_terms_function} is straightforward. 
		The first formula in \eqref{eq:constant_terms_function} is based on the isomorphism
		\begin{equation}
		\on{CT}_\calP[\deg_\calP]\circ \Psi_\calG\cong \Psi_\calM\circ \on{CT}_P[\deg_P] \colon \Sat(\Hk_G,\Lambda)\to \D_{\on{cons}}([\underline{\Gamma}\backslash\Fl_{\calM,\bar k}],\Lambda)^{\on{bd}},
		\end{equation}
		see \Cref{prop_commutativity_nearby_cycles_constant_term}, using that $\on{CT}_P[\deg_P]$ corresponds to the restriction of representations $V\mapsto V|_{^LM}$ under the geometric Satake equivalence \cite[Section VI]{FS21}.
		(We note that the sign $(-1)^{d_V}$ in \eqref{test_function} appears when comparing $\on{CT}_\calP[\deg_\calP]$ and $\on{ct}_\calP$ under the sheaf function dictionary, see also \cite[Lemma 7.2]{HR21}.)
		
		Hence, we reduce to the case where $G=M$ is a minimal $F$-Levi, so anisotropic modulo center, and $V|_{\widehat G\rtimes I}$ is irreducible. 
		Let $\calM_{\calG,V}$ be the v-sheaf theoretic closure of the support of $\Sat(V)$ in $\Gr_\calG$, a finite union of $\calM_{\calG,\mu}$ for $\mu$ ranging over the highest weights of $V$.
		The proof of \cite[Lemma 7.13]{HR21} is based on Iwahori-Weyl group combinatorics, hence applies to show that $\calM_{\calG,V}$ has only a single $\Spd k$-valued point $x_V$.
		As $\Phi$ lifts the geometric Frobenius, we can apply the Grothendieck-Lefschetz trace formula to $\Psi_\calG\Sat(V)$ viewed as an object in $\D_{\on{cons}}([\underline{\Gamma}\backslash \Fl_{\calG,\bar k}],\Lambda)^{\on{bd}}$ to compute
		\begin{equation}\label{eq:GL_trace_formula}
		\on{trace}\big(\Phi\;|\;\Psi_\calG\Sat(V)_{\overline{x_V}}\big)=\on{trace}\big(\Phi\;|\;\on{H}^*(\Fl_{\calG,\bar k},\Psi_\calG\Sat(V))\big)
		\end{equation}
		Since $Rj_*(\Sat(V)|_{\Spd\, \bbC_p})$ is ULA by \Cref{ULA_nearby_cycles}, the latter cohomology group is $\Gamma$-equivariantly isomorphic to 
		\begin{equation}\label{eq:quasisplit_inner}
		\on{H}^*(\Gr_{G, \bbC_p},\Sat(V))=\on{H}^*(\Gr_{G^*, \bbC_p},\Sat(V)),
		\end{equation}
		where $G^*$ is the unique quasi-split inner form of $G$.
		We note that there is a canonical identification $^LG={^L}G^*$ so that on Satake categories $\on{Sat}(\Hk_G,\Lambda)\cong \on{Sat}(\Hk_{G^*},\Lambda)$ by \cite[Section VI]{FS21}.	
		Let $\calG^*$ denote the parahoric corresponding to $\calG$ (necessarily, an Iwahori) and $\calM_{\calG^*,V}$ the associated v-sheaf local model. 
		On the other hand, we know \cite[Proposition 11.12.6]{Hai14} that $z_{\calG,V}^\Phi$ is supported at $x_V$ with value
		\begin{equation}\label{eq:tfc_anisotropic_sum}
		z_{\calG,V}^\Phi(x_V)=\sum_{x\in \calM_{\calG^*,V}(\Spd\, k)}z_{\calG^*,V}^\Phi(x).
		\end{equation}
		Now assuming the test function conjecture for the pair $(\calG^*,V)$, that is, assuming $z_{\calG^*,V}^\Phi=\tau_{\calG^*,V}^\Phi$, we can apply the Grothendieck-Lefschetz trace formula again to see that \eqref{eq:tfc_anisotropic_sum} equals the trace of $\Phi$ on \eqref{eq:quasisplit_inner}, up to the sign $(-1)^{d_V}$.
		So $z_{\calG^*,V}^\Phi=\tau_{\calG^*,V}^\Phi$ implies $z_{\calG,V}^\Phi=\tau_{\calG,V}^\Phi$. 
		
		Hence, we reduce to the case where $G=G^*$ is quasi-split. 
		Now, the minimal Levi $M=T$ is a maximal torus, so $\eqref{eq:constant_terms_function}$ reduces us to the case where $G=T$ is a torus and $\calG=\calT$ its connected locally finite type N\'eron model. 
		Without loss of generality, we assume that $V|_{\widehat T\rtimes I}$ is irreducible.
		Evidently, $T$ is anisotropic modulo center so that both functions $\tau_{\calT,V}^\Phi, z_{\calT,V}^\Phi$ are supported at $x_V$. 
		Using \eqref{eq:GL_trace_formula}, the ULA property of $Rj_*(\Sat(V)|_{\Spd\, \bbC_p})$ and $\on{H}^0(\Gr_{T, \bbC_p},\Sat(V))=V$, we see
		\begin{equation}
		\tau_{\calT,V}^\Phi(x_V)=(-1)^{d_V}\on{trace}\big(\Phi\;|\; V\big)
		\end{equation}
		which equals $z_{\calT,V}^\Phi(x_V)$ because $d_V=0$. 
		This finishes the proof.
	\end{proof}

	\begin{lemma}
		\label{sec:test-funct-conj-reformulation-implies-test-function-conjecture}
		\Cref{tfc_reformulation} implies \Cref{HK_conjecture_intro}.
	\end{lemma}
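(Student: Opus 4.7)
The plan is to specialize \Cref{tfc_reformulation} to the setting of \Cref{HK_conjecture_intro} and then extract the remaining assertions (rationality, $\ell$- and $\sqrt q$-independence, independence of the embedding) from the shape of the normalization \eqref{eq:normalized_IC}. First, I would take the base field of Section~\ref{test_function_section} to be $F := E_0$, the maximal unramified subextension of $E/\bbQ_p$, regarding $G$ and $\calG$ as base-changed to $E_0$ and $O_{E_0}$ respectively; the residue field has cardinality $q$ in both settings, and $\Hk_\calG^{\on{sch}}(k) = G(E_0)/\calG(O_{E_0})$. As algebraic $^LG$-representation I would take $V = V_\mu$ of highest weight $\mu$. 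Since $\Sat(V_\mu) = i_{\mu,*} j_{\mu,!*}\Lambda_{\Gr_\mu^\circ}(\langle 2\rho,\mu\rangle/2)$ by \eqref{eq:normalized_IC}, the restriction of $\Sat(V_\mu)$ to $\Gr_{G,\mu}$ is precisely the sheaf $\IC_\mu$ appearing in \eqref{test_function_intro}; comparing \eqref{test_function_intro} and \eqref{test_function} (and noting $d_{V_\mu} = \langle 2\rho,\mu\rangle$) gives
\begin{equation}
\tau_{\calG,\mu}^\Phi \;=\; \tau_{\calG_{O_{E_0}}, V_\mu}^\Phi.
\end{equation}
\Cref{tfc_reformulation} then identifies this with $z^\Phi_{\calG_{O_{E_0}}, V_\mu}$, and the centrality of $\tau_{\calG,\mu}^\Phi$ together with its Satake parameter characterization on $\calG(O_{E_0})$-spherical irreducibles are immediate from the very definition of $z^\Phi$.

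For integrality and independence, I would exploit the fact that the only source of $\sqrt q$ in $\Sat(V_\mu)$ is the Tate half-twist $(\langle 2\rho,\mu\rangle/2)$ in \eqref{eq:normalized_IC}. Removing this twist gives the unnormalized sheaf $i_{\mu,*}j_{\mu,!*}\bbQ_\ell$, which is defined already over $\bbQ_\ell$ (and admits an evident $\bbZ_\ell$-structure). By \Cref{ULAness_nearby_cycles} and \Cref{prop_ula_special_fiber}, its nearby cycles are perfect-constructible $\bbZ_\ell$-complexes on $\Fl_{\calG,\bar k}$ carrying a continuous $\Gamma$-action (\Cref{adic_nearby_cycles}), so stalks are perfect $\bbZ_\ell$-modules and alternating Frobenius traces on them lie in $\bbZ_\ell$. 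A geometric Frobenius lift acts on $\Lambda(1)$ by $q^{-1}$, hence on $\Lambda(\langle 2\rho,\mu\rangle/2)$ by $(\sqrt q)^{-\langle 2\rho,\mu\rangle}$, so multiplying by $(\sqrt q)^{\langle 2\rho,\mu\rangle}$ absorbs exactly this twist and yields a $\bbZ_\ell$-valued function. Independence of $\ell$ is then inherited from the Satake parameter characterization in \Cref{tfc_reformulation}, which is manifestly intrinsic to $(G,\mu,\Phi)$, so the common value lies in $\bbZ_\ell \cap \bbQ = \bbZ$; independence of $\sqrt q$ becomes automatic since the rescaling absorbs every $\sqrt q$; and independence of the embedding $E \hookto \bar\bbQ_p$ follows because the conjugacy class $\mu$, the representation $V_\mu$, and the Satake parameter $s^\Phi(\pi)$ are intrinsic to the $\bbQ_p$-pair $(G,\mu)$ together with the Frobenius lift.

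The essentially only non-bookkeeping step is the integrality argument in the second paragraph; the rest is a direct transcription of \Cref{tfc_reformulation} into the intro's language. The main obstacle is keeping track of the sign $(-1)^{\langle 2\rho,\mu\rangle}$ and the half-Tate twist consistently across the two different normalizations, but no new geometric or representation-theoretic input is required beyond what is already developed in Sections~\ref{section_nearby_cycles} and the preceding parts of this section.
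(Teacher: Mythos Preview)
There is a genuine gap in your reduction. When you set $F := E_0$ and $V := V_\mu$, you are implicitly assuming that $\mu$ is defined over $E_0$, so that $V_\mu$ is an irreducible representation of $\widehat G \rtimes \Gamma_{E_0}$ and $\Sat(V_\mu)$ is the sheaf described by \eqref{eq:normalized_IC}. But $\mu$ is only defined over its reflex field $E$, and $E/E_0$ is totally ramified and in general nontrivial. Consequently $V_\mu$ is a priori only a representation of $\widehat G \rtimes \Gamma_E$, and the Satake sheaf $\IC_\mu$ in \eqref{test_function_intro} lives on $\Gr_{G,\mu}$ over $\Spd E$, not over $\Spd E_0$. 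The nearby cycles $\Psi_{\calM_{\calG,\mu}}(\IC_\mu)$ in the introduction are therefore computed for the family over $\Spd O_E$, whereas your proposed $\tau^\Phi_{\calG_{O_{E_0}},V_\mu}$ involves nearby cycles over $\Spd O_{E_0}$. These do not coincide on the nose, so your displayed identity $\tau_{\calG,\mu}^\Phi = \tau^\Phi_{\calG_{O_{E_0}},V_\mu}$ is not justified.

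The paper bridges this mismatch by replacing $V_\mu$ with the induced representation $I_E^{E_0}(V_\mu)$ of $\widehat G \rtimes \Gamma_{E_0}$ and proving that nearby cycles commute with proper pushforward along the finite map $\Gr_\calG|_{\Spd O_E} \to \Gr_\calG|_{\Spd O_{E_0}}$; this yields the trace identity between nearby cycles over $O_E$ for $V_\mu$ and over $O_{E_0}$ for $I_E^{E_0}(V_\mu)$. Only then can \Cref{tfc_reformulation} be applied (with $V = I_E^{E_0}(V_\mu)$), and one needs the companion identity $z^\Phi_{\calG_{O_{E_0}}, I_E^{E_0}(V_\mu)} = z^\Phi_{\calG,\mu}$ on the spectral side, which is \cite[Lemma 8.1]{Hai18}. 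Your integrality and independence arguments are in the right spirit but the paper does not argue them directly; it cites \cite[Theorem 7.15]{HR21}, where the claim is verified for the $z$-side without any restriction on $(\calG,\mu)$. In particular, your assertion that $\ell$-independence is ``manifestly intrinsic'' from the Satake parameter characterization is too quick: the Satake parameters are themselves constructed with $\Lambda = \bbQ_\ell(\sqrt q)$-coefficients, and one needs an argument (or a reference) to see that the resulting traces are independent of $\ell$.
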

	\begin{proof}
		Let $\mu$ be a conjugacy class of geometric cocharacters in $G$.
		Denote by $E \subset\bar\bbQ_p$ its reflex field with maximal unramified subextension $E_0/F$. 
		Their rings of integers are denoted by $O_E\supset O_{E_0}$ with residue fields $k_E=k_{E_0}$ and absolute Galois groups $\Gamma_E\subset \Gamma_{E_0}$.
		For every $\Phi\in \Gamma_E$ and $x\in \Gr_\calG(k_E)$, there is an equality
		\begin{equation}\label{eq:induction_tfc}
		\on{trace}\big(\Phi\;|\;\Psi_{\calG,O_E}\Sat(V_\mu)_{\bar x}\big) = \on{trace}\big(\Phi\;|\;\Psi_{\calG,O_{E_0}}\Sat(I_E^{E_0}(V_\mu))_{\bar x}\big),
		\end{equation}
		where $I_E^{E_0}(V_\mu)$ is the induction to $\widehat G\rtimes \Gamma_{E_0}$ of the $\widehat G\rtimes \Gamma_E$-representation $V_\mu$ and $\Sat(V_\mu)$, $\Sat(I_E^{E_0}(V_\mu))$ the corresponding Satake sheaves on $\Gr_{G}|_{\Spec\, E}$, respectively $\Gr_G|_{\Spec\, E_0}$.
		Indeed, \eqref{eq:induction_tfc} follows from the commutation of nearby cycles with proper pushforward applied to the finite morphism $\Gr_\calG|_{\Spd O_E}\to \Gr_\calG|_{\Spd O_{E_0}}$, noting that it induces the induction of representations on Satake categories.
		
		Now, we apply \eqref{eq:induction_tfc} to the pair $\calG_0:=\calG_{O_{E_0}}, V_{\mu,0}:=I_E^{E_0}(V_\mu)$ and any choice of lift $\Phi\in \Gamma_E\subset \Gamma_{E_0}$ of geometric Frobenius to obtain 
		\begin{equation}\label{eq:induction_tfc_app}
		\tau_{\calG_0,V_{\mu,0}}^\Phi=z_{\calG_0,V_{\mu,0}}^\Phi.
		\end{equation}
		The left hand side of \eqref{eq:induction_tfc_app} is equal to the function from \Cref{HK_conjecture_intro} by \eqref{eq:induction_tfc} and so is the right hand side of \eqref{eq:induction_tfc_app} by a similar equality \cite[Lemma 8.1]{Hai18} for inductions of representations along the totally ramified extension $E/E_0$.
		That the function \eqref{eq:induction_tfc_app} takes, after multiplying by $(\sqrt {q_E})^{\langle2\rho, \mu\rangle}$, values in $\bbZ$ independently of the choice of $\ell\neq p$, $\sqrt{q_E}$ and $E\hookto \bar\bbQ_p$ follows from \cite[Theorem 7.15]{HR21} where the statement is verified for the semi-simplified version of the right hand side of \eqref{eq:induction_tfc_app} without any assumptions on $(\calG,\mu)$.
		The same arguments apply here.
	\end{proof}

	\begin{appendices}

		\section{\'Etale sheaves over v-stacks on perfect schemes}
		\label{sec:comp-etale-cohom}	
		In this section, we extend some parts of \cite[Section 27]{Sch17} to v-stacks on perfect schemes, see also \cite[Appendix A]{Wu21}.
		Let $k$ be a perfect field of characteristic $p>0$. 
		Let $X$ be a perfect scheme over $k$, and let $X^\diamondsuit$ be the associated v-sheaf from \Cref{sec:perfect-schemes} (under the tilting equivalence), that is, if $\Spa(R,R^+)$ is an affinoid perfectoid space over $\Spa(k,k)$ and $X$ affine, then
		\begin{equation}
		X^\diamondsuit(\Spa(R,R^+))=X(\Spec(R)),
		\end{equation}
		while for $X$ general $X^\diamondsuit$ is the analytic sheafification of the presheaf $\Spa(R,R^+)\mapsto X(\Spec(R))$. 
		We note that the functor $(\str)^\diamondsuit$ does depend on $k$.

		Fix a torsion ring $\Lambda$ with $p\in \Lambda^\times$.                      
		We let $\D(X, \Lambda):=\widehat{\D}(X_\et,\Lambda)$ be the left-completed \'etale derived $\infty$-category of $X$, see \cite[Section 27]{Sch17}.
		By \cite[Section 27]{Sch17} there is a morphism
		\begin{equation}
			\label{c-functor}
		c_X\colon (X^\diamondsuit)_v\to X_\et
		\end{equation}
		of sites (even to the proétale site of $X$), and the induced functor of $\infty$-categories
		\begin{equation}\label{comparison_functor:eq}
		c_X^\ast\colon \D(X,\Lambda)\to \D(X^\diamondsuit, \Lambda)
		\end{equation}
		is fully faithful, \cite[Proposition 27.2.]{Sch17}.
		In general the functor $c_X^\ast$ is not essentially surjective, for example, on topological spaces $|X^\diamondsuit|\to |X|$ is surjective, but very often not injective. 
		
		The functor $c_X^\ast$ enjoys many compatibilities. 
		If $f\co Y\to X$ is a map of schemes, then $c_X^\ast\circ f^*\cong (f^\diamondsuit)^*\circ c_X^\ast$ and $c_X^\ast(A)\otimes^\bbL_\Lambda c_X^\ast(B)\cong c_X^\ast(A\otimes_\Lambda^\bbL B)$, see \cite[Proposition 27.1.]{Sch17}. 
		If $f\co Y\to X$ is separated perfectly of finite type, then $c_X^\ast\circ Rf_!\cong Rf^\diamondsuit_!\circ c_X^\ast$, see \cite[Proposition 27.4.]{Sch17}. 
		As we now justify $c_X^\ast$ also preserves ULA-objects.
		
		\begin{proposition}
			\label{algebraic-is-ula}
			Let $S$ be a qcqs perfect scheme in characteristic $p$, and let $f\co X\to S$ be a separated perfect scheme perfectly of finite presentation over $S$. 
			Let $A\in \D(X, \Lambda)$ such that $A$ is ULA with respect to $f$. 
			Then $c_X^\ast A$ is ULA with respect to $f^\diamondsuit$, and $c_X^\ast \mathbb{D}_{X/S}(A)\cong \mathbb{D}_{X^\diamondsuit/S^\diamondsuit}(c_X^\ast A)$ for the Verdier duals.   
		\end{proposition}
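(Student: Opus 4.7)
The proof plan is to invoke the biduality characterization of ULA from \cite[Theorem IV.2.23]{FS21}, which states that $A$ is $f^\diamondsuit$-ULA if and only if the natural map
\[
p_1^\ast \mathbb{D}_{X^\diamondsuit/S^\diamondsuit}(c_X^\ast A) \otimes^{\bbL} p_2^\ast c_X^\ast A \lr R\mathcal{H}om(p_1^\ast c_X^\ast A, Rp_2^! c_X^\ast A)
\]
on $X^\diamondsuit \times_{S^\diamondsuit} X^\diamondsuit$ is an isomorphism, together with its scheme-theoretic counterpart on $X \times_S X$. Since $c_X^\ast$ already commutes with $\otimes^{\bbL}$ and with $\ast$-pullback along morphisms of perfect schemes by \cite[Proposition 27.1]{Sch17}, and since $(X\times_S X)^\diamondsuit \cong X^\diamondsuit \times_{S^\diamondsuit} X^\diamondsuit$, the whole statement reduces to checking that $c_X^\ast$ also intertwines the remaining operations that appear, namely the relative dualizing complex $Rf^!\Lambda$ (hence $\mathbb{D}_{X/S}$) and $Rp_2^!$ applied to the ULA input $A$.

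First, I would establish the isomorphism $c_X^\ast\, Rf^!\Lambda \cong R(f^\diamondsuit)^!\Lambda$. By \cite[Proposition 3.11, Corollary 3.15]{BS17}, $f$ is the perfection of a separated morphism of finite presentation $f_0\colon X_0 \to S_0$, and working étale-locally on $X$ (permissible since ULA is local on the source by \cite[Proposition VI.2.13]{FS21} and the duality comparison is étale-local), we may factor $f_0$ as a closed immersion into a smooth $S_0$-scheme. Perfecting yields a factorization $f = g\circ i$ with $i$ a closed immersion of perfect schemes and $g$ perfectly smooth. For perfectly smooth maps, both $Rg^!\Lambda$ and $R(g^\diamondsuit)^!\Lambda$ are the shift-twist $\Lambda(d)[2d]$ (on the v-sheaf side by \cite[Proposition IV.1.11]{FS21} and perfect smoothness), and this identification is manifestly respected by $c^\ast$. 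For the closed immersion $i$ one has $Ri^! = \bbD_X\circ i^\ast \circ \bbD_Y$, and the compactly supported compatibility $c^\ast \circ Ri_! \cong R(i^\diamondsuit)_! \circ c^\ast$ of \cite[Proposition 27.4]{Sch17} combined with the full faithfulness of $c^\ast$ (\cite[Proposition 27.2]{Sch17}) yields the claim by adjunction.

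With the dualizing complex compatibility in hand, I would then deduce the Verdier duality comparison $c_X^\ast \mathbb{D}_{X/S}(A) \cong \mathbb{D}_{X^\diamondsuit/S^\diamondsuit}(c_X^\ast A)$. The crucial input is that when $A$ is $f$-ULA, the biduality isomorphism on $X\times_S X$ allows one to rewrite $\mathbb{D}_{X/S}(A) = R\mathcal{H}om(A, Rf^!\Lambda)$ in terms only of $\otimes^{\bbL}$, $\ast$-pullbacks, and $Rf^!\Lambda$ via the Künneth-style dualizability formula—operations all preserved by $c_X^\ast$. Transporting the scheme-side biduality isomorphism via $c_{X\times_S X}^\ast$ then shows that the v-sheaf-side biduality map evaluates to an isomorphism, so $c_X^\ast A$ is $f^\diamondsuit$-ULA, and the duality comparison is the specialization of this to the diagonal.

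The main obstacle is the dualizing-complex compatibility $c_X^\ast\, Rf^!\Lambda \cong R(f^\diamondsuit)^!\Lambda$, since $c_X^\ast$ is not known to commute with arbitrary $Rf^!$: one must verify that the isomorphism obtained from the factorization into a perfectly smooth morphism followed by a closed immersion is canonical and independent of the factorization, and the closed immersion step requires carefully leveraging the full faithfulness of $c^\ast$ to pass from the $!$-pushforward compatibility to the $!$-pullback compatibility. Once this point is secured, the remainder of the argument is formal bookkeeping within the six-functor formalism and the ULA-dualizability criterion of \cite[Theorem IV.2.23]{FS21}.
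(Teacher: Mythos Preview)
Your approach differs substantially from the paper's, and the closed-immersion step contains a genuine gap.

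The paper does not attempt to prove any compatibility of $c^\ast$ with $Rf^!$ or with dualizing complexes. Instead it uses the Lu--Zheng/Hansen--Scholze $2$-categorical characterization of ULA: one forms a $2$-category $\mathcal{C}_S$ whose objects are separated perfectly finitely presented $S$-schemes, whose morphisms $X\to Y$ are objects of $\D(X\times_S Y,\Lambda)$, and whose composition is the convolution $A\ast B:=R\pi_{1,3,!}(\pi_{1,2}^\ast A\otimes^{\bbL}\pi_{2,3}^\ast B)$. By \cite[Proposition 3.4]{HS21}, $A\in\Hom_{\mathcal{C}_S}(X,S)$ is $f$-ULA if and only if it is a left adjoint $1$-morphism; the analogous statement holds on the diamond side by \cite[Theorem IV.2.23]{FS21}. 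Since $c^\ast$ commutes with $\otimes^{\bbL}$, $\ast$-pullback, and $R(-)_!$ (these are exactly the operations in the convolution formula), it upgrades to a $2$-functor $\mathcal{C}_S\to\mathcal{C}_{S^\diamondsuit}$. Any $2$-functor preserves adjunctions between $1$-morphisms, so ULA objects go to ULA objects, and since the right adjoint in each case is the Verdier dual, the identification $c_X^\ast\mathbb{D}_{X/S}(A)\cong\mathbb{D}_{X^\diamondsuit/S^\diamondsuit}(c_X^\ast A)$ drops out for free.

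Your route instead requires $c_X^\ast Rf^!\Lambda\cong R(f^\diamondsuit)^!\Lambda$, and the argument you give for the closed-immersion factor is incomplete. From $c^\ast\circ Ri_!\cong R(i^\diamondsuit)_!\circ c^\ast$ one obtains, by passing to right adjoints, only $Ri^!\circ Rc_\ast\cong Rc_\ast\circ R(i^\diamondsuit)^!$. Full faithfulness of $c^\ast$ gives $Rc_\ast c^\ast\simeq\mathrm{id}$, so applying this to $c_Y^\ast B$ yields $Ri^!B\cong Rc_{X,\ast}R(i^\diamondsuit)^!c_Y^\ast B$; but to conclude $c_X^\ast Ri^!B\cong R(i^\diamondsuit)^!c_Y^\ast B$ you would need $c_X^\ast Rc_{X,\ast}\simeq\mathrm{id}$ on $R(i^\diamondsuit)^!c_Y^\ast B$, i.e.\ that this object already lies in the essential image of $c_X^\ast$. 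That is not automatic and is precisely the sort of statement the whole proposition is meant to supply. The same circularity reappears when you try to transport the $R\mathcal{H}om$ and $Rp_2^!$ in the biduality map. The paper's $2$-categorical packaging avoids all of this: it never touches $Rf^!$ directly, only the three operations for which compatibility with $c^\ast$ is already established in \cite[Propositions 27.1, 27.4]{Sch17}.
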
       
		\begin{proof}
			As in \cite[Section 3]{HS21}, we let $\mathcal{C}_S$ denote the $2$-category whose objects are schemes over $S$ as in the hypothesis, and where morphisms from $X$ to $Y$ are given by objects in $\D(X\times_S Y, \Lambda)$.
			Given two maps $A\in \Hom_{\mathcal{C}_S}(X_1,X_2)$ and $B\in \Hom_{\mathcal{C}_S}(X_2,X_3)$, we define their composition $A\ast B\in \Hom_{\mathcal{C}_S}(X_1,X_3)$ by the formula 	
			\begin{equation}
			A\ast B := R\pi_{1,3}{}_!(\pi_{1,2}^* A\otimes_{\Lambda}^{\bbL}\pi_{2,3}^* B).
			\end{equation}			
			By \cite[Proposition 3.4, Definition 3.2]{HS21}, the object $A\in \Hom_{\mathcal{C}_S}(X,S)$ is ULA with respect to $f$ if and only if $A$ is a left adjoint in $\mathcal{C}_S$. 
			Analogously, let $\mathcal{C}_{S^\diamondsuit}$ denote the category considered in \cite[Section IV.2.3.3.]{FS21}. 
			By \cite[Theorem IV.2.23.]{FS21}, the object $c_X^\ast A\in \Hom_{\mathcal{C}_S}(X^\diamondsuit,S^\diamondsuit)$ is ULA with respect to $f^\diamondsuit$ if it is a left adjoint in $\mathcal{C}_{S^\diamondsuit}$. 
			Now, we observe that the functors $c_X^\ast$ can be promoted to a functor of $2$-categories $c^*\co {\mathcal{C}_S}\to {\mathcal{C}_{S^\diamondsuit}}$ by the rule $c^*X=X^\diamondsuit$ and $c^*(A)=c_{X\times_S Y}^*(A)$ for $A\in \Hom_{\mathcal{C}_S}(X,Y)$. 
			Here, we use that $c^*$ commutes with the required operations by \cite[Propositions 27.1, 27.4]{Sch17}.
			But functors between 2-categories preserve the adjunctions between 1-morphisms which finishes the proof as the right adjoints are given by Verdier duals. 
		\end{proof}
		
		We move on to study stacks. 
		Let $\mathrm{Ani}$ be the category of anima (also called spaces, $\infty$-groups or Kan complexes).
		By left Kan extension along the Yoneda embedding
		\begin{equation}
		\mathrm{SchPerf}_{k}\to \mathrm{Fun}(\mathrm{SchPerf}_{k}^{\mathrm{op}},\mathrm{Ani}),
		\end{equation}
		we can extend\footnote{We take care of the set-theoretic issues by fixing a suitable cut-off cardinal, large enough to allow all the examples that we are interested in.} the functors $\D(\str,\Lambda), \D((\str)^\diamondsuit,\Lambda)$ using $\ast$-pullbacks and the natural transformation $c_{(\str)}^\ast$ to contravariant functors $\D(\str,\Lambda), \D((\str)^\diamondsuit,\Lambda)^{\mathrm{Kan}}$ on $\mathrm{Fun}(\mathrm{SchPerf}_{k}^{\mathrm{op}},\mathrm{Ani})$ with values in symmetric monoidal stable $\infty$-categories, sending colimits to limits. 
		More concretely, if a functor (also known as, higher prestack)
		\begin{equation}
		\mathfrak{X}\cong \underset{i}{\colim} X_i\in \mathrm{Fun}(\mathrm{SchPerf}_{k}^{\mathrm{op}},\mathrm{Ani})
		\end{equation}
		is written as a colimit of representables, then
		\begin{equation}
		\D(\mathfrak{X},\Lambda)\cong \lim_{i} \D(X_i,\Lambda)
		\end{equation}
		and similarly for $\D(\mathfrak{X}^\diamondsuit,\Lambda)^{\mathrm{Kan}}$. 
		By \cite[Proposition 27.2]{Sch17} and \cite[Lemma B.6]{BN19} (more precisely, its proof of $1.$), the natural transformation
		\begin{equation}
		c_{\mathfrak{X}}^\ast\colon \D(\mathfrak{X},\Lambda)\to \D(\mathfrak{X}^\diamondsuit,\Lambda)^{\mathrm{Kan}}
		\end{equation}
		is fully faithful. 
		Note that at this moment the right hand side is not the left completed derived \'etale category of some (higher) v-stack ``$\mathfrak{X}^\diamondsuit$'' on $\mathrm{Perf}_{k}$, but depends on $\mathfrak{X}$ and is defined abstractly (therefore, we have added the superscript $\mathrm{Kan}$).
		
		Assume now that $\mathfrak{X}$ is a stack (in $1$-groupoids) with representable diagonal such that there exists a v-cover by a perfect scheme $X\to \mathfrak{X}$. Then
		\begin{equation}
		\mathfrak{X}\cong \colim_{\Delta^{\mathrm{op}}} X^{\bullet/\mathfrak{X}}
		\end{equation}
		with the colimit of the \v{C}ech nerve of $X\to \mathfrak{X}$ taken in $\mathrm{Ani}$-valued v-sheaves on $\mathrm{SchPerf}_{k}$. 
		Using \cite[Theorem 5.7]{HS21}, we get
		\begin{equation}
		\D(\mathfrak{X},\Lambda)\cong \lim_{\Delta }\D(X^{\bullet/\mathfrak{X}},\Lambda).
		\end{equation}
		Indeed, by definition
		\begin{equation}
		\D(\mathfrak{X},\Lambda)\cong \lim_{U\to \mathfrak{X}} \D(U,\Lambda)
		\end{equation}
		where the limit is taken over all perfect schemes with a morphism to $\mathfrak{X}$, and thus
		\begin{equation}
		\begin{matrix}
		& \D(\mathfrak{X},\Lambda) \\
		= & \lim_{U\to \mathfrak{X}} \D(U,\Lambda)\\
		\cong & \lim_{U\to \mathfrak{X}} \lim_{\Delta} \D(U\times_{\mathfrak{X}} X^{\bullet/\mathfrak{X}},\Lambda) \\
		\cong & \lim_{\Delta} \lim_{U\to \mathfrak{X}} \D(U\times_{\mathfrak{X}} X^{\bullet/\mathfrak{X}},\Lambda)\\
		\cong & \lim_{\Delta} \D(\mathfrak{X}\times_{\mathfrak{X}} X^{\bullet/\mathfrak{X}},\Lambda) \\
		\cong & \lim_{\Delta} \D(X^{\bullet/\mathfrak{X}},\Lambda),
		\end{matrix}
		\end{equation}
		where the first isomorphism comes from \cite[Theorem 5.7]{HS21} applied to the covering $X \times_{\mathfrak{X}} U\to U$, the second isomorphism just commutes two inverse limits, and the last two isomorphisms use that $\D(\str,\Lambda)$ sends (by definition) colimits to limits and that $\colim_{U\to \mathfrak{X}} U\times_{\mathfrak{X}} X^{n/\mathfrak{X}}\cong X^{n/\mathfrak{X}}$.
		
		Let $X^{\diamondsuit,\bullet/\mathfrak{X}}$ be the simplicial v-sheaf obtained by applying the functor $(\str)^\diamondsuit$ to $X^{\bullet/\mathfrak{X}}$. 
		Now assume additionally that the projection maps in $X^{\diamondsuit,\bullet/\mathfrak{X}}$ are v-covers, and let $\mathfrak{X}^\diamondsuit$ be the colimit of $X^{\diamondsuit,\bullet/\mathfrak{X}}$ in v-stacks on $\mathrm{Perf}_{k}$. 
		Then $\mathfrak{X}^\diamondsuit$ is a small v-stack with well-defined $\D(\mathfrak{X}^\diamondsuit,\Lambda)$, and actually is the v-stackification of $\Spa(R,R^+)\mapsto \mathfrak{X}(\Spec(R))$. 
		By \cite[Proposition 17.3]{Sch17}, we can conclude that
		\begin{equation}
		\D(\mathfrak{X}^\diamondsuit,\Lambda)\cong \lim_{\Delta^{\mathrm{op}}} \D(X^{\diamondsuit,\bullet/\mathfrak{X}},\Lambda).
		\end{equation}
		Moreover, note that there exists a canonical morphism
		\begin{equation}
		\D(\mathfrak{X}^\diamondsuit,\Lambda)^{\mathrm{Kan}}\to \D(\mathfrak{X}^\diamondsuit,\Lambda),
		\end{equation}
		which probably need not be an equivalence in general, but whose composite with $c^\ast_{\mathfrak{X}}$ is still fully faithful.
		
		\begin{lemma}
			\label{sec:comp-etale-cohom-2-diamondsuit-preserves-finitely-presented-v-covers} 
			If $f\colon Y\to X$ is a v-cover of perfect schemes which is a map of (perfectly) finite presentation, then $f^\diamondsuit\colon Y^{\diamondsuit}\to X^{\diamondsuit}$ is a v-cover.                                              
		\end{lemma}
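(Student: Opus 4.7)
My plan proceeds by reducing to a concrete local situation, constructing explicit perfectoid cover candidates, and establishing a uniform bound on generators.

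First, the assertion is Zariski-local on $X$, and since $f$ is perfectly finitely presented (hence qcqs, and affine-locally on $X$), we may assume $X = \Spec B$ and $Y = \Spec A$ are affine with
\begin{equation}
A = B[x_1, \ldots, x_n]^{\mathrm{perf}}/(g_1, \ldots, g_m)^{\mathrm{perf}}
\end{equation}
for finitely many generators $x_i$ and relations $g_j$. A morphism $\Spa(R, R^+) \to X^\diamondsuit$ corresponds to a ring homomorphism $B \to R$ by the definition of $(-)^\diamondsuit$ on affines recalled at the start of the appendix. Base-changing $f$ along this map, we reduce to the case $B = R$, and the task becomes to produce a v-cover $\Spa(S, S^+) \to \Spa(R, R^+)$ together with a ring map $A \to S$ extending $B \to R$; equivalently, elements $s_i \in S$ satisfying $g_j(\vec s) = 0$.

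Second, fix a pseudo-uniformizer $\varpi \in R$. For each integer $N \geq 0$, I consider the affinoid perfectoid $(R, R^+)$-algebra $T_N := R\langle u_1^{1/p^\infty}, \ldots, u_n^{1/p^\infty}\rangle$ (with $T_N^+$ its standard integral model) and define $S_N$ as the quotient of $T_N$ by the Frobenius closure of the ideal generated by the elements $\varpi^{N d_j} g_j(\varpi^{-N}u_1, \ldots, \varpi^{-N}u_n)$, where $d_j$ is chosen to clear denominators. Because we are in characteristic $p$, the Frobenius closure of a finitely generated ideal inside a perfectoid algebra yields a perfectoid quotient, so $(S_N, S_N^+)$ is affinoid perfectoid over $(R, R^+)$. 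By construction, sending $x_i \mapsto \varpi^{-N} u_i$ defines a ring map $A \to S_N$, so that $\Spa(S_N, S_N^+) \to \Spa(R, R^+)$ factors through $Y^\diamondsuit$.

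Third, it remains to show that for $N$ large enough, the map $\Spa(S_N, S_N^+) \to \Spa(R, R^+)$ is a v-cover; by \cite[Lemma 12.11]{Sch17} it suffices to check surjectivity on underlying topological spaces. Given a geometric point $\bar x : \Spa(C, C^+) \to \Spa(R, R^+)$ with $C$ algebraically closed perfectoid, the scheme v-cover property of $\Spec A \to \Spec R$---equivalently, universal subtrusiveness---furnishes an extension of valuation rings $C^+ \hookrightarrow V$ together with a lift $\Spec V \to \Spec A$, realized by elements $y_i \in V$ with $g_j(\vec y) = 0$. Choosing $N(\bar x)$ so that $\varpi^{N(\bar x)} y_i \in V$ for all $i$, this point lifts after v-localization to $\Spa(S_{N(\bar x)}, S_{N(\bar x)}^+)$. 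The main obstacle, and really the whole content of the lemma, is extracting a single global $N$ uniformly in $\bar x$: I would argue that the images $W_N \subset |\Spa(R, R^+)|$ of the maps $|\Spa(S_N, S_N^+)| \to |\Spa(R, R^+)|$ form an ascending chain of open subsets (openness following from partial properness of the ``closed polydisk'' $\Spa(T_N, T_N^+)$ over $\Spa(R, R^+)$ together with the fact that $\Spa(S_N, S_N^+)$ is cut out by finitely many closed conditions) whose union exhausts $|\Spa(R, R^+)|$ by the pointwise lifting just established; quasi-compactness then forces $W_N = |\Spa(R, R^+)|$ for some single $N$. This uniformity step is precisely what fails in \Cref{sec:comp-etale-cohom-1-diamondsuit-does-not-preserve-v-covers}, where the absence of perfect finite presentation (the fibers of $Y_n \to X$ vary with $n$, with no uniform polynomial bound) allows no single $N$ to witness the cover globally.
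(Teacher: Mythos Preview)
Your approach has a genuine gap at precisely the step you yourself flag as ``the whole content of the lemma'': the uniformity argument. The claim that the images $W_N \subset |\Spa(R,R^+)|$ are open is not correct as stated. The closed perfectoid polydisk $\Spa(T_N,T_N^+) \to \Spa(R,R^+)$ is \emph{not} partially proper: given a map $\Spa(K,K^\circ) \to \Spa(T_N,T_N^+)$ sending $u_i \mapsto a_i \in K^\circ$, extension to $\Spa(K,K^+)$ requires $a_i \in K^+$, which fails in general for $K^+ \subsetneq K^\circ$. Moreover, even granting some form of properness, images of proper maps are closed, not open, so the stated reasoning (closed subset of a partially proper space) would at best give that $W_N$ is closed. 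An ascending chain of closed subsets with full union does not terminate by quasi-compactness alone; one would need constructibility of the $W_N$ together with compactness for the constructible topology, and you have not established this. Since the counterexample in \Cref{sec:comp-etale-cohom-1-diamondsuit-does-not-preserve-v-covers} also has the property that every geometric point lifts to \emph{some} $W_N$, your argument as written does not distinguish the finitely presented case from the general one.

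By contrast, the paper's proof avoids any explicit uniformity estimate by a noetherian approximation argument: using that perfectly finitely presented algebras are base-changed from perfectly finitely presented algebras over $k$ (\cite[Lemma 2.12]{BS17}), one reduces to the case where both $Y$ and $X$ are of perfectly finite presentation over the perfect field $k$, and then invokes \cite[Proposition 2.2.9]{Gle20}, which handles that absolute case directly. The point is that the finite presentation hypothesis is used to descend to a noetherian-like base, rather than to bound generators uniformly over an arbitrary perfectoid test object.
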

		\begin{proof}
			The functor $(\str)^{\diamondsuit}$ preserves open covers, so we may assume that $Y\to X$ are affine. 
			In this case, a (perfectly) finite presented v-cover is a cofiltered limit of v-covers between (perfect) affine schemes of (perfect) finite presentation over $\Spec(k)$, see \cite[Lemma 2.12]{BS17}. 
			As $f$ is of (perfectly) finite presentation, we may assume that $f$ is the base change of a v-cover between (perfect) affine schemes of (perfect) finite presentation over $k$. 
			The functor $(\str)^\diamondsuit$ preserves fiber products, and base changes of v-cover of v-sheaves on $\Perf_{k}$ are again v-covers. 
			Thus, we may reduce to the case that $Y,X$ are of (perfect) finite presentation over $k$. Then the statement follows from \cite[Proposition 5.4]{Gle20}.
		\end{proof}
		
		In general, it need however not be true that for a v-cover $Y\to X$ of (perfect) schemes the map $Y^\diamondsuit\to X^\diamondsuit$ is a v-cover of small v-sheaves.
		We include the following example which shows that one needs to be careful when applying the above formalism to, say schematic Hecke stacks as in \eqref{equation-schematic-hecke} as they arise as quotients of schematic loop groups which are of infinite type over the base field. 
		
		\begin{example}
			\label{sec:comp-etale-cohom-1-diamondsuit-does-not-preserve-v-covers}
			Let $C$ be a perfect, non-archimedean field over $\mathbb{F}_p$, and fix a pseudo-uniformizer $\pi_C\in C$. 
			Let
			\begin{equation}
			Z:=\{\textstyle{\frac{1}{n}}\ |\ n\in \mathbb{N}\}\cup \{0\}\subset [0,1]
			\end{equation}
			with its subspace topology, which is profinite. 
			We consider the space of continuous functions
			\begin{equation}
			X:=\Spec(C^0(Z,C_{\mathrm{disc}})),
			\end{equation}
			where $C_{\mathrm{disc}}$ is the field $C$ equipped with the discrete topology. 
			Note that $|X|\cong Z$.
			For $n\in \mathbb{N}$, let $g_n,h_n\colon Z\to C_{\mathrm{disc}}$ be the locally constant functions with
			\begin{equation}
			g_n(1/m)=1/\pi_C^m, h_n(1/m)=1
			\end{equation}
			for $m\leq n$ and $g_n(z)=h_n(z)=0$ otherwise. 
			Let
			\begin{equation}
			Y_n\subset \mathbb{A}^1_X=\Spec(C^0(Z,C_{\mathrm{disc}})[T])
			\end{equation}
			be the vanishing locus of $h_nT-g_n$. 
			Then $Y_{n+1}\subset Y_n$, and we can set $Y:=\lim_{n} Y_n$ which is a v-cover of $X$. Indeed, each $Y_n$ is a v-cover, and inverse limits of v-covers between affine schemes are v-covers. 
			More concretely, each map $\Spec(V)\to X$ with $V$ a valuation ring must factor through a (closed) point of $X$, and then it suffices to see that $Y\to X$ is surjective (it is bijective over $X\setminus\{0\}$, and $\mathbb{A}^1_{C_{\mathrm{disc}}}$ over $0$).
			
			We claim that $Y^\diamondsuit\to X^\diamondsuit$ is not a v-cover. Set $R:=C^0(Z, C)$ (now $C$ given its valuation topology), and $R^+=C^0(Z,\mathcal{O}_C)$. 
			The canonical map $C^0(Z,C_{\mathrm{disc}})\to R$ defines a map
			\begin{equation}
			S:=\Spa(R,R^+)\to X^\diamondsuit,
			\end{equation}
			which does not v-locally factor through $Y^\diamondsuit\subset (\mathbb{A}^1_{X})^{\diamondsuit}$. 
			Indeed, assume that $S^\prime\to S$ is a v-cover with $S^\prime$ affinoid and $S^\prime\to Y^\diamondsuit$ a lift of $S\to X^\diamondsuit$. 
			Then the image of $S^\prime\to Y^{\diamondsuit}\times_{X^\diamondsuit} S\subset \mathbb{A}^{1,\mathrm{ad}}_S$ must factor through some quasi-compact subset. 
			But over each point $z_n:=1/n\in Z\cong |S|$ with $n\in \mathbb{N}$, we have that $Y^{\diamondsuit}\times_{X^\diamondsuit} \{z_n\}$ is the point $1/\pi_C^n\in \mathbb{A}^{1,\mathrm{ad}}_{z_n}$, and in $\mathbb{A}^{1,\mathrm{ad}}_S$ this set of points does not lie in a quasi-compact open.
		\end{example}
		
		We get the following consequence of \Cref{sec:comp-etale-cohom-2-diamondsuit-preserves-finitely-presented-v-covers}.
		
		\begin{lemma}
			\label{sec:comp-etale-cohom-1-c-x-fully-faithful-for-stacks}
			Assume $\mathfrak{X},\mathfrak{Y}$ are v-stacks on $\mathrm{SchPerf}_k$ with representable diagonal of perfectly finite presentation, and that $\mathfrak{X},\mathfrak{Y}$ admit a (perfectly) finitely presented v-covers by a perfect schemes, which are of (perfect) finite presentation over $\Spec(k)$. Let $\mathfrak{X}^\diamondsuit$ be the v-stackification of the functor $\Spa(R,R^+)\mapsto \mathfrak{X}(\Spec(R))$ on $\mathrm{Perf}_k$, and similarly for $\mathfrak{Y}$. Let $f\colon \mathfrak{X}\to \mathfrak{Y}$ be a morphism of v-stacks.
			\begin{enumerate}
				\item Then the canonical morphism
				\begin{equation}
				\D(\mathfrak{X}^\diamondsuit,\Lambda)^{\mathrm{Kan}}\to \D(\mathfrak{X}^\diamondsuit,\Lambda)
				\end{equation}
				is an equivalence, and the composite (still denoted $c_{\mathfrak{X}}^\ast$)
				\begin{equation}
				\D(\mathfrak{X},\Lambda)\overset{c_{\mathfrak{X}}^\ast}{\to} \D(\mathfrak{X}^\diamondsuit,\Lambda)^{\mathrm{Kan}}\cong \D(\mathfrak{X}^\diamondsuit,\Lambda)
				\end{equation}
				is fully faithful.
				\item The functors $c^\ast_{\mathfrak{X}}\circ f^\ast, (f^\diamondsuit)^\ast\circ c^{\ast}_{\mathfrak{Y}}$ are naturally isomorphic.
				\item If $\mathfrak{X}\cong [\Spec(k)/H]$ for some perfectly finitely presented group scheme $H$ over $k$, then the functor
				\begin{equation}
				\D(\mathfrak{X},\Lambda)\to \D(\mathfrak{X}^\diamondsuit,\Lambda)
				\end{equation}
				is an equivalence.
			\end{enumerate}
		\end{lemma}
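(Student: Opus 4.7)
The plan is to exploit v-descent for both sides of the comparison functor $c^*_{\mathfrak{X}}$ along a common presentation by a v-cover, reducing (1) and (2) to the already established results for representable perfect schemes, and then treating (3) by running the same reduction for the bar construction of $H$.

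For (1), pick the assumed (perfectly) finitely presented v-cover $X \to \mathfrak{X}$ with $X$ of perfect finite presentation over $k$, and form its Čech nerve $X^{\bullet/\mathfrak{X}}$. The representability assumption on the diagonal and its perfect finite presentation ensure that each $X^{n/\mathfrak{X}}$ is a perfect scheme of perfect finite presentation over $k$, so the computation in the body gives $\D(\mathfrak{X},\Lambda) \cong \lim_\Delta \D(X^{\bullet/\mathfrak{X}},\Lambda)$. Since $(\str)^\diamondsuit$ preserves finite limits and, by \Cref{sec:comp-etale-cohom-2-diamondsuit-preserves-finitely-presented-v-covers}, the map $X^\diamondsuit \to \mathfrak{X}^\diamondsuit$ is a v-cover with Čech nerve $(X^{\bullet/\mathfrak{X}})^\diamondsuit$, v-descent \cite[Proposition 17.3]{Sch17} yields $\D(\mathfrak{X}^\diamondsuit,\Lambda) \cong \lim_\Delta \D((X^{n/\mathfrak{X}})^\diamondsuit,\Lambda)$. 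On the Kan side, $\D((\str)^\diamondsuit,\Lambda)^{\mathrm{Kan}}$ sends colimits in $\mathrm{Fun}(\mathrm{SchPerf}_k^{\mathrm{op}},\mathrm{Ani})$ to limits and agrees with $\D((\str)^\diamondsuit,\Lambda)$ on representables, so it is computed by the same limit over $\Delta$. This simultaneously identifies the Kan category with the genuine one and exhibits $c^*_{\mathfrak{X}}$ as a $\Delta$-indexed limit of the fully faithful functors $c^*_{X^{n/\mathfrak{X}}}$ from \cite[Proposition 27.2]{Sch17}, giving full faithfulness.

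For (2), pick compatible v-covers $Y \to \mathfrak{Y}$ and $X \to \mathfrak{X}\times_\mathfrak{Y} Y$ as in (1) (the fibre product is representable by the diagonal assumption), so that $f$ is presented as a map of Čech nerves. The identities $c^*_{Y^{n/\mathfrak{Y}}} \circ g_n^* \cong (g_n^\diamondsuit)^* \circ c^*_{X^{n/\mathfrak{X}}}$ on representables are \cite[Proposition 27.1]{Sch17}; passing to the limit over $\Delta$ yields the claimed natural isomorphism $c^*_\mathfrak{X}\circ f^* \cong (f^\diamondsuit)^*\circ c^*_\mathfrak{Y}$.

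For (3), apply (1) to the presentation $\Spec(k) \to BH$, whose Čech nerve is the bar construction $H^\bullet$. Both sides become $\Delta$-limits, so it suffices to show $c^*_{H^n}: \D(H^n,\Lambda) \to \D((H^n)^\diamondsuit,\Lambda)$ is an equivalence for every $n\geq 0$. Full faithfulness is again \cite[Proposition 27.2]{Sch17}; the content is essential surjectivity, which is the main obstacle of the lemma. The strategy is to argue that for a qcqs affine perfect scheme $Y$ of perfect finite presentation over $k$, every étale $\Lambda$-sheaf on $Y^\diamondsuit$ descends to $Y_{\et}$: one reduces to torsion $\Lambda$, writes any sheaf in terms of its sections on affinoid perfectoids, and uses finite-presentation approximation as in the proof of \Cref{sec:comp-etale-cohom-2-diamondsuit-preserves-finitely-presented-v-covers} (together with \cite[Lemma 2.12]{BS17}) to refine any v-cover trivialising the sheaf to one coming from a perfectly finitely presented étale cover of $Y$. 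Left-completion then promotes the equivalence of ordinary étale topoi to an equivalence of the two derived $\infty$-categories, and passing to the limit over $\Delta$ completes the proof.
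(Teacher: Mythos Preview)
Your approach to parts (1) and (2) is essentially that of the paper: compute both sides via the \v{C}ech nerve and use termwise full faithfulness from \cite[Proposition 27.2]{Sch17}. One point in (1) needs care: the claim ``$\D((\str)^\diamondsuit,\Lambda)^{\mathrm{Kan}}$ sends colimits in $\mathrm{Fun}(\mathrm{SchPerf}_k^{\mathrm{op}},\mathrm{Ani})$ to limits'' does not directly apply, because $\mathfrak{X}$ is the colimit of its \v{C}ech nerve only in v-\emph{sheaves}, not in presheaves. The paper closes this gap by a cofinality argument (maps $U\to\mathfrak{X}$ of pfp with $U$ of pfp over $k$ are cofinal among all $V\to\mathfrak{X}$) and then reruns the descent argument on the diamond side using \Cref{sec:comp-etale-cohom-2-diamondsuit-preserves-finitely-presented-v-covers} together with \cite[Proposition 17.3]{Sch17}. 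You should make this step explicit.

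Part (3), however, has a genuine error. You claim that $c^*_{H^n}$ is an equivalence for every $n$, and more generally that for any qcqs affine perfect $Y$ of pfp over $k$ every \'etale sheaf on $Y^\diamondsuit$ descends to $Y_{\et}$. This is false: as the appendix itself remarks, $c_X^\ast$ is in general \emph{not} essentially surjective, since $|X^\diamondsuit|\to |X|$ is typically not injective and one can produce \'etale sheaves on $X^\diamondsuit$ that separate points lying over a single scheme-theoretic point. Your approximation argument cannot repair this: v-covers of $Y^\diamondsuit$ trivializing a given sheaf need not come from schemes, and even when they do the descent datum need not be algebraic. The paper's proof of (3) avoids termwise essential surjectivity entirely. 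It only uses that $c^*$ \emph{is} an equivalence in degree $0$ of the bar resolution, since $\D(\Spec(k),\Lambda)\cong \D(\Spd(k),\Lambda)\cong \D(\Lambda)$, and then invokes \cite[Lemma B.6]{BN19}: a $\Delta$-limit of fully faithful functors that is an equivalence in degree $0$ is itself an equivalence. Intuitively, objects of the totalization are objects in degree $0$ equipped with descent data, and full faithfulness in higher degrees ensures the descent data on the two sides match.
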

		\begin{proof}
			We prove the first point. 
			From the assumptions on $\mathfrak{X}^\diamondsuit$ we can conclude that the morphisms $U\to \mathfrak{X}$ of perfectly finite presentation such that $U$ is of perfectly finite presentation over $k$ are cofinal among all maps $V\to \mathfrak{X}$ with $V$ a perfect scheme. 
			In particular, in the definition of $\D(\mathfrak{X}^\diamondsuit,\Lambda)^{\mathrm{Kan}}$ one can replace the limit over all $V's$ with a morphism to $\mathfrak{X}$ by the limit over all $U's$ with morphism to $\mathfrak{X}$. Using \Cref{sec:comp-etale-cohom-2-diamondsuit-preserves-finitely-presented-v-covers} and the same argument as for $\D(\mathfrak{X},\Lambda)$, we can then conclude that $\D(\mathfrak{X}^\diamondsuit,\Lambda)^{\mathrm{Kan}}\cong \D(\mathfrak{X}^\diamondsuit,\Lambda)$. 
			Fully faithfulness follows from fully faithfulness of $c_{\mathfrak{X}}^\ast$.
			The second point follows by expressing the categories as limits over $\Delta$ by choosing v-covers $X\to \mathfrak{X}, Y\to \mathfrak{Y}$ of perfectly finite presentation with $X,Y$ of perfectly finite presentation over $k$, such that $X\to \mathfrak{X}\overset{f}{\to} \mathfrak{Y}$ factors over $Y\to \mathfrak{Y}$.
			For the last point, we note that $\D(\Spec(k),\Lambda)\cong \D(\Spa(k,k)^\diamond,\Lambda)$ as both identify naturally with the derived category of $\Lambda$-modules, see \cite[Theorem V.1.1]{FS21}. 
			Computing both sides via the \v{C}ech nerve of the covering $\Spec(k)\to [\Spec(k)/H]$, the statement follows from \cite[Lemma B.6]{BN19}.
		\end{proof}
		
		The results apply to the schematic Hecke stack as follows. 
		Let $O$ be a complete discrete valuation ring with residue field $k$, $\calG$ a parahoric group scheme over $O$ with generic fiber $G$ and $\Fl_{\calG}=L_kG/L^+_k\calG$ the partial affine flag variety for $\calG$ as in \Cref{sec:affine-flag-vari}. 
		Let
		\begin{equation}\label{equation-schematic-hecke}
		\mathrm{Hk}_{\calG,k}^{\mathrm{sch}}:=[L^+_k\calG\backslash L_kG/L^+_k\calG]
		\end{equation}
		be the (schematic) Hecke stack where the quotient is taken in v-stacks on $\mathrm{SchPerf}_k$. 
		Then $\mathrm{Hk}_{\calG,k}:=(\mathrm{Hk}^{\mathrm{sch}}_{\calG,k})^\diamondsuit$ is the small v-stack on $\Perf_k$ considered in \Cref{section_nearby_cycles}.
		We denote by
		\begin{equation}
		\D(\mathrm{Hk}_{\calG,k}^{\mathrm{sch}},\Lambda)^{\mathrm{bd}},\ \D(\mathrm{Hk}_{\calG,k},\Lambda)^{\mathrm{bd}} 
		\end{equation}
		the categories of objects with bounded support.
		
		\begin{proposition}
			\label{sec:comp-etale-cohom-1-derived-categories-of-hecke-stacks-are-isomorphic}
			The categories $\D(\mathrm{Hk}_{\calG,k}^{\mathrm{sch}},\Lambda)^{\mathrm{bd}},\ \D(\mathrm{Hk}_{\calG,k},\Lambda)^{\mathrm{bd}}$ are equivalent.
		\end{proposition}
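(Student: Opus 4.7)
The natural candidate for the equivalence is the comparison functor $c^* = c^*_{\mathrm{Hk}^{\mathrm{sch}}_{\calG,k}}$ constructed earlier in the appendix. For full faithfulness I would invoke the first part of the preceding lemma on v-stacks admitting a perfectly finitely presented v-cover: the bounded support assumption lets us reduce to objects living on $[L^{\leq N}_k\calG \backslash \Fl_{\calG, W}]$ for some finite $W \subset \tilde W/W_\calG$ and some truncated loop group $L^{\leq N}_k\calG$ through which the action on $\Fl_{\calG, W}$ factors. Both $L^{\leq N}_k\calG$ and $\Fl_{\calG, W}$ are perfect schemes of perfectly finite presentation over $k$, so the hypothesis of the lemma is met and $c^*$ is fully faithful on the bounded support categories.

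For essential surjectivity my plan is a dévissage along the stratification of $\Fl_{\calG, W}$ by $L^+_{\bar k}\calG$-orbits indexed by $W$. Given $A\in \D(\mathrm{Hk}_{\calG,k},\Lambda)^{\mathrm{bd}}$, I would induct on the number of open strata of $\Fl_{\calG, W}$ meeting the support of $A$. Using the closed-open recollement triangle $i_!Ri^!A\to A\to Rj_\ast j^\ast A$ on the v-stack side and its schematic counterpart, combined with the compatibility of $c^*$ with $f^*$ and with $Rf_!$ for separated perfectly finitely presented $f$ from Propositions 27.1 and 27.4 of \cite{Sch17}, the inductive step reduces essential surjectivity to the case where $A$ is supported on a single open stratum.

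The base case is a single open orbit, which (after étale descent from $k_w$ to $\bar k$) identifies with the diamondification of a classifying stack $[\mathrm{Spec}(\bar k)/H_w]$ for the perfectly finitely presented stabilizer group $H_w\subset L^{\leq N}_{\bar k}\calG$. For such classifying stacks, part (3) of the preceding lemma gives directly the required equivalence $\D([\mathrm{Spec}(\bar k)/H_w],\Lambda)\cong \D([\mathrm{Spec}(\bar k)/H_w]^\diamondsuit, \Lambda)$. The descent from $\bar k$ back to $k_w$ along the profinite Galois group is handled by functoriality of $c^*$ together with Galois descent for the derived categories of étale sheaves on both sides.

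The main obstacle will be the recollement step: one must know that $c^*$ commutes with $Ri_\ast$ for a closed immersion of perfectly finitely presented perfect schemes (equivalently, with $Rj_\ast$ for the open complement), and not merely with the easier operations $f^*$ and $Rf_!$. This does not hold for general v-stacks, but should follow in our setting because the induced map $Z^\diamondsuit\to X^\diamondsuit$ is a closed immersion by the earlier lemma on finitely presented v-covers, so the pushforward $Ri^\diamondsuit_\ast$ can be computed from $Ri_\ast$ on the étale site via the fully faithful embedding of étale topoi into v-topoi, reducing to the standard recollement identity on the scheme side.
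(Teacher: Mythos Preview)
Your outline matches the paper's proof closely: both reduce to a bounded piece $[H\backslash X]$ with $H$ a congruence quotient of $L^+_k\calG$, invoke part (1) of \Cref{sec:comp-etale-cohom-1-c-x-fully-faithful-for-stacks} for full faithfulness, induct on the number of Schubert strata, and use part (3) of that lemma on the open stratum (a classifying stack). The Galois descent you mention is unnecessary---one works directly over $k$ (or $k_w$) throughout.

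The genuine difference is in the inductive step, precisely at the point you flag as the ``main obstacle''. Your recollement argument requires $c^*_X\circ Rj_\ast \cong Rj^\diamondsuit_\ast\circ c^*_U$, and your justification (``$Ri^\diamondsuit_\ast$ can be computed from $Ri_\ast$ via the fully faithful embedding'') is not a proof. One can show that $Rc_{X,\ast}$ applied to both sides yields $Rj_\ast$, but this only pins down $Rj^\diamondsuit_\ast c^*_U B$ up to the kernel of $Rc_{X,\ast}$, which is exactly what is in question. The paper sidesteps this by proving instead that the right adjoint $Rc_{[H\backslash X],\ast}$ is \emph{conservative}, which together with full faithfulness gives the equivalence. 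Conservativity needs only the commutation relations
\[
[H\backslash j]^\ast\circ Rc_{[H\backslash X],\ast}\cong Rc_{[H\backslash U],\ast}\circ [H\backslash j]^{\diamondsuit,\ast},
\qquad
[H\backslash i]_\ast\circ Rc_{[H\backslash Y],\ast}\cong Rc_{[H\backslash X],\ast}\circ [H\backslash i]^\diamondsuit_\ast,
\]
and both of these follow by passing to adjoints from the easy identities $c^*\circ j_!\cong j^\diamondsuit_!\circ c^*$ and $c^*\circ i^\ast\cong i^{\diamondsuit,\ast}\circ c^*$ supplied by \cite[Propositions 27.1, 27.4]{Sch17}. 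Then if $Rc_{X,\ast}(A)=0$, the first relation plus the equivalence on $U$ forces $j^{\diamondsuit,\ast}A=0$, so $A\cong i^\diamondsuit_\ast i^{\diamondsuit,\ast}A$, and the second relation plus the inductive equivalence on $Y$ forces $i^{\diamondsuit,\ast}A=0$. This avoids ever needing $c^*$ to commute with $Rj_\ast$ or $Ri^!$.
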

		\begin{proof}
			Consider a closed substack
			\begin{equation}
			[L^+_k\calG \backslash X]\subset \mathrm{Hk}^{\mathrm{sch}}_{\calG,k}
			\end{equation}
			with $X\subset \Fl_{\calG}$ a closed $L^+_k\calG$-stable subscheme, that is, a union of Schubert varieties.
			By the argument of \cite[Proposition VI.4.1]{FS21}, the vanishing of the cohomology of the affine line over $k$ implies that
			\begin{equation}
			\D([L^+_k\calG \backslash X],\Lambda)\cong \D([H\backslash X],\Lambda)
			\end{equation}
			for any perfectly finitely presented quotient $H$ of $L^+_k\calG$ by some congruence subgroup whose action on $X$ is trivial.
			By \Cref{sec:comp-etale-cohom-1-c-x-fully-faithful-for-stacks}, we have, abusing notation, a natural fully faithful functor
			\begin{equation}
			c^\ast_{[H\backslash X]}\colon \D([H\backslash X],\Lambda)\to \D([H\backslash X]^\diamondsuit,\Lambda)\cong \D([H^\diamondsuit\backslash X^\diamondsuit],\Lambda),
			\end{equation}
			and we claim that this functor is an equivalence.
			We prove this by induction on the number of Schubert strata contained in $X$.
			Let $i\colon Y\subset X$ be a closed $H$-stable (perfect) subscheme with non-empty open complement $j\colon U\to X$, for which $[H\backslash U]$ is a disjoint union of classifying stacks for closed subgroups of $H$. 
			By \Cref{sec:comp-etale-cohom-1-c-x-fully-faithful-for-stacks}, we have
			\begin{equation}
			\D([H\backslash U],\Lambda)\cong \D([H^\diamondsuit\backslash U^\diamondsuit],\Lambda),
			\end{equation}
			and by induction induction hypothesis
			
			\begin{equation}
			\D([H\backslash Y],\Lambda)\cong \D([H^\diamondsuit\backslash {Y}^\diamondsuit],\Lambda).
			\end{equation}
			Let us note that as in \cite[Section 27]{Sch17} the functors $c^\ast_{[H\backslash X]}, c^\ast_{[H\backslash U]}, c^\ast_{[H\backslash Y]}$ admit right adjoints $Rc_{[H\backslash X],\ast}$, $Rc_{[H\backslash U],\ast}$, $Rc_{[H\backslash Y],\ast}$, and it suffices to see that $Rc_{[H\backslash X],\ast}$ is conservative.  
			It is formal that
			\begin{equation}
			\label{eq:6}
			[H\backslash j]^\ast\circ Rc_{[H\backslash X],\ast}\cong Rc_{[H\backslash U],\ast}\circ [H\backslash j]^{\diamondsuit,\ast},
			\end{equation}
			where $[H\backslash j]\colon [H\backslash U]\to [H\backslash X]$ denotes the morphism induced by $j$. More precisely, there exists a natural morphism from the left hand side to the right hand side, and it suffices to see that the morphims induced on the left adjoints is an isomorphism. If $T\to [H\backslash X]$ is a $v$-cover with $T\to \Spec(k)$ of morphism of schemes of finite type, then it suffices (by \Cref{sec:comp-etale-cohom-2-diamondsuit-preserves-finitely-presented-v-covers}) to prove the statement on the isomorphism of left adjoints after pullback to $T^\diamondsuit$. Here, the functor $c_{T}^\ast$ on the \'etale derived categories is induced by a morphism of topoi, and then (\ref{eq:6}) follows by general base change to open subtopoi.

			Let $A\in \D([H\backslash X]^\diamondsuit,\Lambda)$ such that $Rc_{[H\backslash X],\ast}(A)=0$. 
			Then we deduce $[H\backslash j]^{\diamondsuit,\ast}(A)=0$ because $Rc_{[H\backslash U],\ast}$ is an equivalence. 
			In particular, $A\cong [H\backslash i]_\ast^\diamondsuit[H\backslash i]^{\diamondsuit,\ast}(A)$. Now note that
			\begin{equation}
			[H\backslash i]_\ast\circ Rc_{[H\backslash Y],\ast}\cong Rc_{[H\backslash X],\ast}\circ [H\backslash i]^\diamondsuit
			\end{equation}
			as follows by adjunction from $[H\backslash i]^\ast\circ c_{[H\backslash X]}^\ast\cong c_{[H\backslash Y]}^\ast \circ [H\backslash i]^{\diamondsuit,\ast}$. We can conclude that
			\begin{equation}
			[H\backslash i]_{\ast}Rc_{[H\backslash Y],\ast}([H\backslash i]^{\diamondsuit,\ast}A)=0,
			\end{equation}
			which implies $[H\backslash i]^{\diamondsuit,\ast}(A)=0$ because $[H\backslash i]_{\ast}$ is conservative and $Rc_{[H\backslash Y],\ast}$ an equivalence. This implies that $A=0$ as desired.
			
			The equivalence $c^\ast_{[H\backslash X]}$ is natural with respect to inclusions $X\to X^\prime$ of $H$-stable subsets, and morphism $H^\prime\to H$ of quotients of $L^+_k\calG$ (which act trivially on $X$). 
			More precisely, from $c^\ast_{[H\backslash X]}$ we get an equivalence
			\begin{equation}
			\D([L^+_k\calG\backslash X],\Lambda)\cong \D([(L_k^+\calG)^{\diamondsuit}\backslash X^\diamondsuit],\Lambda)
			\end{equation}
			using \cite[VI.4.1]{FS21}, and then we can pass to the colimits of both sides along closed $L^+_k\calG$-stable subschemes of $\Fl_{\calG}$. 
			Then the left hand side is $\D(\mathrm{Hk}_{\calG,k}^{\mathrm{sch}},\Lambda)^{\mathrm{bd}}$ while the right hand side is $\D(\mathrm{Hk}_{\calG,k},\Lambda)^{\mathrm{bd}}$. 
		\end{proof}

	\end{appendices}

	\bibliographystyle{alpha}

\end{document}